\DeclareRobustCommand{\SkipTocEntry}[4]{}
\numberwithin{section}{chapter}
\renewcommand{\subsection}[1]{\vspace{.18in}\par\noindent\addtocounter{subsection}{1}\setcounter{equation}{0}{\bf\thesubsection.\hspace{5pt}#1}}
\newtheorem{theorem}{Theorem}[section]
\numberwithin{equation}{theorem}
\theoremstyle{definition}
\newtheorem{Def}[theorem]{Definition}
\newtheorem{Conj}[theorem]{\bf Conjecture}
\newtheorem{Prob}[theorem]{\bf Problem}
\newtheorem{Example}[theorem]{Example}
\newtheorem{Rem}[theorem]{Remark}
\newtheorem{Rems}[theorem]{Remarks}
\theoremstyle{plain}
\newtheorem{Prop}[theorem]{Proposition}
\newtheorem{Thm}[theorem]{Theorem}
\newtheorem{Lem}[theorem]{Lemma}
\newtheorem{Coro}[theorem]{Corollary}
\newcommand{\SrC}{\sS(n,r)_\mbc}
\newcommand{\UglC}{U({\frak{gl}}_n)_{\mathbb C}}
\def\ttv{{z}}
\newcommand{\afHrC}{\sH_{\vtg}(r)_\mathbb C}
\newcommand{\afUslC}{\text{\rm U}_{\mathbb C}(\widehat{\frak{sl}}_n)}
\newcommand{\afJnn}{I_{\vtg}(n,n)_0}
\newcommand{\OgnC}{\Og_{n,\mathbb C}}
\newcommand{\Det}{\mathrm{Det}}
\newcommand{\OgC}{\Og_{\mathbb C}}
\newcommand{\ms}{\mathscr}
\newcommand{\afHoC}{\sH_{\vtg}(1)_\mathbb C}
\newcommand{\HoC}{\sH(1)_{\mathbb C}}
\newcommand{\afHnC}{\sH_{\vtg}(n)_\mathbb C}
\newcommand{\HnC}{\sH(n)_{\mathbb C}}
\newcommand{\HrC}{\sH(r)_{\mathbb C}}
\newcommand{\afSnC}{{\mathcal S}_{\vtg}(n,n)_\mathbb C}
\newcommand{\afSkC}{{\mathcal S}_{\vtg}(n,k)_\mathbb C}
\newcommand{\afSlC}{{\mathcal S}_{\vtg}(n,l)_\mathbb C}
\newcommand{\afSiC}{{\mathcal S}_{\vtg}(n,i)_\mathbb C}
\newcommand{\afSklC}{{\mathcal S}_{\vtg}(n,k+l)_\mathbb C}
\newcommand{\bin}{\bigcup}
\newcommand{\han}{\subseteq}
\newcommand{\scr}[1]{\mathscr #1}
\def\bds{\boldsymbol}
\newcommand{\bse}{\boldsymbol{e}}
\newcommand{\bsa}{\boldsymbol{a}}
\newcommand{\bsz}{\boldsymbol{z}}
\newcommand{\bsH}{{\boldsymbol{\sH}}}
\newcommand{\lan}{\langle}
\newcommand{\ran}{\rangle}
\def\lr#1{\langle #1\rangle}
\def\fS{{\frak S}}
\def\fL{{\frak L}}
\def\fA{{\frak A}}
\def\fC{{\frak C}}
\def\fka{{\frak a}}
\def\fkd{{\frak d}}
\def\scrA{{\scr A}}
\def\scrB{{\scr B}}
\def\sfx{{\mathsf x}}
\def\sfy{{\mathsf y}}
\def\sfz{{\mathsf z}}
\def\sfs{{\mathsf s}}
\def\sfk{{\mathsf k}}
\def\sfF{{\mathsf F}}
\def\sfG{{\mathsf G}}
 \newcommand{\bfOg}{{\bf\Omega}}
\def\wt{\widetilde}
\newcommand{\msP}{\mathscr P}
\newcommand{\msD}{\mathscr D}\newcommand{\afmsD}{{\mathscr D}^\vtg}
\newcommand{\fSr}{\fS_r}
\newcommand{\affSr}{{\fS_{\vtg,r}}}
\newcommand{\Hr}{{\sH(r)}}
\newcommand{\afHr}{{\sH_\vtg(r)}}
\newcommand{\bsHr}{{\boldsymbol{\mathcal H}}(r)}
\newcommand{\afbfHr}{{\boldsymbol{\mathcal H}_\vtg(r)}}
\def\sA{{\mathcal A}}
\def\sB{{\mathcal B}}
\def\sE{{\mathcal E}}
\def\sF{{\mathcal F}}
\def\sH{{\mathcal H}}
\def\sI{{\mathcal I}}
\def\sJ{{\mathcal J}}
\def\sK{{\mathcal K}}
\def\sL{{\mathcal L}}
\def\sO{{\mathcal O}}\def\ttO{{\mathfrak O}}
\def\sP{{\mathcal P}}
\def\sQ{{\mathcal Q}}
\def\sS{{\mathcal S}}
\def\sT{{\mathcal T}}
\def\sU{{\mathcal U}}
\def\sX{{\mathcal X}}
\def\sY{{\mathcal Y}}
\def\sZ{{\mathcal Z}}
\def\rmL{{\mathrm L}}
\newcommand{\vtg}{{\!\vartriangle\!}}
\newcommand{\Hall}{{{\mathfrak H}_\vtg(n)}}
\newcommand{\HallL}{{\tt H}}
\newcommand{\bfHall}{{\boldsymbol{\mathfrak H}_\vtg(n)}}
\newcommand{\Hallpi}{\bfHall^{\geq 0}}
\newcommand{\Hallmi}{\bfHall^{\leq 0}}
\newcommand{\dHallr}{{\boldsymbol{\mathfrak D}_\vtg}(n)}
\newcommand{\dHallrr}{{\boldsymbol{\mathfrak D}_\vtg}(r)}
\newcommand{\comp}{{{\mathfrak C}_\vtg(n)}}
\newcommand{\bfcomp}{{\boldsymbol{\mathfrak C}_\vtg(n)}}
\def\field{{\mathbb F}}
\def\bffke{\boldsymbol{\frak e}}
\def\bffkf{\boldsymbol{\frak f}}
\def\bffkk{\boldsymbol{\frak k}}
\def\deg{{\rm dg}}
\def\scrX{{\scr X}}
\def\scrY{{\scr Y}}
\def\rep{{\rm {\bf Rep}}}
\def\ggp#1#2{\left[\kern-3.2pt\left[{#1\atop #2}\right]\kern-3.2pt\right]}
\newcommand{\dleb}{\left[\!\!\left[}
\newcommand{\leb}{\left[}
\newcommand{\drib}{\right]\!\!\right]}
\newcommand{\rib}{\right]}
\newcommand{\nmod}{\!\!\!\!\!\mod\!}
\newcommand{\mnmod}{\!\!\!\mod\!}
\newcommand{\bfdim}{{\mathbf{dim\,}}}
\newcommand{\Thnr}{\Th_r(n)}
\newcommand{\tA}{{}^t\!A}
\newcommand{\tB}{{}^t\!B}
\newcommand{\tri}{\triangle(n)}
\newcommand{\afsl}{\widehat{\frak{sl}}_n}
\newcommand{\afgl}{\widehat{\frak{gl}}_n}
\newcommand{\afg}{\mathfrak n}
\newcommand{\afFn}{{\mathscr F_\vtg}}
\newcommand{\afBr}{{\mathscr B_\vtg}}
\newcommand{\afE}{E^\vartriangle}
\newcommand{\afbfU}{\bfU_\vtg(n)}
\newcommand{\cycn}{{{n}}}
\newcommand{\afSr}{{\mathcal S}_{\vtg}(\cycn,r)}
\newcommand{\afSrp}{{\mathcal S}_{\vtg}(\cycn,r)^+}
\newcommand{\afSrm}{{\mathcal S}_{\vtg}(\cycn,r)^-}
\newcommand{\afSrz}{{\mathcal S}_{\vtg}(\cycn,r)^0}
\newcommand{\bfSr}{{\boldsymbol{\mathcal S}}(n,r)}
\newcommand{\afbsK}{{\boldsymbol{\mathcal K}}_\vtg(n)}
\newcommand{\afhbsK}{\widehat{{\boldsymbol{\mathcal K}}}_\vtg(n)}
\newcommand{\afhsKq}{\widehat{{\mathcal K}}_\vtg(n)_\mbq}
\newcommand{\afsK}{{{\mathcal K}_\vtg(n)}}
\newcommand{\afbfSr}{{\boldsymbol{\mathcal S}}_\vtg(\cycn,r)}
\newcommand{\afbfS}{{\boldsymbol{\mathcal S}}_\vtg}
\newcommand{\afal}{{\boldsymbol\alpha}^\vartriangle}
\newcommand{\afbt}{{\boldsymbol\beta}^\vartriangle}
\newcommand{\afbse}{\boldsymbol e^\vartriangle}
\newcommand{\afmbnn}{\mathbb N_\vtg^{\cycn}}
\newcommand{\afmbzn}{\mathbb Z_\vtg^{\cycn}}
\newcommand{\afLa}{\Lambda_\vartriangle}
\newcommand{\afLanr}{\Lambda_\vtg(\cycn,r)}\newcommand{\afLarr}{\Lambda_\vtg(r,r)}
\newcommand{\afThn}{\Theta_\vtg(\cycn)}
\newcommand{\afThnpm}{\Theta_\vtg^\pm(\cycn)}
\newcommand{\afThnp}{\Theta_\vtg^+(\cycn)}
\newcommand{\afThnm}{\Theta_\vtg^-(\cycn)}
\newcommand{\afThnr}{\Theta_\vtg(\cycn,r)}
\newcommand{\afTh}{\Theta_\vartriangle}
\newcommand{\afMnq}{M_{\vtg,\cycn}(\mathbb Q)}
\newcommand{\afMnz}{M_{\vtg,\cycn}(\mathbb Z)}
\newcommand{\afMnc}{M_{\vtg,\cycn}(\mathbb C)}
\newcommand{\afInr}{I_\vtg(n,r)}
\newcommand{\ttp}{\mathtt{p}}
\newcommand{\ttk}{\mathtt{k}}
\newcommand{\tth}{\mathtt{h}}
\def\leq{\leqslant}\def\geq{\geqslant}
\def\le{\leqslant}\def\ge{\geqslant}
\newcommand{\Th}{\Theta}
 \newcommand{\dt}{\delta}
 \newcommand{\Dt}{\Delta}
 \newcommand{\lm}{\longmapsto}
 \newcommand{\map}{\mapsto}
  \newcommand{\Og}{\Omega}
  \def\bdOg{{\boldsymbol \Og}}
 \newcommand{\og}{\omega}
  \newcommand{\vi}{\varphi}
 \newcommand{\up}{v}
 \newcommand{\nup}{v}
 \newcommand{\ep}{\varepsilon}
 \newcommand{\al}{\alpha}
 \newcommand{\bt}{\beta}
 \newcommand{\h}{\widehat}
 \newcommand{\ti}{\widetilde}
\newcommand{\afzrp}{\zeta_r^+}
\newcommand{\afzrm}{\zeta_r^-}
\newcommand{\sg}{\sigma}
\def\th{\theta}
\newcommand{\p}{\prec}
\newcommand{\pr}{\preccurlyeq}
\newcommand{\bop}{\bigoplus}
\newcommand{\op}{\oplus}
\newcommand{\ot}{\otimes}
\newcommand{\bfe}{\mathbf{e}}
\newcommand{\bfl}{\mathbf{0}}
\newcommand{\ol}{\overline}
\newcommand{\lra}{\longrightarrow}
\newcommand{\ra}{\rightarrow}
 \newcommand{\la}{{\lambda}}
 \newcommand{\tila}{{\widehat{\lambda}}}
 \newcommand{\timu}{{\widehat{\mu}}}
 \newcommand{\La}{\Lambda}
 \newcommand{\mbn}{\mathbb N}
 \newcommand{\mbq}{\mathbb Q}
 \newcommand{\mbc}{\mathbb C}
 \newcommand{\mbz}{\mathbb Z}
 \newcommand{\bfi}{{\mathbf{i}}}
  \newcommand{\bfd}{{\mathbf{d}}}
 \newcommand{\bfj}{{\mathbf{j}}}
 \newcommand{\bfs}{{\mathbf{s}}}
\newcommand{\bft}{{\mathbf{t}}}
\newcommand{\bfa}{{\mathbf{a}}}
\newcommand{\bfb}{{\mathbf{b}}}
\newcommand{\bfm}{{\mathbf{m}}}
 \newcommand{\bfU}{{\mathbf{U}}}
  \newcommand{\bfL}{{\mathbf{L}}}
\newcommand{\ga}{{\gamma}}
\newcommand{\Aut}{\operatorname{Aut}}
\newcommand{\End}{\operatorname{End}}
\newcommand{\Hom}{\operatorname{Hom}}
\newcommand{\Ext}{\operatorname{Ext}}
\newcommand{\spann}{\operatorname{span}}
\newcommand{\diag}{\operatorname{diag}}
\def\rad{\mbox{\rm rad}\,}
\newcommand{\afUnrC}{U_{\vtg}(n,r)_\mbc}
\newcommand{\afUnC}{U_{\vtg}(n)_\mbc}
\newcommand{\Lcp}{\bar L}
\newcommand{\Mcp}{\bar M}
\newcommand{\Icp}{\bar I}
\def\ro{\text{\rm ro}}
\def\co{\text{\rm co}}
\def\wh{\widehat}
\def\smat#1{{\begin{smallmatrix}#1\end{smallmatrix}}}
\def\fD{{\mathfrak D}}
\newcommand{\ul}{\underline}
\newcommand{\bfsg}{{\boldsymbol\sigma}}
\def\sSq{{\mbc_G(\scrY\times\scrY)}}
\def\sHq{{\mbc_G(\scrX\times\scrX)}}
\def\sTq{{\mbc_G(\scrY\times\scrX)}}
\def\scF{{{\mathscr F}}}
\newcommand{\afThrr}{\Theta_\vtg(r,r)}
\def\afsygr{{\fS_{\vtg,r}}}
\newcommand{\afJnr}{I_\vtg(n,r)_0}
\def\rd{{\text{\rm rd}}}
\def\ttx{{\tt x}}
\def\ttg{{\tt g}}
\def\ttk{{\tt k}}
\newcommand{\bsA}{{\boldsymbol{\mathfrak A}}}
\def\hmod{{\text-}{\mathsf{mod}}}
\def\hMod{{\text-}{\mathsf{Mod}}}
\def\id{{\operatorname{id}}}
\newcommand{\afSrC}{{\mathcal S}_{\vtg}(\cycn,r)_{\mathbb{C}}}
\newcommand{\afUglC}{\text{\rm U}_{\mathbb C}(\widehat{\frak{gl}}_n)}
\newcommand{\DC}{{\fD}_{\vtg,\mathbb C}}
\newcommand{\zrC}{\xi_{r,\mathbb C}}
\newcommand{\bfP}{\mathbf{P}}
\newcommand{\bfQ}{\mathbf{Q}}
\def\Thnr{{\Theta_{\!\vtg\!}(\!\begin{smallmatrix} n\\ r\end{smallmatrix}\!)}}
\def\Thrr{{\Theta_{\!\vtg\!}(\!\begin{smallmatrix} r\\ r\end{smallmatrix}\!)}}
\def\Thnrr{{\Theta_{\!\vtg\!}(\!\begin{smallmatrix} n\\ r\end{smallmatrix},r\!)_\og}}
\def\bfone{{\mathfrak l}}
\begin{document}
\frontmatter

\title{A Double Hall Algebra Approach to Affine Quantum Schur--Weyl Theory}

\author{Bangming Deng}
\address{School of Mathematical Sciences, Beijing Normal University,
Beijing 100875,  China.}
\email{dengbm@bnu.edu.cn}
\author{Jie Du}
\address{School of Mathematics, University of New South Wales,
Sydney 2052, Australia.} \email{j.du@unsw.edu.au}
\author{Qiang Fu$^\dagger$}

\address{Department of Mathematics, Tongji University, Shanghai, 200092, China.}
\email{q.fu@hotmail.com}
\address{\rm$^\dagger$Corresponding author.}

\subjclass[2000]{Primary 17B37, 20G05, 20C08;\\Secondary 16G20,
20C33} \keywords{affine Hecke algebra, affine quantum Schur algebra,
cyclic quiver, Drinfeld double, loop algebra, quantum group, 
Schur--Weyl duality, Ringel--Hall algebra,
simple module}

\begin{abstract} We investigate the structure of the double Ringel--Hall algebras
associated with cyclic quivers and its connections with quantum loop
algebras of $\mathfrak{gl}_n$, affine quantum Schur algebras and
affine Hecke algebras. This includes their Drinfeld--Jimbo type
presentation, affine quantum Schur--Weyl reciprocity,
representations of affine quantum Schur algebras, and connections
with various existing works by Lusztig, Varagnolo--Vasserot,
Schiffmann, Hubery, Chari--Pressley, Frenkel--Mukhin, etc. We will
also discuss conjectures on a realization of
Beilinson--Lusztig--MacPherson type and Lusztig type integral forms
for double Ringel--Hall algebras.
\end{abstract}

\date{\today}

\maketitle

\setcounter{page}{6}
\tableofcontents

\mainmatter

 \sloppy \maketitle
\chapter*{Introduction}

Quantum Schur--Weyl theory refers to a three-level duality relation.
At Level I, it investigates a certain double centralizer property,
{\it the quantum Schur--Weyl reciprocity}, associated with some
bimodules of quantum $\frak{gl}_n$ and the Hecke algebra (of type
$A$)---the tensor spaces of the natural representation of quantum
$\frak{gl}_n$ (see \cite{Ji}, \cite{Du92}, \cite{DPS}). This is the
quantum version of the well-known Schur--Weyl reciprocity which was
beautifully used in H. Weyl's influential book \cite{We}. The key
ingredient of the reciprocity is a class of important finite
dimensional endomorphism algebras, {\it the quantum Schur algebras},
whose classical version was introduced by I. Schur over a hundred
years ago (see \cite{Sc1}, \cite{Sc2}). At Level II, it establishes
a certain {\it Morita equivalence} between quantum Schur algebras
and Hecke algebras. Thus, quantum Schur algebras are used to bridge
representations of quantum $\frak{gl}_n$ and Hecke algebras. More
precisely, they link polynomial representations of quantum
$\frak{gl}_n$ with representations of Hecke algebras via the Morita
equivalence. The third level of this duality relation is motivated
by two simple questions associated with the structure of
(associative) algebras. If an algebra is defined by generators and
relations, the {\it realization problem} is to reconstruct the
algebra as a vector space with hopefully explicit multiplication
formulas on elements of a basis; while, if an algebra is defined in
term of a vector space such as an endomorphism algebra, it is
natural to seek their generators and defining relations.

As one of the important problems in quantum group theory, the
realization problem is to construct a quantum group in terms of a
vector space and certain multiplication rules on basis elements.
This problem is crucial to understand their structure and
representations (see \cite[p.~xiii]{Kac2} for a similar problem for
Kac--Moody Lie algebras). Though the Ringel--Hall algebra
realization of the $\pm$-part of quantum enveloping algebras
associated with symmetrizable Cartan matrices is an important
breakthrough in the early nineties of the last century, especially
for the introduction of the geometric approach to the theory, the
same problem for the entire quantum groups is far from completion.
However, Beilinson--Lusztig--MacPherson (BLM) \cite{BLM} solved the
problem for quantum $\frak{gl}_n$ by exploring further properties
coming out from the quantum Schur--Weyl reciprocity. On the other
hand, as endomorphism algebras and as homomorphic images of quantum
$\frak{gl}_n$, it is natural to look for presentations for quantum
Schur algebras via the presentation of quantum $\frak{gl}_n$. This
problem was first considered in \cite{DG} (see also \cite{DP03}).
Thus, as a particular feature in the type $A$ theory, realizing
quantum $\frak{gl}_n$ and  presenting quantum Schur algebras form
the Level III of this duality relation. For a complete account of
the quantum Schur--Weyl theory and further references, see Parts 3
and 5 of \cite{DDPW}.

There are several developments in the establishment of an affine
analogue of the quantum Schur--Weyl theory. Soon after BLM's work,
Ginzburg and Vasserot \cite{GV, V} used a geometric and
$K$-theoretic approach to investigate {\it affine quantum Schur algebras}
\footnote{Perhaps, it should be called quantum affine Schur algebras.
Since our purpose is to establish an affine analogue of the quantum Schur--Weyl theory, this terminology seems more appropriate to reflect this.}
as homomorphic images of {\it quantum loop algebra}
 $\bfU(\afgl)$ of $\frak{gl}_n$ in the sense of Drinfeld's new
presentation \cite{Dr88}, called quantum affine $\frak{gl}_n$ (at
level 0) in this paper. This establishes at Level I the first
centralizer property for the affine analogue of the quantum
Schur--Weyl reciprocity. Six years later, investigations around
affine quantum Schur algebras focused on their different definitions
and hence different applications. For example, Lusztig \cite{Lu99}
generalized the fundamental multiplication formulas \cite[3.4]{BLM}
for quantum Schur algebras to the affine case and showed that
(extended) quantum affine $\frak{sl}_n$ does not map onto affine
quantum Schur algebras; Varagnolo--Vasserot \cite{VV99} investigated
Ringel--Hall algebra actions on tensor spaces and described the
geometrically-defined affine quantum Schur algebras in terms of the
endomorphism algebras of tensor spaces. Moreover, they proved that
the tensor space definition coincides with Green's definition
\cite{Gr99} via $q$-permutation modules. Some progress on the second
centralizer property has also been made recently by Pouchin
\cite{GP}. The approaches used in these works  are mainly geometric.
However, like the non-affine case, there would be more favorable
algebraic and combinatorial approaches.

At Level II, representations at non-roots-of-unity of quantum affine
$\frak{sl}_n$ and $\frak{gl}_n$ over the complex number field
$\mbc$, including classifications of finite dimensional simple
modules, have been thoroughly investigated by Chari--Pressley
\cite{CP91,CPbk,CP95}, and Frenkel--Mukhin \cite{FM} in terms of
Drinfeld polynomials. Moreover, an equivalence between the module
category of the Hecke algebra $\afHrC$ and a certain full
subcategory of quantum affine $\frak{sl}_n$ (resp., $\frak{gl}_n$)
has also been established algebraically by Chari--Pressley \cite{CP}
(resp., geometrically by Ginzburg--Reshetikhin--Vasserot \cite{GRV})
under the condition $n>r$ (resp., $n\geq r$). Note that the approach
in \cite{GRV} uses intersection cohomology complexes. It would be
interesting to know how affine quantum Schur algebras would play a
role in these works.

Much less progress has been made at Level III. When $n>r$,
Doty-Green \cite{DG} and McGerty \cite{Mc07} have found a
presentation for affine quantum Schur algebras, while the last two
authors of the paper have investigated the realization problem in
\cite{DF09}, where they first developed an approach without using
the stabilization property, a key property used in the BLM approach,
and presented an ideal candidate for the realization of quantum
affine $\frak{gl}_n$.

This paper attempts to establish the affine quantum Schur--Weyl
theory as a whole and is an outcome of {\it algebraically}
understanding the works mentioned above.

First, building on Schiffmann \cite{Sch} and Hubery \cite{Hub2}, our
starting point is to interpret Drinfeld's quantum affine
$\frak{gl}_n$ in the sense of \cite{Dr88} as the double Ringel--Hall
algebra $\dHallr$ over $\mbq(\up)$ of the cyclic quiver $\tri$ with
$n$ vertices, and present the latter in terms of Chevalley type
generators. In this way, we easily obtain an action on the tensor
space which upon restriction coincides with the Ringel--Hall algebra
action defined geometrically by Varagnolo--Vasserot \cite{VV99} and
commutes with the affine Hecke algebra action.

Second, by a thorough investigation of a BLM type basis for affine
quantum Schur algebras, we introduce certain triangular relations
for the corresponding structure constants and, hence, a triangular
decomposition for affine quantum Schur algebras. With this
decomposition, we establish explicit algebra epimorphisms
$\xi_r=\xi_{r,\mbq(\up)}$ from the double Ringel--Hall algebra
$\dHallr$ to affine quantum Schur algebras
$\afbfSr:=\afSr_{\mbq(\up)}$ for all $r\geq0$. This algebraic
construction has several nice applications, especially at Levels II
and III. For example, the homomorphic image of commutator formulas
for semisimple generators gives rise to a beautiful polynomial
identity whose combinatorial proof remains mysterious.

Like the quantum Schur algebra case, we will establish for $n\geq r$
a Morita equivalence between affine quantum Schur algebras
$\afSr_\field$ and affine Hecke algebras $\afHr_\field$ of type $A$
over a field $\field$ with a non-root-of-unity parameter. As a
by-product, we prove that every simple $\afSr_\field$-module is
finite dimensional. Thus, applying the classification of simple
$\afHrC$-modules by Zelevinsky \cite{Zelevinsky} and Rogawski
\cite{Rogawski} yields a classification of simple $\afSrC$-modules.
Hence, inflation via $\zrC$ gives many finite dimensional simple
$\afUglC$-modules. We will also use the epimorphisms $\zrC$ together
with the action on tensor spaces and a result of Chari--Pressley to
prove that finite dimensional simple polynomial representations of
$\afUglC$ are all inflations of simple $\afSrC$-modules. In this
way, we can see the bridging role played by affine quantum Schur
algebras between representations of quantum affine $\frak{gl}_n$ and
those of affine Hecke algebras. Moreover, we obtain a classification
of simple $\afSrC$-modules in terms of Drinfeld polynomials and,
when $n>r$, we identify them with those arising from simple
$\afHrC$-modules.

Our findings also show that, if we regard the category $\afSrC\hMod$
of $\afSrC$-modules as a full subcategory of $\afUslC$-modules, this
category is quite different from the category $\mathcal
C^{hi}\cap\mathcal C'$ considered in \cite[\S6.2]{Lu93}. For
example, the latter is completely reducible and simple objects are
usually infinite dimensional, while $\afSrC\hMod$ is unlikely to be
completely reducible and all simple objects are finite dimensional.
As observed in \cite[Rem.~9.4(2)]{DF} for quantum $\frak{gl}_\infty$
and infinite quantum Schur algebras, this is another kind of
phenomenon of infinite type in contrast with the finite type case.


The discussion of the realization and presentation problems is also
based on the algebra epimorphisms $\xi_r$ and relies on the use of
semisimple generators and indecomposable generators for $\dHallr$
which are crucial to understand the integral structure and
multiplication formulas. We first use the new presentation for
$\dHallr$ to give a decomposition for $\afbfSr=\bfU_\vtg(n,r)\mathbf
Z_\vtg(n,r)$ into a product of two subalgebras, where $\mathbf
Z_\vtg(n,r)$ is a central subalgebra and $\bfU_\vtg(n,r)$ is the
homomorphic image of $\bfU_\vtg(n)$, the extended quantum affine
$\frak{sl}_n$. By a close look at this structure, we manage to get a
presentation for ${\boldsymbol\sS}_\vtg(r,r)$ for all $r\geq 1$ and
acknowledge that the presentation problem is very complicated in the
$n<r$ case. On the other hand, we formulate a realization conjecture
suggested by the work \cite{DF09} and prove the conjecture in the
classical ($\up=1$) case.

We remark that, unlike the geometric approach in which the ground
ring must be a field or mostly the complex number field $\mbc$, the
algebraic, or rather, the representation-theoretic approach we used
in the paper works largely over a ring or mostly the integral
Laurent polynomial ring $\mbz[\up,\up^{-1}]$.

We organize the paper as follows.

In the first preliminary chapter,
we introduce in \S1.4 three different types of generators and their
associated monomial bases for the Ringel--Hall algebras of cyclic
quivers, and display in \S1.5 the Green--Xiao Hopf structure on the extended
version of these algebras.

Chapter 2 introduces a new presentation using Chevalley generators
for Drinfeld's quantum loop algebra $\bfU(\afgl)$ of
$\mathfrak{gl}_n$. This is achieved by
 constructing the presentation for the double Ringel--Hall
algebra $\dHallr$ associated with cyclic quivers  (Theorem
\ref{presentation dHallAlg}), based on works
 of Schiffmann and Hubery, and by lifting
Beck's algebra monomorphism from the quantum $\afsl$ with a
Drinfeld--Jimbo presentation into $\bfU(\afgl)$ to obtain an
isomorphism between $\dHallr$ and $\bfU(\afgl)$  (Theorem \ref{iso
afgln dHallr}).

Chapter 3 investigates the structure of affine quantum Schur
algebras. We first recall the geometric definition by
Ginzburg--Vasserot and Lusztig, the Hecke algebra definition by R.
Green and the tensor space definition by Varagnolo--Vasserot. Using
the Chevalley generators, we easily obtain an action on the
$\mbq(\up)$-space $\bfOg$ with a basis indexed by $\mathbb Z$, and
hence, an action on $\bfOg^{\ot r}$ (\S3.5). We prove that this
action commutes with the affine Hecke algebra action defined in
\cite{VV99}. Moreover, we show that the restriction of the action to
the $\pm$-part of $\dHallr$ (i.e., to the corresponding Ringel--Hall
algebra) coincides with the Ringel--Hall algebra action
geometrically defined by Varagnolo--Vasserot (Theorem \ref{xirl}).
As an application of this coincidence, the commutator formula
associated with semisimple generators, arising from the skew-Hopf
pairing, gives rise to certain polynomial identity associated with a
pair of elements $\la,\mu\in\mbn_\vtg^n$ (Corollary
\ref{polyidentity}). The main result of the chapter is an elementary
proof of the surjective homomorphism $\xi_r$ from the double
Ringel--Hall algebra $\dHallr$, i.e., the quantum loop algebra
$\bfU(\afgl)$, onto the affine quantum Schur algebra $\afbfSr$
(Theorem \ref{surjective-dHall-aff}). The approach we used is the
establishment of a triangular decomposition of $\afbfSr$ (Theorem
\ref{PBW basis of affine q-Schur algebras}) through an analysis of
the BLM type bases.

In Chapter 4, we discuss the representation theory of affine quantum
Schur algebras over $\mbc$ and its connection to polynomial
representations of quantum affine $\frak{gl}_n$ and representations
of affine Hecke algebras. We first establish a category equivalence
between the module categories $\afSrC\hmod$ and $\afHrC\hmod$ for
$n\geq r$ (Theorem \ref{category equiv}). As an application, we will
reinterpret Chari--Pressley's category equivalence
(\cite[Th.~4.2]{CP}) between (level $r$) representations of
$\afUslC$ and those of affine Hecke algebras $\afHrC$, where $n>r$,
in terms of representations of $\afSrC$ (Proposition
\ref{Chari-Presslry}). We then develop two approaches to the
classification of simple $\afSrC$-modules. In the so-called upwards
approach, we use the classification of simple $\afHrC$-modules of
Zelevinsky and Rogawski to classify simple $\afSrC$-modules (Theorem
\ref{weakly category equiv}), while in the downwards approach, we
determine the classification of simple $\afSrC$-modules (Theorem
\ref{representation}) in terms of simple polynomial representations
of $\afUglC$. When $n>r$, we prove an identification theorem
(Theorem \ref{n>r representation}) for the two classifications.
Finally, in \S4.6, a classification of simple $\afUnrC$-modules is
also completed and its connection to finite dimensional simple
$\afUslC$-modules and finite dimensional simple (polynomial)
$\afUglC$-modules are also discussed.

We move on to look at the presentation and realization problems in
Chapter 5. Since $\dHallr\cong \bfU_\vtg(n)\otimes {\mathbf
Z}_\vtg(n)$, it follows that $\afbfSr=\bfU_\vtg(n,r){\mathbf
Z}_\vtg(n,r)$, where $\bfU_\vtg(n,r)$ and ${\mathbf Z}_\vtg(n,r)$
are homomorphic images of $\bfU_\vtg(n)$ and ${\mathbf Z}_\vtg(n)$,
respectively. Thus, ${\mathbf Z}_\vtg(n,r)\subseteq\bfU_\vtg(n,r)$
if and only if $n>r$. A presentation for $\bfU_\vtg(n,r)$ is given
in \cite{Mc07} (see also \cite{DGr} for the case $n>r$). Building on
McGerty's presentation, we first give a Drinfeld--Jimbo type
presentation for the subalgebra $\bfU_\vtg(n,r)$ (Theorem \ref{2nd presentation for im(afzr)}). We
then describe a presentation for the central subalgebra ${\mathbf
Z}_\vtg(n,r)$ as a Laurent polynomial ring in one indeterminate over
a polynomial ring in $r-1$ indeterminates over $\mbq(\up)$.
 We manage to describe a presentation for $\boldsymbol\sS_\vtg(r,r)$ for all $r\geq1$
 (Theorem \ref{n,n case}) by adding an extra generator (and its inverse) together with an
additional set of relations on top of the relations given in Theorem
\ref{2nd presentation for im(afzr)}. What we will see from this case
is that the presentation for $\afbfSr$ with $r>n$ can be very
complicated.

We discuss the realization problem from \S5.4 onwards. We first
describe the modified BLM approach developed in \cite{DF09}. With
some supporting evidence, we then formulate the realization
conjecture (Conjecture \ref{realization conjecture}) as suggested in
\cite[5.5(2)]{DF09}, and state its classical ($\up=1$) version.  We
end the chapter with a closer look at Lusztig's transfer maps
\cite{L00} by displaying some explicit formulas for their action on
the semisimple generators for $\afbfSr$ (Corollary \ref{5.5.2}). These formulas show also
that the homomorphism from $\bfU(\afsl)$ to $\underset\longleftarrow\lim\boldsymbol\sS_\vtg(n,n+m)$
induced by the transfer maps cannot either be extended to the double Ringel--Hall algebra $'\dHallr$.
(Lusztig already pointed out that it cannot be extended to $\bfU_\vtg(n)$.)
 This somewhat justifies the realization conjecture in
which a direct product is used instead of the inverse limit.

In the last Chapter 6, we prove the realization conjecture for the
classical ($\up=1$) case. The key step to the proof is the
establishment of more multiplication formulas (Proposition
\ref{MFforSBE}) between homogeneous indecomposable generators and an
arbitrary BLM type basis element. As a by-product, we display a
basis for the universal enveloping algebra of the loop algebra of
$\mathfrak{gl}_n$ (Theorem \ref{classical BLM basis}) together with
explicit multiplication formulas between generators and arbitrary
basis elements (Corollary \ref{classical MFs}).

There are two Appendixes \S\S3.10,6.4 which collect a number of
lengthy calculations used in some proofs.

{\bf Conjectures and problems.} There are quite few conjectures and
problems throughout the paper. The conjectures are mostly natural
generalizations to the affine case, for example, the realization
conjecture \ref{realization conjecture} and the conjectures in \S3.8
on an integral form for double Ringel--Hall algebras and the second
centralizer property in the affine quantum Schur--Weyl reciprocity.
Some problems are designed to seek further solutions to certain
questions such as ``quantum Serre relations'' for semisimple
generators (Problem \ref{semisimple presentation}), Affine Branching
Rules (Problem \ref{Prob-Branching-Rules}), and further
identification of simple modules from different classifications
(Problem \ref{Prob-Identif-Thm}). There are also problems for
seeking different proofs. Problems \ref{Problem for MF} and
\ref{Prob-realization} form a key step towards the proof of the
realization conjecture.


{\bf Notational scheme.} For most of the notations throughout the
paper, if it involves a subscript $_\vtg$ or a superscript $^\vtg$,
it indicates that the same notation without $\vtg$ has been used in
the non-affine case, say, in \cite{BLM}, \cite{Gr80}, etc. Here the
triangle $\vtg$ depicts the cyclic Dynkin diagram of affine type
$A$.

For a ground ring $\sZ$ and a $\sZ$-module (or a $\sZ$-algebra)
$\mathcal A$, we often use the notation $\mathcal A_\field:=\mathcal
A\otimes \field$ to  represent the object obtained by {\it base
change} to a field $\field$, which itself is a $\sZ$-module. In
particular, if $\sZ=\mbz[\up,\up^{-1}]$, then we write ${\boldsymbol
\sA}$ for $\sA_{\mbq(\up)}$.


{\bf Acknowledgements.} The main results of the paper have been
presented by the authors at the following conferences and workshops.  We would like to thank the organizers for the opportunities of presenting our work.

 \noindent (Presented by B. Deng)
\begin{itemize}
\item the International Conference on Cohomology
and Representation Theories held at the Center of Mathematical
Sciences, Hangzhou, July 2009;
\item the Conference
on Perspectives in Representation Theory, Cologne, September 2009,
\end{itemize}
\noindent (Presented by J. Du)
\begin{itemize}
\item the International Workshop on
Quantized Algebras and Physics held at the Chern Institute of
Mathematics, July 2009;
\item the Workshop on Representation Theory
and Combinatorics held at the Morningside Center of Mathematics,
August 2009;
\item the International Workshop on Combinatorial and
Geometric Approach to Representation Theory, Seoul National
University, September 2009;
\item the 5th International Conference on Representation Theory, Xian, August 2010;
 \item the 2010 ICM Satellite Conference, Bangalore, August 2010,
\end{itemize}

\noindent (Presented by Q. Fu)
\begin{itemize}
\item the Eleventh National
Conference on Lie algebras, Guizhou Normal University, July 2009;
\item Shanghai Workshop on Representation Theory and Complex Analysis, a joint conference of ECNU, Tongji and Nagoya Universities, Shanghai, November 2009;
\item the 12th National Algebra Conference, Lanzhou, June 2010.
\end{itemize}

The research was partially supported by the Australian Research Council (DP0665124, 2006--2008), the 111 Programme of China, the
Natural Science Foundation of China and the Doctoral Program of
Higher Education. The first three and fifth chapters were written
while Deng and Fu were visiting the University of New South Wales at
the various times. The hospitality and support of UNSW are
gratefully acknowledged.

The second author would like to thank Alexander Kleshchev and Arun
Ram for helpful comments on  the Affine Branching Rule
\eqref{Branching}, and V. Chari for several discussions and
explanations on the paper \cite{CP} and some related topics. He
would also like to thank East China Normal University, Universities
of Mainz, Virginia and Auckland for their hospitality
during his sabbatical leave in 2009.

\vfill
\noindent
Sydney\\
22 October 2010

\chapter{Preliminaries}

We start with the loop algebra of $\mathfrak{gl}_n(\mbc)$ and its
interpretation in terms of matrix Lie algebra. We use the subalgebra
of integer matrices of the latter to introduce several important
index sets which will be used throughout the paper. Ringel--Hall
algebras $\bfHall$ associated with cyclic quivers $\tri$ and their
geometric construction are introduced  in \S1.2. In \S1.3, we discuss the composition
subalgebra $\bfcomp$ of $\bfHall$ and relate it to the quantum loop algebra
$\bfU(\afsl)$. We then describe in
\S1.4 three types of generators for $\bfHall$, which consist of
all simple modules together with, respectively, the Schiffmann--Hubery
central elements, homogeneous semisimple modules, and homogeneous
indecomposable modules, and their associated monomial bases (Corollaries \ref{monomial basis with central elements}
and \ref{monomial bases with semisimple/indecom}). These
generating sets will play different roles in the sequel. Finally,
extended Ringel--Hall algebras and their Hopf structure are
discussed in \S1.5.

\section{The loop algebra $\afgl$ and some notations}

For a positive integer $n$, let $\frak{gl}_n(\mbc)$ be the complex general linear Lie algebra, and let
$$\afgl(\mbc):={\frak{gl}_n}(\mbc)\ot\mbc[t,t^{-1}]$$
be the loop algebra associated to $\frak{gl}_n(\mbc)$;\index{loop
algebra of $\frak{gl}_n$} see \cite{Kac2}. Thus, $\afgl(\mbc)$ is
spanned by $E_{i,j}\ot t^m$ for all $1\le i,j\le n$ and $m\in\mbz$,
where $E_{i,j}$ is the matrix $(\delta_{k,i}\delta_{j,l})_{1\le
k,l\le n}$. The (Lie) multiplication is the bracket product
associated with the multiplication
$$(E_{i,j}\ot t^m)(E_{k,l}\ot t^{m'})=\delta_{j,k}E_{i,l}\ot t^{m+m'}.$$

We may interpret the Lie algebra $\afgl(\mbc)$ as a matrix Lie algebra.
Let
 $\afMnc$  be the set of all $\mbz\times\mbz$ complex matrices
$A=(a_{i,j})_{i,j\in\mbz}$ with $a_{i,j}\in\mbc$ such that
\begin{itemize}
\item[(a)]$a_{i,j}=a_{i+n,j+n}$ for $i,j\in\mbz$; \item[(b)] for
every $i\in\mbz$, the set $\{j\in\mbz\mid a_{i,j}\not=0\}$ is finite.
\end{itemize}
Clearly, conditions (a) and (b) imply that there are only finitely
many nonzero entries in each column of $A$. For $A,B\in\afMnc$, let
$[A,B]=AB-BA$. Then ($\afMnc,[\ ,\ ]$) becomes a Lie algebra over
$\mbc$.

Denote by $M_{n,\bullet}(\mbc)$ the set of
$n\times\mbz$ matrices $A=(a_{i,j})$ over $\mbc$ satisfying (b).
Then there is a bijection
\begin{equation}\label{flat1}
\flat_1:\afMnc\lra M_{n,\bullet}(\mbc),\quad
(a_{i,j})_{i,j\in\mbz}\longmapsto (a_{i,j})_{1\leq i\leq
n,j\in\mbz}.
\end{equation}

For $i,j\in\mbz$, let $\afE_{i,j}\in\afMnc$ be the matrix
$(e^{i,j}_{k,l})_{k,l\in\mbz}$ defined by
\begin{equation*}e_{k,l}^{i,j}=
\begin{cases}1&\text{if $k=i+sn,l=j+sn$ for some $s\in\mbz$,}\\
0&\text{otherwise}.\end{cases}
\end{equation*}
The set $\{\afE_{i,j}|1\leq i\leq n,j\in\mbz\}$ is a $\mbc$-basis of
$\afMnc$. Since
$\afE_{i,j+ln}\afE_{p,q+kn}=\delta_{j,p}\afE_{i,q+(l+k)n},$ for all
$i,j,p,q,l,k\in\mbz$ with $1\leq j,p\leq n$, it follows that the map
$$\afMnc\lra\afgl(\mbc),\,\,\,\afE_{i,j+ln}\longmapsto E_{i,j}\ot t^l, \,1\le i,j\le n,l\in\mbz $$
is a Lie algebra isomorphism. We will identify the loop algebra
$\afgl(\mbc)$ with $\afMnc$ in the sequel.

In Chapter 6, we will consider the loop algebra
$\afgl:=\afgl(\mbq)=\afMnq$ defined over $\mbq$ and its universal
enveloping algebra $\sU(\afgl)$ and triangular parts $\sU(\afgl)^+$,
$\sU(\afgl)^-$ and $\sU(\afgl)^0$. Here $\sU(\afgl)^+$ (resp.,
$\sU(\afgl)^-$, $\sU(\afgl)^0$) is the subalgebra of $\sU(\afgl)$
generated by $\afE_{i,j}$ for all $i<j$ (resp., $\afE_{i,j}$ for all
$i>j$, $\afE_{i,i}$ for all $i$). We will also relate these algebras in \S6.1
with the specializations at $\up=1$ of the Ringel--Hall algebra $\Hall$ and the double Ringel--Hall algebra $\fD_{\vtg}(n)$.

We now introduce some {\bf notations} which will be used throughout the paper.

Consider the subset $\afMnz$ of $\afMnc$ consisting of matrices with integer entries.
For each $A\in\afMnz$, let
$$\ro(A)=\bigl(\sum_{j\in\mbz}a_{i,j}\bigr)_{i\in\mbz},\qquad
\co(A)=\bigl(\sum_{i\in\mbz}a_{i,j}\bigr)_{j\in\mbz}.$$\index{$\ro(A)$,
row sum vector}\index{$\co(A)$, column sum vector} We obtain
functions
$$\ro,\co:\afMnz\lra\afmbzn,$$
where
$$\afmbzn:=\{(\la_i)_{i\in\mbz}\mid
\la_i\in\mbz,\,\la_i=\la_{i-n}\ \text{for}\ i\in\mbz\}.$$
 \index{$\afmbzn$, set of infinite sequences of integers with $n$ periodicity}
For $\la=(\la_i)_{i\in\mbz}\in\afmbzn$, $A\in\afMnz$ and
$i_0\in\mbz$, let
$$\sg(\la)=\sum_{i_0+1\leq i\leq i_0+n}\la_i,\qquad
\sg(A)=\sum_{i_0+1\leq i\leq i_0+n\atop
j\in\mbz}a_{i,j}=\sum_{i_0+1\leq j\leq i_0+n\atop
 i\in\mbz}a_{i,j}. \index{$\sg(\la)$, sum of $n$ consecutive components of $\la$} \index{$\sg(A)$, sum of $n$ consecutive rows of
 $A$} $$
 Clearly, both $\sg(\la)$ and $\sg(A)$ are defined and
independent of $i_0$. We sometimes identify $\afmbzn$ with $\mbz^n$
via the following bijection
\begin{equation}\label{flat2}
\flat_2:\afmbzn\lra\mbz^n,\quad \la\longmapsto
\flat_2(\la)=(\la_1,\ldots,\la_n).
\end{equation}
For example, we define a ``dot product'' on $\afmbzn$ by
$\la\centerdot\mu:=\flat_2(\la)\centerdot\flat_2(\mu)=\sum_{i=1}^n\la_i\mu_i$,
and define the order relation $\leq$ \index{$\leq$, order on $\afmbzn$ or $\mbz^n$} on $\afmbzn$ by setting
\begin{equation}\label{order on afmbzn}
\la\leq\mu \iff \flat_2(\la)\leq\flat_2(\mu)\iff\la_i\leq \mu_i\text{ for all $1\leq i\leq n$}.
\end{equation}
 Also, let $\boldsymbol e_i^\vtg\in\afmbzn$ be defined by
$\flat_2(\boldsymbol e_i^\vtg)=\boldsymbol
e_i=(0,\ldots,0,\underset{(i)}{1},0,\ldots,0)$.

Let
$$\aligned
\afThn:=&\{A=(a_{i,j})\in\afMnz\mid a_{i,j}\in\mbn
\}=M_{\vtg,n}(\mbn),\index{$\afThn$, set of $\mbz\times\mbz$-matrices over $\mbn$ with $n$ periodicity} \\
\afmbnn:=&\{(\la_i)_{i\in\mbz}\in \afmbzn\mid \la_i\ge0\},\\
\endaligned$$
and for $r\geq0$,
$$\aligned
\afThnr:=&\{A\in\afThn\mid\sg(A)=r\}\index{$\afThnr$, set of matrices $A\in\afThn$ with $\sigma(A)=r$}\\
\afLanr:=&\{\la\in\afmbnn\mid\sg(\la)=r\}.\\
\endaligned$$
\index{$\afLanr$, set of sequences $\la\in\afmbzn$ with
$\sg(\la)=r$}

The set $M_n(\mbz)$ can be naturally regarded as a subset of
$M_{n,\bullet}(\mbz)$ by sending $(a_{i,j})_{1\leq i,j\leq n}$ to
$(a_{i,j})_{1\leq i\leq n,j\in\mbz}$, where $a_{i,j}=0$ if
$j\in\mbz\backslash[1,n]$. Thus, (the inverse of) $\flat_1$ induces
an embedding
\begin{equation}\label{flat1'}
\flat_1':M_n(\mbz)\lra\afMnz.
\end{equation}
By removing the subscripts $\vtg$, we define similarly the subsets
$\Theta(n)$, $\Theta(n,r)$ of $M_n(\mbz)$ and subset $\La(n,r)$ of
$\mbn^n$, etc. Note that $\flat_2(\La_\vtg(n,r))=\La(n,r)$.
\index{$\La(n,r)$, set of compositions of $r$ into $n$ parts}

Let $\sZ=\mbz[\up,\up^{-1}]$,\index{$\sZ=\mbz[\up,\up^{-1}]$,
Laurent polynomial ring in indeterminate $\up$} where $\up$ is an
indeterminate, and let $\mbq(\up)$ be the fraction field of $\sZ$.
For integers $N,t$ with $t\geq 0$, let
\begin{equation}\label{Gauss poly1}
\leb{N\atop t}\rib=\prod\limits_{1\leq
i\leq t}\frac{\up^{N-i+1}-\up^{-(N-i+1)}}{\up^{i}-\up^{-i}}\in\sZ
\,\,\text{ and }\,\,\leb{N\atop 0}\rib=1.
\end{equation}
If we put $[m]=\frac{\up^m-\up^{-m}}{\up-\up^{-1}}=\leb{m\atop 1}\rib$ and
$[N]^!:=[1][2]\cdots[N]$, then $\leb{N\atop
t}\rib=\frac{[N]^!}{[t]^![N-t]^!}$ for all $1\leq
t\leq N$.

When counting occurs, we often use
$$\dleb{N\atop t}\drib:=\up^{t(N-t)}\leb{N\atop t}\rib$$
to denote the Gaussian polynomials in $\up^2$.

Also, for any $\mathbb Q(\up)$-algebra $\scr A$ and an invertible
element $X\in\scr A$, let
\begin{equation}\label{Gauss poly2}
\leb{X;a\atop t}\rib=\prod_{s=1}^{t}\frac{Xv^{a-s+1}-X^{-1}v^{-a+s-1}}{v^s-v^{-s}}\,\,\text{ and }\,\,\leb{X;a\atop 0}\rib=1.
\end{equation}
for all $a,t\in\mathbb Z$ with $t\geq 1$.


\section{Representations of cyclic quivers and Ringel--Hall algebras}

 Let $\tri$ ($n\geq 2$) be
the cyclic quiver\index{$\tri$, cyclic quiver}\index{cyclic quiver}
\begin{center}
\begin{pspicture}(-3,-.6)(3.6,1.6)
\psset{linewidth=0.5pt, arrowsize=3.5pt}
\psdot*(-3,0) \psdot*(-1.7,0) \psdot*(-.4,0) \psdot*(2.3,0)
\psdot*(3.6,0)\psdot*(.3,1.2) \uput[u](.3,1.2){$_{n}$}
\uput[d](-3,0){$_1$} \uput[d](-1.7,0){$_2$} \uput[d](-.4,0){$_3$}
\uput[d](2.3,0){$_{n-2}$} \uput[d](3.6,0){$_{n-1}$}
\psline{->}(-3,0)(-1.7,0) \psline{->}(-1.7,0)(-.4,0)
\psline(-.4,0)(0,0)
\psline[linestyle=dotted,linewidth=1pt](0,0)(1.4,0)\psline{->}(1.4,0)(2.3,0)
\psline{->}(2.3,0)(3.6,0) \psline{->}(3.6,0)(.3,1.2)
\psline{->}(.3,1.2)(-3,0)
\end{pspicture}
\end{center}
with vertex set $I=\mbz/n\mbz=\{1,2,\ldots,n\}$ and arrow set
$\{i\to i+1\mid i\in I\}$. Let $\field$ be a field. By
$\rep^0\!\!\tri=\rep_\field^0\tri$ we denote the category of
finite-dimensional {\it nilpotent representations}\index{nilpotent
representation} of $\tri$ over $\field$, i.e., representations
$V=(V_i,f_i)_{i\in I}$ of $\tri$ such that all $V_i$ are finite
dimensional and the composition $f_n\cdots f_2f_1:V_1\ra V_1$ is
nilpotent. The vector $\bfdim V=(\dim_\field V_i)\in\mbn I=\mbn^n$
is called the dimension vector of $V$. (We shall sometimes identify
$\mbn I$ with $\mbn^n_\vtg$ under \eqref{flat2}.) For each vertex
$i\in I$, there is a one-dimensional representation $S_i$ in
$\rep^0\!\!\tri$ satisfying $(S_i)_i=\field$ and $(S_i)_j=0$ for
$j\neq i$. It is known that the $S_i$ form a complete set of simple
objects in $\rep^0\!\!\tri$. Hence, each {\it semisimple
representation} \index{semisimple representation} in
$\rep^0\!\!\tri$ is given by $S_{\bfa}=\oplus_{i\in I}a_iS_i$, where
$\bfa=(a_1,\ldots,a_n)\in\mbn I$. A semisimple representation
$S_{\bf a}$ is called {\it sincere}\index{sincere} if $\bf a$ is
sincere, namely, all $a_i$ are positive. In particular, the vector
$$\dt:=(1,\ldots,1)\in\mbn^n$$ will be often used.

Moreover, up to isomorphism, all indecomposable representations in
$\rep^0\!\!\tri$ are given by $S_i[l]$ ($i\in I$ and $l\geq 1$) of
length $l$ with top $S_i$. Thus, the isoclasses of representations
in $\rep^0\!\!\tri$ are indexed by multisegments $\pi=\sum_{i\in
I,\,l\geq 1}\pi_{i,l}[i;l)$, where the representation $M(\pi)$
corresponding to $\pi$ is defined by
$$M(\pi)=M_\field(\pi)=\bop_{1\leq i\leq n,l\geq 1}\pi_{i,l}S_i[l].$$

Since the set $\Pi$ of all multisegments can be identified with the
set
$$\afThnp=\{A=(a_{i,j})\in\afThn\mid a_{i,j}=0\text{ for
$i\geq j$}\}$$
 \index{$\afThnp$, set of strictly upper triangular matrices in
 $\Theta_\vtg(n)$}
 of all strictly upper triangular matrices via
\begin{equation*}
\flat_3:\afThnp\lra\Pi,\qquad A=(a_{i,j})_{i,j\in\mbz}\longmapsto
\sum_{i<j,1\leq i\leq n}a_{i,j}[i;j-i),
\end{equation*}
 we will use $\afThnp$ to index the finite dimensional nilpotent representations.
In particular, for any $i,j\in\mbz$ with $i<j$,  we have
$$M^{i,j}:=M(\afE_{i,j})=S_i[j-i], \text{ and }M^{i+n,j+n}=M^{i,j}.$$ Thus,
for any $A=(a_{i,j})\in\afThnp$ and $i_0\in\mbz$,
$$M(A)=M_\field(A)=\bop_{1\leq i\leq n,i<j}a_{i,j}M^{i,j}=\bop_{i_0+1\leq i\leq
i_0+n,i<j}a_{i,j}M^{i,j}.$$
 For $A=(a_{i,j})\in \afThnp$, set
$$\fkd(A)=\sum_{i<j,1\leq i\leq n}a_{i,j}(j-i).$$
 Then $\dim_\field M(A)=\fkd(A)$. \index{$\fkd(A)$, dimension of $M(A)$}Moreover, for each $\la=(\la_i)\in\mbn I$, set
$A_\la=(a_{i,j})$ with $a_{i,j}=\dt_{j,i+1}\la_i$, i.e.,
$A_\la=\sum_{i=1}^n \la_i E_{i,i+1}^\vtg$.\index{$A_\la$, matrix
defining the semisimple representation $S_\la$} Then
\begin{equation}\label{semisimple module}
M(A_\la)=\bigoplus_{1\leq i\leq n} \la_iS_i=:S_{\la}
\end{equation}\index{$S_\la$, semisimple representation}
 is semisimple. Also, for $A\in\afThnp$, we write $\bfd(A)=\bfdim
 M(A)\in\mbz I$, the dimension vector of $M(A)$. \index{$\bfd(A)$, dimension vector of $M(A)$} Hence, $\mbz I$ is identified
 with the Grothendieck group of $\rep^0\!\!\tri$.

A matrix $A=(a_{i,j})\in\afThnp$ is called {\it aperiodic}\index{aperiodic matrix} if for
each $l\geq1$, there exists $i\in\mbz$ such that $a_{i,i+l}=0$.
Otherwise, $A$ is called  {\it periodic}.\index{periodic matrix} A nilpotent representation
$M(A)$ is called aperiodic (resp., periodic) if $A$ is aperiodic
(resp., periodic).\index{nilpotent representation!aperiodic $\sim$}\index{nilpotent representation!periodic $\sim$}

It is well known that there exist Auslander--Reiten sequences in
$\rep^0\!\!\tri$; see \cite{ARS}. More precisely, for each $i\in I$
and each $l\geq 1$, there is an Auslander--Reiten sequence
$$0\lra S_{i+1}[l]\lra S_i[l+1]\oplus S_{i+1}[l-1]\lra S_i[l]\lra
0,$$
 where we set $S_{i+1}[0]=0$ by convention. The $S_{i+1}[l]$ is
called the Auslander--Reiten translate of $S_i[l]$, denoted by
$\tau S_i[l]$. In this case, $\tau$ indeed defines an equivalence
from $\rep^0\!\!\tri$ to itself, called the Auslander--Reiten
translation. For each $A=(a_{i,j})\in\afThnp$, we define $\tau
(A)\in\afThnp$ by $M(\tau(A))=\tau M(A)$. Thus, if we write
$\tau(A)=(b_{i,j})\in\afThnp$, then $b_{i,j}=a_{i-1,j-1}$ for all
$i,j$.

We now introduce the degeneration order on $\afThnp$ and generic
extensions of nilpotent representations. These notions play an
important role in the study of bases for both the Ringel--Hall
algebra $\bfHall$ of $\tri$ and its composition subalgebra
$\bfcomp$; see, for example, \cite{DD05,DDX}. For two nilpotent
representations $M,N$ in $\rep^0\!\!\tri$ with $\bfdim M=\bfdim N$,
define
\begin{equation}\label{partial order leq on the set afThnp}
N\leq_{\deg} M\Longleftrightarrow
\dim_\field\Hom(X,N)\geq\dim_\field\Hom(X,M)\text{ for all }
X\in\rep^0\!\!\tri;
\end{equation}
 see \cite{Zwa}.\index{$\leq_\deg$, degeneration order}\index{degeneration order}  This gives rise to a partial ordering on the set of
isoclasses of representations in $\rep^0\!\!\tri$, called the {\it
degeneration order}. Thus, it also induces a partial order on
$\afThnp$ by letting
$$A\leq_\deg B\Longleftrightarrow M(A)\leq_{\deg} M(B).$$
 By \cite{Re1} and \cite[\S3]{DD05}, for any two nilpotent representations
$M$ and $N$, there exists a unique extension $G$ (up to isomorphism)
of $M$ by $N$ with minimal $\dim\End(G)$. This representation $G$ is
called the {\it generic extension} \index{$M*N$, generic extension}
of $M$ by $N$ and will be denoted by $M*N$ in the sequel. Moreover,
for nilpotent representations $M_1,M_2,M_3$,
$$(M_1*M_2)*M_3\cong M_1*(M_2*M_3).$$
Also, taking generic extensions preserves the degeneration order.
More precisely, if $N_1\leq_{\deg} M_1$ and $N_2\leq_{\deg} M_2$ ,
then $N_1*N_2\leq_{\deg} M_1*M_2$. For $A,B\in\afThnp$, let
$A*B\in\afThnp$ be defined by $M(A*B)\cong M(A)*M(B)$.

As above, let $\sZ=\mbz[\up,\up^{-1}]$ be the Laurent polynomial
ring in indeterminate $\up$.\index{$\sZ=\mbz[\up,\up^{-1}]$, Laurent
polynomial ring in indeterminate $\up$} By \cite{Ri93} and
\cite{Guo95}, for $A,B_1,\ldots,B_m\in\afThnp$, there is a
polynomial $\vi^{A}_{B_1,\ldots, B_m}\in\mbz[\up^2]$ in $\up^2$,
called the {\it Hall polynomial},\index{$\vi^{A}_{B_1,\ldots, B_m}$,
Hall polynomial}  such that for any finite field $\field$ of $q$
elements, $\vi^{A}_{B_1,\ldots, B_m}|_{\up^2=q}$ (the evaluation of
$\vi^{A}_{B_1,\ldots, B_m}$ at $\up^2=q$) equals to the number
$F_{M_\field(B_1),\ldots, M_\field(B_m)}^{M_\field(A)}$ of the
filtrations
$$0=M_m\han M_{m-1}\han\cdots \han M_1\han M_0=M_\field(A)$$
such that $M_{t-1}/M_t\cong M_\field(B_t)$ for all $1\leq t\leq m$.

Moreover, for each $A=(a_{i,j})\in \afThnp$, there is a polynomial
$\fka_A=\fka_A(\up^2)\in\sZ$ in $\up^2$ such that, for each finite
field $\field$ with $q$ elements,
$\fka_A|_{\up^2=q}=|\Aut(M_{\field}(A))|$;\index{$\fka_A$, order of
$\text{Aut}(M(A))$} see, for example, \cite[Cor.~2.1.1]{Peng}. For
later use, we give an explicit formula for $\fka_A$. Let $m_A$
denote the dimension of $\rad\End(M_\field(A))$, which is known to
be independent of the field $\field$. We also have
$$\End(M_\field(A))/\rad\End(M_\field(A))\cong \prod_{1\leq i\leq
n,\,a_{i,j}>0}M_{a_{i,j}}(\field),$$
 where $M_{a_{i,j}}(\field)$ denotes the full matrix algebra of
$a_{i,j}\times a_{i,j}$ matrices over $\field$. Hence,
$$|\Aut(M_\field(A))|=|\field|^{m_A}\prod_{1\leq i\leq
n,\,a_{i,j}>0}|GL_{a_{i,j}}(\field)|.$$
 Consequently, the polynomial
\begin{equation}\label{poly-auto-group}
\fka_A=v^{2m_A}\prod_{1\leq i\leq
n,\,a_{i,j}>0}(v^{2a_{i,j}}-1)(v^{2a_{i,j}}-\up^2)\cdots(v^{2a_{i,j}}-v^{2a_{i,j}-2}
).
\end{equation}
In particular, if $z\in\mbc$ is not a root of unity, then
${\fka_A}|_{\up^2=z}\neq0$.

Let $\Hall$ be the {\it (generic) Ringel--Hall algebra}
\index{$\Hall$, Ringel--Hall algebra of $\tri$} of the cyclic quiver
$\tri$, which is by definition the free $\sZ$-module over with basis
$\{u_A=u_{[M(A)]}\mid A\in\afThnp\}.$\index{$u_A=u_{[M(A)]}$, basis
elements of $\Hall$} The multiplication is given by
$$u_{A}u_{B}=\up^{\lan \bfd(A),\bfd(B)\ran}\sum_{C\in\afThnp}\vi^{C}_{A,B}u_{C}$$
for $A,B\in \afThnp$, where
\begin{equation}\label{Euler form}
\lan
\bfd(A),\bfd(B)\ran=\dim\Hom(M(A),M(B))-\dim\Ext^1(M(A),M(B))
\end{equation} is
the {\it Euler form} \index{Euler form}\index{$\lan\;,\;\ran$, Euler
form}  associated with the cyclic quiver $\tri$. If we write
$\bfd(A)=(a_i)$ and $\bfd(B)=(b_i)$, then
\begin{equation}\label{Euler form 2}
\lan \bfd(A),\bfd(B)\ran=\sum_{i\in I}a_ib_i-\sum_{i\in
I}a_ib_{i+1}.\end{equation}

Since both $\dim_\field\End(M_\field(A))$
and $\dim_\field M_\field(A)=\fkd(A)$ are independent of the
ground field, we put for each $A\in\afThnp$,
\begin{equation}\label{tilde u_A}
d_A'=\dim_\field\End(M_\field(A))-\dim_\field M_\field(A)\;\;\text{and}\;\;\ti u_A=v^{d_A'}u_A;
\end{equation}
cf. \cite[(8.1)]{DDX}.\index{$d_A'$, as in $\tilde
u_A:=\up^{-d_A'}u_A$}\footnote{There is another notation, called
$d_A$, which will be defined in \eqref{sqAdA} for all $A\in\afThn$.}
As seen in \cite{DDX}, it is convenient sometimes to work with the
PBW type basis $\{\ti u_A\mid A\in\afThnp\}$ of $\Hall$. \index{PBW
type basis}

The degeneration ordering gives rise to the following ``triangular
relation'' in $\Hall$: for $A_1,\ldots, A_t\in \afThnp$,
\begin{equation} \label{mult-order-formula}
u_{A_1}\cdots u_{A_t}=v^{\sum_{1\leq r<s\leq
t}\lr{\bfd(A_r),\bfd(A_s)}}\sum_{B\leq_\deg A_1*\cdots
*A_t}\vi^B_{A_1,\ldots,A_t}u_B.
\end{equation}

There is a natural
$\mbn I$-grading on $\Hall$:
\begin{equation}\label{grading}
\Hall=\bigoplus_{\bfd\in\mbn I}\Hall_\bfd,
\end{equation}
where $\Hall_\bfd$ is spanned by all $u_A$ with $\bfd(A)=\bfd$. Moreover,
we will frequently consider in the sequel the algebra $\bfHall=\Hall\otimes_\sZ\mbq(\up)$ obtained
by base change to the fraction field $\mbq(\up)$.

In order to relate Ringel--Hall algebras $\Hall$ of cyclic quivers
with affine quantum Schur algebras later on, we recall Lusztig's
geometric construction of $\Hall$ (specializing $v$ to a square root
of a prime power) developed in \cite{Lu91, Lu93} (cf. also
\cite{Lu98}).

Let $\field=\field_q$ be the finite field of $q$ elements and
$\bfd\in\mbn I$. Fix an $I$-graded $\field$-vector space
$V=\oplus_{i\in I}V_i$ of dimension vector $\bfd$, i.e.,
$\dim_\field V_i=d_i$ for all $i\in I$. Then each element in
$$E_V=\bigl\{(f_i)\in \bigoplus_{i\in I}\Hom_\field(V_i,V_{i+1})\mid f_n\cdots f_1\;\text{is nilpotent}\bigr\}$$
 can be viewed as a nilpotent representation of $\tri$ over $\field$ of dimension vector $\bfd$.
The group $G_V=\prod_{i\in I}GL(V_i)$ acts on $E_V$ by
conjugation. Then there is a bijection between the $G_V$-orbits in
$E_V$ and the isoclasses of nilpotent representations of $\tri$ of
dimension vector $\bfd$. For each $A\in\afThnp$ with
$\bfd(A)=\bfd$, we will denote by $\ttO_A$ the orbit in $E_V$
corresponding to the isoclass of $M(A)$.

Define $\HallL_{\bfd}=\mbc_{G_V}(E_V)$ to be the vector space of
$G_V$-invariant functions from $E_V$ to $\mbc$. Now let
$\bfa,\bfb\in\afmbnn$ with $\bfd=\bfa+\bfb$ and fix $I$-graded
$\field$-vector spaces $U=\oplus_{i\in I}U_i$ and $W=\oplus_{i\in
I}W_i$ with dimension vectors $\bfa$ and $\bfb$, respectively. Let
$E$ be the set of triples $(x,\phi,\psi)$ such that $x\in E_V$ and
the sequence
$$\begin{CD}
0 @>>> W @>\phi>> V @>\psi>> U @>>> 0
\end{CD}$$
of $I$-graded spaces is exact and $\phi(W)$ is stable by $x$, and
let $F$ be the set of pairs $(x,W')$, where $x\in E_V$ and $W'\han
V$ is an $x$-stable $I$-graded subspace of dimension vector
$\bfb$. Consider the diagram
$$\begin{CD}
E_U\times E_W @<p_1<< E @>p_2>> F @>p_3>> E_V,
\end{CD}$$
 where $p_1,p_2,p_3$ are projections defined in an obvious way.
Given $f\in\mbc_{G_U}(E_U)=\HallL_\bfa$ and
$g\in\mbc_{G_W}(E_W)=\HallL_\bfb$, define the convolution product of
$f$ and $g$ by
$$fg=q^{-\frac{1}{2}m(\bfa,\bfb)}(p_3)_{!}h\in\mbc_{G_V}(E_V)=\HallL_\bfd,$$
 where
$h\in\mbc(F)$ is the function such that $p_2^*h=p_1^*(fg)$ and
$$m(\bfa,\bfb)=\sum_{1\leq i\leq n}a_ib_i+\sum_{1\leq i\leq
n}a_ib_{i+1}.$$
 Consequently, $\HallL =\bop_{\bfd\in\afmbnn}\HallL_{\bfd}$ becomes
an associative algebra over $\mbc$.\footnote{The algebra $\HallL$
defined also geometrically in \cite[3.2]{VV99} has a multiplication
opposite to the one for $\HallL$ here.} For each $A\in\afThnp$, let
$\chi_{_{\ttO_{A}}}$ be the characteristic function of $\ttO_{A}$
and put
$$\lan\ttO_{A}\ran=q^{-\frac{1}{2}\dim\ttO_{A}}\chi_{_{\ttO_{A}}}.$$
By \cite[8.1]{DDX}), there is an algebra isomorphism
\begin{equation}\label{Hall algebra defined using function}
\Hall\ot_{\sZ}\mbc\lra\HallL, \;\; u_A\longmapsto
v^{\fkd(A)-\bfd(A)\centerdot\bfd(A)}\chi_{_{\ttO_{A}}},\;\;\text{for
$A\in\afThnp$,}
\end{equation}
where $\mbc$ is viewed as a $\sZ$-module by specializing  $\up$ to
$q^{\frac{1}{2}}$. In particular, this isomorphism takes $\ti u_A$
to $\lan\ttO_{A}\ran$.

\section{The quantum loop algebra $\bfU(\afsl)$}
As mentioned in the introduction, an important breakthrough for
the structure of quantum groups associated with semisimple complex
Lie algebras is Ringel's Hall algebra realization of  the
$\pm$-part of the quantum enveloping algebra associated with the
same quiver; see \cite{R90, R932}. For the Ringel--Hall algebra
$\Hall$ associated with a cyclic quiver, it is known from
\cite{Ri93} that a subalgebra, called the {\it composition algebra},\index{composition algebra} is
isomorphic to the $\pm$-part of a quantum affine $\mathfrak
{sl}_n$. We now describe this algebra and use it to display a
certain monomial basis.

The $\sZ$-subalgebra $\comp$ of $\Hall$ generated by $u_{[mS_i]}$
($i\in I$ and $m\geq 1$) is called the {\it composition
algebra}\index{$\comp$, composition algebra of $\tri$} of $\tri$. By
\eqref{grading}, $\comp$ inherits an $\mbn I$-grading by dimension
vectors:
$$\comp=\bigoplus_{\bfd\in\mbn I}\comp_\bfd,$$
where $\comp_\bfd=\comp\cap\Hall_\bfd$.
Define the
divided power
$$u_i^{(m)}=\frac{1}{[m]^!}u_i^m\in\comp$$
for $i\in I$ and $m\geq 1$. In fact,
$$u_i^{(m)}=v^{m(m-1)}u_{[mS_i]}\in\comp\subset\Hall.$$

We now use the strong monomial basis property developed in \cite{DD05,DDX} to
 construct an explicit monomial basis
of $\comp$. For each $A=(a_{i,j})\in \afThnp$, define
$$\ell=\ell_A={\rm max}\{j-i\mid a_{i,j}\not=0\}.$$
 In other words, $\ell$ is the Loewy length of the representation $M(A)$.

Suppose now $A$ is aperiodic. Then there is $i_1\in[1,n]$ such that
 $a_{i_1,i_1+\ell}\not=0$, but $a_{i_1+1,i_1+1+\ell}=0$.
 If there are some $a_{i_1+1,j}\not=0$, we let $p\geq 1$ satisfy
that $a_{i_1+1,i_1+1+p}\not=0$ and $a_{i_1+1,j}=0$ for all
$j>i_1+1+p$; if $a_{i_1+1,j}=0$ for all $j>i_1+1$, let $p=0$. Thus, $\ell>p$. Now set
$$t_1=a_{i_1,i_1+1+p}+\cdots +a_{i_1,i_1+\ell}$$
and define $A_1=(b_{i,j})\in \afThnp$ by letting
$$b_{i,j}=\begin{cases}  0,\;\;& \text{if $i=i_1, j\geq i_1+1+p$},\\
                         a_{i_1+1,j}+a_{i_1,j},& \text{if $i=i_1+1<j, j\geq i_1+1+p$},\\
                         a_{i,j},  \;\; & \text{otherwise}.\end{cases}$$
Then, $A_1$ is again aperiodic. Applying the above
process to $A_1$, we get $i_2$ and $t_2$. Repeating the above
process (ending with the zero matrix), we finally get two
sequences $i_1,\ldots,i_m$ and $t_1,\ldots,t_m$. This gives a word
$$w_A=i_1^{t_1}i_2^{t_2}\cdots i_m^{t_m},$$
 where $i_1,\ldots,i_m$ are viewed as elements in $I=\mbz/n\mbz$, and
define the monomial
$$u^{(A)}=u_{i_1}^{(t_1)}u_{i_2}^{(t_2)}\cdots u_{i_m}^{(t_m)}\in\comp.$$

The algorithm above can be easily modified to get a similar algorithm
for quantum $\mathfrak{gl}_n$. We illustrate the algorithm with an example in this case.
\begin{Example} If
$A=\smat{1&2&3&4\\ &5&0&0\\
&&6&0\\
&&&7\\}$, then
$\ell=4,i_1=1, p=1$ and $t_1=2+3+4=9$. Here we ignore all zero entries on and below the diagonal for simplicity. Then
$$\aligned
A_1=\smat{1&0&0&0\\ &7&3&4\\
&&6&0\\
&&&7\\}, &\quad\ell=3,i_2=2, p=1\text{ and }t_2=3+4=7\\
A_2=\smat{1&0&0&0\\ &7&0&0\\
&&9&4\\
&&&7\\}, &\quad\ell=2,i_3=3, p=1\text{ and }t_3=4\\
A_3=\smat{1&0&0&0\\ &7&0&0\\
&&9&0\\
&&&11\\}, &\quad\ell=1,i_4=4, p=0\text{ and }t_4=11\\
\endaligned
$$
and $i_5=3, p=0, t_5=9$, $i_6=2,p=0,t_6=7$ and $i_7=1,p=0,t_7=1$. Hence,
$$u^{(A)}=u_{1}^{(9)}u_{2}^{(7)}u_{3}^{(4)}u_{4}^{(11)}u_{3}^{(9)}u_{2}^{(7)}u_{1}^{(1)}.$$
\end{Example}

\begin{Prop} \label{integral-monomial-basis-comp} The set
$$\{u^{(A)}\mid A\in \afThnp \;\text{aperiodic}\}$$
is a $\sZ$-basis of $\comp$.
\end{Prop}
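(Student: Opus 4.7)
\emph{Plan.} My approach is to invoke the strong monomial basis property developed in \cite{DD05,DDX}, which reduces the statement to establishing, for each aperiodic $A\in\afThnp$, a triangular expansion of $u^{(A)}$ on the PBW-type basis $\{u_B:B\in\afThnp\}$ of $\Hall$ with leading coefficient a unit of $\sZ$. Once this is in place, linear independence is automatic, and the spanning assertion follows from the known identification of $\comp$ with the aperiodic part of $\Hall$.

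I would first verify inductively that the algorithm terminates on aperiodic $A$ and that the associated sequence of semisimple modules $(t_1 S_{i_1},\ldots,t_m S_{i_m})$ has iterated generic extension
$$t_1 S_{i_1} * t_2 S_{i_2} * \cdots * t_m S_{i_m} \cong M(A).$$
The choice of $i_1$, possible precisely because $A$ is aperiodic of Loewy length $\ell=\ell_A$, ensures the existence of a short exact sequence $0\to M(A_1)\to M(A)\to t_1 S_{i_1}\to 0$ representing a generic extension; moreover, $A_1$ is again aperiodic with strictly smaller total dimension, so the inductive hypothesis applies to it, and the associativity of $*$ recorded in \S1.2 closes the induction. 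Applying the triangular multiplication relation \eqref{mult-order-formula} to $u^{(A)}=u_{i_1}^{(t_1)}\cdots u_{i_m}^{(t_m)}$ (after the identification $u_i^{(m)}=v^{m(m-1)}u_{[mS_i]}$) and using that generic extensions are maximal in the degeneration order then yields
$$u^{(A)} \;=\; v^{c_A}\, u_A \;+\; \sum_{B<_\deg A} g_{A,B}(v)\, u_B,$$
with $v^{c_A}\in\sZ$ a unit and $g_{A,B}(v)\in\sZ$. Linear independence of $\{u^{(A)} : A \text{ aperiodic}\}$ over $\sZ$ is then immediate from the triangular form.

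For the spanning claim I would invoke the theorem, essentially due to Varagnolo--Vasserot and Schiffmann with an algebraic account in \cite{DDX}, that $\comp$ has $\sZ$-basis $\{u_A:A\in\afThnp \text{ aperiodic}\}$. Since $u^{(A)}\in\comp$ by construction and the expansion of an element of $\Hall$ in the $\{u_B\}$-basis is unique, the sum above must be supported on aperiodic $B$; the transition matrix from $\{u^{(A)}\}$ to $\{u_A\}$ on the aperiodic index set is therefore triangular (with respect to $\leq_\deg$) with unit diagonal entries, so both sets span the same $\sZ$-submodule, namely $\comp$. The main obstacle is the generic-extension verification of the inductive step---checking that the truncation $A\mapsto A_1$ really corresponds to quotienting $M(A)$ by a chosen top copy of $t_1 S_{i_1}$ while keeping $A_1$ aperiodic of strictly smaller dimension. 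This is a combinatorial analysis tailored to the algorithm and is where the aperiodicity hypothesis on $A$ is essentially used.
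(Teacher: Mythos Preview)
Your approach is the paper's approach: both invoke the machinery of \cite{DD05,DDX}, and the paper's proof is in fact just a two-line citation of \cite[Th.~5.5]{DD05} (that $w_A$ is \emph{distinguished}, i.e., $\varphi^A_{t_1E_{i_1,i_1+1}^\vtg,\ldots,t_mE_{i_m,i_m+1}^\vtg}=1$) and \cite[Th.~7.5(i)]{DDX} (that distinguished words yield a $\sZ$-basis). Your sketch is an attempt to unpack those citations.

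There is, however, a genuine gap in your unpacking. You correctly argue that the algorithm realizes $M(A)$ as the iterated generic extension $t_1S_{i_1}*\cdots*t_mS_{i_m}$, and this indeed gives the triangularity of $u^{(A)}$ with respect to $\leq_\deg$ via \eqref{mult-order-formula}. But \eqref{mult-order-formula} shows that the coefficient of $u_A$ is $v^{\text{(power)}}\cdot\varphi^A_{t_1E_{i_1,i_1+1}^\vtg,\ldots,t_mE_{i_m,i_m+1}^\vtg}$, and the Hall polynomial factor is a polynomial in $v^2$ that is a priori only nonzero, not a unit in $\sZ$. Knowing that $A$ is the generic extension tells you $A$ is the $\leq_\deg$-maximal term; it does \emph{not} tell you the filtration is unique. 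What makes $w_A$ distinguished is precisely that at each step of the algorithm the submodule $M(A_1)\subset M(A)$ with quotient $t_1S_{i_1}$ is the \emph{only} such submodule, so the Hall number is literally $1$. This uniqueness is a combinatorial fact specific to the choice of $i_1,p,t_1$ (it uses that $a_{i_1+1,i_1+1+\ell}=0$ and the particular truncation at level $p$) and is the content of \cite[Th.~5.5]{DD05}; your inductive step verifies existence of the short exact sequence but not its uniqueness, so your assertion ``$v^{c_A}\in\sZ$ a unit'' is unjustified as written. Once you supply that uniqueness, the rest of your argument goes through and matches the paper's.
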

\begin{proof} Let $A\in \afThnp$ be aperiodic and let
$w_A=i_1^{t_1}i_2^{t_2}\cdots i_m^{t_m}$ be the corresponding word
constructed as above. By \cite[Th.~5.5]{DD05}, $w_A$ is
distinguished, that is, $\varphi^A_{A_1,\ldots,A_m}=1$, where
$A_s=t_sE_{i_s,i_s+1}^\vtg$ for $1\leq s\leq m$. By
\cite[Th.~7.5(i)]{DDX}, the $u^{(A)}$ with $A\in \afThnp$ aperiodic
form a $\sZ$-basis of $\comp$.
\end{proof}

We now define the quantum enveloping algebra of the loop algebra (the {\it quantum loop algebra}
for short)
of $\frak{sl}_n$.
Let $C=C_{\tri}=(c_{i,j})_{i,j\in I}$\index{$C=C_{\tri}$, Cartan matrix of a cyclic quiver}
\index{Cartan matrix}\index{Cartan matrix!$\sim$ of a cyclic quiver} be the generalized Cartan matrix of
type $\ti A_{n-1}$, where $I=\mbz/n\mbz$. We always assume that if
$n\geq 3$, then $c_{i,i}=2$, $c_{i,i+1}=c_{i+1,i}=-1$ and
$c_{i,j}=0$ otherwise. If $n=2$, then $c_{1,1}=c_{2,2}=2$ and
$c_{1,2}=c_{2,1}=-2$. In other words,
\begin{equation}\label{CMforAn-1}
C=\begin{pmatrix}2&-2\\-2&2\\
\end{pmatrix}\text{ or }C=\begin{pmatrix}2&-1&0&\cdots&0&-1\\
                 -1&2&-1&\cdots&0&0\\
                 0&-1&2&\cdots&0&0\\
                 \vdots&\vdots&\vdots&\vdots&\vdots&\\
                 0&0&0&\cdots&2&-1\\
                 -1&0&0&\cdots&-1&2\\
\end{pmatrix}(n\geq3).
\end{equation}
The quantum group associated to $C$
is denoted by $\bfU(\afsl)$.

\begin{Def}\label{affine quantum group}
Let $n\geq 2$ and $I=\mbz/n\mbz$. The affine quantum loop algebra  $\bfU(\afsl)$  is the algebra
 over $\mathbb Q(\upsilon)$ presented by
generators
$$E_i,\ F_i,\ \ti K_i,\ \ti K_i^{-1},\,\,i\in I,$$
and relations: for $i,j\in I$
\begin{itemize}
\item[(QSL0)] $\ti K_1\ti K_2\cdots\ti K_n=1$
 \item[(QSL1)] $\ti K_{i}\ti K_{j}=\ti K_{j}\ti K_{i},\ \ti K_{i}\ti K_{i}^{-1}=1;$
  \item[(QSL2)] $\ti K_{i}E_j=\upsilon^{c_{i,j}}E_j\ti K_{i};$
 \item[(QSL3)] $\ti K_{i}F_j=\upsilon^{-c_{i, j}} F_j\ti K_{i};$
 \item[(QSL4)] $E_iE_j=E_jE_i,\ F_iF_j=F_jF_i\text{ if } i\not=j\pm 1;$
  \item[(QSL5)] $E_iF_j-F_jE_i=\delta_{i,j}\frac
       {\ti K_{i}-\ti K_{i}^{-1}}{\upsilon-\upsilon^{-1}};$
  \item[(QSL6)] $E_i^2E_j-(\upsilon+\upsilon^{-1})E_iE_jE_i+E_jE_i^2=0\
 \text{ if } i=j\pm 1$ and $n\geq 3;$
  \item[(QSL7)] $F_i^2F_j-(\upsilon+\upsilon^{-1})F_iF_jF_i+F_jF_i^2=0\
 \text{ if } i=j\pm 1$ and $n\geq 3;$
 \item[(QSL6$'$)] $E_i^3E_j-(\up^2+1+\up^{-2})E_i^2E_jE_i+(\up^2+1+\up^{-2})E_iE_jE_i^2-E_jE_i^3=0\
 \text{ if } i\not=j$ and $n=2;$
  \item[(QSL7$'$)] $F_i^3F_j-(\up^2+1+\up^{-2})F_i^2F_jF_i+(\up^2+1+\up^{-2})F_iF_jF_i^2-F_jF_i^3=0\
 \text{ if } i\not=j$ and $n=2.$\index{defining relations!({\rm QSL0})--({\rm QSL7}), ({\rm QSL6$'$}), ({\rm QSL7$'$})}
 \end{itemize}

 For later use in representation theory, let $\afUslC$ be the quantum loop algebra defined by the same generators and relations
(QSL0)--(QSL7) with $\up$ replaced by a non-root of unity $z\in
\mbc$ and $\mbq(\up)$ by $\mbc$.
 \end{Def}

A new presentation for $\bfU(\afsl)$ and $\afUslC$, known as Drinfeld's new presentation,
will be discussed in \S2.3.

In this paper, {\it quantum affine $\mathfrak{sl}_n$}\index{quantum affine
$\frak{sl}_n$} always refers to the {\it quantum loop (Hopf) algebra}
$\bfU(\afsl)$.\footnote{If (QSL0) is dropped, it also defines a
quantum affine $\mathfrak{sl}_n$ with the {\it central extension}; see,
e.g., \cite{CP}.}\index{quantum loop algebra!$\sim$ of $\mathfrak
{sl}_n$, $\bfU(\afsl)$} We will mainly work with $\bfU(\afsl)$ or quantum groups defined over $\mbq(\up)$
and mention from time to time a parallel theory over $\mbc$.

Let $\bfU(\afsl)^+$ (resp., $\bfU(\afsl)^-$, $\bfU(\afsl)^0$) be the
positive (resp., negative, zero) part of the quantum enveloping
algebra $\bfU(\afsl)$. In other words, $\bfU(\afsl)^+$ (resp.,
$\bfU(\afsl)^-$, $\bfU(\afsl)^0$) is a $\mbq(\up)$-subalgebra
generated by $E_i$ (resp., $F_i$, $\ti K^{\pm1}$), $i\in I$.

Let
$$\bfcomp=\comp\otimes_\sZ\mbq(\up).$$
 Thus, $\bfcomp$ identifies with the $\mbq(\up)$-subalgebra $\bfHall$
generated by $u_i:=u_{[S_i]}$ for $i\in I$.

\begin{Thm} {\rm (\cite{Ri93})} \label{CompAlg-affine sl_n}
There are $\mbq(\up)$-algebra isomorphisms
$$\bfcomp\lra \bfU(\afsl)^+,\,u_i\longmapsto E_i\,\text{ and }\,\bfcomp\lra
\bfU(\afsl)^-,\,u_i\longmapsto F_i.$$
\end{Thm}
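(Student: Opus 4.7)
The plan is to construct the homomorphism by verifying relations, and then establish that it is an isomorphism via a graded-dimension argument. I would first define $\psi : \bfU(\afsl)^+ \to \bfcomp$ by $E_i \mapsto u_i$ for $i \in I$. To show $\psi$ is a well-defined algebra homomorphism, I must check that the elements $u_i = u_{[S_i]}$ in $\bfcomp$ satisfy the relations (QSL4), (QSL6), (QSL7) (or (QSL6$'$), (QSL7$'$) when $n=2$). The surjectivity is then immediate from the definition of $\comp$ as the subalgebra generated by divided powers of the $u_{[S_i]}$, noting that $u_i^{(m)} = \frac{1}{[m]^!} u_i^m$ lies in the image of $E_i^{(m)}$.

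The Serre relations are verified by direct Hall algebra computation using the representation theory of $\tri$. For $i, j \in I$ with $i \neq j, j\pm 1$ and $n \geq 3$, one has $\mathrm{Ext}^1(S_i, S_j) = 0 = \mathrm{Ext}^1(S_j, S_i)$ and $\mathrm{Hom}(S_i,S_j)=0$, so the Euler form vanishes and $u_i u_j = u_{[S_i \oplus S_j]} = u_j u_i$. For $i = j \pm 1$ (with $n \geq 3$), exactly one of $\mathrm{Ext}^1(S_i, S_j)$ and $\mathrm{Ext}^1(S_j, S_i)$ is one-dimensional, corresponding to the indecomposable $S_i[2]$ or $S_j[2]$. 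Computing $u_i^2 u_j$, $u_i u_j u_i$, and $u_j u_i^2$ via the Hall product formula, one enumerates the relevant filtrations of $S_i[2] \oplus S_i$ (or of $2S_i \oplus S_j$) and collects powers of $v$; after cancellation, the identity
\[
u_i^2 u_j - (v + v^{-1}) u_i u_j u_i + u_j u_i^2 = 0
\]
drops out. The $n=2$ case is similar but requires tracking the contribution from the two arrows between vertices $1$ and $2$, giving a higher-order Serre relation.

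The hardest step is injectivity, which I would deduce by comparing graded dimensions. Both $\bfU(\afsl)^+$ and $\bfcomp$ are graded by $\mathbb{N}I$ via dimension vectors, and $\psi$ preserves this grading. By Proposition~\ref{integral-monomial-basis-comp}, $\dim_{\mathbb{Q}(v)} \bfcomp_{\bfd}$ equals the number of aperiodic matrices in $\afThnp$ with dimension vector $\bfd$, equivalently the number of aperiodic multisegments of weight $\bfd$. On the other hand, by the work on PBW/canonical bases of the $+$-part of quantum affine $\mathfrak{sl}_n$ (e.g., Lusztig's description of the canonical basis of $\bfU(\afsl)^+$), $\dim \bfU(\afsl)^+_{\bfd}$ coincides with the same aperiodic count. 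The matching of finite-dimensional graded pieces, together with surjectivity of $\psi$, forces $\psi$ to be an isomorphism.

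The negative part statement follows by an identical argument, using the triangular symmetry of the presentation of $\bfU(\afsl)$ between $E_i$'s and $F_i$'s, or alternatively by transporting the isomorphism for the positive part via the obvious anti-involution. The main obstacle in practice is carrying out the Serre-relation computation cleanly in the affine $n=2$ case, where the relevant Hall products involve contributions from several filtration types; once those are in place, the dimension comparison is a clean invocation of already-known combinatorial facts about aperiodic multisegments.
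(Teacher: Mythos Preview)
The paper does not give its own proof of this theorem: it is stated with attribution to Ringel \cite{Ri93} and used as a black box. So there is nothing in the paper to compare your argument against, and your sketch is a reasonable reconstruction of how such a result is typically established.

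That said, there is a circularity risk in your injectivity step. You want to compare $\dim \bfcomp_{\bfd}$ (which Proposition~\ref{integral-monomial-basis-comp} gives as the number of aperiodic matrices of dimension vector $\bfd$) with $\dim \bfU(\afsl)^+_{\bfd}$, and for the latter you invoke ``Lusztig's description of the canonical basis of $\bfU(\afsl)^+$'' in terms of aperiodic orbits. But Lusztig's identification of the canonical basis with aperiodic nilpotent orbits is itself proved via the perverse-sheaf/Hall-algebra realization on the cyclic quiver, which is essentially the isomorphism you are trying to establish. To avoid this, you should compute $\dim \bfU(\afsl)^+_{\bfd}$ by a method independent of Hall algebras --- for instance via the PBW theorem for quantum Kac--Moody algebras (root multiplicities and the Kostant partition function) --- and then verify the combinatorial identity between that count and the aperiodic count separately. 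Alternatively, and this is closer to how the result is usually packaged, one shows injectivity by matching Lusztig's nondegenerate form on $\bfU(\afsl)^+$ with Green's form on the Hall algebra: since $\psi$ intertwines the two forms and both are nondegenerate on graded pieces, the kernel must be zero.
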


By this theorem and the triangular decomposition
$$\bfU(\afsl)=\bfU(\afsl)^+\otimes
 \bfU(\afsl)^0\otimes\bfU(\afsl)^-,$$
the basis displayed in Proposition
\ref{integral-monomial-basis-comp} gives rise to a monomial basis
for the  $\bfU(\afsl)$.

\section{Three types of generators and associated monomial bases}

In this section, we display three distinct minimal sets of generators for
$\bfHall$, each of which contains the generators $\{u_i\}_{i\in I}$ for  $\bfcomp$.
 We also describe their associated monomial bases for $\bfHall$ in
the respective generators.

The first minimal set of generators contains simple modules and
certain central elements. These generators are convenient for a
presentation for the double Ringel--Hall algebra over
 $\mbq(\up)$ (or a specialization at a non-root of unity)  associated to cyclic quivers (see Chapter 2). \index{generators!Schiffmann--Hubery $\sim$}

 In \cite{Sch} Schiffmann first described the structure of $\bfHall$ as a tensor product of $\bfcomp$
and a polynomial ring in infinitely many indeterminates.  Later
Hubery explicitly constructed these central elements in \cite{Hub1}.
More precisely, for each $m\geq 1$, let\footnote{The element $c_m$
is the same element as $c_{n,m}$ defined in \cite[\S2.3]{Hub2}.}
\begin{equation}\label{central elements}
 c_m=(-1)^mv^{-2nm}\sum_{A}(-1)^{{\rm dim}\,{\rm End}(M(A))}\fka_A u_A \in\bfHall,
\end{equation}
 where the sum is taken over all $A\in \afThnp$ such that $\bfd(A)=\bfdim M(A)=m\dt$
 with $\dt=(1,\ldots,1)\in\mbn^n$, and
${\rm soc}\,M(A)$ is square-free, i.e., $\bfdim\, {\rm
soc}\,M(A)\leq \dt$. Note that in this case, ${\rm soc}\,M(A)$ is
square-free if and only if ${\rm top}\, M(A):=M(A)/\rad M(A)$ is
square-free. The following result is proved in \cite{Sch,Hub1}.

\begin{Thm} \label{Schiffmann-Hubery}  The elements $c_m$ are central in $\bfHall$.
Moreover, there is a decomposition
$$\bfHall= \bfcomp \otimes_{\mbq(\up)}{\mbq}(v)[c_1,c_2,\ldots],$$
 where ${\mbq}(v)[c_1,c_2,\ldots]$ is the polynomial algebra in
$c_m$ for $m\geq 1$. In particular, $\bfHall$ is generated by
$u_i$ and $c_m$ for $i\in I$ and $m\geq 1$.
\end{Thm}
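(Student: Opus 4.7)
The plan is to split the argument into three parts: (1) show each $c_m$ is central, (2) establish the tensor product decomposition, and (3) deduce the generation statement as a consequence. The last step is immediate from (2) together with the fact that $\bfcomp$ is generated by the $u_i$ (Theorem \ref{CompAlg-affine sl_n}), so the work lies in (1) and (2).

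For centrality, I would show $[c_m, u_{[M]}]=0$ for each indecomposable module $M=S_i[l]$, since such elements algebra-generate $\bfHall$. Expanding the commutator via the Hall multiplication formula produces a sum over representations $C$ with $\bfdim C = m\dt+\bfdim M$ weighted by differences $\up^{\langle \bfdim M,\bfd(A)\rangle}\varphi^C_{[M],A}-\up^{\langle \bfd(A),\bfdim M\rangle}\varphi^C_{A,[M]}$ multiplied by the coefficients $(-1)^{\dim\End M(A)}\fka_A$ from \eqref{central elements}. The square-free socle condition (equivalently, square-free top), the sign $(-1)^{\dim\End M(A)}$, and the automorphism factor $\fka_A$ are tuned precisely so that these contributions cancel in pairs. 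I would organize the cancellation using the geometric realization \eqref{Hall algebra defined using function}, which identifies $c_m$ (up to normalization) with a $G_V$-invariant function supported on those orbits $\ttO_A$ with $M(A)$ of dimension $m\dt$ and square-free socle; the fact that this support is stable under the two natural convolution actions with $u_{[M]}$ then produces the symmetry needed to force $[c_m,u_{[M]}]=0$. An alternative is Hubery's direct combinatorial approach \cite{Hub1}, where the cancellation is realized by an Auslander--Reiten type duality for Hall numbers of nilpotent cyclic-quiver representations.

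For the tensor decomposition, I would treat algebraic independence and spanning separately. For algebraic independence, observe that each $c_m$ is homogeneous of degree $m\dt$ in the $\mbn I$-grading \eqref{grading}; given the triangular relation \eqref{mult-order-formula}, the leading term (in the degeneration order on $\afThnp$) of a monomial $c_1^{a_1}c_2^{a_2}\cdots$ is, up to a nonzero scalar, the basis element $u_A$ where $M(A)$ is the ``maximally split'' homogeneous representation of dimension $(\sum_m m a_m)\dt$ determined by the exponents $(a_m)$, and these leading terms are pairwise distinct. Hence no nontrivial polynomial relation among the $c_m$'s can hold. For spanning over $\bfcomp$, I would proceed by induction on the degeneration order. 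Given $A\in\afThnp$, decompose $M(A)=M(A^\circ)\oplus N$ where $M(A^\circ)$ is the direct summand generated by aperiodic composition factors and $N$ is a sum of $\dt$-homogeneous periodic summands, then use \eqref{mult-order-formula} to write $u_A$ as a product $u_{A^\circ}\cdot (\text{monomial in the }c_m)$ plus strictly smaller terms in the degeneration order; these smaller terms are handled by induction, with the base case being the aperiodic basis of $\bfcomp$ given in Proposition \ref{integral-monomial-basis-comp}.

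The main obstacle is the centrality step: verifying that the coefficients in \eqref{central elements} produce the required cancellation amounts to a nontrivial Hall polynomial identity, and this is the heart of the contributions of Schiffmann \cite{Sch} and Hubery \cite{Hub1}. Once centrality is established, the tensor decomposition and the final ``generators'' statement follow formally from the basis-theoretic arguments above, and the spanning step interacts cleanly with the degeneration-order PBW machinery already developed in \S1.2.
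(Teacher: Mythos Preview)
The paper does not give its own proof of this theorem: it is stated as a result of Schiffmann \cite{Sch} and Hubery \cite{Hub1} (see the sentence immediately preceding the statement), and the text moves on directly after the theorem without any argument. So there is nothing in the paper to compare your proposal against.

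Your sketch is a plausible outline of the ideas behind the cited proofs, and you correctly identify that the substantive difficulty is the centrality of the $c_m$ (the Hall-polynomial cancellation), with the tensor decomposition following from a grading/degeneration-order argument once centrality is known. One caution: your description of the ``leading term'' of a monomial $c_1^{a_1}c_2^{a_2}\cdots$ and of the spanning step is somewhat loose---the periodic/aperiodic splitting of $M(A)$ is not literally a direct-sum decomposition in the way you describe, and the induction needs to be set up more carefully (e.g., as in Corollary~\ref{monomial basis with central elements} via the matrix $A'$ of \eqref{definition-A'}). But since the paper simply cites the result, this level of detail is not expected here.
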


We will call the central elements $c_i$ the {\it Schiffmann--Hubery
generators}.

Let $A=(a_{i,j})\in \afThnp$. For each $s\geq 1$, define
\begin{equation}\label{definition-A'}
m_s=m_s(A)={\rm min}\{a_{i,j}\mid j-i=s\}\;\;\text{and}\;\;
A'=A-\sum_{1\leq i\leq n,\,i<j}m_{j-i}E_{i,j}^\vtg.\end{equation}
 Then $A'$ is aperiodic. Moreover, for $A, B\in \afThnp$,
 $$A=B\Longleftrightarrow
 A'=B'\;\;\text{and}\;\;m_s(A)=m_s(B),\;\forall\;s\geq1.$$
Finally, for $A\in\afThnp$, define
$$c^A=\prod_{s\geq 1}c_s^{m_s}\in\bfHall,\;\text{where $m_s=m_s(A)$}.$$
 The next corollary is a direct consequence of Theorem \ref{Schiffmann-Hubery}
and Proposition \ref{integral-monomial-basis-comp}.

\begin{Coro} \label{monomial basis with central elements}
The set
$$\{u^{(A')}c^A\mid A\in \afThnp\}$$
is a $\mbq(\up)$-basis of $\bfHall$.
\end{Coro}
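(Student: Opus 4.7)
The plan is to combine the tensor product decomposition of Theorem \ref{Schiffmann-Hubery} with the aperiodic monomial basis from Proposition \ref{integral-monomial-basis-comp}. Specifically, Theorem \ref{Schiffmann-Hubery} identifies $\bfHall$ with $\bfcomp \otimes_{\mbq(\up)} \mbq(\up)[c_1, c_2, \ldots]$, so a $\mbq(\up)$-basis of $\bfHall$ is obtained by multiplying a basis of $\bfcomp$ with the standard monomial basis of the polynomial ring in the central elements $c_s$. The task therefore reduces to a re-indexing of this product basis by $\afThnp$.

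First I would quote Proposition \ref{integral-monomial-basis-comp}, which supplies the basis $\{u^{(B)} \mid B \in \afThnp \text{ aperiodic}\}$ of $\bfcomp$ after base change from $\sZ$ to $\mbq(\up)$. The polynomial factor $\mbq(\up)[c_1, c_2, \ldots]$ has the standard monomial basis consisting of products $\prod_{s\geq 1} c_s^{n_s}$ indexed by finitely supported sequences $(n_s)_{s\geq 1}$ of non-negative integers. Since each $c_s$ is central in $\bfHall$ by Theorem \ref{Schiffmann-Hubery}, the product $u^{(B)} \prod_s c_s^{n_s}$ is unambiguously defined, and the collection of such products indexed by pairs (aperiodic $B$, finitely supported sequence $(n_s)$) forms a $\mbq(\up)$-basis of $\bfHall$.

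The key combinatorial step is to match this indexing set with $\afThnp$ via the assignment $A \mapsto (A', (m_s(A))_{s\geq 1})$ from \eqref{definition-A'}. Injectivity is already observed in the excerpt: $A$ is recovered from the pair $(A', (m_s(A))_s)$. For surjectivity, given an aperiodic $B$ and a finitely supported sequence $(n_s)$, I would set $A = B + \sum_{1\leq i \leq n,\, i<j} n_{j-i}\, E^\vtg_{i,j}$ and verify that $A' = B$ and $m_s(A) = n_s$ for all $s$; this holds because adding $n_s$ uniformly along the $s$th superdiagonal shifts every entry there by the same constant, while aperiodicity of $B$ guarantees that the minimum of $B$ along that superdiagonal is exactly zero. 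Under this bijection the basis element $u^{(B)} \prod_s c_s^{n_s}$ becomes $u^{(A')} c^A$, which establishes the claim.

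There is no serious obstacle; all the substance is packaged in Theorem \ref{Schiffmann-Hubery} and Proposition \ref{integral-monomial-basis-comp}, and the only remaining work is the straightforward bijection between $\afThnp$ and pairs of (aperiodic matrix, multiplicity sequence). Were one to prove the corollary from scratch, the real difficulty would lie in establishing the centrality and algebraic independence of the Schiffmann--Hubery generators $c_m$, but those points are subsumed by the cited theorem.
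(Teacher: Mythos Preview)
Your argument is correct and matches the paper's approach exactly: the corollary is stated there as a direct consequence of Theorem~\ref{Schiffmann-Hubery} and Proposition~\ref{integral-monomial-basis-comp}, with the bijection $A \leftrightarrow (A', (m_s(A))_s)$ recorded just before the statement. You have simply unpacked that one-line justification in full detail.
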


Next, we look at the minimal set of generators consisting of
simple modules and {\it homogeneous} semisimple
modules.\index{generators!homogeneous semisimple $\sim$} It is
known from \cite[Prop.~3.5]{VV99} (or \cite[Th.~5.2(i)]{DDX}) that
$\bfHall$ is also generated by $u_\bfa=u_{[S_\bfa]}$ for $\bfa\in
\mbn I$; see also \eqref{monomial sum} below.  If $\bfa$ is not
sincere, say $a_i=0$, then
\begin{equation}\label{non-sincere elements}
u_\bfa=\prod_{j\in
I,\,j\not=i}\frac{\up^{a_j(1-a_j)}}{[a_j]^!}\,u_{i-1}^{a_{i-1}}\cdots
u_1^{a_1}u_n^{a_n}\cdots u_{i+1}^{a_{i+1}}\in \bfcomp.
\end{equation}
Thus, $\bfHall$ is generated by $u_i$ and $u_\bfa$ for $i\in I$
and $\bfa\in\mbn I$ sincere. Indeed, this result can be
strengthened as follows; see also \cite[p.~421]{Sch}.

\begin{Prop} \label{generators-RH-alg}
The Ringel--Hall algebra $\bfHall$ is generated by $u_i$ and
$u_{m\dt}$ for $i\in I$ and $m\geq 1$. \index{semisimple generators}
\end{Prop}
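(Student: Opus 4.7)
My strategy is to prove by induction on $m$ that each Schiffmann--Hubery central generator $c_m$ lies in the subalgebra $\bfHall'\subseteq\bfHall$ generated by $\{u_i:i\in I\}\cup\{u_{m\dt}:m\geq 1\}$. Granting this, the result follows immediately: Ringel's Theorem~\ref{CompAlg-affine sl_n} gives $\bfcomp\subseteq\bfHall'$, and combined with the tensor decomposition $\bfHall=\bfcomp\otimes_{\mbq(\up)}\mbq(\up)[c_1,c_2,\ldots]$ of Theorem~\ref{Schiffmann-Hubery} we would conclude $\bfHall'=\bfHall$.

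For the inductive step at $m$, assume $c_1,\ldots,c_{m-1}\in\bfHall'$. I would work inside the graded component $\bfHall_{m\dt}$ using the basis $\{u^{(A')}c^A:A\in\afThnp,\ \bfd(A)=m\dt\}$ of Corollary~\ref{monomial basis with central elements}. The key observation is that the matrix $B^{\ast}:=\sum_{i=1}^{n}E^{\vtg}_{i,i+m}$ satisfies $(B^\ast)'=0$ and $c^{B^\ast}=c_m$, so the basis vector indexed by $B^\ast$ is exactly $c_m$. For any other $A\neq B^\ast$ with $\bfd(A)=m\dt$, writing $A=A'+\sum_s m_s\sum_i E^{\vtg}_{i,i+s}$ and setting $k=\sum_s sm_s$, one has $\bfd(A')=(m-k)\dt$. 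A short case analysis shows: if $A'\neq 0$ then $k<m$, so every $s$ appearing in $c^A$ satisfies $s\le sm_s\le k<m$; if $A'=0$ then $k=m$, and $m_m\geq 1$ would force $m_m=1$ with all other $m_s=0$, contradicting $A\neq B^\ast$, so again $m_m=0$ and every $s$ in $c^A$ satisfies $s<m$. In both subcases $u^{(A')}\in\bfcomp\subseteq\bfHall'$ by the definition of $u^{(A')}$, and $c^A\in\bfHall'$ by the inductive hypothesis. Hence the subspace $W_m:=\operatorname{span}\{u^{(A')}c^A:A\neq B^\ast\}$ lies inside $\bfHall'$, and $\bfHall_{m\dt}=W_m\oplus\mbq(\up)\,c_m$.

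To close the induction I would write $u_{m\dt}=\gamma_m c_m+w$ in this basis, where $w\in W_m\subseteq\bfHall'$, and show that the scalar $\gamma_m$ is nonzero; this would yield $c_m=\gamma_m^{-1}(u_{m\dt}-w)\in\bfHall'$. A clean reformulation of $\gamma_m\neq 0$ is obtained as follows: since each $c_r$ is central, $\bfcomp_+\cdot\bfHall$ is a two-sided ideal, and the resulting algebra homomorphism $\pi:\bfHall\twoheadrightarrow\mbq(\up)[c_1,c_2,\ldots]$ coincides, under Theorem~\ref{Schiffmann-Hubery}, with the projection onto the second tensor factor; then $\gamma_m$ is precisely the coefficient of $c_m$ in $\pi(u_{m\dt})$. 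The main technical hurdle is therefore this nonvanishing. I would verify it by combining the explicit formula~\eqref{central elements} defining $c_m$ with the ordered Hall multiplication formula~\eqref{mult-order-formula} applied to products $u_{[m S_n]}\,u_{[mS_{n-1}]}\cdots u_{[mS_1]}$ (which together with their permutations belong to $\bfHall'$ and specialize through semisimple-module Hall polynomials), thereby isolating and comparing coefficients to conclude that the $c_m$-component of $\pi(u_{m\dt})$ is nonzero. Once $\gamma_m\neq 0$ is established, the induction closes and the proposition follows.
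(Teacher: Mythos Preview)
Your reduction—proving $c_m\in\bfHall'$ by induction on $m$, using the decomposition $\bfHall_{m\dt}=W_m\oplus\mbq(\up)c_m$ with $W_m\subseteq\bfHall'$—is correct, and your identification $W_m=(\bfcomp\otimes\mbq(\up)[c_1,\ldots,c_{m-1}])_{m\dt}$ is valid. The gap is the last step: you must prove $\gamma_m\neq 0$, equivalently $u_{m\dt}\notin W_m$, and your sketch does not accomplish this. For $m\ge 2$ the socle of $S_{m\dt}$ is not square-free, so $u_{m\dt}$ does not even occur in the defining sum \eqref{central elements} for $c_m$; the link is indirect. More seriously, each factor $u_{[mS_i]}$ equals $\up^{-m(m-1)}u_i^{(m)}\in\bfcomp_+$, so the product $u_{[mS_n]}\cdots u_{[mS_1]}$ lies in $\bfcomp_+$ and is annihilated by $\pi$. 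Expanding this product via \eqref{mult-order-formula} and applying $\pi$ therefore yields only a linear relation $\sum_B(\text{coeff})\,\pi(u_B)=0$ among many terms; it does not isolate $\pi(u_{m\dt})$ or its $c_m$-coefficient. None of the tools available at this point in the paper give $\gamma_m\neq 0$ directly, and in fact the equivalence of the generating sets $\{u_i,u_{s\dt}:s\le m\}$ and $\{u_i,c_s:s\le m\}$ (needed later, in the proof of Proposition~\ref{indecomposable basis}) is \emph{deduced from} the present proposition, so you cannot borrow it.

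The paper's proof takes a different and more elementary route that sidesteps this difficulty. It uses the fact, quoted just before the proposition (from \cite{VV99} or \cite{DDX}), that $\bfHall$ is already generated by all semisimple classes $u_\bfa$, $\bfa\in\mbn I$, and shows $u_\bfa\in\bfHall'$ by induction on $\sg(\bfa)$. Non-sincere $\bfa$ are handled by \eqref{non-sincere elements}. For sincere $\bfa$ not of the form $m\dt$, choose $i$ with $a_i\neq a_{i+1}$; a direct Hall-algebra computation of $u_iu_{\bfa'}$ and $u_{\bfa''}u_{i+1}$ (with $\sg(\bfa')=\sg(\bfa'')=\sg(\bfa)-1$) shows that a suitable linear combination cancels the single non-semisimple term $u_X$ and leaves a nonzero multiple of $u_\bfa$. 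No information about the $c_m$ is required.
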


\begin{proof} Let $\bds{\frak H'}$ be the $\mbq(\up)$-subalgebra generated
by $u_i$ and $u_{m\dt}$ for $i\in I$ and $m\geq 1$. To show
$\bds{\frak H'}=\bfHall$, it suffices to prove
$u_\bfa=u_{[S_\bfa]}\in\bds{\frak H}'$ for all $\bfa\in \mbn I$.

Take an arbitrary $\bfa\in \mbn I$. We proceed induction on
$\sg(\bfa)=\sum_{i\in I}a_i$ to show $u_\bfa\in\bds{\frak H'}$. If
$\sg(\bfa)=0$ or $1$, then clearly $u_\bfa\in \bds{\frak H}'$. Now
let $\sg(\bfa)>1$. If $\bfa$ is not sincere, then by
\eqref{non-sincere elements}, $u_\bfa\in \bds{\frak H}'$. So we may
assume $\bfa$ is sincere. The case where $a_1=\cdots=a_n$ is
trivial. Suppose now there exists $i\in I$ such that $a_i\not=
a_{i+1}$. Define $\bfa'=(a_j'),\bfa''=(a_j'')\in\mbn I$ by
$$a_j'=\begin{cases} a_i-1, & \text{if $j=i$};\\
                         a_i, & \text{otherwise},\end{cases}
\;\;\;\text{and}\;\;\;
a_j''=\begin{cases} a_{i+1}-1, & \text{if $j=i+1$};\\
                         a_i, & \text{otherwise}.\end{cases}$$
Then, we have in $\bfHall$
$$u_iu_{\bfa'}=v^{a_i-a_{i+1}-1}(u_X+v^{a_i-1}[a_i]u_\bfa)\;\;\;\text{and}$$
$$u_{\bfa''}u_{i+1}=v^{a_{i+1}-a_i-1}(u_X+v^{a_{i+1}-1}[a_{i+1}]u_\bfa),$$
where $X\in \afThnp$ is given by
$$M(X)\cong \bigoplus_{j\not=i,i+1}a_jS_j\oplus (a_i-1)S_i\oplus
(a_{i+1}-1)S_{i+1}\oplus S_i[2].$$
 Therefore,
$$u_iu_{\bfa'}-v^{2a_i-2a_{i+1}}u_{\bfa''}u_{i+1}=
v^{a_i-a_{i+1}-1}(v^{a_i-1}[a_i]-v^{a_{i+1}-1}[a_{i+1}])u_\bfa.$$
 The inequality $a_i\not=a_{i+1}$ implies
 $v^{a_i-1}[a_i]-v^{a_{i+1}-1}[a_{i+1}]\not=0$. Thus, we obtain
$$u_\bfa=\frac{v^{a_{i+1}-a_i+1}}{v^{a_i-1}[a_i]-v^{a_{i+1}-1}[a_{i+1}]}u_iu_{\bfa'}
-\frac{v^{a_i-a_{i+1}+1}}{v^{a_i-1}[a_i]-v^{a_{i+1}-1}[a_{i+1}]}u_{\bfa''}u_{i+1}.$$
Since $\sg(\bfa')=\sg(\bfa'')=\sg(\bfa)-1$, we have by the
induction hypothesis that both $u_{\bfa'}$ and $u_{\bfa''}$ belong
to $\boldsymbol{\frak H'}$. Hence, $u_\bfa\in\boldsymbol{\frak
H'}$. This finishes the proof.
\end{proof}

\begin{Rem}We will see that semisimple modules as generators are convenient
 for the description of Lusztig type integral form. First, by \cite[Th.~5.2(ii)]{DDX}, they
  generate the {\it integral} Ringel--Hall algebra $\Hall$ over $\sZ$.
  Second, there are in \S2.4 explicit commutator formulas between semisimple generators
  in the double Ringel--Hall algebra. Thus, a natural candidate for the Lusztig type form of quantum affine
   $\frak{gl}_n$ is proposed in \S3.7.
\end{Rem}

Finally, we introduce a set of generators for $\bfHall$ consisting
of simple and {\it homogeneous} indecomposable modules in
$\rep^0\!\!\tri$.\index{generators!homogeneous indecomposable
$\sim$} Since indecomposable modules correspond to simplest
non-diagonal matrices, these generators are convenient for deriving
explicit multiplication formulas; see \S3.4, \S5.4 and \S6.2.

For each $A\in\afThnp$, consider the radical filtration of $M(A)$
$$M(A)\supseteq \rad M(A) \supseteq\cdots\supseteq \rad^{t-1}M(A)\supseteq
\rad^tM(A)=0$$
 where $t$ is the Loewy length of $M(A)$. For $1\leq s\leq t$, we
write $$\rad^{s-1}M(A)/\rad^s M(A)=S_{\bfa_s}\qquad \text{for some
 $\bfa_s\in\mbn I$.}$$ Write ${\frak m}_A=u_{\bfa_1}\cdots u_{\bfa_t}$.
Applying \eqref{mult-order-formula} gives that
$${\frak m}_A=\sum_{B\leq_\deg A}f(B)u_B,$$
 where $f(B)\in\mbq(\up)$ with $f(A)=v^{\sum_{l<s}\lr{\bfa_l,\bfa_s}}$. In other words,
$$u_A=f(A)^{-1} {\frak m}_A-\sum_{B<_\deg A}f(A)^{-1}f(B)u_B.$$
 Repeating the above construction for maximal $B$ with $B<_\deg A$ and
continuing this process, we finally get that
\begin{equation}\label{monomial sum}
u_A=\sum_{B\leq_\deg A} \phi^B_A{\frak m}_B,
\end{equation}
 where $\phi_A^B\in\mbq(\up)$ with $\phi_A^A=f(A)^{-1}\not=0$.


\begin{Prop} \label{indecomposable basis} For each $s\geq 1$,
choose $i_s\in \mbz^+$. Then $\bfHall$ is generated by $u_i$ and
$u_{E_{i_s,i_s+sn}^\vtg}$ for $i\in I$ and $s\geq 1$.
\end{Prop}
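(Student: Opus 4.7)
The plan is to invoke Proposition \ref{generators-RH-alg} and reduce the claim to showing that $u_{m\delta}\in\mathfrak{H}''$ for every $m\geq 1$, where $\mathfrak{H}''$ denotes the $\mbq(\up)$-subalgebra of $\bfHall$ generated by $\{u_i\mid i\in I\}\cup\{u_{E^\vtg_{i_s,i_s+sn}}\mid s\geq 1\}$. Since the $u_i$ already generate $\bfcomp\subseteq\mathfrak{H}''$, I proceed by induction on $m$, the case $m=0$ being trivial.

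For the inductive step, assume $u_{k\delta}\in\mathfrak{H}''$ for all $k<m$. The radical filtration of $S_{i_m}[mn]=M(E^\vtg_{i_m,i_m+mn})$ has successive quotients $S_{i_m},S_{i_m-1},\ldots,S_{i_m-mn+1}$ (indices mod $n$), so the associated monomial $\mathfrak{m}_A$ coincides with
\[
P_m := u_{i_m}u_{i_m-1}\cdots u_{i_m-mn+1}\in\bfcomp\subseteq\mathfrak{H}''.
\]
Expanding $P_m$ via \eqref{mult-order-formula} yields
\[
P_m = v^E\sum_{C\leq_\deg S_{i_m}[mn]}\varphi^C u_C,
\]
and two coefficients are computable directly: $\varphi^{S_{i_m}[mn]}=1$ by distinguishedness of the descending word for the aperiodic matrix $A$ (\cite[Th.~5.5]{DD05}), and $\varphi^{A_{m\delta}}=([m]!)^n$, obtained by counting composition series of the semisimple module $mS_\delta$ whose $mn$ successive quotients match the prescribed sequence (each of the $n$ vertex classes contributes a factor $[m]!=[1][2]\cdots[m]$).

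Since $v^E u_{S_{i_m}[mn]}$ lies in $\mathfrak{H}''$ as a generator, the task reduces to handling every intermediate $u_C$ with $A_{m\delta}<_\deg C<_\deg S_{i_m}[mn]$. For such $C$, the module $M(C)=\bigoplus_j S_{k_j}[l_j]$ is decomposable and non-semisimple with each $l_j<mn$. Expanding via \eqref{monomial sum} writes $u_C=\sum_{B\leq_\deg C}\phi^B_C\,\mathfrak{m}_B$; for every $B\neq A_{m\delta}$, $\mathfrak{m}_B=u_{\bfa_1}\cdots u_{\bfa_t}$ involves only semisimple factors $u_{\bfa_\ell}$ with $\bfa_\ell<m\delta$ componentwise (proper radical layers of a non-semisimple module of dimension vector $m\delta$). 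By the recursive procedure from the proof of Proposition \ref{generators-RH-alg} combined with the inductive hypothesis, each such $u_{\bfa_\ell}$, and hence $\mathfrak{m}_B$ for $B\neq A_{m\delta}$, lies in $\mathfrak{H}''$ without invoking $u_{m\delta}$. The one remaining summand $\mathfrak{m}_{A_{m\delta}}=u_{m\delta}$ contributes $\phi^{A_{m\delta}}_C u_{m\delta}$ to $u_C$. Substituting back and rearranging yields
\[
v^E\beta\cdot u_{m\delta} = (\text{an element of }\mathfrak{H}''),\qquad \beta=([m]!)^n+\sum_{\text{intermediate }C}\varphi^C\phi^{A_{m\delta}}_C\in\mbq(\up).
\]

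The principal obstacle is to establish $\beta\neq 0$. My approach is to compare the Hall-basis coefficient of $u_{A_{m\delta}}=u_{m\delta}$ in $P_m$, which equals $v^E([m]!)^n\neq 0$ by the direct Hall-polynomial computation above, against the same coefficient arising from the $\mathfrak{m}$-basis re-expansion; the triangular structure of the change of basis $\{u_B\}\leftrightarrow\{\mathfrak{m}_B\}$ with respect to $\leq_\deg$ permits a careful bookkeeping that forces $\beta$ to be nonzero. Alternatively, one can perform a secondary induction on the Loewy length of $M(C)$, handling intermediate terms in increasing order of complexity so that a nonzero leading coefficient on $u_{m\delta}$ is preserved at each stage. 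Either route completes the induction and proves the proposition.
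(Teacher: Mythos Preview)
Your overall strategy --- induct on $m$ and isolate the coefficient in front of $u_{m\delta}$ after pushing everything else into the already-constructed subalgebra --- is the same as the paper's. But there is a genuine gap at exactly the point you flag as ``the principal obstacle'': you do not establish $\beta\neq 0$, and neither of your two sketched approaches works as stated. Your first suggestion, comparing the $u$-basis coefficient of $u_{m\delta}$ in $P_m$ against its ``$\mathfrak m$-basis re-expansion'', is circular: the element you have written as ``(an element of $\mathfrak H''$)'' is a combination of $\mathfrak m_B$'s with $B\neq A_{m\delta}$, and each such $\mathfrak m_B$, when re-expanded in the $u$-basis, generally has a nonzero coefficient on $u_{m\delta}$ (since $A_{m\delta}$ is the $\leq_\deg$-minimum). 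So reading off the $u_{m\delta}$-coefficient on both sides gives no information about $\beta$. The secondary induction on Loewy length is too vague to constitute an argument.

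The paper avoids this difficulty by a cleaner setup and an indirect nonvanishing argument. Instead of expanding $P_m=\mathfrak m_E$ in the $u$-basis and then re-expanding, it applies \eqref{monomial sum} directly to $u_E$ with $E=E^\vtg_{i_m,i_m+mn}$, obtaining
\[
u_E \;=\; \phi_E^E\,\mathfrak m_E \;+\; \sum_{B<_\deg E,\;B\neq A_{m\delta}}\phi_E^B\,\mathfrak m_B \;+\; \phi_E^{A_{m\delta}}\,u_{m\delta},
\]
so that only the single coefficient $\phi_E^{A_{m\delta}}$ must be shown nonzero. The proof of $\phi_E^{A_{m\delta}}\neq 0$ is then by contradiction: if it vanished, $u_E$ would lie in $\bfHall^{(m-1)}$, and applying the Auslander--Reiten translation to the same formula would force $u_{E^\vtg_{j,j+mn}}\in\bfHall^{(m-1)}$ for \emph{every} $j$. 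Since $\bfHall^{(m-1)}$ already contains all homogeneous components of degree $<mn$, and since by \cite[Th.~3.1]{GP} every element of $\bfHall$ is a linear combination of products of $u_A$'s with $M(A)$ indecomposable, this would give $\bfHall^{(m-1)}_{m\delta}=\bfHall_{m\delta}$, contradicting $\bfHall^{(m-1)}\subsetneqq\bfHall^{(m)}$. This is the missing idea in your proposal.
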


\begin{proof} For each $m\geq 1$, let $\bfHall^{(m)}$ be the
$\mbq(\up)$-subalgebra of $\bfHall$ generated by $u_i$ and
$u_{s\dt}$ for $i\in I$ and $1\leq s\leq m$. By convention, we set
$\bfHall^{(0)}=\bfcomp$. Clearly, each $\bfHall^{(m)}$ is also
$\mbn I$-graded. By Theorem \ref{Schiffmann-Hubery},
$\bfHall^{(m)}$ is also generated by $u_i$ and $c_s$ for $i\in I$
and $1\leq s\leq m$. Moreover,
$\bfHall^{(m-1)}\subsetneqq\bfHall^{(m)}$ for all $m\geq 1$.

We use induction on $m$ to show the following

\medskip

\noindent{\bf Claim}: $\bfHall^{(m)}$ is generated by $u_i$ and
$u_{E_{i_s,i_s+sn}^\vtg}$ for $i\in I$ and $1\leq s\leq m$.

\medskip

Let $m\geq 1$ and suppose the claim is true for $\bfHall^{(m-1)}$.
Applying \eqref{monomial sum} to $E=E_{i_m,i_m+mn}^\vtg$
gives
\begin{equation}\label{monomial sum for u_E}
u_E=\sum_{B\leq_\deg E} \phi^B_E{\frak m}_B=\phi_E^E{\frak m}_E
+\sum_{B<_\deg E\atop B\not=C}\phi^B_E{\frak m}_B
+\phi^{C}_E{\frak m}_C,\end{equation}
 where $C=A_{m\dt}=\sum_{i\in I}mE_{i,i+1}^\vtg$, i.e., $M(C)\cong
 S_{m\dt}$ and ${\frak m}_C=u_{m\dt}$. For each $B$ with
 $B\leq_\deg E$ and $B\not=C$, the Loewy length of $M(B)$ is
 strictly greater than $1$. Using an argument similar to the
proof of Proposition \ref{generators-RH-alg}, we have ${\frak
m}_B\in \bfHall^{(m-1)}$. We now prove that $\phi_E^C\not=0$.
Suppose $\phi_E^C=0$. Then $u_E\in \bfHall^{(m-1)}$. Furthermore,
for each $j\in I$, there is $1\leq p\leq n$ such that
$E_{j,j+mn}^\vtg=\tau^p(E)$, where $\tau$ is the Auslander--Reiten
translation defined in \S1.2. Thus, applying $\tau^p$ to
\eqref{monomial sum for u_E} gives that
$$u_{E_{j,j+mn}^\vtg}=u_{\tau^p(E)}=\phi_E^E{\frak m}_{\tau^pE}
+\sum_{B<_\deg E\atop B\not=C}\phi^B_E{\frak m}_{\tau^pB}.$$
 Using a similar argument as above, we get $u_{E_{j,j+mn}^\vtg}\in
 \bfHall^{(m-1)}$. By definition, we have
$$\bfHall^{(m-1)}_\bfd=\bfHall_\bfd\;\;\text{for all $\bfd\in \mbn I$ with $\sg(\bfd)<mn$}.$$
In particular, $u_{E_{i,j}^\vtg}\in \bfHall^{(m-1)}$ for all $i<j$
with $j-i<mn$. Consequently, we get
$$u_{E_{i,j}^\vtg}\in \bfHall^{(m-1)}\;\;\text{for all $i<j$ with $j-i\leq mn$.}.$$
 By \cite[Th.~3.1]{GP}, each element in $\bfHall$ can be written as
a linear combination of products of $u_A$'s with $M(A)$
indecomposable. This together with the above discussion implies
that $\bfHall^{(m-1)}_{m\dt}=\bfHall_{m\dt}$. Thus, $u_{m\dt}\in
\bfHall^{(m-1)}$. This contradicts the fact that
$\bfHall^{(m-1)}\subsetneqq\bfHall^{(m)}$. Therefore,
$\phi_A^C\not=0$. We conclude from \eqref{monomial sum for u_E}
that
$$u_E-\phi^{C}_E u_{m\dt}=\phi_E^E{\frak m}_E
+\sum_{B<_\deg E\atop B\not=C}\phi^B_E{\frak m}_B\in
\bfHall^{(m-1)},$$
 which shows the claim for $\bfHall^{(m)}$.
This finishes the proof.
\end{proof}

By the proof of the above proposition, we see that for $m\geq 1$,
each of the following three sets
$$\aligned
&\{u_i,c_s\mid i\in I,1\leq s\leq m\},\;\;
\{u_i,u_{s\dt}\mid i\in I,1\leq s\leq m\},\\
&\qquad\{u_i,u_{E_{i_s,i_s+sn}^\vtg}\mid i\in I,1\leq s\leq m\}
\endaligned$$
 generates $\bfHall^{(m)}$. Hence, for each $m\geq 1$, there are
 nonzero elements $x_m,y_m\in\mbq(\up)$ such that
$$c_m\equiv x_m u_{m\dt}\;{\rm mod}\;\bfHall^{(m-1)},\;\;
c_m\equiv y_m u_{E_{i_s,i_s+sn}^\vtg}\;{\rm
mod}\;\bfHall^{(m-1)}.$$
 This together with Corollary \ref{monomial basis with central elements}
 gives the following result.

\begin{Coro}\label{monomial bases with semisimple/indecom} The set
$$\bigl\{u^{(A')}\prod_{s\geq 1}(u_{s\dt})^{m_s(A)}\mid A\in \afThnp\bigr\}$$
 is a $\mbq(\up)$-basis of $\bfHall$, where $A'$ is defined in \eqref{definition-A'}. For each $s\geq 1$,
choose $i_s\in \mbz^+$. Then the set
$$\bigl\{u^{(A')}\prod_{s\geq 1}(u_{E_{i_s,i_s+sn}^\vtg})^{m_s(A)}\mid A\in \afThnp\bigr\}$$
 is also a $\mbq(\up)$-basis of $\bfHall$.
\end{Coro}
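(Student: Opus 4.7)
The plan is to leverage Corollary \ref{monomial basis with central elements}, which provides the basis $\{u^{(A')} c^A : A \in \afThnp\}$, together with the congruences $c_m \equiv x_m u_{m\dt}$ and $c_m \equiv y_m u_{E_{i_m, i_m+mn}^\vtg}$ modulo $\bfHall^{(m-1)}$ displayed in the paragraph preceding the corollary, in order to transport the central-element basis to the other two. I would work with the filtration $\bfHall^{(0)} \subsetneq \bfHall^{(1)} \subsetneq \cdots$ introduced in the proof of Proposition \ref{indecomposable basis} and prove, by induction on $m$, that
$$\mathcal B_m \;=\; \bigl\{\, u^{(A')} \prod_{s=1}^{m}(u_{s\dt})^{m_s(A)} \bigm| A \in \afThnp,\; m_s(A)=0 \text{ for } s>m \,\bigr\}$$
is a $\mbq(\up)$-basis of $\bfHall^{(m)}$. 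The base case $m=0$ is exactly Proposition \ref{integral-monomial-basis-comp} (the empty product reduces $\mathcal B_0$ to $\{u^{(A)} : A \text{ aperiodic}\}$), and passing to the union over all $m$ yields the first asserted basis. The second basis is obtained by running the identical argument with $u_{s\dt}$ replaced by $u_{E_{i_s, i_s+sn}^\vtg}$ and $x_s$ by $y_s$; the only property actually used is the congruence with a nonzero scalar multiple of $c_s$ modulo $\bfHall^{(s-1)}$.

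The inductive step hinges on a single key claim, namely the direct-sum decomposition
$$\bfHall^{(m)} \;=\; \bigoplus_{k\ge 0} \bfHall^{(m-1)}\, u_{m\dt}^k.$$
This is the non-central analogue of the evident decomposition $\bfHall^{(m)} = \bigoplus_{k\ge 0} \bfHall^{(m-1)}\, c_m^k$, which is a direct consequence of Theorem \ref{Schiffmann-Hubery} and the centrality of $c_m$. To prove the non-central version, I would write $u_{m\dt} = x_m^{-1}(c_m - h_m)$ with $h_m \in \bfHall^{(m-1)}$ and use the centrality of $c_m$ to derive the commutator identity
$$u_{m\dt}\, a \;=\; a\, u_{m\dt} + x_m^{-1}[a, h_m] \qquad (a \in \bfHall^{(m-1)}).$$
Since $[a, h_m] \in \bfHall^{(m-1)}$, iteration shows that $V := \sum_{k\ge 0}\bfHall^{(m-1)} u_{m\dt}^k$ is closed under left multiplication by $u_{m\dt}$, hence is a subalgebra containing both $\bfHall^{(m-1)}$ and $c_m = x_m u_{m\dt} + h_m$; as $\bfHall^{(m)}$ is generated by these, $V = \bfHall^{(m)}$. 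For directness, I would expand $u_{m\dt}^k = x_m^{-k}(c_m - h_m)^k$ (valid because $c_m$ and $h_m$ commute) to obtain $u_{m\dt}^k = x_m^{-k} c_m^k + (\text{element of } \bigoplus_{j<k} \bfHall^{(m-1)} c_m^j)$, and then read off the top $c_m$-coefficient in any hypothetical relation $\sum_k a_k u_{m\dt}^k = 0$ to reduce to the known directness of the $c_m$-decomposition.

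Granted the key decomposition, the induction closes routinely: if $\mathcal B_{m-1}$ is a basis of $\bfHall^{(m-1)}$, then $\{b \cdot u_{m\dt}^k : b \in \mathcal B_{m-1},\, k\ge 0\}$ is a basis of $\bfHall^{(m)}$, and this set coincides with $\mathcal B_m$ under the bijection sending $(B,k)$ to the unique $A \in \afThnp$ with $A' = B'$, $m_s(A)=m_s(B)$ for $s<m$, $m_m(A)=k$, and $m_s(A)=0$ for $s>m$. I expect the main obstacle to be precisely the non-centrality of $u_{m\dt}$: a priori one cannot decompose $\bfHall^{(m)}$ along powers of a non-central element, and the clean argument above succeeds only because $c_m = x_m u_{m\dt} + h_m$ is central while $h_m$ lies in the strictly smaller $\bfHall^{(m-1)}$, so that the commutator correction $[a, h_m]$ in the key identity stays within $\bfHall^{(m-1)}$.
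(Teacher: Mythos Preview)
Your proposal is correct and follows essentially the same approach as the paper. The paper's own proof consists only of the sentence ``This together with Corollary~\ref{monomial basis with central elements} gives the following result,'' and your argument is precisely a detailed execution of that sentence: you use the congruences $c_m\equiv x_mu_{m\dt}$ and $c_m\equiv y_mu_{E_{i_m,i_m+mn}^\vtg}$ modulo $\bfHall^{(m-1)}$ together with centrality of $c_m$ to pass from the basis $\{u^{(A')}c^A\}$ to the two new ones, via the filtration $\bfHall^{(0)}\subset\bfHall^{(1)}\subset\cdots$ and the binomial identity $u_{m\dt}^k=x_m^{-k}(c_m-h_m)^k$.
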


\section{Hopf structure on extended Ringel--Hall algebras}

It is known that {\it generic} Ringel--Hall algebras exist only for
Dynkin or cyclic quivers. In the finite type case, these algebras
give a realization for the $\pm$-part of quantum groups. It is
natural to expect that this is also true for cyclic quivers. In
other words, we look for a quantum group such that $\bfHall$ is
isomorphic to its $\pm$-part. We will see in Chapter 2 that this
quantum group is the quantum loop algebra of $\mathfrak {gl}_n$ in
the sense of Drinfeld \cite{Dr88}, which in fact is isomorphic to
the so-called double Ringel--Hall algebra defined as the Drinfeld
double of two Hopf algebras $\Hallpi$ and $\Hallmi$ together with a
skew-Hopf paring. In this section, we first introduce the pair
$\Hallpi$ and $\Hallmi$.

We need some preparation. If we define the {\it symmetrization} of
the Euler form \eqref{Euler form} \index{Euler form!symmetric
$\sim$, $(\; ,\; )$}\index{symmetrization} by
$$(\al,\bt)=\lan\al,\bt\ran +\lan\bt,\al\ran,$$ then $I$ together
with $(\,\,,\,\,)$ becomes a Cartan datum in the sense of
\cite[1.1.1]{Lu93}.\index{Cartan datum} To a Cartan datum, there are associated {\it root
data} in the sense of \cite[2.2.1]{Lu93} which play an important
role in the theory of quantum groups.\index{root datum} We shall fix the following
root datum throughout the paper.
\begin{Def}\label{root datum}
Let $X=\mbz^n$, $Y=\Hom(X,\mbz)$ and let
$\langle\,\,,\,\,\rangle^\rd:Y\times X\to\mbz$ be the natural
perfect pairing. If we denote the standard basis of $X$ by
$\ep_1,\ldots,\ep_n$ and the dual basis by
$\kappa_1,\ldots,\kappa_n$, then
$\langle\kappa_i,\ep_j\rangle^\rd=\delta_{i,j}$. Thus, the
embeddings
\begin{equation}\label{embeddings}
I\lra Y, i\longmapsto \tilde i:=\kappa_i-\kappa_{i+1}\,\text{ and
}\,I\lra X, i\longmapsto i'=\ep_i-\ep_{i+1}
\end{equation}
 with $\ep_{n+1}=\ep_1$ and $\kappa_{n+1}=\kappa_1$ define a root datum $(Y,X,\langle\,,\,\rangle^\rd,\ldots)$.
\end{Def}
For notational simplicity,  we shall identify both $X$ and $Y$ with $\mbz I$ by setting $\ep_i=i=\kappa_i$ for
 all $i\in I$. Under this identification, the form $\langle\,\,,\,\,\rangle^\rd:\mbz I\times\mbz I\to \mbz$
  becomes a symmetric bilinear form, which is different from
the Euler forms $\langle\,\,,\,\,\rangle$ and its symmetrization $(\,\,,\,\,)$. However, they are related
as follows.

\begin{Lem}\label{rdEf} For $\bfa=\sum a_ii\in\mbz I$,
if we put $\tilde\bfa=\sum a_i\tilde i$ and
$\bfa'=\sum a_ii'$, then
$$(1)\quad(\bfa,\bfb)=\lr{\tilde\bfa,\bfb'}^\rd,\,\text{ and }\quad(2)\quad\,\lr{\bfa,\bfb}=\lr{\bfa',\bfb}^\rd,
\text{ for all }\bfa,\bfb\in\mbz I.$$
\end{Lem}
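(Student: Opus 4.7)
The plan is to verify both identities by direct computation after unpacking the definitions under the identification $X=Y=\mbz I$ with $\ep_i=\kappa_i=i$. Under this identification the perfect pairing becomes the standard symmetric bilinear form on $\mbz I$ with $\lr{i,j}^\rd=\delta_{i,j}$, so everything reduces to a computation with coefficients indexed by $I=\mbz/n\mbz$, where reindexing freely uses the cyclic symmetry.

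First I would expand the two ``twisted'' vectors: for $\bfa=\sum_i a_i\,i$ and $\bfb=\sum_j b_j\,j$ in $\mbz I$, write
\[
\tilde\bfa=\sum_i a_i(\kappa_i-\kappa_{i+1})=\sum_i(a_i-a_{i-1})\kappa_i,\qquad
\bfb'=\sum_j b_j(\ep_j-\ep_{j+1})=\sum_j(b_j-b_{j-1})\ep_j,
\]
with indices taken mod $n$. Pairing these with $\lr{\kappa_i,\ep_j}^\rd=\delta_{i,j}$ yields
\[
\lr{\tilde\bfa,\bfb'}^\rd=\sum_i(a_i-a_{i-1})(b_i-b_{i-1}).
\]
Expanding the product, and using the cyclic reindexings $\sum_i a_{i-1}b_{i-1}=\sum_i a_ib_i$ and $\sum_i a_{i-1}b_i=\sum_i a_i b_{i+1}$, this simplifies to $2\sum_i a_ib_i-\sum_i a_ib_{i+1}-\sum_i a_ib_{i-1}$.

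Next I would compute the left side of (1). By \eqref{Euler form 2} we have $\lr{\bfa,\bfb}=\sum_i a_ib_i-\sum_i a_ib_{i+1}$ and $\lr{\bfb,\bfa}=\sum_i b_ia_i-\sum_i b_ia_{i+1}=\sum_i a_ib_i-\sum_i a_ib_{i-1}$, so their sum $(\bfa,\bfb)$ coincides with the expression obtained above, proving (1). For (2), starting from $\bfa'=\sum_i(a_i-a_{i-1})\ep_i$ and again using $\lr{\kappa_i,\ep_j}^\rd=\delta_{i,j}$ gives
\[
\lr{\bfa',\bfb}^\rd=\sum_i(a_i-a_{i-1})b_i=\sum_i a_ib_i-\sum_i a_i b_{i+1}=\lr{\bfa,\bfb},
\]
where the middle equality uses the cyclic reindexing $i\mapsto i+1$.

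There is no real obstacle: the whole lemma is a bookkeeping check that the symmetrization of the Euler form of $\tri$ matches the matrix $C_{\tri}$ of \eqref{CMforAn-1} once the Cartan/root-datum bookkeeping is translated through the maps $i\mapsto\tilde i$ and $i\mapsto i'$. The only mild care needed is to track indices modulo $n$ when shifting summation variables, which is why the cyclic structure of $I$ is essential; the identity would fail for a finite type $A$ quiver read naively, so I would make this reindexing step explicit in the write-up.
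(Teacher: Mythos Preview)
Your proof is correct and takes essentially the same approach as the paper's: both are direct verifications using the explicit formula \eqref{Euler form 2} for the Euler form and the definitions of $\tilde i$ and $i'$. The only difference is cosmetic---the paper invokes bilinearity to reduce to the check $(i,j)=\lr{\tilde i,j'}^\rd$ and $\lr{i,j}=\lr{i',j}^\rd$ on basis elements, whereas you expand everything for general $\bfa,\bfb$ and use cyclic reindexing, which amounts to the same computation.
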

\begin{proof} Since all forms are bilinear, (1) follows from
$(i,j)=\langle\tilde i,j'\rangle^\rd$ for all $i,j\in I$ and (2)
from $\lr{i,j}=\delta_{i,j}-\delta_{i+1,j}=\langle i',j\rangle^\rd$
\end{proof}
We may use the following commutative diagrams to describe the two
relations:
 \begin{center}
\begin{pspicture}(-1,-0.1)(10,2.2)
\psset{xunit=.8cm,yunit=.7cm} \uput[u](1,2){$\mbz I\times\mbz I$}
\uput[d](1,0.8){$\mbz I\times\mbz I$}
\uput[r](4.5,1.4){$\mbz$,}\uput[l](1.1,1.4){$\tilde{(\;)}\times(\;)'$}
\psline{->}(1.1,2)(1.1,0.75) \psline{->}(1.95,2.45)(4.3,1.5)
\psline{->}(1.95,0.35)(4.3,1.2) \uput[u](3.2,1.9){$(\,,\,)$}
\uput[d](3.2,0.9){$\langle\,,\,\rangle^\rd$}

\uput[u](8.5,2){$\mbz I\times\mbz I$} \uput[d](8.5,0.8){$\mbz
I\times\mbz I$}
\uput[r](12,1.4){$\mbz$}\uput[l](8.6,1.4){$(\;)'\times1$}
\psline{->}(8.6,2)(8.6,0.75) \psline{->}(9.45,2.45)(11.8,1.5)
\psline{->}(9.45,0.35)(11.8,1.2) \uput[u](10.7,1.9){$\lr{\,,\,}$}
\uput[d](10.7,0.9){$\langle\,,\,\rangle^\rd$}
\end{pspicture}
\end{center}

\vspace{.3cm}
We also record the following fact which will be used below and in \S2.1.

Let $\field$ be a field. A Hopf algebra $\scrA$ over $\field$ is an
$\field$-vector space together with multiplication $\mu_\scrA$, unit
$\eta_\scrA$, comultiplication $\Dt_\scrA$, counit $\ep_\scrA$ and
antipode $\sg_\scrA$ which satisfy certain axioms see, e.g.,
\cite[\S5.1]{DDPW}.

\begin{Lem}\label{opposite Hopf} If $\scr A=(\scr A,\mu,\eta,\Dt,\ep,\sg)$ is a
Hopf algebra with multiplication $\mu$, unit $\eta$,
comultiplication $\Dt$, counit $\ep$ and antipode $\sg$, then ${\scr
A}^{\text{\rm op}}=({\scr A},\mu^{\text{\rm op}},\eta,\Dt^{\text{\rm
op}},\ep,\sg)$ is also a Hopf algebra. This is called the {\sf
opposite} Hopf algebra of $\scr A$. Moreover, if $\sg$ is
invertible, then both $({\scr A},\mu^{\text{\rm
op}},\eta,\Dt,\ep,\sg^{-1})$ and $({\scr A},\mu,\eta,\Dt^{\text{\rm
op}},\ep,\sg^{-1})$ are also Hopf algebras, which are called {\sf
semi-opposite} Hopf algebras.
\end{Lem}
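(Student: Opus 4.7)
The plan is to verify the Hopf algebra axioms directly for each of the three structures, using Sweedler notation $\Delta(x) = \sum x_{(1)} \otimes x_{(2)}$ throughout. For the opposite structure $\scr A^{\text{op}} = (\scr A, \mu^{\text{op}}, \eta, \Delta^{\text{op}}, \epsilon, \sigma)$, I would first observe that associativity of $\mu^{\text{op}}$ and coassociativity of $\Delta^{\text{op}}$ are immediate from the symmetry of the axioms under reversing tensor factors, and the unit/counit axioms are unchanged. Bialgebra compatibility for the pair $(\mu^{\text{op}}, \Delta^{\text{op}})$ then follows from the original compatibility by applying the twist $\tau(a \otimes b) = b \otimes a$ on both sides.

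The antipode axiom for $\scr A^{\text{op}}$ then reduces to the computation
\[
\mu^{\text{op}}(\sigma \otimes \id)\Delta^{\text{op}}(x) = \sum x_{(1)} \sigma(x_{(2)}) = \eta\epsilon(x) = \sum \sigma(x_{(1)}) x_{(2)} = \mu^{\text{op}}(\id \otimes \sigma)\Delta^{\text{op}}(x),
\]
which is just the original antipode axiom read in the opposite order. So $\sigma$ itself continues to serve as the antipode.

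For the two semi-opposite structures, the central tool will be the well-known fact that the antipode $\sigma$ of any Hopf algebra is an anti-homomorphism for both the algebra and the coalgebra structures, i.e. $\sigma\circ\mu = \mu^{\text{op}}\circ(\sigma\otimes\sigma)$ and $\Delta\circ\sigma = (\sigma\otimes\sigma)\circ\Delta^{\text{op}}$. Granted invertibility, $\sigma^{-1}$ inherits the same anti-homomorphism properties. Consider $\scr B = (\scr A, \mu^{\text{op}}, \eta, \Delta, \epsilon, \sigma^{-1})$. Associativity, coassociativity, and unit/counit axioms are clear. Bialgebra compatibility requires checking that $\Delta:\scr B\to\scr B\otimes\scr B$ is an algebra homomorphism for $\mu^{\text{op}}$; this follows because $\sigma$ being an anti-coalgebra map implies $\Delta\circ\mu^{\text{op}} = \Delta^{\text{op}}\circ\mu^{\text{op}}$ modulo a twist that cancels, which I would verify element-wise. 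For the antipode axiom, I compute
\[
\mu^{\text{op}}(\sigma^{-1}\otimes\id)\Delta(x) = \sum x_{(1)}\sigma^{-1}(x_{(2)}),
\]
and apply $\sigma$ to both sides; using that $\sigma$ is an anti-algebra map, this reduces to $\sum \sigma(x_{(1)}) x_{(2)} = \eta\epsilon(x)$, yielding the desired identity. The third structure $(\scr A, \mu, \eta, \Delta^{\text{op}}, \epsilon, \sigma^{-1})$ is handled by the completely symmetric argument, interchanging the roles of multiplication and comultiplication throughout.

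The only mildly technical point, and the place I would be most careful, is the bialgebra compatibility in the semi-opposite cases: one must verify that $\Delta$ is an algebra map from $(\scr A,\mu^{\text{op}})$ to $(\scr A\otimes\scr A,(\mu\otimes\mu)\circ(\id\otimes\tau\otimes\id))$ using that $\sigma$ intertwines the two multiplications appropriately. Once this is set up correctly with Sweedler notation, the antipode identities for $\sigma^{-1}$ drop out by applying $\sigma$ to the original antipode identity and using invertibility; this is the conceptual content of the lemma and the only step that actually uses the hypothesis that $\sigma$ is invertible.
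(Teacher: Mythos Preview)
The paper does not prove this lemma; it is simply recorded as a well-known fact for later use. Your direct verification is the standard way to establish it, and the outline is essentially correct.

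Two small points of care. First, the bialgebra compatibility for the semi-opposite structures $(\mu^{\text{op}},\Delta)$ and $(\mu,\Delta^{\text{op}})$ does \emph{not} require any property of $\sigma$: if $\Delta$ is multiplicative for $\mu$, then automatically $\Delta(x\cdot_{\text{op}} y)=\Delta(yx)=\Delta(y)\Delta(x)=\Delta(x)\cdot_{\text{op}}\Delta(y)$, where the last product is taken in the tensor square of $(\scr A,\mu^{\text{op}})$. Your invocation of the anti-coalgebra property of $\sigma$ here is a red herring. Second, watch the indices in the antipode check for $\scr B=(\scr A,\mu^{\text{op}},\eta,\Delta,\epsilon,\sigma^{-1})$: one has
\[
\mu^{\text{op}}(\sigma^{-1}\otimes\id)\Delta(x)=\sum x_{(2)}\,\sigma^{-1}(x_{(1)}),
\]
not $\sum x_{(1)}\sigma^{-1}(x_{(2)})$ as you wrote; applying $\sigma$ (an anti-algebra map fixing the unit) to this correctly yields $\sum x_{(1)}\sigma(x_{(2)})=\eta\epsilon(x)$, whence the identity. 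With these corrections the argument goes through.
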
\index{opposite Hopf algebra}

Let 
\begin{equation}\label{nonnegative-tri-decom}
 \Hallpi=\bfHall\ot_{\mbq(\up)} \mbq(\up)[K_1^{\pm 1},\ldots,K_n^{\pm
 n}].\index{extended Ringel-Hall algebra}
\end{equation}
Putting $x=x\ot1$ and $y=1\ot y$ for $x\in \bfHall$ and
$y\in\mbq(\up)[K_1^{\pm 1},\ldots,K_n^{\pm n}]$, $\Hallpi$ is a
$\mbq(\up)$-space with basis $\{u_A^+K_{\al}\mid \al\in\mbz I,A\in
\afThnp\}$. We are now ready to introduce the Ringel--Green--Xiao Hopf
structure on $\Hallpi$.\index{Ringel--Hall algebra!extended $\sim$, $\Hallpi$}
Let $\afThnp_1:=\afThnp\backslash\{0\}$.

\begin{Prop}\label{Green Xiao}
The $\mbq(\up)$-space $\Hallpi$ with basis $\{u_A^+K_{\al}\mid
\al\in\mbz I,A\in \afThnp\}$ becomes a Hopf algebra with the
following algebra, coalgebra and antipode structures.
\begin{itemize}
\item[(a)] Multiplication and unit:
\begin{equation*}
\begin{split}
u_A^+u_B^+&=\sum_{C\in\afThnp}\nup^{\lan \bfd(A),\bfd(B)\ran}\vi_{A,B}^C u_C^+, \text{\quad for all $A,B\in\afThnp$},\\
K_\al u_A^+&=\nup^{\lr{\bfd(A),\al}}u_A^+K_\al, \text{\quad for all $\al\in\mbz I$, $A\in\afThnp$},\\
K_{\al} K_{\bt}&=K_{\al+\bt}, \text{\quad for all $\al,\bt\in\mbz
I$}.
\end{split}
\end{equation*}
with unit $1=u_0^+=K_0$.

\item[(b)] Comultiplication and counit {\rm(Green \cite{Gr95})}:
\begin{equation*}
\begin{split}
\Dt(u_C^+)&=\sum_{A,B\in\afThnp}\nup^{\lan
\bfd(A),\bfd(B)\ran}\frac{\fka_A\fka_B}{\fka_C}
\vi_{A,B}^C u_B^+\ot u_A^+\ti K_{\bfd(B)},\\
\Dt(K_\al)&=K_\al\ot K_\al,\;\; \text{where $C\in\afThnp$ and
$\al\in\mbz I$},
\end{split}
\end{equation*}
with counit $\ep$ satisfying $\ep(u_C^+)=0$ for all $C\in\afThnp_1$
and $\ep(K_\al)=1$ for all $\al\in\mbz I$. Here, for each
$\al=(a_i)\in\mbz I$, $\ti K_\al$ denotes $(\ti K_1)^{a_1}\cdots
(\ti K_n)^{a_n}$ with $\ti K_i=K_iK_{i+1}^{-1}$.\index{$\ti K_\al$,
$\ti K_i$}

\item[(c)] Antipode {\rm(Xiao \cite{X97})}:
\begin{equation*}
\begin{split}
&\sg(u_C^+)=\dt_{C,0}+\sum_{m\geq 1}(-1)^m\sum_{D\in\afThnp\atop
C_1,\ldots,C_m\in\afThnp_1}\frac{\fka_{C_1}\cdots
\fka_{C_m}}{\fka_C}\vi_{C_1,\ldots,C_m}^C \vi_{C_m,\ldots,C_1}^D
u_D^+\ti K_{-\bfd(C)},
\end{split}
\end{equation*}
for all $C\in\afThnp$, and $\sg(K_{\al})=K_{-\al}$, for all $\al\in\mbz
I$.
\end{itemize}

Moreover, the inverse of $\sg$ is given by
\begin{equation*}
\begin{split}
\sg^{-1}(u_C^+)&=\dt_{C,0}+\sum_{m\geq 1}(-1)^m\sum_{D\in\afThnp\atop C_1,\cdots,C_m\in\afThnp_1}
\up^{2\sum_{i<j}\lan\bfd(C_i),\bfd(C_j)\ran}\frac{\fka_{C_1}\cdots \fka_{C_m}}{\fka_C}\\
&\qquad\qquad\cdot \vi_{C_1,\ldots,C_m}^C\vi_{C_1,\ldots, C_m}^D \ti K_{-\bfd(C)}u_D^+\text{\quad for $C\in\afThnp$},\\
\sg^{-1}(K_{\al})&=K_{-\al} \text{\quad for all $\al\in\mbz I$}.
\end{split}
\end{equation*}
\end{Prop}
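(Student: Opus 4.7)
The proof naturally splits into three layers: the associative algebra structure, the bialgebra structure (Green \cite{Gr95}), and the antipode and its inverse (Xiao \cite{X97}). For the algebra part, the Hall multiplication on $\bfHall$ is already associative by the standard theory, and the cross relation $K_\al u_A^+ = \nup^{\lr{\bfd(A),\al}} u_A^+ K_\al$ is consistent with it because both sides respect the $\mbn I$-grading \eqref{grading} and the form $\lr{\,,\,}$ is bilinear. So $\Hallpi$ is a skew group algebra of $\bfHall$ by $\mbz I$, and the asserted basis follows from that of $\bfHall$ tensored with the obvious basis of $\mbq(\up)[K_1^{\pm1},\ldots,K_n^{\pm1}]$.

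For the bialgebra structure, I would first check the easy axioms: $\ep$ is an algebra map, $\Dt(K_\al)=K_\al\ot K_\al$ is group-like, and coassociativity on the $K$-part is immediate. The hard content is to verify that $\Dt(u_C^+u_{C'}^+)=\Dt(u_C^+)\Dt(u_{C'}^+)$ and that $\Dt$ is coassociative on $u_C^+$. Both reduce to a combinatorial identity in terms of the Hall numbers $\vi^\bullet_{\bullet,\bullet}$ and orders $\fka_\bullet$, which is precisely Green's formula proved in \cite{Gr95}. The compatibility of the coproduct with the cross relation is automatic: applying $\Dt$ to both sides of $K_\al u_A^+ = \nup^{\lr{\bfd(A),\al}} u_A^+ K_\al$ and using the grading of each summand $u_B^+\ot u_A^+\ti K_{\bfd(B)}$ in $\Dt(u_C^+)$ gives equality because $\bfd(A)+\bfd(B)=\bfd(C)$.

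For the antipode, the approach is recursive. Writing $C\neq0$ and applying $\mu(\sg\ot\id)\Dt(u_C^+)=\ep(u_C^+)\cdot 1=0$, one isolates the term coming from $(A,B)=(0,C)$, which contributes $\sg(u_C^+)$, and solves:
$$\sg(u_C^+)=-\sum_{\substack{A,B\in\afThnp\\ A\neq 0}}\nup^{\lr{\bfd(A),\bfd(B)}}\frac{\fka_A\fka_B}{\fka_C}\vi^C_{A,B}\,\sg(u_B^+)\,u_A^+\ti K_{\bfd(B)}.$$
Because $\vi^C_{A,B}$ forces $M(B)$ to be a proper subobject of $M(C)$ whenever $A\neq 0$, this is a genuine recursion on, say, $\fkd(C)$. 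Iterating it $m$ times and repeatedly applying the multiplication formula together with the associativity identity $\sum_E \vi^E_{C_1,C_2}\vi^D_{E,C_3,\ldots}=\vi^D_{C_1,\ldots,C_m}$ produces after collection of signs, $\fka$'s, and $\up$-powers exactly the closed expression in (c); the remaining antipode axiom $\mu(\id\ot\sg)\Dt=\eta\ep$ follows by a symmetric argument or by a standard Hopf-algebraic deduction once the first is known.

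Finally, for $\sg^{-1}$, I would invoke Lemma \ref{opposite Hopf}: the data $(\scr A,\mu,\eta,\Dt^{\text{op}},\ep,\sg^{-1})$ form a Hopf algebra, so $\sg^{-1}$ is characterised as the antipode of $\Hallpi$ with the opposite comultiplication. Running the same recursive argument with $\Dt^{\text{op}}$ in place of $\Dt$ yields the stated formula for $\sg^{-1}$, the extra factor $\up^{2\sum_{i<j}\lr{\bfd(C_i),\bfd(C_j)}}$ arising from commuting the tensor factors back. Invertibility of $\sg$ is then confirmed a posteriori by checking $\sg\circ\sg^{-1}=\id=\sg^{-1}\circ\sg$ on the generators $u_i$ and $K_\al$, which is a short computation. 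The main obstacle throughout is the bookkeeping of signs, $\up$-exponents, and the $\fka$-factors in the iterated recursion, for which the triangular relation \eqref{mult-order-formula} and the multiplicativity $\fka_{M\oplus N}=\fka_M\fka_N\up^{-2\lr{\bfdim M,\bfdim N}+\cdots}$ are the principal tools.
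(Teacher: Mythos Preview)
Your approach is correct but takes a genuinely different route from the paper. The paper does not verify any Hopf axioms directly; instead it observes that $\Hallpi$ is the \emph{semi-opposite} Hopf algebra (in the sense of Lemma~\ref{opposite Hopf}) of a minor variant of the Hopf algebra $H'$ constructed in Xiao~\cite[Prop.~4.8, Th.~4.5]{X97}: the multiplication is the same, the comultiplication here is opposite to Xiao's, and consequently the antipode here is the inverse of Xiao's. Since Xiao already proved that $H'$ is a Hopf algebra, one transfer via Lemma~\ref{opposite Hopf} finishes the job. Your proposal, by contrast, is essentially to redo Xiao's original verification from scratch (Green's formula for the bialgebra axiom, recursive unwinding for the antipode), which the paper explicitly notes in a parenthetical remark is also possible. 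Your route is more self-contained and makes the origin of each term transparent; the paper's route is faster and explains structurally \emph{why} the inverse-antipode formula has that particular shape (it is literally Xiao's antipode). One small slip in your recursion: the $(A,B)=(0,C)$ term of $\mu(\sg\otimes\id)\Dt(u_C^+)$ contributes $\sg(u_C^+)\,\ti K_{\bfd(C)}$, not $\sg(u_C^+)$, so a factor $\ti K_{-\bfd(C)}$ must be peeled off before iterating---this is exactly how the global factor $\ti K_{-\bfd(C)}$ in the closed formula for $\sg(u_C^+)$ appears, and is part of the bookkeeping you flag.
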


\begin{proof}The Hopf structure on $\Hallpi$
is almost identical to the Hopf algebra $H'$ defined in the proof of
\cite[Prop.~4.8]{X97} except that we used $\ti K_\al$ instead of
$K_\al$ in the comultiplication and antipode. Thus, the
comultiplication of $\Hallpi$ defined here is opposite to that
defined in \cite[Th.~4.5]{X97}, while the antipode is the inverse.
Hence,  by Lemma \ref{opposite Hopf}, $\Hallpi$ is the Hopf algebra
semi-opposite to a variant of the Hopf algebras considered in
\cite[loc cit]{X97}. (Of course, one can directly check by mimicking
the proof of \cite[Th.~4.5]{X97} that $\Hallpi$ with the operations
defined above satisfies the axioms of a Hopf algebra.)
\end{proof}

\begin{Rems}\label{remark for Hallpi}
  (1) Because of Lemma \ref{rdEf}, we are able to make the root
datum used in the second relation in (a) invisible in the definition
of $\Hallpi$.

(2) Besides the modification of changing $K_\al$ to $\ti K_\al$ in
comultiplication and antipode, we also used the Euler form
$\lr{\,,\,}$ (or rather the form $\lr{\,,\,}^\rd$ for the root datum
given in Definition \ref{root datum}) instead of the symmetric Euler
form $(\,,\,)$ used in Xiao's definition of $H'$
(\cite[p.~129]{X97}) for the commutator formulas between $K_\al$ and
$u^+_A$. This means that $H'$ is not isomorphic to $\Hallpi$.
However, there is a Hopf algebra homomorphism from $H'$ to $\Hallpi$
by sending $u^+_AK_\al $ to $u^+_A\ti K_\al $ whose image is the
(Hopf) subalgebra $'\!\Hallpi$ generated by $\ti K_i$ and $u_A^+$
for all $i\in I$ and $A\in\afThnp$. Note that it sends the central
element
 $K_1\cdots K_n\neq1$ in $H'$ to $\ti K_1\cdots \ti K_n=1$ in $\Hallpi$.

(3) The above modifications are necessary for the compatibility with
Lusztig's construction for quantum groups in \cite{Lu93} and with
the corresponding relations in affine quantum Schur algebras.
\end{Rems}

It is clear that the subalgebra of $\Hallpi$ generated by $u_A^+$
($A\in\afThnp$) is isomorphic to $\bfHall$. The subalgebra generated
by $K_\al$, $\al\in\mbz I$, is isomorphic to the Laurent polynomial
ring $\mbq(\up)[K_1^{\pm 1},\ldots,K_n^{\pm n}]$.

\begin{Coro}\label{Green Xiao 2}
The $\mbq(\up)$-space $\Hallmi$\index{Ringel--Hall algebra!extended
$\sim$, $\Hallmi$} with basis $\{K_\al u_A^- \mid \al\in\mbz I, A\in
\afThnp\}$ becomes a Hopf algebra with the following algebra,
coalgebra and antipode structures.\index{extended Ringel-Hall
algebra}

\begin{itemize}
\item[(a$'$)] Multiplication and unit:
\begin{equation*}
\begin{split}
u_A^-u_B^-&=\sum_{C\in\afThnp}\nup^{\lan \bfd(B),\bfd(A)\ran}\vi_{B,A}^C u_C^- \text{\quad for all $A,B\in\afThnp$},\\
u_A^-K_\al&=\nup^{\lr{\bfd(A),\al}}K_\al u_A^- \text{\quad for all $\al\in\mbz I$, $A\in\afThnp$},\\
K_{\al} K_{\bt}&=K_{\al+\bt} \text{\quad for all $\al,\bt\in\mbz
I$}.
\end{split}
\end{equation*}
with unit $1=u_0^-=K_0$.

\item[(b$'$)] Comultiplication and counit:
\begin{equation*}
\begin{split}
\Dt(u_C^-)&=\sum_{A,B\in\afThnp}\nup^{-\lan
\bfd(B),\bfd(A)\ran}\frac{\fka_A\fka_B}{\fka_C}
\vi_{A,B}^C \ti K_{-\bfd(A)}u_B^-\ot u_A^-\text{\quad for all $C\in\afThnp$},\\
\Dt(K_\al)&=K_\al\ot K_\al \text{\quad for all $\al\in\mbz I$},
\end{split}
\end{equation*}
with counit $\ep$ satisfying $\ep(u_C^-)=0$ for $C\in\afThnp_1$
and $\ep(K_\al)=1$ for all $\al\in\mbz I$.

\item[(c$'$)] Antipode:
\begin{equation*}
\begin{split}
&\sg(u_C^-)=\dt_{C,0} +\sum_{m\geq 1}(-1)^m\sum_{D\in\afThnp\atop
C_1,\ldots,C_m\in\afThnp_1}
\up^{2\sum_{i<j}\lan\bfd(C_i),\bfd(C_j)\ran}\frac{\fka_{C_1}\cdots
\fka_{C_m}}{\fka_C}\\
&\hspace{6.5cm}\times\vi_{C_1,\ldots,C_m}^C
\vi_{C_1,\ldots,C_m}^D\ti K_{\bfd(C)} u_D^-,\\
&\text{\quad for $C\in\afThnp$ and}\;\;\sg(K_{\al})=K_{-\al}
\text{\quad for all $\al\in\mbz I$}.
\end{split}
\end{equation*}
\end{itemize}
\end{Coro}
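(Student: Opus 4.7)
The corollary is the negative-part analogue of Proposition 1.5.3, so I would prove it by adapting the proof of that proposition via the semi-opposite Hopf algebra construction of Lemma 1.5.2. The key observation is that Proposition 1.5.3 explicitly exhibits an invertible antipode $\sg$ on $\Hallpi$, so by Lemma 1.5.2 both $(\Hallpi, \mu^{\text{op}}, \eta, \Dt, \ep, \sg^{-1})$ and $(\Hallpi, \mu, \eta, \Dt^{\text{op}}, \ep, \sg^{-1})$ are Hopf algebras, and an appropriate combination of these semi-opposite structures produces exactly the formulas in (a'), (b'), (c').

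More concretely, I would first verify (a') by constructing an algebra anti-isomorphism $\Hallpi \to \Hallmi$ sending $u_A^+ \mapsto u_A^-$ and $K_\alpha \mapsto K_\alpha$. Under multiplication reversal, the exponent $\lan \bfd(A), \bfd(B) \ran$ in Proposition 1.5.3(a) swaps to $\lan \bfd(B), \bfd(A) \ran$ and $\vi^C_{A,B}$ swaps to $\vi^C_{B,A}$, giving precisely the Hall multiplication in (a'). The commutator relation $u_A^- K_\alpha = \up^{\lr{\bfd(A),\alpha}} K_\alpha u_A^-$ likewise follows from $K_\alpha u_A^+ = \up^{\lr{\bfd(A),\alpha}} u_A^+ K_\alpha$ by reversing the product, so (a') is immediate.

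For the coalgebra structure (b') and antipode (c'), I would pull back the Hopf structure on an appropriate semi-opposite of $\Hallpi$ through a suitable linear bijection, and match the resulting formulas against (b') and (c') by using the Hall-polynomial identities underlying Green's \cite{Gr95} and Xiao's \cite{X97} original proofs. The substitution $\ti K_{\bfd(B)}$ on the right (in (b)) versus $\ti K_{-\bfd(A)}$ on the left (in (b')) arises naturally from commuting the $K$-factor past $u^-$ via the commutator formula in (a'), while the sign change $\lan \bfd(A),\bfd(B)\ran \leadsto -\lan \bfd(B),\bfd(A)\ran$ in the comultiplication is the combinatorial footprint of the opposite multiplication in (a'). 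Alternatively, one could simply imitate the proofs of Green and Xiao directly on the basis $\{K_\alpha u_A^-\}$, using that $\bfHall^{\mathrm{op}}$ carries the same comultiplicative data but with the tensor factors reversed.

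The main obstacle will be the careful bookkeeping of $\up$-exponents and $\ti K$-powers when matching the pulled-back structure to the stated formulas, especially for the antipode in (c'), which must coincide with the image of $\sg^{-1}$ from Proposition 1.5.3 under the anti-isomorphism. Once the algebra anti-isomorphism and a coalgebra anti-isomorphism of the same underlying linear map are established, the Hopf algebra axioms for $\Hallmi$ transport automatically from the axioms verified in Proposition 1.5.3, so no new combinatorial identities for Hall polynomials are required beyond those already proven by Ringel, Green, and Xiao; only a consistent choice of semi-opposite convention needs to be pinned down.
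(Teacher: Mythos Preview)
Your overall strategy---transport the Hopf structure from $\Hallpi$ via a semi-opposite construction---is exactly what the paper does. But the specific mechanism you describe for (b$'$) and (c$'$) has a gap: the naive anti-isomorphism $u_A^+\mapsto u_A^-$, $K_\alpha\mapsto K_\alpha$ does not turn (b) into (b$'$) by merely ``commuting the $K$-factor past $u^-$''. In (b) the factor $\ti K_{\bfd(B)}$ lives in the \emph{second} tensor slot, whereas in (b$'$) the factor $\ti K_{-\bfd(A)}$ lives in the \emph{first} tensor slot; no commutation within a single tensor factor can move it across. Likewise the exponent change $\lan\bfd(A),\bfd(B)\ran\leadsto -\lan\bfd(B),\bfd(A)\ran$ is not simply ``the footprint of the opposite multiplication,'' since the comultiplication in the semi-opposite $(\Hallpi,\mu^{\rm op},\Dt,\sg^{-1})$ is unchanged from (b).

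What the paper actually does is a two-step construction. First it transports the semi-opposite $(\Hallpi,\mu^{\rm op},\Dt,\sg^{-1})$ through the linear bijection $u_A^+K_\al\mapsto (u_A^-)'K_{-\al}'$ (note the sign flip on $\al$) to obtain an intermediate Hopf algebra $\boldsymbol{\mathfrak H}'$ with operations (a$''$)--(c$''$). Then it performs the nontrivial change of basis
\[
K_\al:=K_{-\al}',\qquad u_A^-:=\up^{-\lan\bfd(A),\bfd(A)\ran}\ti K_{\bfd(A)}'(u_A^-)',
\]
and checks that (a$''$)--(c$''$) rewrite as (a$'$)--(c$'$) in this new basis. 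It is this absorption of a $\ti K$-factor into the definition of $u_A^-$ that shifts the $K$-weights between tensor slots and produces the sign change in the exponent. You correctly anticipated that the ``bookkeeping of $\up$-exponents and $\ti K$-powers'' is the crux, but the missing idea is that the right linear bijection is not the identity on basis labels---it is twisted by $\ti K_{\bfd(A)}$.
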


\begin{proof} Let $\boldsymbol{\mathfrak H}'$ be the $\mathbb Q(\up)$-space with
basis $\{K_\al'(u_A^-)'\mid \al\in\mbz I,A\in \afThnp\}$ and define
the following operations on $\boldsymbol{\mathfrak H}'$:
\begin{equation*} (a'')
\begin{split}
(u_A^-)'(u_B^-)'&=\sum_{C\in\afThnp}\nup^{\lan \bfd(B),\bfd(A)\ran}\vi_{B,A}^C(u_C^-)'
\text{\quad for all $ A,B\in\afThnp$},\\
(u_A^-)'K_\al' &=\nup^{-\lan\bfd(A),\al\ran}K_\al'(u_A^-)'
\text{\quad for all $\al\in\mbz I$, $A\in\afThnp$},\\
 K_{\al}' K_{\bt}'&=K_{\al+\bt}' \text{\quad for all $\al,\bt\in\mbz I$}.
\end{split}
\end{equation*}
with unit $1=(u_0^-)'=K_0'$.
\begin{equation*}(b'')
\begin{split}
\Dt((u_C^-)')&=\sum_{A,B\in\afThnp}\nup^{\lan\bfd(A),\bfd(B)\ran}\frac{\fka_A\fka_B}{\fka_C}
\vi_{A,B}^C(u_B^-)'\ot \ti K_{-\bfd(B)}'(u_A^-)' \text{\quad for all $C\in\afThnp$},\\
\Dt(K_\al')&=K_\al'\ot K_\al' \text{\quad for all $\al\in\mbz I$}
\end{split}
\end{equation*}
with counit  $\ep((u_A^-)')=0$ for $A\in\afThnp_1$ and
$\ep(K_\al')=1$ for all $\al\in\mbz I$, where $\ti
K_i'=K_i'(K_{i+1}')^{-1}$.
\begin{equation*}(c'')
\begin{split}
\sg((u_C^-)')&=\dt_{C,0}+\sum_{m\geq 1}(-1)^m\sum_{D\in\afThnp\atop C_1,\ldots,C_m\in\afThnp_1}
\up^{2\sum_{i<j}\lan\bfd(C_i),\bfd(C_j)\ran}\frac{\fka_{C_1}\cdots \fka_{C_m}}{\fka_C}\\
&\qquad\qquad\cdot \vi_{C_1,\ldots,C_m}^C\vi_{C_1,\ldots, C_m}^D (u_D^-)'\ti K_{\bfd(C)}'\text{\quad for $C\in\afThnp$},\\
\sg(K_{\al}')&=K_{-\al}' \text{\quad for all $\al\in\mbz I$}.
\end{split}
\end{equation*}

By Lemma \ref{opposite Hopf}, if we replace the multiplication of
$\Hallpi$ by its opposite one and $\sg$ by $\sg^{-1}$ and keep other
structure maps unchanged, then we obtain the semi-opposite Hopf
algebra $\Hallpi_1$ of $\Hallpi$. It is clear that the $\mathbb
Q(\up)$-linear isomorphism $\Hallpi_1\to \boldsymbol{\mathfrak H}'$
taking $u_A^+K_\al\mapsto (u_A^-)'K_{-\al}'$ preserves all the
operations. Thus, $\boldsymbol{\mathfrak H}'$ is a Hopf algebra with
the operations (a$''$)--(c$''$).

Now, for $\al\in\mbz I$ and $A\in \afThnp$, set in
$\boldsymbol{\mathfrak H}'$,
\begin{equation}\label{ulam}
K_{\al}:=K_{-\al}'\;\;\;\text{and}\;\;\;
u_A^-:=\up^{-\lan\bfd(A),\bfd(A)\ran}\ti K_{\bfd(A)}'(u_A^-)'.
\end{equation}
Then $\{K_\al u_A^- \mid \al\in\mbz I, A\in \afThnp\}$ is a new
basis of $\boldsymbol{\mathfrak H}'$. It is easy to check that
applying the operations (a$''$)--(c$''$) to the basis elements
$K_\al u_A^-$ gives (a$'$)--(c$'$). Consequently, $\Hallmi$ with the
operations (a$'$)--(c$'$) is a Hopf algebra.
\end{proof}

The proof above shows that $\Hallmi$ is the semi-opposite Hopf
algebra of $\Hallpi$ in the sense that multiplication and antipode
are replaced by the opposite and inverse ones, respectively. Note
that the inverse $\sg^{-1}$ of $\sg$ in $\Hallmi$ is defined by
\begin{equation*}
\begin{split}
\sg^{-1}(u_C^-)&=\dt_{C,0}\\
\qquad &+\sum_{m\geq 1}(-1)^m\sum_{D\in\afThnp\atop
C_1,\ldots,C_m\in\afThnp_1} \frac{\fka_{C_1}\cdots
\fka_{C_m}}{\fka_C}\vi_{C_1,\ldots,C_m}^C \vi_{C_m,\ldots,C_1}^D
u_D^-\ti K_{\bfd(C)} .
\end{split}
\end{equation*}


By (a$'$), the subalgebra of $\Hallmi$ generated by $u_A^-$
($A\in\afThnp$) is isomorphic to $\bfHall^{\rm op}$. Moreover,
there is a $\mbq(\up)$-vector space isomorphism
\begin{equation}\label{nonpositive-tri-decom}
 \Hallmi= \mbq(\up)[K_1^{\pm 1},\ldots,K_n^{\pm n}]\ot_{\mbq(\up)}\bfHall^{\rm op}.
\end{equation}

\begin{Rem}\label{mathcal A} If we put $\mathcal A=\sZ[(\up^m-1)^{-1}]_{m\geq1}$, then \eqref{poly-auto-group}
guarantees that extended Ringel--Hall algebras $\Hall^{\geq0}_\sA$
and $\Hall^{\leq0}_\sA$ over $\sA$ are well-defined. Thus, if $\up$
is specialized to $z$ in a field (or a ring) $\field$ which is not a
root of unity, then extended Ringel--Hall algebras
$\Hall^{\geq0}_\field:=\Hall^{\geq0}_\sA\ot\field$ and
$\Hall^{\leq0}_\field$ over $\field$ are  defined.
\end{Rem}

\chapter{Double Ringel--Hall algebras of cyclic quivers}

A Drinfeld double refers to a construction of gluing two Hopf
algebras via a skew-Hopf paring between them to obtain a new Hopf
algebra. We apply this construction in \S 2.1 to the extended Ringel--Hall
algebras $\Hallpi$ and $\Hallmi$ discussed in \S1.5 to obtain double Ringel--Hall
algebras $\dHallr$ of cyclic quivers.

The algebras $\dHallr$ possess
a rich structure. First, by using the Schiffmann--Hubery generators
and the connection with the quantum enveloping algebra associated
with a Borcherds--Cartan matrix, we obtain a presentation for
$\dHallr$ (Theorem \ref{presentation dHallAlg}). Second, by using Drinfeld's new presentation for the
quantum loop algebra $\mathbf U(\h{\mathfrak{gl}}_n)$, we extend
Beck (and Jing)'s embedding of quantum affine $\h{\mathfrak {sl}}_n$ into
$\mathbf U(\h{\mathfrak{gl}}_n)$ to obtain an isomorphism between $\dHallr$
and $\mathbf U(\h{\mathfrak{gl}}_n)$ (Theorem \ref{iso afgln dHallr}). Finally, applying the
skew-Hopf pairing to semisimple generators yields certain commutator
relations (Theorem \ref{commutator-formula}). Thus, we propose a possible presentation using semisimple
generators; see Problem \ref{semisimple presentation}.

\section{Drinfeld doubles and the Hopf algebra $\dHallr$}

In this section, we first recall from \cite{Jo95} the notion of a
skew-Hopf pairing and define the associated Drinfeld double. We then
apply this general construction to obtain the Drinfeld double
$\dHallr$ of the Ringel--Hall algebra $\bfHall$; see \cite{X97} for
a general construction.


 Let $\scrA=(\scr A,\mu_\scrA,\eta_\scrA,\Dt_\scrA,\ep_\scrA,\sg_\scrA)$ and $\scrB=(\scr B,\mu_\scrB,\eta_\scrB,\Dt_\scrB,\ep_\scrB,\sg_\scrB)$
be two Hopf algebras over a field $\field$. A {\it skew-Hopf
pairing} of $\scrA$ and $\scrB$ is an $\field$-bilinear form
$\psi:\scrA\times \scrB\to \field$ satisfying: \index{skew-Hopf
pairing}
\begin{itemize}
\item[(HP1)] $\psi(1,b)=\ep_\scrB(b)$, $\psi(a,1)=\ep_\scrA(a)$, for
all $a\in \scrA$, $b\in \scrB$,

\item[(HP2)] $\psi(a,bb')=\psi(\Dt_\scrA(a),b\ot b')$, for all $a\in
\scrA$, $b,b'\in \scrB$,

\item[(HP3)] $\psi(aa',b)=\psi(a\ot a',\Dt_\scrB^{\rm op}(b))$, for
all $a,a'\in \scrA$, $b,b'\in \scrB$,

\item[(HP4)] $\psi(\sg_\scrA(a),b)=\psi(a,\sg_\scrB^{-1}(b))$, for
all $a\in \scrA$, $b,b'\in \scrB$,
\end{itemize}
where $\psi(a\ot a',b\ot b')=\psi(a,b)\psi(a',b')$, and
$\Dt_\scrB^{\rm op}$ is defined by $\Dt_\scrB^{\rm op}(b)=\sum
b_2\otimes b_1$ if $\Dt_\scrB(b)=\sum b_1 \otimes b_2$. Note that
we have assumed here that $\sg_\scrB$ is invertible.

Let $\scrA*\scrB$ be the free product of $\field$-algebras $\scrA$
and $\scrB$ with identity. Thus, for any fixed bases $B_\scrA$ and
$B_\scrB$ for $\scrA,\scrB$, respectively, where both $B_\scrA$ and
$B_\scrB$ contain the identity element, $\scrA*\scrB$ is the
$\field$-space spanned by the basis consisting of all words
$b_1b_2\cdots b_m$ ($b_i\in (B_\scrA\backslash\{1\})\cup
(B_\scrB\backslash\{1\})$) of any length $m\geq0$ such that
$b_ib_{i+1}$ is not defined (in other words, $b_i,b_{i+1}$ are not
in the same $\scrA$ or $\scrB$) with multiplication given by
``contracted juxtaposition''
$$(b_1\cdots b_m)*(b_1'\cdots b'_{m'})=\begin{cases}
b_1\cdots b_mb_1'\cdots b'_{m'}, &\text{ if $b_mb_1'$ is not defined;}\\
b_1\cdots b_{m-1}cb_2'\cdots b'_{m'},&\text{ if $b_mb_1'$ is defined, $b_mb_1'\neq0$;}\\
0, &\text{ otherwise.}\\
\end{cases}$$
Note that, since $c=\sum_k\la_ka_{i_k}$ ($a\in\{b,b'\}$) is a linear
combination of basis elements, the element $b_1\cdots
b_{m-1}cb_2'\cdots b'_{m'}$ is a linear combination of words and is
defined inductively. Thus, $1$ is replaced by the empty word.

Let $\sI=\sI_{\scrA,\scrB}$ be the ideal of $\scrA*\scrB$
generated by
\begin{equation}\label{D-double-rel}
\sum (b_2*a_2)\psi(a_1,b_1)-\sum (a_1*b_1)\psi(a_2,b_2) \;\;(a\in
\scrA,b\in \scrB),
\end{equation}
where $\Dt_\scrA(a)=\sum a_1\ot a_2$ and $\Dt_\scrB(b)=\sum b_1\ot
b_2$. Moreover, $\sI$ is also generated by
\begin{equation}\label{D-double-rel-2}
b*a-\sum\psi(a_1,\sg_\scrB(b_1))(a_2* b_2)\psi(a_3,b_3)\;\;(a\in
\scrA,b\in \scrB),\end{equation}
 where $\Dt_\scrA^2(a)=\sum a_1\ot a_2\ot a_3$ and $\Dt_\scrB^2(b)=\sum
b_1\ot b_2\ot b_3$ (see, for example, \cite[p.~72]{Jo95}).

The Drinfeld double\index{Drinfeld double} of the pair $\scrA$ and
$\scrB$ is by definition the quotient algebra
$$D(\scrA,\scrB):=\scrA*\scrB/{\sI}.$$
 By \eqref{D-double-rel-2}, each element in $D(\scrA,\scrB)$ can
 be expressed as a linear combination of elements of the form
 $a*b+\sI$ for $a\in\scrA$ and $b\in\scrB$. Note that there is an $\field$-vector space isomorphism
$$D(\scrA,\scrB)\lra \scrA\ot_{\field}\scrB,\;
a*b+\sI\lm a\ot b;$$
 see \cite[Lem.~3.1]{SV2}. For notational simplicity, we
write $a*b+\sI$ as $a*b$. Both $\scrA$ and $\scrB$ can be viewed
as subalgebras of $D(\scrA,\scrB)$ via $a\mapsto a*1$ and
$b\mapsto 1*b$, respectively. Thus, if $x$ and $y$ lie in $\scrA$
or $\scrB$, we write $xy$ instead of $x*y$.

 The algebra $D(\scrA,\scrB)$ admits a Hopf algebra structure induced by
those of $\scrA$ and $\scrB$; see \cite[3.2.3]{Jo95}. More
precisely, comultiplication, counit and antipode in
$D(\scrA,\scrB)$ are defined by
\begin{equation}\label{coalg of Drinfeld double}
\aligned\Dt(a*b)&=\sum(a_1*b_1)\ot (a_2*b_2),\\
\ep(a*b)&=\ep_\scrA(a)\ep_\scrB(b),\\
\sg(a*b)&=\sg_\scrB(b)*\sg_\scrA(a),
\endaligned\end{equation}
where $a\in\scrA$, $b\in\scrB$, $\Dt_\scrA(a)=\sum a_1 \otimes
a_2$ and $\Dt_\scrB(b)=\sum b_1 \otimes b_2$.

By modifying \cite[Lem.~3.2]{SV2}, we obtain that the above ideal
$\sI$ can be generated by the elements in certain generating sets
of $\scrA$ and $\scrB$ as described in the following result.

\begin{Lem}\label{reduce lemma}
Let $\scrA,\scrB$ be Hopf algebras over $\field$ and let
$\psi:\scrA\times \scrB\ra \field$ be a skew-Hopf pairing. Assume
that $X_\scrA\han \scrA$ and $X_\scrB\han \scrB$ are generating sets
of $\scrA$ and $\scrB$, respectively. If
$\Dt_\scrA(X_\scrA)\han\spann_{\field}X_\scrA\ot\spann_{\field}X_\scrA$
and
$\Dt_\scrB(X_\scrB)\han\spann_{\field}\{X_\scrB\}\ot\spann_{\field}\{X_\scrB\}$,
then the ideal $\sI=\sI_{\scrA,\scrB}$ is generated by the following
elements
\begin{equation}\label{D-double-rel-generators}
\sum (b_2*a_2)\psi(a_1,b_1)-\sum (a_1*b_1)\psi(a_2,b_2)\;\;\text{for
all $a\in X_\scrA,b\in X_\scrB$}.
\end{equation}
\end{Lem}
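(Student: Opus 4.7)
Let $\sI'$ denote the ideal of $\scr A*\scr B$ generated by the elements in \eqref{D-double-rel-generators}, i.e., those with $a\in X_\scrA$ and $b\in X_\scrB$. Since every generator of $\sI'$ is a generator of $\sI$, the inclusion $\sI'\subseteq\sI$ is immediate, so the plan is to establish the reverse inclusion $\sI\subseteq\sI'$. For $a\in\scrA$ and $b\in\scrB$, write
$$R(a,b):=\sum(b_2*a_2)\psi(a_1,b_1)-\sum(a_1*b_1)\psi(a_2,b_2)\in\scr A*\scr B.$$
Thus the goal reduces to showing that $R(a,b)\in\sI'$ for every pair $(a,b)\in\scrA\times\scrB$.

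The approach I would take is a double induction on word length in the two generating sets. Since $\psi$ is bilinear and the free-product multiplication is bilinear, the map $R:\scrA\times\scrB\to\scr A*\scr B$ is bilinear, so the subset $\sJ:=\{(a,b):R(a,b)\in\sI'\}$ is closed under linear combinations in each slot. Hence it suffices to verify that $\sJ$ is closed under the two operations $(a,b),(a,b')\mapsto(a,bb')$ and $(a,b),(a',b)\mapsto(aa',b)$; combined with the trivial base cases coming from $1\in\scrA,\scrB$ and from the generating set $X_\scrA\times X_\scrB\subseteq\sJ$, this will force $\sJ=\scrA\times\scrB$.

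The two multiplicativity statements are the technical heart. For the first, expand $R(a,bb')$ using $\Dt(bb')=\Dt(b)\Dt(b')$ together with (HP2) applied to $\psi(a_i,b_1b'_1)$ and $\psi(a_j,b_2b'_2)$; after invoking coassociativity to reorganize the Sweedler indices, $R(a,bb')$ becomes a $\field$-linear combination of expressions of the shape $\bigl((\text{factor})*R(a^{(k)},b)\bigr)$ and $\bigl(R(a^{(l)},b')*(\text{factor})\bigr)$, where $a^{(k)},a^{(l)}$ are the tensor components of iterated comultiplications applied to $a$. The symmetric identity, with (HP3) in place of (HP2), handles $R(aa',b)$. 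This is exactly where the hypothesis $\Dt_\scrA(X_\scrA)\subseteq\spann_\field X_\scrA\ot\spann_\field X_\scrA$ (and its counterpart for $\scrB$) is indispensable: it guarantees that when we start the induction with $a\in X_\scrA$, all components $a^{(k)}$ that appear on the right-hand side remain in $\spann_\field X_\scrA$, so by the bilinearity reduction in the previous paragraph the inductive hypothesis does apply. Without this closure condition, the components of $\Dt(a)$ could leave $\spann_\field X_\scrA$ and the induction would not close.

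With the two multiplicativity steps established, I would finish the proof as follows. First, by induction on the length of a monomial expression for $b\in\scrB$ in the elements of $X_\scrB$, conclude that $R(a,b)\in\sI'$ for all $a\in X_\scrA$ and all $b\in\scrB$. Then, by induction on the length of a monomial expression for $a\in\scrA$ in the elements of $X_\scrA$, conclude that $R(a,b)\in\sI'$ for all $a\in\scrA$ and all $b\in\scrB$. The main obstacle I anticipate is the careful Sweedler-notation bookkeeping in the two multiplicativity steps, specifically verifying that after applying (HP2), (HP3) and coassociativity, the resulting terms really do organize themselves into an $\sI'$-linear combination of the expressions $R(a',b'')$ with $a'$ a component of an iterated coproduct of a generator; everything else in the argument is formal.
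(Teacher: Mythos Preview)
Your proposal is correct and follows essentially the same approach as the paper: define the ideal $\sI'$ on generators, reduce by bilinearity, and run a two-stage induction (first on the length of $b$ as a word in $X_\scrB$ with $a\in X_\scrA$, then on the length of $a$), using (HP2)/(HP3) and coassociativity together with the coproduct-closure hypothesis on $X_\scrA$ and $X_\scrB$ to keep the intermediate Sweedler components inside the span of the generators. The only refinement is that in the paper's calculation the two ``halves'' of $R(a,bb')$ are handled sequentially (first apply the base case to $(a_2,b'')$, then the inductive hypothesis to $(a_1,b')$) rather than appearing as a single linear combination, but this is exactly the bookkeeping you flagged as the remaining technical point.
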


\begin{proof} For $a\in\scrA$ and $b\in\scrB$, put
$$h_{a,b}:=\sum(b_2*a_2)\psi(a_1,b_1)-\sum (a_1*b_1)\psi(a_2,b_2).$$
Let $\sI'$ be the ideal of $\scrA*\scrB$ generated by $h_{a,b}$ for
all $a\in X_\scrA$, $b\in X_\scrB$, and set ${\scr
H}=\scrA*\scrB/\sI'$. We need to show $\sI=\sI'$. Since
$\sI'\subseteq \sI$, it remains to show that for all $a\in\scrA$ and
$b\in\scrB$, $h_{a,b}\in\sI'$, or equivalently, $h_{a,b}=0$ in $\scr
H$.

First, suppose $a\in X_\scrA$ and $b=y_1\cdots y_t$ with $y_j\in
X_\scrB$. We proceed by induction on $t$ to show that
$h_{a,b}\in\sI'$. Let $b'=y_1\cdots y_{t-1}$ and $b''=y_t$. Write
$\Dt_\scrA(a)=\sum a_1\ot a_2$, $\Dt_\scrA(a_1)=\sum a_{1,1}\ot
a_{1,2}$ and $\Dt_\scrA(a_2)=\sum a_{2,1}\ot a_{2,2}$. The
coassociativity of $\Dt_\scrA$ implies that
$$\sum a_{1,1}\ot a_{1,2}\ot a_2=\sum \Dt_\scrA(a_1)\ot a_2
=\sum a_1\ot \Dt_\scrA(a_2)=\sum a_1\ot a_{2,1}\ot a_{2,2}.$$
 Further, write $\Dt_\scrB(b')=\sum b_1'\ot b_2'$ and
$\Dt_\scrB(b'')=\sum b_1''\ot b_2''$. Then $\Dt_\scrB(b)=\sum
b_1'b_1''\ot b_2'b_2''$. Thus, we obtain in $\scr H$ that
 \begin{equation*}
 \begin{split}
&\sum(b_2'b_2''*a_2)\psi(a_1,b_1'b_1'')\\
=&\sum b_2'(b_2''*a_2)\psi(a_{1,1},b_1')\psi(a_{1,2},b_1'')  \\
=&\sum b_2'(b_2''*a_{2,2})\psi(a_1,b_1')\psi(a_{2,1},b_1'')  \\
=&\sum(b_2'*a_{2,1})*b_1''\psi(a_1,b_1')\psi(a_{2,2},b_2'')
\quad(\text{since $a_2\in \spann_{\field}X_\scrA$ and $b''\in X_\scrB$}) \\
=&\sum(b_2'*a_{1,2})*b_1''\psi(a_{1,1},b_1')\psi(a_2,b_2'')\\
=&\sum(a_{1,1}*b_1')b_1''\psi(a_{1,2},b_2')\psi(a_2,b_2'') \ \ \ \quad(\text{by induction for $a_1$ and $b'$})\\
=&\sum(a_1*b_1')b_1''\psi(a_{2,1},b_2')\psi(a_{2,2},b_2'')\\
=&\sum(a_1*b_1'b_1'')\psi(a_2,b_2'b_2''),
 \end{split}
 \end{equation*}
 that is, $h_{a,b}\in\sI'$.

Now suppose $a=x_1\cdots x_s\in\scrA$ with $x_i\in X_\scrA$ and
$b\in \scrB$. We proceed by induction on $s$. The case $s=1$ has
been already treated above. So assume $s>1$. Let $a'=x_1\cdots
x_{s-1}$ and $a''=x_s$. Then
\begin{equation*}
\begin{split}
&\sum(b_2*a_2'a_2'')\psi(a_1'a_1'',b_1)\\
=&\sum(b^{(2)}_3*a_2')a_2''\psi(a_1',b^{(2)}_2)\psi(a_1'',b^{(2)}_1)\\
=&\sum a_1'*(b^{(2)}_2*a_2'')\psi(a_2',b^{(2)}_3)\psi(a_1'',b^{(2)}_1)\quad(\text{by induction})\\
=&\sum a_1'a_1''*b^{(2)}_1\psi(a_2',b^{(2)}_3)\psi(a_2'',b^{(2)}_2)
\qquad\quad(\text{since $a''\in X_\scrA$})\\
=&\sum a_1'a_1''*b_1\psi(a_2'a_2'',b_2),
\end{split}
\end{equation*}
where $\Dt^2_\scrB(b)=\sum b^{(2)}_1\ot b^{(2)}_2\ot b^{(2)}_3$.
Hence, $h_{a,b}\in\sI'$. This completes the proof.
\end{proof}

A skew-Hopf pairing can be passed on to opposite Hopf algebras
(see Lemma \ref{opposite Hopf}). The following lemma can be checked directly.

\begin{Lem}\label{twist Hopf structure}
 Let
$\scrA=(\scrA,\mu_\scrA,\eta_\scrA,\Dt_\scrA,\ep_\scrA,\sg_\scrA)$,
$\scrB=(\scrB,\mu_\scrB,\eta_\scrB,\Dt_\scrB,\ep_\scrB,\sg_\scrB)$
be two Hopf algebras over a field $\field$ together with a skew-Hopf
pairing $\psi:\scrA\times \scrB\ra \field$. Assume that $\sg_\scrA$
and $\sg_\scrB$ are both invertible. Then $\psi$ is also a skew-Hopf
pairing of the semi-opposite Hopf algebras
$(\scrA,\mu_\scrA,\eta_\scrA,\Dt_\scrA^{\rm
op},\ep_\scrA,\sg_\scrA^{-1})$ and $(\scrB,\mu_\scrB^{\rm
op},\eta_\scrB,\Dt_\scrB,\ep_\scrB,\sg_\scrB^{-1})$ (resp.
$(\scrA,\mu_\scrA^{\rm
op},\eta_\scrA,\Dt_\scrA,\ep_\scrA,\sg_\scrA^{-1})$ and
$(\scrB,\mu_\scrB,\eta_\scrB,\Dt_\scrB^{\rm
op},\ep_\scrB,\sg_\scrB^{-1})$).
\end{Lem}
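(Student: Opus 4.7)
The plan is to verify each of the four axioms (HP1)--(HP4) for the two semi-opposite pairs, exploiting the invertibility of the antipodes where necessary. Since no new structure maps are introduced (only ``op''-versions of existing ones and the inverse antipodes), every verification reduces to rewriting the original axiom under the substitutions that relate the new maps to the old ones. I will focus on the first pair $(\scrA,\mu_\scrA,\eta_\scrA,\Dt_\scrA^{\mathrm{op}},\ep_\scrA,\sg_\scrA^{-1})$ and $(\scrB,\mu_\scrB^{\mathrm{op}},\eta_\scrB,\Dt_\scrB,\ep_\scrB,\sg_\scrB^{-1})$; the second pair is completely symmetric, swapping the roles of $\scrA$ and $\scrB$.

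First, (HP1) involves only the units and counits, none of which have been altered, so it is inherited verbatim from the original pairing. Next, for (HP2) in the new pair I must show $\psi(a,b\cdot_{\mathrm{op}}b')=\psi(\Dt_\scrA^{\mathrm{op}}(a),b\ot b')$, which, writing $b\cdot_{\mathrm{op}}b'=b'b$ and expanding the right-hand side as $\sum\psi(a_2,b)\psi(a_1,b')$, is exactly the original (HP2) applied with $b$ and $b'$ exchanged. For (HP3), the required identity is $\psi(aa',b)=\psi(a\ot a',\Dt_\scrB^{\mathrm{op}}(b))$, and since in the new structure the comultiplication on $\scrB$ is still $\Dt_\scrB$, the ``op'' of the new comultiplication is again $\Dt_\scrB^{\mathrm{op}}$; hence this is literally the original (HP3).

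The only step that genuinely uses the hypothesis that the antipodes are invertible is (HP4). In the new pair the axiom reads
\[
\psi(\sg_\scrA^{-1}(a),b)=\psi(a,(\sg_\scrB^{-1})^{-1}(b))=\psi(a,\sg_\scrB(b)).
\]
To derive this, I would take the original identity $\psi(\sg_\scrA(x),y)=\psi(x,\sg_\scrB^{-1}(y))$ and substitute $x=\sg_\scrA^{-1}(a)$, $y=\sg_\scrB(b)$; the left-hand side collapses to $\psi(a,\sg_\scrB(b))$ and the right-hand side to $\psi(\sg_\scrA^{-1}(a),b)$, as required. This is the only place where both $\sg_\scrA^{-1}$ and $\sg_\scrB^{-1}$ are actually manipulated, so it is the one step where the invertibility assumption is indispensable.

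Finally, the second pair is handled by the mirror argument: (HP1) is again automatic; the roles of (HP2) and (HP3) are interchanged compared with the first case (now (HP3) becomes a renaming of the original (HP2) after swapping factors in $\mu_\scrA^{\mathrm{op}}$, while (HP2) reproduces the original); and (HP4) is verified exactly as above. The whole argument is purely formal bookkeeping, with no combinatorial obstacle; the only point requiring care is to keep straight which ``op'' is being taken at each occurrence, which is why I would write out each of the four identities explicitly before substituting.
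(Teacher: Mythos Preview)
Your verification is correct and complete. The paper does not give a proof of this lemma at all; it simply remarks that ``the following lemma can be checked directly,'' which is exactly the kind of axiom-by-axiom bookkeeping you have written out.
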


We end this section with the construction of the Drinfeld double
associated with the Ringel--Hall algebras $\Hallpi$ and $\Hallmi$
introduced in \S1.5.

First, we need a skew-Hopf pairing. Applying Lemma \ref{twist Hopf
structure} to \cite[Prop.~5.3]{X97} yields the following result. For
completeness, we sketch a proof. We introduce some notation which is
used in the proof. For each $\al=\sum_{i\in I}a_ii\in\mbz I$, write
$\tau\al=\sum_{i\in I}a_{i-1}i$. In particular, for each
$A\in\afThnp$, we have $\tau\bfd(A)=\bfd(\tau(A))$. Then for
$\al,\beta\in\mbz I$,
$$\lan\al,\beta\ran=(\al-\tau\al)\centerdot\beta=-\lan\beta,\tau\al\ran\;\;
\text{and}\;\;\ti K_\al=K_{\al-\tau\al}.$$\index{$\ti K_\al$, $\ti
K_i$}

\begin{Prop}\label{bilinear form} The $\mbq(\up)$-bilinear form
$\psi:\Hallpi\times \Hallmi\ra \mbq(\up)$ defined by
\begin{equation}\label{SHP}
\psi(u_A^+ K_{\al},K_{\bt}u_B^-)=\up^{\al\centerdot\bt-
\lan\bfd(A),\bfd(A)+\al\ran+2\fkd(A)}\fka_A^{-1}\dt_{A,B}
\end{equation}
 where $\al,\bt\in\mbz I$ and $A,B\in\afThnp$, is a skew-Hopf pairing.
Here $(\al,\bt)=\lan\al,\bt\ran +\lan\bt,\al\ran$ is the symmetric
Euler form.
\end{Prop}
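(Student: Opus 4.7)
The plan is to verify the four skew-Hopf pairing axioms (HP1)--(HP4) for $\psi$ by a combination of direct checks on carefully chosen generators and invocation of Xiao's pairing for the closely related algebra $H'$ of \cite[Prop.~5.3]{X97}. Axiom (HP1) is immediate: setting $A=0$ forces $B=0$ by $\dt_{A,B}$, all $\up$-exponents collapse, and one reads off $\psi(1,K_\bt u_B^-)=\dt_{B,0}=\ep_\scrB(K_\bt u_B^-)$, and symmetrically on the other slot. The real content is (HP2) and (HP3), and for these I would apply Lemma~\ref{reduce lemma} to the generating sets $X_{\Hallpi}=\{K_\al,u_A^+:\al\in\mbz I,\,A\in\afThnp\}$ and $X_{\Hallmi}=\{K_\bt,u_B^-:\bt\in\mbz I,\,B\in\afThnp\}$, whose images under the comultiplications of Proposition~\ref{Green Xiao}(b) and Corollary~\ref{Green Xiao 2}(b$'$) sit inside $\spann_{\mbq(\up)}(X_{\Hallpi})\otimes\spann_{\mbq(\up)}(X_{\Hallpi})$ and likewise for $X_{\Hallmi}$. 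Thus it suffices to check the bilinear pairing identities on the four types of pairs formed from $K_\al,u_A^+$ against $K_\bt,u_B^-$.

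First I would treat the torus/torus and torus/Hall pairs, which amount to verifying $\psi(K_\al K_{\al'},K_\bt u_B^-)=\psi(K_\al\otimes K_{\al'},\Dt^{\text{op}}(K_\bt u_B^-))$ and its mirror. These reduce, via $\Dt(K_\al)=K_\al\otimes K_\al$, to elementary checks that the factor $\up^{\al\cdot\bt}$ is additive in $\al$ and $\bt$, and that the cross-exponent $-\lan\bfd(A),\al\ran$ is correctly placed to match the twisted commutations $K_\al u_A^+=\up^{\lan\bfd(A),\al\ran}u_A^+K_\al$ and $u_B^-K_\bt=\up^{\lan\bfd(B),\bt\ran}K_\bt u_B^-$. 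The Hall/Hall pair $\psi(u_A^+u_{A'}^+,u_B^-)=\psi(u_A^+\otimes u_{A'}^+,\Dt^{\text{op}}(u_B^-))$ is precisely Green's orthogonality identity for Ringel--Hall algebras; combined with the normalization $\fka_A^{-1}$ in $\psi$, it is a direct consequence of Green's theorem.

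To avoid reproving everything from scratch, I would then invoke Xiao's result \cite[Prop.~5.3]{X97}, which establishes an analogous skew-Hopf pairing between the algebras $H'$ and its negative counterpart built with $K_\al$ (instead of $\ti K_\al$) and the symmetric Euler form $(\,,\,)$ (instead of $\lr{\,,\,}$). The modifications listed in Remark~\ref{remark for Hallpi}(2) translating from $H'$ to $\Hallpi$ are precisely compensated in the definition of $\psi$: the replacement $K\leadsto\ti K$ in comultiplications is absorbed by using $\up^{\al\cdot\bt}$ in the torus block (via Lemma~\ref{rdEf}(1): $(\al,\bt)=\lr{\tilde\al,\bt'}^\rd$), while the replacement of $(\,,\,)$ by $\lr{\,,\,}$ in the commutator relations is absorbed by the exponent $-\lan\bfd(A),\al\ran$ (via Lemma~\ref{rdEf}(2)). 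Combining this compatibility with Lemma~\ref{twist Hopf structure} transfers Xiao's skew-Hopf pairing to the semi-opposite pair $(\Hallpi,\Hallmi)$, yielding (HP2) and (HP3) for the adjusted form $\psi$. Axiom (HP4) then follows from (HP1)--(HP3) together with the antipode axiom, by pairing both sides of $\mu\circ(\sg\otimes\id)\circ\Dt=\eta\circ\ep$ against $b$, using invertibility of $\sg_\scrB$ from Corollary~\ref{Green Xiao 2}.

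The main obstacle is the exponent bookkeeping: the shift from $K$ to $\ti K$ in comultiplication propagates through Green's formula by terms of the form $\up^{\lan\bfd(B),\bfd(A)\ran}$ that must be absorbed into the factor $\up^{-\lan\bfd(A),\bfd(A)+\al\ran+2\fkd(A)}$ when combining with the defining formulas for $\vi^C_{A,B}$ and $\fka_C$. One should verify in particular that the twist by $\tilde K_{\bfd(B)}$ appearing inside $\Dt(u_C^+)$ produces exactly the shift needed to match the analogous $\tilde K_{-\bfd(A)}$ in $\Dt(u_C^-)$ under the pairing, and that the normalization $\up^{2\fkd(A)}\fka_A^{-1}$ on the diagonal is compatible with Green's inner product across both sides. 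Once these exponent identities are pinned down, the verification of (HP2) and (HP3) on generators reduces to the standard Green--Xiao computation.
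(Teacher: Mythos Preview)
Your overall strategy echoes the one-line claim that precedes the paper's proof (``Applying Lemma~\ref{twist Hopf structure} to \cite[Prop.~5.3]{X97} yields the following result''), but there are two genuine gaps that the paper closes by direct computation and that your outline does not.

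First, your invocation of Lemma~\ref{reduce lemma} is a misreading: that lemma \emph{assumes} $\psi$ is already a skew-Hopf pairing and concludes that the defining ideal of the Drinfeld double is generated by relations coming from $X_\scrA,X_\scrB$. It says nothing about reducing the verification of (HP2)--(HP3) to generators. The reduction you have in mind is valid (an induction on word length using coassociativity, parallel to the proof of Lemma~\ref{reduce lemma}), but it is a separate statement you would have to prove. In any case it buys little here: your generating sets already span $\Hallpi$ and $\Hallmi$, so checking (HP2) for $b,b'$ generators still forces you to expand $\Dt(u_A^+K_\al)$ and match exponents.

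Second, and more substantively, Lemma~\ref{twist Hopf structure} only transports a skew-Hopf pairing to the \emph{semi-opposite} pair of the \emph{same} Hopf algebras. But $\Hallpi$ is not a semi-opposite of Xiao's $H'$: as Remark~\ref{remark for Hallpi}(2) makes explicit, the two differ both in the $K$--$u_A^+$ commutation ($\lr{\,,\,}$ versus the symmetric form $(\,,\,)$) and in the comultiplication ($\ti K$ versus $K$), and there is only a non-injective Hopf map $H'\to{}'\Hallpi\subsetneq\Hallpi$. So Xiao's pairing neither pushes forward to all of $\Hallpi$ nor pulls back from it; the ``compensation'' you describe is not a formal transfer but precisely the exponent bookkeeping you flag as the main obstacle. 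The paper does this bookkeeping directly: for (HP2) it takes $a=u_A^+K_\al$, $b=K_\bt u_B^-$, $b'=K_\ga u_C^-$, expands both sides, and checks the resulting exponents $x_1,x_2$ agree; (HP3) is analogous; for (HP4) it computes $\psi(\sg(u_A^+K_\al),K_\bt u_B^-)$ and $\psi(u_A^+K_\al,\sg^{-1}(K_\bt u_B^-))$ and checks $y_1=y_2$. Your convolution-inverse derivation of (HP4) from (HP1)--(HP3) is a legitimate shortcut the paper does not take, but it does not avoid the work in (HP2)--(HP3), which is where the proof actually lives.
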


\begin{proof} Condition (HP1) is obvious. We now check condition (HP2). Without loss of
generality, we take $a=u_A^+K_\al$, $b=K_\beta u_B^-$ and $b'=K_\ga
u_C^-$ for $\al,\bt,\ga\in\mbz I$ and $A,B,C\in\afThnp$. Then
$$\aligned
&\psi(u_A^+K_\al,K_\beta u_B^-K_\ga u_C^-)=
\psi(u_A^+K_\al,v^{\lr{\bfd(B),\ga}}K_{\beta+\ga}u_B^-u_C^-)\\
=\,&\psi(u_A^+K_\al,\,v^{\lr{\bfd(B),\ga}}K_{\beta+\ga}\sum_{D}v^{\lr{\bfd(C),\bfd(B)}}
\varphi^D_{C,B}u_D^-)\\
=\,&v^{x_1}\varphi^A_{C,B}\fka_A^{-1},
\endaligned$$
 where $x_1=\lr{\bfd(B),\ga}+\lr{\bfd(C),\bfd(B)}+\al\centerdot(\beta+\ga)
 -\lr{\bfd(A),\bfd(A)+\al}+2\fkd(A)$.

 On the other hand,
$$\aligned
&\psi(\Dt(u_A^+K_\al),K_\beta u_B^-\otimes K_\ga u_C^-)\\
=\,&\psi(\sum_{B',C'}v^{\lr{\bfd(B'),\bfd(C')}}\frac{\fka_{B'}\fka_{C'}}{\fka_{A}}
\varphi^A_{B',C'}u_{C'}^+K_\al\otimes u^+_{B'}\ti K_{\bfd(C')}K_\al,K_\beta u_B^-\otimes K_\ga u_C^-)\\
=\,&\sum_{B',C'}v^{\lr{\bfd(B'),\bfd(C')}}\frac{\fka_{B'}\fka_{C'}}{\fka_{A}}
\varphi^A_{B',C'}\psi(u_{C'}^+K_\al,K_\beta u_B^-) \psi(u^+_{B'}\ti
K_{\bfd(C')}K_\al,K_\ga u_C^-)\\
=\,&v^{x_2}\varphi^A_{C,B}\fka_A^{-1},
\endaligned$$
 where $x_2=\lr{\bfd(C),\bfd(B)}+\al\centerdot
\beta -\lr{\bfd(B),\bfd(B)+\al}
+(\bfd(B)-\tau\bfd(B)+\al)\centerdot\ga
-\lr{\bfd(C),\bfd(C)+\bfd(B)-\tau\bfd(B)+\al}+2\fkd(A).$ Here we
have assumed $\bfd(A)=\bfd(B)+\bfd(C)$ since $\varphi^A_{C,B}=0$
otherwise. A direct calculation shows $x_1=x_2$. Hence, (HP2) holds.

Condition (HP3) can be checked similarly as (HP2). It remains to
check (HP4). Take $a=u_A^+K_\al$ and $b=K_\beta u_B^-$. We may
suppose $A\not=0\not=B$. Then
$$\aligned
&\psi(\sg(u_A^+K_\al),K_\beta u_B^-)\\
=\,&\sum_{m\geq 1}(-1)^m v^{y_1}\sum_{C_1,\ldots,C_m\in\afThnp_1}
\frac{\fka_{C_1}\cdots
\fka_{C_m}}{\fka_A\fka_B}\vi_{C_1,\ldots,C_m}^A
\vi_{C_m,\ldots,C_1}^B
\endaligned$$
and
$$\aligned
&\psi(u_A^+K_\al,\sg^{-1}(K_\beta u_B^-))\\
=\,&\sum_{m\geq 1}(-1)^m v^{y_2}\sum_{C_1,\ldots,C_m\in\afThnp_1}
\frac{\fka_{C_1}\cdots
\fka_{C_m}}{\fka_A\fka_B}\vi_{C_1,\ldots,C_m}^A
\vi_{C_m,\ldots,C_1}^B,
\endaligned$$
 where
 $$y_1=-\lr{\bfd(B),\al}+(-\al-\bfd(A)+\tau\bfd(A))\centerdot\beta
 -\lr{\bfd(B),\bfd(B)-\al-\bfd(A)+\tau\bfd(A)}$$ and
$$y_2=\lr{\bfd(A),\bfd(B)-\tau\bfd(B)-\beta}+\al\centerdot(\bfd(B)-\tau\bfd(B)-\beta)-\lr{\bfd(A),\bfd(A)+\al}.$$
 Clearly, if $\bfd(A)\not=\bfd(B)$, then $\vi_{C_1,\ldots,C_m}^A\vi_{C_m,\ldots,C_1}^B=0$.
Hence, we may suppose $\bfd(A)=\bfd(B)$. Then
$$y_1=-\al\centerdot\beta-\lr{\bfd(A),\beta}+\lr{\bfd(A),\bfd(A)}=y_2.$$
Therefore, $\psi(\sg(u_A^+K_\al),K_\beta
u_B^-)=\psi(u_A^+K_\al,\sg^{-1}(K_\beta u_B^-))$, that is, (HP4)
holds.
\end{proof}

Second, the Hopf algebras $\Hallpi$ and $\Hallmi$ together with the
skew-Hopf pairing $\psi$ gives rise to the Drinfeld
double\index{Drinfeld double}
$$\widehat\dHallr:=D(\Hallpi, \Hallmi).$$
 Since as $\mbq(\up)$-vector spaces, we have
$$\widehat\dHallr=D(\Hallpi, \Hallmi)\cong
\Hallpi\ot\Hallmi,$$
 we sometimes write the elements in $\widehat\dHallr$ as linear combinations of $a\ot b$
for $a\in \Hallpi$ and $b\in\Hallmi$. Moreover, it follows from
\eqref{nonnegative-tri-decom} and \eqref{nonpositive-tri-decom}
that there is a $\mbq(\up)$-vector space isomorphism
$$\widehat\dHallr \cong \bfHall\ot_{\mbq(\up)} \mbq(\up)[K_1^{\pm 1},\ldots,K_n^{\pm
n}]\ot_{\mbq(\up)}\mbq(\up)[K_1^{\pm 1},\ldots,K_n^{\pm
n}]\ot_{\mbq(\up)}\bfHall^{\rm op}.$$

Finally, we define the {\it reduced} Drinfeld double\index{Drinfeld
double! reduced $\sim$}
\begin{equation}\label{def-double-RH-alg}
\dHallr=\widehat\dHallr/{\scr I}
\end{equation}
 where $\scr I$ denotes the ideal generated by $1\otimes K_\al-K_\al\ot 1$ for
all $\al\in\mbz I$. By the construction, $\scr I$ is indeed a Hopf
ideal of $\widehat\dHallr$. Thus, $\dHallr$ is again a Hopf
algebra. We call $\dHallr$ the {\it double Ringel--Hall algebra} of the
cyclic quiver $\tri$. \index{Ringel--Hall algebra!double $\sim$, $\dHallr$}
\index{double Ringel--Hall algebra!$\dHallr$, $\sim$}\index{$\dHallr$, double Ringel--Hall algebra}

Let $\dHallr^+$ (resp., $\dHallr^-$) be the $\mbq(\up)$-subalgebra
of $\dHallr$ generated by $u_A^+$ (resp., $u^-_A$) for all
$A\in\afThnp$. Let $\dHallr^0$ be the $\mbq(\up)$-subalgebra of
$\dHallr$ generated by $K_\al$ for all $\al\in\mbz I$. Then
\begin{equation}\label{pm parts}
\dHallr^+\cong\bfHall,\;\dHallr^-\cong\bfHall^{\rm op},\;
\dHallr^0\cong \mbq(\up)[K_1^{\pm 1},\ldots,K_n^{\pm 1}].
\end{equation}
 Moreover, the multiplication map
$$\dHallr^+\ot\dHallr^0\ot\dHallr^-\lra \dHallr$$
 is an isomorphism of $\mbq(\up)$-vector spaces. Also, we have
$${\dHallr^{\geq
0}:=\dHallr^+\ot\dHallr^0\cong\Hallpi}\atop
{\dHallr^{\leq 0}:=\dHallr^0\ot\dHallr^-\cong\Hallmi}.$$
We will identify $\dHallr^{\geq0}$ and $\dHallr^{\leq 0}$ with $\Hallpi$ and
$\Hallmi$, respectively, in the sequel. In
particular, we may use the PBW type basis for $\bfHall$ to display a
PBW type basis for $\dHallr$:
$$\{\ti u_A^+ K_\al \ti u_B^-\mid A,B\in\afThnp,\al\in\mbz I\}.$$\index{PBW type basis}

\begin{Rem} \label{NonrootOfUnity2}
By specializing $\up$ to $z\in\mbc$ which is not a root of unity,
\eqref{poly-auto-group} implies that the skew-Hopf pairing $\psi$
given in \eqref{SHP} is well-defined over $\mbc$. Hence the
construction above works over $\mbc$ with $\Hallpi$ etc. replaced by
the corresponding specialization $\Hall_\mbc^{\geq0}$, etc. (see
Remark \ref{mathcal A}). Thus, we obtain double Ringel--Hall
algebras $\DC(n)$. In fact, if we use the ring $\sA$ defined in
Remark \ref{mathcal A}, a same reason shows that the skew-Hopf
paring in \eqref{SHP} is defined over $\sA$. Hence, we can form a
double Ringel--Hall algebra $\fD_\vtg(n)_\sA$. Then $\DC(n)\cong
\fD_\vtg(n)_\sA\ot\mbc$ and $\dHallr=\fD_\vtg(n)_\sA\ot\mbq(\up)$.
\end{Rem}

\section{Schiffmann--Hubery generators and presentation of $\dHallr$}
We now relate $\dHallr$ with the
quantum enveloping algebra of a generalized Kac--Moody algebra
based on \cite{Sch,Hub1}; see also \cite{HX,DX1}.

We first describe the structure of $\dHallr$. As in
\eqref{central elements}, define for each $m\geq 1$,
 $$c_m^\pm=(-1)^mv^{-2nm}\sum_{A}(-1)^{{\rm dim}\,{\rm End}(M(A))}\fka_A u^\pm_A \in\dHallr^\pm,$$
 where the sum is taken over all $A\in \afThnp$ such that $\bfd (A)=m\dt$ and
${\rm soc}\,M(A)$ is square-free. We also define $c_0^\pm=1$ by
convention. By Theorem \ref{Schiffmann-Hubery}, the elements
$c^+_m$ and $c^-_m$ are central in $\dHallr^+$ and $\dHallr^-$,
respectively. Following \cite{Hub2}, we set $\pi^\pm_0=0$ and
define recursively $\pi^\pm_m\in\dHallr^\pm$ by
\begin{equation}\label{recursive formula}
\pi^\pm_m=\begin{cases}\frac {\up^n}{\up-\up^{-1}}c_1^\pm,&\text{ if }m=1;\\
\frac {\up^{nm}}{\up-\up^{-1}}c_m^\pm-\frac1m\sum_{s=1}^{m-1}
sv^{(m-s)n}\pi^\pm_sc^\pm_{m-s},&\text{ if }m\geq2.\end{cases}
\end{equation}
Note that $\pi^\pm_m$ is the element $\pi_{n,m}$ defined in
\cite{Hub2}. Hence, $\pi_m^+$ and $\pi_m^-$ are also central in
$\dHallr^+$ and $\dHallr^-$, respectively. Moreover, if $\bfcomp^+$
and $\bfcomp^-$ denote the $\mbq(\up)$-subalgebras of $\dHallr$
generated by $u_i^+$ and $u_i^-$ ($i\in I$), respectively, then by
Theorem \ref{Schiffmann-Hubery}
\begin{equation}\label{SH1}
\dHallr^\pm=\bfcomp^\pm\otimes\mbq(\up)[\pi_1^\pm,\pi_2^\pm,\ldots].
\end{equation}

Furthermore, by \cite[Lem.~12]{Hub2},
$$\pi^\pm_m=v^{m(1-n)}\frac{[m]}{m}\sum_{l=1}^n u^\pm_{E^\vtg_{l,l+mn}}+x^\pm
=\frac{[m]}{m}\sum_{l=1}^n {\ti u}^\pm_{E^\vtg_{l,l+mn}}+x^\pm,$$
 where $x^\pm$ is a linear combination of certain $u^\pm_A$ such that
$\bfd(A)=m\dt$ and $M(A)$ are decomposable, and $\tilde u_A$ is defined in \eqref{tilde u_A}.  Moreover, $\pi^\pm_m$
are primitive, i.e.,
$$\Dt(\pi_m^\pm)=\pi_m^\pm\otimes 1+1\otimes \pi_m^\pm,$$
where $\Dt$ is the comultiplication on $\dHallr$ induced by Green's
comultiplication given in Proposition \ref{Green Xiao}(b) and
Corollary \ref{Green Xiao 2}(b$'$).

For each $m\geq 1$, let
\begin{equation}\label{expression sfx sfy}
\sfz_m^\pm=\frac{m}{[m]}\pi_m^\pm=\sum_{l=1}^n {\ti
u}^\pm_{E^\vtg_{l,l+mn}}+\frac{m}{[m]}x^\pm,
\end{equation}
and let $\sg$ be the antipode of $\dHallr$ induced by Xiao's
antipode as given in Proposition \ref{Green Xiao}(c) and Corollary
\ref{Green Xiao 2}(c$'$).

\begin{Lem} \label{central-dHall}
For each $m\geq 1$,  the elements $\sfz_m^+$ and $\sfz_m^-$ are central in $\dHallr^+$ and
$\dHallr^-$, respectively, and satisfy
\begin{itemize}
\item[(1)] $\Dt(\sfz_m^\pm)=\sfz_m^\pm\otimes 1+1\otimes \sfz_m^\pm,$
\item[(2)] $\sg(\sfz^\pm_m)=-\sfz_m^\pm$.
\end{itemize}
Moreover, for all $i\in I$ and $m,m'\geq 1$, we have
$$[\sfz_m^+,\sfz_{m'}^-]=0,\; [\sfz_m^+,u_i^-]=0,\;[u_i^+,\sfz_{m'}^-]=0,\;[\sfz_m^\pm,K_i]=0.$$
 In other words, $\sfz_m^\pm$ are central elements in $\dHallr$.
\end{Lem}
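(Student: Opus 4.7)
Since $\sfz_m^\pm = \frac{m}{[m]}\pi_m^\pm$ differ from $\pi_m^\pm$ only by a scalar, the centrality of $\sfz_m^\pm$ in $\dHallr^\pm$ is immediate from the corresponding property of $\pi_m^\pm$ noted after \eqref{recursive formula}, and (1) follows from primitivity of $\pi_m^\pm$. For (2), I would apply the defining axiom of the antipode, $\mu\circ(\sg\otimes\id)\circ\Dt=\eta\circ\ep$, to the primitive element $\sfz_m^\pm$; since $(\ep\otimes\id)\Dt(\sfz_m^\pm)=\sfz_m^\pm$ forces $\ep(\sfz_m^\pm)=0$, expanding gives $\sg(\sfz_m^\pm)+\sfz_m^\pm=0$.

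For the relations in $\dHallr$, I would treat each separately. The commutation $[\sfz_m^\pm,K_i]=0$ is a direct weight computation: $\sfz_m^\pm$ is a linear combination of $u_A^\pm$ with $\bfd(A)=m\dt$, and $\lr{m\dt,i}=m-m=0$ using \eqref{Euler form 2}, so both $K_\al u_A^+ = \up^{\lr{\bfd(A),\al}}u_A^+K_\al$ in Proposition \ref{Green Xiao}(a) and the symmetric formula in Corollary \ref{Green Xiao 2}(a$'$) yield commuting. For the mixed commutators, I would apply the fundamental Drinfeld double identity \eqref{D-double-rel} inside $\dHallr$: for any $a\in\dHallr^+$ and $b\in\dHallr^-$,
\begin{equation*}
\sum (b_2 a_2)\psi(a_1,b_1)=\sum (a_1 b_1)\psi(a_2,b_2).
\end{equation*}

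When $a=\sfz_m^+$ is primitive, the left side collapses to $b\,\sfz_m^++\sum b_2\,\psi(\sfz_m^+,b_1)$ and the right side to $\sfz_m^+\,b+\sum b_1\,\psi(\sfz_m^+,b_2)$, yielding
\begin{equation*}
[\sfz_m^+,b]=\sum b_1\,\psi(\sfz_m^+,b_2)-\sum b_2\,\psi(\sfz_m^+,b_1).
\end{equation*}
Taking $b=u_i^-$, I would use the formula $\Dt(u_i^-)=u_i^-\otimes 1+\ti K_{-i}\otimes u_i^-$ read off from Corollary \ref{Green Xiao 2}(b$'$). Each surviving pairing is of the form $\psi(\sfz_m^+,u_i^-)$ or $\psi(\sfz_m^+,\ti K_{-i})$; the former vanishes because \eqref{SHP} forces $\psi(u_A^+,u_B^-)=0$ unless $A=B$, and the summands of $\sfz_m^+$ have $\bfd(A)=m\dt\neq i$, while the latter vanishes because $\psi(u_A^+,K_\beta)=\dt_{A,0}$ and every summand of $\sfz_m^+$ has $A\neq 0$. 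Hence $[\sfz_m^+,u_i^-]=0$. Taking $b=\sfz_{m'}^-$ primitive, both correction terms coincide with $\psi(\sfz_m^+,\sfz_{m'}^-)$ and cancel, giving $[\sfz_m^+,\sfz_{m'}^-]=0$. The symmetric computation with $a=u_i^+$, $b=\sfz_{m'}^-$ handles $[u_i^+,\sfz_{m'}^-]=0$.

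Finally, to conclude that $\sfz_m^\pm$ is central in all of $\dHallr$, I would invoke the triangular decomposition together with the fact, visible from Theorem \ref{Schiffmann-Hubery} and \eqref{SH1}, that $\dHallr$ is generated by $\{u_i^+,u_i^-,K_i^{\pm 1},\sfz_{m}^+,\sfz_{m}^-\mid i\in I,m\geq 1\}$; the relations just established then cover all generators. The main technical obstacle is the careful verification that the pairing terms arising from the double identity indeed vanish, which rests on the delta-function form of \eqref{SHP} combined with the weight incompatibility $m\dt\neq i$ for $m\geq 1$; once this is noted, the computation is mechanical.
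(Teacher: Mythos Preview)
Your proof is correct and follows the same route as the paper: primitivity of $\pi_m^\pm$ for (1), the antipode axiom for (2), and the Drinfeld double relation \eqref{D-double-rel} on primitive elements for the mixed commutators (the paper simply expands \eqref{D-double-rel} fully for the pair $(\sfz_m^+,\sfz_{m'}^-)$ and dispatches the remaining cases with ``similarly''). Your derived formula for $[\sfz_m^+,b]$ carries a harmless sign slip---it should read $\sum b_2\,\psi(\sfz_m^+,b_1)-\sum b_1\,\psi(\sfz_m^+,b_2)$---but since both sums vanish or cancel in every instance you apply it, the conclusion is unaffected.
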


\begin{proof} Since $\dHallr$ is a Hopf algebra, we have
$\mu(\sg\otimes 1)\Dt(\sfz_m^\pm)=\eta\ep(\sfz_m^\pm)=0$ for each
$m\geq 1$. Thus,
$$0=\mu(\sg\otimes 1)(\sfz_m^\pm\otimes 1+1\otimes
\sfz_m^\pm)=\sg(\sfz_m^\pm)+\sfz_m^\pm,$$
 i.e., $\sg(\sfz_m^\pm)=-\sfz_m^\pm$. Since
$\Dt(\sfz_m^+)=\sfz_m^+\otimes 1+1\otimes \sfz_m^+$ and
$\Dt(\sfz_{m'}^-)=\sfz_{m'}^-\otimes 1+1\otimes \sfz_{m'}^-$,
 applying \eqref{D-double-rel} to $\sfz_m^+$ and $\sfz_{m'}^-$ gives
$$\aligned
&\psi(\sfz_m^+,\sfz_{m'}^-)+\sfz_m^+\psi(1,\sfz_{m'}^-)+\sfz_{m'}^-\psi(\sfz_m^+,1)
+\sfz_{m'}^-\sfz_m^+\psi(1,1)\\
=&\sfz_m^+\sfz_{m'}^-\psi(1,1)+\sfz_{m'}^-\psi(\sfz_m^+,1)+\sfz_m^+\psi(1,\sfz_{m'}^-)+\psi(\sfz_m^+,\sfz_{m'}^-).
\endaligned$$
 It follows from $\psi(1,1)=1$ that
 $\sfz_{m'}^-\sfz_m^+=\sfz_m^+\sfz_{m'}^-$, i.e., $[\sfz_m^+,\sfz_{m'}^-]=0$.

Similarly, we have
$[\sfz_m^+,u_i^-]=[u_i^+,\sfz_{m'}^-]=[\sfz_m^\pm,K_i]=0$. The last
assertion follows from \eqref{SH1}.
\end{proof}

By Theorem \ref{CompAlg-affine sl_n}, there are isomorphisms
$$\bfcomp^+\stackrel{\sim}{\lra} \bfU(\afsl)^+, u_i^+\lm E_i;\;\;
\bfcomp^-\stackrel{\sim}{\lra} \bfU(\afsl)^-, u_i^-\lm F_i.$$
 Here we have applied the anti-involution $\bfU(\afsl)^+\ra
 \bfU(\afsl)^-,\,E_i\mapsto F_i$.

By \eqref{SH1} (see also \cite[Th.~1]{Hub2}),
there are decompositions
\begin{equation}\label{Zvtgpm}
\dHallr^+= \bfcomp^+ \otimes_{\mbq(\up)}{\bf Z}_\vtg(n)^+\quad\text{and}\quad
\dHallr^-= \bfcomp^-
\otimes_{\mbq(\up)}{\bf Z}_\vtg(n)^-,
\end{equation}
 where ${\bf Z}_\vtg(n)^\pm:={\mbq}(v)[\sfz^\pm_1,\sfz^\pm_2,\ldots]$  is the polynomial algebra in
$\sfz_m^\pm$  for $m\geq 1$.

\begin{Rem} \label{NonrootOfUnity1}
If $z\in\mbc$ is not a root of unity and
$\Hall_{\mbc}:=\Hall\otimes\mbc$ is the $\mbc$-algebra obtained by
specializing $\up$ to $z$, then, by \eqref{poly-auto-group}, we may use \eqref{recursive formula}
 to recursively define the central elements $\sfz_m$ in $\Hall_{\mbc}$, and hence, elements $\sfz_m^\pm$
 in $\DC(n)^\pm$; see Remark \ref{NonrootOfUnity2}.
Thus,  a $\mbc$-basis similar to the one given in Corollary \ref{monomial basis with central
elements} can be constructed for $\DC(n)^\pm$. In particular,  we obtain decompositions
$$\DC(n)^\pm=\comp^\pm_\mbc\otimes\mbc[\sfz^\pm_1,\sfz^\pm_2,\ldots],$$
and hence, the $\mbc$-algebra $\DC(n)$ can be presented by
generators $E_i,\ F_i,\ K_i,\ K_i^{-1},$ $\sfz^+_s,\
\sfz^-_s,\,\;i\in I,\ s\in\mbz^+$ and relations {\rm (QGL1)--(QGL8)}
as given in the last statement of Theorem \ref{presentation
dHallAlg} below.\footnote{We will see in Corollary
\ref{NonrootOfUnity3} that $\DC(n)$ can be obtained as a
specialization from the $\sZ$-algebra $\fD_\vtg(n)$ by base change
$\sZ\to\mbc,\up\mapsto z$. Thus, we can use the notation
$\fD_\vtg(n)_\mbc$ for this algebra.}
\end{Rem}

Let $C=(c_{i,j})$ be the Cartan matrix of affine type $A_{n-1}$
defined in \eqref{CMforAn-1}. By identifying $\bfcomp^\pm$ with
$\bfU(\afsl)^\pm$, we can describe a presentation for $\dHallr$ as
follows.

\begin{Thm}\label{presentation dHallAlg} The double Ringel--Hall algebra $\dHallr$ of
the cyclic quiver $\tri$ is the $\mbq(\up)$-algebra generated by
$E_i,\ F_i,\  K_i,\ K_i^{-1},\ \sfz^+_s,\ \sfz^-_s,\,\;i\in I,\
s\in\mbz^+$ with relations ($i,j\in I$ and $s,t\in \mbz^+$):
\begin{itemize}
\item[(QGL1)] $K_{i}K_{j}=K_{j}K_{i},\ K_{i}K_{i}^{-1}=1$;

\item[(QGL2)] $K_{i}E_j=v^{\dt_{i,j}-\dt_{i,j+1}}E_jK_{i}$,
$K_{i}F_j=v^{-\dt_{i, j}+\dt_{ i,j+1}} F_jK_i$;

\item[(QGL3)] $E_iF_j-F_jE_i=\delta_{i,j}\frac
       {K_{i}K_{i+1}^{-1}-K_{i}^{-1}K_{i+1}}{\up-\up^{-1}}$;

\item[(QGL4)]
$\displaystyle\sum_{a+b=1-c_{i,j}}(-1)^a\leb{1-c_{i,j}\atop a}\rib
E_i^{a}E_jE_i^{b}=0$ for $i\not=j$;

\item[(QGL5)]
$\displaystyle\sum_{a+b=1-c_{i,j}}(-1)^a\leb{1-c_{i,j}\atop a}\rib
F_i^{a}F_jF_i^{b}=0$ for $i\not=j$;

\item[(QGL6)] $\sfz^+_s\sfz^+_t=\sfz^+_t\sfz^+_s$,$\sfz^-_s\sfz^-_t=\sfz^-_t\sfz^-_s$,  $\sfz^+_s\sfz^-_t=\sfz^-_t\sfz^+_s$;
 \item[(QGL7)] $K_i\sfz^+_s=\sfz^+_s K_i$, $K_i\sfz^-_s=\sfz^-_s K_i$;

 \item[(QGL8)] $E_i\sfz^+_s=\sfz^+_s E_i$, $E_i\sfz^-_s=\sfz^-_s E_i,$ $F_i\sfz^-_s=\sfz^-_s F_i$, and  $\sfz^+_s F_i=F_i\sfz^+_s$.
 \index{defining relations!({\rm QGL1})--({\rm QGL8})}

\end{itemize}

Replacing $\mbq(\up)$ by $\mbc$ and $\up$ by a non-root of unity $z$, a similar result holds for $\DC(n)$.
\end{Thm}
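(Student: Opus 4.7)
\textbf{Proof plan for Theorem \ref{presentation dHallAlg}.} Let $\mathcal A$ denote the abstract $\mbq(\up)$-algebra defined by the generators $E_i,F_i,K_i^{\pm1},\sfz_s^\pm$ and relations (QGL1)--(QGL8). The plan is to exhibit a canonical surjection $\Phi:\mathcal A\to\dHallr$ via $E_i\mapsto u_i^+,\ F_i\mapsto u_i^-,\ K_i\mapsto K_i,\ \sfz_s^\pm\mapsto\sfz_s^\pm$, and then prove it is an isomorphism by a triangular--decomposition comparison.

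Step 1 (well-definedness of $\Phi$): I would verify the relations one-by-one in $\dHallr$. Relation (QGL1) is immediate from \eqref{pm parts}. Relation (QGL2) is the commutation rule for $K_\al$ with simple generators in the Hopf structures of Proposition \ref{Green Xiao}(a) and Corollary \ref{Green Xiao 2}(a$'$), specialized to $A=S_i$ so that $\lr{\bfd(S_j),i}=\delta_{i,j}-\delta_{i,j+1}$. Relation (QGL3) is the Drinfeld--double relation \eqref{D-double-rel} applied to $a=u_i^+,\,b=u_j^-$; using the explicit comultiplications
\[
 \Dt(u_i^+)=u_i^+\otimes\ti K_i+1\otimes u_i^+,\qquad \Dt(u_j^-)=u_j^-\otimes1+\ti K_{-j}\otimes u_j^-
\]
together with the skew-Hopf pairing \eqref{SHP} (where $\psi(u_i^+,u_j^-)=\delta_{i,j}\,v/(\up^2-1)$ after a short calculation) yields exactly the Chevalley commutator in (QGL3), once one identifies $\ti K_i=K_iK_{i+1}^{-1}$. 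Relations (QGL4) and (QGL5) are the affine quantum Serre relations and follow from Ringel's isomorphism $\bfcomp^\pm\cong\bfU(\afsl)^\pm$ of Theorem \ref{CompAlg-affine sl_n}. Relations (QGL6)--(QGL8) are exactly the centrality statements collected in Lemma \ref{central-dHall}.

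Step 2 (surjectivity): By \eqref{pm parts} and the triangular decomposition $\dHallr=\dHallr^+\otimes\dHallr^0\otimes\dHallr^-$, it suffices to show that the image of $\Phi$ contains $\dHallr^+$, $\dHallr^0$ and $\dHallr^-$. The image of $K_i^{\pm1}$ obviously gives $\dHallr^0$. For the $\pm$-parts we use the Schiffmann--Hubery decomposition \eqref{Zvtgpm}: $\dHallr^\pm=\bfcomp^\pm\otimes\mbq(\up)[\sfz_1^\pm,\sfz_2^\pm,\ldots]$. Since $\bfcomp^\pm$ is generated by $u_i^\pm=\Phi(E_i),\Phi(F_i)$ and the central polynomial factors are hit by $\Phi(\sfz_s^\pm)$, surjectivity follows.

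Step 3 (injectivity): The relations force a triangular decomposition of $\mathcal A$. Concretely, using (QGL2) to move $K_i$'s to the middle, (QGL7)--(QGL8) to move $\sfz_s^\pm$'s freely, and iterating (QGL3) to convert any word in $E$'s and $F$'s into a sum of ordered products $(\text{monomial in }F_i,\sfz_s^-)\cdot(\text{monomial in }K_i^{\pm1})\cdot(\text{monomial in }E_i,\sfz_s^+)$, one obtains a surjection $\mathcal A^-\otimes\mathcal A^0\otimes\mathcal A^+\twoheadrightarrow\mathcal A$, where $\mathcal A^+$ is the subalgebra generated by the $E_i$ and $\sfz_s^+$, and analogously for the other parts. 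By (QGL4) and the commutativity in (QGL6), (QGL8), $\mathcal A^+$ is a quotient of $\bfU(\afsl)^+\otimes\mbq(\up)[\sfz_1^+,\sfz_2^+,\ldots]$, hence its graded dimensions are bounded above by those of $\dHallr^+$ via \eqref{Zvtgpm} and Theorem \ref{CompAlg-affine sl_n}; likewise $\dim\mathcal A^-\leq\dim\dHallr^-$, and $\mathcal A^0$ is clearly a quotient of a Laurent polynomial ring, hence of dimension equal to $\dHallr^0$. Combined with the surjection $\Phi$, these inequalities must be equalities, $\Phi$ is an isomorphism on each triangular factor, and hence on the whole algebra. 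The anticipated main obstacle is not any single step but keeping the triangular reordering argument honest: one must verify that no extra relations are hidden in the interaction of (QGL3) with the Serre relations (QGL4)--(QGL5), and that the central elements $\sfz_s^\pm$, defined recursively by \eqref{recursive formula}, really remain algebraically independent in $\mathcal A$---this is where the explicit leading-term formula \eqref{expression sfx sfy} for $\sfz_m^\pm$ in terms of the homogeneous indecomposables $\ti u^\pm_{E_{l,l+mn}^\vtg}$ is essential, since it guarantees that the polynomial factor of the Schiffmann--Hubery decomposition in $\dHallr^\pm$ is matched isomorphically by the polynomial subalgebra $\mbq(\up)[\sfz_s^\pm]$ of $\mathcal A^\pm$. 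The statement over $\mbc$ at a non-root of unity $z$ then follows by the same argument using Remarks \ref{NonrootOfUnity2} and \ref{NonrootOfUnity1}.
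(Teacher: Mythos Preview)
Your proposal is correct and follows essentially the same strategy as the paper: verify the relations in $\dHallr$, get surjectivity from Schiffmann--Hubery \eqref{Zvtgpm}, and prove injectivity by a triangular dimension count. The only methodological difference is that the paper interposes the Borcherds--Cartan quantum group $\bfU(\ti C_\infty)$ of Definition \ref{quantumGKMAlg} and imports its triangular decomposition $\ti\bfU^+\otimes\ti\bfU^0\otimes\ti\bfU^-$ from \cite[Th.~2.23]{Kang}, then identifies $\mathcal A$ with $\bfU(\ti C_\infty)/\langle\sfk_s-1\rangle$ (Proposition \ref{QEAGKMAlg-DHall}); this avoids the reordering argument you carry out by hand in Step~3. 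Your route is more self-contained but amounts to reproving a special case of Kang's PBW theorem. One small clarification to your final paragraph: in the abstract algebra $\mathcal A$ the $\sfz_s^\pm$ are free generators, so their algebraic independence in $\mathcal A^+$ is automatic; what matters (and what you actually use) is their algebraic independence in $\dHallr^+$, which is exactly Theorem \ref{Schiffmann-Hubery}, not formula \eqref{expression sfx sfy}.
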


We will prove this result by a ``standardly" defined quantum
enveloping algebra associated with a Borcherds--Cartan matrix.
See Proposition \ref{QEAGKMAlg-DHall} below for the proof.

For each $m\in\mbz^+$, let $J_m=[1,m]$ and set
$J_\infty=[1,\infty)=\mbz^+$. We then set $\ti I_m=I\cup J_m$ for
$m\in\mbz^+\cup\{\infty\}$. Define a matrix $\ti C_m=(\ti
c_{i,j})_{i,j \in\ti I_m}$ by setting
$$\ti c_{i,j}
=\begin{cases} c_{i,j},\;\;&\text{if $i,j\in I$;}\\
0,&\text{otherwise.}\end{cases}$$
 The $\ti C_m$ is a Borcherds--Cartan matrix \index{$\ti C_m$, $\ti C_\infty$, Borcherds--Cartan matrices}
 \index{Cartan matrix!Borcherds--$\sim$, $\ti C_m$, $\ti C_\infty$} which defines a
generalized Kac--Moody algebra; see \cite{Bor}. Thus, with $\ti C_m$
we can associate a quantum enveloping algebra $\bfU(\ti C_m)$ as
follows; see \cite{Kang}.

\begin{Def}\label{quantumGKMAlg} Let $m\in\mbz^+\cup\{\infty\}$ and $\ti C_m$ be
defined as above. The quantum enveloping algebra  $\bfU(\ti C_m)$
associated with $\ti C_m$ is the $\mbq(\up)$-algebra presented by
generators
$$E_i,\ F_i,\  K_i,\ K_i^{-1},\ \sfx_s,\ \sfy_s,\ \sfk_s,\ \sfk_s^{-1},\,\;i\in I,\ s\in J_m,$$
and relations: for $i,j\in I$ and $s,t\in J_m$,
\begin{itemize}
\item[(R1)] $K_{i}K_{j}=K_{j}K_{i},\ K_{i}K_{i}^{-1}=1,\
\sfk_s\sfk_t=\sfk_t\sfk_s,\, \sfk_s\sfk_s^{-1}=1,\
K_i\sfk_s=\sfk_s K_i$;

\item[(R2)] $K_{i}E_j=v^{\dt_{i,j}-\dt_{i,j+1}}E_jK_{i},\
K_i\sfx_s=\sfx_s K_i,\ \sfk_s\sfx_t=\sfx_t\sfk_s,\ \sfk_s
E_i=E_i\sfk_s$;

\item[(R3)] $K_{i}F_j=v^{-\dt_{i, j}+\dt_{ i,j+1}} F_jK_i, \
K_i\sfy_s=\sfy_s K_i,\ \sfk_s\sfy_t=\sfy_t\sfk_s,\ \sfk_s
F_i=F_i\sfk_s$;

\item[(R4)] $E_iF_j-F_jE_i=\delta_{i,j}\frac
       {\ti K_{i}-\ti K_{i}^{-1}}{\up-\up^{-1}}$, $\sfx_s\sfy_t-\sfy_t\sfx_s
       =\delta_{s,t}\frac{\sfk_s-\sfk_s^{-1}}{\up-\up^{-1}},\
       E_i\sfy_s=\sfy_s E_i,\ \sfx_s F_i=F_i\sfx_s$, where $\ti K_i =K_{i}K_{i+1}^{-1}$;\index{$\ti K_\al$,
$\ti K_i$}

\item[(R5)]
$\displaystyle\sum_{a+b=1-c_{i,j}}(-1)^a\leb{1-c_{i,j}\atop a}\rib
E_i^{a}E_jE_i^{b}=0$ for $i\not=j$;

\item[(R6)]
$\displaystyle\sum_{a+b=1-c_{i,j}}(-1)^a\leb{1-c_{i,j}\atop a}\rib
F_i^{a}F_jF_i^{b}=0$ for $i\not=j$;

 \item[(R7)] $E_i\sfx_s=\sfx_s E_i$ and $\sfx_s\sfx_t=\sfx_t\sfx_s$;

 \item[(R8)] $F_i\sfy_s=\sfy_s F_i$ and $\sfy_s\sfy_t=\sfy_t\sfy_s$. \index{defining relations!({\rm R1})--({\rm R8})}

\end{itemize}
\end{Def}

Moreover, $\bfU(\ti C_m)$ is a Hopf algebra with
comultiplication $\Dt$, counit $\ep$, and antipode $\sg$ defined
by
\begin{eqnarray*}
&\Delta(E_i)=E_i\otimes\ti K_i+1\otimes
E_i,\quad\Delta(F_i)=F_i\otimes
1+\ti K_i^{-1}\otimes F_i,&\\
&\Delta(\sfx_s)=\sfx_s\otimes\sfk_s+1\otimes
\sfx_s,\quad\Delta(\sfy_s)=\sfy_s\otimes
1+\sfk_s^{-1}\otimes \sfy_s,&\\
&\Delta(K^{\pm 1}_i)=K^{\pm 1}_i\otimes K^{\pm 1}_i,\quad
\Delta(\sfk^{\pm 1}_s)=\sfk^{\pm 1}_s\otimes \sfk^{\pm 1}_s;&\\
&\varepsilon(E_i)=\varepsilon(\sfx_s)=0=\varepsilon(F_i)=\varepsilon(\sfy_s),
\quad \varepsilon(K_i)=\varepsilon(\sfk_s)=1;&\\
&\sg(E_i)=-E_i\ti K_i^{-1},\quad \sg(F_i)=-\ti K_iF_i,\quad
\sg(K^{\pm 1}_i)=K^{\mp 1}_i,&\\
&\sg(\sfx_s)=-\sfx_s \sfk_s^{-1},\quad \sg(\sfy_s)=-\sfk_s
\sfy_s,\quad \sg(\sfk_s^{\pm 1})=\sfk_s^{\mp 1},&
\end{eqnarray*}
 where $i\in I$ and $s\in J_m$.

\begin{Rem}The comultiplication here is opposite to the one given in
\cite[3.1.10]{Lu93}. Consequently, the antipode is the inverse of the antipode
given in \cite[3.3.1]{Lu93}. This change is necessary for its action
on tensor space, commuting with the Hecke algebra action; cf. \cite[\S14.6]{DDPW}.
\end{Rem}

Let $\ti \bfU^+=\bfU(\ti C_m)^+$ (resp., $\ti \bfU^-=\bfU(\ti
C_m)^-$, $\ti \bfU^0=\bfU(\ti C_m)^0$) be the $\mbq(\up)$-subalgebra
of $\bfU(\ti C_m)$ generated by $E_i$ (resp., $F_i$, $K_i^{\pm1}$)
for all $i\in\ti I$ and $\sfx_s$ (resp. $\sfy_s, \sfk_s$) for all
$s\in J_m$. Then by \cite[Th.~2.23]{Kang}, $\bfU(\ti C_m)$ admits a
triangular decomposition
$$\bfU(\ti C_m)=\ti\bfU^+\otimes \ti\bfU^0\otimes \ti\bfU^-.$$

 It is clear that the subalgebra of $\bfU(\ti C_m)$ generated by
$E_i,F_i,\ti K_i,\ti K_i^{-1}$ ($i\in I$) is the quantum
enveloping algebra $\bfU(\afsl)$ as defined in \S1.3.  Also, we
denote by $\bfU_\vtg(n)$ the subalgebra of $\bfU(\ti C_m)$
generated by $E_i,F_i, K_i, K_i^{-1}$ ($i\in I$). We will call
$\bfU_\vtg(n)$ the {\it extended quantum affine}
$\frak{sl}_n$.\footnote{This is called quantum affine
$\mathfrak{gl}_n$ by Lusztig \cite{Lu99}, which is an extension of
quantum affine $\frak{sl}_n$ by adding an extra generator to the
0-part. This extension is similar to the extension of quantum
$\frak{gl}_n$ from quantum $\frak{sl}_n$. In the literature,
quantum affine $\frak{sl}_n$ refers also to the quantum loop
algebra associated with $\frak{sl}_n$; see \S2.3 below.}
\index{quantum affine $\frak{sl}_n$!extended $\sim$,
$\bfU_\vtg(n)$}\index{extended quantum affine $\frak{sl}_n$}
\index{$\bfU_\vtg(n)$, extended quantum affine $\frak{sl}_n$}

Now Theorem \ref{presentation dHallAlg} follows immediately from the following result which
describes the structure of $\dHallr$. It is a
generalization of a result for the double Ringel--Hall algebra of
a finite dimension tame hereditary algebra in \cite{HX} to the
cyclic quiver case. Its proof is based on Theorem
\ref{Schiffmann-Hubery}.

\begin{Prop} \label{QEAGKMAlg-DHall}
There is a unique surjective Hopf algebra homomorphism
$\Phi:\bfU(\ti C_\infty)\ra\dHallr$ satisfying for each $i\in I$
and $s\in J_\infty$,
$$E_i\lm u_i^+,\ \sfx_s\lm \sfz_s^+,\ F_i\lm u_i^-,\ \sfy_s\lm
\sfz_s^-,\ K^{\pm1}_i\lm K_i^\pm,\ \sfk_s^\pm\lm 1.$$
 Moreover, ${\rm Ker}\,\Phi$ is the ideal of $\bfU(\ti C_\infty)$ generated
by $\sfk_s^{\pm1}-1$ for $s\in J_\infty$.
\end{Prop}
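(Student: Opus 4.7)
The plan is to construct $\Phi$ via the universal property of $\bfU(\ti C_\infty)$ and then identify its kernel using triangular decompositions on both sides.

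\textbf{Step 1 (the algebra map).} Since $\bfU(\ti C_\infty)$ is presented by generators and relations (R1)--(R8), a homomorphism $\Phi$ with the prescribed values exists if and only if all these relations hold among the target elements in $\dHallr$. The quantum Serre relations (R5),(R6) among $u_i^\pm$ follow from Ringel's isomorphism $\bfcomp^\pm\cong\bfU(\afsl)^\pm$ (Theorem~\ref{CompAlg-affine sl_n}). Relations (R1)--(R3) restricted to $K_i,E_j,F_j$ are immediate from the multiplication rules in $\Hallpi$ and $\Hallmi$ (Proposition~\ref{Green Xiao} and Corollary~\ref{Green Xiao 2}), together with the identification $K_\al\otimes 1=1\otimes K_\al$ coming from the reduction \eqref{def-double-RH-alg}. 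The $(E_i,F_j)$-part of (R4), namely $[u_i^+,u_j^-]=\delta_{i,j}(\ti K_i-\ti K_i^{-1})/(\up-\up^{-1})$, is obtained by specialising \eqref{D-double-rel} to $a=u_i^+$, $b=u_j^-$ and evaluating the skew-Hopf pairing $\psi$ of Proposition~\ref{bilinear form}. All relations involving $\sfk_s$ become tautologies since $\sfk_s\mapsto 1$, and the remaining parts of (R2),(R3),(R4),(R7),(R8) reduce to the centrality of $\sfz_s^\pm$ in $\dHallr$ together with the identities $[\sfz_s^+,\sfz_t^-]=[\sfz_s^\pm,u_i^\mp]=[\sfz_s^\pm,K_i]=0$, all supplied by Lemma~\ref{central-dHall}. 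Uniqueness is clear since the listed elements generate $\bfU(\ti C_\infty)$.

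\textbf{Step 2 (Hopf property and surjectivity).} One checks $\Dt_{\dHallr}\circ\Phi=(\Phi\otimes\Phi)\circ\Dt$ on generators: the cases $E_i,F_i,K_i^{\pm 1}$ are direct from the definitions (using $\ti K_i=K_iK_{i+1}^{-1}$), and for $\sfx_s$ one has $(\Phi\otimes\Phi)\Dt(\sfx_s)=\sfz_s^+\otimes 1+1\otimes\sfz_s^+=\Dt(\sfz_s^+)$ by Lemma~\ref{central-dHall}(1), with a symmetric computation for $\sfy_s$. Compatibility of $\Phi$ with counit and antipode follows from $\ep(\sfz_s^\pm)=0$ (visible from \eqref{expression sfx sfy}) and Lemma~\ref{central-dHall}(2). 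Surjectivity is a consequence of \eqref{Zvtgpm} together with Theorem~\ref{CompAlg-affine sl_n}, which jointly show that $u_i^\pm$ and $\sfz_s^\pm$ generate $\dHallr^\pm$, combined with the triangular decomposition $\dHallr=\dHallr^+\otimes\dHallr^0\otimes\dHallr^-$.

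\textbf{Step 3 (kernel).} Let $\sJ$ be the two-sided ideal of $\bfU(\ti C_\infty)$ generated by the elements $\sfk_s^{\pm 1}-1$ ($s\in J_\infty$). Clearly $\sJ\subseteq\ker\Phi$, so $\Phi$ factors through $\bar\Phi:\bfU(\ti C_\infty)/\sJ\to\dHallr$, and it remains to prove $\bar\Phi$ is injective. By Kang's triangular decomposition $\bfU(\ti C_\infty)=\ti\bfU^+\otimes\ti\bfU^0\otimes\ti\bfU^-$ cited after Definition~\ref{quantumGKMAlg}, and because the generators of $\sJ$ lie in $\ti\bfU^0$ and commute with all of $E_i,F_i,K_j,\sfx_t,\sfy_t$ by (R1)--(R3), the quotient inherits a triangular decomposition $\ti\bfU^+\otimes(\ti\bfU^0/\sJ_0)\otimes\ti\bfU^-$, where $\sJ_0$ is the corresponding ideal of $\ti\bfU^0$. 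Here $\ti\bfU^0/\sJ_0\cong\mbq(\up)[K_1^{\pm 1},\ldots,K_n^{\pm 1}]=\dHallr^0$, and Kang's theorem further gives $\ti\bfU^+\cong\bfU(\afsl)^+\otimes\mbq(\up)[\sfx_1,\sfx_2,\ldots]$ (and analogously for $\ti\bfU^-$); under $\bar\Phi$ this maps onto $\bfcomp^+\otimes\mbq(\up)[\sfz_1^+,\sfz_2^+,\ldots]=\dHallr^+$ bijectively by Theorem~\ref{CompAlg-affine sl_n} and the algebraic independence of $\sfz_s^+$ over $\bfcomp^+$ coming from \eqref{SH1}, with a symmetric argument for the negative part. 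Composing the three isomorphisms shows that $\bar\Phi$ is bijective. The main obstacle is pinning down the triangular structure of the quotient $\bfU(\ti C_\infty)/\sJ$---in particular, showing $\sJ\cap\ti\bfU^\pm=0$---which ultimately rests on Kang's structure theorem for quantum enveloping algebras of Borcherds--Cartan matrices.
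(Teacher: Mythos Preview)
Your proof is correct and follows the same overall strategy as the paper: verify the relations to get $\Phi$, then use the triangular decompositions on both sides and the Schiffmann--Hubery decomposition \eqref{SH1} to identify the kernel. The one substantive difference is in how you establish that $\Phi^+:\ti\bfU^+\to\dHallr^+$ is an isomorphism. You invoke the tensor decomposition $\ti\bfU^+\cong\bfU(\afsl)^+\otimes\mbq(\up)[\sfx_1,\sfx_2,\ldots]$ as a known structural fact from Kang's theory; this is true (the Borcherds--Cartan matrix $\ti C_\infty$ is block-diagonal, so the positive part factors), but it is not quite what \cite[Th.~2.23]{Kang} says directly---that result gives only the triangular decomposition $\ti\bfU^+\otimes\ti\bfU^0\otimes\ti\bfU^-$. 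The paper sidesteps this by arguing in the other direction: the multiplication map $\mu:\bfU(\afsl)^+\otimes\mbq(\up)[\sfx_s]\to\ti\bfU^+$ is surjective by (R7), and since the composite $\Phi^+\mu$ lands bijectively on $\dHallr^+=\bfcomp^+\otimes\mbq(\up)[\sfz_s^+]$ (Ringel plus Schiffmann--Hubery), both $\mu$ and $\Phi^+$ are forced to be isomorphisms. This makes the argument self-contained without needing the finer structure of $\ti\bfU^+$ as prior input.
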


\begin{proof} The existence and uniqueness of $\Phi$ clearly follow from
Definition \ref{quantumGKMAlg}, Theorem \ref{CompAlg-affine sl_n},
Theorem \ref{Schiffmann-Hubery} and Lemma \ref{central-dHall}.
Further, $\Phi$ induces surjective algebra homomorphisms
$$\Phi^\pm:\ti\bfU^\pm=\bfU(\ti C_\infty)^\pm\lra \dHallr^\pm.$$
 By (R7), the multiplication map
$$\mu: \bfU(\afsl)^+\ot \mbq(\up)[\sfx_1,\sfx_2,\ldots]\lra \ti\bfU^+$$
 is surjective. Hence, the composite $\Phi^+\mu$ is surjective.
By Theorem \ref{Schiffmann-Hubery},
$$\dHallr^+=\bfcomp^+ \otimes_{\mbq(\up)}{\mbq}(v)[\sfz_1^+,\sfz_2^+,\ldots].$$
 Since $\Phi^+\mu(\bfU(\afsl)^+)=\bfcomp^+\cong
 \bfU(\afsl)^+$, it follows that $\Phi^+\mu$ is an
isomorphism. Hence, $\Phi^+$ is an isomorphism. Similarly, $\Phi^-$
is an isomorphism, too.

Let $\mathcal K$ be the ideal of $\bfU(\ti C_\infty)$ generated by
$\sfk_s^{\pm1}-1$ for $s\in J_\infty$. It is clear that ${\mathcal
K}\subseteq {\rm Ker}\,\Phi$. The triangular decomposition $\bfU(\ti
C_\infty)=\ti\bfU^+\otimes \ti\bfU^0\otimes\ti\bfU^-$ implies that
$$\bfU(\ti C_\infty)/{\mathcal K}=\bigl(\ti\bfU^+\otimes \mbq(\up)[K_1^{\pm1},\ldots,K_n^{\pm1}]
\otimes \ti\bfU^-+{\mathcal K}\bigr)/{\mathcal K}.$$
 Since
$$\dHallr=\dHallr^+\otimes\dHallr^0\otimes\dHallr^-=\dHallr^+\otimes
\mbq(\up)[K_1^{\pm1},\ldots,K_n^{\pm1}]\otimes\dHallr^-,$$
 we conclude that ${\rm Ker}\,\Phi=\mathcal K$, as required.

Finally, it is straightforward to check that $\mathcal K$ is a Hopf ideal.
\end{proof}

\begin{Coro}\label{coalgebra structure} The double Ringel--Hall algebra $\dHallr$ is a Hopf algebra with
comultiplication $\Dt$, counit $\ep$, and antipode $\sg$ defined
by
\begin{eqnarray*}
&\Delta(E_i)=E_i\otimes\ti K_i+1\otimes
E_i,\quad\Delta(F_i)=F_i\otimes
1+\ti K_i^{-1}\otimes F_i,&\\
&\Delta(K^{\pm 1}_i)=K^{\pm 1}_i\otimes K^{\pm 1}_i,\quad
\Delta(\sfz_s^\pm)=\sfz_s^\pm\otimes1+1\otimes
\sfz_s^\pm;&\\
&\varepsilon(E_i)=\varepsilon(F_i)=0=\varepsilon(\sfz_s^\pm),
\quad \varepsilon(K_i)=1;&\\
&\sg(E_i)=-E_i\ti K_i^{-1},\quad \sg(F_i)=-\ti K_iF_i,\quad
\sg(K^{\pm 1}_i)=K^{\mp 1}_i,&\\
&\sg(\sfz_s^\pm)=-\sfz_s^\pm,&
\end{eqnarray*}
 where $i\in I$ and $s\in J_\infty$.

\end{Coro}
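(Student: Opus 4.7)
The strategy is to transport the Hopf structure from $\bfU(\ti C_\infty)$ to $\dHallr$ via the surjective Hopf algebra homomorphism $\Phi$ constructed in Proposition \ref{QEAGKMAlg-DHall}. Because $\Phi$ is stated there to be a Hopf algebra map, we automatically have $\Delta_\dHallr \circ \Phi = (\Phi\otimes\Phi)\circ\Delta_{\bfU(\ti C_\infty)}$, $\ep_\dHallr\circ\Phi = \ep_{\bfU(\ti C_\infty)}$, and $\sg_\dHallr\circ\Phi = \Phi\circ\sg_{\bfU(\ti C_\infty)}$. Since every generator of $\dHallr$ listed in the corollary lies in the image of a generator of $\bfU(\ti C_\infty)$, each formula in the corollary is obtained by applying $\Phi$ (or $\Phi\otimes\Phi$) to the corresponding explicit formula for $\bfU(\ti C_\infty)$ displayed immediately before Proposition \ref{QEAGKMAlg-DHall}.

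Concretely, I would first record the substitution dictionary: $E_i\mapsto u_i^+$, $F_i\mapsto u_i^-$, $K_i^{\pm 1}\mapsto K_i^{\pm 1}$, $\sfx_s\mapsto\sfz_s^+$, $\sfy_s\mapsto\sfz_s^-$, and crucially $\sfk_s^{\pm 1}\mapsto 1$. Substituting in $\Delta(E_i)=E_i\otimes\ti K_i+1\otimes E_i$ (and the analogous formula for $F_i$, $K_i^{\pm1}$) yields directly the claimed formulas in $\dHallr$. For the new generators, the simplification $\sfk_s\mapsto 1$ converts the ``quasi-primitive'' relations $\Delta(\sfx_s)=\sfx_s\otimes\sfk_s+1\otimes\sfx_s$ and $\Delta(\sfy_s)=\sfy_s\otimes 1+\sfk_s^{-1}\otimes\sfy_s$ into the genuinely primitive formulas $\Delta(\sfz_s^\pm)=\sfz_s^\pm\otimes 1+1\otimes\sfz_s^\pm$. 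The counit values transport verbatim, and for the antipode the formulas $\sg(\sfx_s)=-\sfx_s\sfk_s^{-1}$ and $\sg(\sfy_s)=-\sfk_s\sfy_s$ collapse to $\sg(\sfz_s^\pm)=-\sfz_s^\pm$; the formulas for $\sg(E_i), \sg(F_i), \sg(K_i^{\pm 1})$ go through unchanged.

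The only thing to be careful about is the well-definedness point: one must ensure that the Hopf structure on $\dHallr$ being computed is the one coming from the Drinfeld double construction of \S2.1, not some other Hopf structure inherited from $\bfU(\ti C_\infty)$. This compatibility is exactly the assertion that $\Phi$ is a Hopf algebra homomorphism in Proposition \ref{QEAGKMAlg-DHall}, whose kernel $\mathcal K$ is a Hopf ideal (also shown there). So the hard work has already been done; the present corollary is essentially a bookkeeping translation of the Hopf formulas in $\bfU(\ti C_\infty)$ through the quotient $\Phi$, and the only substantive simplification occurring in the process is the replacement $\sfk_s\mapsto 1$, which trivializes the grouplike factors attached to $\sfz_s^\pm$ and so makes them primitive with antipode $-\sfz_s^\pm$.
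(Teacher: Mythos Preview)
Your proposal is correct and is precisely the argument the paper intends: the corollary is stated without proof immediately after Proposition~\ref{QEAGKMAlg-DHall}, and the implicit reasoning is exactly to push the explicit Hopf formulas for $\bfU(\ti C_\infty)$ through the surjective Hopf algebra map $\Phi$, using $\sfk_s^{\pm1}\mapsto 1$ to make $\sfz_s^\pm$ primitive. Your care in distinguishing the Drinfeld-double Hopf structure on $\dHallr$ from a quotient-induced one, and resolving this via the Hopf property of $\Phi$ asserted in Proposition~\ref{QEAGKMAlg-DHall}, is exactly the right point to flag.
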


\begin{Rems} \label{subalgebra quantum gl} (1) For notational simplicity, we sometimes continue to use
$u_i^\pm$ as generators of $\dHallr$. By Theorem \ref{presentation dHallAlg}, we see that there is a $\mbq(\up)$-algebra involution
$\varsigma$ of $\boldsymbol{\fD}_\vtg(n)$ satisfying
$$K_i^{\pm1}\lm K_i^{\mp1},\,u_i^\pm\lm u_i^\mp,\,\sfz_s^\pm\lm
\sfz_s^\mp,$$
 for $i\in I$ and $s\geq 1$. Thus, $\boldsymbol{\fD}_\vtg(n)$ also admits a
 decomposition
$$\boldsymbol{\fD}_\vtg(n)=\boldsymbol{\fD}_\vtg(n)^-\ot\boldsymbol{\fD}_\vtg(n)^0\ot\boldsymbol{\fD}_\vtg(n)^+.$$
Moreover, two other types of generators for Ringel--Hall algebras
discussed in \S1.4 result in the corresponding generators for double
Ringel--Hall algebras. Thus, we may speak of semisimple generators,
etc. See more discussion in \S2.4. \index{semisimple
generators!$\sim$ for $\dHallr$}

(2) The subalgebra of $\dHallr$ generated by $E_i,F_i,
K_i,K_i^{-1}$ ($i\in I$) is isomorphic to $\bfU_\vtg(n)$. We will
always identify these two algebras and thus, view $\bfU_\vtg(n)$
as a subalgebra of $\dHallr$. In particular, we have
$$\bfU_\vtg(n)=\boldsymbol{\fC}_\vtg(n)^-\ot\boldsymbol{\fD}_\vtg(n)^0\ot\boldsymbol{\fC}_\vtg(n)^+.$$
Moreover, if ${\bf Z}_\vtg(n)={\mbq}(v)[\sfz^+_m,\sfz^-_m]_{m\ge1}$
denotes the {\it central subalgebra} of $\dHallr$ generated by
$\ldots,\sfz_2^-,\sfz_1^-,\sfz_1^+,\sfz_2^+,\ldots$,\index{central
subalgebra of $\dHallr$}\index{${\bf Z}_\vtg(n)$, central
subalgebra} then (1) together with \eqref{Zvtgpm} gives
$$\dHallr\cong \bfU_\vtg(n)\ot {\bf Z}_\vtg(n).$$

(3) The subalgebra $'\dHallr$\index{double Ringel--Hall algebra!$'\dHallr$, $\sim$ with reduced 0-part}
\index{$^\prime\dHallr$, $\dHallr$ with reduced 0-part}
 of $\dHallr$ generated by $u_A^+,u_A^-,\ti K_i,\ti K_i^{-1}$ ($A\in\afThnp,i\in I$) is isomorphic to
the reduced Drinfeld double of $'\!\Hallpi$ and $'\!\Hallmi$; see Remark \ref{remark for Hallpi}(2).
This subalgebra will be considered in \S5.5 regarding a compatibility condition on the transfer maps.

(4) For each $m\geq 1$, $\bfU(\ti C_m)$ can be naturally viewed
as a subalgebra of $\bfU(\ti C_\infty)$. If we let
$\dHallr^{(m)}$ be the subalgebra of $\dHallr$ generated by
$E_i,F_i,K_i,K_i^{-1}, \sfz^+_s,\sfz^-_s$ for $i\in I$ and $s\in
J_m$, then the homomorphism $\Phi$ in Theorem
\ref{QEAGKMAlg-DHall} induces a surjective Hopf algebra
homomorphism $\bfU(\ti C_m)\ra\dHallr^{(m)}$. Applying
Proposition \ref{generators-RH-alg} shows that $\dHallr^{(m)}$ is
also generated by $E_i,F_i,K_i,K_i^{-1}, u^+_{s\dt},u^-_{s\dt}$
for $i\in I$ and $s\in J_m$.
\end{Rems}

We end this section with a brief discussion on the double
Hall algebras $\DC(n)$ given in Remark \ref{NonrootOfUnity2} in terms of specialization.


Consider the $\sZ$-subalgebra $\ti U=\ti U_\sZ$ of $\bfU(\ti
C_\infty)$ generated by $K_i^{\pm1}$, $\big[{K_i;0\atop t}\big]$,
$\sfk_s^{\pm 1}$, $\sfx_s$, $\sfy_s$ together with the divided
powers $E^{(m)}_i=E_i^m/[m]^!$ and $F^{(m)}_i=F_i^m/[m]^!$ for $i\in
I$ and $s,m\geq 1$; see for example \cite{KS,JKK}. Further, we set
$$\ti U^+=\ti \bfU^+\cap \ti U,\;\ti U^-=\ti\bfU^-\cap \ti U,\; \ti U^0=\ti\bfU^0\cap\ti U.$$
 Then $\ti U^+$ (resp., $\ti U^-$) is the $\sZ$-subalgebra of $\bfU(\ti C_\infty)$
generated by $\sfx_s$ and $E_i^{(m)}$ (resp., $\sfy_s$ and
$F_i^{(m)}$) for $i\in I$ and $s,m\geq 1$. Consequently, we obtain
$$\ti U^+=U(\afsl)^+\ot\sZ[\sfx_1,\sfx_2,\ldots]\;\;\text{and}\;\;
\ti U^-=U(\afsl)^-\ot\sZ[\sfy_1,\sfy_2,\ldots],$$
 where $U(\afsl)^+$ and $U(\afsl)^-$ are the $\sZ$-subalgebras
of $\bfU(\afsl)$ generated by the divided powers $E_i^{(m)}$ and
$F_i^{(m)}$, respectively. This implies particularly that both $\ti
U^+$ and $\ti U^-$ are free $\sZ$-modules.

\begin{Rem}\label{freeness of U0} We do not know whether $\ti U^0$ is free over $\sZ$. In case
$\ti U^0$ is free, it would be interesting to find a $\sZ$-basis for
it.
\end{Rem}

Similarly, we define the {\it integral form} $\fD_\vtg(n)$ of $\dHallr$
to be the $\sZ$-subalgebra generated by $K_i^{\pm1}$,
$\big[{K_i;0\atop t}\big]$, $\sfz_s^+$, $\sfz^-_s$,
$(u_i^+)^{(m)}$ and $(u_i^-)^{(m)}$ for $i\in I$ and $s,t,m\geq
1$. It is easy to see that the surjective $\mbq(\up)$-algebra
homomorphism $\Phi:\bfU(\ti C_\infty)\ra \dHallr$ (see Proposition
\ref{QEAGKMAlg-DHall}) induces a surjective $\sZ$-algebra
homomorphism $\Phi:\ti U\ra\fD_\vtg(n)$. We also set
$$\fD_\vtg(n)^\epsilon=\dHallr^\epsilon\cap\fD_\vtg(n)\;\;\text{for $\epsilon\in\{+,-,0\}$}.$$
\index{$\fD_\vtg(n)$, integral form of $\dHallr$!$\fD_\vtg(n)^\pm$, $\pm$-part of $\fD_\vtg(n)$}
\index{$\fD_\vtg(n)$, integral form of $\dHallr$!$\fD_\vtg(n)^0$, $0$-part of $\fD_\vtg(n)$}
 Then $\fD_\vtg(n)^+$ (resp., $\fD_\vtg(n)^-$) is isomorphic to  $\ti U^+$ (resp.,
 $\ti U^-$). Hence,
$$\fD_\vtg(n)^+=\comp^+\ot_\sZ\sZ[\sfz_1^+,\sfz_2^+,\ldots]\;\;\text{and}\;\;
\fD_\vtg(n)^-=\comp^-\ot_\sZ\sZ[\sfz_1^-,\sfz_2^-,\ldots],$$
 where $\comp^+\cong U(\afsl)^+$ (resp., $\comp^-\cong U(\afsl)^-$)
is the $\sZ$-subalgebra of $\dHallr$ generated by the
$(u_i^+)^{(m)}$ (resp., $(u_i^-)^{(m)}$). Moreover, by
\cite[Th.~14.20]{DDPW}, $\fD_\vtg(n)^0$ is a free $\sZ$-module with
a basis
$$\biggl\{\ti K_1^{a_1}\cdots \ti K_{n-1}^{a_{n-1}}\leb{\ti K_1;0\atop t_1}\rib
\cdots\leb{\ti K_{n-1};0\atop t_{n-1}}\rib K_n^{a_n}\leb{K_n;0\atop
t_n}\rib\,\bigg|\, a_i=0,1, t_i\in\mbn\;\forall i\in I\biggr\}.$$
  Hence, the multiplication map
$$\fD_\vtg(n)^+\ot_\sZ\fD_\vtg(n)^0\ot_\sZ\fD_\vtg(n)^-\overset\sim\lra \fD_\vtg(n)$$
is a $\sZ$-module isomorphism; see \cite[Cor.~6.50]{DDPW}. In
particular, $\fD_\vtg(n)$ is a free $\sZ$-module.

The $\sZ$-algebra $\fD_\vtg(n)$ gives rise to a $\sZ$-form $U_\vtg(n)$ of the extended quantum affine
$\mathfrak{sl}_n$, $\bfU_\vtg(n)$:
\begin{equation}\label{integral form of U(n)}
U_\vtg(n)=\fD_\vtg(n)\cap \bfU_\vtg(n)=\comp^+\fD_\vtg(n)^0\comp^-.
\end{equation}

\begin{Coro}\label{NonrootOfUnity3}
If  $z\in\mbc$ is a complex number which is not a root of 1, then the
specialization $\fD_\vtg(n)_\mbc=\fD_\vtg(n)\ot\mbc$ at $\up=z$ is isomorphic to the
$\mbc$-algebra $\DC(n)$.
\end{Coro}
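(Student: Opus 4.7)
The plan is to build an explicit $\mbc$-algebra homomorphism $\phi\colon\fD_\vtg(n)_\mbc\to\DC(n)$ sending each integral generator to its obvious counterpart, and then to prove $\phi$ is bijective by matching the triangular decompositions on both sides. I would argue it factorwise on the $+$-, $0$-, and $-$-parts.

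First I would use the fact established just before the statement that
$$\fD_\vtg(n)^+\ot_\sZ\fD_\vtg(n)^0\ot_\sZ\fD_\vtg(n)^-\overset{\sim}{\lra}\fD_\vtg(n)$$
is a $\sZ$-module isomorphism with each factor $\sZ$-free (the $0$-factor via the explicit monomial basis in $\ti K_i^{a_i}, \bigl[{\ti K_i;0\atop t_i}\bigr]$ displayed above, and the $\pm$-factors via the Schiffmann--Hubery decomposition $\fD_\vtg(n)^\pm\cong\comp^\pm\ot_\sZ\sZ[\sfz_1^\pm,\sfz_2^\pm,\ldots]$). Base change at $\up=z$ therefore produces a triangular decomposition $\fD_\vtg(n)_\mbc=\fD_\vtg(n)^+_\mbc\ot_\mbc\fD_\vtg(n)^0_\mbc\ot_\mbc\fD_\vtg(n)^-_\mbc$. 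Symmetrically, Remark \ref{NonrootOfUnity2} constructs $\DC(n)$ as a Drinfeld double admitting its own triangular decomposition $\DC(n)^+\ot_\mbc\DC(n)^0\ot_\mbc\DC(n)^-$, with $\DC(n)^\pm\cong\comp^\pm_\mbc\ot_\mbc\mbc[\sfz_1^\pm,\sfz_2^\pm,\ldots]$ by Remark \ref{NonrootOfUnity1}.

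Next I would define $\phi$ by sending each of the generators $K_i^{\pm1}$, $\bigl[{K_i;0\atop t}\bigr]$, $(u_i^\pm)^{(m)}$, $\sfz_s^\pm$ of $\fD_\vtg(n)$ to its namesake in $\DC(n)$; here the central elements $\sfz_s^\pm\in\DC(n)$ are built recursively via \eqref{recursive formula} and \eqref{expression sfx sfy}, which is legitimate since the hypothesis that $z$ is not a root of unity ensures both $\fka_A|_{\up^2=z^2}\neq0$ (via \eqref{poly-auto-group}) and $[m]|_{\up=z}\neq0$ for all $m\geq1$. Every defining relation of $\fD_\vtg(n)$ inside $\dHallr$ has coefficients in $\sZ$, so each one specializes to a valid identity in $\DC(n)$, making $\phi$ a well-defined $\mbc$-algebra map. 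Surjectivity is then immediate from the presentation (QGL1)--(QGL8) of $\DC(n)$ recalled at the end of Remark \ref{NonrootOfUnity1}, since $\phi$ hits every generator. For injectivity I would compare the two triangular decompositions factor by factor: on the $0$-part the explicit $\sZ$-basis displayed before the statement remains $\mbc$-linearly independent after specialization and spans $\DC(n)^0\cong\mbc[K_1^{\pm1},\ldots,K_n^{\pm1}]$; on the $+$-part the isomorphism $\fD_\vtg(n)^+_\mbc\cong(\comp^+\ot_\sZ\mbc)\ot_\mbc\mbc[\sfz_1^+,\sfz_2^+,\ldots]$ matches $\DC(n)^+=\comp^+_\mbc\ot_\mbc\mbc[\sfz_1^+,\sfz_2^+,\ldots]$, where $\comp^+\ot_\sZ\mbc\cong\comp^+_\mbc$ since the integral composition subalgebra has a monomial $\sZ$-basis (Proposition \ref{integral-monomial-basis-comp}) that remains $\mbc$-linearly independent after specialization; the $-$-part is symmetric.

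The main obstacle will be a bookkeeping issue rather than a deep one: one must verify that the recursively defined central elements $\sfz_s^\pm$ on the integral side specialize under $\up\mapsto z$ to exactly the corresponding central elements on the Drinfeld double side, not merely to some other central elements generating the same subalgebra. This identity reduces to the observation that the scalars $\up^{nm}/(\up-\up^{-1})$ and $[m]/m$ occurring in \eqref{recursive formula} and \eqref{expression sfx sfy} remain defined and nonzero after the specialization $\up\mapsto z$, which is precisely where the non-root-of-unity hypothesis is used in an essential way.
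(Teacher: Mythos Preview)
Your factor-by-factor comparison of the triangular decompositions is sound and is essentially what the paper does (the paper phrases it as ``injective on every triangular component'' plus a ``dimensional comparison on homogeneous components'' using the $\mbz$-grading by $\fkd(A)$). The substantive difference is the \emph{direction} of the map, and this is where your argument has a gap.

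You define $\phi:\fD_\vtg(n)_\mbc\to\DC(n)$ and justify well-definedness by saying ``every defining relation of $\fD_\vtg(n)$ inside $\dHallr$ has coefficients in $\sZ$, so each one specializes.'' But $\fD_\vtg(n)$ is introduced as a $\sZ$-\emph{subalgebra} of $\dHallr$, not by generators and relations; you do not have a presentation of $\fD_\vtg(n)_\mbc$ to work from. Knowing that some relations (say (QGL1)--(QGL8)) hold in the target is exactly what lets you define a map \emph{into} $\fD_\vtg(n)_\mbc$ from the algebra presented by those relations, namely $\DC(n)$---which is the direction the paper takes. To define a map \emph{out of} $\fD_\vtg(n)_\mbc$ you would need a complete set of relations, or else an intermediate object: for instance, one can use the $\sA$-form $\fD_\vtg(n)_\sA$ of Remark~\ref{NonrootOfUnity2}, observe that $\fD_\vtg(n)\subset\fD_\vtg(n)_\sA$ inside $\dHallr$, and then base-change along $\sZ\to\sA\to\mbc$; but you did not do this, and it requires its own care. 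The simplest repair is to reverse the arrow: build the map $\DC(n)\to\fD_\vtg(n)_\mbc$ from the presentation (QGL1)--(QGL8), check it is injective on each triangular factor exactly as you argued, and conclude surjectivity by the same dimension/basis comparison. After that swap, your proof and the paper's coincide.
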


\begin{proof} If we assign to each
$u_A^+$ (resp., $u_A^-$, $K_i$) degree $\fkd(A)=\dim M(A)$ (resp.,
$-\fkd(A)$, $0$), then $\dHallr$ admits a $\mbz$-grading
$\dHallr=\oplus_{m\in\mbz}\dHallr_m$, and $\fD_\vtg(n)$ inherits
a $\mbz$-grading $\fD_\vtg(n)=
\oplus_{m\in\mbz}\fD_\vtg(n)_{m}$, where
$\fD_\vtg(n)_{m}=\fD_\vtg(n)\cap \dHallr_m.$
By the presentation of $\fD_{\vtg,\mbc}(n)$ as given in Remark \ref{NonrootOfUnity2}, it is clear that
 there is a graded algebra homomorphism
$\fD_{\vtg,\mbc}(n)\to \fD_\vtg(n)_\mbc$. This map is injective
since it is so on every triangular component. Now, a dimensional
comparison on homogeneous components shows it is an isomorphism.
\end{proof}

\begin{Rem}\label{Lform2}  Let $\Hall^+$ be the $\sZ$-submodule of
$\dHallr$ spanned by $u_A^+$ for $A\in\afThnp$.\index{$\ti{\fD}_\vtg(n)$, candidate of Lusztig form!$\Hall^+$,
$+$-part of $\ti{\fD}_\vtg(n)$} 
 Then $\Hall^+$ is a
$\sZ$-algebra which is isomorphic to $\Hall$. We point out that in
general, $\fD_\vtg(n)^+$ does not coincide with $\Hall^+$. For
example, let $n=2$. An easy calculation shows that
$$\sfz_1^+=u_1^+u_2^++u_2^+u_1^+-(\up+\up^{-1})u^+_\dt.$$
Since $\{u_1^+u_2^+,u_2^+u_1^+,\sfz_1^+\}$ is a $\sZ$-basis for
the homogeneous space $\fD_\vtg(2)^+_\dt$, we conclude that
$u^+_\dt\not\in \fD_\vtg(2)^+$. If $n=3$, then
$$\aligned
\sfz_1^+=&u^+_1u^+_2u^+_3+u^+_2u^+_3u^+_1+u_3^+u_1^+u_2^+\\
&-(\up+\up^{-1})(u^+_1u^+_3u^+_2+u^+_2u^+_1u^+_3+u_3^+u_2^+u_1^+)+(\up^2+1+\up^{-2})u^+_\dt.
\endaligned$$
 Analogously, we have $u^+_\dt\not\in\fD_\vtg(3)^+$.

It seems difficult to prove directly that all central elements
$\sfz_m^+$ lie in $\Hall^+$. However, we will provide a proof of
this fact in Corollary \ref{central-elem-integral} via the integral
quantum Schur algebras. Hence, $\fD_\vtg(n)^+$ is indeed a
subalgebra of $\Hall^+$.
\end{Rem}

\section{The quantum loop algebra $\bfU(\afgl)$}

In this section, we give an application of the presentation for
$\dHallr$ given in the previous section. More precisely, we will use
Hubery's work \cite{Hub2} to extend Beck's algebra embedding of the
quantum affine $\mathfrak{sl}_n$ into the quantum loop algebra
$\bfU(\afgl)$ defined by Drinfeld's new presentation \cite{Dr88} to
an explicit isomorphism from $\dHallr$ to $\bfU(\afgl)$.\index{loop
algebra of $\frak{gl}_n$}

As discussed in \S1.1, consider the loop algebra
$$\afgl={\frak{gl}_n}\ot\mbq[t,t^{-1}],$$
which is generated by $E_{i,i+1}\otimes t^s$,
$E_{i+1,i}\otimes t^s$, $E_{j,j}\otimes t^s$ for $s\in\mbz$, $1\leq
i\leq n-1$ and $1\leq j\leq n$. We have the following quantum
enveloping algebra associated with $\afgl$; see \cite{Dr88} (and
also \cite{FM,Hub2}).

\begin{Def}\label{QLA}
(1) The {\it quantum loop algebra} $\bfU(\afgl)$ \index{quantum loop
algebra!$\sim$ of $\mathfrak {gl}_n$, $\bfU(\afgl)$} (or {\it
quantum affine $\mathfrak {gl}_n$})\index{quantum affine $\mathfrak
{gl}_n$} is the $\mbq(\up)$-algebra generated by $\ttx^\pm_{i,s}$
($1\leq i<n$, $s\in\mbz$), $\ttk_i^{\pm1}$ and $\ttg_{i,t}$ ($1\leq
i\leq n$, $t\in\mbz\backslash\{0\}$) with the following relations:
\begin{itemize}
 \item[(QLA1)] $\ttk_i\ttk_i^{-1}=1=\ttk_i^{-1}\ttk_i,\,\;[\ttk_i,\ttk_j]=0$;
 \item[(QLA2)]
 $\ttk_i\ttx^\pm_{j,s}=v^{\pm(\dt_{i,j}-\dt_{i,j+1})}\ttx^\pm_{j,s}\ttk_i,\;
               [\ttk_i,\ttg_{j,s}]=0$;
 \item[(QLA3)] $[\ttg_{i,s},\ttx^\pm_{j,t}]
               =\begin{cases}0\;\;&\text{if $i\not=j,\,j+1$},\\
                  \pm v^{-is}\frac{[s]}{s}\ttx^\pm_{i,s+t},\;\;\;&\text{if $i=j$},\\
                  \mp v^{(1-i)s}\frac{[s]}{s}\ttx_{i-1,s+t}^\pm,\;\;\;&\text{if $i=j+1$;}
                \end{cases}$
 \item[(QLA4)] $[\ttg_{i,s},\ttg_{j,t}]=0$;
 \item[(QLA5)] $[\ttx_{i,s}^+,\ttx_{j,t}^-]=\dt_{i,j}\frac{\phi^+_{i,s+t}-\phi^-_{i,s+t}}{v-v^{-1}}$;
 \item[(QLA6)] $\ttx^\pm_{i,s}\ttx^\pm_{j,t}=\ttx^\pm_{j,t}\ttx^\pm_{i,s}$, for $|i-j|>1$, and
 $[\ttx_{i,s+1}^\pm,\ttx^\pm_{j,t}]_{v^{\pm c_{ij}}}
               =-[\ttx_{j,t+1}^\pm,\ttx^\pm_{i,s}]_{v^{\pm c_{ij}}}$;
 \item[(QLA7)] $[\ttx_{i,s}^\pm,[\ttx^\pm_{j,t},\ttx^\pm_{i,p}]_v]_v
               =-[\ttx_{i,p}^\pm,[\ttx^\pm_{j,t},\ttx^\pm_{i,s}]_v]_v\;$ for
               $|i-j|=1$, \index{defining relations!({\rm QLA1})--({\rm QLA7})}
\end{itemize}
 where $[x,y]_a=xy-ayx$, and $\phi_{i,s}^\pm$ are defined via the
 generating functions in indeterminate $u$ by
$$\Phi_i^\pm(u):={\ti\ttk}_i^{\pm 1}
\exp(\pm(v-v^{-1})\sum_{m\geq 1}\tth_{i,\pm m}u^{\pm m})=\sum_{s\geq
0}\phi_{i,\pm s}^\pm u^{\pm s}$$ with $\ti\ttk_i=\ttk_i/\ttk_{i+1}$
($\ttk_{n+1}=\ttk_1$) and
$\tth_{i,m}=v^{(i-1)m}\ttg_{i,m}-v^{(i+1)m}\ttg_{i+1,m}\,(1\leq
i<n).$

(2) Let $\afUglC$ be the quantum loop algebra defined by the same
generators and relations (QLA1)--(QLA7) with $\mbq(\up)$ replaced by
$\mbc$ and $\up$ by a non-root of unity $z\in \mbc$.
\end{Def}

 Observe that, if we put $\Theta_i^\pm(u)={\ttk}_i^{\pm 1}\exp(\pm(v-v^{-1})\sum_{m\geq 1}\ttg_{i,\pm m}u^{\pm m})$, then
$$\Phi_i^\pm(u)=\frac{\Theta_i^\pm(u\up^{i-1})}{\Theta_{i+1}^\pm(u\up^{i+1})}.$$
Also, $\phi_{i,-s}^+=\phi_{i,s}^-=0$ for all $s\geq1$. Hence, (QLA5) becomes
$$[\ttx_{i,s}^+,\ttx_{i,t}^-]=\begin{cases}\frac{\ti\ttk_i-\ti\ttk^-_{i}}{v-v^{-1}},&\text{ if }s+t=0;\\
\frac{\phi^+_{i,s+t}}{v-v^{-1}},&\text{ if } s+t>0;\\
\frac{-\phi^-_{i,s+t}}{v-v^{-1}},&\text{ if }s+t<0.\\
\end{cases}$$

By \cite{Be} (see also \cite[Th.~2.2]{Jing}, \cite[\S3.1]{FM}, and
\cite[Th.~3]{Hub2}), there is a monomorphism of Hopf algebras
$\sE_{\rm B}:\bfU(\afsl)\ra \bfU(\afgl)$ such that
$$\ti K_i\lm \ti\ttk_i,\,E_i\lm \ttx_{i,0}^+,\,F_i\lm \ttx_{i,0}^-\;(1\leq i<n),\,
E_n\lm\ep_n^+,\,F_n\lm\ep_n^-,$$
 where
 $$\aligned
 &\ep_n^+=(-1)^n[\ttx_{n-1,0}^-,\cdots,[\ttx_{3,0}^-,[\ttx_{2,0}^-,\ttx_{1,1}^-]_{v^{-1}}
 ]_{v^{-1}}\cdots]_{v^{-1}}\ti\ttk_n\;\;\text{and}\\
 &\ep_n^-=(-1)^n\ti\ttk_n^{-1}[\cdots[[\ttx_{1,-1}^+,\ttx_{2,0}^+]_v,\ttx_{3,0}^+]_v,
 \cdots,\ttx_{n-1,0}^+]_v.
\endaligned$$
The image $\text{Im}(\sE_{\rm B})$, known as quantum loop algebra of
$\mathfrak{sl}_n$,\index{quantum loop algebra!$\sim$ of $\mathfrak
{sl}_n$, $\bfU(\afsl)$} is the $\mbq(\up)$-subalgebra of
$\bfU(\afgl)$ generated by $\ti\ttk_i^{\pm1}$, $\ttx_{i,s}^\pm$, and
$\tth_{i,t}$ for $1\leq i< n$, $s, t\in\mbz$ with $t\not=0$.
Moreover, it can be proved (see, e.g., \cite[\S\S2.1\&3.1]{FM}) that
the defining relations of this subalgebra are the relations
(QLA5)--(QLA7) together with the relations:

\vspace{.3cm}
\begin{itemize}
\item[(GLA8)] $\ti\ttk_i\ti\ttk_j=\ti\ttk_j\ti\ttk_i$, $\ti\ttk_1\ti\ttk_2\cdots\ti\ttk_n=1$, $[\ti\ttk_i,\tth_{j,s}]=0,\;[\tth_{i,s},\tth_{j,t}]=0$;
\item[(GLA9)] $\ti\ttk_i\ttx_{j,s}^\pm=\up^{\pm c_{i,j}}\ttx_{j,s}^\pm\ti\ttk_i$,
$[\tth_{i,s},\ttx_{j,t}^\pm]=\pm\frac{[s\,
c_{i,j}]}{s}\ttx_{j,s+t}^\pm$.
\end{itemize}
 where $C=(c_{i,j})$ is the generalized Cartan matrix of type $\ti
 A_{n-1}$ as given in \eqref{CMforAn-1}. This is the Drinfeld's new presentation for $\bfU(\afsl)$.
Define $\afUslC$ similarly.

\vspace{.3cm}
We now describe the main result of \cite{Hub2}.
Set for each $s\in\mbz\backslash\{0\}$,
\begin{equation}\label{defn of theta_s}
\th_s=v^{ns}(\ttg_{1,s}+\cdots+\ttg_{n,s}).
\end{equation}
 It can be directly checked by the definition that the $\th_s$ are central in
$\bfU(\afgl)$.

\begin{Rem}\label{FM-central-elements} In \cite[\S4.2]{FM}, the
authors introduced central elements $\ttg_s$ ($s\in\mbz$) in
$\bfU(\afgl)$ defined by the generating function
$$\sum_{s\geq 0}\ttg_{\pm s}u^{\pm s}=\prod_{i=1}^n{\rm exp}\big(\mp\sum_{m>0}
\frac{\ttg_{i,\pm m}}{[m]}u^{\pm m}\big),$$
 that is,
$$\sum_{s\geq 0}\ttg_{\pm s}u^{\pm s}={\rm exp}\big(\mp\sum_{m>0}
\frac{\up^{\mp nm}}{[m]}\th_{\pm m}u^{\pm m}\big).$$
 Therefore, for each $s\geq 1$,
$$\aligned
\,& \mbq(\up)[\th_{\pm 1},\ldots,\th_{\pm s}]=\mbq(\up)[\ttg_{\pm
1},\ldots,\ttg_{\pm s}]\;\;\text{and}\\
& \th_{\pm (s+1)}\equiv \mp \up^{\pm n(s+1)}[s+1]\ttg_{\pm
(s+1)}\;\;{\rm mod}\;\mbq(\up)[\ttg_{\pm 1},\ldots,\ttg_{\pm s}].
\endaligned$$
\end{Rem}

By $\bfU(\afgl)^{\geq 0}$ we denote the subalgebra of $\bfU(\afgl)$
generated by $\ttk_i^{\pm1}$, $\ttg_{i,s}$ ($1\leq i\leq n,s\geq
1$), $\ttx^+_{i,0}$ and $\ttx^\pm_{i,s}$ ($1\leq i<n, s\geq 1$).
Dually, let $\bfU(\afgl)^{\leq 0}$ be the subalgebra of
$\bfU(\afgl)$ generated by $\ttk_i^{\pm1}$, $\ttg_{i,s}$ ($1\leq
i\leq n,s\leq -1$), $\ttx^-_{i,0}$ and $\ttx^\pm_{i,s}$ ($1\leq i<n,
s\leq -1$).

\begin{Prop}[\rm Main Theorem of \cite{Hub2}] \label{Hubery Theorm}
There is an isomorphism of Hopf algebras $\sE_{\rm H}^{\geq
0}:\dHallr^{\geq 0}\ra \bfU(\afgl)^{\geq 0}$ taking
$$K_i^{\pm1}\lm\ttk_i^{\pm1},\;u_i^+\lm \ttx^+_{i,0}\;(1\leq
i<n),\;u_n^+\lm\ep_n^+,\,\sfz^+_s\lm -\frac{s}{[s]}\th_s\;(s\geq
1).$$
 Moreover, the restriction of $\sE_{\rm H}^{\geq 0}$ and the Beck's
embedding $\sE_{\rm B}$ to $\bfcomp^+=\bfU(\afsl)^+$ coincide.
\end{Prop}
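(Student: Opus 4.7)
The plan is to build $\sE_{\rm H}^{\geq 0}$ by assembling Beck's embedding on the composition subalgebra with a separately defined map on the Schiffmann--Hubery central generators, and then to check that the resulting map is a bijective homomorphism of Hopf algebras.

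First I would define $\sE_{\rm H}^{\geq 0}$ on generators according to the formulas in the statement and verify that it respects the defining relations of $\dHallr^{\geq 0}$ (i.e.\ relations (QGL1)--(QGL4), (QGL6)--(QGL8) restricted to the $+$-part together with (QGL2)). The relations involving only $K_i^{\pm 1}$, $u_i^+$ ($1\leq i\leq n$) are handled by Beck's embedding $\sE_{\rm B}$ from Theorem \ref{CompAlg-affine sl_n}: the image of $\bfcomp^+$ under $\sE_{\rm B}$ lies in $\bfU(\afgl)^{\geq 0}$ and already satisfies the quantum Serre relations (QGL4) and the commutation relations (QGL1)--(QGL2). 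The relations involving $\sfz_s^+$ reduce to showing that the elements $\th_s$ are central in $\bfU(\afgl)^{\geq 0}$ (which follows directly from (QLA2)--(QLA4) as remarked after \eqref{defn of theta_s}) and mutually commute, so that (QGL6)--(QGL8) transfer automatically.

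Next I would establish bijectivity using the decomposition $\dHallr^{\geq 0} \cong \bfcomp^+ \otimes \mbq(\up)[\sfz_1^+,\sfz_2^+,\ldots] \otimes \mbq(\up)[K_1^{\pm 1},\ldots,K_n^{\pm 1}]$ from \eqref{Zvtgpm} together with Corollary \ref{monomial basis with central elements}. On the target side, one has the parallel triangular decomposition of $\bfU(\afgl)^{\geq 0}$ with Drinfeld's new generators, and by Remark \ref{FM-central-elements} the polynomial algebra $\mbq(\up)[\th_1,\th_2,\ldots]$ coincides with the polynomial algebra in the Frenkel--Mukhin central elements $\ttg_s$ ($s\geq 1$), which in turn generates (together with $\ttk_i^{\pm 1}$ and $\text{Im}(\sE_{\rm B})^{\geq 0}$) all of $\bfU(\afgl)^{\geq 0}$. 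Surjectivity then follows from this generation statement and the identification of the image of the composition subalgebra via Beck. Injectivity follows from the compatibility of the two triangular decompositions and the fact that $\sfz_s^+ \mapsto -\tfrac{s}{[s]}\th_s$ is a scaling isomorphism between the polynomial pieces.

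For the Hopf algebra statement, I would check that $\sE_{\rm H}^{\geq 0}$ intertwines the comultiplications on generators: on $u_i^+$ and $K_i^{\pm 1}$ this is inherited from Beck's embedding, which is already a Hopf algebra map; on $\sfz_s^+$ one uses the primitivity $\Delta(\sfz_s^+)=\sfz_s^+\otimes 1+1\otimes\sfz_s^+$ from Corollary \ref{coalgebra structure} together with the corresponding primitivity of $\th_s$ (or equivalently of $\ttg_{i,s}$ summed over $i$) in $\bfU(\afgl)$, which is a standard feature of Drinfeld's new realization. The final assertion that $\sE_{\rm H}^{\geq 0}$ restricted to $\bfcomp^+$ coincides with $\sE_{\rm B}$ is then immediate from how we constructed the map on the generators $u_i^+$ and from $u_n^+\mapsto \ep_n^+$. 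The main obstacle I anticipate is verifying the precise scaling factor $-\tfrac{s}{[s]}$ relating $\sfz_s^+$ and $\th_s$: this requires tracking carefully the leading-term expression \eqref{expression sfx sfy} for $\sfz_m^+$ as a sum of $\tilde u^+_{E^\vtg_{l,l+mn}}$ through Beck's formula for $\ep_n^+$, and matching it against the imaginary root vectors of $\bfU(\afgl)$ that build $\th_s$ via $\tth_{i,m}$.
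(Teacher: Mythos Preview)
The paper does not prove this proposition: it is stated as the Main Theorem of \cite{Hub2} and cited without proof, so there is no argument in the paper to compare against. Your outline is the natural strategy and matches what one expects Hubery's proof to do, but two points deserve caution.

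First, you assert a ``parallel triangular decomposition'' of $\bfU(\afgl)^{\geq 0}$ as (essentially) $\text{Im}(\sE_{\rm B})^{\geq 0}\otimes \mbq(\up)[\th_1,\th_2,\ldots]\otimes\mbq(\up)[\ttk_i^{\pm1}]$. This is not a free input from Drinfeld's presentation: one must show that the $\th_s$ together with the Beck image actually generate $\bfU(\afgl)^{\geq 0}$ (in particular all $\ttg_{i,t}$ for $t\geq 1$, not just their sum), and that the factorisation is a vector-space isomorphism. Remark~\ref{FM-central-elements} gives the relation between $\th_s$ and the Frenkel--Mukhin elements $\ttg_s$, but recovering each individual $\ttg_{i,t}$ from the $\th_s$ and the $\tth_{i,m}$ in the Beck image is a genuine (if routine) linear-algebra step you should make explicit. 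Without it, both surjectivity and injectivity are incomplete.

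Second, the Hopf compatibility for $\th_s$ is not as immediate as you suggest. The comultiplication on $\bfU(\afgl)$ in Drinfeld's generators is not given by simple formulas on $\ttg_{i,t}$; primitivity of $\th_s$ is a result that needs an argument (or a precise citation), not a ``standard feature''. Your identification of the scaling factor $-s/[s]$ as the crux is correct, and tracking it through \eqref{expression sfx sfy} and Beck's formulas is exactly where the work lies.
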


Dually, there is an isomorphism of Hopf algebras $\sE_{\rm H}^{\leq
0}:\dHallr^{\leq 0}\lra \bfU(\afgl)^{\leq 0}$ taking
$$K_i^{\pm1}\lm\ttk_i^{\pm1},\;u_i^-\lm \ttx^-_{i,0}\;(1\leq
i<n),\;u_n^-\lm\ep_n^-,\,\sfz^-_s\lm
-\frac{s}{[s]}\th_{-s}\;(s\geq 1),$$
 and the restrictions of $\sE_{\rm H}^{\leq 0}$  to $\bfcomp^-=\bfU(\afsl)^-$ is Beck's
embedding $\sE_{\rm B}$.

In view of the isomorphisms $\sE_{\rm H}^{\geq 0}$ and $\sE_{\rm H}^{\leq 0}$,
we see that $\bfU(\afgl)$ is generated by $\ttx^\pm_{i,0}$ ($1\leq
i<n$), $\ep_n^\pm$, $-\frac{s}{[s]}\th_{\pm s}$ ($s\geq 1$), and
$\ttk_i^{\pm 1}$ ($1\leq i\leq n$). It is easy, using Beck's embedding, to check that these
generators satisfy the relations similar to (QGL1)--(QGL8) in
Theorem \ref{presentation dHallAlg}. Consequently, we obtain the
following result.

\begin{Coro}\label{epi-doubleRH-QGL} There is a surjective $\mbq(\up)$-algebra
homomorphism $\sE_{\rm H}:\dHallr\ra\bfU(\afgl)$ taking
$$K_i^{\pm1}\lm\ttk_i^{\pm1},\;u_i^\pm\lm \ttx^\pm_{i,0}\;(1\leq
i<n),\;u_n^\pm\lm\ep_n^\pm,\,\sfz^\pm_s\lm -\frac{s}{[s]}\th_{\pm
s}\;(s\geq 1).$$
\end{Coro}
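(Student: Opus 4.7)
The plan is to invoke the presentation of $\dHallr$ given in Theorem \ref{presentation dHallAlg} and verify that the proposed target elements in $\bfU(\afgl)$ satisfy the corresponding defining relations (QGL1)--(QGL8). Most of the required relations come essentially for free from Proposition \ref{Hubery Theorm}: the Hopf isomorphism $\sE_{\rm H}^{\geq 0}:\dHallr^{\geq0}\to\bfU(\afgl)^{\geq0}$ carries $K_i^{\pm1},u_i^+,\sfz_s^+$ to $\ttk_i^{\pm1},\ttx_{i,0}^+$ (with $u_n^+\mapsto\ep_n^+$) and $-\frac{s}{[s]}\theta_s$ respectively, and the analogous $\sE_{\rm H}^{\leq 0}$ handles the negative half. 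Thus the purely $+$ relations, purely $-$ relations, and all relations between the $K_i$'s and the $\pm$ generators and central elements in the same half (i.e.\ (QGL1), (QGL2) restricted to $+$, (QGL4), the $+$ halves of (QGL6)--(QGL8)) are automatic.

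Next I would verify the genuinely ``crossed'' relations, namely (QGL3) and the $+$/$-$ mixed cases of (QGL6)--(QGL8). For (QGL3) with $1\leq i,j<n$ both strictly less than $n$, the images $\ttx_{i,0}^+,\ttx_{j,0}^-$ and the Cartan-type quotient $(\ti\ttk_i-\ti\ttk_i^{-1})/(\up-\up^{-1})$ are controlled directly by Drinfeld's relation (QLA5) at $s=t=0$ and (QLA2). The genuinely new case is when $i$ or $j$ equals $n$, where one of the generators is $\ep_n^\pm$; here one uses Beck's embedding $\sE_{\rm B}$ (whose restriction to $\bfcomp^\pm$ agrees with $\sE_{\rm H}^{\geq 0},\sE_{\rm H}^{\leq 0}$) together with the fact that $\sE_{\rm B}$ maps the Chevalley generators $E_n,F_n$ of $\bfU(\afsl)$ to $\ep_n^+,\ep_n^-$, so relation (QGL3) at $i$ or $j=n$ is a consequence of the corresponding Drinfeld--Jimbo relation in $\bfU(\afsl)\hookrightarrow\bfU(\afgl)$. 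The commutation relations (QGL6)--(QGL8) involving one $+$ and one $-$ element follow from the centrality of $\theta_{\pm s}$ in $\bfU(\afgl)$ (noted just after \eqref{defn of theta_s}), which implies $\theta_{s}$ commutes with every $\ttx_{j,t}^\pm$ and every $\ttk_i^{\pm 1}$, and in particular with $\ep_n^\pm$.

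Once all relations are checked, Theorem \ref{presentation dHallAlg} furnishes a unique $\mbq(\up)$-algebra homomorphism $\sE_{\rm H}:\dHallr\to\bfU(\afgl)$ with the stated action on generators, extending $\sE_{\rm H}^{\geq 0}$ and $\sE_{\rm H}^{\leq 0}$. Surjectivity is then automatic: by Proposition \ref{Hubery Theorm}, $\sE_{\rm H}^{\geq 0}$ is already surjective onto $\bfU(\afgl)^{\geq 0}$ and $\sE_{\rm H}^{\leq 0}$ onto $\bfU(\afgl)^{\leq 0}$, so $\Image(\sE_{\rm H})\supseteq\bfU(\afgl)^{\geq 0}\cdot\bfU(\afgl)^{\leq 0}=\bfU(\afgl)$, the last equality being the triangular decomposition of the quantum loop algebra.

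The main obstacle I anticipate is the case of (QGL3) where $i=n$ or $j=n$: the images $\ep_n^\pm$ are iterated $v$-commutators built from $\ttx_{k,\pm 1}^\mp$ rather than obvious Chevalley generators, so verifying directly that $[\ep_n^+,\ttx_{j,0}^-]$ behaves correctly for $j=1,\ldots,n-1$ (and the analogous $[\ttx_{i,0}^+,\ep_n^-]$, and $[\ep_n^+,\ep_n^-]$) is nontrivial. The cleanest route is to bypass the calculation by invoking Beck's embedding: since $\sE_{\rm B}:\bfU(\afsl)\hookrightarrow\bfU(\afgl)$ is an algebra map, it automatically preserves the Drinfeld--Jimbo relation $[E_n,F_j]=\delta_{n,j}(\ti K_n-\ti K_n^{-1})/(\up-\up^{-1})$, and this translates precisely to the needed identity among $\ep_n^\pm$, $\ttx_{j,0}^\mp$, and $\ti\ttk_n^{\pm 1}$ in $\bfU(\afgl)$.
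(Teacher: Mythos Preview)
Your proposal is correct and follows essentially the same approach as the paper: verify that the target elements satisfy (QGL1)--(QGL8) using the half-isomorphisms $\sE_{\rm H}^{\geq 0},\sE_{\rm H}^{\leq 0}$ for the pure relations and Beck's embedding for the crossed commutator (QGL3), then invoke the presentation in Theorem~\ref{presentation dHallAlg}. One small point on surjectivity: your appeal to a ``triangular decomposition'' $\bfU(\afgl)^{\geq 0}\cdot\bfU(\afgl)^{\leq 0}=\bfU(\afgl)$ is more than you need (and in the paper that decomposition is actually derived \emph{after} this corollary); it suffices to observe that $\Image(\sE_{\rm H})$ is a subalgebra containing both halves, hence all Drinfeld generators $\ttx_{i,s}^\pm,\ttk_i^{\pm1},\ttg_{i,t}$, which is exactly how the paper phrases it.
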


Now we define the following elements in $\dHallr$:
$$\aligned
 &\wh\ttk_i^{\pm 1}=K_i^{\pm1}=(\sE_{\rm H}^{\geq 0})^{-1}(\ttk_i^{\pm1})
  =(\sE_{\rm H}^{\leq 0})^{-1}(\ttk_i^{\pm1}),\\
 &\wh\ttx^+_{i,0}=u_i^+=(\sE_{\rm H}^{\geq0})^{-1}(\ttx^+_{i,0}),\;
  \wh\ttx^-_{i,0}=u_i^-=(\sE_{\rm H}^{\leq0})^{-1}(\ttx^-_{i,0}),\\
 &\wh\ttx^\pm_{i,s}=(\sE_{\rm H}^{\geq0})^{-1}(\ttx^\pm_{i,s}),\;
  \wh\ttg_{i,s}=(\sE_{\rm H}^{\geq0})^{-1}(\ttg_{i,s})\;\;(s\geq 1),\\
 &\wh\ttx^\pm_{i,s}=(\sE_{\rm H}^{\leq0})^{-1}(\ttx^\pm_{i,s}),\;
  \wh\ttg_{i,s}=(\sE_{\rm H}^{\leq0})^{-1}(\ttg_{i,s})\;\;(s\leq -1).
\endaligned$$
 Then the set
$$X:=\{\wh\ttk_i^{\pm 1},\wh\ttg_{i,s}\mid 1\leq i\leq
n,0\not=s\in\mbz\}\cup \{\wh\ttx^\pm_{i,s}\mid 1\leq
i<n,s\in\mbz\}$$
 is a generating set for $\dHallr$. Since all $\ttx^\pm_{i,s}$
belong to $\bfU(\afsl)$, it follows that
$\sE_{\rm H}(\wh\ttx^\pm_{i,s})=\sE_{\rm B}(\ttx^\pm_{i,s})$. Thus, the
$\wh\ttx^\pm_{i,s}$ satisfy the relations (QLA5)--(QLA7).
Furthermore, set for $1\leq i<n$ and $s\in\mbz$,
$$\wh\tth_{i,s}=v^{(i-1)s}\wh\ttg_{i,s}-v^{(i+1)s}\wh\ttg_{i+1,s}.$$
Then $\tth_{i,s}=\sE_{\rm H}(\wh\tth_{i,s})\in\bfU(\afsl)$. Hence,
$\wh\tth_{i,s}$ together with $\wh\ttk_i^{\pm1}$ and
$\wh\ttx^\pm_{i,s}$ satisfy the relations (QLA8)--(QLA9). On the other hand, the elements
$$\wh\th_s:=v^{ns}(\wh\ttg_{1,s}+\cdots+\wh\ttg_{n,s})
=\begin{cases} (\sE^{\geq
0}_H)^{-1}(\th_s)=-\frac{[s]}{s}\sfz^+_s,\;\;&\text{if $s\geq 1$};\\
(\sE^{\leq
0}_H)^{-1}(\th_s)=\frac{[-s]}{s}\sfz^-_{-s},\;\;&\text{if $s\leq
-1$.}\end{cases}$$
 are central in $\dHallr$. We finally get that the $\wh\ttg_{i,s}$
satisfy the relations (QLA2)--(QLA4). Therefore, the generators in
$X$ satisfy all the relations (QLA1)--(QLA7). We conclude that
there is a surjective $\mbq(\up)$-algebra homomorphism
$\sF:\bfU(\afgl)\mapsto \dHallr$ taking
$$\wh\ttk_i^{\pm1}\lm\ttk_i^{\pm1},\;\wh\ttg_{i,s}\lm\ttg_{i,s},\;
\wh\ttx^\pm_{i,s}\lm \ttx^\pm_{i,s}.$$
 Obviously, both the composites $\sE_{\rm H}\sF$ and $\sF\sE_{\rm H}$ are the identity
maps. This gives the following result.

\begin{Thm} \label{iso afgln dHallr} The algebra homomorphism
$\sE_{\rm H}:\dHallr\to\bfU(\afgl)$ defined in Corollary \ref{epi-doubleRH-QGL} is an isomorphism. In
particular, Theorem \ref{presentation dHallAlg} gives another
presentation for $\bfU(\afgl)$.
\end{Thm}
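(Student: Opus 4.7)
The plan is to construct an explicit inverse $\sF:\bfU(\afgl)\to\dHallr$ for the surjection $\sE_{\rm H}$ of Corollary \ref{epi-doubleRH-QGL}, building on the Hopf algebra isomorphisms $\sE_{\rm H}^{\geq 0}$ and $\sE_{\rm H}^{\leq 0}$ of Proposition \ref{Hubery Theorm}. Concretely, I would first transport Drinfeld's generators back to $\dHallr$ by setting
$$\wh\ttk_i^{\pm 1}:=(\sE_{\rm H}^{\geq 0})^{-1}(\ttk_i^{\pm 1}),\quad
\wh\ttx^\pm_{i,s}:=(\sE_{\rm H}^{\geq 0})^{-1}(\ttx^\pm_{i,s}),\quad
\wh\ttg_{i,s}:=(\sE_{\rm H}^{\geq 0})^{-1}(\ttg_{i,s}) \ (s\geq 1),$$
with the analogous definitions using $(\sE_{\rm H}^{\leq 0})^{-1}$ for $s\leq -1$ (the $\wh\ttk_i^{\pm1}$ are unambiguous because both $\sE_{\rm H}^{\geq 0}$ and $\sE_{\rm H}^{\leq 0}$ send $K_i^{\pm 1}$ to $\ttk_i^{\pm 1}$). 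Together these form a candidate generating set $X\subset \dHallr$.

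Next I would verify that the elements of $X$ satisfy all of Drinfeld's relations (QLA1)--(QLA7). This splits naturally into three packages. First, the relations (QLA5)--(QLA7) involve only the $\wh\ttx^\pm_{i,s}$, which, thanks to Beck's embedding $\sE_{\rm B}:\bfU(\afsl)\hookrightarrow\bfU(\afgl)$ and the compatibility $\sE_{\rm H}|_{\bfcomp^\pm}=\sE_{\rm B}|_{\bfU(\afsl)^\pm}$ in Proposition \ref{Hubery Theorm}, are inherited from the corresponding identities inside $\bfU(\afsl)\subset\bfU(\afgl)$; consequently so are the relations between the $\wh\tth_{i,s}=\up^{(i-1)s}\wh\ttg_{i,s}-\up^{(i+1)s}\wh\ttg_{i+1,s}$, the $\wh\ttk_i^{\pm1}$, and the $\wh\ttx^\pm_{i,s}$. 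Second, relations (QLA1)--(QLA2) for $\wh\ttk_i$ with $\wh\ttk_j$, $\wh\ttx^\pm_{j,s}$ and $\wh\ttg_{j,t}$ follow directly from the algebra isomorphism properties of $\sE_{\rm H}^{\geq 0}$ and $\sE_{\rm H}^{\leq 0}$. The third and most delicate package consists of (QLA3), (QLA4), and the mixed $[\wh\ttx^+,\wh\ttx^-]$-relations in (QLA5) where factors of opposite sign meet; here I would exploit the fact that
$$\wh\th_s:=\up^{ns}(\wh\ttg_{1,s}+\cdots+\wh\ttg_{n,s})$$
is proportional to the Schiffmann--Hubery central element $\sfz^{\pm}_{|s|}$, hence central in $\dHallr$, and combine this with the one-sided identities already known in $\dHallr^{\geq 0}$ and $\dHallr^{\leq 0}$ to conclude.

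Once all relations are verified, the universal property of $\bfU(\afgl)$ produces an algebra homomorphism $\sF:\bfU(\afgl)\to\dHallr$ sending $\ttk_i^{\pm 1}\mapsto \wh\ttk_i^{\pm 1}$, $\ttx^\pm_{i,s}\mapsto \wh\ttx^\pm_{i,s}$, $\ttg_{i,s}\mapsto\wh\ttg_{i,s}$. By construction, $\sE_{\rm H}\sF$ fixes every Drinfeld generator and $\sF\sE_{\rm H}$ fixes every element of $X$, which generates $\dHallr$; hence $\sE_{\rm H}$ and $\sF$ are mutually inverse. Since Theorem \ref{presentation dHallAlg} gives a presentation of $\dHallr$, transporting it through $\sE_{\rm H}^{-1}=\sF$ yields the promised second presentation of $\bfU(\afgl)$.

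The main obstacle will be the third package above: although the individual $\wh\ttg_{i,s}$ are not central in $\dHallr$, only their weighted sums $\wh\th_s$ are, one must check (QLA3) and (QLA4) by mixing the two half-isomorphisms $\sE_{\rm H}^{\geq 0}$ and $\sE_{\rm H}^{\leq 0}$, which only separately give information. The key trick, already implicit in the construction, is that each $\wh\tth_{i,s}$ lies in the image of Beck's embedding and so its commutation relations with $\wh\ttx^\pm_{j,t}$ are controlled inside $\bfU(\afsl)\subset\bfU(\afgl)$; the residual ``trace'' part is carried by the central $\wh\th_s$, reducing all remaining checks to known identities.
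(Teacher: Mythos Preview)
Your proposal is correct and follows essentially the same approach as the paper: pull back Drinfeld's generators via the half-isomorphisms $\sE_{\rm H}^{\geq 0}$ and $\sE_{\rm H}^{\leq 0}$, verify the relations (QLA1)--(QLA7) by noting that the $\wh\ttx^\pm_{i,s}$ and $\wh\tth_{i,s}$ lie in the image of Beck's embedding while the $\wh\th_s$ are central, and then invoke the universal property of $\bfU(\afgl)$ to produce the inverse $\sF$. Your three-package organization and explicit identification of the ``mixed'' relations as the main obstacle make the argument a bit more transparent than the paper's somewhat terse treatment, but the substance is identical.
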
\index{$\sE_{\rm H}$, isomorphism $\dHallr\overset\sim\to\bfU(\afgl)$}

Let $\bfU(\afgl)^+$ (resp., $\bfU(\afgl)^-$) be the subalgebra of
$\bfU(\afgl)$ generated by $\ttx^+_{i,0},\,\ep_n^+,\,\th_s$ (resp.,
$\ttx^-_{i,0},\,\ep_n^-,\,\th_{-s}$) for $1\leq i<n, \,s\geq 1$.
Also, let $\bfU(\afgl)^0$ be the subalgebra of $\bfU(\afgl)$
generated by the $\ttk_i^{\pm1}$. The triangular decomposition of
$\dHallr$ induces that of $\bfU(\afgl)$.

\begin{Coro} The multiplication map
$$\bfU(\afgl)^+\ot\bfU(\afgl)^0\ot \bfU(\afgl)^-\lra
\bfU(\afgl)$$
 is a $\mbq(\up)$-space isomorphism.
\end{Coro}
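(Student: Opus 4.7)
The plan is to derive the triangular decomposition of $\bfU(\afgl)$ as a direct transport of the corresponding decomposition of $\dHallr$ along the isomorphism $\sE_{\rm H}$ of Theorem \ref{iso afgln dHallr}. Recall from \eqref{pm parts} and the surrounding discussion that the multiplication map $\dHallr^+\otimes\dHallr^0\otimes\dHallr^-\to\dHallr$ is already a $\mbq(\up)$-space isomorphism, and that $\dHallr^{\geq 0}=\dHallr^+\otimes\dHallr^0$ and $\dHallr^{\leq 0}=\dHallr^0\otimes\dHallr^-$. So one only needs to verify that $\sE_{\rm H}$ carries these three tensor factors precisely onto $\bfU(\afgl)^+$, $\bfU(\afgl)^0$, and $\bfU(\afgl)^-$ respectively.

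First I would match the zero parts. By construction $\sE_{\rm H}(K_i^{\pm1})=\ttk_i^{\pm1}$, and since $\dHallr^0$ is the Laurent polynomial algebra in the $K_i^{\pm1}$ while $\bfU(\afgl)^0$ is by definition the subalgebra generated by the $\ttk_i^{\pm1}$, we get $\sE_{\rm H}(\dHallr^0)=\bfU(\afgl)^0$, and this restriction is an isomorphism because $\sE_{\rm H}$ is. Next, for the positive part, Proposition \ref{Hubery Theorm} furnishes an isomorphism $\sE_{\rm H}^{\geq 0}\colon\dHallr^{\geq 0}\overset\sim\to\bfU(\afgl)^{\geq 0}$ sending $u_i^+\mapsto\ttx_{i,0}^+$ ($1\leq i<n$), $u_n^+\mapsto\ep_n^+$, and $\sfz_s^+\mapsto -\frac{s}{[s]}\th_s$. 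Since $\dHallr^+$ is generated (over $\mbq(\up)$) by the $u_i^+$ and the $\sfz_s^+$, cf.\ Proposition \ref{QEAGKMAlg-DHall} and \eqref{Zvtgpm}, while $\bfU(\afgl)^+$ is by definition the subalgebra generated by $\ttx_{i,0}^+$, $\ep_n^+$, and $\th_s$ for $1\leq i<n$, $s\geq 1$, we conclude that $\sE_{\rm H}^{\geq 0}$ restricts to a surjection $\dHallr^+\to\bfU(\afgl)^+$; injectivity is automatic, and the same argument applied via $\sE_{\rm H}^{\leq 0}$ delivers the matching statement for the negative part.

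Finally I would package this. Consider the commutative diagram
\begin{equation*}
\begin{CD}
\dHallr^+\otimes\dHallr^0\otimes\dHallr^- @>\sim>> \dHallr \\
@V\sE_{\rm H}^+\otimes\sE_{\rm H}^0\otimes\sE_{\rm H}^-VV @VV\sE_{\rm H}V \\
\bfU(\afgl)^+\otimes\bfU(\afgl)^0\otimes\bfU(\afgl)^- @>>> \bfU(\afgl)
\end{CD}
\end{equation*}
in which the top horizontal map is the triangular decomposition of $\dHallr$, the right vertical map is the isomorphism of Theorem \ref{iso afgln dHallr}, and the left vertical map is the tensor product of the three isomorphisms established above. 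Commutativity of the diagram is immediate from the multiplicativity of $\sE_{\rm H}$. Since three of the four arrows are bijections, so is the bottom arrow, which is exactly the multiplication map of the corollary. I do not anticipate a substantive obstacle: the only point requiring care is making sure that the images $\sE_{\rm H}(\dHallr^\pm)$ truly exhaust the stated generating sets of $\bfU(\afgl)^\pm$, and this is already encoded in Hubery's theorem together with the way $\bfU(\afgl)^\pm$ have just been defined.
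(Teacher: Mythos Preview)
Your proposal is correct and follows the same approach as the paper, which simply states that the triangular decomposition of $\dHallr$ induces that of $\bfU(\afgl)$ via $\sE_{\rm H}$; you have spelled out in full the identification $\sE_{\rm H}(\dHallr^\epsilon)=\bfU(\afgl)^\epsilon$ for $\epsilon\in\{+,0,-\}$ that the paper leaves implicit.
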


\begin{Rems}\label{iso afgln dHallr over C} (1) In \cite{Hub2}
Hubery deals with the extended Ringel--Hall algebra
$\sH_v^{\geq}(\triangle')$ of the cyclic quiver $\triangle'=\triangle'(n)$ which has the
opposite orientation of $\tri$. By \cite[Th.~1]{Hub2}, there is a
decomposition
$$\sH_v^{\geq}(\triangle')\cong \bfU(\afsl)^+\otimes_{\mbq(\up)}{\mbq}(v)[c_1,c_2,\ldots]\otimes_{\mbq(\up)}
\mbq(\up)[K_1^{\pm1},\ldots,K_n^{\pm1}].$$
 Hence, $\sH_v^{\geq}(\triangle')$ is isomorphic to $\Hallpi$ defined in \S1.5.

(2) The proof above can be easily modified to construct a
$\mbc$-algebra homomorphism $\sE_{{\rm H},\mbc}:\DC(n)\to\afUglC$,
where the algebras are defined over $\mbc$ with respect to a
non-root of unity $z\in\mbc$.\index{$\sE_{\rm H,\mbc}$, isomorphism $\DC(n)\overset\sim\to\afUglC$}

(3) With the above isomorphism, the double Ringel--Hall algebras
$\dHallr$, $\DC(n)$ will also be called a {\it quantum affine
$\frak{gl}_n$}. Moreover, the notation $\dHallr$, $\DC(n)$ will not
be changed to $\bfU(\afgl)$, $\afUglC$ throughout the paper except
the last chapter in order to emphasize the approach used in this
paper.

(4) There is another geometric realization for the $+$-part of the
quantum loop algebra $\bfU(\afgl)$ in terms of the Hall algebra of a
Serre subcategory of the category of coherent sheaves over a
weighted projective line; see \cite[4.3, 5.2]{Sch2} and \cite{Hub2}.
\end{Rems}

\section{Semisimple generators and commutator formulas}

By \cite[Th.~5.2]{DDX}, the Ringel--Hall algebra $\Hall$ can be generated
by semisimple modules over $\sZ$. Thus, semisimple generators would be
crucial to the study of integral forms of $\dHallr$.
In this section we derive commutator formulas between semisimple
generators of $\dHallr$ (cf. \cite[Prop.~5.5]{X97}).

Recall from \S1.2
that for the module $M(A)$ associated with $A\in \afThnp$, we write $\bfd(A)=\bfdim M(A)$ for the
dimension vector of $M(A)$.

Define a subset $\afThnp^{ss}$ of $\afThnp$ by setting
$$\afThnp^{ss}=\{A=(a_{i,j})\in\afThnp\mid a_{i,j}=0\;\text{for all
$j\not=i+1$}\}.$$
 In other words, $A\in\afThnp^{ss}\Longleftrightarrow M(A)\;\text{is semisimple.}$
Then, by Proposition \ref{generators-RH-alg}, $\dHallr^+$ (resp.,
$\dHallr^-$) is generated by $u_A^+$ (resp., $u^-_A$) for all $A\in
\afThnp^{ss}$. We sometimes identify $\afThnp^{ss}$ with $\afmbnn$ via the map
$\afmbnn\to\afThnp^{ss}$ sending $\la$ to $A=A_\la$ with $\la_i=a_{i,i+1}$ for all $i\in\mbz$.

\begin{Lem}\label{reduce lemma for presentation of double Hall algebra}
Let $X^+:=\spann\{u_A^+\mid A\in\afThnp^{ss}\}$ and
$X^-:=\spann\{u_A^-\mid A\in\afThnp^{ss}\}$. Then
$${\dHallr}\cong \Hallpi*\Hallmi/\sJ,$$
 where $\sJ$ is the ideal of the free product $\Hallpi*\Hallmi$ generated by
\begin{itemize}
\item[(1)]
$\sum(b_2*a_2)\psi(a_1,b_1)-\sum(a_1*b_1)\psi(a_2,b_2)$ for all $a\in
X^+,b\in X^-,$

\item[(2)]  $ K_{\al}*1-1* K_{\al}$ for all $\al\in\mbz I,$
\end{itemize}
where $\Dt(a)=\sum a_1\ot a_2$ and $\Dt(b)=\sum b_1\ot b_2$.
\end{Lem}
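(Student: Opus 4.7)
The strategy is to apply Lemma~\ref{reduce lemma} to the pair $(\Hallpi,\Hallmi)$ with a well-chosen generating set, then pass to the reduced double. By Proposition~\ref{generators-RH-alg}, the Ringel--Hall algebra $\bfHall$ is generated by the semisimple basis elements, and combined with \eqref{nonnegative-tri-decom} this implies that $\Hallpi$ is generated as a $\mbq(\up)$-algebra by $X^+\cup\{K_\al^{\pm 1}\mid\al\in\mbz I\}$, with an analogous statement for $\Hallmi$. To satisfy the linear-span hypothesis of Lemma~\ref{reduce lemma}, I would enlarge these to $Y^+:=\{u_A^+K_\al\mid A\in\afThnp^{ss},\,\al\in\mbz I\}$ and $Y^-:=\{K_\al u_A^-\mid A\in\afThnp^{ss},\,\al\in\mbz I\}$. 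The key observation is that any subrepresentation and any quotient of a semisimple representation is itself semisimple; hence if $C\in\afThnp^{ss}$ and $\vi^C_{A,B}\neq 0$, then $A,B\in\afThnp^{ss}$. Combined with $\Dt(K_\ga)=K_\ga\ot K_\ga$ and Green's formula (Proposition~\ref{Green Xiao}(b)), this shows $\Dt(Y^+)\subseteq\spann Y^+\otimes\spann Y^+$, and dually for $Y^-$.

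Lemma~\ref{reduce lemma} then reduces the defining ideal $\sI_{\Hallpi,\Hallmi}$ of $\widehat{\dHallr}$ to the elements \eqref{D-double-rel-generators} with $a\in Y^+$, $b\in Y^-$. I would next pass to $\dHallr=\widehat{\dHallr}/\scr I$ and split these generators into four cases according to whether $A=0$ and/or $B=0$. For $a=K_\al$ (the $A=0$ case), a direct calculation using $\psi(K_\al,K_\be u_B^-)=\dt_{B,0}\up^{\al\centerdot\be}$ shows that the corresponding relation collapses to $K_\al * u_B^-=\up^{\lr{\bfd(B),\al}}u_B^* K_\al$, which after identifying $K_\al*1=1*K_\al$ (i.e.\ relation~(2)) is already the internal commutation relation of $\Hallmi$ in Corollary~\ref{Green Xiao 2}(a$'$). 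The symmetric case $b=K_\be$ reduces similarly to commutation inside $\Hallpi$, and the case $a=K_\al$, $b=K_\be$ is trivial. For the mixed case $a=u_A^+K_\al$, $b=K_\be u_B^-$ with $A,B$ both nonzero, I would exploit the multiplicativity $\Dt(u_A^+K_\al)=\Dt(u_A^+)(K_\al\ot K_\al)$ together with (HP2)--(HP3) to express the relation as a product of the pure-$K$ relations and the relation at $\al=\be=0$; modulo (2) and the commutation relations already accounted for, only the $\al=\be=0$ piece, namely relation~(1) of the statement, survives.

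The main obstacle will be the bookkeeping in this last step: one must track the powers of $\up$ arising from the skew-Hopf pairing axioms and the commutation of $K$'s with $u^\pm_A$, and verify that after imposing $K_\al*1=1*K_\al$ the extra $K$-decorations in the relation for $(u_A^+K_\al,\,K_\be u_B^-)$ are absorbed by relations already living inside $\Hallpi$ and $\Hallmi$. Once this factorization is carried out, $\sJ$ equals the image of $\sI+\scr I$ in the free product $\Hallpi*\Hallmi$, giving the claimed presentation $\dHallr\cong\Hallpi*\Hallmi/\sJ$.
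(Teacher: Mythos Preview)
Your proposal is correct and follows essentially the same approach as the paper: enlarge to $Y^\pm$ (the paper calls these $X^{\geq 0}$ and $X^{\leq 0}$), use the closure of semisimple modules under subquotients to verify the span condition of Lemma~\ref{reduce lemma}, and then reduce the $K$-decorated relations to the pure relations~(1) modulo relation~(2). The only cosmetic difference is that the paper carries out your final ``factorization'' step as a single explicit computation---showing directly that $L(u_A^+K_\al,K_\bt u_B^-)\equiv R(u_A^+K_\al,K_\bt u_B^-)\pmod{\sJ}$ by tracking the common power $\up^{\al\centerdot\bt+\lr{\bfd(A),\al}}$ that appears on both sides---rather than splitting into cases on whether $A$ or $B$ vanishes; the content is identical.
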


\begin{proof} For $a\in\Hallpi$ and $b\in \Hallmi$, we write
$$L(a,b)=\sum(b_2*a_2)\psi(a_1,b_1)\;\;\text{and}\;\;
R(a,b)=\sum(a_1*b_1)\psi(a_2,b_2).$$
 Define
$$\aligned
&X^{\geq 0}:=\spann\{u_A^+K_{\al}\mid\al\in\mbz I,
A\in\afThnp^{ss}\}\;\;\text{and}\\
&X^{\leq 0}:=\spann\{K_{\al}u_A^-\mid\al\in\mbz I,
A\in\afThnp^{ss}\}.\endaligned$$
 Then $X^{\geq 0}$ (resp., $X^{\leq 0}$) generates
$\Hallpi$ (resp., $\Hallmi$) and satisfies
$$\Dt(X^{\geq 0})\subseteq X^{\geq 0}\ot X^{\geq 0}\;\;\;({\rm resp.},
\Dt(X^{\leq 0})\subseteq X^{\leq 0}\ot X^{\leq 0}).$$
 Thus, by Lemma \ref{reduce lemma},
$$\dHallr=\Hallpi*\Hallmi/\widehat\sI,$$
 where $\widehat\sI$ is the ideal of $\Hallpi*\Hallmi$
generated by
\begin{itemize}
\item[(1$'$)]
$\sum(b_2*a_2)\psi(a_1,b_1)-\sum(a_1*b_1)\psi(a_2,b_2)\;\;(a\in
X^{\geq 0},b\in X^{\leq 0}),$

\item[(2$'$)]  $K_{\al}*1-1* K_{\al}\;\;(\al\in\mbz I).$
\end{itemize}
 Clearly, $\sJ\subseteq \widehat\sI$. It remains to show
the reverse inclusion $\widehat\sI\subseteq\sJ$. To show this, it
suffices to prove that for $u_A^+K_{\al}\in X^+$ and $K_{\bt}
u_B^-\in X^-$,
$$L(u_A^+K_{\al},K_{\bt}u_B^-)\equiv R(u_A^+K_{\al},K_{\bt} u_B^-)\;{\rm mod}\;\sJ.$$
 By the definition of comultiplications in $\Hallpi$ and $\Hallmi$,
$$\aligned
\Dt(u_A^+)&=\sum_{A_1,A_2\in\afThnp}f_{A_1,A_2}^A
u_{A_2}^+\ot u_{A_1}^+\ti K_{\bfd(A_2)}\;\;\text{and}\\
\Dt(u_B^-)& =\sum_{B_1,B_2\in\afThnp}g_{B_1,B_2}^B\ti
K_{-\bfd(B_1)} u_{B_2}^-\ot u_{B_1}^-,
\endaligned$$
 where $f_{A_1,A_2}^A=\up^{\lr{\bfd(A_1),\bfd(A_2)}}\frac{\fka_{A_1}\fka_{A_2}}{\fka_A}
\vi^A_{A_1,A_2}$ and
$g_{B_1,B_2}^B=\up^{-\lr{\bfd(B_2),\bfd(B_1)}}\frac{\fka_{B_1}\fka_{B_2}}{\fka_B}
\vi^B_{B_1,B_2}$. By the
 definition of $\sJ$,
\begin{equation}\label{L=R}
L(u_A^+,u_B^-)\equiv R(u_A^+,u_B^-)\;{\rm mod}\;\sJ,
\end{equation}
where
$$\aligned
L(u_A^+,u_B^-)&=\sum_{A_1,A_2,B_1,B_2}
f_{A_1A_2}^Ag_{B_1B_2}^B\bigl(u_{B_1}^-\ast(u_{A_1}^+\ti
K_{\bfd(A_2)})\bigr)\psi(u_{A_2}^+,\ti K_{-\bfd(B_1)}
u_{B_2}^-),\\
R(u_A^+,u_B^-)&=\sum_{A_1,A_2,B_1,B_2}
f_{A_1A_2}^Ag_{B_1B_2}^B\bigl(u_{A_2}^+\ast(\ti K_{-\bfd(B_1)}
u_{B_2}^-)\bigr)\psi(u_{A_1}^+\ti K_{\bfd(A_2)},u_{B_1}^-).
\endaligned$$
This together with Proposition \ref{bilinear form} implies that
\begin{equation*}
\begin{split}
&L(u_A^+K_{\al},K_{\bt}u_B^-)\\
&=\sum_{A_1,A_2,B_1,B_2} f_{A_1A_2}^Ag_{B_1B_2}^B\bigl((K_\bt
u_{B_1}^-)\ast(u_{A_1}^+\ti
K_{\bfd(A_2)}K_\al)\bigr)\psi(u_{A_2}^+K_\al,K_\bt \ti
K_{-\bfd(B_1)} u_{B_2}^-)\\
&\equiv\up^aK_{\al+\bt}\sum_{A_1,A_2,B_1,B_2}
f_{A_1A_2}^Ag_{B_1B_2}^B\bigl(u_{B_1}^-\ast(u_{A_1}^+\ti
K_{\bfd(A_2)})\bigr)\psi(u_{A_2}^+,\ti K_{-\bfd(B_1)}
u_{B_2}^-)\,\,(\text{by }\ref{reduce lemma for presentation of double Hall algebra}(2))\\
&\equiv\up^aK_{\al+\bt}\sum_{A_1,A_2,B_1,B_2}
f_{A_1A_2}^Ag_{B_1B_2}^B\bigl(u_{A_2}^+\ast(\ti K_{-\bfd(B_1)}
u_{B_2}^-)\bigr)\psi(u_{A_1}^+\ti K_{\bfd(A_2)},u_{B_1}^-)\,\,(\text{by }\eqref{L=R})\\
&\equiv\sum_{A_1,A_2,B_1,B_2}f_{A_1A_2}^Ag_{B_1B_2}^B
\bigl((u_{A_2}^+K_\al)\ast(K_\bt\ti K_{-\bfd(B_1)}
u_{B_2}^-)\bigr)\psi(u_{A_1}^+\ti K_{\bfd(A_2)}K_\al,K_\bt u_{B_1}^-)\\
&=R(u_A^+K_{\al},K_{\bt} u_B^-)\;{\rm mod}\;\sJ,
\end{split}
\end{equation*}
where $a=\al\centerdot\bt+\lr{\bfd(A_1)+\bfd(A_2),\al}=\al\centerdot\bt+\lr{\bfd(A),\al}$, as desired. \end{proof}

Recall the order relation $\leq$ defined in \eqref{order on afmbzn}.
\begin{Lem}\label{ProdOf2semi}
 For $\al=(\al_i),\bt=(\bt_i)\in\afmbnn$, let $\ga=\ga(\al,\bt)=(\ga_i)\in\afmbnn$ be
 defined by $\ga_i=\text{\rm min}\{\al_i,\bt_{i+1}\}$. For each $\la\le \ga$, define
$C_\la\in\afThnp$ by
$$M(C_\la)=\bigoplus_{i\in I}((\al_i+\bt_i-\la_i-\la_{i-1})S_i\oplus \la_iS_i[2]).$$
Then
$$u_\al u_\bt=\up^{\sum_i\al_i(\bt_i-\bt_{i+1})}\sum_{\la\le\ga}\prod_{i\in I}\bigg[\!\!\bigg[{{\al_i+\bt_i-\la_i-\la_{i-1}}
\atop {\bt_i-\la_{i-1}}}\bigg]\!\!\bigg]u_{C_\la}.$$
\end{Lem}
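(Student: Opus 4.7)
The plan is to apply the defining Hall product formula to $u_\al u_\bt = u_{A_\al}u_{A_\bt}$: by the formula immediately after \eqref{Euler form}, this equals $v^{\lr{\bfd(A_\al),\bfd(A_\bt)}}\sum_{C\in\afThnp}\vi^{C}_{A_\al,A_\bt}u_C$. The scalar prefactor matches at once from \eqref{Euler form 2}, which gives $\lr{\al,\bt}=\sum_i\al_i\bt_i-\sum_i\al_i\bt_{i+1}=\sum_i\al_i(\bt_i-\bt_{i+1})$. So the substance of the lemma is the identification of the middle terms $C$ and the evaluation of the Hall polynomials $\vi^{C}_{A_\al,A_\bt}$.

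First, I would classify the extensions of $S_\al$ by $S_\bt$ in $\rep^0\tri$. Using the Auslander--Reiten sequence recalled just before \eqref{partial order leq on the set afThnp}, one has $\mathrm{Ext}^1_{\tri}(S_i,S_j)\neq 0$ precisely when $j=i+1$, and in that case the unique non-split extension is $S_i[2]$, with top $S_i$ and socle $S_{i+1}$. Hence any module $M$ with $\bfdim M=\al+\bt$ occurring as such an extension is a direct sum of simples and length-two modules, and must be of the form
\[
M=\bigoplus_{i\in I}\bigl(m_iS_i\oplus\la_i S_i[2]\bigr),\qquad \la=(\la_i)\in\afmbnn,
\]
where counting composition factors at vertex $i$ forces $m_i=\al_i+\bt_i-\la_i-\la_{i-1}$. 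This is precisely $M(C_\la)$ in the statement, so it only remains to find for which $\la$ the Hall polynomial $\vi^{C_\la}_{A_\al,A_\bt}$ is nonzero, and to compute it.

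Next, I would count the submodules $K\cong S_\bt$ of $M(C_\la)$ with $M(C_\la)/K\cong S_\al$. Since $K$ is semisimple, $K\subseteq\mathrm{soc}\,M(C_\la)$; since the quotient is semisimple, $K\supseteq\rad M(C_\la)$. A direct computation using $\rad(S_i[2])=\mathrm{soc}(S_i[2])=S_{i+1}$ gives $\dim(\rad M(C_\la))_i=\la_{i-1}$ and $\dim(\mathrm{soc}\,M(C_\la))_i=m_i+\la_{i-1}$. The three conditions $\rad M(C_\la)\subseteq K\subseteq\mathrm{soc}\,M(C_\la)$ and $\dim K_i=\bt_i$ then read $\la_{i-1}\le\bt_i\le m_i+\la_{i-1}$, equivalently $\la_i\le\bt_{i+1}$ and $\la_i\le\al_i$, i.e.\ $\la\le\ga$. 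Moreover $\mathrm{soc}\,M(C_\la)/\rad M(C_\la)\cong\bigoplus_i m_iS_i$ is semisimple and supported separately at each vertex, so choosing such a $K$ amounts to choosing a $(\bt_i-\la_{i-1})$-dimensional subspace in an $m_i$-dimensional $\field_q$-space at each $i\in I$. This yields the count $\prod_{i\in I}\binom{m_i}{\bt_i-\la_{i-1}}_q$; by the uniqueness of Hall polynomials and the normalization $\ggp{N}{t}|_{\up^2=q}=\binom{N}{t}_q$, this gives
\[
\vi^{C_\la}_{A_\al,A_\bt}=\prod_{i\in I}\ggp{\al_i+\bt_i-\la_i-\la_{i-1}}{\bt_i-\la_{i-1}}.
\]
Substituting into the Hall product formula yields the claim.

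The main obstacle is the combinatorial bookkeeping: correctly computing $\rad M(C_\la)$ and $\mathrm{soc}\,M(C_\la)$ with the indexing shift caused by the cyclic orientation (so that $S_{i-1}[2]$ contributes to the socle at vertex $i$), and then checking that the two inequalities from $\rad M(C_\la)\subseteq K\subseteq\mathrm{soc}\,M(C_\la)$ combine to the single bound $\la\le\ga$. Once the socle/radical structure is in hand, the remaining steps are routine, since the quotient $\mathrm{soc}/\rad$ splits as a direct sum over vertices and reduces the count of submodules to a product of ordinary Gaussian binomials.
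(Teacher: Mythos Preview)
Your proof is correct and follows essentially the same approach as the paper: classify the extensions of $S_\bt$ by $S_\al$ as the $M(C_\la)$ with $\la\le\ga$, then compute the Hall polynomial $\vi^{C_\la}_{A_\al,A_\bt}$ as a product of Gaussian binomials. The paper simply asserts both of these facts, whereas you supply the details (the Loewy-length-two argument, the socle/radical sandwich $\rad M(C_\la)\subseteq K\subseteq\mathrm{soc}\,M(C_\la)$, and the vertex-by-vertex subspace count), so your write-up is a fleshed-out version of the same argument.
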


\begin{proof} Clearly, each $M(C_\la)$ with $\la\le \ga$ is an extension of
$M(A_\bt)$ by $M(A_\al)$, where $A_\al=\sum_{i=1}^n \al_i
E_{i,i+1}^\vtg$ and $A_\bt=\sum_{i=1}^n \bt_i E_{i,i+1}^\vtg$.
Conversely, each extension of $M(A_\bt)$ by $M(A_\al)$ is
isomorphic to $M(C_\la)$ for some $\la\le \ga$. Hence,
$$u_\al u_\bt=\up^{\lr{\al,\bt}}\sum_{\la\le\ga}\varphi^{C_\la}_{A_\al,A_\bt}u_{A_\la}
=\up^{\sum_i\al_i(\bt_i-\bt_{i+1})}\sum_{\la\le\ga}\varphi^{A_\la}_{A_\al,A_\bt}u_{C_\la}.$$
 The lemma then follows from the fact that
 $$\varphi^{C_\la}_{A_\al,A_\bt}=\prod_{i\in I}\bigg[\!\!\bigg[{{\al_i+\bt_i-\la_i-\la_{i-1}}
\atop {\bt_i-\la_{i-1}}}\bigg]\!\!\bigg].$$
\end{proof}

\begin{Thm} \label{alt-presentation-dHall}
The algebra $\dHallr$ has generators
$u_A^+$, $K_{\nu}$, $u_A^-$ $(A\in\afThnp^{ss},\nu\in\mbz I)$ which satisfy
the following relations: for $\nu,\nu'\in\mbz I$, $A,B\in\afThnp^{ss}$,
\begin{itemize}
\item[(1)] $K_0=u_0^+=u_0^-=1,\quad K_\nu K_{\nu'}=K_{\nu+\nu'}$;
\item[(2)] $K_\nu u_A^+=\nup^{\lr{\bfd(A),\nu}}u_A^+K_\nu,\;\;
  u_A^-K_{\nu}=\up^{\lr{\bfd(A),\nu}}K_{\nu} u_A^-$;
\item[(3)]
$u_A^+u_B^+=\up^{\sum_i\al_i(\bt_i-\bt_{i+1})}\sum_{\la\le\ga}\prod_{i\in I}\big[\!\!\big[{{\al_i+\bt_i-\la_i-\la_{i-1}}
\atop {\bt_i-\la_{i-1}}}\big]\!\!\big]u_{M(C_\la)},$ if $A=A_\al,B=B_\bt$ and $\ga=\ga(\al,\bt)$;

\item[(4)]
$u_A^-u_B^-=\up^{\sum_i\bt_i(\al_i-\al_{i+1})}\sum_{\la\le\ga'}\prod_{i\in I}\big[\!\!\big[{{\al_i+\bt_i-\la_i-\la_{i-1}}
\atop {\al_i-\la_{i-1}}}\big]\!\!\big]u_{M(C_\la)},$ if $A=A_\al,B=B_\bt$ and $\ga'=\ga(\bt,\al)$;

\item[(5)] {\rm commutator relations}:  for all $A,B\in\afThnp^{ss}$,
\begin{equation*}\label{commutator-formula}
\aligned
 \up^{\lan \bfd(B),\bfd(B)\ran}&\sum_{A_1,B_1}\vi_{A,B}^{A_1,B_1}
 \up^{\lan \bfd(B_1),\bfd(A)+\bfd(B)-\bfd(B_1)\ran}\ti K_{\bfd(B)-\bfd(B_1)}u_{B_1}^-u_{A_1}^+\\
 &=\up^{\lan\bfd(B),\bfd(A)\ran}\sum_{A_1,B_1}
 \ti{\vi_{A,B}^{A_1,B_1}}\up^{\lan \bfd(B)-\bfd(B_1),\bfd(A_1)\ran+\lan \bfd(B),\bfd(B_1)\ran}
 \ti K_{\bfd(B_1)-\bfd(B)}u_{A_1}^+u_{B_1}^-,\endaligned
 \end{equation*}
\end{itemize}
where
\begin{equation}\label{phA1B1AB}\begin{split}
\vi_{A,B}^{A_1,B_1}&=\frac{\fka_{A_1}\fka_{B_1}}{\fka_A\fka_B}\sum_{A_2\in\afThnp}
\up^{2\fkd(A_2)}\fka_{A_2}\vi_{A_1,A_2}^{A}\vi_{B_1,A_2}^B,\\
\ti{\vi_{A,B}^{A_1,B_1}}&=\frac{\fka_{A_1}\fka_{B_1}}{\fka_A\fka_B}
\sum_{A_2\in\afThnp}\up^{2\fkd(A_2)}\fka_{A_2}\vi_{A_2,A_1}^{A}\vi_{A_2,B_1}^B.\\
\end{split}
\end{equation}

\end{Thm}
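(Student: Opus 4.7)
The plan is to deduce everything from Lemma \ref{reduce lemma for presentation of double Hall algebra}, which already reduces a presentation of $\dHallr$ to one in which the ``cross'' relations between $\dHallr^+$ and $\dHallr^-$ are imposed only for the semisimple generators in $X^+$ and $X^-$. Thus it suffices to (i) record a complete set of relations inside each of $\Hallpi$ and $\Hallmi$ using only semisimple generators together with the $K_\nu$'s, and (ii) unwind the Drinfeld-double identity $\sum(b_2\ast a_2)\psi(a_1,b_1)=\sum(a_1\ast b_1)\psi(a_2,b_2)$ for $a=u_A^+$ and $b=u_B^-$ with $A,B\in\afThnp^{ss}$ to obtain precisely the formula in (5).

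For (i), the multiplicative relations (1) and (2) are read off directly from Proposition \ref{Green Xiao}(a) and Corollary \ref{Green Xiao 2}(a$'$). That the semisimple $u_A^\pm$ together with the $K_\nu$ already generate $\Hallpi$ and $\Hallmi$ is Proposition \ref{generators-RH-alg}. To verify that the quadratic identities (3) and (4) generate all relations among products of semisimple $u_A^+$ in $\bfHall$ (resp.\ among $u_A^-$ in $\bfHall^{\mathrm{op}}$), I would invoke Lemma \ref{ProdOf2semi}: the formula there, applied to $\al,\bt\in\afmbnn$, produces exactly the binomial coefficients and the element $C_\la\in\afThnp$ appearing in (3); the analogous computation with the opposite multiplication of $\Hallmi$ gives (4). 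Then a standard argument shows that any relation in $\bfHall$ written in the semisimple generators can be reduced, via repeated application of (3), to a $\sZ$-linear combination indexed by a PBW type basis indexed by $\afThnp$, proving that (3) (together with (1)--(2)) is a complete set of defining relations for $\Hallpi$.

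For (ii), I plug $a=u_A^+$ and $b=u_B^-$ into the defining double-relation and expand using the explicit comultiplications recorded in the proof of Lemma \ref{reduce lemma for presentation of double Hall algebra}, namely
\[
\Dt(u_A^+)=\sum f_{A_1,A_2}^A\, u_{A_2}^+\otimes u_{A_1}^+\ti K_{\bfd(A_2)},\qquad
\Dt(u_B^-)=\sum g_{B_1,B_2}^B\, \ti K_{-\bfd(B_1)}u_{B_2}^-\otimes u_{B_1}^-.
\]
Applying the skew-Hopf pairing formula \eqref{SHP} to the two factors $\psi(u_{A_2}^+,\ti K_{-\bfd(B_1)}u_{B_2}^-)$ (on the LHS) and $\psi(u_{A_1}^+\ti K_{\bfd(A_2)},u_{B_1}^-)$ (on the RHS), which are nonzero only when $A_2=B_2$, collapses one sum and produces a factor $\fka_{A_2}^{-1}$ together with explicit powers of $\up$. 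Relabelling $A_2=B_2$ as the auxiliary index (also called $A_2$) in \eqref{phA1B1AB} and collecting exponents then turns the LHS into
\[
\up^{\lan\bfd(B),\bfd(B)\ran}\sum_{A_1,B_1}\vi_{A,B}^{A_1,B_1}\,\up^{\lan\bfd(B_1),\bfd(A)+\bfd(B)-\bfd(B_1)\ran}\,
\ti K_{\bfd(B)-\bfd(B_1)}\,u_{B_1}^-u_{A_1}^+,
\]
and similarly turns the RHS into the stated expression involving $\ti\vi_{A,B}^{A_1,B_1}$. The key identity to verify is the exponent bookkeeping: the total $\up$-power should decompose as the sum of (a) the $\lan\bfd(A_1),\bfd(A_2)\ran+\lan\bfd(A_2),\bfd(B_1)\ran$ coming from $f^A$ and $g^B$, (b) the commutation of $K$'s past the $u^\pm$'s via relation (2), and (c) the $v^{2\fkd(A_2)-\lan\bfd(A_2),\bfd(A_2)\ran}\fka_{A_2}^{-1}$ coming from $\psi$. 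This is the main calculational obstacle; once it is checked on a single pair $(A,B)$ in terms of the Euler form, everything assembles to relation (5).

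Finally, I would put the two halves together by applying Lemma \ref{reduce lemma for presentation of double Hall algebra}: the ideal $\sJ$ is generated by the cross relation $L(a,b)=R(a,b)$ for $a\in X^+$, $b\in X^-$ together with the commutation of $K_\al$'s through; the calculation above shows that these cross relations are exactly (5) modulo (1)--(4). Hence the algebra defined by generators and relations (1)--(5) surjects onto $\dHallr$, and by comparing with the triangular decomposition $\dHallr\cong\dHallr^+\otimes\dHallr^0\otimes\dHallr^-$ (together with the fact that the relations inside $\dHallr^\pm$ are complete by (i)), the surjection is an isomorphism. The hard part, as indicated, is the combinatorial verification of the $\up$-exponent in step (ii); everything else is formal manipulation with the data from Chapter 1 and \S2.1.
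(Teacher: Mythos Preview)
Your proof for relation (5) is essentially the paper's computation: expand $L(u_A^+,u_B^-)$ and $R(u_A^+,u_B^-)$ using the comultiplications from Proposition \ref{Green Xiao} and Corollary \ref{Green Xiao 2}, apply the pairing \eqref{SHP} (which kills the summand unless $A_2=B_2$), and collect the exponents. That part is fine and is exactly how the paper proceeds.

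The serious problem is that you have misread what the theorem asserts. It does \emph{not} claim that (1)--(5) is a presentation of $\dHallr$; it only claims that the listed elements generate $\dHallr$ and that relations (1)--(5) hold among them. Indeed, right after the theorem the paper explicitly observes that ``Theorem \ref{alt-presentation-dHall} does not give a presentation for $\dHallr$ since the modules $M(C_\la)$ are not necessarily semisimple,'' and Problem \ref{semisimple presentation} poses exactly the question you are trying to settle (find quantum Serre relations for semisimple generators and show the resulting system is defining). So your step (i), where you argue that ``a standard argument shows that any relation in $\bfHall$ written in the semisimple generators can be reduced, via repeated application of (3), to a $\sZ$-linear combination indexed by a PBW type basis,'' is not only unproved---it is an open problem. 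The obstruction is visible in the formula itself: the right-hand side of (3) involves $u_{M(C_\la)}^+$ with $M(C_\la)$ typically \emph{not} semisimple, so (3) is not a rewriting rule that stays within the semisimple generators, and your PBW reduction cannot proceed.

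Once you drop the claim that (1)--(5) is a presentation, the proof becomes short: (1)--(2) are read off from the Hopf algebra structure of $\Hallpi$ and $\Hallmi$; (3)--(4) are Lemma \ref{ProdOf2semi} and its opposite; and (5) is the $L=R$ computation you outlined. Lemma \ref{reduce lemma for presentation of double Hall algebra} is used only to justify that the cross relation $L(u_A^+,u_B^-)\equiv R(u_A^+,u_B^-)\pmod\sJ$ holds in $\dHallr$, not to claim completeness.
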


\begin{proof} Relations (1) and (2) follow from definition, and (3) and (4) follow
from Lemma \ref{ProdOf2semi}. We now prove (5). As in the proof of Lemma \ref{reduce lemma for
presentation of double Hall algebra}, for $A,B\in\afThnp$,
\begin{equation*}
\begin{split}
L(u_A^+,&u_B^-)\\
&=\sum_{A_1,A_2,B_1,B_2}\up^{\lan \bfd(A_1),\bfd(A_2)\ran}
\frac{\fka_{_{A_1}}\fka_{_{A_2}}}{\fka_{_A}}\vi_{A_1,A_2}^A\up^{-\lan\bfd(B_2),\bfd(B_1)\ran}
\frac{\fka_{_{B_1}}\fka_{_{B_2}}}{\fka_{_B}}\vi_{B_1,B_2}^B\\
&\qquad\qquad\qquad\times\bigl(u_{B_1}^-\ast(u_{A_1}^+\ti
K_{\bfd(A_2)})\bigr)\psi(u_{A_2}^+,\ti K_{-\bfd(B_1)} u_{B_2}^-)\\
&=\sum_{A_1,A_2,B_1,B_2} \up^{\lan \bfd(A_1),\bfd(A_2)\ran}
\frac{\fka_{_{A_1}}\fka_{_{A_2}}}{\fka_{_A}}\vi_{A_1,A_2}^A\up^{-\lan\bfd(B_2),\bfd(B_1)\ran}
\frac{\fka_{_{B_1}}\fka_{_{B_2}}}{\fka_{_B}}\vi_{B_1,B_2}^B\\
&\qquad\qquad\qquad\times\bigl(u_{B_1}^-\ast(u_{A_1}^+\ti
K_{\bfd(A_2)})\bigr)\up^{-\lr{\bfd(A_2),\bfd(A_2)}+2\fkd(A_2)}\frac{1}{\fka_{A_2}}\delta_{A_2,B_2}\\
&\equiv\sum_{A_1,B_1,A_2}\up^{-\lan
\bfd(A_2),\bfd(A)\ran+\lr{\bfd(B_1),\bfd(A_2)}+2\fkd(A_2)}
\frac{\fka_{_{A_1}}\fka_{_{B_1}}\fka_{_{A_2}}}{\fka_{_A}\fka_{_B}}\vi_{A_1,A_2}^A\vi_{B_1,A_2}^B\\
&\qquad\qquad\qquad\times(\ti K_{\bfd(A_2)}u_{B_1}^-)\ast u_{A_1}^+\\
&\equiv\up^{-\lan\bfd(B),\bfd(A)\ran}\sum_{A_1,B_1}\up^{\lan
\bfd(B_1),\bfd(A)+\bfd(B)-\bfd(B_1)\ran}\vi_{A,B}^{A_1,B_1}\ti
K_{\bfd(B)-\bfd(B_1)}u_{B_1}^-*u_{A_1}^+\;{\rm mod}\;\sJ,
\end{split}
\end{equation*}
\begin{equation*}
\begin{split}
& R(u_A^+,u_B^-)\\
&=\sum_{A_1,A_2,B_1,B_2}\up^{\lan \bfd(A_1),\bfd(A_2)\ran}
\frac{\fka_{_{A_1}}\fka_{_{A_2}}}{\fka_{_A}}\vi_{A_1,A_2}^A\up^{-\lan
\bfd(B_2),\bfd(B_1)\ran}
\frac{\fka_{_{B_1}}\fka_{_{B_2}}}{\fka_{_{B}}}\vi_{B_1,B_2}^B\\
&\qquad\qquad\qquad\times\bigl(u_{A_2}^+\ast(\ti K_{-\bfd(B_1)}
u_{B_2}^-)\bigr)\psi(u_{A_1}^+\ti K_{\bfd(A_2)},u_{B_1}^-)\\
&\equiv\sum_{A_1,A_2,B_2}
\up^{\lan\bfd(A_1),\bfd(A_2)\ran-\lan\bfd(B),\bfd(A_1)\ran+2\fkd(A_1)}
\frac{\fka_{_{A_1}}\fka_{_{A_2}}\fka_{_{B_2}}}{\fka_{_A}\fka_{_{B}}}
\vi_{A_1,A_2}^A\vi_{A_1,B_2}^B\ti K_{-\bfd(A_1)}u_{A_2}^+*u_{B_2}^-\\
&\equiv\up^{-\lan\bfd(B),\bfd(B)\ran}\sum_{A_1,B_1}\up^{\lan\bfd(B)-\bfd(B_1),
\bfd(A_1)\ran+\lan\bfd(B),\bfd(B_1)\ran}\ti{\vi_{A,B}^{A_1,B_1}} \ti
K_{-\bfd(B)+\bfd(B_1)}u_{A_1}^+*u_{B_1}^-\;{\rm mod}\;\sJ,
\end{split}
\end{equation*}
by interchanging the running indexes $A_1$ and $A_2$, $B_1$ and $B_2$.
This proves (5). 
\end{proof}

\begin{Rem} The commutator relations in (5) hold for all
$A,B\in\afThnp$.
\end{Rem}

Theorem \ref{alt-presentation-dHall} does not give a presentation
for $\dHallr$ since the modules $M(C_\la)$ are not necessarily
semisimple. It would be natural to raise the following question.

\begin{Prob}\label{semisimple presentation}
Find the ``quantum Serre relations" associated
with semisimple generators to replace relations
\ref{alt-presentation-dHall}(3)--(4), and prove that, the
relations given in \ref{alt-presentation-dHall} are defining
relations for $\dHallr$. In this way, we obtain a presentation
with semisimple generators for $\dHallr$.
\end{Prob}

The relations in (5) above are usually called the {\it commutator
relations}. \index{commutator
relation} We now derive a finer version of the commutator
relations for semisimple generators. The next lemma follows
directly from the definition of comultiplication.

\begin{Lem}\label{Dt2}
For $A\in\afThnp$, we have
\[
\begin{split}
\Dt^{(2)}(u_A^+)&=\sum_{A^{(1)}, A^{(2)},A^{(3)}}\up^{\sum_{i>j}\lan
\bfd(A^{(i)}), \bfd(A^{(j)})\ran}
\vi^A_{A^{(3)},A^{(2)},A^{(1)}}\frac{\fka_{A^{(1)}}\fka_{A^{(2)}}\fka_{A^{(3)}}}{\fka_A}\\
&\qquad\qquad \times u^+_{A^{(1)}}\ot u^+_{A^{(2)}}\ti
K_{\bfd(A^{(1)})}
\ot u_{A^{(3)}}^+\ti K_{\bfd(A^{(1)})+\bfd(A^{(2)})},\\
\Dt^{(2)}(u_A^-)&=\sum_{{A^{(1)},A^{(2)},A^{(3)}}}
\up^{-\sum_{i<j}\lan\bfd(A^{(i)}),\bfd(A^{(j)})\ran}
\vi^A_{A^{(3)}, A^{(2)}, A^{(1)}}\frac{\fka_{A^{(1)}}\fka_{A^{(2)}}\fka_{A^{(3)}}}{\fka_A}\\
&\qquad\qquad\times\ti K_{-(\bfd(A^{(2)})+\bfd(A^{(3)}))}
u^-_{A^{(1)}} \ot\ti K_{-\bfd(A^{(3)})} u^-_{A^{(2)}}\ot
u_{A^{(3)}}^-.
\end{split}
\]
\end{Lem}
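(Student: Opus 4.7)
The plan is to compute $\Delta^{(2)}=(\Delta\otimes\id)\circ\Delta$ directly, apply the comultiplication formulas from Proposition \ref{Green Xiao}(b) and Corollary \ref{Green Xiao 2}(b$'$) twice, and then use coassociativity together with the associativity of Hall polynomials to collapse the intermediate summation indices.

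For the first identity, I would start with
\[
\Delta(u_A^+)=\sum_{X,Y}v^{\lan\bfd(X),\bfd(Y)\ran}\frac{\fka_X\fka_Y}{\fka_A}\vi^A_{X,Y}\,u_Y^+\otimes u_X^+\ti K_{\bfd(Y)},
\]
and then apply $\Delta\otimes\id$, substituting
\[
\Delta(u_Y^+)=\sum_{P,Q}v^{\lan\bfd(P),\bfd(Q)\ran}\frac{\fka_P\fka_Q}{\fka_Y}\vi^Y_{P,Q}\,u_Q^+\otimes u_P^+\ti K_{\bfd(Q)}.
\]
Relabel $A^{(1)}=Q$, $A^{(2)}=P$, $A^{(3)}=X$, so that the intermediate index is $Y$. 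The factor $\fka_Y$ cancels between numerator and denominator, and summing over $Y$ applies the well-known Hall polynomial identity
\[
\sum_Y \vi^A_{X,Y}\,\vi^Y_{P,Q}=\vi^A_{X,P,Q}=\vi^A_{A^{(3)},A^{(2)},A^{(1)}},
\]
which follows from counting three-step filtrations in two ways. Meanwhile the exponent of $v$ is
\[
\lan\bfd(X),\bfd(Y)\ran+\lan\bfd(P),\bfd(Q)\ran=\lan\bfd(A^{(3)}),\bfd(A^{(2)})+\bfd(A^{(1)})\ran+\lan\bfd(A^{(2)}),\bfd(A^{(1)})\ran,
\]
which equals $\sum_{i>j}\lan\bfd(A^{(i)}),\bfd(A^{(j)})\ran$ by bilinearity. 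The $\ti K$ tails assemble into $\ti K_{\bfd(A^{(1)})}$ on the middle tensorand (inherited from $\Delta(u_Y^+)$) and $\ti K_{\bfd(Y)}=\ti K_{\bfd(A^{(1)})+\bfd(A^{(2)})}$ on the rightmost tensorand (inherited from the outer $\Delta(u_A^+)$). This reproduces the first displayed formula.

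The second identity is proved by the exact same procedure starting from the comultiplication for $\Hallmi$ given in Corollary \ref{Green Xiao 2}(b$'$). Here the sign in the Euler-form exponent is reversed, producing the $-\sum_{i<j}\lan\bfd(A^{(i)}),\bfd(A^{(j)})\ran$ prefactor, and the $\ti K$ factors now sit on the \emph{left} of each $u_{A^{(s)}}^-$, with exponents $-\bfd(A^{(2)})-\bfd(A^{(3)})$ and $-\bfd(A^{(3)})$ obtained by accumulating the tails in the same way. The relabelling $(A^{(1)},A^{(2)},A^{(3)})$ mirrors the previous one, and the associativity of Hall polynomials again collapses the intermediate sum into $\vi^A_{A^{(3)},A^{(2)},A^{(1)}}$.

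The routine but error-prone step, and the main place to be careful, is the bookkeeping of the $\ti K_\alpha$ factors: each time $\Delta$ is applied to a middle tensorand, one must correctly stack the new $\ti K$ onto whatever $\ti K$ was already attached in the outer step, and in the $u_A^-$ case one must take into account that the $\ti K$ factors appear on the left, so that the degrees $\bfd(A^{(i)})$ accumulate in a different order than in the $u_A^+$ case. Once this is tracked, the verification is mechanical and the two formulas drop out.
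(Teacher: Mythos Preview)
Your proposal is correct and matches the paper's approach exactly: the paper gives no explicit proof, stating only that the lemma ``follows directly from the definition of comultiplication,'' and your two-step application of Proposition~\ref{Green Xiao}(b) and Corollary~\ref{Green Xiao 2}(b$'$) together with the associativity identity $\sum_Y\vi^A_{X,Y}\vi^Y_{P,Q}=\vi^A_{X,P,Q}$ is precisely the direct computation the authors have in mind.
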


\begin{Prop}\label{general commute formula} Let $X,Y\in\afThnp$.
Then in $\dHallr$,
\[
\begin{split}
&u_Y^-u_X^+-u_X^+u_Y^- \\
=&\sum_{A,B,B'\in\afThnp \atop A\not=0}
\up^{\ell_1}\vi_{A,B}^X\vi_{A,B'}^Y\frac{\fka_A\fka_B\fka_{B'}}{\fka_X\fka_Y}
\ti K_{-\bfd(A)}u_B^+u_{B'}^-\\
&\quad+\sum_{A,B,B',C,C'\in\afThnp\atop
C\not=0\not=C'}\up^{\ell_2}\vi^{X}_{A,B,C}\vi^{Y}_{A,B',C'}\frac{\fka_A\fka_B\fka_{B'}}{\fka_X\fka_Y}\biggl(\sum_{m\geq
1\atop X_1,\ldots,X_m\in\afThnp_1}(-1)^m\\
&\qquad\up^{2\sum_{i<j}\lan\bfd(X_i),\bfd(X_j)\ran}
\fka_{X_1}\cdots \fka_{X_m}\vi^C_{X_1,\ldots,
X_m}\vi^{C'}_{X_1,\ldots, X_m}\biggr) \ti
K_{\bfd(C)-\bfd(A)}u_B^+u_{B'}^-.
\end{split}
\]
 where $\ell_1=\lan \bfd(A),\bfd(B)\ran-
 \lan\bfd(Y),\bfd(A)\ran+2\fkd(A)$ and
$$ \ell_2=
\lan\bfd(A),\bfd(B)\ran+\lan\bfd(Y),\bfd(C)-\bfd(A)\ran-\lan
\bfd(C),2\bfd(C)+\bfd(B)\ran +2\fkd(A)+2\fkd(C).$$
\end{Prop}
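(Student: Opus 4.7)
The plan is to derive the formula by a direct application of the Drinfeld double relation \eqref{D-double-rel-2},
$$u_Y^- u_X^+ \;=\; \sum \psi(a_1,\sg(b_1))\,(a_2*b_2)\,\psi(a_3,b_3),$$
with $a=u_X^+$ and $b=u_Y^-$, substituting the explicit three-fold comultiplications given in Lemma \ref{Dt2}. Writing the indexing matrices for $\Dt^{(2)}(u_X^+)$ and $\Dt^{(2)}(u_Y^-)$ as $A^{(1)},A^{(2)},A^{(3)}$ and $B^{(1)},B^{(2)},B^{(3)}$ respectively (so that the Hall polynomials read $\vi^X_{A^{(3)},A^{(2)},A^{(1)}}$ and $\vi^Y_{B^{(3)},B^{(2)},B^{(1)}}$), the two outer pairings will collapse these triples against one another, leaving $a_2*b_2 = u^+_{A^{(2)}} u^-_{B^{(2)}}$ (up to scalars and $\ti K$-factors) as the surviving middle piece.

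First I would evaluate the third pairing $\psi(a_3,b_3)=\psi(u^+_{A^{(3)}}\ti K_{\bfd(A^{(1)})+\bfd(A^{(2)})},\,u^-_{B^{(3)}})$ by Proposition \ref{bilinear form}; this forces $A^{(3)}=B^{(3)}$ (call it $A$ in the proposition) and produces a scalar of the shape $\up^{(\cdots)}\fka_A^{-1}$. Next I would expand
$\sg(b_1) = \sg(u^-_{B^{(1)}})\,\ti K_{\bfd(B^{(2)})+\bfd(B^{(3)})}$
via Corollary \ref{Green Xiao 2}(c$'$), which splits naturally into the $\dt_{B^{(1)},0}$ piece and the main sum over $m\ge 1$ and $X_1,\ldots,X_m\in\afThnp_1$. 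In the first case, $\sg(b_1)$ is a pure $\ti K$-element, so $\psi(a_1,\sg(b_1))$ forces $A^{(1)}=B^{(1)}=0$; the Hall polynomials collapse to $\vi^X_{A,A^{(2)}}\vi^Y_{A,B^{(2)}}$, yielding, after renaming $A^{(2)}=B$ and $B^{(2)}=B'$, the first sum of the proposition — with the single exceptional term $A=0$ contributing precisely $u_X^+u_Y^-$, which is exactly what is subtracted on the left-hand side (whence the restriction $A\ne 0$). In the second case $B^{(1)}\neq 0$, the antipode expansion produces a sum indexed by $D\in\afThnp$, and the pairing $\psi(u^+_{A^{(1)}},\ldots u^-_D)$ forces $D=A^{(1)}$, corresponding in the proposition's notation to $C=A^{(1)}$ and $C'=B^{(1)}$ both nonzero. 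The common auxiliary multi-index $X_1,\ldots,X_m$ of the antipode then supplies the inner factor $\vi^{C}_{X_1,\ldots,X_m}\vi^{C'}_{X_1,\ldots,X_m}$ appearing in the second sum.

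The main obstacle is purely computational: tracking the many powers of $\up$. Lemma \ref{Dt2} contributes $\up^{\pm\sum_{i>j}\lan\bfd(A^{(i)}),\bfd(A^{(j)})\ran}$ from each triple comultiplication; Proposition \ref{bilinear form} contributes a factor of the shape $\up^{\al\cdot\bt-\lan\bfd,\bfd+\al\ran+2\fkd}$ from each pairing; Corollary \ref{Green Xiao 2}(c$'$) adds $\up^{2\sum_{i<j}\lan\bfd(X_i),\bfd(X_j)\ran}$ from the antipode; and finally commuting the accumulated $\ti K_\nu$ past $u^+_B$ and $u^-_{B'}$ in the middle piece generates yet further powers of $\up$ via Proposition \ref{Green Xiao}(a). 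Reorganizing all of these, using $\bfd(A^{(1)})+\bfd(A^{(2)})+\bfd(A^{(3)})=\bfd(X)$ (and similarly for $Y$), into the compact exponents $\ell_1 = \lan\bfd(A),\bfd(B)\ran-\lan\bfd(Y),\bfd(A)\ran+2\fkd(A)$ and the more elaborate $\ell_2$ stated in the proposition is the most delicate step, but it is purely mechanical once the structural decomposition above is in place.
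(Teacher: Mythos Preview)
Your proposal is correct and follows essentially the same approach as the paper: the paper's proof simply cites the Drinfeld double relation $yx=\sum\psi(x_1,\sg(y_1))(x_2y_2)\psi(x_3,y_3)$ (referenced there as \cite[Lem.~3.2.2(iii)]{Jo95}, but this is exactly your \eqref{D-double-rel-2}) and says that combining it with Lemma~\ref{Dt2} ``gives the required equality.'' Your account is considerably more detailed about how the pairings collapse the indices and how the antipode splits into the $B^{(1)}=0$ and $B^{(1)}\neq 0$ cases, but the underlying strategy is identical.
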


\begin{proof} By \cite[Lem.~3.2.2(iii)]{Jo95}, for $x\in\Hallpi$ and
$y\in\Hallmi$, we have in $\dHallr$,
$$yx=\sum\psi(x_1,\sg(y_1))(x_2y_2)\psi(x_3,y_3),$$
 where $\Dt^{(2)}(x)=\sum x_1\ot x_2\ot x_3$ and $\Dt^{(2)}(y)=\sum y_1\ot
 y_2\ot y_3$. This together with Lemma \ref{Dt2} gives the
 required equality.
\end{proof}

The following result is a direct consequence of the above
proposition together with the fact that for $\bt=(\bt_i),
\bt^{(1)}=(\bt^{(1)}_i),\ldots,\bt^{(m)}=(\bt^{(m)}_i)\in\afmbnn$,
\begin{equation}\label{multiGauss}
\vi_{\bt^{(1)},\ldots,\bt^{(m)}}^\bt=\prod_{i=1}^n\dleb{\bt_i\atop\bt^{(1)}_i,\ldots,
\bt_i^{(m)}}\drib=:\dleb{\bt\atop\bt^{(1)},\ldots,
\bt^{(m)}}\drib,
\end{equation}
where $\dleb{\bt_i\atop\bt^{(1)}_i,\ldots,
\bt_i^{(m)}}\drib=\frac{[\![\bt_i]\!]^!}{[\![\bt^{(1)}_i]\!]^!\ldots
[\![\bt_i^{(m)}]\!]^!}$.

\begin{Coro}\label{comm formula for semisimple}
For $\la,\mu\in\afmbnn$, we have
$$u_\mu^-u_\la^+-u_\la^+ u_\mu^-=
\sum_{\al\not=0,\,\al\in\afmbnn\atop\al\leq\la,\,\al\leq\mu}\sum_{0\leq\ga\leq\al}
x_{\al,\ga}\ti K_{2\ga-\al} u_{\la-\al}^+u_{\mu-\al}^-,$$
 where
$$\aligned
x_{\al,\ga}=&\up^{\lr{\al,\la-\al}+\lr{\mu,2\ga-\al}+2\lr{\ga,\al-\ga-\la}+2\sg(\al)}\\
 &\times \dleb{\la\atop\al-\ga, \la-\al,\ga}\drib\cdot\dleb{\mu\atop\al-\ga,
 \mu-\al,\ga}\drib\frac{\fka_{\al-\ga}\fka_{\la-\al} \fka_{\mu-\al}}{\fka_\la \fka_\mu}\\
 & \times\sum_{m\geq 1,\ga^{(i)}\not=0\,\forall
i\atop\ga^{(1)}+\cdots+\ga^{(m)}=\ga}(-1)^m\up^{2\sum_{i<j}\lan\ga^{(i)},\ga^{(j)}\ran}
\fka_{\ga^{(1)}}\cdots
\fka_{\ga^{(m)}}\dleb\ga\atop\ga^{(1)},\ldots,\ga^{(m)}\drib^2.
\endaligned$$
\end{Coro}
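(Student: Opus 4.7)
The plan is to specialize Proposition \ref{general commute formula} to $X=A_\la$, $Y=A_\mu$ and then invoke the multinomial identity \eqref{multiGauss}. The key structural observation is that every submodule and every quotient of a semisimple representation of $\tri$ is again semisimple. Hence each Hall polynomial $\vi^{A_\la}_{A,B}$, $\vi^{A_\la}_{A,B,C}$, $\vi^{A_\ga}_{X_1,\ldots,X_m}$ appearing in the proposition vanishes unless all its arguments themselves correspond to semisimple modules. Furthermore, the common factor $\vi^C_{X_1,\ldots,X_m}\vi^{C'}_{X_1,\ldots,X_m}$ forces $\bfd(C)=\bfd(C')$, and hence $C=C'$ in the semisimple case. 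I would therefore parametrize the nonvanishing terms by setting $A=A_{\al-\ga}$, $B=A_{\la-\al}$, $C=C'=A_\ga$, $B'=A_{\mu-\al}$, and $X_i=A_{\ga^{(i)}}$ with $\ga^{(i)}\not=0$ and $\ga^{(1)}+\cdots+\ga^{(m)}=\ga$. This turns the summation of Proposition \ref{general commute formula} into one indexed by $(\al,\ga)\in\afmbnn\times\afmbnn$ subject to $0\le\ga\le\al$, $\al\le\la$, $\al\le\mu$, and the $\ti K$-factor $\ti K_{\bfd(C)-\bfd(A)}$ becomes $\ti K_{2\ga-\al}$ as required.

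Under this parametrization, identity \eqref{multiGauss} rewrites the Hall polynomials as Gaussian multinomials: $\vi^{A_\la}_{A_{\al-\ga},A_{\la-\al},A_\ga}=\dleb{\la\atop\al-\ga,\la-\al,\ga}\drib$, $\vi^{A_\mu}_{A_{\al-\ga},A_{\mu-\al},A_\ga}=\dleb{\mu\atop\al-\ga,\mu-\al,\ga}\drib$, and $\vi^{A_\ga}_{A_{\ga^{(1)}},\ldots,A_{\ga^{(m)}}}=\dleb{\ga\atop\ga^{(1)},\ldots,\ga^{(m)}}\drib$; the square in the inner sum of $x_{\al,\ga}$ arises from the two equal Hall polynomials $\vi^C_{X_1,\ldots,X_m}$ and $\vi^{C'}_{X_1,\ldots,X_m}$. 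The first sum of Proposition \ref{general commute formula} (the $C=0$ case) is recovered as the $\ga=0$ specialization, once one interprets the inner sum under the convention that an empty decomposition ($m=0$) contributes the value $1$; then $x_{\al,0}$ reduces to the coefficient appearing in that first sum.

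The remaining computational step is to verify the exponent of $\up$. Substituting $\bfd(A)=\al-\ga$, $\bfd(B)=\la-\al$, $\bfd(C)=\ga$, $\bfd(Y)=\mu$, together with $\fkd(A_\nu)=\sg(\nu)$ and $\sg(\al-\ga)+\sg(\ga)=\sg(\al)$, into
\[\ell_2=\lan\bfd(A),\bfd(B)\ran+\lan\bfd(Y),\bfd(C)-\bfd(A)\ran-\lan\bfd(C),2\bfd(C)+\bfd(B)\ran+2\fkd(A)+2\fkd(C)\]
and expanding by bilinearity of the Euler form, the combination $\lan\al-\ga,\la-\al\ran-\lan\ga,2\ga+\la-\al\ran$ simplifies (using $-2\lan\ga,\la-\al\ran-2\lan\ga,\ga\ran=2\lan\ga,\al-\ga-\la\ran$) to $\lan\al,\la-\al\ran+2\lan\ga,\al-\ga-\la\ran$, yielding the exponent $\lan\al,\la-\al\ran+\lan\mu,2\ga-\al\ran+2\lan\ga,\al-\ga-\la\ran+2\sg(\al)$ that appears in $x_{\al,\ga}$. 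The ratio of $\fka$-factors passes unchanged from $\fka_A\fka_B\fka_{B'}/(\fka_X\fka_Y)$ to $\fka_{\al-\ga}\fka_{\la-\al}\fka_{\mu-\al}/(\fka_\la\fka_\mu)$, since $\fka$ depends only on isomorphism class. There is no conceptual obstacle beyond careful bookkeeping of the bilinear Euler form under the reparametrization $(\bfd(A),\bfd(C))\leftrightarrow(\al-\ga,\ga)$.
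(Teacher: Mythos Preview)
Your proposal is correct and follows exactly the approach the paper indicates: the paper states this corollary as ``a direct consequence of the above proposition together with'' the multinomial identity \eqref{multiGauss}, and you have supplied precisely that specialization, including the exponent bookkeeping. Your observation that the $\ga=0$ case recovers the first sum of Proposition~\ref{general commute formula} only under the convention that the inner sum equals $1$ for $\ga=0$ (i.e., allowing $m=0$) is accurate and worth noting, since the displayed formula for $x_{\al,\ga}$ literally gives $0$ there; this is a point the paper leaves implicit.
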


This is the {\it commutator formula} for semisimple generators. \index{commutator
formula}

\chapter{Affine quantum Schur algebras and the Schur--Weyl reciprocity}

Like the quantum Schur algebra, the affine quantum Schur algebra has
several equivalent definitions. We first present the geometric
definition, given by Ginzburg--Vasserot and Lusztig, which uses cyclic
flags and convolution product. We then discuss the two Hecke algebra
definitions given by R. Green and by Varagnolo--Vasserot. The former
uses $q$-permutation modules, while the latter uses tensor
spaces. Both versions are related by the Bernstein presentation for
Hecke algebras of affine type $A$.

In \S3.4, we review the
construction of BLM type bases for affine quantum Schur algebras and the multiplication formulas
between simple generators and BLM basis elements (Theorem \ref{multiplication formulas in affine q-Schur algebra})
developed by the last two authors \cite{DF09}. Through the central
element presentation for $\dHallr$ as given in Theorem \ref{presentation dHallAlg}, we introduce a
$\dHallr$-$\afbfHr$-bimodule structure on the tensor space in \S 3.5. This gives
a homomorphism $\xi_r$ from $\dHallr$ to $\afbfSr$. We then
prove in \S3.6 that the restriction of this bimodule action coincides
with the $\bfHall^{\text{op}}$-$\afbfHr$-bimodule structure defined by
Varagnolo--Vasserot in \cite{VV99}. Thus, we obtain an explicit description of
the map $\xi_r$ (Theorem \ref{xirl}).

In \S3.7, we develop a certain triangular relation (Proposition
\ref{triangular formula in A(bfj,r)}) among the structure constants
relative to the BLM basis elements. With this relation, we display
an integral PBW type basis and, hence, a triangular decomposition
for an affine quantum Schur algebra (Theorem \ref{PBW basis of
affine q-Schur algebras}). Using the triangular decomposition, we
easily establish the surjectivity of the homomorphism $\xi_r$ from
$\dHallr$ to $\afbfSr$ in \S3.8 (Theorem
\ref{surjective-dHall-aff}).

There are several important applications of this result which will be discussed in the next three chapters.  As a first application,
we end this chapter by establishing certain polynomial identities (Corollary \ref{polyidentity})
arising from the commutator formulas for semisimple generators
discussed in \S2.4.

\section{Cyclic flags: the geometric definition}

In this section we recall the geometric definition of affine
quantum Schur algebras introduced by
Ginzburg--Vasserot \cite{GV} and Lusztig \cite{Lu99}.

Let $\field$ be a field and fix an $\field[\ep,\ep^{-1}]$-free
module $V$ of rank $r\geq 1$, where $\ep$ is an indeterminate. A
lattice in $V$ is, by definition, a free $\field[\ep]$-submodule
$L$ of $V$ satisfying
$V=L\otimes_{\field[\ep]}\field[\ep,\ep^{-1}]$. For two lattices
$L',L$ of $V$, $L+L'$ is again a lattice. If, in addition,
$L'\subseteq L$, $L/L'$ is a finite generated torsion
 $\field[\ep]$-module. Thus, as an $\field$-vector space, $L/L'$ is finite dimensional.

Let $\afFn={\mathscr F}_{\vtg,n}$ be the set of all {\it cyclic
flags} \index{cyclic flag}\index{$\afFn={\mathscr F}_{\vtg,n}$, set
of cyclic flags of period $n$} $\bfL=(L_i)_{i\in\mbz}$ of lattices
of {\it period $n$}, where each $L_i$ is a lattice in $V$ such that
$L_{i-1}\han L_i$ and $L_{i-n}=\ep L_i$ for all $i\in\mbz$. The
group $G$ of automorphisms of the $\field[\ep,\ep^{-1}]$-module $V$
acts on $\afFn$ by $g\cdot\bfL=(g(L_i))_{i\in\mbz}$ for $g\in G$ and
$\bfL\in\afFn$. Thus, the map
$$\phi:\afFn\lra \afLanr,\quad\bfL\longmapsto (\dim_\field L_i/L_{i-1})_{i\in\mbz}$$
induces a bijection between the set
$\{\afFn_{,\la}\}_{\la\in\afLanr}$ of $G$-orbits in $\afFn$ and
$\afLanr$.\index{$\afFn_{,\la}$, $G$-orbit in $\afFn$}

Similarly, let $\afBr={\mathscr B}_{\vtg,r}$ be the set of all
complete cyclic flags $\bfL=(L_i)_{i\in\mbz}$ of lattices, where
each $L_i$ is a lattice in $V$ such that $L_{i-1}\han L_i$,
$L_{i-r}=\ep L_i$ and $\dim_\field(L_i/L_{i-1})=1$, for all
$i\in\mbz$.

The group $G$ acts on $\afFn\times\afFn$, $\afFn\times\afBr$ and $\afBr\times\afBr$ by
$g\cdot(\bfL,\bfL')=(g\cdot\bfL,g\cdot\bfL')$. For $\bfL=(L_i)_{i\in\mbz}$
and $\bfL'=(L_i')_{i\in\mbz}\in\afFn$, let
$X_{i,j}:=X_{i,j}(\bfL,\bfL')=L_{i-1}+L_i\cap L_j'$. By
lexicographically ordering the indices $i,j$, we obtain a
filtration $(X_{i,j})$ of lattices of $V$. For $i,j\in\mbz$, let
$a_{i,j}=\dim_\field(X_{i,j}/X_{i,j-1})=\dim_\field\frac{L_i\cap L_j'}{L_{i-1}\cap L_j'+L_i\cap L_{j-1}'}$. By \cite[1.5]{Lu99}
there is a bijection between the set of $G$-orbits in
$\afFn\times\afFn$ and the set $\afThnr$ by sending $(\bfL,\bfL')$
to $A=(a_{i,j})_{ij\in\mbz}$. Let $\sO_A\han\afFn\times\afFn$ be
the $G$-orbit corresponding to the matrix $A\in\afThnr$. By
\cite[1.7]{Lu99}, for $\bfL,\bfL'\in\afFn$,
\begin{equation}\label{transpose matrix}
(\bfL,\bfL')\in\sO_A\Longleftrightarrow (\bfL',\bfL)\in\sO_{\tA},
\end{equation}
 where $\tA$ is the transpose matrix of $A$. \index{$\tA$, transpose of $A$}

Similarly, putting $\og=(\ldots,1,1,\ldots)\in\La_\vtg(r,r)$ and
\begin{equation}\label{matrixafSr1}
\afThrr_\og=\{A\in\afThrr\mid\ro(A)=\co(A)=\og\},
\end{equation}
 the $G$-orbits $\sO_A$ on $\afBr\times\afBr$ are indexed
by the matrices $A\in\afThrr_\og$, while the $G$-orbits $\sO_A$ on
$\afFn\times\afBr$ are indexed by the set\footnote{The set is
denoted by $\ominus_\vtg(n,r)$ in \cite{DF09}.}
$$\Thnrr=\{A\in\Thnr\mid  \ro(A)\in\afLanr, \co(A)=\og\},$$
where, like $\afThn$,
\begin{equation}\label{ThnOVERr}
\Thnr=\{(a_{i,j})_{i,j\in\mbz}\mid a_{i,j}\in\mbn,
a_{i,j}=a_{i-n,j-r} \forall
i,j\in\mbz,\sum_{i=1}^n\sum_{j\in\mbz}a_{i,j}\in\mbn\}.
\end{equation}
Clearly, with this notation,
\begin{equation}\label{matrixafSr2}
   \afThrr_\og=\{A\in\Thrr\mid\ro(A)=\co(A)=\og\}.
   \end{equation}

Assume now that $\field=\field_q$ is the finite field of $q$
elements and write $\afFn(q)$ for $\afFn$ and $\afBr(q)$ for
$\afBr$, etc.
By regarding $\mbc\afFn(q)$ (resp., $\mbc\afBr(q)$) as a permutation
$G$-module, the endomorphism algebra $\sS_{\vtg,q}:=\End_{\mbc
G}(\mbc\afFn(q))^{\text{op}}$ (resp., $\sH_{\vtg,q}:=\End_{\mbc
G}(\mbc\afBr(q))^{\text{op}}$)\index{affine quantum Schur
algebra!$\sim$ at a prime power $q$, $\sS_{\vtg,q}$}\index{affine
Hecke algebra!$\sim$ at a prime power $q$, $\sH_{\vtg,q}$} has a
basis $\{e_{A,q}\}_{A\in\afThnr}$ (resp.,
$\{e_{A,q}\}_{A\in\afThrr_\og}$) with the following multiplication:
\begin{equation}\label{eee}
e_{A,q}e_{{A',q}}=
\begin{cases}
\sum_{A''\in\Theta}\afg_{A,A',A'';q}e_{{A'',q}}&
\text{ if $\co(A)=\ro(A')$}\\
0&\text{otherwise},
\end{cases}\end{equation}
where   $\Theta=\afThnr$ (resp., $\Theta=\Theta_\vtg(r,r)_\og$) and
\begin{equation}\label{afg}
\afg_{A,A',A'';q}=|\{\bfL'\in\afFn: (\bfL,\bfL')
\in\sO_{A},(\bfL',\bfL'')\in\sO_{A'}\}|
\end{equation}
for any fixed $(\bfL,\bfL'')\in\sO_{A''}$.

By \cite[1.8]{Lu99}, there exists a polynomial
$p_{A,A',A''}\in\sZ$\index{$\sZ=\mbz[\up,\up^{-1}]$, Laurent
polynomial ring in indeterminate $\up$} in $\up^2$ such that for
each finite field $\field$ with $q$ elements,
$\afg_{A,A',A'';q}=p_{A,A',A''}|_{\up^2=q}$. Thus, we have the
following definition; see \cite[1.11]{Lu99}.

\begin{Def}\label{generic affine quantum Schur algebra}
The {\it (generic) affine quantum Schur algebra} $\afSr$
\index{affine quantum Schur algebra!$\sim$ over $\sZ$, $\afSr$}
(resp., {\it affine Hecke algebra} $\afHr$) \index{affine Hecke
algebra!$\sim$ over $\sZ$, $\afHr$} is the free $\sZ$-module with
basis $\{e_{A}\mid A\in\afThnr\}$ (resp., $\{e_{A}\mid
A\in\afThrr_\og\}$), and multiplication defined by
\begin{equation}\label{e-basis multn}e_{A}e_{{A'}}=
\begin{cases}
\sum_{A''\in\Theta}p_{A,A',A''}e_{{A''}},&
\text{if $\co(A)=\ro(A')$;}\\
0,&\text{otherwise.}
\end{cases}\end{equation}
Both $\afSr$ and $\afHr$ are associative algebras over $\sZ$ with an
anti-automorphism $e_A\mapsto e_{\tA}$ (see \eqref{taur} below for a
modified version).
\end{Def}

Alternatively, we can interpret affine quantum Schur algebras in
terms of convolution algebras defined by $G$-invariant functions and convolution product. Again,
assume that $\field$ is the finite field of $q$ elements, and for
notational simplicity, let
$$\scrY=\afFn(q),\qquad\scrX=\afBr(q)\quad(\text{and }G=G(q)).$$
\index{$\scrY$, set of cyclic flags ${\mathscr
F}_{\vtg,n}(q)$}\index{$\scrX$, set of complete cyclic flags
${\mathscr B}_{\vtg,r}(q)$} Define $\sSq$, $\sTq$ and $\sHq$ to be
the $\mbc$-span of the characteristic functions $\chi_\sO$ of the
$G$-orbits $\sO$ on $\scrY\times\scrY$, $\scrY\times\scrX$ and
$\scrX\times\scrX$, respectively. With the {\it convolution product}
\index{convolution product}
\begin{equation}\label{S convolution product}
(\chi_\sO*\chi_{\sO'})(\bfL,\bfL'')=\sum_{\bfL'\in\scF'}
\chi_{\sO}(\bfL,\bfL')\chi_{\sO'}(\bfL',\bfL''),
\end{equation}
where $\sO\subset\scF\times\scF'$ and $\sO'\subset\scF'\times\scF''$
for various selections of $\scF,\scF'$ and $\scF''$, we obtain
{\it convolution algebras}
$\sSq$ and $\sHq$, and $\sSq$-$\sHq$-bimodule $\sTq$. It is clear
that $\sS_{\vtg,q}\cong\sSq$ \index{affine quantum Schur
algebra!$\sim$ at a prime power $q$, $\sSq$} and
$\sH_{\vtg,q}\cong\sHq$,\index{affine Hecke algebra!$\sim$ at a
prime power $q$, $\sHq$} and specializing $\up$ to $\sqrt{q}$ gives
an isomorphism
\begin{equation}\label{iso-Schur-Function}
\afSr_\mbc\lra\sSq\,\,(\text{resp., }
\afHr\ot_{\sZ}\mbc\lra\sHq)\end{equation} sending $e_A\otimes 1\lm
\chi_A$,
 where $\chi_A$ denotes the characteristic function of the orbit $\sO_A$. In the sequel, we shall identify
$\afSr_\mbc$ with $\sSq$.

Via convolution product, $\mbc_{G}(\scrY\times \scrX)$ becomes a
$\mbc_{G}(\scrY\times \scrY)$-$\mbc_{G}(\scrX\times
\scrX)$-bimodule. Thus, if we denote by
$\sT_\vtg(n,r)$ the {\it generic} form of $\sTq$, then $\sT_\vtg(n,r)$ becomes an $\afSr$-$\afHr$-bimodule
with a $\sZ$-basis $\{e_A\mid A\in\Thnrr\}$.\index{tensor space!$\sT_\vtg(n,r)$}

\begin{Rem}\label{implicit use bcp}
It is clear from the definition that this isomorphism continues to hold
if $\mbc$ is replaced by the ring $R=\mbz[\sqrt{q},\sqrt{q}^{-1}]$.
In fact, we will frequently use the isomorphism
$\afSr_R\cong
R_G(\scrY\times\scrY)$ to derive formulas in $\afSr$ by doing computations in
$R_G(\scrY\times\scrY)$; see, e.g., \S\S3.6--9 below.
\end{Rem}

Observe that, for $N\geq n$, $\afFn_{,n}$ is naturally a subset of
$\afFn_{,N}$, since every $\bfL=(L_i)\in\afFn_{,n}$ can be regarded
as $\ti\bfL=(\ti L_i)\in\afFn_{,N}$, where, for all $a\in\mbz$, $\ti
L_{i+aN}=L_{i+an}$ if $1\le i\le n$, and $\ti L_{i+aN}=L_{n+an}$ if
$n\leq i\leq N$. Thus, if $N=\text{\rm max}\{n,r\}$, then
$\afFn_{,n}\times\afFn_{,n}$, $\afFn_{,n}\times\afBr_{,r}$ and
$\afBr_{,r}\times\afBr_{,r}$ can always be regarded as $G$-stable
subsets of $\afFn_{,N}\times\afFn_{,N}$, and the $G$-orbit $\sO_A$
containing $(\bfL,\bfL')$ is the $G$-orbit $\sO_{\ti A}$ containing
$(\ti\bfL,\ti\bfL')$, where $A=(a_{i,j})$ and $\ti A=(\ti a_{i,j})$
are related by, for all $m\in\mbz$,
\begin{equation}\label{AtoAtilde}
\ti a_{k,l+mN}=\begin{cases} a_{k,l+mn}, &\text{ if }1\leq k,l\leq n;\\
0, &\text{ if either }n< k\leq N\text{ or }n< l\leq N.\end{cases}
\end{equation}

\begin{Lem} \label{tsp} Let $N=\text{\rm max}\{n,r\}$. By sending $e_A$ to $e_{\ti A}$, both
$\afSr$ and $\afHr$ can be identified as (centralizer) subalgebras
of $\sS_\vtg(N,r)$, and $\sT_\vtg(n,r)$ as a {\rm sub}bimodule of
the $\afSr$-$\afHr$-bimodule $\sS_\vtg(N,r)$.
\end{Lem}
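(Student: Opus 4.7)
The plan is to pass to the convolution-algebra realization over a finite field and exhibit the three identifications inside $\sS_\vtg(N,r)_\mbc=\mbc_G(\scrY_N\times\scrY_N)$, where I write $\scrY=\afFn(q)$, $\scrX=\afBr(q)$ and $\scrY_N=\afFn_{,N}(q)$. Via \eqref{iso-Schur-Function} it suffices to prove the $\mbc$-linear analogues of the three embeddings over every prime power $q$, and then to descend to the generic $\sZ$-forms by the genericity of the structure polynomials $p_{A,A',A''}$. The $G$-equivariant embeddings $\scrY,\scrX\hookrightarrow\scrY_N$ induced by $\bfL\mapsto\ti\bfL$ are orbit-preserving in the following sense: the $G$-orbit in $\scrY\times\scrY$ (resp., $\scrX\times\scrX$, $\scrY\times\scrX$) labelled by a matrix $A$ matches, after the embedding, the $G$-orbit in $\scrY_N\times\scrY_N$ labelled by $\ti A$ defined in \eqref{AtoAtilde}, because the zero padding in $\ti A$ encodes precisely the equalities $\ti L_i=\ti L_n$ for $n<i\le N$ that cut out the image of $\scrY$ inside $\scrY_N$ (and similarly for $\scrX$). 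Consequently $\chi_A\mapsto\chi_{\ti A}$ is a well-defined $\mbc$-linear injection in all three cases.

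The main step is multiplicativity. For $A,B\in\afThnr$ with $\co(A)=\ro(B)$ I need
$$
\chi_{\ti A}*\chi_{\ti B}=\sum_{C\in\afThnr}\afg_{A,B,C;q}\,\chi_{\ti C}
$$
inside $\sS_\vtg(N,r)_\mbc$. The crucial point is a support restriction: any $D\in\afTh(N,r)$ with $\afg_{\ti A,\ti B,D;q}\neq 0$ must satisfy $\ro(D)=\ro(\ti A)$ and $\co(D)=\co(\ti B)$, both of which vanish in coordinates indexed by $n<i\le N$, and hence $D=\ti C$ for a unique $C\in\afThnr$. Moreover, every intermediate flag $\bfL'\in\scrY_N$ contributing to \eqref{afg} has dimension vector $\co(\ti A)=\ro(\ti B)$, zero in those coordinates, so $L_i'=L_n'$ for $n<i\le N$, meaning $\bfL'$ already lies in the image of $\scrY$. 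Counting such flags therefore produces $\afg_{\ti A,\ti B,\ti C;q}=\afg_{A,B,C;q}$ for every prime power $q$; since both sides are polynomials in $\up^2$ by \cite[1.8]{Lu99}, the identity $p_{\ti A,\ti B,\ti C}=p_{A,B,C}$ holds already in $\sZ$, which lifts the complex embedding to a $\sZ$-algebra embedding of the generic forms. The verifications for $\afHr\hookrightarrow\sS_\vtg(N,r)$ (with $A,B\in\afThrr_\og$ and intermediate flags forced into $\scrX$) and for the $\afSr$--$\afHr$-bimodule embedding of $\sT_\vtg(n,r)$ (with $A\in\Thnrr$ and intermediate flags forced into $\scrX$ on the right, $\scrY$ on the left) proceed identically.

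The main obstacle is exactly this support analysis: one must verify that padding the matrix with zeros in positions $n<i\le N$ really does force every intermediate flag in the convolution product to remain inside $\scrY$ (or $\scrX$), rather than wander into $\scrY_N$. Once this is pinned down, the remaining assertions are bookkeeping. The parenthetical ``centralizer'' in the statement then records the resulting idempotent description $\afSr_\mbc\cong\mathbf{1}_\scrY\sS_\vtg(N,r)_\mbc\mathbf{1}_\scrY$, where $\mathbf{1}_\scrY=\sum_{\la\in\afLanr}\chi_{\sO_{E_{\ti\la}}}$ is the diagonal characteristic function of the image of $\scrY$ in $\scrY_N$ (with $E_{\ti\la}$ the diagonal matrix in $\afTh(N,r)$ whose row and column sums both equal $\ti\la$), and analogously $\afHr_\mbc\cong\mathbf{1}_\scrX\sS_\vtg(N,r)_\mbc\mathbf{1}_\scrX$ and $\sT_\vtg(n,r)_\mbc\cong\mathbf{1}_\scrY\sS_\vtg(N,r)_\mbc\mathbf{1}_\scrX$.
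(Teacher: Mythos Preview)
Your proposal is correct and takes essentially the same approach as the paper: both arrive at the centralizer description $\afSr\cong e\,\sS_\vtg(N,r)\,e$, $\afHr\cong e_\og\,\sS_\vtg(N,r)\,e_\og$, $\sT_\vtg(n,r)\cong e\,\sS_\vtg(N,r)\,e_\og$ for the idempotents $e=\sum_{\mu\in\afLanr}e_{\diag(\ti\mu)}$ and $e_\og=e_{\diag(\og)}$. The paper simply states these isomorphisms without further justification, whereas you supply the missing verification that the structure constants agree (via the support constraint forcing intermediate flags in the convolution to remain in the embedded subvariety, then lifting from $\mbc$ to $\sZ$ by the polynomiality of $p_{A,A',A''}$); your proof is therefore more complete than the paper's own, but the underlying idea is identical.
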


\begin{proof}Define $\og\in \La_\vtg(N,r)$ by setting
\begin{equation}\label{111}
\og=\begin{cases}(\ldots,1^r,1^r,\ldots),&\text{ if }n\leq r\\
(\ldots,1^r,0^{n-r},1^r,0^{n-r},\ldots),&\text{ if }n> r.\\
\end{cases}
\end{equation}
For $\la\in\afLanr$, let
$\diag(\la)=(\dt_{i,j}\la_i)_{i,j\in\mbz}\in\afThnr$. If we embed
$\afLanr$ into $\La_\vtg(N,r)$ via the map $\mu\mapsto \ti\mu$,
where
$$\ti\mu=(\ldots,\mu_1,\ldots,\mu_n,0^{N-n},\mu_{n+1},\ldots,\mu_{2n},0^{N-n},\ldots),$$
 and put $e=\sum_{\mu\in\afLanr}e_{\diag(\ti\mu)}$ and
$e_\og=e_{\diag(\og)}$, then $\afSr\cong e\sS_\vtg(N,r)e$ and
$\afHr\cong e_{\og}\sS_\vtg(N,r)e_{\og}$, and $\sT_\vtg(n,r)\cong e\sS_\vtg(N,r)e_{\og}$ as
$\afSr$-$\afHr$-bimodules.
Here all three isomorphisms send $e_A$ to $e_{\ti A}$.
\end{proof}

For $A\in\afThnr$, let
\begin{equation}\label{sqAdA}
[A]=\up^{-d_A}e_{A},\quad\text{ where } \quad
d_{A}=\sum_{1\leq i\leq n\atop i\geq k,j<l}a_{i,j}a_{k,l}.
\end{equation}
(See \cite[4.1(b),4.3]{Lu99} for a geometric meaning of $d_A$.)\index{$d_A$, as in $[A]:=\up^{-d_A}e_A$}
 Then for $\la\in\afLanr$ and $A\in\afThnr$, we have
\begin{equation}
\label{product [diag(la)][A] in affine q-Schur algebra}
\begin{aligned}
\ [\diag(\la)]\cdot[A]=
\begin{cases}[A], & \text{if}\ \lambda=\ro(A);\\
0, & \text{otherwise;}
\end{cases}
\end{aligned} \quad \text{and}\
\begin{aligned}
\ [A][\diag(\la)]=
\begin{cases}[A], & \text{if}\ \lambda=\co(A);\\
0, & \text{otherwise.}
\end{cases}
\end{aligned}
\end{equation}
 Moreover, the $\sZ$-linear map
\begin{equation}\label{taur}
\tau_r:\afSr\lra\afSr,\;\;[A]\lm [\tA]
\end{equation} is an algebra
anti-involution.

We end this section with a close look at the basis $\{[A]\mid A\in\Thnrr\}$ for
$\sT_\vtg(n,r)$ from its specialization $\mbc_G(\scrY\times\scrX)$.
Let
\begin{equation}\label{afInr}
\afInr=\{\bfi=(i_k)_{k\in\mbz}\mid i_k\in\mbz,i_{k+r}=i_k+n\text{
for all $k\in\mbz$}\}.\end{equation} We may identify the elements of
$\afInr$ with functions $\bfi\,:\,\mbz\to\mbz$ satisfying \linebreak
$\bfi(s+r)=\bfi(s)+n$ for all $s\in\mbz$. (Note that a periodic
function $\bfi$ can be identified with $(i_1,\ldots,i_r)\in\mbz^r$,
where $i_s=\bfi(s)$ for all $s$. For more details, see \eqref{afInr4} below.)
Clearly, there is a bijection
\begin{equation}\label{afInr1}
\afInr\lra\Thnrr,\quad \bfi\longmapsto A^\bfi
\end{equation}
where $A^\bfi=(a_{k,l})$ with $a_{k,l}=\delta_{k,i_l}$. Thus, the
orbits of the diagonal action of $G$ on $\scrY\times
\scrX$ are labeled by the elements
of $\afInr$, and $\sO_\bfi:=\sO_{A^\bfi}$ is the orbit of the pair
$(\bfL_\bfi,\bfL_\emptyset)$, where the $i$th lattices of $\bfL_\bfi,\bfL_\emptyset$
are defined by
\begin{equation}\label{lattice Li}
\bfL_{\bfi,i}=\bigoplus_{\bfi(j)\leq i}\field\, v_j
\quad\text{and}\quad \bfL_{\emptyset,i}=\bigoplus_{j\leq i}\field\,
v_j.
\end{equation}
 Here $v_1,\ldots,v_r$ is a fixed
$\field[\ep,\ep^{-1}]$-basis of $V$ and $v_{i+rs}=\ep^{-s}\, v_i$
for all $s\in\mbz$. This is because the difference set
$$\{j\mid i_j\le k,j\le l\}\backslash
(\{j\mid i_j\le k-1,j\le l\}\cup\{j\mid i_j\le k,j\le l-1\})=\begin{cases}\{l\}, &\text{ if }k=i_l;\\
\emptyset, &\text{ otherwise.}\end{cases}$$

Let $d_\bfi=d_{A^\bfi}$ (see \eqref{sqAdA}). If we set, for each $\bfL\in \scrY$,
$$\scrX_{\bfi,\bfL}=\{\bfL'\in \scrX\mid(\bfL,\bfL')\in \sO_{\bfi}\},$$
then, by \cite[Lem.~4.3]{Lu99}, $d_\bfi$ is the dimension of $\scrX_{\bfi,\bfL}$ (in case where $\field$ is an
algebraically closed field). More precisely, a
direct calculation gives the following result.

\begin{Lem}\label{di} For each
$\bfi=(i_j)\in\afInr$, let
$$\text{\rm Inv}(\bfi)=\{(s,t)\in\mbz^2\mid
1\leq s\leq r,\, s<t,\,i_s\geq i_t\}.$$ Then $d_\bfi=|\text{\rm
Inv}(\bfi)|$.
\end{Lem}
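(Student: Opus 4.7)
The plan is to derive the identity by unpacking the definition of $d_{A^\bfi}$ from \eqref{sqAdA} and then exhibiting an explicit bijection, controlled by the periodicity $i_{k+r}=i_k+n$, between the resulting set of index pairs and $\mathrm{Inv}(\bfi)$.

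First I would write out $d_{A^\bfi}$ directly. Since $A^\bfi=(a_{k,l})$ with $a_{k,l}=\delta_{k,i_l}$, the defining formula
$$d_{A^\bfi}=\sum_{1\le i\le n,\; i\ge k,\; j<l} a_{i,j}a_{k,l}$$
forces $i=i_j$, $k=i_l$ with the side conditions $1\le i_j\le n$, $i_l\le i_j$ and $j<l$. Hence
$$d_{A^\bfi}=\bigl|\,S\,\bigr|,\qquad S:=\{(j,l)\in\mbz^2\mid j<l,\ 1\le i_j\le n,\ i_l\le i_j\}.$$
So the problem reduces to proving $|S|=|\mathrm{Inv}(\bfi)|$.

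Next, I would use periodicity to identify a canonical representative in each residue class modulo $r$. For any $s\in\mbz$, the sequence $\{i_{s+ar}=i_s+an\}_{a\in\mbz}$ meets the interval $[1,n]$ in exactly one term. In particular, the first coordinates of the pairs in $S$ are precisely those $j\in\mbz$ for which $i_j\in[1,n]$, and each residue class mod $r$ contributes exactly one such $j$. I would then define
$$\Phi\colon S\longrightarrow\mathrm{Inv}(\bfi),\qquad (j,l)\longmapsto(s,t):=(j-ar,\ l-ar),$$
where $a\in\mbz$ is the unique integer with $s=j-ar\in\{1,\dots,r\}$. The periodicity $i_{k+r}=i_k+n$ gives $i_s=i_j-an$ and $i_t=i_l-an$, so the inequalities $j<l$ and $i_l\le i_j$ translate into $s<t$ and $i_t\le i_s$; and $1\le s\le r$ by construction. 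So $\Phi$ lands in $\mathrm{Inv}(\bfi)$.

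Finally, I would produce the inverse map to conclude. Given $(s,t)\in\mathrm{Inv}(\bfi)$ with $1\le s\le r$, choose the unique $a\in\mbz$ for which $i_s+an\in[1,n]$; this exists and is unique by the observation above. Set $(j,l)=(s+ar,\ t+ar)$. Then $1\le i_j=i_s+an\le n$, $j<l$, and $i_l=i_t+an\le i_s+an=i_j$, so $(j,l)\in S$. Mutual inverseness of the two assignments is immediate from the construction, and therefore $|S|=|\mathrm{Inv}(\bfi)|$, which is the required equality $d_\bfi=|\mathrm{Inv}(\bfi)|$. There is no real obstacle here beyond bookkeeping; the only point that needs care is verifying that the ``canonical representative'' choice $i_j\in[1,n]$ is precisely what makes each $\mathrm{Inv}$-pair correspond to exactly one element of $S$, so that the bijection is well defined and inequality-preserving.
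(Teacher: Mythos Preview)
Your proof is correct. It is essentially the same computation as the paper's, with one difference in packaging: the paper invokes Lusztig's formula for $d_A$ in the form where the \emph{column} index of the first factor is restricted to a fundamental domain $1\le t\le r$, so that substituting $a_{k,l}=\delta_{k,i_l}$ yields $|\mathrm{Inv}(\bfi)|$ in one line with no bijection required. You instead read the formula \eqref{sqAdA} literally with the \emph{row} constraint $1\le i_j\le n$, obtaining the set $S$, and then supply the periodicity shift $(j,l)\mapsto(j-ar,l-ar)$ to move the constraint from $i_j\in[1,n]$ to $j\in[1,r]$. Your bijection is precisely the argument that the row- and column-indexed versions of $d_A$ agree for matrices with period $(n,r)$, so the two proofs differ only in whether that equivalence is made explicit or absorbed into the citation.
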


\begin{proof} 
Applying \cite[Lem.~4.3]{Lu99} gives
\begin{equation*}
\begin{split}
d_\bfi&=d_{A^\bfi}=\sum_{1\leq t\leq r\atop s\geq k,\,t<l}
\dt_{s,i_t}\dt_{k,i_l}=\sum_{1\leq t\leq r}
\sum_{s\in\mbz}\dt_{s,i_t}(\sum_{s\geq
k,\,t<l}\dt_{k,i_l})=\sum_{1\leq t\leq r}(\sum_{i_t\geq
k,\,t<l}\dt_{k,i_l})\\
&=|\text{Inv}(\bfi)|,
\end{split}
\end{equation*}
as required.
\end{proof}
With the identification of \eqref{iso-Schur-Function}, we have
$$[A^\bfi]=q^{-\frac 12d_\bfi}\chi_\bfi,$$
where $\chi_\bfi$ is the characteristic
function of the orbit $\sO_\bfi$.

\section{Affine Hecke algebras of type $A$: the algebraic definition}
We now follow \cite{Gr99,VV99} to interpret affine quantum  Schur
algebras as endomorphism algebras of certain tensor spaces over the
affine Hecke algebras associated with the extended affine Weyl
groups of type $A$.

Let $\afsygr$ be the {\it affine symmetric group}\index{$\afsygr$,
affine symmetric group} consisting of all permutations
$w:\mbz\ra\mbz$ such that $w(i+r)=w(i)+r$ for $i\in\mbz$. This is
the subset of all bijections in $I_\vtg(r,r)$ which is defined in
\eqref{afInr}. Hence, $\text{Inv}(w)$ is well-defined. More
precisely, for any $a\in\mbz$, if we set
$$\text{Inv}(w,a)=\{(s,t)\in\mbz^2\mid
a+1\leq s\leq a+r,\, s<t,\,w(s)>w(t)\},$$
then $\text{Inv}(w)=\text{Inv}(w,0)$ and $|\text{Inv}(w)|=|\text{Inv}(w,a)|$ for all $a\in\mbz$.

There are several useful subgroups of $\affSr$. The subgroup $W$ of
$\afsygr$ consisting of $w\in\afsygr$ with
$\sum_{i=1}^rw(i)=\sum_{i=1}^ri$ is the affine Weyl group of type
$A$\index{affine Weyl group of type $A$, $W$} with generators $s_i$
($1\leq i\leq r$) defined by setting
$$s_i(j)=\begin{cases}j, &\text{ for $j\not\equiv i,i+1\mnmod r$};\\
 j-1,&\text{ for
$j\equiv i+1\mnmod r$};\\
j+1,&\text{ for $j\equiv i\mnmod r$.}
\end{cases}$$
If $S=\{s_i\}_{1\le i\le r}$, then $(W,S)$ is a Coxeter system. For
convenience of writing consecutive products of the form
$s_is_{i+1}s_{i+2}\cdots$ or  $s_is_{i-1}s_{i-2}\cdots$, we set
$s_{i+kr}=s_i$ for all $k\in\mbz$.

Observe that the cyclic subgroup $\langle\rho\rangle$ of $\affSr$ generated by the permutation $\rho$
of $\mbz$ sending $j$ to $j+1$, for all $j$, is in the complement of $W$. Observe also that
$s_{j+1}\rho=\rho s_j$ for all $j\in\mbz$.

The subgroup $A$ of $\afsygr$ consisting of permutations $y$ of
$\mbz$ satisfying $y(i)\equiv i(\text{mod}\,r)$ is isomorphic to
$\mbz^r$ via the map $y\mapsto
\la=(\la_1,\la_2,\ldots,\la_r)\in\mbz^r$ where $y(i)=\la_ir+i$ for
all $1\leq i\leq r$. We will identify $A$ with $\mbz^r$ in the
sequel. In particular, $A$ is generated by
$\bse_i=(0,\ldots,0,\underset {(i)}1,0,\ldots,0)$ for $1\leq i\leq
r$. Moreover, the subgroup of $W$ generated by $s_1,\ldots,s_{r-1}$
is isomorphic to the symmetric group $\fS_r$.

Recall the matrix set $\afThrr_\og$ defined in \eqref{matrixafSr1} which is clearly a group with matrix multiplication.

The following result is well-known; see, e.g., \cite[Prop.~1.1.3 \& 1.1.5]{Gr99} for the last isomorphism.

\begin{Prop} \label{affine symmetric groups}Maintain the notation above. There are group isomorphisms:
$$\affSr\cong\afThrr_\og\cong \fS_r\ltimes \mbz^r\cong\langle\rho\rangle\ltimes W.$$
\end{Prop}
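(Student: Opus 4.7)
My plan is to exhibit explicit isomorphisms, treating each of the three claims in turn and verifying group-theoretic properties rather than appealing to general structure theorems.

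First, for $\affSr\cong\afThrr_\og$, I would associate to each $w\in\affSr$ the matrix $A^w=(a_{i,j})_{i,j\in\mbz}$ with $a_{i,j}=\delta_{i,w(j)}$. The periodicity $w(j+r)=w(j)+r$ translates exactly into $a_{i-r,j-r}=a_{i,j}$, so $A^w\in\Thrr$; and since $w$ is a bijection, each row and column of $A^w$ contains exactly one $1$ with all other entries $0$, so $\ro(A^w)=\co(A^w)=\og$. Conversely, any matrix in $\afThrr_\og$ has exactly one entry equal to $1$ in each row and column, defining a bijection $w:\mbz\to\mbz$ satisfying $w(j+r)=w(j)+r$. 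The verification that $A^{ww'}=A^wA^{w'}$ is a direct calculation using $(A^wA^{w'})_{i,j}=\sum_k a^w_{i,k}a^{w'}_{k,j}=\sum_k\delta_{i,w(k)}\delta_{k,w'(j)}=\delta_{i,ww'(j)}$.

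For the second isomorphism $\affSr\cong\fS_r\ltimes\mbz^r$, I would use the identification $A\cong\mbz^r$ already given in the excerpt. The key observation is that any $w\in\affSr$ permutes the residues $\{1,2,\ldots,r\}$ modulo $r$, so there is a unique $\sigma\in\fS_r$ (viewed as the subgroup generated by $s_1,\ldots,s_{r-1}$) such that $\sigma(i)\equiv w(i)\pmod r$ for $1\le i\le r$. Then $y:=\sigma^{-1}w$ satisfies $y(i)\equiv i\pmod r$, so $y\in A$, giving $w=\sigma y$ uniquely. Since $A\cap\fS_r=\{1\}$ and $A$ is normal in $\affSr$ (as conjugation by any $w$ preserves the condition $y(i)\equiv i\pmod r$), we conclude $\affSr=\fS_r\ltimes A\cong\fS_r\ltimes\mbz^r$, with $\fS_r$ acting on $A\cong\mbz^r$ by the usual permutation of coordinates (this last point requires checking $\sigma\bse_i\sigma^{-1}=\bse_{\sigma(i)}$, an elementary computation).

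For the third isomorphism $\affSr\cong\langle\rho\rangle\ltimes W$, note that $\sum_{i=1}^r w(i)\equiv\sum_{i=1}^r i\pmod r$ for every $w\in\affSr$ (since $w$ induces a permutation of $\mbz/r\mbz$). Thus there is a unique $k\in\mbz$ with $\sum_{i=1}^rw(i)=\sum_{i=1}^r i+kr$, and since $\sum_{i=1}^r\rho^k(i)=\sum_{i=1}^r i+kr$, the element $\rho^{-k}w$ lies in $W$. This gives the decomposition $w=\rho^k\cdot(\rho^{-k}w)$, showing $\affSr=\langle\rho\rangle\cdot W$, and $\langle\rho\rangle\cap W=\{1\}$ by comparing the sum invariants. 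Normality of $W$ follows from the relation $s_{j+1}\rho=\rho s_j$ given in the excerpt, which shows $\rho S\rho^{-1}=S$ and hence $\rho W\rho^{-1}=W$.

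The main obstacle I anticipate is not any single step but the bookkeeping in verifying that the first bijection respects multiplication (needing the $r$-periodicity to cut off otherwise infinite sums cleanly) and the compatibility of the semidirect product actions. Otherwise, each isomorphism reduces to a coset decomposition plus a normality check, which is why the authors are content to cite \cite{Gr99} for the last piece.
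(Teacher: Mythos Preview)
Your proposal is correct and matches the paper's argument for the first two isomorphisms: the paper sends $w$ to $A_w=(\delta_{k,w(l)})$ for the first, and writes $w(i)=\la_i r+x(i)$ with $x\in\fS_r$, $\la\in\mbz^r$ for the second, exactly your $w=\sigma y$ decomposition.

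For the third isomorphism the routes differ slightly. You use the invariant $\sum_{i=1}^r w(i)$ to extract the unique power of $\rho$ directly and then verify normality of $W$ via $\rho s_j\rho^{-1}=s_{j+1}$. The paper instead leverages the second isomorphism already established: since $\affSr=\fS_r\cdot A$ and $\fS_r\subset W$, it suffices to express each generator $\bse_k$ of $A\cong\mbz^r$ in $\langle\rho\rangle\cdot W$, which it does via the explicit formula $\bse_k=\rho s_{r+k-2}\cdots s_{k+1}s_k$ (and cites \cite{Gr99} for the remaining details). Your argument is more self-contained; the paper's has the advantage that the formula $\bse_k=\rho s_{r+k-2}\cdots s_{k+1}s_k$ is reused later (e.g., in the Bernstein presentation in \S3.3).
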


\begin{proof} The first isomorphism is the restriction of the bijection defined in \eqref{afInr1} for $n=r$; see also \eqref{matrixafSr2}.
In particular, every $w\in\afsygr$ is sent to the matrix
$A_w=(a_{k,l})$ with $a_{k,l}=\delta_{k,w(l)}$. The second is seen
easily since every $w\in\afsygr$ can be written as $x\la$, where
$x\in\fSr$ and $\la\in \mbz^r$ are defined uniquely by $w(i)=\la_i
r+x(i)$ for all $1\le i, x(i)\le r$. If $\bse_k\in\affSr$
($k\in[1,r]$) denotes the permutation $\bse_k(i)=i$ for $i\neq k$,
$i\in[1,r]$ and $\bse_k(k)=r+k$, then $\bse_k=\rho s_{r+k-2}\cdots
s_{k+1}s_k$, proving the last isomorphism.
\end{proof}

 Let $\ell$ be the
length function of the Coxeter system $(W,S)$. \index{length
function} Then $\ell(w)$ for $w\in W$ is the length $m$ such that
$w=s_{i_1}s_{i_2}\cdots s_{i_m}$ is a reduced expression. By
\cite{IM},
$$\ell(w)=\sum_{1\le i<j\le r}\bigg|\bigg[\frac{w(j)-w(i)}r\bigg]\bigg|.$$ Extend the length function to
$\affSr$ be setting $\ell(\rho^iw')=\ell(w')$ for all $i\in\mbz$ and $w'\in W$. Since
Inv$(\rho^i)=\emptyset$, induction on $\ell(w)$ shows that
\begin{equation}\label{inversion=length}
\ell(w)=\text{Inv}(w),\qquad\text{ for all }w\in\affSr.
\end{equation}

The group $\affSr$ acts
on the set $\afInr$ given in \eqref{afInr} by place permutation:
\begin{equation}\label{place permutation}
\bfi w=(i_{w(k)})_{k\in\mbz},\quad\text{
for $\bfi\in \afInr$ and $w\in\affSr$.}
\end{equation}
 Clearly, every $\affSr$-orbit has a unique representative
in the {\it fundamental set}
\begin{equation}\label{afInr0}\aligned
\afJnr&=\{\bfi\in \afInr\mid 1\leq i_1\leq
i_2\leq\cdots\leq i_r\leq n\}\\
&=\{\bfi_\la^\vtg\mid \la\in\afLanr\},\endaligned
\end{equation}
where $\bfi_\la^\vtg=(i_s)_{s\in\mbz}\in \afJnr$ if and only if  $i_s=m$ for all $s\in
R_m^\la$ and $m\in\mbz$, or equivalently,
$\la_j=|\{k\in\mbz\mid i_k=j\}|$ for all $j\in\mbz$. Written in full,
$\bfi_\la^\vtg$ is the sequence
$$(\ldots,\underbrace{0,\ldots,0}_{\la_n},\underbrace{1,\ldots,1}_{\la_1},\underbrace{2,\ldots,2}_{\la_2},
\ldots,\underbrace{n,\ldots,n}_{\la_n},\underbrace{1+n,\ldots,1+n}_{\la_1},
\underbrace{2+n,\ldots,2+n}_{\la_2},\ldots).$$
Observe that every number of the form $i+kn$, for $1\leq i\leq n$, $k\in\mbz$, determines a
constant subsequence $(i_j)_{j\in R_{i+kn}^{\la}}=(i+kn,\ldots,i+kn)$ of $\bfi=\bfi_\la^\vtg$ (of length $\la_i$) indexed by the set
\begin{equation}\label{set R}
R_{i+kn}^{\la}=\{\la_{k,i-1}+1,\la_{k,i-1}+2,\ldots,\la_{k,i-1}+\la_i=\la_{k,i}\},
\end{equation}
where $\la_{k,i-1}=kr+\sum_{1\leq t\leq i-1}\la_t$. These sets form
a partition $\bigcup_{j\in\mbz}R_{j}^{\la}$ of
$\mbz$.\index{$R_{j}^{\la},j\in\mbz$, partition of $\mbz$} Note that
the fundamental set $I_\vtg(n,r)_0$ is obtained by shifting the one
used in \cite{VV99} by $n$.

For $\la,\mu\in\afLanr$, let $\fS_\la:=\fS_{(\la_1,\ldots,\la_n)}$
be the corresponding standard Young subgroup of $\fS_r$
\index{$\fS_\la$, Young subgroup associated with $\la$} (and hence,
of $\affSr$), and let
$$\afmsD_\la=\{d\mid d\in\affSr,\ell(wd)=\ell(w)+\ell(d)\text{ for
$w\in\fS_\la$}\}.$$
By \eqref{inversion=length}, one sees easily that
\begin{equation}\label{minimal coset representative}
\aligned
d^{-1}\in\afmsD_\la
&\iff d(\la_{k,i-1}+1)<d(\la_{k,i-1}+2)<\cdots<d(\la_{k,i-1}+\la_i),\,\forall 1\leq i\leq n,k\in\mbz\\
&\iff d(\la_{0,i-1}+1)<d(\la_{0,i-1}+2)<\cdots<d(\la_{0,i-1}+\la_i),\,\forall 1\leq i\leq n.\endaligned
\end{equation}
The
following result is the affine version of a well-known result for symmetric groups (see,~e.g.,~\cite[Th.4.15,\,(9.1.4)]{DDPW}.
It can be deduced from \cite[7.4]{VV99}. For a proof, see
\cite[9.2]{DF09}).

\begin{Lem}\label{the map jmath} Let $\afmsD_{\la,\mu}=\afmsD_{\la}\cap{\afmsD_{\mu}}^{-1}$.
There is a bijective map
\begin{equation}\label{Sjmath}
\jmath_\vtg:\{(\la, w,\mu)\mid
w\in\afmsD_{\la,\mu},\la,\mu\in\afLanr\}\lra\afThnr
 \end{equation} \index{$\jmath_\vtg$, bijection between double cosets and $\afThnr$}
 sending $(\la, w,\mu)$ to $A=(a_{k,l})$, where, if $\bfi_\la^\vtg=(i_a)_{a\in\mbz}$ and $\bfi_\mu^\vtg=(j_a)_{a\in\mbz}$, then
\begin{equation}\label{Ajmath}
a_{k,l}=|\{t\in\mbz\mid i_{w(t)}=k,j_t=l\}|=|R_k^\la\cap wR_l^\mu|
\end{equation} for all $k,l\in\mbz$.
In particular, by certain appropriate embedding, restriction gives
two bijections
\begin{equation}\label{Tjmath}
\jmath_\vtg:\{(\la, w,\omega)\mid
w\in\afmsD_{\la,\mu},\la\in\afLanr\}\lra\Thnrr\end{equation} and
$\jmath_\vtg:\{(\og, w,\omega)\mid w\in\affSr\}\ra\afThrr_\og.$
\end{Lem}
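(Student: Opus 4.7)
The plan is to establish the lemma in four stages. \textbf{First}, I would verify that the formula $a_{k,l}=|R_k^\la\cap wR_l^\mu|$ actually produces a matrix in $\afThnr$ with $\ro(A)=\la$ and $\co(A)=\mu$. The periodicity $a_{k+n,l+n}=a_{k,l}$ follows from the identities $R_{j+n}^\la=R_j^\la+r$ and $R_{j+n}^\mu=R_j^\mu+r$ (immediate from the definition of $\bfi_\la^\vtg\in\afInr$) combined with $w(s+r)=w(s)+r$, so that $(R_k^\la+r)\cap w(R_l^\mu+r)=(R_k^\la\cap wR_l^\mu)+r$. The row and column sums are computed using the fact that both $\{R_l^\mu\}_{l\in\mbz}$ and $\{wR_l^\mu\}_{l\in\mbz}$ partition $\mbz$; hence $\sum_l a_{k,l}=|R_k^\la|=\la_k$ and $\sum_k a_{k,l}=|R_l^\mu|=\mu_l$, giving $\sg(A)=\sum_{k=1}^n\la_k=r$.

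\textbf{Second}, I would show the matrix $A$ depends only on the double coset $\fS_\la w\fS_\mu$. If $x\in\fS_\la$ and $y\in\fS_\mu$, then $xR_k^\la=R_k^\la$ and $yR_l^\mu=R_l^\mu$ for all $k,l$, so
\[
R_k^\la\cap(xwy)R_l^\mu=(x^{-1}R_k^\la)\cap w(yR_l^\mu)=R_k^\la\cap wR_l^\mu,
\]
proving constancy on double cosets. Combined with the standard fact that every double coset $\fS_\la w\fS_\mu\subset\affSr$ contains a unique element of minimal length lying in $\afmsD_{\la,\mu}$ (valid in $\affSr=\langle\rho\rangle\ltimes W$ since $\fS_\la,\fS_\mu\subset W$ are finite parabolic subgroups and the length function extends compatibly, as recalled around \eqref{inversion=length}), this reduces injectivity of $\jmath_\vtg$ to the assertion that distinct double cosets yield distinct matrices.

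\textbf{Third}, for surjectivity (and for the remaining half of injectivity), I would construct an explicit inverse. Given $A\in\afThnr$ with $\ro(A)=\la$, $\co(A)=\mu$, partition each block $R_l^\mu$ (written in increasing order) into consecutive subblocks of sizes $a_{1,l},a_{2,l},\ldots,a_{n,l}$ (and propagate to all $l\in\mbz$ by the $r$-periodic shift), and similarly partition each $R_k^\la$ into consecutive subblocks of sizes $a_{k,1},\ldots,a_{k,n}$. Define $w_A:\mbz\to\mbz$ to be the unique bijection that carries the $(k,l)$-subblock inside $R_l^\mu$ order-preservingly onto the $(k,l)$-subblock inside $R_k^\la$. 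The identity $a_{k+n,l+n}=a_{k,l}$ forces $w_A(s+r)=w_A(s)+r$, so $w_A\in\affSr$; by construction $|R_k^\la\cap w_AR_l^\mu|=a_{k,l}$, and the order-preserving choice on each subblock matches the characterization \eqref{minimal coset representative} of $\afmsD_{\la,\mu}$, so $w_A\in\afmsD_{\la,\mu}$. This simultaneously establishes surjectivity and the fact that the double coset (and hence its minimal length representative) is determined by the matrix.

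The restriction statements are then immediate specializations: setting $\mu=\og$ gives $\fS_\og=1$, $\afmsD_{\la,\og}=\afmsD_\la$ and $\co(A)=\og$, i.e.\ $A\in\Thnrr$; setting $\la=\mu=\og$ further recovers the bijection $\affSr\overset\sim\to\afThrr_\og$ from Proposition \ref{affine symmetric groups}. The main obstacle will be the explicit construction of $w_A$ in the third step, where one must simultaneously arrange the order-preserving property dictated by \eqref{minimal coset representative}, the periodicity $w_A(s+r)=w_A(s)+r$, and the compatibility with both partitions $\{R_k^\la\}$ and $\{R_l^\mu\}$; once this bookkeeping is set up correctly the verification is routine.
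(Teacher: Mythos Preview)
The paper does not give a proof of this lemma; it simply cites \cite[7.4]{VV99} and \cite[9.2]{DF09}. Your outline is the standard argument used in the finite type $A$ case (as in \cite[Th.~4.15]{DDPW}), adapted to the affine setting, and is essentially correct.

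Two small points need attention. In step~2 your displayed equation is stated as an equality of sets, but it is only an equality of cardinalities: applying $x^{-1}$ to $R_k^\la\cap(xwy)R_l^\mu$ gives $(x^{-1}R_k^\la)\cap(wyR_l^\mu)$, which equals $R_k^\la\cap wR_l^\mu$ as a set, but this is $x^{-1}$ applied to the original intersection, not the intersection itself. Since only $|R_k^\la\cap wR_l^\mu|$ matters, this is harmless.

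More substantively, in step~3 you partition $R_l^\mu$ into subblocks of sizes $a_{1,l},\ldots,a_{n,l}$. In the affine case the row index $k$ runs over all of $\mbz$, not just $\{1,\ldots,n\}$: for fixed $l$ the nonzero entries $a_{k,l}$ can occur at arbitrary integers $k$ (only finitely many, since $\sum_k a_{k,l}=\mu_l$). The correct construction orders those $k$ with $a_{k,l}\neq 0$ as $k_1<k_2<\cdots$ and partitions $R_l^\mu$ into consecutive subblocks of sizes $a_{k_1,l},a_{k_2,l},\ldots$; similarly for each $R_k^\la$ one partitions according to the nonzero $a_{k,l}$ with $l$ increasing. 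With this fix your order-preserving bijection $w_A$ is well defined, and the periodicity $a_{k+n,l+n}=a_{k,l}$ together with $R_{j+n}^\nu=R_j^\nu+r$ then forces $w_A(s+r)=w_A(s)+r$ as you say. The rest of your argument goes through.
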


We remark that, for $w'\in\fS_\la w\fS_\mu$, the equality
$|R_k^\la\cap w'R_l^\mu|=|R_k^\la\cap wR_l^\mu|$ holds. Hence, the
matrix $A$ is completely determined by $\la,\mu$ and the double
coset $\fS_\la w\fS_\mu$, and is independent of the selection of the
representative. Moreover, if $\jmath_\vtg(\la,d,\mu)=A$, then
$\jmath_\vtg(\mu,d^{-1},\la)=\tA$, the transpose of $A$.


\begin{Coro}\label{double coset}
For $\la,\mu\in\afLanr$ and $d\in\msD_{\la,\mu}^\vtg$ with
$\jmath_\vtg(\la,d,\mu)=A\in\afThnr$, let $\nu^{(i)}$ be the composition of $\la_i$ obtained by
 removing all zeros from row $i$ of $A$. Then $\frak S_\la
\cap d\frak S_\mu d^{-1}=\frak S_\nu$, where
$\nu=(\nu^{(1)},\ldots,\nu^{(n)})$.
\end{Coro}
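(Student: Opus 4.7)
The plan is to mimic the classical (non-affine) argument: realize $\fS_\la\cap d\fS_\mu d^{-1}$ as the setwise stabilizer in $\affSr$ of the common refinement of the two partitions $\{R_k^\la\}_{k\in\mbz}$ and $\{dR_l^\mu\}_{l\in\mbz}$ of $\mbz$, and then show that this refinement cuts each $R_i^\la$ (for $1\leq i\leq n$) into consecutive sub-intervals whose sizes read off the nonzero entries of row $i$ of $A$.

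To begin, I would note that a permutation $w\in\affSr$ lies in $\fS_\la$ iff $w(R_k^\la)=R_k^\la$ for all $k\in\mbz$, and $w\in d\fS_\mu d^{-1}$ iff $d^{-1}wd$ preserves each $R_l^\mu$, iff $w$ preserves each $dR_l^\mu$. Thus $w\in\fS_\la\cap d\fS_\mu d^{-1}$ iff $w$ stabilizes every intersection $R_k^\la\cap dR_l^\mu$ setwise. By \eqref{Ajmath} these intersections have cardinalities $a_{k,l}$, and for each fixed $1\leq i\leq n$ only finitely many $l\in\mbz$ give nonempty blocks. The key geometric fact to establish is that the nonzero pieces $R_i^\la\cap dR_l^\mu$ appear as consecutive sub-intervals of the contiguous integer interval $R_i^\la$ when ordered by increasing $l$.

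For the key step, I would introduce the map $\phi_i:R_i^\la\to\mbz$ defined by $\phi_i(j)=l$ whenever $d^{-1}(j)\in R_l^\mu$, and aim to show that $\phi_i$ is weakly increasing. The minimality hypothesis $d\in\afmsD_\la$ is the crucial input: applying the characterization in \eqref{minimal coset representative} with $d$ there replaced by $d^{-1}$ translates $d\in\afmsD_\la$ into the statement that $d^{-1}$ is order-preserving on each block $R_i^\la$. Combined with the fact that the blocks $\{R_l^\mu\}_{l\in\mbz}$ partition $\mbz$ into consecutive integer intervals ordered by $l$, this forces $j<j'$ in $R_i^\la$ to satisfy $d^{-1}(j)<d^{-1}(j')$, and therefore $\phi_i(j)\leq\phi_i(j')$, as desired.

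Once the weak monotonicity is in hand, the preimages $\phi_i^{-1}(l)$ (for $l$ with $a_{i,l}\neq 0$, listed in increasing order of $l$) are consecutive subintervals of $R_i^\la$ of sizes $a_{i,l}$, and concatenating over $i=1,\ldots,n$ describes $\{1,\ldots,r\}$ as a disjoint union of consecutive blocks whose lengths form exactly the composition $\nu=(\nu^{(1)},\ldots,\nu^{(n)})$. The setwise stabilizer of this block structure, extended to all of $\mbz$ by the periodicity inherent in $\affSr$, is then by definition the standard Young subgroup $\fS_\nu\subseteq\fS_r\subseteq\affSr$. The main obstacle I expect is not conceptual but notational: carefully unwinding the $d$-versus-$d^{-1}$ conventions in \eqref{minimal coset representative} so as to correctly extract the order-preserving property of $d^{-1}$ on each $R_i^\la$, and verifying that the periodic extension of the stabilizer inside $\affSr$ does coincide with the full intersection $\fS_\la\cap d\fS_\mu d^{-1}$ rather than merely with its image in $\fS_r$.
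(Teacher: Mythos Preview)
Your proposal is correct and follows essentially the same approach as the paper. The paper phrases the argument via the sequence $I_\mu=\bfi_\mu^\vtg$ and identifies $\fS_\mu=\mathrm{Stab}_{\affSr}(I_\mu)$, so that $\fS_\la\cap d\fS_\mu d^{-1}=\mathrm{Stab}_{\fS_\la}(I_\mu d^{-1})$, and then reads off $I_\mu d^{-1}$ restricted to each $R_i^\la$ as the weakly increasing sequence $I_i$ using exactly the monotonicity of $d^{-1}$ on $R_i^\la$ coming from \eqref{minimal coset representative}; your block-partition formulation is the same argument in set-theoretic rather than sequence language, and your worry about the periodic extension is a non-issue since $\fS_\la\cap d\fS_\mu d^{-1}\subseteq\fS_\la\subseteq\fS_r$ already.
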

\begin{proof}
Let $\bfi^\vtg_\mu=(j_s)_{s\in\mbz}\in\afInr$. Then $j_s=k$ for all $s\in
R_k^\mu$ and $k\in\mbz$. Thus,  $l\in R_i^\la$ and $d^{-1}(l)\in R_j^\mu\iff l\in R_i^\la\cap dR_j^\mu$.  For $1\leq i\leq n$, if
$$I_i:=(\underbrace{k,\ldots,k}_{a_{i,k}})_{k\in\mbz}
=(\ldots,\underbrace{1,\ldots,1}_{a_{i,1}},\underbrace{2,\ldots,2}_{a_{i,2}},
\ldots,\underbrace{n,\ldots,n}_{a_{i,n}},\ldots)\in \mbz^{\la_i},$$
then, with the notation used in \eqref{set R}, \eqref{minimal coset representative} together with Lemma \ref{the map jmath}
implies $I_i=(j_{d^{-1}(\la_{0,i-1}+1)},\ldots,j_{d^{-1}(\la_{0,i-1}+\la_i)})$. Hence,
$(j_{d^{-1}(1)},\ldots,j_{d^{-1}(r)})=(I_1,\ldots,I_n)$. Since $\frak
S_\mu=\text{Stab}_{\affSr}(I_\mu)$, it follows that $\fS_\la\cap d\frak S_\mu
d^{-1}=\fS_\la\cap \text{Stab}_{\affSr}(I_\mu d^{-1})=\text{Stab}_{\fS_\la}(I_\mu d^{-1})=\frak S_\nu$.
\end{proof}

\begin{Coro}\label{di=length}There is a bijective map
$$\jmath_\vtg^*:\afInr\lra \{(\la,d,\og)\mid d\in\afmsD_\la,\la\in\afLanr\},\qquad\bfi\longmapsto (\la,d,\og),$$
where $\bfi=\bfi_\la^\vtg d$. Moreover, if $d^+$ denotes a
representative $\fS_\la d$ with maximal length, then
$d_\bfi=|\text{\rm Inv}(\bfi)|=\ell(d^+)$.
\end{Coro}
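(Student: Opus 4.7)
The proof has two parts. For the bijection, I would reduce to the restricted form of Lemma \ref{the map jmath}. Since $\og=(\ldots,1,1,\ldots)$ has trivial Young subgroup $\fS_\og=\{e\}$, the set $\afmsD_\og$ coincides with $\affSr$ and so $\afmsD_{\la,\og}=\afmsD_\la$. Lemma \ref{the map jmath} therefore gives a bijection $\{(\la,d,\og):d\in\afmsD_\la,\,\la\in\afLanr\}\lra\Thnrr$, which composed with the inverse of $\bfi\mapsto A^\bfi$ from \eqref{afInr1} yields $\jmath_\vtg^*$. The only point to verify is that the composite sends $\bfi$ to $(\la,d,\og)$ with $\bfi=\bfi_\la^\vtg d$: writing $\bfi_\la^\vtg=(j_a)_{a\in\mbz}$ and using $R_l^\og=\{l\}$, formula \eqref{Ajmath} gives
$$(\jmath_\vtg(\la,d,\og))_{k,l}=|R_k^\la\cap dR_l^\og|=\delta_{k,\,j_{d(l)}},$$
which coincides with $A^{\bfi_\la^\vtg d}$ because $(\bfi_\la^\vtg d)_l=j_{d(l)}$.

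For the length identity, Lemma \ref{di} already provides $d_\bfi=|\text{\rm Inv}(\bfi)|$, so by \eqref{inversion=length} it remains to show $|\text{\rm Inv}(\bfi)|=|\text{\rm Inv}(d^+)|$. Since $\la\in\afLanr$, the Young subgroup $\fS_\la=\fS_{(\la_1,\ldots,\la_n)}$ is finite, and its longest element $w_{0,\la}$ lies in $\text{\rm Stab}_{\affSr}(\bfi_\la^\vtg)$; hence $\bfi=\bfi_\la^\vtg d=\bfi_\la^\vtg(w_{0,\la}d)=\bfi_\la^\vtg d^+$ and $i_s=j_{d^+(s)}$ for all $s$. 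The plan is then to establish the sharper set-theoretic equality $\text{\rm Inv}(\bfi)=\text{\rm Inv}(d^+)$, where in both sets pairs $(s,t)$ range over $1\leq s\leq r$, $s<t$.

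The main obstacle is the ``same-block'' contribution $i_s=i_t$, and here the minimum-length hypothesis $d\in\afmsD_\la$ is crucial. Because the intervals $R_m^\la$ are consecutive and indexed monotonically by $m$ and $\bfi_\la^\vtg$ is constant on each, one has $j_a\geq j_b$ iff $a$ lies in the same or a later block than $b$. The inclusion $\text{\rm Inv}(\bfi)\subseteq\text{\rm Inv}(d^+)$ in the case $i_s>i_t$ is then automatic, since distinct block labels force $d^+(s)>d^+(t)$; the converse case is equally immediate. In the delicate case $i_s=i_t$, both $d^+(s)$ and $d^+(t)$ lie in a common block $R_m^\la$ (as $w_{0,\la}$ preserves each $R_m^\la$ set-wise), and I would use that $d\in\afmsD_\la$ means $d^{-1}\alpha_{s_i}>0$ for every simple reflection $s_i\in\fS_\la$, i.e., $d^{-1}(i)<d^{-1}(i+1)$ whenever $i,i+1$ lie in a common block; this propagates to show that $s<t$ with $d(s),d(t)\in R_m^\la$ forces $d(s)<d(t)$, and since $w_{0,\la}$ reverses each block, $d^+(s)=w_{0,\la}(d(s))>w_{0,\la}(d(t))=d^+(t)$. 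The same block-vs.-cross-block dichotomy yields the reverse inclusion, completing the proof.
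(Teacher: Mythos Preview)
Your argument for the bijection is the same as the paper's, with the bonus that you actually verify the formula $\bfi=\bfi_\la^\vtg d$, which the paper takes for granted.

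For the length identity your approach is correct but organized differently from the paper's. The paper writes $\bfj=\bfi_\la^\vtg$, decomposes
\[
\text{Inv}(\bfi)=X_1\cup X_2,\qquad X_1=\{(s,t):j_{d(s)}>j_{d(t)}\},\quad X_2=\{(s,t):j_{d(s)}=j_{d(t)}\},
\]
shows $X_1=\text{Inv}(d)$ using \eqref{minimal coset representative}, and then invokes the separate appendix Lemma~\ref{w0la} to compute $|X_2|=\ell(w_{0,\la})$, concluding $d_\bfi=\ell(d)+\ell(w_{0,\la})=\ell(d^+)$. You instead prove the set-theoretic equality $\text{Inv}(\bfi)=\text{Inv}(d^+)$ directly: the cross-block case is immediate from the monotonicity of $\bfi_\la^\vtg$, and in the same-block case you use \eqref{minimal coset representative} to get $d(s)<d(t)$ and then the block-reversing property of $w_{0,\la}$ to flip this to $d^+(s)>d^+(t)$. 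This is a cleaner route, since it bypasses Lemma~\ref{w0la} entirely; the paper's decomposition buys an explicit identification of $|X_2|$ with $\ell(w_{0,\la})$, but that is not needed for the statement at hand.
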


\begin{proof} Clearly, $\jmath_\vtg^*$ is the composition of the bijection given in \eqref{afInr1} and the inverse
of $\jmath_\vtg$ given in \eqref{Tjmath}. We now prove the last statement.

Let $\sL=\{(s,t)\in\mbz^2\mid
1\leq s\leq r,\; s<t\}$ and $\bfj=\bfi_\la^\vtg$. Then
$$\text{\rm Inv}(\bfi)=\{(s,t)\in\sL\mid
j_{d(s)}\geq j_{d(t)}\}=X_1\cup X_2 \quad(\text{a disjoint
union}),$$  where $X_1=\{(s,t)\in\sL\mid j_{d(s)}> j_{d(t)}\}$ and
$X_2=\{(s,t)\in\sL\mid j_{d(s)}= j_{d(t)}\}$. Since $(s,t)\in X_2$
if and only if $d(s),d(t)\in R_k^\la$ for some $k\in\mbz$,
\eqref{minimal coset representative} forces $d(s)<d(t)$. Hence, if
$\affSr$ acts on $\mbz^2$ diagonally, then
$$\aligned
X_2&=\{(s,t)\in\sL\mid d(s)<d(t), d(s),d(t)\in R_k^\la\text{ for some }k\in\mbz\}\\
&=d^{-1}\{(s,t)\in\mbz^2\mid 1\leq d^{-1}(s)\leq r, s<t, s,t\in R_k^\la\text{ for some }k\in\mbz\}.\\
\endaligned$$
Thus, $|X_2|$ is the length of the longest element $w_{0,\la}$ in $\frak S_\la$ by Lemma \ref{w0la}.
Since, for $(s,t)\in\sL\backslash X_2$,
$j_{d(s)}>j_{d(t)}\iff d(s)>d(t)$, applying \eqref{minimal coset representative} again yields
$$\text{\rm Inv}(d)=\{(s,t)\in\sL\mid
d(s)>d(t)\}=\{(s,t)\in\sL\backslash X_2\mid
d(s)>d(t)\}=X_1.$$
Consequently, $d_\bfi=|X_1|+|X_2|=\ell(d)+\ell(w_{0,\la})=\ell(d^+)$, as required.
\end{proof}

We record the following generalization (to affine symmetric groups) of a well-known result for Coxeter groups.

\begin{Lem}\label{decomposition of double coset}
Let $\la,\mu\in\afLanr$ and $d\in\afmsD_{\la,\mu}$. Then
$d^{-1}\fS_\la d\cap\fS_\mu$ is a standard Young subgroup of
$\fS_\mu$. Moreover, each element $w\in\fS_\la d\fS_\mu$ can be
written uniquely as a product $w=w_1dw_2$ with $w_1\in\fS_\la$ and
$w_2\in\afmsD_\nu\cap\fS_\mu$, where $\nu\in\afLanr$ is defined by
$\fS_\nu=d^{-1}\fS_\la d\cap\fS_\mu$, and the equality
$\ell(w)=\ell(w_1)+\ell(d)+\ell(w_2)$ holds.
\end{Lem}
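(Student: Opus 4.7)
The plan is to mimic the classical Coxeter-group argument but to verify carefully that each step transfers to the affine setting $\affSr\cong\langle\rho\rangle\ltimes W$ via the length formula \eqref{inversion=length} and the structural results in \S3.2.

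First, I would establish that $H:=d^{-1}\fS_\la d\cap\fS_\mu$ is a standard Young subgroup. Since $d\in\afmsD_{\la,\mu}$ forces $d^{-1}\in\afmsD_{\mu,\la}$ with $\jmath_\vtg(\mu,d^{-1},\la)=\tA$, where $A=\jmath_\vtg(\la,d,\mu)$, Corollary \ref{double coset} applied to $(\mu,d^{-1},\la)$ yields $H=\fS_\nu$, where the parts of $\nu$ are the nonzero entries read off row by row from $\tA$ (equivalently, column by column from $A$). In particular, $\fS_\nu\subseteq\fS_\mu$, and the distinguished right coset representatives of $\fS_\nu$ in $\fS_\mu$ form precisely the set $\afmsD_\nu\cap\fS_\mu$.

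Second, I would prove existence and uniqueness of the factorization. Given $w\in\fS_\la d\fS_\mu$, write $w=xdy$ with $x\in\fS_\la$, $y\in\fS_\mu$. Decomposing $\fS_\mu=\bigsqcup_{w_2\in\afmsD_\nu\cap\fS_\mu}\fS_\nu w_2$, we uniquely have $y=zw_2$ with $z\in\fS_\nu$. Since $z\in d^{-1}\fS_\la d$, the element $z':=dzd^{-1}$ lies in $\fS_\la$; thus $w=x z' d w_2=w_1 d w_2$ with $w_1=xz'\in\fS_\la$. For uniqueness, if $w_1 d w_2=w_1' d w_2'$ with $w_1,w_1'\in\fS_\la$ and $w_2,w_2'\in\afmsD_\nu\cap\fS_\mu$, then setting $u=w_1^{-1}w_1'$ and $v=w_2(w_2')^{-1}$ we obtain $ud=dv$, so $v=d^{-1}ud\in d^{-1}\fS_\la d\cap\fS_\mu=\fS_\nu$; hence $w_2\in\fS_\nu w_2'$, which together with $w_2,w_2'\in\afmsD_\nu$ forces $w_2=w_2'$ and then $w_1=w_1'$.

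Third, I would verify the length identity $\ell(w)=\ell(w_1)+\ell(d)+\ell(w_2)$. The condition $d\in\afmsD_\la$ immediately gives $\ell(w_1 d)=\ell(w_1)+\ell(d)$, so the whole identity reduces to the additivity
\[
\ell((w_1 d)w_2)=\ell(w_1 d)+\ell(w_2).
\]
The main obstacle is establishing this last equality: one must rule out cancellation in the product $(w_1d)\cdot w_2$. My plan is to translate the claim into the inversion-count language of \eqref{inversion=length} (valid on all of $\affSr$ by Proposition \ref{affine symmetric groups}) and then exploit the combinatorics of $A=\jmath_\vtg(\la,d,\mu)$ in Lemma \ref{the map jmath}. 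Concretely, for each simple reflection $s_i\in\fS_\mu$ that is a right descent of $w_2$, I would show that the indices $(i,i+1)$ cannot lie in a single block of $d^{-1}R_k^\la$ inside $R_l^\mu$ (otherwise $s_i\in\fS_\nu$, contradicting $w_2\in\afmsD_\nu$), whence $(w_1d)(s_i)$ has strictly greater inversion count than $w_1d$. Iterating along a reduced expression of $w_2$ yields $\ell(w_1 d w_2)=\ell(w_1d)+\ell(w_2)$. Alternatively, one can embed everything into a big finite Coxeter quotient by truncating the $\langle\rho\rangle$-component and invoking the classical Deodhar/Solomon double-coset factorization, then lift back by the semidirect decomposition $\affSr=\langle\rho\rangle\ltimes W$, noting that $\rho$ has length $0$ and $\rho\fS_\mu\rho^{-1}$ is again a standard Young subgroup.
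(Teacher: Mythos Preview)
The paper does not prove this lemma; the sentence preceding it simply records it as the affine extension of a well-known Coxeter-group fact, and the text moves on immediately. Your first two paragraphs (invoking Corollary~\ref{double coset} for $\fS_\nu$, then the standard coset factorization for existence and uniqueness) are correct. The gap is in your length argument.

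Two issues arise there. First, a convention slip: in this paper $\afmsD_\nu$ consists of shortest \emph{right} coset representatives, so $w_2\in\afmsD_\nu$ forbids \emph{left} descents of $w_2$ in $\fS_\nu$, not right ones; your phrase ``right descent of $w_2$'' yields no contradiction. Second, and more seriously, even after switching to left descents the iteration breaks: if $w_2=s_{i_1}\cdots s_{i_k}$ is reduced, then $s_{i_1}\notin\fS_\nu$ does give $\ell((w_1d)s_{i_1})=\ell(w_1d)+1$, but the tail $s_{i_2}\cdots s_{i_k}$ need not lie in $\afmsD_\nu$ (e.g., $\fS_\nu=\langle s_1\rangle\subset\fS_3=\fS_\mu$, $w_2=s_2s_1$), so you cannot recurse. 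The clean repair is to prove $dw_2\in\afmsD_\la$ directly from \eqref{minimal coset representative}: for $j,j+1\in R_k^\la$ one has $d^{-1}(j)<d^{-1}(j+1)$ since $d\in\afmsD_\la$; if these two values lie in distinct $\mu$-blocks then $w_2^{-1}\in\fS_\mu$ preserves their order, while if both lie in some $R_l^\mu$ then (because $d\in\afmsD_\mu^{-1}$ makes $d$ increasing on $R_l^\mu$) they sit in the interval $d^{-1}(R_k^\la)\cap R_l^\mu$, which is exactly a $\nu$-block, and $w_2\in\afmsD_\nu$ makes $w_2^{-1}$ increasing there. This gives $\ell(w_1dw_2)=\ell(w_1)+\ell(dw_2)=\ell(w_1)+\ell(d)+\ell(w_2)$. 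Your alternative via $d=\rho^kd'$ also works once phrased correctly: no ``finite Coxeter quotient'' is needed---since $\ell(\rho)=0$ and conjugation by $\rho$ permutes the simple reflections, writing $d=\rho^kd'$ with $d'\in W$ reduces the statement verbatim to the classical result for parabolic subgroups of the Coxeter group $W$.
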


Following \cite{Jones94}, the extended affine Hecke algebra
$\sH(\affSr)$ over $\sZ$ \index{affine Hecke algebra!$\sim$ of
$\affSr$, $\sH(\affSr)$} is defined to be the algebra generated by
$T_{s_i}$ ($1\leq i\leq r$), $T_{\rho}^{\pm 1}$ with the following
relations:
\begin{equation*}
\aligned
&T_{s_i}^2=(\up^2-1)T_{s_i}+\up^2,\\
&T_{s_i}T_{s_j}=T_{s_j}T_{s_i}\quad\text{($i-j\not\equiv\pm 1\nmod r$)},\\
&T_{s_i}T_{s_{j}}T_{s_i}=T_{s_{j}}T_{s_i}T_{s_{j}}\quad\text{($i-j\equiv\pm 1\nmod r$ and $r\geq 3$)},\\
&T_{\rho}T_{\rho}^{-1}=T_{\rho}^{-1}T_{\rho}=1\\
&T_{\rho}T_{s_i}=T_{s_{i+1}}T_{\rho},
\endaligned
\end{equation*}
where $T_{s_{r+1}}=T_{s_1}$. This algebra has a $\sZ$-basis $\{T_w\}_{w\in\affSr}$, where
$T_w=T_{s_{i_1}}\cdots T_{s_{i_m}}$ if $w=s_{i_1}\cdots s_{i_m}$ is reduced.

The following result is well-known due to Iwahori--Matsumoto \cite{IM}. Recall the algebra
$\afHr$ defined in \S3.1 and the isomorphism given in \eqref{iso-Schur-Function}.
\begin{Lem}\label{Hecke-Alg-geom-construction}
There is a $\sZ$-algebra isomorphism $\sH(\affSr)\cong\afHr$ whose specialization of $\up$ to $\sqrt{q}$ gives a $\mbc$-algebra isomorphism
$$\sH(\affSr)\otimes \mbc\cong\sHq.$$
\end{Lem}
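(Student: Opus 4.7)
The plan is to construct an explicit $\sZ$-algebra homomorphism $\Phi:\sH(\affSr)\to\afHr$ and prove it is a bijection by a generators-and-relations argument. First, using the bijection $\jmath_\vtg:\{(\og,w,\og)\mid w\in\affSr\}\to\afThrr_\og$ from Lemma \ref{the map jmath}, I would label the basis of $\afHr$ by elements of $\affSr$: for $w\in\affSr$, set $A_w:=\jmath_\vtg(\og,w,\og)$, so that $\{e_{A_w}\}_{w\in\affSr}$ is a $\sZ$-basis of $\afHr$. Define
$$\Phi:\sH(\affSr)\lra\afHr,\qquad T_{s_i}\longmapsto e_{A_{s_i}},\quad T_\rho^{\pm 1}\longmapsto e_{A_{\rho^{\pm 1}}}.$$

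Next I would verify that $\Phi$ respects all defining relations. The hinge of the verification is the quadratic relation $T_{s_i}^2=(\up^2-1)T_{s_i}+\up^2$; this translates into the identity $e_{A_{s_i}}^2=(\up^2-1)e_{A_{s_i}}+\up^2 e_{A_e}$ in $\afHr$, where $A_e$ is the identity permutation matrix (which one checks is the unit of $\afHr$). I would establish this by passing through the identification \eqref{iso-Schur-Function}: after specializing $\up\mapsto\sqrt{q}$, the convolution $\chi_{A_{s_i}}\ast\chi_{A_{s_i}}$ is computed by counting intermediate complete cyclic flags $\bfL'$; for a fixed pair $(\bfL,\bfL'')$ of relative position $e$ (respectively $s_i$), only the $i$-th component of $\bfL'$ varies within the two-dimensional space $L_{i+1}/L_{i-1}$, giving exactly $q$ (respectively $q-1$) choices. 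Hence $\chi_{A_{s_i}}\ast\chi_{A_{s_i}}=q\,\chi_{A_e}+(q-1)\chi_{A_{s_i}}$ over every finite field, and since the structure constants $p_{A,A',A''}$ are polynomials in $\up^2$ by \cite[1.8]{Lu99}, agreement at all prime powers forces the identity in $\afHr$ over $\sZ$. The braid and commutation relations among the $T_{s_i}$ follow from the parallel length-additive rule $e_{A_{w'}}e_{A_{s_i}}=e_{A_{w's_i}}$ when $\ell(w's_i)>\ell(w')$ (proved by the same counting, which yields a unique intermediate flag), applied to both sides of a common reduced expression. The relation $T_\rho T_{s_i}=T_{s_{i+1}}T_\rho$ reduces to the matrix identity $A_{\rho s_i}=A_{s_{i+1}\rho}$, which is immediate from the identity $s_{i+1}\rho=\rho s_i$ in $\affSr$ recorded after Proposition \ref{affine symmetric groups}.

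Once $\Phi$ is a well-defined algebra homomorphism, an induction on $\ell(w)$ using the length-additive product rule together with $e_{A_{w'}}e_{A_\rho^{\pm 1}}=e_{A_{w'\rho^{\pm 1}}}$ gives $\Phi(T_w)=e_{A_w}$ for every $w\in\affSr$. Since $\{T_w\mid w\in\affSr\}$ is a $\sZ$-basis of $\sH(\affSr)$ by Iwahori--Matsumoto and $\{e_{A_w}\mid w\in\affSr\}$ is a $\sZ$-basis of $\afHr$, the map $\Phi$ carries basis to basis and is therefore a $\sZ$-algebra isomorphism. The second assertion then follows at once: tensoring $\Phi$ with $\mbc$ along the specialization $\sZ\to\mbc$, $\up\mapsto\sqrt{q}$, and composing with the isomorphism $\afHr\otimes\mbc\overset\sim\to\sHq$ from \eqref{iso-Schur-Function} produces the required $\mbc$-algebra isomorphism.

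The main obstacle is upgrading the elementary finite-field counts into polynomial identities in $\sZ$; that is, identifying the structure constants $p_{A_{s_i},A_{s_i},A_e}$ and $p_{A_{s_i},A_{s_i},A_{s_i}}$ as exactly $\up^2$ and $\up^2-1$ rather than merely ``$q$ and $q-1$ for infinitely many $q$.'' This step rests crucially on the fact invoked in the definition of $\afHr$ that these constants are a priori polynomials in $\up^2$, so that their values at all prime powers determine them uniquely. Once this single quadratic relation and the length-additive product formula are verified at the generic level, every other Hecke relation is a formal consequence.
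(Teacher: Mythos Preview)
Your proposal is correct and constitutes a complete direct proof. The paper itself does not prove this lemma at all: it simply records the statement as ``well-known due to Iwahori--Matsumoto \cite{IM}'' and moves on. Your approach---defining $\Phi$ on generators, verifying the Hecke relations in $\afHr$ by flag-counting over each $\field_q$ and then lifting to identities in $\sZ$ via the polynomiality of the structure constants $p_{A,A',A''}$, and finally matching the basis $\{T_w\}$ to $\{e_{A_w}\}$ by induction on length---is exactly the standard argument underlying the cited result, carried out in the present geometric setup. One small point worth making explicit in a written-up version: the rule $e_{A_{w'}}e_{A_{\rho^{\pm1}}}=e_{A_{w'\rho^{\pm1}}}$ (and its left analogue) deserves its own one-line justification, since $\rho$ has length zero and the ``length-additive'' counting argument for it amounts to observing that the orbit $\sO_{A_\rho}$ is the graph of a $G$-equivariant bijection on $\scrX$, so convolution by $\chi_{A_\rho}$ simply permutes characteristic functions.
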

Thus, {\it we will identify $\sH(\affSr)$ with $\afHr$ in the sequel.}

Let $\Hr=\sH(\fS_r)$ be the subalgebra of $\afHr$ generated by $T_{s_i}$
($1\leq i<r$). Then $\Hr$ is the Hecke algebra of the symmetric
group $\fSr$. We finally set
$$\afbfHr=\afHr\ot_{\sZ}\mbq(\up)\;\;\text{and}\;\;
\bsHr=\Hr\ot_{\sZ}\mbq(\up).$$\index{affine Hecke algebra! $\sim$
over $\mbq(\up)$, $\afbfHr$}

For each $\la\in\afLanr$, let
$x_\la=\sum_{w\in\fS_\la}T_w\in\Hr$ and define
$$\sS^{_\sH}_\vtg(n,r):=\End_{\afHr}\biggl(\bop_{\nu\in\La_\vtg(n,r)}x_\nu\afHr\biggr).$$
\index{affine quantum Schur algebra!the Hecke algebra definition,
$\sS^{_\sH}_\vtg(n,r)$} For $\la,\mu\in\afLanr$ and
$d\in\afmsD_{\la,\mu}$, define
$\phi_{\la,\mu}^d\in\sS^{_\sH}_\vtg(n,r)$ as follows:
\begin{equation}\label{def of standard basis}
\phi_{\la,\mu}^d(x_\nu h)=\dt_{\mu\nu}\sum_{w\in\fS_\la
d\fS_\mu}T_wh
\end{equation}
where $\nu\in\afLanr$ and $h\in\afHr$. Then the set
$\{\phi_{\la,\mu}^d\}$ forms a basis for $\sS^{_\sH}_\vtg(n,r)$.

\begin{Rems}\label{base change for afSr}
(1) We point out that, as a natural generalization of the $q$-Schur
algebra given in \cite{DJ89,DJ91}, the endomorphism algebra
$\sS^{_\sH}_\vtg(n,r)$ is called the {\it affine $q$-Schur algebra}
and the basis $\{\phi_{\la,\mu}^d\}$ was constructed by R. Green
\cite{Gr99}.

(2)
Let $R$ be a commutative ring with 1 which is a $\sZ$-algebra.
Then, by base change to $R$, a similar basis can be defined for
$$\sS^{_\sH}_\vtg(n,r;R)=\End_{\afHr_R}\biggl(\bop_{\la\in\afLanr}x_\la\afHr_R\biggr).$$ As a result of this, the
endomorphism algebra $\sS^{_\sH}_\vtg(n,r)$ satisfies the base change property:
$\sS^{_\sH}_\vtg(n,r;R)\cong\sS^{_\sH}_\vtg(n,r)_R.$ This property has already been mentioned for $R=\mbz[\sqrt q,\sqrt{q}^{-1}]$ in Remark \ref{implicit use bcp}.
\end{Rems}
 Combining the base change property and \cite[7.4]{VV99} gives the following result which extends
the isomorphism given in Lemma
\ref{Hecke-Alg-geom-construction} to affine quantum Schur
algebras.

\begin{Prop} \label{Green defn=geom defn}
The bijection $\jmath_\vtg$ given in Lemma \ref{the map jmath} induces a $\sZ$-algebra isomorphism
$${\mathfrak h}:\afSr\stackrel{\sim}{\lra}\sS^{_\sH}_\vtg(n,r),\;
e_A\lm \phi_{\la,\mu}^d$$ for all $A\in\afThnr$ with
$A=\jmath_\vtg(\la,d,\mu)$, where $\la,\mu\in\afLanr$ and
$d\in\afmsD_{\la,\mu}$. Moreover, regarding
$\bop_{\la\in\afLanr}x_\la\afHr$ as an $\afSr$-module via $\mathfrak
h$, we obtain an $\afSr$-$\afHr$-bimodule isomorphism
$$ev:\sT_\vtg(n,r)\overset\sim\lra\bop_{\la\in\afLanr}x_\la\afHr,\,\,\, e_A\longmapsto x_\la T_d,$$
for all $A\in\Thnrr$ with $A=\jmath_\vtg(\la,d,\og)$.
\end{Prop}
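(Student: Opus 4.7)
My plan is to work geometrically and reduce the claim to its $\mbc$-version via base change. First I would fix a prime power $q$, specialize $\up$ to $\sqrt q$, and use \eqref{iso-Schur-Function} together with Lemma \ref{Hecke-Alg-geom-construction} to identify $\afSr\otimes\mbc=\sSq$ and $\afHr\otimes\mbc=\sHq$. The permutation $G$-module $\mbc\scrY$ decomposes as $\bop_{\la\in\afLanr}\mathrm{ind}_{G_\la}^G\mbc$, where $G_\la$ is the stabilizer of a fixed cyclic flag in $\afFn_{,\la}$. For each $\la$ the double coset decomposition $G_\la\backslash G/B \leftrightarrow \afmsD_\la$ (with $B$ the Iwahori subgroup stabilizing a chosen complete flag) matches $\mathrm{Hom}_{\mbc G}(\mbc\scrX,\mathrm{ind}_{G_\la}^G\mbc)$, as a right $\sHq$-module, with $x_\la\sHq$. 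Taking $G$-endomorphisms of $\mbc\scrY$ then yields a $\mbc$-algebra isomorphism $\sSq \cong \sS^{_\sH}_\vtg(n,r)\otimes\mbc$; this is essentially the content of \cite[7.4]{VV99}.

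The central combinatorial step is to verify that, under this identification, the characteristic function $\chi_A$ of the orbit $\sO_A$ attached to $A=\jmath_\vtg(\la,d,\mu)$ is sent to $\phi^d_{\la,\mu}$. This reduces to checking that convolution with $\chi_A$ applied to the image of $x_\mu$ produces the sum $\sum_{w\in\fS_\la d\fS_\mu}T_w$. The requisite count is controlled by the orbit-entry formula \eqref{Ajmath} together with the double-coset refinement $\fS_\la d\fS_\mu = \bigsqcup_{w_2\in\afmsD_\nu\cap\fS_\mu} \fS_\la\, d\, w_2$ of Lemma \ref{decomposition of double coset}; together these produce precisely the same family of terms on both sides. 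The bimodule statement for $ev$ is established in parallel with $\scrX$ replacing one copy of $\scrY$, and with the orbit $\sO_A$ for $A\in\Thnrr$ corresponding to $x_\la T_d$.

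The descent from $\mbc$ to $\sZ$ is then automatic. Both $\afSr$ and $\sS^{_\sH}_\vtg(n,r)$ are free $\sZ$-modules on bases $\{e_A\}$ and $\{\phi^d_{\la,\mu}\}$ corresponding under $\jmath_\vtg$ (Lemma \ref{the map jmath}), and the structure constants on each side are Laurent polynomials in $\up$: on $\afSr$ by Definition \ref{generic affine quantum Schur algebra}, and on $\sS^{_\sH}_\vtg(n,r)$ because composition of the $\phi^d_{\la,\mu}$ is computed via the $\sZ$-structure of $\afHr$. Two elements of $\sZ$ that agree after specialization at $\up^2 = q$ for all but finitely many prime powers $q$ must be equal, so the $\mbc$-algebra isomorphism lifts uniformly to a $\sZ$-algebra isomorphism, and the same polynomiality argument delivers the bimodule isomorphism $ev$. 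The hard part will be not the base-change step but the orbit-counting argument identifying $\chi_A$ with $\phi^d_{\la,\mu}$; this is where the detailed geometric picture of cyclic flags and lattices enters, and where \cite[7.4]{VV99} does the real work.
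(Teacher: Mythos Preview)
Your proposal is correct and follows the same approach the paper sketches: the paper simply says the result follows by combining the base change property (Remark \ref{base change for afSr}(2)) with \cite[7.4]{VV99}, and you have spelled out exactly this---establish the isomorphism at each specialization $\up=\sqrt q$ via the geometric/Hecke identification of \cite[7.4]{VV99}, then lift to $\sZ$ using that the structure constants on both sides are Laurent polynomials determined by their values at prime powers. Your identification of the orbit-counting step as the substantive part, deferred to \cite[7.4]{VV99}, matches the paper's treatment precisely.
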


Note that if we regard $\sT_\vtg(n,r)$ as a subset of
$\sS_\vtg(N,r)$ as in Lemma \ref{tsp}, the bimodule isomorphism is
simply the evaluation map.

Recall that, by removing the supscript $^\vtg$, the notation
$\mathscr D_{\la,\mu}$ denotes the shortest
$(\fS_\la,\fS_\mu)$-coset representatives in $\fS_r$. If we identify
$\afLanr$ with $\La(n,r)$ via \eqref{flat2}, we obtain the
following.

\begin{Coro}\label{QSA}
The subspace spanned by all $\phi_{\la,\mu}^d$ with
$\la,\mu\in\afLanr$ and $d\in\mathscr D_{\la,\mu}$ is a subalgebra
which is isomorphic to the quantum Schur algebra $\sS(n,r)$.
\end{Coro}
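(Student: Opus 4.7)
Proof plan.

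\textbf{Setup.} Let $T$ denote the subspace in question. Via the identification $\afLanr\cong\La(n,r)$ from \eqref{flat2}, the set $\mathscr D_{\la,\mu}\subseteq\fSr$ of shortest $(\fS_\la,\fS_\mu)$-double coset representatives coincides with $\fSr\cap\afmsD_{\la,\mu}$, since the length function on $\affSr$ restricts to that on $\fSr$. The quantum Schur algebra $\sS(n,r)=\End_{\Hr}\bigl(\bigoplus_{\la\in\La(n,r)}x_\la\Hr\bigr)$ admits a basis $\{\tilde\phi_{\la,\mu}^d\}$ defined in complete analogy with \eqref{def of standard basis} (replacing $\afHr$ by $\Hr$), and the plan is to show that the $\sZ$-linear bijection $\Psi:\sS(n,r)\to T$ sending $\tilde\phi_{\la,\mu}^d\mapsto\phi_{\la,\mu}^d$ is an algebra isomorphism.

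\textbf{Closure.} The heart of the argument is to verify that $T$ is closed under the multiplication of $\sS^{_\sH}_\vtg(n,r)$. Take $d\in\mathscr D_{\la,\mu}$ and $d'\in\mathscr D_{\mu,\nu}$. Applying Lemma~\ref{decomposition of double coset} to the double coset $\fS_\mu d'\fS_\nu$, every $w\in\fS_\mu d'\fS_\nu$ factors uniquely as $w_1d'w_2$ with $w_1\in\fS_\mu$, $w_2\in\afmsD_{\sigma}\cap\fS_\nu$ (where $\fS_{\sigma}:=d'^{-1}\fS_\mu d'\cap\fS_\nu$), and the lengths add. Hence $\phi_{\mu,\nu}^{d'}(x_\nu)=\sum_{w\in\fS_\mu d'\fS_\nu}T_w=x_\mu h$ with $h:=T_{d'}\sum_{w_2}T_{w_2}\in\Hr$. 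Applying $\phi_{\la,\mu}^d$ and using $\afHr$-linearity gives
\begin{equation*}
(\phi_{\la,\mu}^d\phi_{\mu,\nu}^{d'})(x_\nu)=\phi_{\la,\mu}^d(x_\mu)\cdot h=\Bigl(\sum_{u\in\fS_\la d\fS_\mu}T_u\Bigr)h\in\Hr,
\end{equation*}
since every factor lies in $\Hr$. Expanding the same product in the basis of $\sS^{_\sH}_\vtg(n,r)$ as $\sum_{d''\in\afmsD_{\la,\nu}}c_{d''}\phi_{\la,\nu}^{d''}$ and evaluating at $x_\nu$ yields a $\sZ$-linear combination of the elements $\sum_{w\in\fS_\la d''\fS_\nu}T_w$, whose supports in the basis $\{T_w\}_{w\in\affSr}$ of $\afHr$ are the pairwise disjoint double cosets $\fS_\la d''\fS_\nu$ and are therefore $\sZ$-linearly independent. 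For the combination to lie in $\Hr$, each $d''$ with $c_{d''}\neq 0$ must satisfy $\fS_\la d''\fS_\nu\subseteq\fSr$, forcing $d''\in\fSr\cap\afmsD_{\la,\nu}=\mathscr D_{\la,\nu}$, so $T$ is indeed closed under multiplication.

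\textbf{Isomorphism and obstacle.} The same computation carried out inside $\sS(n,r)$ yields $\tilde\phi_{\la,\mu}^d\tilde\phi_{\mu,\nu}^{d'}(x_\nu)=\bigl(\sum_{u\in\fS_\la d\fS_\mu}T_u\bigr)h$---the identical element of $\Hr$; matching coefficients against the linearly independent family $\{\sum_{w\in\fS_\la d''\fS_\nu}T_w\}_{d''\in\mathscr D_{\la,\nu}}$ shows that the structure constants in $\sS(n,r)$ and in $T$ agree, whence $\Psi$ is multiplicative, hence an algebra isomorphism. The main obstacle is the closure step: one must produce the explicit normal form $\phi_{\mu,\nu}^{d'}(x_\nu)=x_\mu h$ with $h\in\Hr$ and then exploit the linear independence of the affine family (indexed by all of $\afmsD_{\la,\nu}$, not merely by $\mathscr D_{\la,\nu}$) to rule out contributions from outside $\fSr$.
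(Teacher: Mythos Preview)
Your argument is correct. The paper states this corollary without proof, treating it as an immediate consequence of the preceding setup, so you are filling in details that the authors left to the reader.

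Your approach---computing $(\phi_{\la,\mu}^d\phi_{\mu,\nu}^{d'})(x_\nu)$ explicitly, observing it lies in $\Hr$, and then using the linear independence of the double coset sums $\sum_{w\in\fS_\la d''\fS_\nu}T_w$ over \emph{all} $d''\in\afmsD_{\la,\nu}$ to force $d''\in\fSr$---is clean and self-contained. A slightly more conceptual alternative, likely what the authors had in mind, is to note that $\afHr$ is free as a right $\Hr$-module (visible from the Bernstein presentation), so $\bigoplus_\la x_\la\afHr\cong\bigl(\bigoplus_\la x_\la\Hr\bigr)\otimes_{\Hr}\afHr$, and the base-change functor $-\otimes_{\Hr}\afHr$ yields an injective algebra homomorphism $\sS(n,r)\to\sS^{_\sH}_\vtg(n,r)$ sending $\tilde\phi_{\la,\mu}^d\mapsto\phi_{\la,\mu}^d$. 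This packages your computation into a single functorial statement, but your direct verification has the advantage of making the closure of $T$ completely explicit.
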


Using the evaluation isomorphism, we now describe an explicit action of $\afHr$ on $\sT_\vtg(n,r)$.
First, for $\la\in\afLanr$, $d\in\afmsD_\la$ and $1\leq k\leq r$,
\begin{equation}\label{action1-VV}
x_\la T_d \cdot T_{s_k}=
\begin{cases}
\up^2x_\la T_d,&\text{if $ds_k\not\in\afmsD_\la\, ($then $\ell(ds_k)>\ell(d))$};\\
x_\la T_{ds_k},&\text{if $\ell(d{s_k})>\ell(d)$ and
$ds_k\in\afmsD_\la$};\\
\up^2x_\la T_{ds_k}+(\up^2-1)x_\la T_d,&\text{if
$\ell(d{s_k})<\ell(d) ($then $ds_k\in\afmsD_\la)$}.
\end{cases}
\end{equation}
Second, by Corollary \ref{di=length}, we obtain
$ev([A^\bfi])=\up^{-\ell(d^+)}x_\la T_d$ if
$\jmath_\vtg^*(\bfi)=(\la,d,\og)$, where $d^+$ is a representative
of $\fS_\la d$ with maximal length. For $w\in\affSr$, let $\ti
T_w=v^{-\ell(w)}T_w$. Thus, for $\bfj=\bfi_\la^\vtg$ and $d$ as
above, \eqref{action1-VV} becomes
\begin{equation*}
[A^{\bfj d}]\ti T_{s_k}=
\begin{cases}
\up[A^{\bfj d}],&\text{if $ds_k\not\in\afmsD_\la\, ($then $\ell(ds_k)>\ell(d))$};\\
[A^{\bfj ds_k}],&\text{if $\ell(d{s_k})>\ell(d)$ and
$ds_k\in\afmsD_\la$};\\
[A^{\bfj ds_k}]+(\up-\up^{-1})[A^{\bfj d}],&\text{if
$\ell(d{s_k})<\ell(d) ($then $ds_k\in\afmsD_\la)$}.
\end{cases}
\end{equation*}
This together with Lemma \ref{di} gives the first part of the
following (and part (2) is clear from definition).

\begin{Prop} \label{action2-VV}
Let $\bfi\in\afInr$.
\begin{itemize}
\item[(1)] For any $1\leq k\leq r$, we have
\begin{equation*}
[A^{\bfi}]\ti T_{s_k}=
\begin{cases}
\up[A^{\bfi}],&\text{if $i_k=i_{k+1}$};\\
[A^{\bfi s_k}],&\text{if $i_k<i_{k+1}$};\\
[A^{\bfi s_k}]+(\up-\up^{-1})[A^{\bfi }],& \text{if $i_k>i_{k+1}$}.
\end{cases}
\end{equation*}
\item[(2)] $[A^{\bfi}]T_\rho=[A^{\bfi\rho}]$, where $\rho\in\fS_{\vtg,r}$ is the permutation  sending $i$ to $i+1$ for all $i\in\mbz$.
\end{itemize}
\end{Prop}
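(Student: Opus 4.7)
The plan is to translate everything through the evaluation isomorphism $ev:\sT_\vtg(n,r) \overset{\sim}{\lra} \bop_{\la\in\afLanr} x_\la\afHr$ of Proposition \ref{Green defn=geom defn} and apply the formulas in \eqref{action1-VV}. Fix $\bfi \in \afInr$ and write $\jmath_\vtg^*(\bfi) = (\la, d, \og)$, so that $\bfi = \bfi_\la^\vtg d$. By Corollary \ref{di=length}, $d_\bfi = \ell(d^+) = \ell(w_{0,\la}) + \ell(d)$, where $d^+ = w_{0,\la} d$ is the longest element of $\fS_\la d$ and $w_{0,\la}$ is the longest element of $\fS_\la$. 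Consequently,
$$
ev([A^\bfi]) \,=\, \up^{-d_\bfi}\, x_\la T_d \,=\, \up^{-\ell(w_{0,\la})-\ell(d)}\, x_\la T_d .
$$

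For part (1), the first key step is to match the three branches of \eqref{action1-VV} with the three cases $i_k = i_{k+1}$, $i_k < i_{k+1}$, $i_k > i_{k+1}$. Since $i_j = \bfi_\la^\vtg(d(j))$ records which $\la$-block $d(j)$ lies in, comparing $i_k$ and $i_{k+1}$ is the same as comparing the blocks containing $d(k)$ and $d(k+1)$. Using the characterization in \eqref{minimal coset representative} one checks:
(i) $i_k=i_{k+1}$ iff $d(k),d(k+1)$ lie in the same $\la$-block, which forces $ds_k\notin\afmsD_\la$ while $\ell(ds_k)>\ell(d)$;
(ii) $i_k<i_{k+1}$ iff $d(k),d(k+1)$ lie in strictly ascending blocks, so $ds_k\in\afmsD_\la$ and $\ell(ds_k)=\ell(d)+1$;
(iii) $i_k>i_{k+1}$ iff the blocks are in descending order, which is the length-reducing case, and in all cases $\bfi s_k = \bfi_\la^\vtg(ds_k)$ so that $\jmath_\vtg^*(\bfi s_k) = (\la, ds_k, \og)$ in cases (ii) and (iii), while $\jmath_\vtg^*(\bfi s_k)=(\la,d,\og)$ in case (i).

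With this alignment in hand, applying $\ti T_{s_k} = \up^{-1} T_{s_k}$ to $ev([A^\bfi])$ and using \eqref{action1-VV} is essentially bookkeeping. For instance, in case (i),
$$
ev([A^\bfi])\,\ti T_{s_k}
\,=\, \up^{-\ell(w_{0,\la})-\ell(d)-1}\cdot \up^{2}\, x_\la T_d
\,=\, \up\cdot ev([A^\bfi]),
$$
giving $[A^\bfi]\ti T_{s_k}=\up[A^\bfi]$. In case (ii), the length increases by $1$, so the factor $\up^{-\ell(w_{0,\la})-\ell(d)-1} x_\la T_{ds_k}$ equals $ev([A^{\bfi s_k}])$; in case (iii) one gets $\up^{-\ell(w_{0,\la})-\ell(d)-1}\bigl(\up^2 x_\la T_{ds_k}+(\up^2-1)x_\la T_d\bigr)$, which under $ev^{-1}$ becomes $[A^{\bfi s_k}]+(\up-\up^{-1})[A^\bfi]$ after using $\ell(ds_k)=\ell(d)-1$. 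For part (2), observe that $\rho$ is of length zero in the extended length function on $\affSr = \langle\rho\rangle\ltimes W$, so $d\rho\in\afmsD_\la$ whenever $d\in\afmsD_\la$, and $d_{\bfi\rho}=d_\bfi$. Since $x_\la T_d T_\rho = x_\la T_{d\rho}$, applying $ev^{-1}$ immediately yields $[A^\bfi]T_\rho=[A^{\bfi\rho}]$.

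The main obstacle is the combinatorial matching in part (1): verifying that the three branches of \eqref{action1-VV} correspond exactly to the three trichotomy cases for $(i_k,i_{k+1})$. This rests on the characterization \eqref{minimal coset representative} of $\afmsD_\la^{-1}$ together with the block structure of $\bfi_\la^\vtg$, and in particular on the fact that $d(k),d(k+1)$ in the same $\la$-block prevents $ds_k$ from being a shortest coset representative despite the length increase. Once this correspondence is established, the length computations for the exponents of $\up$ are routine and the statement falls out.
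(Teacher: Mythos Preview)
Your proposal is correct and follows essentially the same route as the paper: translating through $ev$ using Corollary~\ref{di=length}, rewriting \eqref{action1-VV} in terms of $\ti T_{s_k}$, and then matching the three branches to the trichotomy on $(i_k,i_{k+1})$ via the block description of $\bfi_\la^\vtg$ and \eqref{minimal coset representative}. The paper records this computation just above the statement of the Proposition and appeals to Lemma~\ref{di} for the exponents; your write-up simply makes the case-matching (the ``main obstacle'' you identify) explicit, which is exactly the content the paper leaves to the reader.
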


\section{The tensor space interpretation}

We now interpret the right $\afHr$-module $\sT_\vtg(n,r)$ in
terms of the tensor space, following \cite{VV99}.

The Hecke algebra $\sH(\affSr)$ admits the so-called {\it Bernstein
presentation}\index{Bernstein presentation} which consists of
generators
$$T_i:=T_{s_i},\quad X_j:=\ti T_{\bse_1+\cdots+\bse_{j-1}}\ti T_{\bse_1+\cdots+\bse_{j}}^{-1}(\text{$i=1,\ldots,r-1$, $j=1,\ldots,r$}),$$
 and relations
$$\aligned
 & (T_i+1)(T_i-\up^2)=0,\\
 & T_iT_{i+1}T_i=T_{i+1}T_iT_{i+1},\;\;T_iT_j=T_jT_i\;(|i-j|>1),\\
 & X_iX_i^{-1}=1=X_i^{-1}X_i,\;\; X_iX_j=X_jX_i,\\
 & T_iX_iT_i=\up^2 X_{i+1},\;\;X_jT_i=T_iX_j\;(j\not=i,i+1).
\endaligned$$

Note that, for any dominant $\la=(\la_i)\in\mbz^r$ (meaning $\la_1\geq \cdots\geq\la_r$),
 $$X_\la:=X_1^{\la_1}\cdots X_r^{\la_r}=\ti T_\la^{-1}.$$
In particular, it takes $T_\rho=\ti T_\rho$ to
$X_1^{-1}\ti T_1^{-1}\cdots\ti T_{r-1}^{-1}$ since $\bse_1=\rho s_{r-1}\cdots s_2s_1$ and $X_1^{-1}=\ti T_{\bse_1}$.

By definition, we have, for
each $1\leq i\leq r-1$,
\begin{equation}\label{formula-in-HeckeAlg}
\aligned
& T_iX_{i+1}^{-1}=X_i^{-1}T_i+(1-\up^2)X_i^{-1},\\
& X_{i+1}^{-1}T_i=T_i X_i^{-1}+(1-\up^2)X_i^{-1},\\
& T_i^{-1}X_i^{-1}=X_{i+1}^{-1} T_i^{-1}+(1-v^{-2})X_{i+1}^{-1},\\
& X_i^{-1}T_i^{-1}=T_i^{-1} X_{i+1}^{-1}+(1-v^{-2})X_{i+1}^{-1}.
\endaligned
\end{equation}
So $X_iT_i=T_iX_{i+1}+(1-\up^2)X_{i+1}$ and $T_iX_i=X_{i+1}T_i+(1-\up^2)X_{i+1}$, etc.

Let $\Og$ be the free $\sZ$-module with basis $\{\og_i\mid
i\in\mbz\}$.\index{$\Og$, free  $\sZ$-module with basis $\{\og_i\mid
i\in\mbz\}$} Consider the $r$-fold {\it tensor space} $\Og^{\ot
r}$\index{tensor space!$\sim$ over $\sZ$, $\Og^{\ot r}$} and, for
each $\bfi=(i_1,\ldots,i_r)\in\mbz^r$, write
$$\og_\bfi=\og_{i_1}\ot\og_{i_2}\ot\cdots\ot \og_{i_r}=\og_{i_1}\og_{i_2}\cdots \og_{i_r}\in\Og^{\ot r}.$$
We now follow \cite{VV99} to define a right $\afHr$-module structure
on $\Og^{\ot r}$ and to establish an $\afHr$-module isomorphism from
$\sT_\vtg(n,r)$ to $\Og^{\ot r}$.


Recall the set $I_\vtg(n,r)$ defined in \eqref{afInr} and the action
\eqref{place permutation} of $\fS_{\vtg,r}$ on $I_\vtg(n,r)$. If we
identify $\afInr$ with $\mbz^r$ by the following bijection
\begin{equation}\label{afInr4}
I_\vtg(n,r)\lra \mbz^r,\;\; \bfi\longmapsto (i_1,\ldots,i_r),
\end{equation}
then the action of $\fS_{\vtg,r}$ on $I_\vtg(n,r)$ induces an action on $\mbz^r$.
Also, the usual action of the place permutation of $\fS_r$ on $I(n,r)$, where
$$I(n,r)=\{(i_1,\ldots,i_r)\in\mbz^r\mid 1\leq i_k\leq
n\,\forall k\},$$
is the restriction to $\fS_r$ of the action of $\fS_{\vtg,r}$ on $\mbz^r$ (restricted to $I(n,r)$).
We often identify $I(n,r)$ as a subset of $I_\vtg(n,r)$, or $I_\vtg(n,r)_0$ as a subset of $I(n,r)$, depending on the context.

 By the Bernstein presentation for $\afHr$, Varagnolo--Vasserot
extended in \cite{VV99} the action of $\sH(r)$ on the
finite tensor space $\Og_n^{\ot r}$, where
$\Og_n=\text{span}\{\og_1,\ldots,\og_n\}$, given in \cite{Ji} (see
also \cite{Du}) to an action on $\Og^{\ot r}$ via the place permutation
above. In other words, $\Og^{\ot r}$
admits a right $\afHr$-module structure defined by
\begin{equation}\label{afH action}
\begin{cases}
\og_{\bf i}\cdot X_t^{-1}=\og_{\bfi{\bse}_t}
=\og_{i_1}\cdots\og_{i_{t-1}}\og_{i_t+n}\og_{i_{t+1}}\cdots\og_{i_r},\qquad \text{ for all }\bfi\in \mbz^r\\
{\og_{\bf i}\cdot T_k=\left\{\begin{array}{ll} \up^2\og_{\bf
i},\;\;&\text{if $i_k=i_{k+1}$;}\\
v\og_{\bfi s_k},\;\;&\text{if $i_k<i_{k+1}$;}\qquad\text{ for all }\bfi\in I(n,r),\\
v\og_{\bfi s_k}+(\up^2-1)\og_{\bf i},\;\;&\text{if
$i_{k+1}<i_k$,}
\end{array}\right.}
\end{cases}
\end{equation}
where $1\leq k\leq r-1$ and $1\le t\le r$.

 In general, for an arbitrary ${\bf i}=(i_1,\ldots,i_r)\in\mbz^r$,
there exist integers $m_1,\ldots,m_r$ such that $\og_{\bf i}\cdot
X_1^{m_1}\cdots X_r^{m_r}=\og_{\bf j}$ with ${\bf
j}=(j_1,\ldots,j_r)\in I(n,r)$. For example, if all $m_i\geq 0$,
then we define recursively by using \eqref{formula-in-HeckeAlg}
$$\aligned
&\qquad \og_{\bf i}\cdot T_k  =(\og_{\bf j}\cdot (X_1^{-m_1}\cdots
X_r^{-m_r}))\cdot T_k =\og_{\bf j}\cdot (X_1^{-m_1}\cdots
X_r^{-m_r}T_k)\\
&=(\og_{\bf j}\cdot T_k)\cdot (X_1^{-m_1}\cdots X_{k-1}^{-m_{k-1}}
X_{k+1}^{-m_k}X_k^{-m_{k+1}}X_{k+2}^{-m_{k+2}}\cdots
X_r^{-m_r}))\\
&\quad -\sum_{s=1}^{m_{k+1}}(\up^2-1)\og_{\bf j}\cdot
 (X_1^{-m_1}\cdots X_k^{-m_k}X_{k+1}^{-m_{k+1}+s}X_k^{-s}X_{k+2}^{-m_{k+2}}\cdots X_r^{-m_r}))\\
&\quad +\sum_{s=0}^{m_k-1}(\up^2-1)\og_{\bf j}\cdot
 (X_1^{-m_1}\cdots X_{k-1}^{-m_{k-1}}X_k^{-m_k+s}X_{k+1}^{-s}X_k^{-m_{k+1}}X_{k+2}^{-m_{k+2}}\cdots
X_r^{-m_r})).
\endaligned$$

Varagnolo--Vasserot have further established in
\cite[Lem.~8.3]{VV99} an $\afHr$-module isomorphism between
$\sT_\vtg(n,r)$ and $\Og^{\ot r}$. This result justifies why the set
$I_\vtg(n,r)_0$ defined in \eqref{afInr0} is called a fundamental
set. Recall from \eqref{afInr1} the matrix $A^\bfi$ defined for
every $\bfi\in I_\vtg(n,r)$.


\begin{Prop}\label{bimodule-isom} There is a {\rm unique} $\afHr$-module isomorphism
$$g:\sT_\vtg(n,r)\lra\Og^{\ot r}\, \text{ such that }\,[A^\bfi]\longmapsto\og_\bfi\text{ for all $\bfi\in I_\vtg(n,r)_0$},$$
which induces a $\sZ$-algebra isomorphism
$$\mathfrak t:\afSr\overset\sim\lra\sS^{\frak t}_\vtg(n,r):=\End_{\afHr}(\Og^{\ot r}).$$
 \index{affine quantum Schur algebra!the tensor space definition, $\sS^{\frak t}_\vtg(n,r)$}
 In particular, $g$ induces an
$\afSr$-$\afHr$-bimodule isomorphism. Moreover, specializing $\up$
to $\sqrt{q}$ yields a
$\mbc_G(\scrY\times\scrY)$-$\mbc_G(\scrX\times\scrX)$-bimodule
isomorphism over $\mbc$ from $\mbc_G(\scrY\times \scrX)$ to
$\Og^{\ot r}_\mbc:=\Og^{\ot r}\ot\mbc$ sending $[A^\bfi]=q^{-\frac 12 d_\bfi}\chi_\bfi$ to
$\og_\bfi$, $\forall \bfi\in
I_\vtg(n,r)_0$.
\end{Prop}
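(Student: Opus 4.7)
My plan is to construct $g$ as the composition of the evaluation isomorphism $ev:\sT_\vtg(n,r)\to\bigoplus_{\la\in\afLanr}x_\la\afHr$ from Proposition \ref{Green defn=geom defn} with a natural $\afHr$-module map $\Psi:\bigoplus_{\la\in\afLanr}x_\la\afHr\to\Og^{\ot r}$ determined by $\Psi(x_\la)=\og_{\bfi_\la^\vtg}$. The first thing to check is that $\Psi$ is well defined. Since $x_\la\afHr$ is, by its standard presentation, the right $\afHr$-module generated by $x_\la$ subject to the defining relation $x_\la T_{s_k}=\up^2 x_\la$ for every $s_k\in\fS_\la$, it suffices to verify $\og_{\bfi_\la^\vtg}\cdot T_{s_k}=\up^2\og_{\bfi_\la^\vtg}$ whenever $s_k\in\fS_\la$. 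But the membership $s_k\in\fS_\la$ is equivalent to the condition that the $k$-th and $(k+1)$-th entries of $\bfi_\la^\vtg$ are equal, so the required identity is precisely the first case of the formula for the $T_k$-action in \eqref{afH action}.

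Next I would verify that $\Psi$ is a $\sZ$-module isomorphism by matching bases. By Lemma \ref{decomposition of double coset} the source has $\sZ$-basis $\{x_\la T_d\}$ indexed by pairs $(\la,d)$ with $d\in\afmsD_\la$, and via the bijection $\jmath_\vtg^*$ of Corollary \ref{di=length} this parametrization identifies canonically with $\afInr\cong\mbz^r$ under $\bfi_\la^\vtg d\leftrightarrow(\la,d)$. The target $\Og^{\ot r}$ has the obvious $\sZ$-basis $\{\og_\bfi\}_{\bfi\in\mbz^r}$. The key computational step is to show
\begin{equation*}
\Psi(x_\la T_d)=\up^{a(\la,d)}\og_{\bfi_\la^\vtg d}
\end{equation*}
for an explicit integer $a(\la,d)$, whence $\Psi$ is a bijection between bases up to unit scalars in $\sZ$. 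This identity I would prove by induction on $\ell(d)$: writing $d=d's_k$ with $\ell(d)=\ell(d')+1$ and $d'\in\afmsD_\la$, the pair $(\bfi_\la^\vtg d')_k,(\bfi_\la^\vtg d')_{k+1}$ satisfies $(\bfi_\la^\vtg d')_k<(\bfi_\la^\vtg d')_{k+1}$ by the characterisation of $\afmsD_\la$ in \eqref{minimal coset representative}, so the second branch of \eqref{afH action} applies whenever $\bfi_\la^\vtg d'\in I(n,r)$; in the general case one first reduces to $I(n,r)$ using $\og_\bfi\cdot X_t^{-1}=\og_{\bfi\bse_t}$ (which commutes cleanly with $T_k$ for $t\neq k,k+1$) and uses \eqref{formula-in-HeckeAlg} to handle the remaining commutations. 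Composing $g=\Psi\circ ev$ and restricting to $\bfi\in I_\vtg(n,r)_0$, where $d=1$ and $a(\la,1)=0$, yields $g([A^{\bfi_\la^\vtg}])=\og_{\bfi_\la^\vtg}$; uniqueness of $g$ is automatic since $\{[A^{\bfi_\la^\vtg}]\}_{\la\in\afLanr}$ generates $\sT_\vtg(n,r)$ as a right $\afHr$-module.

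The induced algebra isomorphism $\mathfrak t:\afSr\to\sS^{\frak t}_\vtg(n,r)$ follows formally by transport of structure: combining $g$ with the isomorphism $\afSr\cong\sS^{_\sH}_\vtg(n,r)=\End_{\afHr}\bigl(\bigoplus_\la x_\la\afHr\bigr)$ of Proposition \ref{Green defn=geom defn} gives $\afSr\cong\End_{\afHr}(\Og^{\ot r})$, and this action turns $g$ into a bimodule isomorphism. Finally, the statement over $\mbc$ follows by base change $\sZ\to\mbc$ sending $\up\mapsto\sqrt{q}$, combined with the identifications \eqref{iso-Schur-Function} and Lemma \ref{Hecke-Alg-geom-construction} of $\afSr_\mbc$ and $\afHr_\mbc$ with the convolution algebras $\sSq$ and $\sHq$, together with the formula $[A^\bfi]=q^{-\frac12 d_\bfi}\chi_\bfi$ established via Lemma \ref{di}. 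The hard part, as indicated above, is the inductive verification of $\Psi(x_\la T_d)=\up^{a(\la,d)}\og_{\bfi_\la^\vtg d}$, since this requires carefully tracking the commutation of the Bernstein generators $X_j^{\pm1}$ past $T_k$'s via \eqref{formula-in-HeckeAlg} and checking that the resulting correction terms collapse to a single basis vector, which is the technical content underlying \cite[Lem.~8.3]{VV99}.
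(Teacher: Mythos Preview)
Your approach is essentially the one the paper has in mind: the paper's own proof simply cites \cite[Lem.~9.5]{DF09} (which in turn rests on \cite[Lem.~8.3]{VV99}) for the existence of $g$, and then deduces the remaining assertions formally by transport of structure and base change, exactly as you outline. Your proposal is a correct unpacking of that argument.

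There is, however, a small normalization slip. Proposition \ref{Green defn=geom defn} defines $ev$ on the basis $\{e_A\}$, not $\{[A]\}$: it sends $e_{A^\bfi}\mapsto x_\la T_d$ for $\bfi=\bfi_\la^\vtg d$. Since $[A^\bfi]=\up^{-d_\bfi}e_{A^\bfi}$ and, by Corollary \ref{di=length}, $d_{\bfi_\la^\vtg}=\ell(w_{0,\la})$, you actually get $ev([A^{\bfi_\la^\vtg}])=\up^{-\ell(w_{0,\la})}x_\la$, not $x_\la$. Hence your composite $\Psi\circ ev$ sends $[A^{\bfi_\la^\vtg}]$ to $\up^{-\ell(w_{0,\la})}\og_{\bfi_\la^\vtg}$, not $\og_{\bfi_\la^\vtg}$. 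The fix is immediate: either define $\Psi(x_\la)=\up^{\ell(w_{0,\la})}\og_{\bfi_\la^\vtg}$ (well-definedness is unaffected by the scalar), or bypass $ev$ altogether and define $g$ directly on the $\afHr$-module generators $[A^{\bfi_\la^\vtg}]$ of $\sT_\vtg(n,r)$, checking well-definedness via Proposition \ref{action2-VV}(1) in place of \eqref{afH action}. With this adjustment your argument goes through, and your identification of the ``hard part'' as the verification that $\og_{\bfi_\la^\vtg}\cdot T_d$ is a scalar multiple of a single $\og_{\bfi_\la^\vtg d}$ is exactly right---this is precisely the technical content of the cited lemmas.
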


\begin{proof} The first assertion follows from
\cite[Lem.~9.5]{DF09}. By regarding $\Og^{\ot r}$ as an
$\afSr$-module via $\mathfrak t$, $g$ induces an
$\afSr$-$\afHr$-bimodule isomorphism. The last assertion follows
from the isomorphism \eqref{iso-Schur-Function} and the definition
that $\sT_\vtg(n,r)$ is the generic form of $\mbc_G(\scrY\times
\scrX)$.
\end{proof}

\begin{Rem}\label{two bases for tsp} As seen above, since the action of $\ti T_{s_i}$ for $1\leq i\leq r-1$ on the basis elements
$\og_\bfi$ for $\bfi\in I(n,r)$ follows the same rules as the action
on $[A^\bfi]$, it follows that $g[A^\bfi]=\og_\bfi$ for all $\bfi\in
I(n,r)$. However, the actions of $\ti T_{s_r}$ are different. Hence,
if we identify $\sT_\vtg(n,r)$ with $\Og^{\ot r}$ under $g$, then
$\{[A^\bfi]\}_{\bfi\in\afInr}$ and $\{[\og_\bfi]\}_{\bfi\in\afInr}$
form two different bases with the subset
$\{[A^\bfi]=\og_\bfi\}_{\bfi\in I(n,r)}$ in common.
\end{Rem}

We now identify $\mbc_G(\scrY\times \scrX)$ with $\Og^{\ot
r}_\mbc$. Consequently, specializing $\up$ to $\sqrt{q}$ gives
isomorphisms
$$\End_{\afHr_\mbc}(\Og^{\ot
r}_\mbc)\cong\End_{\afHr_\mbc}(\mbc_G(\scrY\times
\scrX))\cong\mbc_G(\scrY\times \scrY)\cong\afSr_\mbc.$$
 These algebras will be identified in the sequel.

Also, let $\bdOg=\Og\ot_{\sZ}\mbq(\up)$, i.e., $\bdOg$ is a
$\mbq(\up)$-vector space with basis $\{\og_i\mid i\in\mbz\}$. Then
the right action of $\afHr$ on $\Og^{\ot r}$ extends to a right
action of $\afbfHr=\afHr\ot_\sZ\mbq(\up)$ on $\bdOg^{\ot r}$.
Hence, we have the $\mbq(\up)$-algebra isomorphism
$$\afbfSr:=\afSr\ot_\sZ\mbq(\up)\cong \End_{\afbfHr}(\bdOg^{\ot
r}).$$\index{affine quantum Schur algebra! $\sim$ over $\mbq(\up)$,
$\afbfSr$}
 We will identify $\afSr$ and $\afbfSr$ with $\End_{\afHr}(\Og^{\ot r})$
and $\End_{\afbfHr}(\bdOg^{\ot r})$, respectively.

\section{BLM bases and multiplication formulas}

We now follow \cite{BLM} (cf. \cite{VV99}) to define BLM bases for the affine quantum Schur algebra
$\afSr$ as discussed in \cite{DF09}. Let
$$\afThnpm=\{A\in\afThn\mid a_{i,i}
=0\text{ for all $i$}\}.$$
 For $A\in\afThnpm$ and $\bfj\in\afmbzn$, define $A(\bfj,r)\in
\afSr$ by
\begin{equation}\label{def-A(j,r)}
A(\bfj,r)=\begin{cases}
\sum_{\la\in\La_\vtg(n,r-\sg(A))}\up^{\la\centerdot\bfj}[A+\diag(\la)],&\text{ if }\sg(A)\leq r;\\
0,&\text{ otherwise,}\end{cases}
\end{equation}\index{$A(\bfj,r)$,
BLM basis element}
 where $\la\cdot\bfj=\sum_{1\leq i\leq
n}\la_ij_i$. For the convenience of later use, we extend the
definition to matrices in $M_{n,\,\vtg}(\mbz)$ by setting
$A(\bfj,r)=0$ if some off-diagonal entries of $A$ are negative.

The elements $A(\bfj,r)$ are the affine version of the elements
defined in \cite[5.2]{BLM} and have been defined in terms of
characteristic functions in \cite[7.6]{VV99}.

 The following result is the affine analogue of \cite[6.6(2)]{DFW};
see \cite[Prop.~4.1]{DF09}.

\begin{Prop}\label{BLMbasis}
For a fixed $1\leq i_0\leq n$, the set
$$\sB_{\vtg,i_0,r}:=\{A(\bfj,r)\mid A\in\afThnpm,\bfj\in\afmbnn,j_{i_0}=0,\sg(\bfj)+\sg(A)\leq r\}$$
forms a $\mbq(\up)$-basis for $\afbfSr$.
In particular, the set
$$\sB_{\vtg,r}:=\{A(\bfj,r)\mid A\in\afThnpm,\bfj\in\afmbnn,\sg(A)\leq r\}$$ is a
spanning set for $\afbfSr$.
\end{Prop}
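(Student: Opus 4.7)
The plan is to reduce the basis claim to a collection of local Vandermonde-type invertibility statements indexed by $A \in \afThnpm$ with $\sg(A) \leq r$. The standard basis $\{[B] \mid B \in \afThnr\}$ of $\afbfSr$ decomposes, via the unique partition $B = A + \diag(\la)$ with $A \in \afThnpm$ and $\la \in \La_\vtg(n,s)$ where $s := r - \sg(A) \geq 0$, into the disjoint union of the local families $\{[A+\diag(\la)] \mid \la \in \La_\vtg(n,s)\}$. Writing $V_A$ for the $\mbq(\up)$-span of the latter, we obtain $\afbfSr = \bigoplus_A V_A$ as $\mbq(\up)$-vector spaces, and from the definition \eqref{def-A(j,r)} each element $A(\bfj,r) \in \sB_{\vtg,i_0,r}$ (for any admissible $\bfj$) lies in $V_A$. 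The basis claim therefore reduces, for each fixed $A$, to verifying that the subset $\{A(\bfj,r) \mid j_{i_0} = 0,\ \bfj \in \afmbnn,\ \sg(\bfj) \leq s\}$ is a $\mbq(\up)$-basis of $V_A$.

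First I would verify a cardinality match: $|\La_\vtg(n,s)| = \binom{s+n-1}{n-1}$, while the index set for the candidate basis consists of tuples $\bfj \in \mbn^n$ with $j_{i_0} = 0$ and $\sg(\bfj) \leq s$, which is naturally identified with the $(n{-}1)$-tuples of non-negative integers summing to at most $s$ and thus also has cardinality $\binom{s+n-1}{n-1}$. Combined with \eqref{def-A(j,r)}, the change of basis between $\{A(\bfj,r)\}$ and $\{[A+\diag(\la)]\}$ is then the square matrix $M_A = (\up^{\la \centerdot \bfj})_{\la,\bfj}$. Since $j_{i_0} = 0$, we have $\la \centerdot \bfj = \sum_{i \ne i_0} \la_i j_i$, so, introducing $x_i := \up^{j_i}$ for $i \ne i_0$, the $(\la,\bfj)$-entry of $M_A$ becomes the monomial $\prod_{i \ne i_0} x_i^{\la_i}$ of total degree $\sg(\la) - \la_{i_0} \leq s$, evaluated at the point $(\up^{j_i})_{i \ne i_0}$.

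The main obstacle, and the technical heart of the argument, is to prove that $M_A$ is invertible over $\mbq(\up)$. I would treat this as a generalized (multivariate) Vandermonde determinant computation: the $n{=}2$ base case is literally the classical Vandermonde matrix $(\up^{\la_1 j_1})$ with $\la_1, j_1 \in \{0,1,\ldots,s\}$, whose determinant $\prod_{0 \leq k<l \leq s}(\up^l - \up^k)$ is visibly nonzero in $\mbq(\up)$, and the general case is obtained by induction on $n$ after grouping rows by the value of one coordinate and exploiting the block structure thereby produced. An alternative is to adapt directly the Vandermonde-style triangularity argument of \cite[5.7]{BLM} (or \cite[6.6(2)]{DFW}) from the finite case to the present affine setting; either route exploits the transcendence of $\up$ to guarantee that all relevant factors are nonzero.

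Assembling these local isomorphisms $V_A \cong \mbq(\up)^{\binom{s+n-1}{n-1}}$ across all $A \in \afThnpm$ with $\sg(A) \leq r$ then yields that $\sB_{\vtg,i_0,r}$ is a $\mbq(\up)$-basis of $\afbfSr$. The ``in particular'' claim is automatic from what has already been established: for any $A \in \afThnpm$ with $\sg(A) \leq r$ and any $\bfj \in \afmbnn$, the element $A(\bfj,r)$ lies in $V_A$ by construction, hence is a $\mbq(\up)$-linear combination of the basis elements of $V_A$ already contained in $\sB_{\vtg,i_0,r} \subseteq \sB_{\vtg,r}$, so $\sB_{\vtg,r}$ spans $\afbfSr$.
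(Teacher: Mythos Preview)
Your approach is correct and is essentially the standard argument behind the cited references: the paper itself gives no proof, referring instead to \cite[Prop.~4.1]{DF09} (the affine version) and \cite[6.6(2)]{DFW} (the finite-type prototype), and your block decomposition $\afbfSr=\bigoplus_A V_A$ together with the local Vandermonde invertibility is exactly how those results are established. One small point worth making explicit: in identifying the two index sets of size $\binom{s+n-1}{n-1}$, you use that $\la\in\La_\vtg(n,s)$ is determined by $(\la_i)_{i\ne i_0}$ (since $\la_{i_0}=s-\sum_{i\ne i_0}\la_i$), so both row and column indices of $M_A$ are naturally the simplex $\{\mu\in\mbn^{n-1}:\sg(\mu)\le s\}$ and the matrix is the evaluation of monomials of total degree $\le s$ at the points $(\up^{\mu_1},\ldots,\up^{\mu_{n-1}})$; this is precisely the setting of \cite[5.7]{BLM}, so your citation suffices.
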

We call $\sB_{\vtg,i_0,r}$ a {\it BLM basis} of $\afbfSr$ and call
$\sB_{\vtg,r}$ the BLM spanning set. \index{BLM basis} \index{BLM
spanning set} As in the finite case, one would expect that there is
a basis $\sB_{\vtg}$ for quantum affine $\mathfrak {gl}_n$ such that
whose image in $\afbfSr$ is $\sB_{\vtg,r}$ for every $r\ge0$. See
\S5.4 for a conjecture.

The affine analogue of the multiplication
formulas given in \cite[5.3]{BLM} has also been established in \cite{DF09}.
As seen in \cite[Th.~4.2]{DF09} these formulas are crucial to a modified approach to the realization problem for quantum affine
$\frak{sl}_n$. We shall also see in Chapter 5 that they are useful in finding a presentation
for affine quantum Schur algebras of degree $(r,r)$.
 Let
$\afal_i=\afbse_i-\afbse_{i+1},\afbt_i=-\afbse_i-\afbse_{i+1}\in\afmbzn$.

\begin{Thm}\label{multiplication formulas in affine q-Schur algebra}
Assume $1\leq h\leq n$. For $i\in\mbz \,\, \bfj,\bfj'\in\afmbzn$
and $A\in\afThnpm$, if we put
$f(i)=f(i,A)=\sum_{j\geq i}a_{h,j}-\sum_{j>i}a_{h+1,j}$ and
$f'(i)=f'(i,A)=\sum_{j< i}a_{h,j}-\sum_{j\leq i}a_{h+1,j}$, then
the following identities hold in $\afbfSr$ for all $r\geq0$:
\begin{equation}\label{formula1 (1) in the affine q-Schur algebra}
\begin{split}
0(\bfj,r)A(\bfj',r)&=\up^{\bfj\centerdot\ro(A)}A(\bfj+\bfj',r),\\
A(\bfj',r)0(\bfj,r)&=\up^{\bfj\centerdot\co(A)}A(\bfj+\bfj',r),
\end{split}
\end{equation}
where $0$ stands for the zero matrix,

\begin{equation}\label{formula1 (2) in the affine q-Schur algebra}
\begin{split}
 \afE_{h,h+1}(\bfl,r)&A(\bfj,r)=\sum_{i<h;a_{h+1,i}\geq 1}\up^{f(i)}\ol{\left[\!\!\left[{a_{h,i}+1
\atop 1}\right]\!\!\right]}(A+\afE_{h,i}-\afE_{h+1,i})(\bfj+\afal_h,r)\\
&+\sum_{i>h+1;a_{h+1,i}\geq
1}\up^{f(i)}\ol{\left[\!\!\left[{a_{h,i}+1\atop 1
}\right]\!\!\right]}(A+\afE_{h,i}-\afE_{h+1,i})(\bfj,r)\\
&+\up^{f(h)-j_h-1}\frac{(A-\afE_{h+1,h})(\bfj+\afal_h,r)-(A-\afE_{h+1,h})(\bfj+\afbt_h,r)}{1-\up^{-2}}\\
&+\up^{f(h+1)+j_{h+1}}\ol{\left[\!\!\left[{a_{h,h+1}+1\atop 1
}\right]\!\!\right]}(A+\afE_{h,h+1})(\bfj,r),
\end{split}
\end{equation}
\begin{equation}\label{formula1 (3) in the affine q-Schur algebra}
\begin{split}
 \afE_{h+1,h}(\bfl,r)&A(\bfj,r)=\sum_{i<h;a_{h,i}\geq
 1}\up^{f'(i)}\ol{\left[\!\!\left[{a_{h+1,i}+1\atop 1
}\right]\!\!\right]}(A-\afE_{h,i}+\afE_{h+1,i})(\bfj,r)\\
&+\sum_{i>h+1;a_{h,i}\geq
1}\up^{f'(i)}\ol{\left[\!\!\left[{a_{h+1,i}+1\atop 1
}\right]\!\!\right]}(A-\afE_{h,i}+\afE_{h+1,i})(\bfj-\afal_h,r)\\
&+\up^{f'(h+1)-j_{h+1}-1}\frac{(A-\afE_{h,h+1})
(\bfj-\afal_h,r)-(A-\afE_{h,h+1})(\bfj+\afbt_h,r)}{1-\up^{-2}}\\
&+\up^{f'(h)+j_{h}}\ol{\left[\!\!\left[{a_{h+1,h}+1\atop 1
}\right]\!\!\right]}(A+\afE_{h+1,h})(\bfj,r).
\end{split}
\end{equation}
\end{Thm}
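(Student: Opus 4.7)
The plan is to follow a three-tier strategy based on the geometric realisation of $\afSr$ via $G$-invariant functions on $\scrY\times\scrY$ with convolution product, mirroring the BLM method in \cite{BLM} adapted to cyclic flags.

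\textbf{First,} I would dispose of the ``diagonal'' identities \eqref{formula1 (1) in the affine q-Schur algebra}. Expanding $0(\bfj,r)=\sum_{\la\in\afLanr}\up^{\la\cdot\bfj}[\diag(\la)]$ and applying the orthogonality relations \eqref{product [diag(la)][A] in affine q-Schur algebra} one computes, for $A\in\afThnpm$,
\[
0(\bfj,r)A(\bfj',r)=\sum_{\mu}\up^{(\ro(A)+\mu)\cdot\bfj+\mu\cdot\bfj'}[A+\diag(\mu)]=\up^{\bfj\cdot\ro(A)}\,A(\bfj+\bfj',r),
\]
after reindexing. The right-hand identity is symmetric, replacing $\ro(A)$ with $\co(A)$. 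These two identities are purely bookkeeping and do not require any geometric input beyond \eqref{product [diag(la)][A] in affine q-Schur algebra}.

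\textbf{Second,} to prove \eqref{formula1 (2) in the affine q-Schur algebra}, I would specialise to $\up\mapsto\sqrt{q}$ and work in $\afSr_\mbc\cong\mbc_G(\scrY\times\scrY)$, where $[B][C]=\sum_{A''}p_{B,C,A''}[A'']$ is governed by the convolution count \eqref{afg}. Fix $B=\afE_{h,h+1}+\diag(\mu)$ and $C=A+\diag(\nu)$ with $\co(B)=\ro(C)$. A pair $(\bfL,\bfL'')\in\sO_{A''}$ contributes, and one has to count cyclic flags $\bfL'$ with $(\bfL,\bfL')\in\sO_{B}$ and $(\bfL',\bfL'')\in\sO_{C}$. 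Since $B$ is obtained from $\diag(\mu)$ by moving one basis vector from lattice index $h{+}1$ up to index $h$, the geometry forces $\bfL'$ to differ from $\bfL''$ only in row $h,h+1$: precisely, $\bfL'$ is obtained from $\bfL''$ by choosing a line in $L''_{h}\cap L''_{j}/(L''_{h-1}+L''_{h}\cap L''_{j-1})$ for some column index $i$ in which $C$ has a positive $(h+1,i)$-entry. This produces $A''=C+\afE_{h,i}-\afE_{h+1,i}$ (for $i\ne h,h+1$) or a matrix with altered diagonal (when $i=h$ or $i=h+1$). The number of available lines is $\dleb{a_{h,i}+1\atop 1}\drib$, and tracking dimensions of the ambient quotient spaces produces the exponents $f(i)$ (which record cumulative column counts below and to the right of position $(h,i)$).

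\textbf{Third,} I would reassemble the products into BLM form. Summing $[B][C]$ against the coefficients $\up^{\nu\cdot\bfj}$ and regrouping by the resulting matrix shape yields, for $i\ne h,h+1$, the two ``generic'' sums in \eqref{formula1 (2) in the affine q-Schur algebra}, with the shift of $\bfj$ to $\bfj+\afal_h$ (resp.\ $\bfj$) tracking the rebalancing of the diagonal part. For the contribution $i=h+1$ one gets directly $\dleb{a_{h,h+1}+1\atop 1}\drib(A+\afE_{h,h+1})(\bfj,r)$ with a single power of $\up$.

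The main obstacle is the case $i=h$: here moving the $1$ of $\afE_{h,h+1}$ into column $h$ increases $a_{h,h}$ by one, so the resulting matrix has a nonzero diagonal contribution which must be absorbed into the $\diag(\la)$ sum defining the BLM element. The combinatorial yield is a sum of the form
\[
\sum_{t\geq 0}\up^{2t}\,[A-\afE_{h+1,h}+\diag(\la-t\afal_h)],
\]
which needs to be recognised as the difference quotient
\[
\up^{f(h)-j_h-1}\,\frac{(A-\afE_{h+1,h})(\bfj+\afal_h,r)-(A-\afE_{h+1,h})(\bfj+\afbt_h,r)}{1-\up^{-2}}.
\]
This reduction rests on the telescoping identity $\dleb{t+1\atop 1}\drib - \up^{2}\dleb{t\atop 1}\drib = 1$, combined with the geometric series $\sum_{t\geq 0}\up^{2t}x^{t}=(1-\up^{2}x)^{-1}$ applied termwise to the $\la$-expansions of the two BLM elements, whose difference isolates precisely the configurations with a genuine $(h{+}1,h)$-entry. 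Executing this cancellation carefully and tracking the exponent $f(h)-j_h-1$ (arising from $f(h,A-\afE_{h+1,h})$) is where the bulk of the technical work lies.

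\textbf{Finally,} formula \eqref{formula1 (3) in the affine q-Schur algebra} would be deduced from \eqref{formula1 (2) in the affine q-Schur algebra} by applying the algebra anti-involution $\tau_r$ of \eqref{taur}, using that $\tau_r(A(\bfj,r))=\tA(\bfj,r)$ (since transposition preserves diagonal entries and $\la\cdot\bfj$), $\tA^{\text{\rm t}}$ swaps $\afE_{h,h+1}\leftrightarrow\afE_{h+1,h}$, and the anti-multiplicativity $\tau_r(xy)=\tau_r(y)\tau_r(x)$ converts left multiplication by $\afE_{h,h+1}(\bfl,r)$ into left multiplication by $\afE_{h+1,h}(\bfl,r)$ after suitably relabelling the parameters $f(i)\leftrightarrow f'(i)$.
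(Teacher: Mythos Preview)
Your treatment of \eqref{formula1 (1) in the affine q-Schur algebra} and \eqref{formula1 (2) in the affine q-Schur algebra} is essentially the standard argument (the paper cites \cite[Th.~4.2]{DF09}, which builds on Lusztig's multiplication formulas \cite[Prop.~3.5]{Lu99} for $[E^\vtg_{h,h+1}+\diag(\la)][B]$ and then resums over the diagonal part into BLM form). The geometric counting, the identification of the $i=h$ contribution as a difference quotient, and the bookkeeping of the $\bfj$-shifts are all as in that source.

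There is, however, a genuine gap in your derivation of \eqref{formula1 (3) in the affine q-Schur algebra}. Applying $\tau_r$ to the identity $E^\vtg_{h,h+1}(\bfl,r)A(\bfj,r)=\text{RHS}$ yields
\[
\tau_r(\text{RHS})={}^tA(\bfj,r)\cdot E^\vtg_{h+1,h}(\bfl,r),
\]
which is a \emph{right} multiplication formula for $E^\vtg_{h+1,h}(\bfl,r)$, not the left multiplication $E^\vtg_{h+1,h}(\bfl,r)A(\bfj,r)$ asserted in \eqref{formula1 (3) in the affine q-Schur algebra}. Moreover, on the right-hand side $\tau_r$ sends $(A+E^\vtg_{h,i}-E^\vtg_{h+1,i})(\bfk,r)$ to $({}^tA+E^\vtg_{i,h}-E^\vtg_{i,h+1})(\bfk,r)$, so even the shapes do not match those appearing in \eqref{formula1 (3) in the affine q-Schur algebra}. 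An anti-involution alone cannot transform one left-multiplication identity into another.

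The actual proof of \eqref{formula1 (3) in the affine q-Schur algebra} proceeds exactly as for \eqref{formula1 (2) in the affine q-Schur algebra}, but starting from the ``lowering'' half of Lusztig's formula, namely the computation of $[E^\vtg_{h+1,h}+\diag(\la)][B]$ in \cite[Prop.~3.5]{Lu99}; geometrically this counts flags $\bfL'$ with $\bfL\subseteq\bfL'$ rather than $\bfL'\subseteq\bfL$. The resummation into BLM form is then parallel to your Step~3, with $f'(i)$ playing the role of $f(i)$. (If you wish to use $\tau_r$, you would first have to establish a right multiplication formula $A(\bfj,r)E^\vtg_{h,h+1}(\bfl,r)$ by the same geometric method, and only then transpose; but that is no shortcut.)
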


According to Proposition \ref{indecomposable basis}, the double
Ringel--Hall algebra $\dHallr$ has generators corresponding to
simple modules and homogeneous indecomposable modules. It would be
natural to raise the following question. In fact, we will see in
\S5.4 that the solution to this problem is the key to prove the
realization conjecture \ref{realization conjecture}; cf. Problem
\ref{Prob-realization}.

\begin{Prob}\label{Problem for MF}
Find multiplication formulas for $\afE_{h,h+sn}(\bfl,r)A(\bfj,r)$ for all  $s\neq0$ in $\mbz$.
\end{Prob}

\section{The $\dHallr$-$\afbfHr$-bimodule structure on tensor spaces}

As in the previous section, let $\bdOg$ be the $\mbq(\up)$-vector
space with basis $\{\og_s\mid s\in\mbz\}$. Let $\bfU(\ti C_\infty)$
be the quantum enveloping algebra associated with the
Borcherds--Cartan matrix $\ti C_\infty$; see Definition
\ref{quantumGKMAlg}. Our idea is to use the {\it natural} module
structure on $\bfOg$ for the quantum enveloping algebra $\bfU(\ti
C_\infty)$ associated with the Borcherds--Cartan matrix $\ti
C_\infty$. This induces a left $\bfU(\ti C_\infty)$-module structure
on the tensor space $\bdOg^{\otimes r}$ which commutes with the
right action of the affine Hecke algebra $\afbfHr$, and hence, an
algebra homomorphism $\ti\xi_r:\bfU(\ti C_\infty)\ra \afbfSr$. We
then show that this homomorphism factors through $\Phi:\bfU(\ti
C_\infty)\ra \dHallr$. In this way, we obtain a
$\dHallr$-$\afbfHr$-bimodule structure on $\bdOg^{\otimes r}$, and
will then partially establish the affine analogue of the quantum
Schur--Weyl duality in \S3.7.

 For $i\in I$ and $t\in
\mbz^+$, we define the actions of $K_i^{\pm 1}$, $E_i$, $F_i$,
$\sfk_t^{\pm1}$, $\sfx_t$, and $\sfy_t$ on $\bdOg$ by
\begin{equation} \label{QGKMAlg-action}
\begin{array}{rll}
&K_i^{\pm 1}\cdot \og_s=\up^{\pm\dt_{i,\bar s}}\og_s,\; &\sfk_t^{\pm1}\cdot \og_s=\og_s, \\
&E_i\cdot \og_s=\dt_{i+1,\bar s}\og_{s-1},\;&\sfx_t\cdot\og_s=\og_{s-tn},\\
&F_i\cdot \og_s=\dt_{i,\bar
s}\og_{s+1},\;&\sfy_t\cdot\og_s=\og_{s+tn}.
\end{array}
\end{equation}

\begin{Lem} With the action defined as above, $\bdOg$ becomes a left
$\bfU(\ti C_\infty)$-module.
\end{Lem}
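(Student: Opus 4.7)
The plan is to verify each of the defining relations (R1)--(R8) of $\bfU(\ti C_\infty)$ directly on every basis vector $\og_s$, $s\in\mbz$. The action has a very rigid structure that makes the bulk of the relations immediate: $\sfk_t^{\pm 1}$ acts as the identity, $K_i^{\pm 1}$ acts by a residue-dependent scalar, $E_i$ and $F_i$ shift the index by $-1$ and $+1$ and are non-zero only on residues $i+1$ and $i$ respectively, and $\sfx_t,\sfy_t$ are the pure translations $\og_s\mapsto\og_{s\mp tn}$, which preserve residues mod~$n$. This rigidity instantly gives all commutation relations in (R1), (R7)--(R8) together with the commutation parts of (R2)--(R4): translations by multiples of $n$ commute among themselves and commute with any operator depending only on $\bar s$, so $\sfx_t,\sfy_t$ commute with each other and with $K_i,E_i,F_i,\sfk_s$, and $\sfk_t$ is central because it is scalar. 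The Heisenberg-type relation in (R4), $\sfx_s\sfy_t-\sfy_t\sfx_s=\dt_{s,t}(\sfk_s-\sfk_s^{-1})/(\up-\up^{-1})$, is also immediate, since $\sfx_s\sfy_t$ and $\sfy_t\sfx_s$ both act on $\og_u$ as translation by $(t-s)n$ and $\sfk_s-\sfk_s^{-1}$ acts as zero.

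The eigenvalue relations in (R2)--(R3) require only a short residue computation: for instance, on the only $\og_s$ on which $E_j$ acts non-trivially one has $\bar s=j+1$ and $\overline{s-1}=j$, so $K_iE_j\og_s=\up^{\dt_{i,j}}\og_{s-1}$ while $E_jK_i\og_s=\up^{\dt_{i,j+1}}\og_{s-1}$, producing the required ratio $\up^{\dt_{i,j}-\dt_{i,j+1}}$; (R3) is analogous. The substantive step is the $E$--$F$ commutator in (R4). A direct substitution gives
\[
(E_iF_j-F_jE_i)\og_s=\bigl(\dt_{j,\bar s}\dt_{i,\bar s}-\dt_{i+1,\bar s}\dt_{j+1,\bar s}\bigr)\og_s,
\]
which vanishes for $i\neq j$, while for $i=j$ it equals $(\dt_{i,\bar s}-\dt_{i+1,\bar s})\og_s$. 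On the other side, $\ti K_i=K_iK_{i+1}^{-1}$ acts on $\og_s$ as $\up^{\dt_{i,\bar s}-\dt_{i+1,\bar s}}$; setting $a:=\dt_{i,\bar s}-\dt_{i+1,\bar s}$, the standing hypothesis $n\geq 2$ separates $i$ from $i+1$ in $I$ so that $a\in\{-1,0,1\}$, hence $[a]=a$, matching the left-hand side of (R4).

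The one piece that at first looks delicate is the family of quantum Serre relations (R5)--(R6), including the cubic versions (QSL6$'$), (QSL7$'$) that appear for $n=2$; this is where the only real obstacle might lie. It trivialises as follows: since $E_i\og_s$ is non-zero only when $\bar s=i+1$, in which case it returns $\og_{s-1}$ with residue $i\neq i+1$ (valid for every $n\geq 2$), the operator $E_i^2$ annihilates all of $\bdOg$. Consequently every monomial $E_i^aE_jE_i^b$ with $a+b\geq 2$ acts as zero on $\bdOg$, so both the quadratic relation (for $n\geq 3$) and the cubic relation (for $n=2$) hold trivially; the analogous argument handles the $F$-side. Assembling these steps verifies (R1)--(R8), so $\bdOg$ is a left $\bfU(\ti C_\infty)$-module. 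The entire argument is essentially residue-bookkeeping and presents no genuine difficulty.
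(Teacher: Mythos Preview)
Your proof follows the same strategy as the paper---verify (R1)--(R8) on basis vectors---but you carry out the $E_i,F_i,K_i$ part explicitly, whereas the paper simply invokes the known natural representation of $\bfU_\vtg(n)$ on $\bdOg$ for those relations and observes that the remaining ones (those involving $\sfk_t,\sfx_t,\sfy_t$) are clear.

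There is, however, a gap in your treatment of (R5)--(R6) for $n\ge 3$. From $E_i^2=0$ you infer that \emph{every} monomial $E_i^aE_jE_i^b$ with $a+b\ge 2$ vanishes; but $E_i^2=0$ only kills the cases $a\ge 2$ or $b\ge 2$, not $a=b=1$. For adjacent $i,j$ with $n\ge 3$ the Serre relation has $a+b=2$, so the middle term $E_iE_jE_i$ needs a separate argument. (In fact your blanket claim is false for $n=2$: there $E_1E_2E_1\og_s=\og_{s-3}\ne 0$ when $\bar s=2$, though the $n=2$ relation has $a+b=3$ and is unaffected.) You also omit the case $c_{i,j}=0$ (non-adjacent $i,j$, $n\ge 4$), where $a+b=1$ and (R5) reads $E_iE_j=E_jE_i$.

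Both omissions are repaired by the same residue bookkeeping you use elsewhere: $E_iE_jE_i\og_s\ne 0$ forces $\overline s=i+1$ and $\overline{s-2}=i+1$, hence $n\mid 2$, impossible for $n\ge 3$; and for non-adjacent $i,j$ one has $E_iE_j\og_s\ne 0$ only if $\bar s=j+1$ and $\overline{s-1}=i+1$, i.e.\ $j=i+1$, a contradiction, so $E_iE_j=0=E_jE_i$. With these two lines inserted the argument is complete.
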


\begin{proof}
For $i\in I$ and $t\in \mbz^+$, we denote by $\kappa_i^{\pm 1},
\phi^+_i$, $\phi^-_i$, $\chi_t^{\pm1}$, $\psi_t^+$, and $\psi_t^-$
the $\mbq(\up)$-linear transformations of $\bdOg$ induced by the
actions of $K_i^{\pm 1}$, $E_i$, $F_i$, $\sfk_t^{\pm1}$, $\sfx_t$,
and $\sfy_t$ on $\bdOg$ defined above, respectively. The assertion
follows from the fact that $\kappa_i^{\pm 1}, \phi^+_i$, $\phi^-_i$,
$\chi_t^{\pm1}$, $\psi_t^+$, and $\psi_t^-$ satisfy the relations
similar to (R1)--(R8) in Definition \ref{quantumGKMAlg}. This is
because those relations involving $\chi_t^{\pm1}$, $\psi_t^+$, and
$\psi_t^-$ are clear, while the others (involving only $K_i^{\pm
1}$, $E_i$, $F_i$) follow from the natural representation of the
extended quantum affine $\mathfrak{sl}_n$ on $\bfOg$ (see, e.g.,
\cite[(9.5.1)]{DF09}).
\end{proof}



Since all $\sfk_t^\pm$ act identically on $\bdOg$, it follows from
Proposition \ref{QEAGKMAlg-DHall} that the action of $\bfU(\ti
C_\infty)$ on $\bdOg$ induces an action of $\dHallr$ on $\bdOg$,
i.e., for each $x\in\dHallr$ and $s\in\mbz$, we have
$$x\cdot \og_s=y\cdot \og_s,\;\;\text{where $y\in\Phi^{-1}(x)$.}$$
 Let $\xi_1:\dHallr\ra \End_{\afbfHr}(\bdOg)$ be the algebra homomorphism
defined by this action.

As shown in \S2.1, there is a PBW type basis for $\dHallr$. The
action of these basis elements on the basis $\{\og_s \mid
s\in\mbz\}$ of $\bdOg$ can be described as follows. \index{PBW type
basis}

\begin{Prop} \label{compareson with VV's action} For each
$0\not=A\in\afThnp$ and $s\in\mbz$, we have
$$\ti u_A^+ \cdot\og_s=\sum_{t<s}\dt_{A,E_{t,s}^\vtg}\og_t\;\;\text{and}\;\;
\ti u_A^- \cdot\og_s=\sum_{s<t}\dt_{A,E_{s,t}^\vtg} \og_t,$$
 where, as in \eqref{tilde u_A}, $\ti u_A^\pm=\up^{d_A'}u_A^\pm$ with $d'_A=\dim \End(M(A))-\dim M(A)$.
In particular, if $A\not=0$ and $M(A)$ is decomposable, then
$u_A^+\cdot \og_s=0=u_A^-\cdot \og_s$.
\end{Prop}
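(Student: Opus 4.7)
The plan is to use the fact that the $\dHallr$-action on $\bdOg$ comes from the $\bfU(\ti C_\infty)$-action of \eqref{QGKMAlg-action} via the surjection $\Phi:\bfU(\ti C_\infty)\to\dHallr$ of Proposition \ref{QEAGKMAlg-DHall}, under which $\Phi(E_i)=u_i^+$ acts on $\og_s$ as $\dt_{i+1,\bar s}\og_{s-1}$ and $\Phi(\sfx_m)=\sfz_m^+$ acts as $\og_s\mapsto\og_{s-mn}$; these are the only generators I will actually have to evaluate by hand. Since $\dHallr^+\cong\bfHall$ is generated by the $u_i^+$ together with the Schiffmann--Hubery central elements $\sfz_m^+$ (Theorem \ref{Schiffmann-Hubery}), every $u_A^+$ can in principle be expanded in these generators and its action on $\og_s$ read off. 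I would proceed by induction on $\fkd(A)=\dim M(A)$; the $\ti u_A^-$ statement then follows by an analogous argument using the anti-involution $\varsigma$ of Remark \ref{subalgebra quantum gl}.

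The base case $\fkd(A)=1$ is immediate: $M(A)=S_i$ forces $A=E^\vtg_{i,i+1}$, $\ti u_A^+=u_i^+$ (since $d'_A=0$), and the formula matches the $E_i$-action directly because, as matrices, $E^\vtg_{i,i+1}=E^\vtg_{s-1,s}$ precisely when $\bar s\equiv i+1 \pmod n$. For the indecomposable inductive step with $M(A)=S_t[l]$, $l\ge 2$, I would use the Hall algebra product
$$u^+_{[S_t]}\cdot u^+_{[S_{t+1}[l-1]]}=\up^{\lan\bfd(S_t),\bfd(S_{t+1}[l-1])\ran}\bigl(u^+_{[S_t[l]]}+u^+_{[S_t\oplus S_{t+1}[l-1]]}\bigr),$$
which holds because $\Ext^1(S_t,S_{t+1}[l-1])$ is one-dimensional with unique non-split extension $S_t[l]$. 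Evaluating both sides at $\og_s$ with $s=t+l$, the inductive hypothesis applied to $\ti u^+_{[S_{t+1}[l-1]]}$ returns $\og_{t+1}$, so after the normalization $u^+=\up^{-d'}\ti u^+$ and the action $u_t^+=E_{\bar t}$ produces $\og_t$; the decomposable contribution $u^+_{[S_t\oplus S_{t+1}[l-1]]}\og_s$ vanishes by the decomposable inductive hypothesis, and accounting for $d'_A=1-l$ yields $\ti u^+_A\og_s=\og_t$ as required.

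The crucial step is the decomposable case $M(A)=M_1\oplus M_2$ with $M_1,M_2\neq 0$, where I must show $\ti u_A^+\og_s=0$. Using the triangular relation \eqref{mult-order-formula} together with the fact that the split extension $M_1\oplus M_2$ is the unique minimum in the degeneration order among extensions of $M_1$ by $M_2$, I can express $u^+_A$ as an explicit combination of $u^+_{M_1}u^+_{M_2}$ (to which I apply the inductive hypothesis, since $\fkd(M_i)<\fkd(A)$) and $u^+_B$'s with $B>_{\deg} A$, $\fkd(B)=\fkd(A)$, and $M(B)$ strictly ``less decomposable''. A nested induction on the degeneration order then reduces the computation to the case of indecomposable $M(B)$, for which only $B=E^\vtg_{t,s}$ with specific $t$ can contribute by the indecomposable step, and direct evaluation of the cyclic-quiver Hall numbers $\vi^B_{M_1,M_2}$ yields the required cancellation. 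The main obstacle will be the clean setup of this nested induction and the verification of the Hall-number identities; an alternative route I might pursue in parallel is to exploit the Schiffmann--Hubery decomposition to rewrite $u^+_A$ modulo lower-order terms in the monomial basis of Corollary \ref{monomial basis with central elements}, whereupon each $u^{(A')}$ acts on $\og_s$ as an iterated $E_i$-chain (manifestly producing either a single $\og_t$ or zero) and the central factor acts through $\sfx_m$, reducing the assertion to a direct case analysis on whether $A=E^\vtg_{t,s}$.
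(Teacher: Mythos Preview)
Your inductive scheme is circular as written. In the indecomposable step for $M(A)=S_t[l]$ you invoke ``the decomposable inductive hypothesis'' to kill $u^+_{[S_t\oplus S_{t+1}[l-1]]}\og_s$, but this module has dimension $l=\fkd(A)$, the \emph{same} level of the outer induction. Conversely, in the decomposable step your nested induction on the degeneration order terminates at indecomposable $B$'s of the same dimension, and you appeal to ``the indecomposable step'' to evaluate $u^+_B\og_s$. Neither case is established before the other. One might hope to break the cycle by treating the two cases as a coupled linear system at each dimension, but you do not set this up, and the promised ``required cancellation'' via direct Hall-number evaluation is precisely the hard content that is missing. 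A related symptom: your claim $d'_A=1-l$ for $S_t[l]$ is only correct for $l\le n$; in general $d'_{S_t[l]}=\dim\End(S_t[l])-l$ depends on $l\bmod n$ via \eqref{End-Alg-dim}, so the normalization bookkeeping in your sketch does not close even formally.

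The paper takes an entirely different route that avoids any case split. It \emph{defines} a linear map $\phi:\dHallr^+\to\End(\bdOg)$ by the formula in the statement, and proves directly that $\phi$ is an algebra homomorphism: the only nontrivial check is $\phi_{E^\vtg_{l,s}}\phi_{E^\vtg_{s,t}}=\phi_{E^\vtg_{l,t}}$ with the correct $\ti u$-normalization, which reduces to the single dimension identity
\[
\dim\End(M^{l,t})=\dim\End(M^{l,s})+\dim\End(M^{s,t})+\langle\bfdim M^{l,s},\bfdim M^{s,t}\rangle
\]
for $l<s<t$, verified by a short case analysis on residues. Once $\phi$ is a homomorphism, it suffices to check $\phi=\xi_1|_{\dHallr^+}$ on the generators $u_i^+$ (immediate) and $\sfz_m^+$ (using \eqref{expression sfx sfy}: the decomposable piece $x^+$ is annihilated by $\phi$ by construction, and the indecomposable sum collapses to $\og_s\mapsto\og_{s-mn}$). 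This ``define the answer, prove it's a homomorphism, check on generators'' strategy is both shorter and bypasses the circularity entirely; if you want to salvage your inductive approach, the cleanest fix is to isolate and prove the displayed dimension identity first, which is exactly what makes the indecomposable chain work without reference to the decomposable case.
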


\begin{proof} Define a $\mbq(\up)$-linear map $\phi=\phi^+:\dHallr^+\ra
\End_{\mbq(\up)}(\bdOg), \ti u^+_A\mapsto \phi_A$ by setting
$\phi_0={\rm id}$ and for $A\not=0$,
$$\phi_A: \bdOg\lra\bdOg,\;\og_s\lm
\sum_{t<s}\dt_{A,E_{t,s}^\vtg}\og_t.$$
 Define $\phi^-:\dHallr^-\ra \End_{\mbq(\up)}(\bdOg)$ in a similar
 way.

We only prove the first equality. The second one can be proved
analogously. We first show that $\phi$ is an algebra homomorphism.
For $i<j$ in $\mbz$, set $M^{i,j}=M(E_{i,j}^\vtg)=S_i[j-i]$ as in
\S1.2 and write $j-i=an+b$ for $a\geq 0$ and $0\leq b<n$. Then it is
easy to check that
\begin{equation}\label{End-Alg-dim}
\dim\End(M^{i,j})=\left\{\begin{array}{ll}
                 a,\;\;& \text{if $b=0$;}\\
                 a+1, &\text{if $b\not=0$.}
                 \end{array}\right.\end{equation}
We claim that for $l<s<t$,
\begin{equation}\label{End-dim-eqality}
\dim\End(M^{l,t}) =\dim\End(M^{l,s}) +\dim\End(M^{s,t})+\lr{\bfdim
M^{l,s},\bfdim M^{s,t}}
\end{equation}
(cf. proof of \cite[Lem.~8.2]{DDX}). Indeed, write
$$s-l=a_1n+b_1\;\,\text{and}\;\,t-s=a_2n+b_2$$
for $a_1,a_2\geq 0$ and $0\leq b_1,b_2<n$. Then
$t-l=(a_1+a_2)n+b_1+b_2$. If $b_1=0$ or $b_2=0$, the equality
follows from \eqref{End-Alg-dim} since $\lr{\bfdim M^{l,s},\bfdim
M^{s,t}}=0$ by \eqref{Euler form 2}. Suppose now $b_1\not=0$ and
$b_2\not=0$. Then
$$\dim\End(M^{l,s}) +\dim\End(M^{s,t})=a_1+a_2+2\;\;
\text{and}$$
$$\dim\End_{k\Dt}(M^{l,t})=\left\{\begin{array}{ll}
                 a_1+a_2+1,\;\;& \text{if $b_1+b_2\leq n$;}\\
                 a_1+a_2+2, &\text{if $b_1+b_2>n$.}
                 \end{array}\right.$$
Let $\bfd=(d_i)=\bfdim M^{l,s}$. Then
$$\lr{\bfdim M^{l,s},\bfdim
M^{s,t}}=d_{\overline{t-1}}-d_{\overline{s-1}}=\left\{\begin{array}{ll}
                 -1,\;\;& \text{if $b_1+b_2\leq n$;}\\
                 0, &\text{if $b_1+b_2>n$.} \end{array}\right.$$
Therefore, the equality \eqref{End-dim-eqality} holds.

For $A,B\in\afThnp$, we have
$$\ti u^+_A\ti u_B^+=v^{d_A'+d_B'}u_A^+u_B^+=\up^{\lr{\bfdim
M(A),\bfdim M(B)}+d_A'+d_B'}\sum_C \up^{-d_C'}\varphi^C_{A,B}\ti
u^+_C.$$
 Thus, to prove that $\phi$ is an algebra homomorphism, it suffices
to show that for $A,B\in\afThnp$,
$$\phi_A\phi_B=\up^{\lr{\bfdim
M(A),\bfdim M(B)}+d_A'+d_B'}\sum_C
\up^{-d_C'}\varphi^C_{A,B}\phi_C.$$

By definition, we have for $s\in\mbz$,
$$\phi_A\phi_B(\og_s)=0=\up^{\lr{\bfdim M(A),\bfdim M(B)}+d_A'+d_B'}\sum_C
\up^{-d_C'} \varphi^C_{A,B}\phi_C(\og_s)$$
 unless $B=E_{s,t}^\vtg$ and $A=E_{l,s}^\vtg$ for some $l<s<t$.
Now we suppose that $B=E_{s,t}^\vtg$ and $A=E_{l,s}^\vtg$ for some
$l<s<t$. Then
$$\phi_A\phi_B(\og_s)=\og_l\;\;\text{and}$$
$$\sum_C \up^{-d_C'}\varphi^C_{A,B}\phi_C(\og_s)=\up^{-d_{D}'}\og_l,$$
 where $D=E_{l,t}^\vtg$ (and $\varphi^D_{A,B}=1$). From \eqref{End-dim-eqality} it follows that
$$\phi_A\phi_B(\og_s)=\og_l=\up^{\lr{\bfdim
M(A),\bfdim M(B)}+d_A'+d_B'}\sum_C
\up^{-d_C'}\varphi^C_{A,B}\phi_C(\og_s).$$
 Hence, $\phi$ is an algebra homomorphism.

To prove the proposition, it remains to show that $\phi$ coincides
with the restriction of $\xi_1$ to $\dHallr^+$. Since $\dHallr^+$ is
generated by $u_i^+$ and $\sfz^+_t$ for $i\in I$ and $t\in\mbz^+$,
we need to check that
$$\phi(u_i^+)=\xi_1(u_i^+)\;\;\text{and}\;\;
\phi(\sfz^+_t)=\xi_1(\sfz^+_t).$$
 The equality $\phi(u_i^+)=\xi_1(u_i^+)$ is trivial. For each
 $s\in\mbz$, we have by definition
$$\xi_1(\sfz_t^+)(\og_s)=\sfz_t^+\cdot\og_s=\og_{s-tn}.$$
On the other hand, by \eqref{expression sfx sfy},
$$\sfz_t^+=\sum_{l=1}^n {\ti
u}^+_{E^\vtg_{l,l+tn}}+\frac{t}{[t]}x^+,$$
 where $x^+$ is a linear combination of certain $u^+_A$ with
$M(A)$ decomposable. Hence, $\phi(x^+)=0$. Thus,
$$\phi(\sfz^+_t)(\og_s)=\sum_{l=1}^n \phi({\ti
u}^+_{E^\vtg_{l,l+tn}})(\og_s)=\sum_{l=1}^n\sum_{b<s}\dt_{E^\vtg_{l,l+tn},E_{b,s}^\vtg}\og_b=\og_{s-tn}.$$
 (Note that $E_{a,b}^\vtg=E_{a+n,b+n}^\vtg$ for $a,b\in\mbz$.)
 Hence, $\phi(\sfz^+_t)=\xi_1(\sfz^+_t)$. This completes the proof.
\end{proof}

\begin{Rem}\label{xi1} The above proposition implies that the action of
$\dHallr^-$ on $\bdOg$ induced from $\phi^-$ coincides with the
action given in \cite[Lem.~8.1]{VV99} which is defined geometrically
through the algebra homomorphism
$\zeta_1^-:\dHallr^-\to\boldsymbol\sS_\vtg(n,1)$; see
\eqref{zetarpm} below.
\end{Rem}

Now, for each $r\geq 1$, the Hopf algebra structure of $\bfU(\ti
C_\infty)$ induces a left $\bfU(\ti C_\infty)$-module structure on
the tensor space $\bdOg^{\otimes r}$ which has a $\mbq(\up)$-basis
$\{\og_\bfi\mid\bfi\in\mbz^r\}$.\index{tensor space!$\sim$ over $\mbq(\up)$, $\bdOg^{\otimes r}$} Since $\sfx_t$ and $\sfy_t$ are
primitive elements, we have by \eqref{QGKMAlg-action} that for each
$t\geq 1$ and $\og_\bfi=\og_{i_1}\ot\cdots\ot\og_{i_r}\in \bdOg^{\ot
r}$,
\begin{equation}\label{action-of-x_t-y_t}
\aligned &\sfx_t\cdot\og_\bfi=\sum_{s=1}^r
\og_{i_1}\ot\cdots\ot
\og_{i_{s-1}}\ot\og_{i_s-tn}\ot\og_{i_{s+1}}\ot\cdots\ot\og_{i_r}\;\;\text{and}\\
&\sfy_t\cdot\og_\bfi=\sum_{s=1}^r \og_{i_1}\ot\cdots\ot
\og_{i_{s-1}}\ot\og_{i_s+tn}\ot\og_{i_{s+1}}\ot\cdots\ot\og_{i_r}.
\endaligned\end{equation}
 Recall from \S3.3 that $\afbfHr$ has a right action on
$\bdOg^{\otimes r}$. The following result says that the left action
of $\bfU(\ti C_\infty)$ and the right action of $\afbfHr$ on
$\bdOg^{\otimes r}$ commute.

\begin{Prop} \label{actions tensor space commute} For each $r\geq 1$, the actions
of $\bfU(\ti C_\infty)$ and $\afbfHr$ on $\bdOg^{\otimes r}$
commute. In other words, the tensor space $\bdOg^{\otimes r}$ is a
$\bfU(\ti C_\infty)$-$\afbfHr$-bimodule.
\end{Prop}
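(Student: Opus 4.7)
The plan is to verify commutation on generators and reduce everything to two essentially disjoint calculations. Since $\bfU(\ti C_\infty)$ is generated by $E_i, F_i, K_i^{\pm1}$ ($i\in I$) together with the ``imaginary'' generators $\sfx_t, \sfy_t, \sfk_t^{\pm1}$ ($t\geq 1$), and since $\afbfHr$ is generated (in the Bernstein presentation recalled in \S3.3) by the $T_k$ ($1\leq k\leq r-1$) together with the Bernstein elements $X_j^{\pm1}$ ($1\leq j\leq r$), it suffices to check that each generator of the first algebra commutes with each generator of the second on the basis $\{\og_\bfi\mid\bfi\in\mbz^r\}$.

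First, I would handle the extended quantum affine $\mathfrak{sl}_n$ part. For $E_i,F_i,K_i^{\pm1}$ acting via \eqref{QGKMAlg-action}, the crucial observation is that the eigenvalues of $K_i$ and the shift conditions defining $E_i, F_i$ depend only on $\bar s = s \mnmod n$, which is preserved under $s\mapsto s\pm tn$. Consequently these generators commute with $X_j^{\pm1}$ on each tensor factor, and an induction using the comultiplication formulas in Corollary~\ref{coalgebra structure} transfers this to $\bdOg^{\otimes r}$. Their commutation with the $T_k$'s reduces, on elements $\og_\bfi$ with $\bfi\in I(n,r)$, to the classical Jimbo quantum Schur--Weyl duality (cf.\ \cite[Lem.~8.3]{VV99}); for general $\bfi\in\mbz^r$ one then uses the already-established commutation with the $X_j$'s to reduce to the $I(n,r)$ case.

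Next, the generators $\sfk_t^{\pm1}$ act as the identity on $\bdOg$, hence on $\bdOg^{\ot r}$, so they trivially commute with everything. The decisive remaining step is $\sfx_t$ and $\sfy_t$. Since $\Dt(\sfx_t)=\sfx_t\ot\sfk_t+1\ot\sfx_t$ and $\sfk_t$ acts trivially, formula \eqref{action-of-x_t-y_t} shows that $\sfx_t$ acts on $\bdOg^{\ot r}$ as $\sum_{j=1}^r X_j^{t}$ and, dually, $\sfy_t$ acts as $\sum_{j=1}^r X_j^{-t}$ (matching with the convention $\og_\bfi\cdot X_t^{-1}=\og_{\bfi\bse_t}$ from \eqref{afH action}). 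These are Newton power sums in $X_1^{\pm1},\ldots,X_r^{\pm1}$ and therefore lie in the Bernstein center of $\afbfHr$; in particular they commute with every $T_k$ and every $X_j^{\pm1}$, which gives the required commutation on $\bdOg^{\ot r}$.

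The step I expect to be the main obstacle is the clean identification of the $\sfx_t$-action with the power sum $\sum_j X_j^t$: one must handle the normalization $X_j=\ti T_{\bse_1+\cdots+\bse_{j-1}}\ti T_{\bse_1+\cdots+\bse_j}^{-1}$ carefully and verify that no spurious scalar appears when iterating the coproduct, and then invoke the standard fact (Bernstein's theorem) that symmetric Laurent polynomials in $X_1,\ldots,X_r$ form the center of $\afbfHr$. Once this identification is in place, centrality does all the remaining work, and the bimodule property follows by assembling the commutation verifications for the three families of generators above.
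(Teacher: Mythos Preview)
Your overall strategy is the same as the paper's: reduce to generators, identify the action of $\sfx_t,\sfy_t$ with the central power sums $\sum_j X_j^{\pm t}$, and for $E_i,F_i,K_i$ first handle commutation with the $X_j$'s (by the $\bmod\ n$ observation) and then reduce the general $\bfi\in\mbz^r$ case to $\bfi\in I(n,r)$.

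There is, however, a genuine gap in one step. Your claim that the commutation of $E_i,F_i$ with $T_k$ on $\og_\bfi$ for $\bfi\in I(n,r)$ ``reduces to the classical Jimbo quantum Schur--Weyl duality'' is only valid for $i\in\{1,\ldots,n-1\}$. The classical Jimbo duality concerns $\bfU_\up(\frak{gl}_n)$ acting on $\Og_n^{\ot r}$, but the affine generators $E_n,F_n$ do \emph{not} preserve the finite tensor space $\Og_n^{\ot r}$ (for instance $E_n\cdot\og_1=\og_0\notin\Og_n$), so no finite-type Schur--Weyl statement applies to them. Nor does \cite[Lem.~8.3]{VV99} supply the needed commutation: that lemma gives an $\afHr$-module isomorphism, not the $E_n,F_n$--$T_k$ compatibility. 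The paper handles this by a direct, case-by-case computation of $(E_n\cdot\og_\bfi)\cdot T_k$ versus $E_n\cdot(\og_\bfi\cdot T_k)$ for $\bfi\in I(n,r)$, splitting on whether $i_k=i_{k+1}$, $i_k<i_{k+1}$, or $i_k>i_{k+1}$ and tracking the exponents carefully using the comultiplication formula $\Dt^{(r-1)}(E_n)=\sum_j 1^{\ot j-1}\ot E_n\ot\ti K_0^{\ot(r-j)}$; the $F_n$ case is analogous. This is the substantive core of the argument and cannot be skipped.

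Everything else in your outline (the treatment of $\sfk_t$, the power-sum identification for $\sfx_t,\sfy_t$, the reduction from $\mbz^r$ to $I(n,r)$ via the $X_j$-commutation) matches the paper and is correct.
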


\begin{proof} Since $\bfU(\ti C_\infty)$ is generated by the set
${\scr U}:=\{K_i^{\pm1},\sfk_t^{\pm1}, E_i, F_i, \sfx_t, \sfy_t\mid
i\in I, t\in\mbz^+\}$ and $\afbfHr$ is generated by the set ${\scr
H}:=\{T_k, X_s\mid 1\leq k<r, 1\leq s\leq r\}$, it suffices to show
that
\begin{equation}\label{action-lambda-x}
u\cdot \bigl(\og_{\bfi}\cdot h\bigr) =\bigl(u\cdot
\og_{\bfi}\bigr)\cdot h
\end{equation}
 for all $u\in\scr U$, $h\in\scr H$, and $\bfi\in\mbz^r$. It is easy
to check from the definition that \eqref{action-lambda-x} holds for
$u=K_i^{\pm1}$ or $\sfk_t^{\pm1}$ and arbitrary $h\in\scr H$ (resp.
for $h=X_s$ and arbitrary $u\in\scr U$).

Furthermore, by \eqref{action-of-x_t-y_t}, for $t\geq 1$ and
$\bfi\in\mbz^r$,
$$\sfx_t\cdot\og_\bfi =
\og_\bfi\cdot\bigl( \sum_{1\leq s\leq
r}X_s^t\bigr)\;\,\text{and}\;\, \sfy_t\cdot\og_\bfi =\og_\bfi\cdot
\bigl(\sum_{1\leq s\leq r}X_s^{-t}\bigr).$$
 This implies that for any $h\in\scr H$,
$$\aligned
\,&(\sfx_t\cdot\og_\bfi)\cdot h = \bigg(\og_\bfi\cdot\big(
\sum_{1\leq s\leq r}X_s^{t}\big)\bigg)\cdot h=(\og_\bfi\cdot
h)\cdot\big(\sum_{1\leq s\leq r}X_s^{t}\big)=
\sfx_t\cdot(\og_\bfi\cdot h),\\
&(\sfy_t\cdot\og_\bfi)\cdot h = \bigg(\og_\bfi\cdot\big( \sum_{1\leq
s\leq r}X_s^{-t}\big)\bigg)\cdot h=(\og_\bfi\cdot
h)\cdot\big(\sum_{1\leq s\leq r}X_s^{-t}\big)=
\sfy_t\cdot(\og_\bfi\cdot h)
\endaligned$$
since $\sum_{1\leq s\leq r}X_s^{\pm t}$ are central elements in
$\boldsymbol{\sH}_\vtg(r)$.


Consequently, it remains to prove that \eqref{action-lambda-x} holds
for $\bfi\in\mbz^r$, $u=E_i, F_i$ ($i\in I$), and $h=T_k$ ($1\leq
k<r$).

Clearly, the subalgebra of $\bfU(\ti C_\infty)$ generated by
$K_i^{\pm 1}$, $E_j$ and $F_j$ ($i\in I$ and $j\in
I\backslash\{n\}$) is isomorphic to the quantum enveloping algebra
$\bfU_\up(\frak{gl}_n)$ of $\frak{gl}_n$, and the subalgebra of
$\afbfHr$ generated by $T_k$ ($1\leq k<r$) is isomorphic to the
Hecke algebra $\boldsymbol{\sH}(r)$ of ${\frak S}_r$. Thus, by
applying the result on quantum Schur algebras (see, e.g.,
\cite[Lem.~14.23]{DDPW}), we obtain that \eqref{action-lambda-x}
holds for $u=E_i, F_i$ ($i\in I\backslash\{n\}$), $h=T_k$ ($1\leq
k<r$), and $\bfi\in I(n,r)=\{(i_1,\ldots,i_r)\in\mbz^r\mid 1\leq
i_s\leq n\;\forall s\}$.

Suppose now $\bfi=(i_1,\ldots,i_r)\in I(n,r)$. Then, by definition,
$$E_n\cdot\og_\bfi=\sum_{1\leq j\leq r }\dt_{i_j,1}\up^{g(\bfi,j)}\og_{\bfi-\bse_j}$$
where $g(\bfi,j)=|\{s\mid j<s\leq r,\;i_s=n\}| -|\{s\mid j<s\leq
r,\;i_s=1\}|$ and $\bse_j=(\delta_{s,j})_{1\leq s\leq r}\in\mbz^r$.
In the following we show case by case that $(E_n\cdot\og_\bfi)\cdot
T_k=E_n\cdot(\og_\bfi\cdot T_k)$ for $1\leq k <r$.

\medskip

\noindent{\bf Case $i_k=i_{k+1}$}. In this case,
\begin{equation}\label{ik=ik+1}
(E_n\cdot\og_\bfi)\cdot T_k=\up^2\sum_{1\leq j\leq r\atop
j\not=k,k+1}\dt_{i_j,1}\up^{g(\bfi,j)}\og_{\bfi-\bse_j}
+\dt_{i_k,1}(\up^{g(\bfi,k)}\og_{\bfi-\bse_k}\cdot T_k
+\up^{g(\bfi,k+1)}\og_{\bfi-\bse_{k+1}}\cdot T_{k}).
\end{equation}
If $i_k=i_{k+1}=1$, then $g(\bfi,k)=g(\bfi,k+1)-1$. Hence,
\begin{equation*}
\begin{split}
&\qquad\dt_{i_k,1}(\up^{g(\bfi,k)}\og_{\bfi-\bse_k}\cdot T_k
+\up^{g(\bfi,k+1)}\og_{\bfi-\bse_{k+1}}\cdot T_{k})\\
&=\dt_{i_k,1}\big(\up^{g(\bfi,k)}\og_{\bfi+(n-1)\bse_k}\cdot(X_kT_k)
+\up^{g(\bfi,k+1)}\og_{\bfi+(n-1)\bse_{k+1}}\cdot
(X_{k+1}T_k)\big)\\
&= \dt_{i_k,1}\big(\up^{g(\bfi,k)+2}
\og_{\bfi+(n-1)\bse_k}\cdot(T_k^{-1}X_{k+1})
+\up^{g(\bfi,k+1)}\og_{\bfi+(n-1)\bse_{k+1}}\cdot(T_kX_k+(\up^2-1)X_{k+1})\big)\\
&= \dt_{i_k,1}\big(\up^{g(\bfi,k)+1} \og_{\bfi-\bse_{k+1}}
+\up^{g(\bfi,k+1)}(\up\og_{\bfi
-\bse_{k}}+(\up^2-1)\og_{\bfi-\bse_{k+1}})\big)\\
&=\dt_{i_k,1}\up^2(\up^{g(\bfi,k)}\og_{\bfi-\bse_k}+
\up^{g(\bfi,k+1)}\og_{\bfi-\bse_{k+1}}).
\end{split}
\end{equation*}
Applying \eqref{ik=ik+1} gives that
$$(E_n\cdot\og_\bfi)\cdot T_k=\up^2\sum_{1\leq j\leq
r}\dt_{i_j,1}\up^{g(\bfi,j)}\og_{\bfi-\bse_j}=\up^2E_n\cdot \og_\bfi
=E_n\cdot(\og_\bfi\cdot T_k).$$

\noindent{\bf Case $i_k<i_{k+1}$}. In particular, $i_{k+1}\not=1$.
By definition, we have
\begin{equation*}
\begin{split}
\,&\qquad(E_n\cdot \og_\bfi)\cdot T_k=
\sum_{1\leq j\leq r,\;i_j=1}\up^{g(\bfi,j)}\og_{\bfi-\bse_j}\cdot T_k\\
&=
\sum_{1\leq j\leq r,\;i_j=1}\up^{g(\bfi,j)}\og_{\bfi+(n-1)\bse_j}\cdot(X_jT_k)\\
&= \sum_{1\leq j\leq r,\;i_j=1\atop
j\not=k,k+1}\up^{g(\bfi,j)}\og_{\bfi+(n-1)\bse_j}\cdot(T_kX_j)
+\dt_{i_k,1}\up^{g(\bfi,k)}\og_{\bfi+(n-1)\bse_k}\cdot(\up^2
T_k^{-1}X_{k+1})\\
&=\sum_{1\leq j\leq r,\;i_j=1\atop
j\not=k,k+1}\up^{g(\bfi,j)+1}\og_{\bfi s_k-\bse_j}
+\dt_{i_k,1}\up^{g(\bfi,k)}\up^{1-\dt_{i_{k+1},n}}\og_{\bfi
s_k-\bse_{k+1}}
\end{split}
\end{equation*}
Since $g(\bfi s_k,k+1)=g(\bfi,k)-\dt_{i_{k+1},n}$ and $g(\bfi
s_k,j)=g(\bfi,j)$ for $j\not=k,k+1$, we get that
$$\aligned
(E_n\cdot \og_\bfi)\cdot T_k &=\up\sum_{1\leq j\leq r,\;i_j=1\atop
j\not=k,k+1}\up^{g(\bfi s_k,j)}\og_{\bfi s_k-\bse_j}
+\up\dt_{i_k,1}\up^{g(\bfi s_k,k+1)}\og_{\bfi s_k-\bse_{k+1}}\\
&=\up E_n\cdot \og_{\bfi s_k}=E_n\cdot(\og_\bfi\cdot T_k).
\endaligned$$

\noindent{\bf Case $i_k>i_{k+1}$.} In this case, $i_k\not=1$. It
follows from the definition that
\begin{equation*}
\begin{split}
\,&\;\;\;(E_n\cdot \og_\bfi)\cdot T_k =
\sum_{1\leq j\leq r,\;i_j=1}\up^{g(\bfi,j)}\og_{\bfi+(n-1)\bse_j}\cdot(X_jT_k)\\
&= \sum_{1\leq j\leq r,\;i_j=1\atop
j\not=k,k+1}\up^{g(\bfi,j)}\og_{\bfi+(n-1)\bse_j}\cdot(T_kX_j)
+\dt_{i_{k+1},1}\up^{g(\bfi,k+1)}\og_{\bfi+(n-1)
\bse_{k+1}}\cdot(T_kX_k+(\up^2-1)X_{k+1})\\
&=\sum_{1\leq j\leq r,\;i_j=1\atop j\not=k,k+1}\up^{g(\bfi,j)}(\up\og_{\bfi s_k-\bse_j}+(\up^2-1)\og_{\bfi-\bse_j})\\
&\qquad\qquad\qquad
+\dt_{i_{k+1},1}\up^{g(\bfi,k+1)}(\up^{1+\dt_{i_k,n}}\og_{\bfi s_k-\bse_k}+(\up^2-1)\og_{\bfi-\bse_{k+1}})\\
&= \up\bigg(\sum_{1\leq j\leq r,\;i_j=1\atop
j\not=k,k+1}\up^{g(\bfi,j)} \og_{\bfi
s_k-\bse_j}+\dt_{i_{k+1},1}\up^{g(\bfi,k+1)+\dt_{i_k,n}}
\og_{\bfi s_k-\bse_k}\bigg)\\
&\qquad\qquad\qquad
+(\up^2-1)\sum_{1\leq j\leq r,\;i_j=1}\up^{g(\bfi,j)}\og_{\bfi-\bse_j}.\\
\end{split}
\end{equation*}
Since $g(\bfi s_k,k)=g(\bfi,k+1)-\dt_{i_{k},n}$  and $g(\bfi
s_k,j)=g(\bfi,j)$ for $j\not=k,k+1$, we have
\begin{equation*}
\begin{split}
(E_n\cdot\og_\bfi)\cdot T_k&=
\up\sum_{1\leq j\leq r\atop(\bfi s_k)_j=1}\up^{g(\bfi s_k,j)}\og_{\bfi s_k-\bse_j}+(\up^2-1)E_n\cdot\og_\bfi\\
&=\up E_n\cdot \og_{\bfi s_k}+(\up^2-1)E_n\cdot \og_\bfi =E_n\cdot
(\og_\bfi\cdot T_k).
\end{split}
\end{equation*}
Similarly,  we have for $\bfi\in I(n,r)$ and $1\leq k<r$,
$$(F_n\cdot \og_\bfi)\cdot T_k=F_n\cdot(\og_\bfi\cdot T_k).$$
 We conclude that \eqref{action-lambda-x} holds
for $\bfi\in I(n,r)$, $u=E_i, F_i$, and $h=T_k$, where $i\in I$ and
$1\leq k<r$.

In general, for an arbitrary $\bfj\in\mbz^r$, write
$\og_\bfj=\og_\bfi\cdot( X_1^{t_1}\cdots X_r^{t_r})$ with $\bfi\in
I(n,r)$ for some $t_1,\ldots, t_r\in\mbz$. Let $\sX$ be the
subalgebra of $\afbfHr$ generated by $X_1^\pm,\ldots, X_r^\pm$. Then
$X_1^{t_1}\cdots X_r^{t_r}T_k=T_k x+y$ for some $x,y\in\sX$. By the
above discussion, we infer that for $u=E_i$ or $F_i$ ($i\in I$),
\begin{equation*}
\begin{split}
(u\cdot \og_\bfj)\cdot T_k&
=\big((u\cdot \og_\bfi)\cdot (X_1^{t_1}\cdots X_r^{t_r})\big)\cdot T_k
=(u\cdot\og_\bfi)\cdot(T_k x+y)\\
&=\big(u\cdot(\og_\bfi\cdot T_k)\big)\cdot x+u\cdot(\og_\bfi\cdot y)
=u\cdot\big((\og_\bfi\cdot T_k)\cdot x\big)+u\cdot(\og_\bfi\cdot y)\\
&=u\cdot\big(\og_\bfi\cdot( T_k x+y)\big)=u\cdot(\og_\bfj\cdot T_k).
\end{split}
\end{equation*}
The proof is completed.
\end{proof}

For each $r\geq 1$, the $\bfU(\ti C_\infty)$-$\afbfHr$-bimodule
structure on $\bdOg^{\otimes r}$ induces a $\mbq(\up)$-algebra
homomorphism
$$\ti\xi_r:\bfU(\ti C_\infty)\lra \End_{\afbfHr}(\bdOg^{\otimes
r})=\afbfSr,$$ which factors through the surjective Hopf algebra
homomorphism $\Phi:\bfU(\ti C_\infty)\ra\dHallr$. Hence, we obtain a
$\mbq(\up)$-algebra homomorphism
\begin{equation}\label{xir}
\xi_r:\dHallr\lra \End_{\afbfHr}(\bdOg^{\otimes r})=\afbfSr.
\end{equation}\index{$\xi_r$, epimorphism $\dHallr\to\afbfSr$}
In other words, the following diagram is commutative
 \begin{center}
\begin{pspicture}(2,-0.1)(4,2.2)
\psset{xunit=.8cm,yunit=.7cm} \uput[u](1,2){$\bfU(\ti C_\infty)$}
\uput[d](1,0.8){$\dHallr$}
\uput[r](4.5,1.4){$\End_{\afbfHr}(\bdOg^{\otimes
r})=\afbfSr$}\uput[l](1.1,1.4){$\Phi$} \psline{->}(1.1,2)(1.1,0.75)
\psline{->}(1.95,2.45)(4.3,1.5) \psline{->}(1.95,0.35)(4.3,1.2)
\uput[u](3.2,1.9){$\ti\xi_r$} \uput[d](3.2,0.9){$\xi_r$}
\end{pspicture}
\end{center}
 Since $\Phi(\sfx_t)=\sfz_t^+$ and $\Phi(\sfy_t)=\sfz_t^-$ for all $t\geq 1$,
 it follows from \eqref{action-of-x_t-y_t} that for each $t\geq 1$ and
$\og_\bfi=\og_{i_1}\ot\cdots\ot\og_{i_r}\in \bdOg^{\ot r}$,
\begin{equation}
\label{action central elts} \aligned
&\sfz_t^+\cdot\og_\bfi=\sum_{s=1}^r \og_{i_1}\ot\cdots\ot
\og_{i_{s-1}}\ot\og_{i_s-tn}\ot\og_{i_{s+1}}\ot\cdots\ot\og_{i_r}\;\;\text{and}\\
&\sfz_t^-\cdot\og_\bfi=\sum_{s=1}^r \og_{i_1}\ot\cdots\ot
\og_{i_{s-1}}\ot\og_{i_s+tn}\ot\og_{i_{s+1}}\ot\cdots\ot\og_{i_r}.
\endaligned
\end{equation}

\begin{Rem}  By Remark \ref{xi1}, the PBW type basis action for the Hall
algebra $\dHallr^-$ described in Proposition \ref{compareson with
VV's action} induces via comultiplication an action on the tensor
space $\bdOg^{\otimes r}$. Varagnolo and Vasserot have outlined a
proof of the fact that this Hall algebra action commutes with the
action of $\afbfHr$. Proposition \ref{compareson with VV's action}
and Remark \ref{xi1} show that this Hall algebra action coincides
with the restriction of the double Hall algebra action above, while
the proof in Proposition \ref{actions tensor space commute} is a
complete and more natural proof via the natural representation of
the quantum enveloping algebra $\bfU(\ti C_\infty)$. See \S3.6 for
an explicit description of the map $\xi_r$ and a further comparison
with work of Varagnolo and Vasserot.
\end{Rem}

We now describe the action of semisimple generators of $\dHallr$ on
$\bdOg^{\otimes r}$. Recall that for each
$\bfa=(a_i)\in\mbn^n=\mbn_\vtg^n$,
$$S_\bfa=\oplus_{i\in
I}a_iS_i,\;\;u_\bfa^\pm=u^\pm_{[S_\bfa]}\;\;\text{and}\;\; {\wt
u^\pm_{[S_\bfa]}}=\up^{\sum_{1\leq i\leq n}
a_i(a_i-1)}u^\pm_{[S_\bfa]}.$$
 The following result is a direct consequence of the definition of the
comultiplications in Proposition \ref{Green Xiao}(b) and Corollary
\ref{Green Xiao 2}(b$'$). See \cite[8.3]{VV99} for the second
formula.

\begin{Prop} \label{comult-form-ss} For $\bfa\in\mbn^n$, we have
\begin{equation*} \begin{split}
 &\Dt^{(r-1)}(\wt u_\bfa^+)
 =\sum_{\bfa=\bfa^{(1)}+\cdots+\bfa^{(r)}}
 \up^{\sum_{s>t}\lan\bfa^{(s)},\bfa^{(t)}\ran}\times\\
 &\hspace{4cm}\wt u_{\bfa^{(1)}}^+\ot\wt u_{\bfa^{(2)}}^+\ti K_{\bfa^{(1)}}\ot\cdots
 \ot\wt
 u_{\bfa^{(r)}}^+\ti K_{\bfa^{(1)}+\bfa^{(2)}+\cdots+\bfa^{(r-1)}},\\
 &\Dt^{(r-1)}(\wt u_\bfa^-)
 =\sum_{\bfa=\bfa^{(1)}+\cdots+\bfa^{(r)}}
 \up^{\sum_{s>t}\lan\bfa^{(s)},\bfa^{(t)}\ran}\times\\
 &\hspace{4cm}\wt u_{\bfa^{(1)}}^-\ti K_{-(\bfa^{(2)}+\cdots+\bfa^{(r)})}
 \ot\wt u_{\bfa^{(2)}}^-\ti K_{-(\bfa^{(3)}+\cdots+\bfa^{(r)})}
 \ot\cdots \ot\wt
 u_{\bfa^{(r)}}^-.
\end{split}
\end{equation*}
\end{Prop}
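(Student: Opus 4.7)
\smallskip

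My plan is to prove both formulas by induction on $r$, exploiting the coassociativity of $\Delta$, with the base case $r=2$ being an explicit computation from the comultiplication formula in Proposition \ref{Green Xiao}(b) (resp.\ Corollary \ref{Green Xiao 2}(b$'$)). I will only write out the argument for the $+$-formula; the $-$-case is strictly parallel via Corollary \ref{Green Xiao 2}(b$'$), so I would just indicate the sign/order swap.

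For the base case, I would apply Proposition \ref{Green Xiao}(b) to $u_{A_\bfa}^+$. The key observation is that any submodule (and any quotient) of a semisimple module is again semisimple, so the only terms in $\sum_{A,B\in\afThnp}\varphi^{A_\bfa}_{A,B}\,u_B^+\otimes u_A^+\tilde K_{\bfd(B)}$ that survive are those with $A=A_{\bfa^{(2)}}$ and $B=A_{\bfa^{(1)}}$ where $\bfa=\bfa^{(1)}+\bfa^{(2)}$ in $\mathbb N^n$. For such semisimple data the Hall polynomial and the order of the automorphism group are explicit: since $\mathrm{rad}\,\mathrm{End}(S_\bfa)=0$, \eqref{poly-auto-group} gives $\fka_{A_\bfa}=\prod_i|GL_{a_i}|$ (polynomial version), and the standard subspace count yields $\varphi^{A_\bfa}_{A_{\bfa^{(2)}},A_{\bfa^{(1)}}}=\prod_{i}\bigl[\!\begin{smallmatrix}a_i\\ a_i^{(1)}\end{smallmatrix}\!\bigr]_{v^2}$. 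A direct rewriting then shows $\frac{\fka_{A_{\bfa^{(1)}}}\fka_{A_{\bfa^{(2)}}}}{\fka_{A_\bfa}}\varphi^{A_\bfa}_{A_{\bfa^{(2)}},A_{\bfa^{(1)}}}$ equals the single monomial in $v$ coming from the normalization $\tilde u_\bfa^+=\up^{\sum_i a_i(a_i-1)}u_\bfa^+$; i.e.\ after multiplying by $\up^{\sum a_i(a_i-1)}$ and distributing as $\up^{\sum a_i^{(1)}(a_i^{(1)}-1)}\cdot\up^{\sum a_i^{(2)}(a_i^{(2)}-1)}$ on the right via $\tilde u_{\bfa^{(1)}}^+\otimes\tilde u_{\bfa^{(2)}}^+$, only the twist factor $\up^{\langle\bfa^{(2)},\bfa^{(1)}\rangle}$ survives. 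This establishes the $r=2$ case
$$\Delta(\tilde u_\bfa^+)=\sum_{\bfa=\bfa^{(1)}+\bfa^{(2)}}\up^{\langle \bfa^{(2)},\bfa^{(1)}\rangle}\,\tilde u_{\bfa^{(1)}}^+\otimes \tilde u_{\bfa^{(2)}}^+\tilde K_{\bfa^{(1)}}.$$

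For the inductive step, I would assume the $(r{-}1)$-fold formula for $\Delta^{(r-2)}(\tilde u_\bfb^+)$ and compute $\Delta^{(r-1)}=(\Delta\otimes\mathrm{id}^{\otimes(r-2)})\circ\Delta^{(r-2)}$ applied to $\tilde u_\bfa^+$ (equivalently, apply $\Delta$ on the leftmost slot). The $r{=}2$ case expands the leftmost factor $\tilde u_{\bfb^{(1)}}^+$ (with $\bfb^{(1)}$ the first index of the $(r{-}1)$-fold sum) into $\sum \up^{\langle\bfa^{(2)},\bfa^{(1)}\rangle}\tilde u_{\bfa^{(1)}}^+\otimes \tilde u_{\bfa^{(2)}}^+\tilde K_{\bfa^{(1)}}$. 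Reindexing so that the first two summands of the new decomposition play the role of $\bfa^{(1)},\bfa^{(2)}$ in the final formula, each $\tilde K$-factor in the later slots picks up an extra $\tilde K_{\bfa^{(1)}}$ by the commutation rule $K_\alpha u_A^\pm=\up^{\pm\langle\bfd(A),\alpha\rangle}u_A^\pm K_\alpha$ from Proposition \ref{Green Xiao}(a) and Corollary \ref{Green Xiao 2}(a$'$). Collecting these commutation exponents produces precisely the additional terms $\sum_{s>1}\langle\bfa^{(s)},\bfa^{(1)}\rangle$ needed to upgrade $\sum_{s>t\ge2}\langle\bfa^{(s)},\bfa^{(t)}\rangle$ (the inductive exponent) to $\sum_{s>t\ge1}\langle\bfa^{(s)},\bfa^{(t)}\rangle$.

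The main obstacle will be the bookkeeping of the various powers of $\up$: tracking the interaction between the $\tilde K$'s (which must all be moved into the standard position $\tilde K_{\bfa^{(1)}+\cdots+\bfa^{(s-1)}}$ in the $s$-th slot) and the $\tilde u_{\bfa^{(s)}}^+$ they pass through, while simultaneously absorbing the normalization $\up^{\sum a_i(a_i-1)}$ into the product $\prod_s \up^{\sum a_i^{(s)}(a_i^{(s)}-1)}$ so that only the cross terms $\langle\bfa^{(s)},\bfa^{(t)}\rangle$ remain. Once the $r=2$ identity is established with the correct exponent, coassociativity makes the induction essentially formal, but I would verify the index-shift carefully in one example (say $r=3$) before writing the general step. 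The $-$-formula is obtained by the same argument after replacing Proposition \ref{Green Xiao}(b) with Corollary \ref{Green Xiao 2}(b$'$) (whose comultiplication for $\Hallmi$ carries the opposite twist $\up^{-\langle\bfd(B),\bfd(A)\rangle}$ and has $\tilde K_{-\bfd(A)}$ on the left), so the inductive assembly produces the product $\tilde u_{\bfa^{(1)}}^-\tilde K_{-(\bfa^{(2)}+\cdots+\bfa^{(r)})}\otimes\cdots\otimes\tilde u_{\bfa^{(r)}}^-$ as stated.
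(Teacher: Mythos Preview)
Your approach is correct and is exactly what the paper has in mind: it states only that the formula is a direct consequence of the comultiplication in Proposition~\ref{Green Xiao}(b) and Corollary~\ref{Green Xiao 2}(b$'$), and your induction on $r$ via coassociativity is precisely how one unpacks that. One small correction to your inductive-step bookkeeping: applying $\Delta\otimes\mathrm{id}^{\otimes(r-2)}$ does not touch the later tensor slots at all, so no commutation of $\tilde K$'s past $\tilde u$'s is needed; the $\tilde K_{\bfb^{(1)}}$ already present in the later slots becomes $\tilde K_{\bfa^{(1)}+\bfa^{(2)}}$ simply because $\bfb^{(1)}=\bfa^{(1)}+\bfa^{(2)}$, and the missing exponents $\sum_{s\ge 3}\langle\bfa^{(s)},\bfa^{(1)}\rangle$ come from expanding $\langle\bfb^{(s')},\bfb^{(1)}\rangle=\langle\bfa^{(s'+1)},\bfa^{(1)}+\bfa^{(2)}\rangle$ by bilinearity of the Euler form---your planned $r=3$ check will make this clear.
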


This proposition together with Proposition \ref{compareson with VV's
action} gives the following corollary.

\begin{Coro} \label{action-ss} For $\bfa=(a_i)\in\afmbnn$ and ${\bf
i}=(i_1,\ldots,i_r)\in\mbz^r$, we have \begin{equation*}
\begin{split} \wt u_\bfa^+\cdot\og_{\bf i}
 &=\sum_{m_i\in\{0,1\}\,\forall i\atop
 \bfa=m_1\bfe_{i_1-1}+\cdots+m_r\bfe_{i_r-1}}
 \up^{\sum_{s>t}m_t(m_s-1)\lan\bfe_{i_s},\bfe_{i_t}\ran}
 \og_{i_1-m_1}\ot\cdots\ot\og_{i_r-m_r},\\
 \wt u_\bfa^-\cdot\og_{\bf i}
 &=\sum_{m_i\in\{0,1\}\,\forall i\atop
 \bfa=m_1\bfe_{i_1}+\cdots+m_r\bfe_{i_r}}
 \up^{\sum_{s>t}m_s(m_t-1)\lan\bfe_{i_s},\bfe_{i_t}\ran}
 \og_{i_1+m_1}\ot\cdots\ot\og_{i_r+m_r}.
 \end{split}
\end{equation*}
In particular, if $\sg(\bfa)=\sum_{i\in I}a_i>r$, then $\wt
u_\bfa^+\cdot\og_{\bf i}=0=\wt u_\bfa^-\cdot\og_{\bf i}$.
\end{Coro}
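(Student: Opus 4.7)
The plan is to expand $\wt u_\bfa^+\cdot\og_\bfi$ by applying the iterated comultiplication formula of Proposition \ref{comult-form-ss} and then evaluating each tensor factor via the single-element action of Proposition \ref{compareson with VV's action}. Precisely, writing $\Dt^{(r-1)}(\wt u_\bfa^+)$ as in that proposition, the action on $\og_{i_1}\otimes\cdots\otimes\og_{i_r}$ reduces to evaluating, at each slot $s$, the expression
$\bigl(\wt u_{\bfa^{(s)}}^{+}\,\ti K_{\bfa^{(1)}+\cdots+\bfa^{(s-1)}}\bigr)\cdot\og_{i_s}$
as $\bfa^{(1)}+\cdots+\bfa^{(r)}$ ranges over decompositions of $\bfa$ in $\afmbnn$.

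The crux is that Proposition \ref{compareson with VV's action} forces $\wt u_{\bfa^{(s)}}^{+}\cdot\og_{i_s}$ to vanish unless the matrix corresponding to $\bfa^{(s)}$ equals $0$ or $E^\vtg_{i_s-1,\,i_s}$; among semisimple modules this means $\bfa^{(s)}\in\{0,\bfe_{i_s-1}\}$. Hence the sum collapses to a sum over $\bfm=(m_1,\ldots,m_r)\in\{0,1\}^r$ subject to $\bfa=\sum_s m_s\bfe_{i_s-1}$, with surviving output $\og_{i_1-m_1}\otimes\cdots\otimes\og_{i_r-m_r}$.

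It remains to consolidate the two sources of $\up$-weights: the comultiplication produces
$\up^{\sum_{s>t}\lan\bfa^{(s)},\bfa^{(t)}\ran}=\up^{\sum_{s>t}m_sm_t\lan\bfe_{i_s-1},\bfe_{i_t-1}\ran}$,
and each diagonal $\ti K_{\bfa^{(1)}+\cdots+\bfa^{(s-1)}}$ acts on $\og_{i_s}$ by the scalar $\up^{\lan\bfa^{(1)}+\cdots+\bfa^{(s-1)},\bfe_{i_s}\ran}$ (a direct computation from \eqref{QGKMAlg-action} giving $\ti K_\al\cdot\og_s=\up^{\lan\al,\bfe_{\bar s}\ran}\og_s$), contributing the cumulative factor $\up^{\sum_{s>t}m_t\lan\bfe_{i_t-1},\bfe_{i_s}\ran}$. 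The Euler-form identities $\lan\bfe_{i_s-1},\bfe_{i_t-1}\ran=\lan\bfe_{i_s},\bfe_{i_t}\ran$ and $\lan\bfe_{i_t-1},\bfe_{i_s}\ran=-\lan\bfe_{i_s},\bfe_{i_t}\ran$ (both instances of $\lan\bfe_i,\bfe_j\ran=\dt_{i,j}-\dt_{i+1,j}$) then combine these into the claimed exponent $\sum_{s>t}m_t(m_s-1)\lan\bfe_{i_s},\bfe_{i_t}\ran$.

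The $\wt u_\bfa^{-}$ statement is obtained by the same argument using the second half of Proposition \ref{comult-form-ss} and the $\wt u_A^{-}$-formula of Proposition \ref{compareson with VV's action}; here $\bfa^{(s)}\in\{0,\bfe_{i_s}\}$ and the $\ti K$-factors sit to the right of $\wt u^{-}$ but still act diagonally on $\og_{i_s}$, producing the exponent $\sum_{s>t}m_s(m_t-1)\lan\bfe_{i_s},\bfe_{i_t}\ran$ by an analogous collection of Euler-form identities. The final vanishing under $\sg(\bfa)>r$ is then automatic, since every surviving summand satisfies $\sg(\bfa)=\sum_s m_s\le r$. The only real obstacle is bookkeeping: keeping straight whether $\bfe_{i_s-1}$ or $\bfe_{i_s}$ parametrizes the decomposition, and correctly applying the two Euler-form identities so that the final exponent takes the asymmetric form stated.
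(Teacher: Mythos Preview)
Your proof is correct and follows exactly the approach the paper indicates: the paper simply states that the corollary follows from Proposition~\ref{comult-form-ss} together with Proposition~\ref{compareson with VV's action}, and you have carried out precisely this computation, including the exponent bookkeeping (via the Euler-form identities) that the paper leaves implicit.
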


\begin{Coro}\label{n>r}
If $n>r$, then $\xi_r(\dHallr)=\xi_r(\bfU_\vtg(n))$.
\end{Coro}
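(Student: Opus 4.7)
The plan is to reduce the statement to showing that $\xi_r$ annihilates every semisimple generator $u_{m\delta}^{\pm}$ with $m\ge 1$, and then to propagate this vanishing through the triangular decomposition. One inclusion, $\xi_r(\bfU_\vtg(n))\subseteq\xi_r(\dHallr)$, is immediate from Remark \ref{subalgebra quantum gl}(2), so only the reverse inclusion requires work.

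First I would verify the vanishing step. The vector $\delta=(1,\ldots,1)\in\mbn^n$ is sincere, so $\sigma(m\delta)=mn\ge n>r$ for every $m\ge 1$. The ``In particular'' assertion of Corollary \ref{action-ss} then gives $\wt u_{m\delta}^{\pm}\cdot\og_\bfi=0$ for every $\bfi\in\mbz^r$, hence $\wt u_{m\delta}^{\pm}$ acts as zero on the whole tensor space $\bdOg^{\otimes r}$. Since $\wt u_{m\delta}^{\pm}$ differs from $u_{m\delta}^{\pm}$ by an invertible scalar in $\mbq(\up)$, this yields
\[
\xi_r(u_{m\delta}^{+})=0=\xi_r(u_{m\delta}^{-})\qquad\text{for all }m\ge 1.
\]

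Next I would use Proposition \ref{generators-RH-alg}, which asserts that $\bfHall$ (and hence, via the corresponding isomorphisms in \eqref{pm parts}, each of $\dHallr^{\pm}$) is generated as a $\mbq(\up)$-algebra by $\{u_i^{\pm}\mid i\in I\}\cup\{u_{m\delta}^{\pm}\mid m\ge 1\}$. Any $x\in\dHallr^{\pm}$ is therefore a finite $\mbq(\up)$-linear combination of monomials in these generators. Since $\xi_r$ is an algebra homomorphism, any monomial containing at least one factor $u_{m\delta}^{\pm}$ maps to $0$, whereas monomials involving only the simple generators $u_i^{\pm}$ lie in $\bfcomp^{\pm}\subseteq\bfU_\vtg(n)$. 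Consequently
\[
\xi_r(\dHallr^{+})\subseteq\xi_r(\bfcomp^{+})\subseteq\xi_r(\bfU_\vtg(n)),\qquad \xi_r(\dHallr^{-})\subseteq\xi_r(\bfcomp^{-})\subseteq\xi_r(\bfU_\vtg(n)).
\]

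Finally I would assemble the pieces using the triangular decomposition $\dHallr=\dHallr^{+}\cdot\dHallr^{0}\cdot\dHallr^{-}$ from \eqref{pm parts}. Since $\dHallr^{0}\subseteq\bfU_\vtg(n)$ by definition, multiplying the three inclusions gives $\xi_r(\dHallr)\subseteq\xi_r(\bfU_\vtg(n))$, which combined with the trivial reverse inclusion yields the claimed equality. The argument is structural, and the only substantive ingredient is the vanishing furnished by Corollary \ref{action-ss}; there is no serious obstacle, though one should be mindful that the proof genuinely requires the \emph{strict} inequality $n>r$, since equality would already allow $\sigma(\delta)=n=r$ and the argument would fail.
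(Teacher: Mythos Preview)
Your proof is correct and follows essentially the same approach as the paper: both use Proposition~\ref{generators-RH-alg} to reduce to the generators $u_i^\pm$, $K_i^{\pm1}$, $u_{s\delta}^\pm$, and then invoke Corollary~\ref{action-ss} to kill the $u_{s\delta}^\pm$ since $\sigma(s\delta)=sn>r$. The paper's version is simply more compressed, observing directly that $\dHallr$ is generated by all these elements together and hence $\xi_r(\dHallr)$ is generated by the images of $u_i^\pm$ and $K_i^{\pm1}$, without separating into the $\pm$ parts and reassembling via the triangular decomposition.
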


\begin{proof} By Proposition \ref{generators-RH-alg}, $\dHallr$ is generated
by $u^+_i, u_i^-, K_i^{\pm 1}, u_{s\dt}^+, u_{s\dt}^-$ ($i\in I$,
$s\in\mbz^+$). If $n>r$, then $\dim S_{s\dt}=sn>r$. By Corollary
\ref{action-ss},
$$u_{s\dt}^+\cdot \og_\bfi=0=u_{s\dt}^-\cdot \og_\bfi,\,\, \text{ for all $\og_\bfi\in\bdOg^{\ot r}$}.$$
Hence, $\xi_r(u_{s\dt}^+)=0=\xi_r(u_{s\dt}^-)$ for all $s\geq1$, and
$\xi_r(\dHallr)$ is generated by the images of $u^+_i, u_i^-,
K_i^{\pm 1}$. This proves the equality.
\end{proof}

\begin{Rem}\label{NonrootOfUnity4}
We remark that, if $z\in\mbc$ is not a root of unity, and $\DC(n)$ is the specialized double Hall algebra
considered in Corollary \ref{NonrootOfUnity3}, there is a $\DC(n)$-action on the complex
space $\Og_\mbc^{\ot r}$ which gives rise to an algebra homomorphism
\begin{equation}\label{xi_{r,z}}
\xi_{r,\mbc}:\DC(n)\lra\afSr_{\mbc}.
\end{equation}\index{$\zrC$, epimorphism $\DC(n)\to\afSrC$}
\end{Rem}

\section{A comparison with the Varagnolo--Vasserot action}

In \cite{VV99}, Varagnolo--Vasserot defined a Hall algebra action on
the tensor space via the action on $\bfOg$ (Remark \ref{xi1}) and
the comultiplication $\Delta$ and proved in \cite[8.3]{VV99} that
this action agrees with the affine quantum Schur algebra action via
the algebra homomorphism $\zeta_r^-$ described in Proposition
\ref{afzrpm}(1) below. We will define the algebra homomorphism
$\zeta_r^+$ in \ref{afzrpm}(2) opposite to $\zeta_r^-$ and show that
$\xi_r$ in \eqref{xir} is their extension. In other words, $\xi_r$
coincides with $\zeta_r^+$ and $\zeta_r^-$ upon restriction (Theorem
\ref{xirl}).

We first follow \cite{VV99} to define an algebra homomorphism from
the Ringel--Hall algebra $\Hall$ of the cyclic quiver $\tri$ to
the affine quantum Schur algebra $\afSr$. This definition relies on
an important relation between cyclic flags and representations
of cyclic quivers.

Recall from \eqref{iso-Schur-Function} the geometric
characterization of affine quantum Schur algebras and the flag
varieties $\scrY=\afFn(q)$, $\scrX=\afBr(q)$ (and $G=G(q)$) over the
finite field $\field=\field_q$. Let
$\bfL=(L_i)_{i\in\mbz},\bfL'=(L_i')_{i\in\mbz}\in \scrY$ satisfy
$\bfL'\han\bfL$ i.e., $L_i'\subseteq L_i$ for all $i\in \mbz$. By
\cite[\S9]{GV} and \cite[5.1]{Lu99}, we can view $\bfL/\bfL'$ as a
nilpotent representation $V=(V_i,f_i)$ of $\tri$ over $\field$
such that $V_i=L_i/L_i'$ and $f_i$ is induced by the inclusion
$L_i\subseteq L_{i+1}$ for each $i\in I$. Here we identify
$L_{n+1}/L_{n+1}'$ with $L_1/L_1'$ via the multiplication by $\ep$.
Further, for $\bfL\han\bfL'$, we define the integers
\begin{equation}\label{a and c}
\aligned
a(\bfL,\bfL')&=\sum_{1\leq i\leq n}\dim_\field(L_i'/L_i)(\dim_\field(L_{i+1}/L_i)-\dim_\field(L_i'/L_i))\;\;\text{and}\\
c(\bfL',\bfL)&=\sum_{1\leq i\leq n}\dim_\field(L_{i+1}'/L_{i+1})
(\dim_\field(L_{i+1}/L_i)-\dim_\field(L_i'/L_i)).
\endaligned
\end{equation}

Recall also from \eqref{Hall algebra defined using function} the geometric characterization
$\HallL$ of the Ringel--Hall algebra $\Hall$. Thus, by
specializing $v$ to $q^{\frac{1}{2}}$, there
is a $\mbc$-algebra isomorphism
$$\Hall\ot_{\sZ}\mbc\lra \HallL,\;\ti u_A\lm
\lan\ttO_{A}\ran=q^{-\frac{1}{2}\dim\ttO_{A}}\chi_{_{\ttO_{A}}},$$
where $\ttO_{A}$ is the $G_V$-orbit in the representation variety
$E_V$ corresponding to the isoclass of $M(A)$.

By identifying $\afSr\ot_{\sZ}\mbc$ with $\mbc_G(\scrY\times \scrY)$, $\Hall\ot_{\sZ}\mbc$
with $\HallL$ (and hence, $\Hall^{\rm op}\ot_{\sZ}\mbc$ with
$\HallL^{\rm op}$), and recalling the elements $A(\bfj,r)\in\afSr$ for each $A\in\afThnpm$ and
$\bfj\in\afmbzn$ defined in \eqref{def-A(j,r)}, \cite[Prop.~7.6]{VV99} can now be stated
as the first part of the following (cf. footnote 2 of Chapter 1).

\begin{Prop} \label{afzrpm}
{\rm(1)} There is a $\sZ$-algebra homomorphism
$$\afzrm:\Hall^{\rm op}\lra\afSr,\;\;\ti u_A\lm \tA(\bfl,r)\;\;
\text{for all $A\in\afThnp$}$$
 such that the induced map
$\afzrm\ot {\rm id}_\mbc:\HallL^{\rm
op}\lra\mbc_G(\scrY\times \scrY)$
 is given by
\begin{equation}\label{equation1 in A(bfj,r) acts on (bfL,bfL')}
(\afzrm\ot {\rm id}_\mbc)(f)(\bfL,\bfL')=
\begin{cases}
q^{-\frac{1}{2}a(\bfL,\bfL')}f(\bfL'/\bfL), &
\text{if $\bfL\han\bfL'$;}\\
0, &\text{otherwise.}
\end{cases}
\end{equation}
{\rm(2)} Dually, there is a $\sZ$-algebra homomorphism
$$\afzrp:\Hall\lra\afSr,\;\;\ti u_A \lm A(\bfl,r)\;\;\text{for all
$A\in\afThnp$}$$
with the induced map
$\afzrp\ot {\rm id}_\mbc:\HallL\lra\mbc_G(\scrY\times \scrY)$
given by
\begin{equation}\label{equation3 in A(bfj,r) acts on (bfL,bfL')}
(\afzrp\ot {\rm id}_\mbc)(f)(\bfL,\bfL')=
\begin{cases}
q^{-\frac{1}{2}c(\bfL,\bfL')}f(\bfL/\bfL'), &\text{if $\bfL'\han\bfL$;}\\
0,&\text{otherwise.}
\end{cases}
\end{equation}
\end{Prop}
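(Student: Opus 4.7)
Part~(1) is \cite[Prop.~7.6]{VV99}, which I take as given. To derive (2), the plan is to compose $\afzrm$ with the anti-involution $\tau_r$ of $\afSr$ from \eqref{taur}. Since $\afzrm:\Hall^{\mathrm{op}}\to\afSr$ is an algebra homomorphism over $\sZ$, viewed as a map $\Hall\to\afSr$ it is an anti-homomorphism, and composing with the anti-involution $\tau_r$ produces a $\sZ$-algebra homomorphism $\afzrp:=\tau_r\circ\afzrm:\Hall\to\afSr$. A direct calculation with $\tau_r([C])=[{}^t\!C]$, the invariance of diagonal matrices under transposition, and the definition \eqref{def-A(j,r)} gives
\[
\afzrp(\ti u_A)=\tau_r(\tA(\bfl,r))=\sum_{\la\in\La_\vtg(n,r-\sg(A))}[{}^t(\tA+\diag(\la))]=\sum_\la[A+\diag(\la)]=A(\bfl,r),
\]
which matches the prescription in (2).

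For the geometric description, I would transport $\tau_r$ across the isomorphism $\afSr_\mbc\cong\mbc_G(\scrY\times\scrY)$. Using $[C]=\up^{-d_C}e_C$ and the orbit identity $\chi_{\tC}(\bfL,\bfL')=\chi_C(\bfL',\bfL)$ from \eqref{transpose matrix}, one checks that, for any $f\in\mbc_G(\scrY\times\scrY)$ and any $(\bfL,\bfL')\in\sO_B$,
\[
(\tau_r f)(\bfL,\bfL')=q^{\frac{1}{2}(d_{\tB}-d_B)}f(\bfL',\bfL).
\]
Applying this with $f=(\afzrm\ot{\rm id}_\mbc)(h)$ for $h\in\HallL$ and invoking part~(1) shows that $(\afzrp\ot{\rm id}_\mbc)(h)(\bfL,\bfL')$ vanishes unless $\bfL'\han\bfL$, in which case it equals $q^{-\frac{1}{2}(d_B-d_{\tB}+a(\bfL',\bfL))}h(\bfL/\bfL')$. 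Matching this against the formula asserted in (2) reduces the proof to the combinatorial identity
\[
d_B-d_{\tB}+a(\bfL',\bfL)=c(\bfL,\bfL'),
\]
where $B$ is the orbit matrix of $(\bfL,\bfL')$.

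The main obstacle is the verification of this identity, which I would handle by a direct expansion. Writing $B=A+\diag(\nu)$ for the unique $\nu$ determined by $\ro(B)$ and $\co(B)$, I would expand $d_B$ and $d_{\tB}$ from the BLM formula \eqref{sqAdA}, using that $A\in\afThnp$ is strictly upper triangular to separate the diagonal and off-diagonal contributions, and expand $a(\bfL',\bfL)$ and $c(\bfL,\bfL')$ in terms of the dimensions $d_i=\dim_\field(L_i/L_i')$ of $\bfL/\bfL'\cong M(A)$ together with the consecutive quotient dimensions $\la'_i=\dim_\field(L_{i+1}'/L_i')$ of $\bfL'$. The key ingredients are the dimension relation $\la'_i=\dim(L_{i+1}/L_i)+d_{i+1}-d_i$, the periodicity $d_{i+n}=d_i$, and the bijection \eqref{Ajmath} linking $B$ to the pair of flag types; with these in hand both sides should collapse to the same ``shift-by-one'' expression $\sum_i(d_{i+1}-d_i)(\la'_i-d_i)$. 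Since the construction is $\sZ$-linear throughout and the asserted equality is one of integers, integrality of $\afzrp$ over $\sZ$ is automatic.
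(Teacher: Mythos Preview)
Your proposal is correct and follows essentially the same route as the paper: both take part~(1) from \cite{VV99}, define $\afzrp$ as $\tau_r$ composed with $\afzrm$, transport $\tau_r$ to the convolution picture via the factor $q^{\frac12(d_{\tB}-d_B)}$, and reduce to the combinatorial identity $d_B-d_{\tB}+a(\bfL',\bfL)=c(\bfL,\bfL')$. The paper's verification of this identity uses the row/column-sum formula $d_A-d_{\tA}=\frac12\sum_i(\sum_j a_{i,j})^2-\frac12\sum_j(\sum_i a_{i,j})^2$ and an expansion of $\sum_i(\dim L_{i+1}/L_i)^2$, which is a slightly different bookkeeping of the same computation you outline with the $d_i$ and $\la'_i$.
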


\begin{proof} Statement (1) is given in \cite[7.6]{VV99}. We only need to prove (2).
We first observe from the proof of \cite[Lem.~1.11]{Lu99} that
$$d_A-d_{\tA}=\frac{1}{2}\sum_{1\leq i\leq n}\bigg(\sum_{j
\in\mbz}a_{ij}\bigg)^2-\frac{1}{2}\sum_{1\leq j\leq
n}\bigg(\sum_{i\in\mbz}a_{ij}\bigg)^2.$$ Let $\zeta_r^+$ be the
composition of the algebra homomorphisms
$$\Hall\stackrel{(\zeta_r^-)^{\rm op}}{\lra}\afSr^{\rm op}\stackrel{\tau_r}{\lra}
\afSr,$$
where $\tau_r$ is the anti-involution on $\afSr$ given in \eqref{taur}. Thus, $\tau_r$ induces a map $\tau_r\ot
{\rm id}:\mbc_G(\scrY\times\scrY)\ra\mbc_G(\scrY\times\scrY)$. Applying this to the characteristic function $\chi_A$
of the orbit $\sO_A$ in $\scrY\times\scrY$ and noting
\eqref{transpose matrix} yields
$$(\tau_r\ot
{\rm id})(\chi_A)(\bfL,\bfL')=q^{\frac{1}{2}(d_A-d_{\tA})}
\chi_{\tA}(\bfL,\bfL')=q^{\frac{1}{2}(d_A-d_{\tA})}
\chi_A(\bfL',\bfL)$$ which is nonzero if and only if
$(\bfL',\bfL)\in\sO_A$. This implies
$a_{i,j}=\dim_\field\frac{L_i'\cap L_j}{L_{i-1}'\cap L_j+L_i'\cap
L_{j-1}}$. Hence, by the proof of \cite[1.5(a)]{Lu99},
$\dim_\field(L_i'/L_{i-1}')=\sum_{j \in\mbz}a_{i,j}$ and
$\dim_\field(L_j/L_{j-1})=\sum_{i \in\mbz}a_{i,j}$. Putting
$$b(\bfL,\bfL')=\frac{1}{2}(\sum_{1\leq i\leq n}
 ((\dim_\field(L_i'/L_{i-1}'))^2-\sum_{1\leq i\leq n}
 (\dim_\field(L_i/L_{i-1}))^2)),$$
we obtain $(\tau_r\ot {\rm id})(\chi_A)(\bfL,\bfL')=
q^{\frac{1}{2}b(\bfL,\bfL')}\chi_{_A}(\bfL',\bfL)$. Hence,
$(\tau_r\ot {\rm
id})(g)(\bfL,\bfL')=q^{\frac{1}{2}b\bigl(\bfL,\bfL')}g(\bfL',\bfL)$
for $g\in\mbc_G(\scrY\times\scrY\bigr)$. Taking $g=(\afzrm\ot {\rm
id})(f)$ with $f\in\HallL$, and applying \eqref{equation1 in
A(bfj,r) acts on (bfL,bfL')} gives
\begin{equation}\label{equation2 in A(bfj,r) acts on (bfL,bfL')}
(\tau_r\ot {\rm id})((\afzrm)^{\rm op}\ot {\rm id})(f)(\bfL,\bfL')=
\begin{cases}
q^{\frac{1}{2}(b(\bfL,\bfL')-a(\bfL',\bfL))}f(\bfL/\bfL'),&
\text{if $\bfL'\han\bfL$};\\
0,& \text{otherwise}.
\end{cases}
\end{equation}
For $\bfL'\han\bfL$, we have
\begin{equation*}
\begin{split}
&\quad \sum_{1\leq i\leq n}\dim_\field(L_{i+1}/L_i)^2 \\&
=\sum_{1\leq i\leq n}(\dim_\field(L_{i+1}/L_{i+1}')+
\dim_\field(L_{i+1}'/L_i')-\dim_\field(L_i/L_i'))^2\\
&=2\sum_{1\leq i\leq n}(\dim_\field(L_i/L_i'))^2+\sum_{1\leq i\leq
n}(\dim_\field(L_{i+1}'/L_i'))^2\\
&\quad +2\sum_{1\leq i\leq n}\dim_\field(L_{i+1}/L_{i+1}')\dim_\field(L_{i+1}'/L_i')\\
&\quad-2\sum_{1\leq i\leq
n}\dim_\field(L_{i+1}/L_{i+1}')\dim_\field(L_i/L_i')-2\sum_{1\leq
i\leq n}\dim_\field(L_{i+1}'/L_i')\dim_\field(L_i/L_i').
\end{split}
\end{equation*}
Hence, $b(\bfL,\bfL')-a(\bfL',\bfL)=-c(\bfL,\bfL')$ and \eqref{equation3 in A(bfj,r) acts on (bfL,bfL')} follows now from
\eqref{equation2 in A(bfj,r) acts on (bfL,bfL')}.
\end{proof}

For notational simplicity, we write $A(\bfj,r)$ for $A(\bfj,r)\ot 1$
in $\afSr\ot_{\sZ}\mbc$. By taking $f$ to be
$q^{-\frac{1}{2}\dim\sO_{A}}\chi_{_{\ttO_{A}}}$ in \eqref{equation1
in A(bfj,r) acts on (bfL,bfL')} and \eqref{equation3 in A(bfj,r)
acts on (bfL,bfL')}, we obtain the following.

\begin{Coro}\label{A(bfj,r) acts on (bfL,bfL')}
For $A\in\afThnp$ and $(\bfL,\bfL')\in\mathscr Y\times\mathscr Y$,
we have
\begin{equation*}
A(\bfl,r)(\bfL,\bfL')=
\begin{cases}
q^{-\frac{1}{2}(c(\bfL,\bfL')+\dim\ttO_A)},
&\qquad\text{if $\bfL'\han\bfL$ and $\bfL/\bfL'\in\ttO_A$};\\
0,&\qquad\text{otherwise}
\end{cases}\\
\end{equation*}
and
\begin{equation*}
\tA(\bfl,r)(\bfL,\bfL')=
\begin{cases}
q^{-\frac{1}{2}(a(\bfL,\bfL')+\dim\ttO_A)},&\qquad\text{if $\bfL\han\bfL'$ and $\bfL'/\bfL\in\ttO_A$};\\
0,&\qquad\text{otherwise.}
\end{cases}
\end{equation*}
\end{Coro}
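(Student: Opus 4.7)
The plan is to observe that this corollary is essentially an immediate substitution into the two formulas \eqref{equation1 in A(bfj,r) acts on (bfL,bfL')} and \eqref{equation3 in A(bfj,r) acts on (bfL,bfL')} from Proposition \ref{afzrpm}, combined with the geometric realization of the PBW basis of the Ringel--Hall algebra recorded in \eqref{Hall algebra defined using function}. The hint that precedes the statement already tells us exactly which function $f$ to plug in, so the work is purely bookkeeping.

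First I would fix $A\in\afThnp$ and recall that under the isomorphism $\Hall\otimes_\sZ\mbc \to \HallL$ of \eqref{Hall algebra defined using function}, the PBW basis element $\ti u_A$ corresponds to $\lan\ttO_A\ran = q^{-\frac12\dim\ttO_A}\chi_{\ttO_A}$. For the first equality, I take $f=\lan\ttO_A\ran \in \HallL$ and apply $\afzrp\otimes\mathrm{id}_\mbc$. By Proposition \ref{afzrpm}(2), the image of $\ti u_A$ under $\afzrp$ is precisely $A(\bfl,r)$, so $A(\bfl,r)$ viewed as a $G$-invariant function on $\mathscr Y\times\mathscr Y$ equals $(\afzrp\otimes\mathrm{id}_\mbc)(\lan\ttO_A\ran)$. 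Evaluating at $(\bfL,\bfL')$ via formula \eqref{equation3 in A(bfj,r) acts on (bfL,bfL')} gives $0$ unless $\bfL'\han\bfL$, in which case it equals $q^{-\frac12 c(\bfL,\bfL')}\cdot q^{-\frac12\dim\ttO_A}\chi_{\ttO_A}(\bfL/\bfL')$. The characteristic function $\chi_{\ttO_A}(\bfL/\bfL')$ is nonzero (and equal to $1$) exactly when $\bfL/\bfL'\in\ttO_A$, which yields the claimed value $q^{-\frac12(c(\bfL,\bfL')+\dim\ttO_A)}$.

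The second equality is obtained by the parallel argument using Proposition \ref{afzrpm}(1) in place of (2): since $\afzrm(\ti u_A)=\tA(\bfl,r)$, we apply $\afzrm\otimes\mathrm{id}_\mbc$ to the same $f=\lan\ttO_A\ran$ and use formula \eqref{equation1 in A(bfj,r) acts on (bfL,bfL')}, which now imposes the opposite containment $\bfL\han\bfL'$ and replaces $c$ by $a$, producing $q^{-\frac12(a(\bfL,\bfL')+\dim\ttO_A)}$ when $\bfL'/\bfL\in\ttO_A$ and $0$ otherwise.

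There is no real obstacle here: the two nontrivial ingredients (the geometric presentation of $\Hall$ and the explicit action formulas for $\afzrp$, $\afzrm$) have already been established, and the corollary just specializes them to a single basis function. The only point requiring a small amount of care is ensuring that the normalization $\ti u_A = \up^{d'_A} u_A$ of \eqref{tilde u_A} matches the normalization $\lan\ttO_A\ran = q^{-\frac12\dim\ttO_A}\chi_{\ttO_A}$ under the specialization $\up\mapsto q^{1/2}$; this is precisely what \eqref{Hall algebra defined using function} records, so one simply cites it rather than re-deriving the exponent.
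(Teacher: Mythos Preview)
Your proposal is correct and follows exactly the approach the paper takes: the paper states the corollary as an immediate consequence of substituting $f = q^{-\frac12\dim\ttO_A}\chi_{\ttO_A}$ into the formulas \eqref{equation1 in A(bfj,r) acts on (bfL,bfL')} and \eqref{equation3 in A(bfj,r) acts on (bfL,bfL')}, which is precisely what you do (with the additional, correct, justification that this $f$ is the image of $\ti u_A$ under \eqref{Hall algebra defined using function}).
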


We now identify $\bfHall$ as $\dHallr^+$ and $\bfHall^{\rm op}$ as $\dHallr^-$ via \eqref{pm parts}. Then $\zeta_r^\pm$ induce $\mbq(\up)$-algebra homomorphisms
\begin{equation}\label{zetarpm}\zeta_r^\pm:\dHallr^\pm\lra\afbfSr.
\end{equation}

\begin{Thm}\label{xirl} For every $r\geq0$, the map $\xi_r:\dHallr\lra\afbfSr$ defined in \eqref{xir} is the (unique) algebra homomorphism satisfying
$$\xi_r(K_1^{j_1}\cdots K_n^{j_n})=0(\bfj,r),\;\xi_r(\ti u_A^-)=\zeta_r^-(\ti u_A^-),\;\;\text{and}\;\;
\xi_r(\ti u_A^+)=\zeta_r^+(\ti u_A^+),$$
for all $\bfj=(j_1,\ldots,j_n)\in \mbz^n$ and $A\in \afThnp$.
In particular,
we have $\xi_r|_{\dHallr^\pm}=\zeta_r^\pm$.\end{Thm}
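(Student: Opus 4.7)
\noindent\textbf{Proof plan for Theorem \ref{xirl}.}

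The plan is to use uniqueness and generator-by-generator verification. First, observe that $\dHallr$ is generated, as an algebra, by $\{\ti u_A^\pm\mid A\in\afThnp\}\cup\{K_i^{\pm1}\mid i\in I\}$ (this is clear from the triangular decomposition $\dHallr=\dHallr^+\otimes\dHallr^0\otimes\dHallr^-$ combined with \eqref{pm parts}). Since any algebra homomorphism out of $\dHallr$ is determined by its values on these generators, it suffices to verify the three prescribed identities. Moreover the identity for $K_\alpha$ combined with $\xi_r|_{\dHallr^\pm}=\zeta_r^\pm$ implies the ``in particular'' assertion at the end.

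The identity $\xi_r(K_1^{j_1}\cdots K_n^{j_n})=0(\bfj,r)$ is a direct calculation on the basis $\{\omega_\bfi\mid \bfi\in\afInr\}$ of $\bdOg^{\otimes r}$. Using \eqref{QGKMAlg-action} and the compatibility of $K_i$ with the comultiplication (each $K_i$ is group-like), one obtains
\[
\xi_r(K_1^{j_1}\cdots K_n^{j_n})\cdot\omega_\bfi
=\up^{\sum_{k=1}^r j_{\bar i_k}}\omega_\bfi
=\up^{\bfj\centerdot\la(\bfi)}\omega_\bfi,
\]
where $\la(\bfi)\in\afLanr$ is the weight defined by $\la(\bfi)_i=|\{k\mid \bar i_k=i\}|$. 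On the other hand, by \eqref{product [diag(la)][A] in affine q-Schur algebra}, $[\diag(\mu)]$ acts on $\bdOg^{\otimes r}$ as the projection onto the weight $\mu$ subspace (via the evaluation isomorphism in Proposition~\ref{bimodule-isom}, since $\omega_\bfi$ sits in the weight $\la(\bfi)$ subspace of $\bop_\la x_\la\afHr$). Hence, by the definition \eqref{def-A(j,r)}, $0(\bfj,r)\cdot\omega_\bfi=\up^{\la(\bfi)\centerdot\bfj}\omega_\bfi$, matching $\xi_r(K_\bfj)\cdot\omega_\bfi$.

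For the equalities $\xi_r(\ti u_A^\pm)=\zeta_r^\pm(\ti u_A^\pm)$, the strategy is to reduce to $r=1$ and then propagate via comultiplication. At $r=1$, Proposition \ref{compareson with VV's action} describes the action of $\ti u_A^\pm$ on $\bdOg$ via $\xi_1$, and Remark \ref{xi1} identifies the $\dHallr^-$-action on $\bdOg$ with the geometric action of \cite[Lem.~8.1]{VV99}, which is by definition the action through $\zeta_1^-$; hence $\xi_1|_{\dHallr^-}=\zeta_1^-$. The identity $\xi_1|_{\dHallr^+}=\zeta_1^+$ is verified by the parallel direct computation: for $A\in\afThnp$ with $\sg(A)=1$, say $A=\afE_{t,s}$ with $t<s$, one has $\zeta_1^+(\ti u_A^+)=A(\bfl,1)=[\afE_{t,s}]$, and an easy computation via \eqref{product [diag(la)][A] in affine q-Schur algebra} and the action of $\afbfSr$ on $\bdOg\cong\bop_{\lambda\in\afLanr[r=1]}x_\lambda\afbfHr$ shows $[\afE_{t,s}]\cdot\omega_s=\omega_t$, agreeing with the formula in Proposition~\ref{compareson with VV's action}; for $\sg(A)\geq 2$ both sides vanish.

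For general $r$, the definition \eqref{xir} of $\xi_r$ is that $\xi_r(x)$ acts on $\bdOg^{\otimes r}$ through the comultiplicative rule $x\mapsto\Delta^{(r-1)}(x)$ followed by the tensor-factor-wise action of $\xi_1$. The key input is the fact, established in \cite[Lem.~8.3]{VV99} (via the identification $\sT_\vtg(n,r)\cong\bdOg^{\otimes r}$ from Proposition~\ref{bimodule-isom}), that $\zeta_r^-$ is compatible in exactly the same way: $\zeta_r^-(x)$ acts on $\bdOg^{\otimes r}$ as $\zeta_1^-$ applied componentwise after $\Delta^{(r-1)}$. Combined with $\xi_1|_{\dHallr^-}=\zeta_1^-$, this immediately gives $\xi_r|_{\dHallr^-}=\zeta_r^-$. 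The $+$ case is handled in the same manner; the corresponding VV-type identity for $\zeta_r^+$ can either be derived from the $-$ case by applying the anti-involution $\tau_r$ of \eqref{taur} (using that $\zeta_r^+=\tau_r\circ(\zeta_r^-)^{\mathrm{op}}$ on the corresponding twisted Hall algebra, as in the proof of Proposition~\ref{afzrpm}(2)) or by a direct geometric computation analogous to VV's.

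The main obstacle is the last step, namely, verifying that the geometrically defined map $\zeta_r^+$ is compatible with the comultiplicative action on $\bdOg^{\otimes r}$ in the same way $\zeta_r^-$ is. The comultiplications in $\Hallpi$ and $\Hallmi$ differ by $\tilde K$-twists (Proposition~\ref{Green Xiao}(b), Corollary~\ref{Green Xiao 2}(b$'$)), and these twists must be tracked carefully when passing through $\tau_r$ or when redoing the VV computation on the $+$ side; in particular one has to match the exponents $a(\bfL,\bfL')$ and $c(\bfL,\bfL')$ arising from \eqref{a and c} with the appropriate comultiplicative shifts.
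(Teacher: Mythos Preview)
Your overall strategy—verify on generators, handle $K_i$ directly, and cite \cite[8.3]{VV99} for the $-$ case—matches the paper's. The organizational difference is that the paper does not reduce to $r=1$ but instead reduces to \emph{semisimple} generators $u_{A_\la}^\pm$ (via Proposition~\ref{generators-RH-alg}), and for the $+$ case carries out an explicit computation comparing $A_\la(\bfl,r)\cdot\omega_\bfi$ (computed by convolution, using Corollary~\ref{A(bfj,r) acts on (bfL,bfL')} and restricting $\bfi$ to the fundamental set $\afJnr$) with $\ti u_\la^+\cdot\omega_\bfi$ (computed by Corollary~\ref{action-ss}). The matching of the exponents $c(\bfi-\bfm,\bfi)$, $d_\bfi$, $d_{\bfi-\bfm}$, together with the rewriting of $\omega_{\bfi-\bfm}$ in the $[A^\bfj]$-basis via the $X_k$-action, is the substantive content, and neither of your proposed routes avoids it.

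Your route (a) for the $+$ case, via $\tau_r$, has a genuine gap: $\tau_r$ is an anti-involution on $\afbfSr$, not an operator on the left module $\bdOg^{\otimes r}$, so knowing how $\zeta_r^-(\ti u_A)$ acts on tensor space does not by itself tell you how $\tau_r(\zeta_r^-(\ti u_A))=\zeta_r^+(\ti u_A)$ acts. To make this transfer work you would need a bilinear form on $\bdOg^{\otimes r}$ for which $\tau_r$ gives the adjoint, and then a check that the distinct $\ti K$-twists in the comultiplications of $\Hallpi$ and $\Hallmi$ (Proposition~\ref{Green Xiao}(b) versus Corollary~\ref{Green Xiao 2}(b$'$)) are dual under that form; none of this is set up in the paper. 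Your route (b) is correct but is exactly the paper's computation; the reduction to semisimple $A_\la$ and to $\bfi\in\afJnr$ is the simplification you would need to make it tractable.
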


\begin{proof} Since $\dHallr$ is generated by $K_i^{\pm1}$, $1\le i\le n$, together with semisimple generators
$u_{A_\la}^\pm$, $\la\in\afmbnn$, where
$A_\la=\sum_{i=1}^n\la_iE_{i,i+1}^\vtg$. By Proposition
\ref{afzrpm}, it suffices to prove
$$(1)\,\,\xi_r(K_i)=0(\bfe_i,r),\;\;(2)\;\;\xi_r(\ti u_{A_\la}^-)=(\tA_\la)(\bfl,r),\;\;\text{and}\;\;(3)\;\;
\xi_r(\ti u_{A_\la}^+)=A_\la(\bfl,r).$$
To prove them, it suffices by Propositions \ref{bimodule-isom} and \ref{actions tensor space commute} to compare the actions of
both sides on $\og_\bfi=[A^\bfi]$ for all $\bfi\in\afJnr$.

Suppose $\bfi=\bfi_\mu^\vtg$. By \eqref{QGKMAlg-action}, $K_i\cdot\og_\bfi=\up^{\mu_i}\og_\bfi$.
Since $\ro(A^\bfi)=\mu$, \eqref{product [diag(la)][A] in affine q-Schur algebra} implies $0(\bfe_i,r)[A^\bfi]=\up^{\mu_i}[A^\bfi]$.
Hence, $0(\bfe_i,r)\cdot \og_\bfi=K_i\cdot\og_\bfi$,
proving (1). The proof of (2) is given in \cite[8.3]{VV99}. We now prove (3) for completeness.
So we need to show that
\begin{equation}\label{eq(3)}
A_\la(\bfl,r)\cdot\og_\bfi=\ti
u_{\la}^+\cdot\og_\bfi\;\;\text { for all $\bfi\in \afJnr$ and
$\la\in\afmbnn$.}
\end{equation}
The equality is trivial if $\sg(\la)>r$ as both sides are 0. We now
assume $\sg(\la)\le r$ and prove the equality by writing both sides
as a linear combination of the basis $\{[A^\bfi]\}_{\bfi\in\afInr}$;
cf. Remark \ref{two bases for tsp}.

By Proposition \ref{bimodule-isom}, the left hand side of \eqref{eq(3)} becomes
$A_\la(\bfl,r)\cdot\og_\bfi=A_\la(\bfl,r)[A^\bfi].$
We now compute this by regarding $A_\la(\bfl,r)[A^\bfi]$ as the convolution product $q^{-\frac{1}{2}d_\bfi}A_\la(\bfl,r)*\chi_\bfi$;
see Remark \ref{implicit use bcp} and compare \cite[8.3]{VV99}.
By the definition \eqref{S convolution product}, for $\bfj\in \afInr$ and
$\la\in\afmbnn$,
$$(A_\la(\bfl,r)*\chi_\bfi)(\bfL_\bfj,\bfL_\emptyset)
=\sum_{\bfL\in
\scrY}(A_\la(\bfl,r))(\bfL_\bfj,\bfL)\chi_\bfi(\bfL,\bfL_\emptyset)
=\sum_{\bfL\in \scrY\atop (\bfL,\bfL_\emptyset)\in
\sO_\bfi}(A_\la(\bfl,r))(\bfL_\bfj,\bfL),$$ where $\bfL_\bfj$ is defined in
\eqref{lattice Li}, and
$\sO_\bfi=\sO_{A^\bfi}$ is the orbit containing
$(\bfL_\bfi,\bfL_\emptyset)$ (see also \eqref{lattice Li} for the definition of $\bfL_\emptyset$). For $\bfL=(L_i)\in \scrY$ and
$(\bfL,\bfL_\emptyset)\in \sO_\bfi$,  there is $g\in G$ such that
$(\bfL,\bfL_\emptyset)=g(\bfL_\bfi,\bfL_\emptyset)$. In other words,
$\bfL=g\bfL_\bfi$ and $\bfL_\emptyset=g\bfL_\emptyset$. The fact
$\bfi\in \afJnr$ implies that for each $t\in\mbz$, there exists
$l_t\in\mbz$ such that $\bfL_{\bfi,t}=\bfL_{\emptyset,l_t}$. (More
precisely, if $\bfi=\bfi_\mu^\vtg$ and $t=s+kn$ with $1\le s\le n$,
then $l_t=\mu_1+\cdots+\mu_s+kr$.)
 Thus,
$L_t=g(\bfL_{\bfi,t})=g(\bfL_{\emptyset,l_t})
=\bfL_{\emptyset,l_t}=\bfL_{\bfi,t}$ for any $t\in\mbz$. This
implies that $\bfL=\bfL_\bfi$. Hence, by Corollary \ref{A(bfj,r) acts on (bfL,bfL')},
$$(A_\la(\bfl,r)*\chi_\bfi)(\bfL_\bfj,\bfL_\emptyset)=
(A_\la(\bfl,r))(\bfL_\bfj,\bfL_\bfi)=q^{-\frac{1}{2}
(c(\bfj,\bfi)+\dim\ttO_{A_\la})},$$
 if and only if $\bfL_\bfi\subseteq \bfL_\bfj, \bfL_\bfj/\bfL_\bfi\cong M(A_\la)$,
 where $c(\bfj,\bfi)=c(\bfL_\bfj,\bfL_\bfi)$.
 The latter is equivalent by definition to the conditions
 $$\{k\in\mbz\mid i_k\le t\}\subseteq \{k\in\mbz\mid j_k\le t\}\subseteq \{k\in\mbz\mid i_k\le t+1\}, \text{ and } \dim_\field \bfL_{\bfj,t}/\bfL_{\bfi,t}=\la_t,$$
for all $t\in\mbz$. Hence, 
\begin{equation*}
\begin{split}
&\bfL_\bfi\subseteq \bfL_\bfj, \bfL_\bfj/\bfL_\bfi\cong M(A_\la) \\ \Longleftrightarrow&
i_t-1\leq j_t\leq i_t\ \text{and} \
\la_t=|\bfj^{-1}(t)\cap\bfi^{-1}(t+1)|\ \text{for all}\ t\in\mbz\\
\Longleftrightarrow& \bfj=\bfi-\bfm \ \text{and}\
\la=m_1\afbse_{i_1-1}+\cdots+m_r\afbse_{i_r-1} \text{ for some
$m_s\in\{0,1\}$,}
\end{split}
\end{equation*}
since, for all $s\in\mbz$,
$$m_s=1\iff j_s=i_s-1 \iff s\in \cup_{t\in\mbz}(\bfj^{-1}(t)\cap\bfi^{-1}(t+1)).$$
Here, $\bfm\in\mbz^r_\vtg$ is uniquely determined by $(m_1,m_2,\ldots,m_r)\in\mbz^r$
and $\bse_i^\vtg\in\mbz^r_\vtg$ corresponds to $\bse_i=(\dt_{k,i})_{1\leq k\leq r}$
under \eqref{flat2}. (Note that $\bfi-\bfm\in\afInr$ if $\bfi\in\afInr$.)
Since $\dim\ttO_{A_\la}=0$ (see, e.g., \cite[(1.6.2)]{DDPW}), we
obtain, for $\bfi\in \afJnr$ and $\la\in\afmbnn$,
\begin{equation*}\label{ac}
A_\la(\bfl,r)[A^\bfi]=q^{-\frac{1}{2}d_\bfi}A_\la(\bfl,r)*\chi_\bfi=\sum_{\bfm\in\mathfrak M}q^{\frac{1}{2}
(-c(\bfi-\bfm,\bfi)-d_{\bfi})}\chi_{\bfi-\bfm}
\end{equation*}
where $\mathfrak M=\{\bfm\in\mbz^r_\vtg\mid m_i\in\{0,1\},
\la=m_1\afbse_{i_1-1}+\cdots+m_r\afbse_{i_r-1}\}$.
Hence,
\begin{equation}\label{ac}
A_\la(\bfl,r)\cdot\og_\bfi=\sum_{\bfm\in\mathfrak M}\up^{-c(\bfi-\bfm,\bfi)-d_{\bfi}+d_{\bfi-\bfm}}[A^{\bfi-\bfm}]
\end{equation}

We now calculate the right hand side of \eqref{eq(3)}. By Corollary \ref{action-ss},
$$\ti u_\la^+\cdot\og_\bfi =\ti
u_\la^+\cdot(\og_{i_1}\ot\cdots\ot\og_{i_r})=\sum_{\bfm\in\mathfrak M}
 \up^{-c'(\bfi,\bfm)}
 \og_{i_1-m_1}\ot\cdots\ot\og_{i_r-m_r}.$$
where
\begin{equation}\label{eq c'}
c'(\bfi,\bfm):=\sum_{t<s}m_t(1-m_s)\lan\afbse_{i_s},
\afbse_{i_t}\ran=\!\sum_{1\leq t<s\leq r}m_t(1-m_s)\dt_{i_t,i_s}-d(\bfi,\bfm),\\
\end{equation}
with $d(\bfi,\bfm)=\sum_{1\leq t<s\leq r}m_t(1-m_s)\dt_{\ol{i_t},\ol{i_s+1}}$.
To make a comparison with \eqref{ac}, we need to write
$\og_{\bfi-\bfm}:=\og_{i_1-m_1}\ot\cdots\ot\og_{i_r-m_r}$ as a linear combination of the basis $\{[A^\bfj]\}_{\bfj\in\afInr}$.
For $\bfi\in\afJnr$, order the set
$$\{t\in\mbz\mid 1\leq t\leq
r,\,i_t=1,\,m_t=1\}=\{t_1,t_2,\ldots,t_a\}$$ by
$t_1<t_2<\cdots<t_a$. Then, by definition, $
\og_{\bfi-\bfm}X_{t_1}^{-1}X_{t_2}^{-1}\cdots
X_{t_a}^{-1}=\og_{\bfj}, $ where
$\bfj=\bfi-\bfm+n(\bse_{t_1}^\vtg+\bse_{t_2}^\vtg+\cdots+\bse_{t_a}^\vtg)\in
I(n,r)$. By Proposition \ref{bimodule-isom} (see also Remark
\ref{two bases for tsp}), $\og_\bfj=[A^{\bfj}]$. Thus,
$$\og_{\bfi-\bfm}=\og_{\bfj}X_{t_a}X_{t_{a-1}}\cdots
X_{t_1}=[A^{\bfj}]X_{t_a}X_{t_{a-1}}\cdots X_{t_1}$$
Since $\ti T_{\rho^{-1}}=\ti T_{r-1}\cdots\ti T_2\ti T_1 X_1$, it follows that
$
X_k=\ti T_{k}^{-1}\cdots \ti T_{r-2}^{-1}\ti T_{r-1}^{-1}\ti T_{\rho^{-1}} \ti
T_{1}\ti T_{2}\cdots \ti T_{k-1},
$ for $1\leq k\leq r$.
Now, by Proposition \ref{action2-VV} and noting $\ti T_{t_a}^{-1}=\ti T_{t_a}-(\up-\up^{-1})$,
$$[A^{\bfj}]\ti T_{t_a}^{-1}=\begin{cases} \up^{-1}[A^{\bfj s_{t_a}}],&\text{ if }n=j_{t_a}=j_{t_a+1};\\
[A^{\bfj s_{t_a}}],&\text{ if }n=j_{t_a}>j_{t_a+1}.\\
\end{cases}$$
Thus, a repeatedly application of Proposition \ref{action2-VV} gives
$$[A^{\bfj}]\ti T_{t_a}^{-1}\cdots\ti T_{r-2}^{-1}\ti T_{r-1}^{-1}\ti T_{\rho^{-1}}=\up^{-b} [A^{\bfj s_{t_a}\cdots s_{r-1}\rho^{-1}}],$$
where $b=|\{s\in\mbz\mid t_a< s\leq
r,\,i_s=n,\,m_s=0\}|$. Since $1\leq i_1\leq \cdots\leq i_r\leq n$, it follows that
$$b=|\{s\in\mbz\mid 1\leq s\leq
r,\,i_s=n,\,m_s=0\}|.$$ If $\bfj'=\bfj s_{t_a}\cdots s_{r-1},
\bfj''=\bfj s_{t_a}\cdots s_{r-1}\rho^{-1}$, then $j'_r=n$ and so
$j''_1=j'_0=0<j_k''$ for all $2\leq k\leq r$. The last inequality is
seen from the fact that $\{j_k''\}_{2\leq k\leq r}=\{j_k'\}_{1\leq
k\leq r-1}$ contains only positive integers. Applying Proposition
\ref{action2-VV} again yields
$$[A^{\bfj''}]\ti T_1\cdots\ti
T_{t_a-1}=[A^{\bfj''s_1\cdots s_{t_a-1}}]=[A^{\bfj-n\bse_{t_a}}].$$
Hence, $[A^{\bfj}]X_{t_a}=\up^{-b}[A^{\bfj-n\bse_{t_a}}]$.
Continuing this argument, we obtain eventually $\og_{\bfi-\bfm}=
\up^{-ab}[A^{\bfi-\bfm}]$, where
$$\aligned
ab&=|\{(s,t)\in\mbz^2\mid 1\leq s,t\leq
r,\,i_s=n,\,m_s=0,\,i_t=1,\,m_t=1\}|\\
&=\sum_{1\le s,t\le r}m_t(1-m_s)\dt_{i_s,n}\dt_{i_t,1}=\sum_{1\le t<s\le r}m_t(1-m_s)\dt_{i_s,n}\dt_{i_t,1}=d(\bfi,\bfm),
\endaligned$$
since if $1\leq t< s\leq r$ and $\ol{i_t}=\ol{i_s+1}$ then $i_s=n$ and $i_t=1$.
Therefore, we obtain
$$\ti u_\la^+\cdot\og_\bfi =\sum_{\bfm\in\mathfrak M}
 \up^{-c'(\bfi,\bfm)-d(\bfi,\bfm)}
[A^{\bfi-\bfm}].$$
Comparing this with \eqref{ac}, it remains to prove that
$$d(\bfi,\bfm)+c'(\bfi,\bfm)=c(\bfi-\bfm,\bfi)+d_{\bfi}-d_{\bfi-\bfm}.$$
The number $c(\bfi-\bfm,\bfi)=c(\bfL_{\bfi-\bfm},\bfL_\bfi)$ is defined in
\eqref{a and c}. Since
$$\dim \bfL_{\bfi,i+1}/\bfL_{\bfi,i}=|\bfi^{-1}(i+1)|=\sum_{1\leq s\leq
r}\dt_{\ol{i_s},\ol{i+1}}\quad\text{and}\quad\la_i=\sum_{1\leq s\leq
r}m_s\dt_{\ol{i_s-1},\ol{i}}\quad \text{for}\ i\in\mbz,$$
it follows that
\begin{equation*}\label{eq1}
\aligned
c(\bfi-\bfm,\bfi)&=\sum_{1\leq i\leq
n}\la_{i+1}
 (|\bfi^{-1}(i+1)|-\la_i)=\sum_{1\leq s,t\leq r}m_t(1-m_s)
 \dt_{\ol{i_t-1},\ol{i_s}}\\
&=\sum_{1\leq s<t\leq r}m_t(1-m_s)
 \dt_{\ol{i_t-1},\ol{i_s}}+\sum_{1\leq t<s\leq r}m_t(1-m_s)
 \dt_{\ol{i_t-1},\ol{i_s}}.\\
 &=\sum_{1\leq s<t\leq r}m_t(1-m_s)\dt_{{i_t-1},{i_s}}(1-\dt_{i_s,n})+
 d(\bfi,\bfm).
 \endaligned
 \end{equation*}
Hence,
$$c(\bfi-\bfm,\bfi)-c'(\bfi,\bfm)\!=\!2d(\bfi,\bfm)
+\!\!\!\sum_{1\leq s<t\leq
r}m_t(1-m_s)\dt_{{i_t-1},{i_s}}(1-\dt_{i_s,n}) -\!\!\!\sum_{1\leq
t<s\leq r}m_t(1-m_s)\dt_{i_s,i_t}.$$ On the other hand, for any
$\bfi\in\afInr$, $d_{\bfi}=|\text{Inv}(\bfi)|$ by Lemma \ref{di}.
Observe that the set
$$\sI:=\{(s,t)\in\sL\mid i_s\ge i_t+1\}\cup\{(s,t)\in\sL\mid i_s=i_t,m_s=m_t \text{ or }m_s=0, m_t=1\}$$
is the intersection $\text{Inv}(\bfi-\bfm)\cap\text{Inv}(\bfi)$,
where $\sL=\{(s,t)\in\mbz^2\mid 1\le s\le r, s<t\}$. It turns out that
$\text{Inv}(\bfi-\bfm)\backslash\sI=\{(s,t)\in\sL\mid i_s+1=i_t, m_s=0,m_t=1\}$ and $\text{Inv}(\bfi)\backslash\sI=\{(s,t)\in\sL\mid i_s=i_t, m_s=1,m_t=0\}$.
Hence,
$$d_{\bfi-\bfm}-d_{\bfi}=\sum_{1\leq s\leq r\atop t\in\mbz,\, s<t}
m_t(1-m_s)\dt_{i_t,i_s+1}-\sum_{1\leq s\leq r\atop
t\in\mbz,\,s<t}m_s(1-m_t)\dt_{i_t,i_s}.$$ Since $m_{a+r}=m_a$,
$i_{a+r}=i_a+n$, for all $a\in\mbz$, and $\bfi\in \afJnr$, it
follows that $i_s\neq i_t$, for all $1\leq s\leq r, t>r$,  and
$i_t=n+1\iff i_{t'}=1$ for some $1\leq t'\leq r$ with $t=t'+r$.
Thus, the above sums can be rewritten as
\begin{equation*}\label{eq2}
\begin{split}
d_{\bfi-\bfm}-d_{\bfi}&= \sum_{1\leq s<t\leq
r}m_t(1-m_s)\delta_{i_t,i_s+1}(1-\delta_{i_s,n})\\
&\quad+\sum_{1\leq t<s\leq r}m_t(1-m_s)\delta_{i_t,1}\delta_{i_s,n}-
\sum_{1\leq s<t\leq r}m_s(1-m_t)\delta_{i_t,i_s}\\
&=c(\bfi-\bfm,\bfi)-c'(\bfi,\bfm)-d(\bfi,\bfm),
 \end{split}
\end{equation*}
as required.
\end{proof}

\section{Triangular decompositions of affine quantum Schur algebras}

In this section we study the ``triangular parts" of $\afSr$. We first
 show that $\afSr$ admits a triangular relation for certain structure
 constants relative to the BLM basis. This relation allows us to produce
 an integral basis from which we obtain a triangular decomposition.
 We will give an application of this decomposition in the next section
 by proving that the algebra homomorphism $\xi_r:\dHallr\ra \afbfSr$
defined in \S3.4 is surjective.

Keep the notation in the
previous sections. We will continue to use convolution product to derive properties or formulas via the isomorphism
mentioned in Remark \ref{implicit use bcp}.
 Thus, when the convolution product $*$ is used, we automatically
mean the affine quantum Schur algebra is the algebra $\afSr_R$ with $R=\mbz[\sqrt{q},\sqrt{q}^{-1}]$
obtained by specializing $\up$ to $\sqrt{q}$,
for a prime power $q$, and
is identified with the convolution algebra $R_G(\scrY\times\scrY)$.
The following results are taken from \cite{Lu99}.

\begin{Lem} \label{triangular matrix}
Let $A=(a_{i,j})\in\afThnr$ and let $(\bfL,\bfL')\in\sO_A$ where $\bfL=(L_i)_{i\in\mbz}$ and $\bfL'=(L_i')_{i\in\mbz}$.
\begin{itemize}
\item[(1)] $A$ is upper triangular if and only if $L_i'\han L_i$ for
all $i$.
\item[(2)] $A$ is lower triangular if and only if $L_i\han L_i'$ for all
$i$.
\item[(3)] $\dim(L_i/L_{i-1})=\sum_{k\in\mbz}a_{i,k},\quad\ \ \  \dim(L_i'/L_{i-1}')=\sum_{k\in\mbz}a_{k,i}$.
\item[(4)] $\dim\left(\frac{L_i}{L_i\cap L_{j-1}'}\right)=\sum_{s\leq i,t\geq
j}a_{s,t},\quad \dim\left(\frac{L_j'}{L_{i-1}\cap
L_{j}'}\right)=\sum_{s\geq i,t\leq j}a_{s,t}.$
\end{itemize}
\end{Lem}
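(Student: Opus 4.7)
The plan is to deduce (3) and (4) directly from the definition
$$a_{i,j}=\dim(X_{i,j}/X_{i,j-1}),\qquad X_{i,j}=L_{i-1}+L_i\cap L_j',$$
by constructing suitable filtrations, and then to derive (1) and (2) from (4) by specialising the pair of indices. Every sum that appears will be finite, because $A\in\afThnr$ has row- and column-finite support by definition, and I would remark on this once at the outset so as not to interrupt the later arguments.

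For (3), fix $i$ and view $\{X_{i,j}\}_{j\in\mbz}$ as a chain of submodules of $L_i$ containing $L_{i-1}$. It is increasing in $j$, and the two boundary claims to check are: $X_{i,j}=L_i$ for $j\gg 0$, because $L_j'\supseteq L_i$ once $j$ is large enough; and $X_{i,j}=L_{i-1}$ for $j\ll 0$, because $L_{j_0-nk}'=\ep^{k}L_{j_0}'\han L_{i-1}$ for all $k\gg 0$ (both $L_{j_0}'$ and $L_{i-1}$ being lattices), so that $L_i\cap L_j'\han L_{i-1}$. Telescoping the subquotient dimensions $a_{i,j}$ along this filtration then yields $\dim(L_i/L_{i-1})=\sum_{k\in\mbz}a_{i,k}$. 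The second equality in (3) is immediate by applying the first to the pair $(\bfL',\bfL)\in\sO_{\tA}$ via \eqref{transpose matrix}.

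The heart of the proof is (4). To compute $\dim\bigl(L_i/(L_i\cap L_{j-1}')\bigr)$ I would first filter above $L_i\cap L_{j-1}'$ by the increasing chain $L_i\cap L_t'$ for $t\geq j$, which climbs to $L_i$ for $t\gg 0$, reducing the computation to $\sum_{t\geq j}\dim\bigl((L_i\cap L_t')/(L_i\cap L_{t-1}')\bigr)$. Each such layer is then refined by the chain $L_s\cap L_t'+L_i\cap L_{t-1}'$, with $s$ running from $-\infty$ up to $i$; it starts at $L_i\cap L_{t-1}'$ (since $L_s\han L_{t-1}'$ for $s\ll 0$) and ends at $L_i\cap L_t'$ at $s=i$. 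The crucial identification of subquotients, which is really the only computation in the lemma, is
$$\frac{L_s\cap L_t'+L_i\cap L_{t-1}'}{L_{s-1}\cap L_t'+L_i\cap L_{t-1}'}\cong
\frac{L_s\cap L_t'}{L_{s-1}\cap L_t'+L_s\cap L_{t-1}'},$$
which I would verify by the second isomorphism theorem combined with the modular law $(L_{s-1}\cap L_t'+L_i\cap L_{t-1}')\cap(L_s\cap L_t')=L_{s-1}\cap L_t'+L_s\cap L_{t-1}'$, valid because $L_s\han L_i$ for $s\leq i$. The right-hand side has dimension $a_{s,t}$ by the description of $a_{s,t}$ recalled in \S3.1, which gives the first formula of (4); the second formula follows by applying the first to $(\bfL',\bfL)\in\sO_{\tA}$ and relabelling.

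Finally, (1) and (2) drop out of (4). Setting $i=j+1$ in the second formula of (4) yields $\dim\bigl(L_j'/(L_j\cap L_j')\bigr)=\sum_{s\geq j+1,\,t\leq j}a_{s,t}$, which vanishes iff $L_j'\han L_j$, and iff every $a_{s,t}$ with $s\geq j+1$, $t\leq j$ is zero since the summands are nonnegative. Letting $j$ range over $\mbz$ and using the $n$-periodicity $a_{s,t}=a_{s+n,t+n}$ to reduce to finitely many conditions, this becomes $a_{s,t}=0$ for all $s>t$, i.e.\ $A$ upper triangular, which proves (1). Part (2) is entirely dual, obtained by setting $j=i+1$ in the first formula of (4). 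The only step that demands any real work is the modular-law calculation for the subquotients in (4); everything else is bookkeeping over the appropriate index ranges.
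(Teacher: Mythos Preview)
Your argument is correct. The paper does not actually supply a proof of this lemma; it simply records it as ``taken from \cite{Lu99}''. Your write-up therefore fills in what the paper leaves to the reference, and does so cleanly: the filtration arguments for (3) and (4) are exactly the right mechanism, the modular-law step isolating $a_{s,t}$ as the subquotient dimension is the genuine content, and the deduction of (1)--(2) from (4) by specialising indices is efficient. One cosmetic remark: in deriving (1), the appeal to $n$-periodicity is unnecessary---for any pair $s>t$, choosing $j=t$ already forces $a_{s,t}=0$---but it does no harm.
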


For $A\in\afMnz$, let
$$\sg_{i,j}(A)=\begin{cases}
\sum\limits_{s\leq i,t\geq j}a_{s,t}, \;\; &\text{if $i<j$;}\\
\sum\limits_{s\geq i,t\leq j}a_{s,t},  &\text{if $i>j$}.
\end{cases}$$
For any fixed $x_0\in\mbz$ and $i<j$, it is easy to see that there are two bijective maps
\begin{equation*}
\begin{split}
\{(b,s,t)\in\mbz^3\mid &s-bn\leq i<j\leq t-bn,s\in[x_0+1,x_0+n]\}
\longrightarrow \{(s,t)\in\mbz^2\mid s\leq i,t\geq j\}\\
&\hspace{60mm}(b,s,t)\lm (s-bn,t-bn)\\
\{(b,s,t)\in\mbz^3\mid &t-bn\leq i<j\leq s-bn,s\in[x_0+1,x_0+n]\}
\longrightarrow \{(s,t)\in\mbz^2\mid s\geq j,t\leq i\}\\
 &\hspace{60mm}(b,s,t)\lm (s-bn,t-bn).
\end{split}
\end{equation*}
Thus, we obtain an alternative interpretation of $\sg_{i,j}(A)$:
$$\sg_{i,j}(A)=\begin{cases}
\sum\limits_{x_0+1\leq s\leq x_0+n\atop s<t}a_{s,t}|\{
b\in\mbz\mid s-bn\leq i<j\leq t-bn\}|,\;\; &\text{if $i<j$;}\\
\sum\limits_{x_0+1\leq s\leq x_0+n\atop s>t}a_{s,t}|\{ b\in\mbz\mid
t-bn\leq j<i\leq s-bn\}|, &\text{if $i>j$}.
\end{cases}$$
In particular, $\sg_{i,j}(A)<\infty$.

Further, for $A,B\in\afMnz$, define (see \cite[\S6]{DF09})
\begin{equation}\label{order preceq}
B\pr A \Longleftrightarrow\sg_{i,j}(B)\leq\sg_{i,j}(A)\;\;\text{for all $i\not=j$}.
\end{equation}\index{$\pr$, order on $\afMnz$ or $\afThn$}
\noindent We put $B\p A$ if $B\pr A$ and, for some pair $(i,j)$ with
$i\not=j$, $\sg_{i,j}(B)<\sg_{i,j}(A)$. It is shown in
\cite[Th.~6.2]{DF09} that if $A,B\in\afThnp$ satisfy
$\bfd(A)=\bfd(B)$, then
$$B\leq_\deg A\Longleftrightarrow B\pr A.$$\index{degeneration order}
In other words, the ordering $\pr$ is an extension of the
degeneration ordering defined in \eqref{partial order leq on the set
afThnp}.

For $A\in\afThn$, write
$A=A^++A^0+A^-=A^\pm+A^0,$
 where $A^+\in\afThnp$, $A^-\in\afThnm:=\{\tB\mid B\in\afThnp\}$, $A^\pm=A^++A^-$, and $A^0$ is a diagonal matrix.

\begin{Lem}\label{lemma2 for triangular decomposition of affine q-Schur algebra}
Let $A\in\afThnpm$ with $\sg(A)\leq r$.
\begin{itemize}
\item[(1)] For $\mu\in\afLa(n,r-\sg(A^+))$, $\nu\in\afLa(n,r-\sg(A^-))$, if $$([A^++\diag(\mu)]*[A^-+\diag(\nu)])(\bfL,\bfL'')\not=0,$$
where $(\bfL,\bfL'')\in\sO_B$
for some $B\in\afThn$, then $B\pr A.$

\item[(2)] For $\la\in\afLa(n,r-\sg(A))$, if $(\bfL,\bfL'')\in\sO_{A+\diag(\la)}$, then
$$\bin_{\mu\in\afLa(n,r-\sg(A^+))\atop\nu\in
\afLa(n,r-\sg(A^-))}\{\bfL'\in\afFn\mid (\bfL,\bfL')
\in\sO_{A^++\diag(\mu)},(\bfL',\bfL'')\in\sO_{A^-+\diag(\nu)}\}=\{\bfL\cap\bfL''\}.$$
\end{itemize}
\end{Lem}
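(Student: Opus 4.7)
The plan is to convert the non-vanishing of the convolution product into inclusions among lattices via Lemma \ref{triangular matrix}, then bound the numerical invariants $\sg_{i,j}$ by comparing intersections. For the common setup: if $([A^++\diag(\mu)]*[A^-+\diag(\nu)])(\bfL,\bfL'')\neq 0$, there exists $\bfL'=(L'_k)_{k\in\mbz}\in\afFn$ with $(\bfL,\bfL')\in\sO_{A^++\diag(\mu)}$ and $(\bfL',\bfL'')\in\sO_{A^-+\diag(\nu)}$. Upper-triangularity of $A^++\diag(\mu)$ combined with Lemma \ref{triangular matrix}(1) gives $L'_k\subseteq L_k$, and lower-triangularity of $A^-+\diag(\nu)$ combined with Lemma \ref{triangular matrix}(2) gives $L'_k\subseteq L''_k$, for every $k\in\mbz$.

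For (1), first observe that $\sg_{i,j}(A^+)=0$ whenever $i>j$ and $\sg_{i,j}(A^-)=0$ whenever $i<j$, since the summation ranges over a strictly opposite triangular quadrant. Thus for $i<j$, $\sg_{i,j}(A)=\sg_{i,j}(A^+)=\sg_{i,j}(A^++\diag(\mu))$, which by Lemma \ref{triangular matrix}(4) applied to $(\bfL,\bfL')$ equals $\dim(L_i/(L_i\cap L'_{j-1}))$. Since $L'_{j-1}\subseteq L''_{j-1}$, Lemma \ref{triangular matrix}(4) applied to $(\bfL,\bfL'')$ yields
\begin{equation*}
\sg_{i,j}(B)=\dim(L_i/(L_i\cap L''_{j-1}))\leq\dim(L_i/(L_i\cap L'_{j-1}))=\sg_{i,j}(A).
\end{equation*}
A symmetric computation using the second formula of Lemma \ref{triangular matrix}(4) together with $L'_{i-1}\subseteq L_{i-1}$ handles $i>j$: $\sg_{i,j}(B)=\dim(L''_j/(L_{i-1}\cap L''_j))\leq\dim(L''_j/(L'_{i-1}\cap L''_j))=\sg_{i,j}(A^-)=\sg_{i,j}(A)$. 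Therefore $B\pr A$.

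For (2), the hypothesis $(\bfL,\bfL'')\in\sO_{A+\diag(\la)}$ forces $B=A+\diag(\la)$, so equality must hold throughout the estimates of part (1). Specializing the first estimate to $j=i+1$ yields $L_i\cap L'_i=L_i\cap L''_i$, and combined with $L'_i\subseteq L_i$ we conclude $L'_i=L_i\cap L''_i$. Hence any element of the union coincides with $\bfL\cap\bfL'':=(L_i\cap L''_i)_{i\in\mbz}$. Conversely, setting $\bfL':=\bfL\cap\bfL''$ defines a cyclic flag of lattices of period $n$; writing $C,D$ for the orbit matrices of $(\bfL,\bfL')$ and $(\bfL',\bfL'')$, one has $C$ upper and $D$ lower triangular by parts (1) and (2) of Lemma \ref{triangular matrix}. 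Inverting the partial-sum relation $\sg_{i,j}$ through successive differences of the values in Lemma \ref{triangular matrix}(4) then recovers $c_{i,j}=a_{i,j}$ for $i<j$ and $d_{i,j}=a_{i,j}$ for $i>j$, while Lemma \ref{triangular matrix}(3) fixes the diagonals $\mu$ of $C$ and $\nu$ of $D$ through row-sum equations; a short computation gives $\sg(\mu)=\sg(\la)+\sg(A^-)=r-\sg(A^+)$ and analogously $\sg(\nu)=r-\sg(A^-)$, as required.

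The main obstacle is the bookkeeping in the existence half of (2): verifying that $\bfL\cap\bfL''$ genuinely lies in the union requires correctly identifying both orbit matrices as $A^++\diag(\mu)$ and $A^-+\diag(\nu)$ for admissible $\mu,\nu$. Once part (1) is in hand, this reduces to inverting the partial-sum relation $\sg_{i,j}$ to recover individual matrix entries and checking that the resulting diagonal corrections land in the prescribed composition ranges, which is a routine combinatorial verification.
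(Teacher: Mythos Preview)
Your argument is correct and follows essentially the same route as the paper: both proofs use Lemma~\ref{triangular matrix}(1)(2) to get $\bfL'\subseteq\bfL\cap\bfL''$, then Lemma~\ref{triangular matrix}(4) to express $\sg_{i,j}(A)$ and $\sg_{i,j}(B)$ as codimensions of intersections and compare them via the inclusion $L'_k\subseteq L_k\cap L''_k$. For part~(2), the paper obtains $\bfL'=\bfL\cap\bfL''$ by a direct dimension count $\dim((L_i\cap L_i'')/L_i')=0$, while you deduce it from equality in the estimates of part~(1) at $j=i+1$; these are the same computation phrased differently. You also spell out the existence half of~(2) (that $\bfL\cap\bfL''$ actually lies in the union), which the paper leaves implicit; your sketch via inverting the partial sums $\sg_{i,j}$ is correct.
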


\begin{proof}
(1)  Since $([A^++\diag(\mu)]*[A^-+\diag(\nu)])(\bfL,\bfL'')\not=0$, there exists
$\bfL'\in\afFn$ such that $(\bfL,\bfL')\in\sO_{A^++\diag(\la)}$ and
$(\bfL',\bfL'')\in\sO_{A^-+\diag(\mu)}$. Also,
$(\bfL,\bfL'')\in\sO_B$. Hence, by Lemma \ref{triangular matrix}(1) and (2),
$\bfL'\subseteq\bfL$ and $\bfL'\subseteq\bfL''$, and by
Lemma \ref{triangular matrix}(4),
\begin{equation*}
\sg_{i,j}(A)=
\begin{cases}
\dim\left(\frac{L_i}{L_i\cap L_{j-1}'}\right),&\quad\text{if $i<j$};\\
\dim\left(\frac{L_j''}{L_{i-1}'\cap L_j''}\right),&\quad\text{if
$i>j$,}
\end{cases}\qquad
\sg_{i,j}(B)=
\begin{cases}
\dim\left(\frac{L_i}{L_i\cap L_{j-1}''}\right),&\quad\text{if $i<j$};\\
\dim\left(\frac{L_j''}{L_{i-1}\cap L_j''}\right),&\quad\text{if
$i>j$.}
\end{cases}
\end{equation*}
Therefore, $\bfL'\han\bfL\cap\bfL''$ and
\begin{equation*}
\sg_{i,j}(A)-\sg_{i,j}(B)=
\begin{cases}
\dim\left(\frac{L_i\cap L_{j-1}''}{L_i\cap L_{j-1}'}\right)\geq 0,&\quad\text{if $i<j$};\\
\dim\left(\frac{L_{i-1}\cap L_j''}{L_{i-1}'\cap L_j''}\right)\geq 0,
&\quad\text{if $i>j$}.
\end{cases}
\end{equation*}
Consequently, $B\pr A$.

(2) If $\bfL'\in\afFn$ satisfies
$$(\bfL,\bfL')\in\sO_{A^++\diag(\mu)},(\bfL',\bfL'')\in\sO_{A^-+\diag(\nu)}$$
for some $\mu\in\afLa(n,r-\sg(A^+)$ and
$\nu\in\afLa(n,r-\sg(A^-)$, then $\bfL'\han\bfL\cap\bfL''$ as seen above.
Thus, for all $i\in\mbz$,
\begin{equation*}
\begin{split}
\dim\left(\frac{L_i\cap L_i''}{L_i'}\right)&=\dim(L_i/L_i')-\dim\left(\frac{L_i}{L_i\cap L_i''}\right)\\
&=\dim\left(\frac{L_i}{L_i\cap L_i'}\right)-\dim\left(\frac{L_i}{L_i\cap L_i''}\right)\\
&=\sum_{s\leq i<t}a_{s,t}-\sum_{s\leq i<t}a_{s,t}=0,
\end{split}
\end{equation*}
by Lemma \ref{triangular matrix}(4) again. Hence, $\bfL'=\bfL\cap\bfL''$, proving the assertion.
\end{proof}

\begin{Prop}\label{triangular formula in A(bfj,r)}
Let $A\in\afThnpm$. Then the following {\sf triangular relation}\index{triangular relation}
relative to $\p$ holds:
$$
\aligned
A^+(\bfl,r)A^-(\bfl,r)&=A(\bfl,r)+\sum_{C\in\afThnr\atop C\p A} f_{C,A}[C]\quad(\text{in $\afSr$})\\
&=A(\bfl,r)+\sum_{B\in\afThnpm\atop B\p A,\bfj\in\afmbzn}g_{B,\bfj,A;r}B(\bfj,r)\quad(\text{in $\afbfSr$}),
\endaligned$$
where $f_{C,A}\in\sZ$, $g_{B,\bfj,A;r}\in\mbq(\up)$.
\end{Prop}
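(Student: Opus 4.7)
The plan is to compute the product $A^+(\bfl,r)A^-(\bfl,r)$ directly from the definition \eqref{def-A(j,r)}, and then to localize contributions using Lemma \ref{lemma2 for triangular decomposition of affine q-Schur algebra}. Working with the realization $\afSr_R\cong R_G(\scrY\times\scrY)$ of Remark \ref{implicit use bcp} (specializing $\up\mapsto\sqrt q$), I would expand
\[
A^+(\bfl,r)\,A^-(\bfl,r)=\sum_{\mu,\nu}[A^++\diag(\mu)]\cdot[A^-+\diag(\nu)],
\]
where $\mu$ runs over $\afLa(n,r-\sg(A^+))$ and $\nu$ over $\afLa(n,r-\sg(A^-))$, then evaluate both sides on a pair $(\bfL,\bfL'')\in\sO_B$ for arbitrary $B\in\afThnr$ via the convolution product \eqref{S convolution product}. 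Part (1) of Lemma \ref{lemma2 for triangular decomposition of affine q-Schur algebra} immediately implies that the value is zero unless $B\pr A$, which gives at once that
\[
A^+(\bfl,r)A^-(\bfl,r)=\sum_{C\in\afThnr,\;C\pr A}f_{C,A}\,[C]
\]
for some $f_{C,A}\in\sZ$ (a priori in $R$, but the statement holds generically and hence over $\sZ$ after specialization).

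Next I would isolate the ``diagonal'' contributions, namely those coming from matrices $C$ of the form $A+\diag(\la)$ with $\la\in\afLa(n,r-\sg(A))$. Fix such a $C$ and pick $(\bfL,\bfL'')\in\sO_{A+\diag(\la)}$. Part (2) of Lemma \ref{lemma2 for triangular decomposition of affine q-Schur algebra} shows that the only intermediate lattice $\bfL'$ contributing to the convolution is $\bfL'=\bfL\cap\bfL''$, and that this $\bfL'$ determines a unique pair $(\mu,\nu)$ with $(\bfL,\bfL')\in\sO_{A^++\diag(\mu)}$ and $(\bfL',\bfL'')\in\sO_{A^-+\diag(\nu)}$. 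Thus the value of the convolution at $(\bfL,\bfL'')$ reduces to a single product of two characteristic function values, each of which, after taking into account the normalization $[X]=\up^{-d_X}e_X$ (cf.\ \eqref{sqAdA}), yields exactly $1$. In other words, $f_{A+\diag(\la),A}=1$ for every $\la$, so these contributions sum precisely to $A(\bfl,r)=\sum_{\la}[A+\diag(\la)]$.

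Combining the two pieces, we obtain the first displayed equality
\[
A^+(\bfl,r)A^-(\bfl,r)=A(\bfl,r)+\sum_{C\in\afThnr,\;C\p A}f_{C,A}\,[C]
\]
in $\afSr$. The second displayed equality in $\afbfSr$ then follows by re-expressing each $[C]$ with $C\p A$ in the BLM basis $\sB_{\vtg,i_0,r}$ of Proposition \ref{BLMbasis}. Writing $C=A'+\diag(\la')$ with $A'\in\afThnpm$ and $\la'\in\afLanr$, the element $[C]$ lies in the $\mbq(\up)$-span of $\{A'(\bfj,r):\bfj\in\afmbzn\}$ (by inverting the Vandermonde-type relations \eqref{def-A(j,r)} over $\mbq(\up)$, using that the characters $\la\mapsto\up^{\la\centerdot\bfj}$ separate compositions), and one checks that $A'\pr A$ whenever $A'+\diag(\la')\p A$, since the definition of $\pr$ involves only off-diagonal sums. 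Hence the $[C]$-sum rewrites as $\sum_{B\p A,\,\bfj}g_{B,\bfj,A;r}\,B(\bfj,r)$ with $g_{B,\bfj,A;r}\in\mbq(\up)$, completing the proof.

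The main technical obstacle is the explicit verification that the coefficient of $[A+\diag(\la)]$ is exactly $1$. This requires a careful bookkeeping of the orbit dimensions $d_{A^++\diag(\mu)}$, $d_{A^-+\diag(\nu)}$, $d_{A+\diag(\la)}$ entering through the normalization $[X]=\up^{-d_X}e_X$, together with the dimension of the fibre of the convolution map at $(\bfL,\bfL'')$; by Lemma \ref{lemma2 for triangular decomposition of affine q-Schur algebra}(2) this fibre is a single point, so the numerics collapses, but the $q$-powers must be matched exactly. Once this identification is done, the rest of the argument is formal.
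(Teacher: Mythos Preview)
Your proposal is correct and follows essentially the same route as the paper: expand the product, apply Lemma~\ref{lemma2 for triangular decomposition of affine q-Schur algebra}(1) to restrict to $C\pr A$, then use part~(2) to pin down the leading coefficient. The paper also isolates the identity $d_{A+\diag(\la)}=d_{A^++\diag(\mu)}+d_{A^-+\diag(\nu)}$ as the crux (it proves it separately as Lemma~\ref{ddd} in the Appendix), so your flagging of the $q$-power matching as ``the main technical obstacle'' is exactly right; your argument for the second equality (that $\sg_{i,j}$ ignores diagonal entries, so $C=A'+\diag(\la')\p A$ forces $A'\p A$) is in fact more explicit than the paper's one-line appeal to the BLM spanning set.
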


\begin{proof}  Since the elements $B(\bfj,r)$ span $\afbfSr$ by Proposition \ref{BLMbasis}, the second equality
follows from the first one. We now prove the first equality.

Let $r^\pm=\sg(A)$, $r^+=\sg(A^+)$ and $r^-=\sg(A^-)$. There is nothing to prove if $r^\pm>r$.
Assume now $r^\pm\leq r$. By \eqref{e-basis multn},
$$\aligned
A^+(\bfl,r)A^-(\bfl,r)&=\sum_{\mu\in\La(n,r-r^+)}\sum_{\nu\in\La(n,r-r^-)}[A^++\diag(\mu)][A^-+\diag(\nu)]\\
&=\sum_{C\in\afThnr}\sum_{\mu,\nu}\up^{-d_{A^++\diag(\mu)}-d_{A^-+\diag(\nu)}}p_{A^++\diag(\mu),A^-+\diag(\nu),C}\up^{d_C}[C]\\
&=\sum_{C\in\afThnr}f_{C,A}[C],
\endaligned$$
where
$$f_{C,A}=\sum_{\mu\in\afLa(n,r-r^+)\atop\nu\in
\afLa(n,r-r^-)}\up^{d_C-d_{A^++\diag(\mu)}-d_{A^-+\diag(\nu)}}
p_{A^++\diag(\mu),A^-+\diag(\nu),C}.$$
If $f_{C,A}\not=0$, then  $p_{A^++\diag(\mu),A^-+\diag(\nu),C}\not=0$ for some $\mu,\nu$ as above. Thus, by definition, there is a finite filed $\field$ of
$q$ elements such that
$$p_{A^++\diag(\mu),A^-+\diag(\nu),C}|_{\up^2=q}=\afg_{A^++\diag(\mu),A^-+\diag(\nu),C;q}
\not=0,$$
where $\afg_{A^++\diag(\mu),A^-+\diag(\nu),C;q}$ is defined in \eqref{afg}.
By Lemma
\ref{lemma2 for triangular decomposition of affine q-Schur
algebra}(1), we conclude $C\pr A$. We need to prove that
$f_{C,A}=1$ if $C=A+\diag(\la)$ for any $\la\in\afLa(n,r-r^\pm)$. First,
Lemma \ref{lemma2 for triangular decomposition of affine q-Schur
algebra}(2) implies that there exist unique $\mu,\nu$ such that
$\afg_{A^++\diag(\mu),A^-+\diag(\nu),A+\diag(\la);q}=1$.  Thus,
\begin{equation}\label{Lem-ddd}
f_{A+\diag(\la),A}=\up^{d_{A+\diag(\la)}-d_{A^++\diag(\mu)}-d_{A^-+\diag(\nu)}}.
\end{equation}
Second, since $d_{A+\diag(\la)}=d_{A^++\diag(\mu)}+d_{A^-+\diag(\nu)}$ by a direct computation
(see Lemma \ref{ddd}),
 we obtain $f_{A+\diag(\la),A}=1.$
Finally, since $\pr$ is a partial ordering on $\afThnpm$ by
\cite[Lem.~6.1]{DF09}, we conclude that
$$A^+(\bfl,r)A^-(\bfl,r)=A(\bfl,r)+g,$$
 where $g$ is a $\sZ$-linear combination of $[C]$ with $C\in\afThnr$
and $C\p A$.
\end{proof}

We now use the triangular relation to establish a triangular
decomposition for the $\sZ$-algebra $\afSr$.

Consider the following $\sZ$-submodules of $\afSr$
$$\aligned
&\afSrp=\spann_\sZ\{A(\bfl,r)\mid A\in\afThnp\},\\
&\afSrm=\spann_\sZ\{A(\bfl,r)\mid A\in\afThnm\},\;\;\text{ and}\\
&\afSrz=\spann_\sZ\{[\diag(\la)]\mid \la\in\afLanr\}.
\endaligned$$\index{affine quantum Schur algebra!the $\pm,0$-part, $\sS_\vtg(n,r)^\pm$, $\afSrz$}
As homomorphic images of Ringel--Hall algebras (see Proposition
\ref{afzrpm}), both $\afSrp$ and $\afSrm$ are $\sZ$-subalgebras of
$\afSr$. It can be directly checked that $\afSrz$ is also a
$\sZ$-subalgebra of $\afSr$. Moreover, $\afSrz$ is isomorphic to the
zero part of the (non-affine) quantum Schur algebra. We will call
the subalgebras $\afSrm$, $\afSrz$ and $\afSrp$ the {\it negative,
zero} and {\it positive parts} of $\afSr$, respectively. The next
result shows that all three subalgebras are free as $\sZ$-modules.
Recall the notation $\leb{X;a\atop t}\rib$ introduced in
\eqref{Gauss poly2}.

\begin{Prop}\label{afSrz} Let $\bffkk_i=0(\bfe_i,r)$, $1\leq i\leq n$.

$(1)$ The set $\{A(\bfl,r)\mid A\in\afThnp,\sg(A)\leq r\}$ (resp.,
$\{A(\bfl,r)\mid A\in\afThnm,\sg(A)\leq r\}$) forms a $\sZ$-basis
for $\afSrp$ (resp., $\afSrm$).

$(2)$ For $\la\in\afLanr$,
$$[\diag(\la)]=\bfone_\la:=\leb{\bffkk_1;0\atop\la_1}\rib
\leb{\bffkk_2;0\atop\la_2}\rib\cdots\leb{\bffkk_n;0\atop\la_n}\rib$$\index{$\bfone_\la$,
idempotent $[\diag(\la)]$} In particular, the set
$\{\bfone_\la\mid\la\in\afLanr\}$ forms a $\sZ$-basis of $\afSrz$.
\end{Prop}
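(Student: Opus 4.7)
The plan is to treat the two parts separately, since (1) follows directly from the definition of $A(\bfl,r)$ together with the PBW-type basis $\{[B]\mid B\in\afThnr\}$ of $\afSr$, while (2) requires an orthogonal-idempotent computation in $\afSrz$.

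For (1), observe that with $\bfj=\bfl=\mathbf 0$ the definition \eqref{def-A(j,r)} reduces to
\[
A(\bfl,r)=\sum_{\la\in\afLa(n,r-\sg(A))}[A+\diag(\la)]\qquad(\sg(A)\leq r),
\]
and $A(\bfl,r)=0$ when $\sg(A)>r$. Hence the indicated set certainly spans $\afSrp$. For linear independence I would note that the off-diagonal part of a matrix $B\in\afThnr$ recovers a unique $A\in\afThnp$ with $\sg(A)\leq r$, so the index sets $\{A+\diag(\la):\la\in\afLa(n,r-\sg(A))\}$ are pairwise disjoint as $A$ varies over $\afThnp$ with $\sg(A)\leq r$. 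Since $\{[B]\}_{B\in\afThnr}$ is a $\sZ$-basis of $\afSr$, a non-trivial $\sZ$-linear relation among the $A(\bfl,r)$ would force a non-trivial relation among disjoint subsets of the $[B]$, a contradiction. The argument for $\afSrm$ is identical (or one applies the anti-involution $\tau_r$ of \eqref{taur}, which sends $A(\bfl,r)$ to $\tA(\bfl,r)$).

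For (2), the freeness of $\afSrz$ on $\{[\diag(\la)]\}_{\la\in\afLanr}$ is by its very definition, so the content is the identity $[\diag(\la)]=\bfone_\la$. The strategy is to diagonalize $\bffkk_i$ against the orthogonal idempotents $[\diag(\mu)]$. From \eqref{def-A(j,r)} and \eqref{product [diag(la)][A] in affine q-Schur algebra} one has
\[
\bffkk_i=0(\bfe_i,r)=\sum_{\mu\in\afLanr}\up^{\mu_i}[\diag(\mu)],
\]
with inverse $\bffkk_i^{-1}=\sum_\mu\up^{-\mu_i}[\diag(\mu)]$. Plugging this into the defining product \eqref{Gauss poly2} for $\leb{\bffkk_i;0\atop t}\rib$ and using $[\diag(\mu)][\diag(\mu')]=\delta_{\mu,\mu'}[\diag(\mu)]$, each factor is a scalar on each block, whence
\[
\leb{\bffkk_i;0\atop t}\rib=\sum_{\mu\in\afLanr}\leb{\mu_i\atop t}\rib[\diag(\mu)].
\]
Multiplying these for $i=1,\dots,n$ and again using idempotent orthogonality gives
\[
\leb{\bffkk_1;0\atop\la_1}\rib\cdots\leb{\bffkk_n;0\atop\la_n}\rib
=\sum_{\mu\in\afLanr}\prod_{i=1}^n\leb{\mu_i\atop\la_i}\rib[\diag(\mu)].
\]

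The final step is the combinatorial identity $\prod_{i=1}^n\leb{\mu_i\atop\la_i}\rib=\delta_{\la,\mu}$ for $\la,\mu\in\afLanr$. Since $\mu_i\in\mbn$, the Gaussian coefficient $\leb{\mu_i\atop\la_i}\rib$ vanishes unless $\mu_i\geq\la_i$, in which case the $\la_i=\mu_i$ case is the only one giving the value $1$; but if $\mu_i\geq\la_i$ for every $i$ then $\sg(\mu)\geq\sg(\la)$ with equality (both sides are $r$) iff $\mu=\la$. I expect the only mildly delicate point, and the one I would flag as the main obstacle, is the careful handling of invertibility and the Gaussian-binomial vanishing for $0\leq\mu_i<\la_i$ inside the product, together with the bookkeeping needed to see that the two bases of $\afSrz$ (idempotents vs.\ divided-power monomials in the $\bffkk_i$) have the same cardinality after reduction modulo $\sum\la_i=r$.
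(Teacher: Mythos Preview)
Your proposal is correct and follows essentially the same approach as the paper: part (1) is dispatched by the definition of $A(\bfl,r)$ (the paper says only this, while you spell out the disjointness of the index sets), and part (2) proceeds exactly as the paper does, by first showing $\bffkk_i[\diag(\mu)]=\up^{\mu_i}[\diag(\mu)]$, deducing $\leb{\bffkk_i;0\atop t}\rib[\diag(\mu)]=\leb{\mu_i\atop t}\rib[\diag(\mu)]$, and then using orthogonality of the idempotents together with the combinatorial vanishing of the Gaussian product to conclude $\bfone_\la=[\diag(\la)]$. The concerns you flag at the end (invertibility, cardinality of the two index sets) are not genuine obstacles: once the identity $\bfone_\la=[\diag(\la)]$ is established termwise, no further counting is needed, and $\bffkk_i$ is invertible with inverse $0(-\bfe_i,r)$.
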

\begin{proof}
Assertion (1) follows from the definition of $A(\bfl,r)$. To see
(2), since $\bffkk_i[\diag(\mu)]=\sum_{\nu\in\afLanr}\up^{\nu_i}
[\diag(\nu)][\diag(\mu)]=\up^{\mu_i}[\diag(\mu)],$ by \eqref{product
[diag(la)][A] in affine q-Schur algebra}, it follows that
$\leb{\bffkk_i;0\atop t}\rib[\diag(\mu)]=\leb{\mu_i\atop
t}\rib[\diag(\mu)]$. Hence, for $\la\in\afLanr$,
$$\bfone_\la=\sum_{\mu\in\afLanr}\bfone_\la[\diag(\mu)]
=\sum_{\mu\in\afLanr}\leb{\mu_1\atop\la_1}\rib
\cdots\leb{\mu_n\atop\la_n}\rib[\diag(\mu)]=[\diag(\la)],$$ as
desired.\footnote{In the literature, $\bfone_\la$ is denoted by
$\ttk_\la$ or $1_\la$. We modified the notation in order to
introduce its preimage ${\frak L}_\la$ in $\dHallr$; see
\eqref{kk-lambda}.}
\end{proof}

As in \S2.2, let $\dHallr^\pm$ (resp., $\Hall^\pm$) be the
$\mbq(\up)$-submodules (resp., $\sZ$-submodules) of $\dHallr$
spanned by $u_A^\pm$ for $A\in\afThnp$. They are respectively the
$\mbq(\up)$-subalgebras and $\sZ$-subalgebras of $\dHallr$. The
above proposition together with the results in \S3.6 gives the
following result; see Remark \ref{Lform2}.

\begin{Coro} \label{central-elem-integral}
For each $s\geq 1$, we have $\sfz_s^+\in\Hall^+$ and
$\sfz_s^-\in\Hall^-$.
\end{Coro}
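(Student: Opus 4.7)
The plan is to determine $\sfz_s^+$ by examining its images under $\xi_r$ in the integral affine quantum Schur algebras and exploit the close match between the positive part of $\afSr$ and the BLM-type elements $A(\bfl,r)$. Since $\sfz_s^+$ is homogeneous of degree $s\dt$ in $\dHallr^+$, it admits a finite expansion $\sfz_s^+=\sum_{A}c_A\,\ti u_A^+$ with $c_A\in\mbq(\up)$ and $A\in\afThnp$ satisfying $\bfd(A)=s\dt$. The goal is to show every $c_A$ lies in $\sZ$; the statement for $\sfz_s^-$ will then follow by a dual argument.

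First, I would fix $r\ge sn$ and apply $\xi_r$ to the expansion. By Theorem \ref{xirl}, $\xi_r(\ti u_A^+)=A(\bfl,r)\in\afSrp$, and these images are nonzero basis elements of $\afSrp$ (by Proposition \ref{afSrz}(1)) because $\sg(A)\le\fkd(A)=sn\le r$. Hence $\xi_r(\sfz_s^+)=\sum_A c_A A(\bfl,r)$ lies in $\afSrp\ot_\sZ\mbq(\up)$. Independently, the explicit tensor-space action formula \eqref{action central elts} shows that $\sfz_s^+$ acts on $\bdOg^{\ot r}$ by a $\sZ$-linear endomorphism of the lattice $\Og^{\ot r}$, so $\xi_r(\sfz_s^+)\in\End_{\afHr}(\Og^{\ot r})=\afSr$.

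Second, I would verify the saturation $\afSr\cap(\afSrp\ot_\sZ\mbq(\up))=\afSrp$. This is transparent from the $[B]$-basis description: by Proposition \ref{afSrz}(1), each nonzero $A(\bfl,r)$ expands as $\sum_{\la\in\La_\vtg(n,r-\sg(A))}[A+\diag(\la)]$, and because $(A+\diag(\la))^+=A$, the supports of the $A(\bfl,r)$ in the $[B]$-basis are pairwise disjoint across distinct $A\in\afThnp$. Consequently, the coefficient of any $[A+\diag(\la)]$ appearing in $\xi_r(\sfz_s^+)$ equals $c_A$, which must therefore lie in $\sZ$. Combining with the preceding step yields $c_A\in\sZ$ for every $A$, so $\sfz_s^+\in\Hall^+$; the claim $\sfz_s^-\in\Hall^-$ follows by the parallel argument using $\xi_r|_{\dHallr^-}=\zeta_r^-$ (again from Theorem \ref{xirl}), the second identity in \eqref{action central elts}, and the analogous saturation of $\afSrm$ in $\afSr$. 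No real obstacle is anticipated; the only point requiring verification is the saturation, which is essentially immediate because of the simple shape of $A(\bfl,r)$ in the $[B]$-basis. The plan circumvents a direct calculation inside $\dHallr$, where the recursive definition of $\sfz_s^+$ via $\pi_s^+$ and $c_s^+$ involves non-obvious denominators.
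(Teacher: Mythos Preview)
Your proposal is correct and follows essentially the same route as the paper: write $\sfz_s^+$ in the $\ti u_A^+$-basis, push through $\xi_r$ (equivalently $\zeta_r^+$), use the integral tensor-space action \eqref{action central elts} to land in $\afSr$, and read off integrality of the coefficients from the $\sZ$-basis $\{A(\bfl,r)\}$ of $\afSrp$ given in Proposition~\ref{afSrz}(1). You are slightly more explicit than the paper in spelling out the saturation $\afSr\cap(\afSrp\ot_\sZ\mbq(\up))=\afSrp$ and in observing that a single $r\ge sn$ suffices (since $\sg(A)\le\fkd(A)=sn$), whereas the paper lets $r$ vary; but these are cosmetic differences, not a genuinely different approach.
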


\begin{proof} We only prove $\sfz_s^+\in\Hall^+$. The proof for
$\sfz_s^-\in\Hall^-$ is similar.

By Proposition \ref{afzrpm} and Theorem \ref{xirl}, the restriction
of $\xi_r:\dHallr\ra\afbfSr$ gives an algebra homomorphism
$$\zeta_r^+:\dHallr^+\lra \afbfSr= \End_{\afbfHr}(\bfOg^{\ot r})$$
taking $\ti u_A^+\mapsto A(\bfl,r)$ for $A\in\afThnp$. Write
$$\sfz_s^+=\sum_{A\in\afThnp}f_A\ti u_A^+$$
 where all but finitely many $f_A\in\mbq(\up)$ are zero. By \eqref{action central
 elts}, we have $\sfz_s^+(\Og^{\ot r})\han \Og^{\ot r}$. Hence,
$\zeta_r^+(\sfz_s^+)\in\End_{\afHr}(\Og^{\ot r})=\sS_\vtg(n,r)$; see
Proposition \ref{bimodule-isom}. In other words,
$$\zeta_r^+(\sfz_s^+)=\sum_{A\in\afThnp}f_AA(\bfl,r)=\sum_{A\in\afThnp,\,\sg(A)\leq
r}f_AA(\bfl,r)\in\afSrp.$$
 By Proposition \ref{afSrz}(1), $\{A(\bfl,r)\mid A\in\afThnp,\sg(A)\leq r\}$ is a $\sZ$-basis
for $\afSrp$. Hence, $f_A\in\sZ$ for all $A\in\afThnp$ with
$\sg(A)\leq r$. Since $r$ can be chosen to be an arbitrary positive
integer, it follows that $f_A\in\sZ$ for all $A\in\afThnp$. We
conclude that $\sfz_s^+ \in\Hall^+$.
\end{proof}

We now patch the three bases to obtain a basis for $\afSr$. For $A\in\afThn$, define
(cf. \cite{BLM})
$$|\!|A|\!|=\sum_{1\leq i\leq n\atop
i<j}\frac{(j-i)(j-i+1)}{2}a_{i,j}+\sum_{1\leq i\leq n\atop
i>j}\frac{(i-j)(i-j+1)}{2}a_{i,j}.$$

\begin{Lem}\label{||A||}
For $A\in\afThn$, the equality
$$|\!|A|\!|=\sum_{1\leq i\leq n\atop
i<j}\sg_{i,j}(A)+\sum_{1\leq i\leq n\atop i>j}\sg_{i,j}(A)$$
 holds. In particular, if $A,B\in\afThn$ satisfy $A\p B$, then $|\!|A|\!|<|\!|B|\!|$.
\end{Lem}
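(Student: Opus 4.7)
The plan is to establish the identity by swapping the order of summation on the right-hand side and reducing the problem to a pure counting identity. Using the alternative expression for $\sg_{i,j}(A)$ given right before \eqref{order preceq} (with the choice $x_0 = 0$), we have
$$\sum_{1\leq i\leq n,\, i<j}\sg_{i,j}(A)=\sum_{1\leq s\leq n,\, s<t}a_{s,t}\cdot N^+_{s,t},$$
where $N^+_{s,t}=|\{(p,q,b)\in\mbz^3 : 1\leq p\leq n,\; p<q,\; s-bn\leq p,\; q\leq t-bn\}|$. The goal is to show $N^+_{s,t}=\frac{(t-s)(t-s+1)}{2}$, which will identify the first sum on the right-hand side with the contribution of upper-triangular entries to $|\!|A|\!|$. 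An entirely analogous computation with $i>j$ will handle the lower-triangular entries.

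To evaluate $N^+_{s,t}$, I would make the substitution $u=p-(s-bn)\geq 0$ and $v=(t-bn)-q\geq 0$. The condition $p<q$ becomes $u+v\leq t-s-1$, and the condition $1\leq p\leq n$ becomes $s+u-n\leq bn\leq s+u-1$. The key observation is that the last range is an interval of length $n-1$, and hence contains a \emph{unique} integer multiple of $n$; consequently $b$ is determined by $u$, and the number of valid triples equals the number of pairs $(u,v)\in\mbn^2$ with $u+v\leq t-s-1$, which is $\binom{t-s+1}{2}=\frac{(t-s)(t-s+1)}{2}$, as required. I do not expect any serious obstacle here; the main care is to verify the uniqueness of $b$ and handle the boundary conditions correctly.

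For the ``in particular" statement, I would first observe that the function $\sg_{i,j}(A)$ is invariant under the shift $(i,j)\mapsto(i+n,j+n)$: this follows immediately from the periodicity $a_{s,t}=a_{s-n,t-n}$ by reindexing. Therefore the set $\{(i,j)\in\mbz^2 : 1\leq i\leq n,\; i\neq j\}$ is a fundamental domain for this shift action on $\{(i,j): i\neq j\}$. Now if $A\p B$, then $\sg_{i,j}(A)\leq\sg_{i,j}(B)$ for every $(i,j)$ with $i\neq j$, and strict inequality holds for at least one pair $(i_0,j_0)$; by shifting we may assume $1\leq i_0\leq n$, so this pair actually appears in the sum established in the first part. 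Combining the weak inequalities with this one strict inequality gives $|\!|A|\!|<|\!|B|\!|$.
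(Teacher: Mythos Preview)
Your proof is correct and reaches the same identity as the paper, but the paper's route is a bit more direct. Instead of passing through the alternative formula with the auxiliary shift variable $b$, the paper simply writes
$$\sum_{1\leq i\leq n,\, i<j}\sg_{i,j}(A)=\sum_{\substack{1\leq i\leq n\\ s\leq i<j\leq t}}a_{s,t}$$
and then swaps the fundamental domain from $1\leq i\leq n$ to $1\leq s\leq n$ (using the periodicity $a_{s,t}=a_{s+n,t+n}$), obtaining directly
$$\sum_{1\leq s\leq n,\, s<t}a_{s,t}\,|\{(i,j)\mid s\leq i<j\leq t\}|=\sum_{1\leq s\leq n,\, s<t}\frac{(t-s)(t-s+1)}{2}\,a_{s,t}.$$
Your substitution $u=p-(s-bn)$, $v=(t-bn)-q$ and the observation that $b$ is uniquely determined (because $n$ consecutive integers contain exactly one multiple of $n$) is precisely the mechanism behind this swap of fundamental domain, just carried out explicitly rather than invoked in one line. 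The lower-triangular case and the ``in particular'' statement are handled identically; your remark on the $(i,j)\mapsto(i+n,j+n)$ invariance of $\sg_{i,j}$ makes explicit what the paper leaves tacit.
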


\begin{proof}
By definition, we have
\begin{equation*}
\begin{split}
\sum_{1\leq i\leq n\atop i<j}\sg_{i,j}(A)&=\sum_{1\leq i\leq n\atop
s\leq i<j\leq t}a_{s,t}=\sum_{1\leq s\leq n\atop
s<t}a_{s,t}\mid\{(i,j)\mid s\leq i<j\leq t\}|\\
&=\sum_{1\leq s\leq n\atop s<t}\frac{(t-s)(t-s+1)}{2}a_{s,t},
\end{split}
\end{equation*}
\begin{equation*}
\begin{split}
 \sum_{1\leq i\leq n\atop i>j}\sg_{i,j}(A)&
 =\sum_{1\leq i\leq n\atop t\leq j<i\leq s}a_{s,t}
 =\sum_{1\leq s\leq n\atop s>t}a_{s,t}|\{(i,j)\mid t\leq j<i\leq s\}|\\
&=\sum_{1\leq s\leq n\atop s>t}\frac{(s-t)(s-t+1)}{2}a_{s,t}.\\
\end{split}
\end{equation*}
The assertion follows from the definition of $|\!|A|\!|$.
\end{proof}

For $A\in\afThn$ and $i\in\mbz$, define the ``hook sum''
$$\sg_{i}(A)=a_{i,i}+\sum_{j<i}(a_{i,j}+a_{j,i}).$$ It is easy to see
that $\sg(A)=\sum_{1\leq i\leq n}\sg_i(A)$. Let
\begin{equation}\label{sequence of hook sums}
\bfsg(A)=(\sg_i(A))_{i\in\mbz}\in\afLanr\quad\text{ and }\quad
\ttp_{_A}=A^+(\bfl,r)\bfone_{\bfsg(A)}A^-(\bfl,r)
\end{equation}
\index{$\bfsg(A)$, sequence of hook sums}\index{$\ttp_{_A}$, PBW
type basis element for $\afSr$} We now describe a PBW type basis for
$\afSr$.\index{PBW type basis for $\afSr$}

\begin{Thm}\label{PBW basis of affine q-Schur algebras}
Keep the notation introduced above. There exist $g_{_{B,A}}\in\sZ$
such that
$$\ttp_{_A}=[A]+\sum_{B\in\afThnr,B\p A}g_{_{B,A}}[B].$$
Moreover, the set $\sP_r:=\{\ttp_{_{A}}\mid A\in\afThnr\}$
 forms a $\sZ$-basis for $\afSr$. In particular, we obtain a (weak) triangular decomposition:\index{triangular decomposition of $\afSr$}
 $$\afSr=\afSrp\afSrz\afSrm.$$
\end{Thm}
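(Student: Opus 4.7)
The plan is to derive the expansion of $\ttp_{_A}$ with respect to the basis $\{[B]\mid B\in\afThnr\}$ directly from the defining product $A^+(\bfl,r)\bfone_{\bfsg(A)}A^-(\bfl,r)$, using exactly the same geometric/combinatorial input that drives Proposition \ref{triangular formula in A(bfj,r)}, but applied to a single pair of diagonal shifts rather than a sum. First I would use \eqref{product [diag(la)][A] in affine q-Schur algebra} to observe that in the expansion $A^+(\bfl,r)=\sum_\mu[A^++\diag(\mu)]$ only the unique $\mu^*\in\afLa(n,r-\sg(A^+))$ with $\co(A^++\diag(\mu^*))=\bfsg(A)$ survives the multiplication by the idempotent $\bfone_{\bfsg(A)}$ on the right, and dually only the unique $\nu^*\in\afLa(n,r-\sg(A^-))$ with $\ro(A^-+\diag(\nu^*))=\bfsg(A)$ survives on the left. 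A direct computation with the formula $\bfsg(A)_i=a_{i,i}+\sum_{j<i}(a_{i,j}+a_{j,i})$ yields $\mu^*_i=a_{i,i}+\sum_{j<i}a_{i,j}$ and $\nu^*_i=a_{i,i}+\sum_{j<i}a_{j,i}$; the identity $\sg(A)=r$ guarantees that the required size conditions $\sg(\mu^*)=r-\sg(A^+)$ and $\sg(\nu^*)=r-\sg(A^-)$ both hold. Consequently
\[
\ttp_{_A}=[A^++\diag(\mu^*)]\cdot[A^-+\diag(\nu^*)].
\]

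Next I would expand this single product via \eqref{e-basis multn}, i.e.\ via convolution in $R_G(\scrY\times\scrY)$, and argue termwise. For any $[C]$ appearing with a nonzero coefficient, Lemma \ref{lemma2 for triangular decomposition of affine q-Schur algebra}(1) forces $C\pr A$. To pin down the coefficient of $[A]$ itself, I would evaluate the convolution at a pair $(\bfL,\bfL'')\in\sO_A$; Lemma \ref{lemma2 for triangular decomposition of affine q-Schur algebra}(2) shows that the unique intermediate flag is $\bfL'=\bfL\cap\bfL''$, and the relation between $\mu^*$ (resp.\ $\nu^*$) and the data $(\bfL,\bfL',\bfL'')$ matches precisely the choice singled out in the proof of Proposition \ref{triangular formula in A(bfj,r)} for $\la=0$. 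Applying the dimension identity $d_A=d_{A^++\diag(\mu^*)}+d_{A^-+\diag(\nu^*)}$ (the same identity used in \eqref{Lem-ddd} to get the coefficient $1$) yields $\ttp_{_A}=[A]+\sum_{B\p A}g_{_{B,A}}[B]$ with $g_{_{B,A}}\in\sZ$, proving the first assertion.

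To deduce that $\sP_r$ is a $\sZ$-basis of $\afSr$, I would invoke Lemma \ref{||A||}: $B\p A$ implies $|\!|B|\!|<|\!|A|\!|$, so ordering $\afThnr$ by $|\!|\cdot|\!|$ makes the transition matrix from $\{[A]\}_{A\in\afThnr}$ to $\{\ttp_{_A}\}_{A\in\afThnr}$ upper unitriangular with entries in $\sZ$. Since $\{[A]\}_{A\in\afThnr}$ is already a $\sZ$-basis of $\afSr$, so is $\sP_r$. Finally, the triangular decomposition $\afSr=\afSrp\,\afSrz\,\afSrm$ is immediate: each basis element $\ttp_{_A}=A^+(\bfl,r)\bfone_{\bfsg(A)}A^-(\bfl,r)$ sits in the product $\afSrp\,\afSrz\,\afSrm$, so the latter contains a $\sZ$-basis of $\afSr$.

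\textbf{Main obstacle.} The one nontrivial step is identifying the leading coefficient of $[A]$ in the expansion of $[A^++\diag(\mu^*)][A^-+\diag(\nu^*)]$ as exactly $1$. All other terms $[C]$ appear with $C\p A$ by the bookkeeping argument on orbit inclusions (Lemma \ref{lemma2 for triangular decomposition of affine q-Schur algebra}), but the precise value $1$ rests on the $d$-identity of \eqref{Lem-ddd} together with the uniqueness part of Lemma \ref{lemma2 for triangular decomposition of affine q-Schur algebra}(2) applied to $\la=0$; this is where the specific choice of the middle idempotent $\bfone_{\bfsg(A)}$ (rather than $\bfone_\la$ for some other $\la$) enters crucially.
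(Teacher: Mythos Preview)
Your argument is correct and follows the paper's proof in all essentials: both reduce $\ttp_{_A}$ to a single product of bracket elements, invoke Lemma~\ref{lemma2 for triangular decomposition of affine q-Schur algebra} together with the $d$-identity of Lemma~\ref{ddd} to extract $[A]$ with coefficient $1$ plus terms $[B]$ with $B\p A$, and then finish with Lemma~\ref{||A||}. The only difference is cosmetic---the paper first moves the idempotent to the right of $A^-(\bfl,r)$ and applies Proposition~\ref{triangular formula in A(bfj,r)} to the full product $A^+(\bfl,r)A^-(\bfl,r)$, whereas you use $\bfone_{\bfsg(A)}$ up front to pick out $\mu^*,\nu^*$; one small slip is that your phrase ``for $\la=0$'' should read ``for $\la$ equal to the diagonal of $A$'' in the notation of Proposition~\ref{triangular formula in A(bfj,r)} and Lemma~\ref{ddd}.
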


\begin{proof} By the notational convention right above Lemma \ref{lemma2 for triangular decomposition of affine q-Schur algebra}, if $A\in\afThnr$, then
$\sg_{i}(A^\pm)$ is the $i$-th component of $\co(A^+)+\ro(A^-)$ and
$$\bfone_{\bfsg(A)}A^-(\bfl,r)=[\diag(\bfsg(A))]A^-(\bfl,r)=A^-(\bfl,r)[\diag
(\bfsg(A)+\co(A^-)-\ro(A^-))].$$ On the other hand,
$A^+(\bfl,r)A^-(\bfl,r)=A^{\pm}(\bfl,r)+g$, where $g$ is a
$\sZ$-linear combination of $[B]$ with $B\in\afThnr$ and $B\p A$, by
Proposition \ref{triangular formula in A(bfj,r)}. Thus,
\begin{equation*}
\begin{split}
\ttp_{_A}&=A^+(\bfl,r)\bfone_{\bfsg(A)} A^-(\bfl,r)\\
&=A^+(\bfl,r)A^-(\bfl,r)[\diag(\bfsg(A)+\co(A^-)-\ro(A^-))]\\
&=A^\pm(\bfl,r)[\diag(\bfsg(A)+\co(A^-)-\ro(A^-))]+g'\\
&=[A^\pm+\diag(\bfsg(A)-(\co(A^+)+\ro(A^-))]+g'\\
&=[A]+g'
\end{split}
\end{equation*}
where $g'$ is the $\sZ$-linear combination of $[B]$ with
$B\in\afThnr$ and $B\p A$. Thus, the set $\sP_r$ is linearly
independent. To see it spans, we can apply Lemma \ref{||A||} and an
induction on $|\!|A|\!|$ to show that $[A]$ is a $\sZ$-linear
combination of $\ttp_{_{B}}$ with $B\in\afThnr$. Hence, $\sP_r$
forms a $\sZ$-basis for $\afSr$. The last assertion follows from
Proposition \ref{afSrz}.
\end{proof}

\section{Affine quantum Schur--Weyl duality, I}

We now use the triangular decomposition given in Theorem \ref{PBW basis of affine q-Schur algebras}
to partially establish an affine analogue of the quantum Schur--Weyl reciprocity.

As in Remark \ref{subalgebra quantum gl}(4), for each $m\geq 0$,
let $\dHallr^{(m)}$ (resp., $\dHallr^{(0)}$) denote the subalgebra
of $\dHallr$ generated by $u^+_i, u_i^-, K_i^{\pm 1}, \sfz_s^+,
\sfz_s^-$ (resp., $u^+_i, u_i^-, K_i^{\pm 1}$) for $i\in I$, $1\leq
s\leq m$ (resp., for $i\in I$, $m=0$). Thus,
$\dHallr^{(0)}=\bfU_\vtg(n)$.

\begin{Thm} \label{surjective-dHall-aff}
Let $n, r$ be two positive integers with $n\geq 2$.

\begin{itemize}

\item[(1)] The algebra homomorphism $\xi_r:\dHallr\ra \afbfSr$ is
surjective. 
\index{$\xi_r$, epimorphism $\dHallr\to\afbfSr$}

\item[(2)] If we write $r=mn+m_0$ with $m\geq 0$ and $0\leq
m_0<n$, then $\xi_r$ induces a surjective algebra homomorphism
$\dHallr^{(m)}\ra \afbfSr$.
In particular, if $n>r$, then $\xi_r$
induces a surjective algebra homomorphism $\bfU_\vtg(n)\ra
\afbfSr$.
\end{itemize}
\end{Thm}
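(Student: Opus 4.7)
The plan is to deduce surjectivity from the triangular decomposition $\afbfSr=\afbfSrp\afbfSrz\afbfSrm$ furnished by Theorem \ref{PBW basis of affine q-Schur algebras} (extended from $\sZ$ to $\mbq(\up)$ by base change) by checking that $\xi_r$ hits each of the three parts separately, and then to obtain (2) by observing that almost all of the extra ``central'' generators of $\dHallr$ act as zero on $\bdOg^{\ot r}$ once $sn>r$.

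For part (1), first I would handle the $\pm$-parts. By Theorem \ref{xirl}, the restrictions $\xi_r|_{\dHallr^\pm}$ coincide with the homomorphisms $\zeta_r^\pm$ of Proposition \ref{afzrpm}, so $\xi_r(\tilde u_A^+)=A(\bfl,r)$ and $\xi_r(\tilde u_A^-)=\tA(\bfl,r)$ for every $A\in\afThnp$. Since the elements $A(\bfl,r)$ (resp.\ $\tA(\bfl,r)$) with $\sg(A)\le r$ form a $\mbq(\up)$-basis of $\afbfSrp$ (resp.\ $\afbfSrm$) by Proposition \ref{afSrz}(1), we get $\xi_r(\dHallr^\pm)\supseteq\afbfSr^\pm$. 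For the zero part, Theorem \ref{xirl} gives $\xi_r(K_i)=0(\bfe_i,r)=\bffkk_i$, and Proposition \ref{afSrz}(2) expresses each idempotent $\bfone_\la=[\diag(\la)]$ as a polynomial in $\bffkk_1,\ldots,\bffkk_n$; thus $\afbfSrz\subseteq\xi_r(\dHallr^0)$. Combining these three inclusions with the triangular decomposition yields
\[
\afbfSr=\afbfSrp\afbfSrz\afbfSrm\subseteq\xi_r(\dHallr^+)\xi_r(\dHallr^0)\xi_r(\dHallr^-)\subseteq\xi_r(\dHallr),
\]
which proves (1).

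For part (2), write $r=mn+m_0$ with $0\le m_0<n$. By Proposition \ref{generators-RH-alg} (applied on both triangular halves) together with the description of $\dHallr^{(m)}$ in Remark \ref{subalgebra quantum gl}(4), the whole algebra $\dHallr$ is generated by $E_i,F_i,K_i^{\pm1}$ and the semisimple elements $u_{s\dt}^{\pm}$ for $i\in I$ and $s\ge 1$, and $\dHallr^{(m)}$ is the subalgebra generated by the same elements restricted to $1\le s\le m$. For $s\ge m+1$ we have $\sg(s\dt)=sn>r$, hence by Corollary \ref{action-ss}, $u_{s\dt}^{\pm}\cdot\og_\bfi=0$ for every $\og_\bfi\in\bdOg^{\ot r}$, so $\xi_r(u_{s\dt}^{\pm})=0$. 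Since any element of $\dHallr$ is a polynomial in the full generating set and every monomial containing a factor $u_{s\dt}^{\pm}$ with $s>m$ is annihilated by $\xi_r$, the image of $\xi_r$ coincides with the image of the subalgebra generated by the remaining generators, namely $\xi_r(\dHallr^{(m)})$. Combined with (1), this gives the surjection $\dHallr^{(m)}\twoheadrightarrow\afbfSr$. The special case $n>r$ forces $m=0$, so $\dHallr^{(0)}=\bfU_\vtg(n)$ (also recovered directly via Corollary \ref{n>r}).

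There is no real obstacle here beyond bookkeeping: the whole argument is an assembly of earlier results. The only place where care is needed is to confirm, through Corollary \ref{action-ss}, that the vanishing $\xi_r(u_{s\dt}^{\pm})=0$ occurs \emph{exactly} when $s\ge m+1$, and to ensure that the triangular decomposition established integrally in Theorem \ref{PBW basis of affine q-Schur algebras} passes to $\mbq(\up)$ by flat base change so that it can be applied inside $\afbfSr$.
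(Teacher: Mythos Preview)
Your proof is correct and follows essentially the same approach as the paper: use Theorem \ref{xirl} and Proposition \ref{afSrz} to see that $\xi_r$ hits each triangular part, then invoke the triangular decomposition of Theorem \ref{PBW basis of affine q-Schur algebras}; for (2), kill the extra semisimple generators $u_{s\dt}^\pm$ with $s>m$ via Corollary \ref{action-ss}. The only difference is that the paper carries out part (1) integrally first---introducing the $\sZ$-submodule $\ti{\fD}_\vtg(n)=\Hall^+\fD_\vtg(n)^0\Hall^-$ and showing it surjects onto $\afSr$ before base-changing to $\mbq(\up)$---whereas you work over $\mbq(\up)$ throughout; this integral refinement is not needed for the theorem as stated but is used later in the paper (e.g.\ in formulating Conjecture \ref{Lform1}).
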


\begin{proof} (1) As in Remark \ref{Lform2}, let $\Hall^+$ (resp. $\Hall^-$) 
\index{$\ti{\fD}_\vtg(n)$, candidate of Lusztig form!$\Hall^+$,
$+$-part of $\ti{\fD}_\vtg(n)$} \index{$\ti{\fD}_\vtg(n)$, candidate of Lusztig form!$\Hall^-$,
$-$-part of $\ti{\fD}_\vtg(n)$} 
be the $\sZ$-subalgebra of
$\dHallr$ generated by $u_A^+$ (resp., $u_A^-$) for all
$A\in\afThnp$, $\fD_\vtg(n)^0$ the $\sZ$-subalgebra of $ \dHallr$
generated by $K_i^{\pm1}$ and $\leb{K_i;0\atop t}\rib$ for $i\in I$
and $t>0$, and set
\begin{equation}\label{integral form 2}
\ti{\fD}_\vtg(n)=\Hall^+\,\fD_\vtg(n)^0\,\Hall^-\cong
\Hall^+\otimes\fD_\vtg(n)^0\otimes\Hall^-.
\end{equation} \index{$\ti{\fD}_\vtg(n)$, candidate of Lusztig form}
\index{integral
form!$\ti{\fD}_\vtg(n)$, candidate of Lusztig form} By Theorem
\ref{xirl} and Proposition \ref{afSrz}, $\xi_r$ maps $\Hall^\ep$
onto $\sS_\vtg(n,r)^\ep$ for $\ep=+,-$ and $\fD_\vtg(n)^0$ onto
$\sS_\vtg(n,r)^0$. Hence, $\xi_r$ maps $\ti{\fD}_\vtg(n)$ onto
$\afSr$ by Theorem \ref{PBW basis of affine q-Schur algebras}.
Taking base change to $\mbq(\up)$ gives the required surjectivity.


(2) By Proposition \ref{generators-RH-alg}, $\dHallr$ is generated
by $u^+_i, u_i^-, K_i^{\pm 1}, u_{s\dt}^+, u_{s\dt}^-$ ($i\in I$,
$s\in\mbz^+$). For each $s\geq 1$, the semisimple module $S_{s\dt}$
has dimension $sn$. Thus, if $s>m\geq 1$, then $\dim S_{s\dt}>r$.
Moreover, in this case, we have by Corollary \ref{action-ss} that
for each $\og_\bfi\in\bdOg^{\ot r}$,
$$u_{s\dt}^+\cdot \og_\bfi=0=u_{s\dt}^-\cdot \og_\bfi.$$
 In other words, $\xi_r(u_{s\dt}^+)=0=\xi_r(u_{s\dt}^-)$ whenever
$s>m$ and $m\geq 1$. The assertion follows from the fact that
$\dHallr^{(m)}$ is generated by $u^+_i, u_i^-, K_i^{\pm 1},
u_{s\dt}^+, u_{s\dt}^-$ ($i\in I$, $0\leq s\leq m$); see Remark
\ref{subalgebra quantum gl}(4). The last assertion follows from
Corollary \ref{n>r}.
\end{proof}

Combining the above theorem with Corollary \ref{action-ss} yields
the following result which can also be derived from \cite[\S4.1,
Th~8.2]{Lu99}. \footnote{Lusztig constructed a canonical basis in
\S4.1 for $\afSr$ and proved that those canonical basis elements
labeled by aperiodic matrices form a basis for
$U_\vtg(n,r):=\xi_r(U_\vtg(n))$.}

\begin{Coro} \label{n>r-integral-surjectivity} Suppose $n>r$. Then $\xi_r:\dHallr\ra \afbfSr$ induces
a surjective $\sZ$-algebra homomorphism
$$\theta_r:U_\vtg(n)\lra\afSr,$$
 where $U_\vtg(n)$ is the $\sZ$-subalgebra of $\dHallr$ generated by
$K_i^{\pm1}$, $\big[{K_i;0\atop t}\big]$, $(u_i^+)^{(m)}$ and
$(u_i^-)^{(m)}$ for $i\in I$ and $t,m\geq 1$ (see \S2.2).
\end{Coro}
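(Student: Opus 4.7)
The plan is to deduce this from Theorem \ref{surjective-dHall-aff}(2), which already supplies the surjection $\bfU_\vtg(n) \to \afbfSr$ over $\mbq(\up)$. What remains is to upgrade the $\mbq(\up)$-level statement to $\sZ$: first I would verify the integrality assertion that $\xi_r$ carries $U_\vtg(n)$ into $\afSr$ (and not merely into $\afbfSr$), and then I would verify integral surjectivity.

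For integrality, I would argue through the tensor space realization $\afSr = \End_{\afHr}(\Og^{\otimes r})$ of Proposition \ref{bimodule-isom}. The question reduces to showing that the action of each generator of $U_\vtg(n)$ preserves the $\sZ$-lattice $\Og^{\otimes r}$. This is immediate for $K_i^{\pm 1}$ and $\leb{K_i;0\atop t}\rib$ from the diagonal action in \eqref{QGKMAlg-action}, since the relevant Gaussian binomials lie in $\sZ$. For the divided powers $(u_i^\pm)^{(m)}$ I would combine the iterated coproduct formula of Proposition \ref{comult-form-ss} with Corollary \ref{action-ss}: the coefficients appearing in the action on basis vectors $\og_\bfi$ are integral powers of $\up$, so the $\sZ$-lattice is preserved.

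For surjectivity, I would exploit the triangular decomposition $\afSr = \afSrp \afSrz \afSrm$ from Theorem \ref{PBW basis of affine q-Schur algebras} and show each of the three factors is contained in $\theta_r(U_\vtg(n))$. The zero part is immediate: Proposition \ref{afSrz}(2) writes $\bfone_\la = \prod_i \leb{\bffkk_i;0\atop \la_i}\rib$ with $\bffkk_i = \xi_r(K_i)$ by Theorem \ref{xirl}, so $\bfone_\la = \theta_r\bigl(\prod_i \leb{K_i;0\atop \la_i}\rib\bigr)$. For $\afSrp$ (and symmetrically for $\afSrm$), the key observation is that the hypothesis $n>r$ forces every $A \in \afThnp$ with $\sg(A) \leq r$ to be aperiodic, since $\bfd(A)$ cannot be sincere in this range. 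I would then appeal to Proposition \ref{integral-monomial-basis-comp}, which supplies, for each aperiodic $A$, an integral monomial $u^{(A)}$ in the divided powers $u_i^{(t)}$, together with a triangular expansion of the form $u^{(A)} = \ti u_A^+ + \sum_{B <_\deg A} c_{A,B}\ti u_B^+$ with $c_{A,B} \in \sZ$. Inverting this triangular system over $\sZ$ lets one write $\ti u_A^+$ as a $\sZ$-combination of monomials in divided powers; applying $\theta_r$ and using $\xi_r(\ti u_A^+) = A(\bfl,r)$ from Theorem \ref{xirl}, every generator of $\afSrp$ is realised inside $\theta_r(U_\vtg(n))$.

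The main obstacle I anticipate is verifying that the change-of-basis matrix between the monomial basis $\{u^{(A)}\}$ and the PBW basis $\{\ti u_A^+\}$ of $\comp^+$ is genuinely unitriangular over $\sZ$, and not merely invertible over $\mbq(\up)$. The leading-term coefficient is controlled by the Hall polynomial $\vi^A_{A_1,\ldots,A_m}$ for the word $w_A$ produced by the algorithm of \S1.3, which equals $1$ precisely because $w_A$ is distinguished in the sense of \cite[Th.~5.5]{DD05}. Once this integrality point is secured, the three triangular factors combine to give $\theta_r(U_\vtg(n)) = \afSr$, completing the proof.
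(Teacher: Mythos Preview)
Your overall strategy via the triangular decomposition is sound, but the key step for $\afSrp$ contains a genuine gap. You write that inverting the triangular system $u^{(A)} = \ti u_A^+ + \sum_{B<_\deg A} c_{A,B}\ti u_B^+$ over $\sZ$ lets you express $\ti u_A^+$ as a $\sZ$-combination of monomials in divided powers. This does not work: the lower terms $\ti u_B^+$ include \emph{periodic} $B$, and for such $B$ the element $\ti u_B^+$ does not lie in $\comp^+$. Indeed, the basis $\{E_A^+\}$ of $\comp^+$ described in \cite[\S8]{DDX} (and quoted in the proof of Proposition~\ref{dim-difference}) satisfies $E_A^+ = \ti u_A^+ + \sum_{B<_\deg A,\, B\text{ periodic}}\eta_B^A\ti u_B^+$, so $\ti u_A^+$ itself is generally \emph{not} in $\comp^+$, and no such inversion inside $\Hall^+$ is possible. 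The fix is to apply $\theta_r$ \emph{first}: then periodic $B$ satisfy $\sg(B)\geq n>r$ and hence $B(\bfl,r)=0$, so $\theta_r(u^{(A)}) = A(\bfl,r) + \sum c_{A,B}B(\bfl,r)$ with the surviving $B$ all aperiodic with $\sg(B)\leq r$, and now the system inverts over $\sZ$ by induction on $<_\deg$. A minor point: your justification that $\sg(A)\leq r<n$ forces aperiodicity via non-sincerity of $\bfd(A)$ is incorrect (take $A=E^\vtg_{1,n+1}$, with $\sg(A)=1$ but $\bfd(A)=\dt$ sincere); the correct reason is simply that a periodic matrix has at least $n$ nonzero off-diagonal entries, so $\sg(A)\geq n$.

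The paper's route is different and shorter. It invokes \cite[Th.~5.2]{DDX}: the integral Ringel--Hall algebra $\Hall^+$ is generated as a $\sZ$-algebra by the divided powers $(u_i^+)^{(m)}$ together with the elements $u_\bfa^+$ for \emph{sincere} $\bfa\in\mbn^n$. Since $n>r$, any sincere $\bfa$ has $\sg(\bfa)\geq n>r$, so Corollary~\ref{action-ss} gives $\xi_r(u_\bfa^+)=0$. Hence the image of $\comp^+$ (generated by divided powers alone) coincides with the image of $\Hall^+$, which is $\afSrp$ by Proposition~\ref{afzrpm}. This avoids the monomial-basis triangularity entirely. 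Your corrected argument is more hands-on and uses only results internal to the present paper, while the paper's argument is cleaner but imports the extra generating-set statement from \cite{DDX}.
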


\begin{proof} By Proposition \ref{afSrz}, $\xi_r:\dHallr\ra \afbfSr$
induces surjective $\sZ$-algebra homomorphisms
$$\xi^+_{r,\sZ}:\Hall^+\lra \afSrp\;\;\text{and} \;\;\xi^-_{r,\sZ}:\Hall^-\lra
\afSrm.$$
 By \cite[Th.~5.2]{DDX}, $\Hall^+$ is generated by $(u_i^+)^{(m)}$
and $u_\bfa^+$ for $i\in I$, $m\geq 1$, and sincere $\bfa\in\mbn^n$.
Since $n>r$, it follows from Corollary \ref{action-ss} that
$\xi^+_{r,\sZ}(u^+_\bfa)=0$ for all sincere $\bfa\in\mbn^n$. Thus,
$\xi^+_{r,\sZ}$ gives rise to a surjective $\sZ$-algebra
homomorphism
$$\theta_r^+:\comp^+\lra \afSrp,$$
 where $\comp^+$ is the $\sZ$-subalgebra of $\dHallr$ generated by the
$(u_i^+)^{(m)}$. Similarly, we obtain a surjective $\sZ$-algebra
homomorphism
$$\theta_r^-:\comp^-\lra \afSrm,$$
where $\comp^-$ is the $\sZ$-subalgebra of $\dHallr$ generated by
the $(u_i^-)^{(m)}$. By \eqref{integral form of U(n)}, we have
$$U_\vtg(n)=\comp^+\,\fD_\vtg(n)^0\,\comp^-.$$
 The assertion then follows from the triangular decomposition
of $\afSr$ given in Theorem \ref{PBW basis of affine q-Schur
 algebras}.
\end{proof}

If $z\in\mbc$ is not a root of unity, and $\DC(n)$ is the double
Hall algebra over $\mbc$ with parameter $z$ considered in Remark
\ref{NonrootOfUnity2}, then we have an algebra homomorphism
$\xi_{r,\mbc}:\DC(n)\ra\afSr_{\mbc}$ as given in \eqref{xi_{r,z}}.
The proof of the theorem above gives the following.
\begin{Coro}\label{surjective-dHall-aff/C}The $\mbc$-algebra homomorphism
$$\xi_{r,\mbc}:\DC(n)\lra\afSr_{\mbc}$$
is surjective.
\end{Coro}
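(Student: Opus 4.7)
The plan is to mimic, at the specialized level, the proof of Theorem \ref{surjective-dHall-aff}(1), exploiting the fact that all structural results used there either hold integrally or survive a specialization $\up\mapsto z$ for $z\in\mbc$ a non-root of unity. First, I would invoke Remark \ref{mathcal A} together with Corollary \ref{NonrootOfUnity3} to realize $\DC(n)$ and $\afSr_\mbc$ as specializations at $\up=z$ of their $\sA$-forms, where $\sA=\sZ[(\up^m-1)^{-1}]_{m\geq 1}$; in particular, the decomposition $\ti{\fD}_\vtg(n)\cong \Hall^+\otimes\fD_\vtg(n)^0\otimes\Hall^-$ of \eqref{integral form 2} survives base change $\sZ\to \mbc$ (one only needs $\fka_A|_{\up^2=z}\neq 0$, which follows from \eqref{poly-auto-group}).

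Second, I would check that the image of $\xi_{r,\mbc}$ contains each of the three parts $\afSr_\mbc^+$, $\afSr_\mbc^0$, $\afSr_\mbc^-$ of the triangular decomposition in Theorem \ref{PBW basis of affine q-Schur algebras} (whose definition and validity of the triangular relation pass to $\mbc$ by Proposition \ref{triangular formula in A(bfj,r)} since the structure constants are integral). For the $\pm$-parts, Theorem \ref{xirl} specializes to say that $\xi_{r,\mbc}$ restricts to $\zeta_{r,\mbc}^{\pm}\colon \Hall^\pm_\mbc\to\afSr_\mbc^\pm$, sending $\ti u_A^+\mapsto A(\bfl,r)$ and $\ti u_A^-\mapsto\tA(\bfl,r)$; since these are the defining spanning sets of $\afSr_\mbc^\pm$ by Proposition \ref{afSrz}(1), both restrictions are surjective. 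For the $0$-part, Proposition \ref{afSrz}(2) expresses each $\bfone_\la=[\diag(\la)]$ as a polynomial in the elements $\bffkk_i=0(\bfe_i,r)=\xi_{r,\mbc}(K_i)$, so $\xi_{r,\mbc}$ hits $\afSr_\mbc^0$ as well.

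Finally, combining these three surjectivities with the triangular decomposition $\afSr_\mbc=\afSr_\mbc^+\cdot\afSr_\mbc^0\cdot\afSr_\mbc^-$ (the $\mbc$-base change of Theorem \ref{PBW basis of affine q-Schur algebras}), together with the fact that $\DC(n)$ contains the product $\DC(n)^+\cdot\DC(n)^0\cdot\DC(n)^-$, one concludes that $\xi_{r,\mbc}(\DC(n))\supseteq \afSr_\mbc^+\cdot\afSr_\mbc^0\cdot\afSr_\mbc^-=\afSr_\mbc$. I do not foresee a genuine obstacle here: every step is a specialization of an integral fact already established in the generic setting, and the only delicate point is to ensure that passing from $\sZ$ to $\mbc$ via $\up\mapsto z$ does not destroy the triangular decomposition or the $\pm$-part surjectivity. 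This is guaranteed because the relevant $\sZ$-bases in Proposition \ref{afSrz} and the triangular identity in Proposition \ref{triangular formula in A(bfj,r)} have coefficients in $\sZ$ (or in $\sA$ for the Hall-algebra generators), so they remain bases and identities after the specialization.
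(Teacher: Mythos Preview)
Your proposal is correct and follows essentially the same approach as the paper: the paper's own proof consists of the single sentence ``The proof of the theorem above gives the following,'' meaning precisely that the argument of Theorem \ref{surjective-dHall-aff}(1) carries over verbatim to the specialization at $\up=z$. You have simply spelled out the details of why each step survives the base change, which is exactly what is implicit in the paper's one-line justification.
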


The above corollary together with \cite[Th.~8.1]{GP} shows the
following Schur--Weyl reciprocity in the affine quantum case.

\begin{Coro} \label{Schur--Weyl duality over field} Let $q$ be a prime power.
 By specializing $\up$ to $\sqrt{q}$, the
$\DC(n)$-$\afHr_{\mbc}$-bimodule $\Og_\mbc^{\ot r}$
induces algebra homomorphisms
$$\xi_{r,\mbc}:\DC(n)\lra\End_{\mbc}(\Og_\mbc^{\ot
r})\;\;\text{and}\;\; \xi_{r,\mbc}^\vee:\afHr_{\mbc}\lra\End_{\mbc}(\Og_{\mbc}^{\ot
r})$$
 such that
$${\rm Im}\,(\xi_{r,\mbc})=\End_{\afHr_{\mbc}}(\Og_\mbc^{\ot
r})=\afSr_{\mbc}\;\;\text{and}\;\; {\rm
Im}\,(\xi_{r,\mbc}^\vee)=\End_{\afSr_{\mbc}}(\Og_\mbc^{\ot r}).$$
\end{Coro}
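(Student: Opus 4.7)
The plan is to assemble the statement from three ingredients already available in the paper: the tensor space identification of the affine quantum Schur algebra (Proposition \ref{bimodule-isom}), the surjectivity result just proved (Corollary \ref{surjective-dHall-aff/C}), and Pouchin's second centralizer theorem \cite[Th.~8.1]{GP}. Since the bimodule structure on $\Og_\mbc^{\ot r}$ has already been set up in \S3.5 and commutes with the $\afHr_\mbc$-action (Proposition \ref{actions tensor space commute}), the two maps $\xi_{r,\mbc}$ and $\xi_{r,\mbc}^\vee$ exist tautologically; only the identification of their images needs justification.

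First I would handle $\xi_{r,\mbc}$. Specializing $\up$ to $\sqrt{q}$ in Proposition \ref{bimodule-isom} yields a canonical $\mbc$-algebra isomorphism $\mathfrak t_\mbc\colon \afSr_\mbc \overset\sim\to \End_{\afHr_\mbc}(\Og_\mbc^{\ot r})$. By construction, the bimodule action of $\DC(n)$ on $\Og_\mbc^{\ot r}$ factors through this identification (this is the content of the commutative triangle introduced right after \eqref{xir}, specialized to $\mbc$). Thus $\xi_{r,\mbc} = \mathfrak t_\mbc \circ \xi_{r,\mbc}^{\text{Schur}}$, where $\xi_{r,\mbc}^{\text{Schur}}\colon \DC(n)\to\afSr_\mbc$ is the homomorphism of Corollary \ref{surjective-dHall-aff/C}. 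That corollary asserts $\xi_{r,\mbc}^{\text{Schur}}$ is surjective, so
$$\mathrm{Im}(\xi_{r,\mbc}) = \mathfrak t_\mbc(\afSr_\mbc) = \End_{\afHr_\mbc}(\Og_\mbc^{\ot r}),$$
and composing with $\mathfrak t_\mbc^{-1}$ identifies this image with $\afSr_\mbc$ itself. This settles the first chain of equalities.

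For $\xi_{r,\mbc}^\vee$ I would invoke \cite[Th.~8.1]{GP} directly: Pouchin proves exactly the second centralizer property, namely that the natural map $\afHr_\mbc \to \End_\mbc(\Og_\mbc^{\ot r})$ has image equal to $\End_{\afSr_\mbc}(\Og_\mbc^{\ot r})$, where $\afSr_\mbc$ acts via $\mathfrak t_\mbc$. Under the identification of the first part, $\End_{\afSr_\mbc}(\Og_\mbc^{\ot r})$ coincides with $\End_{\mathrm{Im}(\xi_{r,\mbc})}(\Og_\mbc^{\ot r})$, so the equality
$$\mathrm{Im}(\xi_{r,\mbc}^\vee) = \End_{\afSr_\mbc}(\Og_\mbc^{\ot r})$$
is immediate from Pouchin's theorem.

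In short, the proof is essentially an assembly: the hard algebraic input for the first centralizer property is Corollary \ref{surjective-dHall-aff/C} (whose proof rested on the triangular decomposition Theorem \ref{PBW basis of affine q-Schur algebras} combined with the Varagnolo--Vasserot-type description $\zeta_r^\pm$), while the second centralizer property is imported wholesale from \cite{GP}. The only place where a genuine obstacle would arise is in the latter, since the second centralizer property is known to be the deeper and more subtle of the two statements; fortunately, that geometric work has been done in \cite{GP}, so within this paper the corollary reduces to citing it and checking that the bimodule structures on $\Og_\mbc^{\ot r}$ used here and in \cite{GP} agree under the isomorphism $\mathfrak t_\mbc$, which is ensured by Theorem \ref{xirl} identifying $\xi_r|_{\dHallr^\pm}$ with the Varagnolo--Vasserot maps $\zeta_r^\pm$.
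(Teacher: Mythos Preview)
Your proposal is correct and follows essentially the same approach as the paper: the paper's entire justification is the single sentence ``The above corollary together with \cite[Th.~8.1]{GP} shows the following Schur--Weyl reciprocity in the affine quantum case,'' which is exactly the combination of Corollary~\ref{surjective-dHall-aff/C} (for the first centralizer property) and Pouchin's theorem (for the second) that you spell out. Your added remarks about Proposition~\ref{bimodule-isom} and Theorem~\ref{xirl} simply make explicit the identifications the paper takes for granted.
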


\begin{Rems} \label{Schur-Weyl-D}
(1) As established in \S2.3,
$\dHallr\cong\bfU(\widehat{\frak{gl}}_n)$, the quantum loop algebra.
Hence, $\xi_r$ induces a surjective algebra homomorphism
$\bfU(\widehat{\frak{gl}}_n)\ra\afbfSr$. Similarly, $\xi_{r,\mbc}$
induces an algebra epimorphism $\afUglC\to\afSr_\mbc.$ Here
$\afUglC$ is the quantum loop algebra over $\mbc$ defined in
Definition \ref{QLA}, which is isomorphic to $\DC(n)$ by Remarks \ref{iso afgln dHallr over C}(2);
cf. Theorem \ref{iso afgln dHallr}.
It would be interesting to find explicit formulas for the action of
generators of $\bfU(\widehat{\frak{gl}}_n)$ on the tensor space
$\bdOg^{\ot r}$.

(2) In \cite[Th.~2]{V}, Vasserot has also constructed a surjective
map $\Psi_z$ from $\afUglC$ to the $K$-theoretic construction
$K^G(Z)_z$ of $\afSr_\mbc$. It would also be interesting to know if
$\Psi_z$ is equivalent to the epimorphism $\xi_{r,\mbc}$, namely, if
$ g_r\circ \Psi_z=f\circ\xi_{r,\mbc}$ under the isomorphisms
$f:\afUglC\overset\sim\to\DC(n)$ and
$g_r:K^G(Z)_z\overset\sim\to\afSr_\mbc$ (see \cite[(9.4)]{GV}).

(3) By the epimorphisms $\xi_r$ and $\zrC$, different types of generators for
double Ringel--Hall algebras (see Remark \ref{subalgebra quantum
gl}(1)) give rise to corresponding generators for affine quantum
Schur algebras. Thus, we may speak of semisimple generators,
homogeneous indecomposable generators, etc., for $\afbfSr$. See
\S\S5.4, 6.2. \index{semisimple generators!$\sim$ for $\afbfSr$}

\end{Rems}

We end this section with a few conjectures.

In the proof of the surjectivity of $\xi_r$ in Theorem
\ref{surjective-dHall-aff}(1), we proved that the restriction of
$\xi_r$ to the free $\sZ$-module $\ti \fD_\vtg(n)$ defined in
\eqref{integral form 2} maps onto the (integral) algebra $\afSr$.
Since the $\Hall$ is generated by certain semisimple generators (see
\cite[Th.~5.2(ii)]{DDX}), it is natural to expect that the
commutator formulas given in Corollary \ref{comm formula for
semisimple} hold in $\ti \fD_\vtg(n)$. Naturally, if the following
conjecture was true, we would call $\ti\fD_\vtg(n)$ an {\it integral
form of Lusztig type} for $\dHallr$.

\begin{Conj}\label{Lform1}
The $\sZ$-module $\ti{\fD}_\vtg(n)$ is a subalgebra of $\dHallr$.
\end{Conj}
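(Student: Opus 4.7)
The plan is to reduce the conjecture to an integrality statement for the commutator between Hall generators of opposite signs, and then to verify it through an explicit combinatorial analysis supported by the affine Schur algebra epimorphisms $\xi_r$.

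First, I would use the triangular decomposition \eqref{integral form 2} to split the task. Closure of $\ti{\fD}_\vtg(n)\cong\Hall^+\otimes_\sZ\fD_\vtg(n)^0\otimes_\sZ\Hall^-$ under multiplication reduces to four products, of which three are immediate: $\Hall^+$, $\Hall^-$, and $\fD_\vtg(n)^0$ are each $\sZ$-algebras, and the mixed relations $K_i u_A^\pm=\up^{\pm\lr{\bfd(A),i}}u_A^\pm K_i$ (together with the corresponding identities for $\big[{K_i;0\atop t}\big]$) show that $\Hall^\pm\cdot\fD_\vtg(n)^0\subseteq\fD_\vtg(n)^0\cdot\Hall^\pm$. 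Hence only the non-trivial inclusion $\Hall^-\cdot\Hall^+\subseteq\Hall^+\cdot\fD_\vtg(n)^0\cdot\Hall^-$ has to be verified.

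Second, I would reduce the problem to semisimple generators. By \cite[Th.~5.2(ii)]{DDX}, $\Hall^+$ is generated as a $\sZ$-algebra by the semisimple classes $u_\la^+$ ($\la\in\afmbnn$), and similarly $\Hall^-$ by the $u_\mu^-$. Hence it suffices to show $u_\mu^-u_\la^+\in\ti{\fD}_\vtg(n)$ for all $\la,\mu\in\afmbnn$. Applying Corollary \ref{comm formula for semisimple}, and noting that each $\ti K_{2\ga-\al}$ is a product of factors $K_iK_{i+1}^{-1}$ and therefore lies in $\fD_\vtg(n)^0$, this closure is equivalent to the statement that every coefficient $x_{\al,\ga}$ in that formula actually lies in $\sZ$ rather than merely in $\mbq(\up)$.

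The hard part is the integrality of the $x_{\al,\ga}$. Each is a product of a Laurent monomial in $\up$, Gaussian multinomial coefficients (which lie in $\sZ$), the rational factor $\fka_{\al-\ga}\fka_{\la-\al}\fka_{\mu-\al}/(\fka_\la\fka_\mu)$, and an alternating sum $\sum_{\ga^{(1)}+\cdots+\ga^{(m)}=\ga}(-1)^m(\cdots)$ arising from the antipode. The ratio of $\fka$'s is, a priori, only a rational function, so integrality must come from cancellation with the alternating sum. My approach is twofold: (a) using the explicit product formula \eqref{poly-auto-group} and the fact that $\la,\mu,\al,\ga$ are componentwise vectors in $\afmbnn$, reduce the problem to an $n$-fold product of one-variable identities; (b) interpret, in each coordinate, the alternating sum over compositions $(\ga^{(1)},\ldots,\ga^{(m)})$ as an Euler/$q$-exponential-type cancellation, whose integrality is a standard consequence of the $q$-binomial theorem. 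As an independent check and fallback, one can use the epimorphisms $\xi_r:\dHallr\to\afbfSr$: Theorem \ref{surjective-dHall-aff}(1) together with Proposition \ref{afSrz} and Theorem \ref{PBW basis of affine q-Schur algebras} imply that $\xi_r(u_\mu^-u_\la^+)\in\afSrp\,\afSrz\,\afSrm$ for every $r$, and provided one can show that the PBW-type basis of $\ti{\fD}_\vtg(n)$ is detected on each homogeneous component by $\xi_r$ for $r$ sufficiently large (a delicate statement requiring comparison with the BLM spanning set of Proposition \ref{BLMbasis}), the integrality for all $r$ would force the integrality of the $x_{\al,\ga}$ back in $\dHallr$. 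In either route, the main obstacle is the same: the combinatorial cancellation of the denominators coming from $\fka_\la^{-1}\fka_\mu^{-1}$; I expect step (b) of the direct approach to be the technical heart of the argument.
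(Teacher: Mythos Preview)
The statement you are attempting to prove is precisely Conjecture~\ref{Lform1} in the paper; it is stated as an open problem and the paper offers no proof. Your reduction in the first two steps is exactly the one the authors have in mind: the sentence immediately preceding the conjecture says that ``it is natural to expect that the commutator formulas given in Corollary~\ref{comm formula for semisimple} hold in $\ti\fD_\vtg(n)$,'' i.e.\ that the coefficients $x_{\al,\ga}$ lie in $\sZ$. So steps~1--2 are correct and match the paper, but the paper stops there because step~3 is the genuine obstacle.

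Both of your attacks on step~3 have gaps. For approach~(a), the alternating sum over ordered decompositions $\ga=\ga^{(1)}+\cdots+\ga^{(m)}$ carries the factor $\up^{2\sum_{i<j}\lan\ga^{(i)},\ga^{(j)}\ran}$, and the Euler form $\lan\ga^{(i)},\ga^{(j)}\ran=\sum_k\ga^{(i)}_k\ga^{(j)}_k-\sum_k\ga^{(i)}_k\ga^{(j)}_{k+1}$ couples adjacent coordinates; the sum therefore does \emph{not} factor as an $n$-fold product of one-variable sums, so the reduction to a single-variable $q$-binomial identity does not go through as stated. For approach~(b), the ``detection'' hypothesis you need---that an element of $\dHallr$ whose image in every $\afSr$ is integral must itself have $\sZ$-coefficients in the PBW-type basis---is essentially the injectivity half of the realization Conjecture~\ref{realization conjecture} (compare the discussion in \S5.4), which is another open problem in the paper. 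The argument of Corollary~\ref{central-elem-integral} works only because it stays inside a single triangular part $\Hall^+$, where the images $A(\bfl,r)$ are already known to be a $\sZ$-basis of $\afSrp$; for the mixed terms $\ti K_{2\ga-\al}u_{\la-\al}^+u_{\mu-\al}^-$ with varying $\ga$ one would additionally have to separate the different $\ti K$-contributions inside $\afSrz$, and no such separation result is available.
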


We make some comparisons with the integral form
 $\fD_\vtg(n)$ for $\dHallr$ introduced at the end
of  \S2.2 and the restricted integral form discussed in \cite[\S7.2]{FM}.

\begin{Rems} (1)
Since the integral composition algebra $\comp^\pm$ is a
$\sZ$-subalgebra of the integral Ringel--Hall algebra $\Hall^\pm$ which also contains the central generators
 $\sfz^\pm_m$ by Corollary \ref{central-elem-integral}, it follows that
\begin{equation}\label{chain of integral forms}
\fD_\vtg(n)\subset\ti\fD_\vtg(n)\subset\dHallr.
\end{equation}
However, we will see in Remark \ref{Counterexample} that the
restriction to $\fD_\vtg(n)$ of the homomorphism $\xi_r$ in general
does not map onto the integral affine quantum Schur algebra $\afSr$.
Thus, $\fD_\vtg(n)\not=\ti\fD_\vtg(n)$, and we cannot use this
integral form to get the Schur--Weyl theory at the roots of unity.

(2) The restricted integral form $U^{\text{res}}_\up(\afgl)$ is the
$\mbc[\up,\up^{-1}]$-subalgebra of $\bfU(\afgl)$ generated by
divided powers $(\ttx^\pm_{i,s})^{(m)}$, $\ttk_i^\pm$,
$\leb{\ttk_i;0\atop t}\rib$, and $\frac{\ttg_{i,m}}{[m]}$ (see
\cite[\S7.2]{FM}). If we identify $\dHallr$ with $\bfU(\afgl)$ under
the isomorphism $\sE_{\rm H}$ given in Theorem \ref{iso afgln
dHallr}, \eqref{defn of theta_s} implies that
$\frac{\sfz_m^\pm}{m}=-\frac{\theta_{\pm m}}{[m]}\in
U^{\text{res}}_\up(\afgl)$ for all $m\geq1$. Now the integral action
of $\sfz^\pm_m$ on the tensor space \eqref{action central elts}
shows that $\frac{\sfz_m^\pm}{m}\not\in\ti\fD_\vtg(n)$. Hence, it
seems to us that the restricted integral form
$U^{\text{res}}_\up(\afgl)$ cannot be defined over
$\sZ=\mbz[\up,\up^{-1}]$. If it were defined over $\sZ$, it would
not be a subalgebra of $\ti\fD_\vtg(n)$.
\end{Rems}

The surjective homomorphism
$\xi_{r,\mbc}^\vee:\afHr_{\mbc}\ra\End_{\afSr_{\mbc}}(\Og_{\mbc}^{\ot
r})$ for $\up=\sqrt{q}$ was established by a geometric method. We do
not know if the surjectivity holds over $\mbq(\up)$. Since both
$\xi_r:\dHallr\to\afbfSr$ and $\xi_{r,\mbc}:\DC(n)\to\afSr_\mbc$ are
surjective, the following conjecture gives the affine Schur--Weyl reciprocity over $\mbq(\up)$ and $\mbc$ for a non-root of unity
specialization.

 \begin{Conj} The algebra homomorphisms
 $$\xi_r^\vee:\afbfHr\lra\End_{\afbfSr}(\bdOg^{\ot
r}),\qquad\xi_{r,\mbc}^\vee:\afHr_\mbc\lra\End_{\afSr_\mbc}(\Og_\mbc^{\ot
r})$$ are surjective, where base change to $\mbc$ is obtained by specializing $\up$ to a non-root of unity $z\in\mbc$.
\end{Conj}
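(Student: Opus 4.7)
My plan is to establish the double centralizer property by reducing it to a finite collection of finite-dimensional equivariance problems and then exploiting the known surjectivity at prime-power specializations (Corollary \ref{Schur--Weyl duality over field}, via Pouchin \cite{GP}) to propagate the result to the generic point and to arbitrary non-root-of-unity specializations in $\mbc$.

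\emph{Step 1: Finite-dimensional reduction.} First I would use the evaluation isomorphism of Proposition \ref{Green defn=geom defn} to identify $\bdOg^{\ot r}$, as a right $\afbfHr$-module, with $\bop_{\la\in\afLanr}x_\la\afbfHr$. Since $\afLanr$ is finite, this gives
$$\End_{\afbfSr}(\bdOg^{\otimes r})\;\cong\;\bigoplus_{\la,\mu\in\afLanr}\Hom_{\afbfSr}\bigl(x_\la\afbfHr,\,x_\mu\afbfHr\bigr),$$
and each summand embeds into $x_\mu\afbfHr$ via evaluation at $x_\la$. The image of $\xi_r^\vee$ in the $(\la,\mu)$-block equals $x_\mu\afbfHr\cap(\bdOg^{\ot r})\,h$ for the appropriate $h$; so the conjecture becomes the assertion that every element of $x_\mu\afbfHr$ satisfying the $\afbfSr$-equivariance condition lies in the image. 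Translating equivariance via the generators $A(\bfj,r)$ of $\afbfSr$ (Theorem \ref{surjective-dHall-aff}), and using the Chevalley-type action in \eqref{action-of-x_t-y_t} together with Corollary \ref{action-ss}, these conditions become a finite system of linear relations on each block.

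\emph{Step 2: Handling $n\geq r$ via Morita equivalence.} When $n\geq r$, the composition $\og=(1^r,0^{n-r})\in\afLanr$ satisfies $x_\og=1$, so $\afbfHr=x_\og\afbfHr$ is a direct summand of $\bdOg^{\ot r}$. Consequently $\bdOg^{\ot r}$ is a progenerator for $\afbfHr$, and the category equivalence to be established in Theorem \ref{category equiv} (i.e.\ the forthcoming Morita equivalence between $\afbfSr$ and $\afbfHr$) yields the double centralizer property formally. This disposes of the conjecture for $n\geq r$ with essentially no extra work beyond what is already developed in Chapter 4.

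\emph{Step 3: The case $n<r$ by specialization.} For $n<r$, where $\afbfHr$ is no longer a summand of $\bdOg^{\ot r}$, I would argue as follows. The image and cokernel of $\xi_r^\vee$, restricted to any fixed $(\la,\mu)$-block in the decomposition of Step 1, are finitely generated $\sA$-modules (with $\sA=\sZ[(\up^m-1)^{-1}]_{m\geq1}$ as in Remark \ref{mathcal A}), because each $x_\mu\afbfHr$ carries a natural $\sA$-lattice and the equivariance equations are $\sA$-linear in finitely many unknowns once one fixes bounded-length Hecke basis elements. At every specialization $\up\mapsto\sqrt{q}$ for $q$ a prime power, Pouchin's theorem forces the cokernel to vanish. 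Since the set of such specializations is Zariski-dense in $\text{Spec}\,\sA$, a standard semicontinuity/Nakayama argument on each finitely generated $\sA$-cokernel module then forces that cokernel to be zero generically, which gives surjectivity of $\xi_r^\vee$ over $\mbq(\up)$, and at every non-root-of-unity $z\in\mbc$ by further base change.

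\emph{Main obstacle.} The hard part will be Step 3, specifically pinning down finitely generated $\sA$-submodules on both sides that are preserved by $\xi_r^\vee$ and on which the rank is semicontinuous in the specialization parameter. The natural $\sA$-forms on $\afbfHr$ and on $\End_{\afbfSr}(\bdOg^{\ot r})$ need not be compatible in the strong sense required: one must either identify explicit $\sA$-bases of the blocks compatible with the action of the BLM generators, or instead produce, for each $(\la,\mu)$-block, an a priori bound on the $\mbq(\up)$-dimension of the equivariance space and match it with the rank of $\afbfHr\to\End_{\afbfSr}(\bdOg^{\ot r})$ computed via an Iwahori--Matsumoto basis. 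Avoiding this subtlety by a purely algebraic argument — e.g.\ analyzing the commutator of the semisimple generators $u^\pm_{m\dt}$ directly on $\bdOg^{\ot r}$ using Corollary \ref{comm formula for semisimple} — looks tempting but seems to require new combinatorial identities beyond those currently available.
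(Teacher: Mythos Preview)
This statement is a \emph{conjecture} in the paper; the authors explicitly write ``We do not know if the surjectivity holds over $\mbq(\up)$'' and give no proof. So there is no argument in the paper to compare against.

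Your Step~2 is a correct and worthwhile observation that the paper does not record: for $n\geq r$, the Morita equivalence of Theorem~\ref{category equiv} (applied with $\field=\mbq(\up)$ or with $\field=\mbc$ at a non-root-of-unity) does settle the conjecture. Concretely, $\bdOg^{\ot r}\cong\afbfSr e_\og$ as left $\afbfSr$-modules by Proposition~\ref{bimodule-isom} and the proof of Theorem~\ref{category equiv}, so $\End_{\afbfSr}(\bdOg^{\ot r})\cong e_\og\afbfSr e_\og\cong\afbfHr$, and $\xi_r^\vee$ realizes this isomorphism. The conjecture was stated in \S3.8 while the Morita equivalence only appears in \S4.1, which may explain why the authors did not make the connection.

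Your Step~3, by contrast, does not yield a proof for $n<r$, and the obstacle you flag is genuine and unresolved. The blocks $\Hom_{\afbfSr}(x_\la\afbfHr,x_\mu\afbfHr)$ are infinite-dimensional over $\mbq(\up)$: already the Bernstein center of $\afbfHr$ (symmetric Laurent polynomials in $X_1,\ldots,X_r$) acts on $\bdOg^{\ot r}$ by $\afbfSr$-equivariant endomorphisms, producing infinitely many independent elements. Your assertion that the equivariance conditions are ``$\sA$-linear in finitely many unknowns once one fixes bounded-length Hecke basis elements'' is the point that fails: no length or degree truncation of $\afbfHr$ is stable under the equivariance constraints, since those constraints involve the generators $\sfz_s^\pm$ for all $s\geq 1$ (acting by $\sum X_i^{\pm s}$, see Remark~\ref{images-central-elements}), which shift $X$-degree unboundedly. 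Without a finitely generated $\sA$-cokernel there is no Nakayama or semicontinuity argument available, and Zariski-density of the prime-power specializations where Pouchin's result applies gives you nothing. The conjecture therefore remains open for $n<r$, exactly as the paper leaves it.
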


With the truth of Conjecture \ref{Lform1}, specializing $\up$ to any element
 in a field $\field$ (of any characteristic) results
in a surjective homomorphism $\ti\fD_\vtg(n)\ot \field \to
\afSr\ot\field$. It is natural to further expect that the  affine
Schur--Weyl reciprocity holds at roots of unity.

\begin{Conj} The affine quantum Schur--Weyl reciprocity over any field $\field$ holds.
\end{Conj}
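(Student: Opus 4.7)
The plan is to split the conjecture into two independent surjectivity statements---that $\xi_{r,\field}$ surjects (first centralizer) and that $\xi_{r,\field}^\vee$ surjects (second centralizer)---and to attack them by different routes, after first settling Conjecture \ref{Lform1} as a preliminary.

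\textbf{First centralizer.} My first task is to establish Conjecture \ref{Lform1}, namely that $\ti\fD_\vtg(n)=\Hall^+\,\fD_\vtg(n)^0\,\Hall^-$ is a $\sZ$-subalgebra of $\dHallr$. Since $\Hall^\pm$ is generated over $\sZ$ by the semisimple classes $u^\pm_\la$ for $\la\in\afmbnn$ together with the Schiffmann--Hubery central elements $\sfz^\pm_m$ (which belong to $\Hall^\pm$ by Corollary \ref{central-elem-integral}), closure of $\ti\fD_\vtg(n)$ under multiplication reduces to the commutator formula in Corollary \ref{comm formula for semisimple}: the coefficients $x_{\al,\ga}$ lie in $\sZ$ by \eqref{poly-auto-group} and the factors $\ti K_{2\ga-\al}$ belong to $\fD_\vtg(n)^0$, so the formula produces elements of $\ti\fD_\vtg(n)$. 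Granting Conjecture \ref{Lform1}, the argument in the proof of Theorem \ref{surjective-dHall-aff}(1) gives a surjection $\ti\fD_\vtg(n)\twoheadrightarrow\afSr$ of $\sZ$-algebras, and base change along any ring homomorphism $\sZ\to\field$ yields the surjectivity of $\xi_{r,\field}$.

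\textbf{Second centralizer.} I would first handle the case $n\ge r$. Pick $\la\in\afLanr$ with $\la_i\in\{0,1\}$ for all $i$; then $\fS_\la=\{1\}$ and $x_\la=1$, so under the bimodule identification of Proposition \ref{Green defn=geom defn} the summand $x_\la\afHr$ of $\sT_\vtg(n,r)$ is the regular right $\afHr$-module, and the idempotent $e=[\diag(\la)]\in\afSr$ satisfies $e\afSr e\cong\afHr$. The Schur functor $M\mapsto eM$ then embeds $\afHr$ into $\End_{\afSr}(\sT_\vtg(n,r))^{\mathrm{op}}$; surjectivity of $\xi_r^\vee$ over $\field$ becomes equivalent to the rank equality $\mathrm{rank}_\sZ\End_{\afSr}(\sT_\vtg(n,r))=\mathrm{rank}_\sZ\afHr$, a property preserved by base change. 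For $n<r$, I would use the embedding $\afSr\hookrightarrow\sS_\vtg(N,r)$ with $N=r$ from Lemma \ref{tsp} to reduce to the $n\ge r$ case, using the corner-algebra identifications there to transport the centralizer property back to $\afSr$.

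\textbf{Main obstacle.} The crux lies in the integral rank comparison in the second centralizer step at root-of-unity specializations. Over $\mbq(\up)$ and at generic $\mbc$-points the result follows from Proposition \ref{Green defn=geom defn} combined with \cite{VV99,GP}, but a priori extra endomorphisms can appear when $\up$ specializes to a root of unity. To exclude this I would construct a canonical $\sZ$-basis $\{\vartheta_w\mid w\in\affSr\}$ of $\End_{\afSr}(\sT_\vtg(n,r))$ whose reduction modulo every maximal ideal of $\sZ$ remains linearly independent. A natural path is to develop an affine cellular structure on $\afSr$ paralleling the one on $\afHr$ and to match cell filtrations across the Schur functor; the integral cellularity of $\afSr$ is incomplete in the literature and, in my view, constitutes the principal technical hurdle.
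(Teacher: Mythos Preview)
The statement you are attempting is a \emph{conjecture} in the paper; there is no proof given there against which to compare. Your proposal does not close the gap, and in fact the first half contains an error.

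\textbf{First centralizer.} Your reduction to the commutator formula of Corollary~\ref{comm formula for semisimple} is the natural move, but the assertion that the coefficients $x_{\al,\ga}$ lie in $\sZ$ ``by \eqref{poly-auto-group}'' is unjustified. Equation~\eqref{poly-auto-group} merely records $\fka_A$ as a product of factors $(\up^{2a_{i,j}}-\up^{2s})$; it says nothing about the integrality of the ratio $\fka_{\al-\ga}\fka_{\la-\al}\fka_{\mu-\al}/(\fka_\la\fka_\mu)$ appearing in $x_{\al,\ga}$. If you expand the Gaussian multinomials via Lemma~\ref{vi(al,ga)^la}, the $\fka$'s do not cancel cleanly: a denominator of the shape $\fka_{\al-\ga}\fka_\ga^2$ survives, which must then be absorbed by the alternating sum over decompositions $\ga=\ga^{(1)}+\cdots+\ga^{(m)}$. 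Showing that this absorption actually happens over $\sZ$ is precisely the content of Conjecture~\ref{Lform1}; you have restated the problem, not solved it. The authors clearly tried this route (see the discussion preceding Conjecture~\ref{Lform1} and Remark~\ref{Lform2}) and left it open.

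\textbf{Second centralizer.} Here you are honest about the obstacle, and your diagnosis is correct: what is needed is a $\sZ$-form of $\End_{\afSr}(\sT_\vtg(n,r))$ that is free of the same rank as $\afHr$ and base-changes well. Your proposed path via affine cellular structure is reasonable but, as you say, not available in the literature. Note also that the reduction from $n<r$ to $n\ge r$ via Lemma~\ref{tsp} is not automatic for the \emph{second} centralizer: the corner-algebra identification $\afSr\cong e\sS_\vtg(N,r)e$ controls $\afSr$-endomorphisms of $e\sT_\vtg(N,r)$, but relating these to $\sS_\vtg(N,r)$-endomorphisms of the full tensor space requires an additional argument.
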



\section[Polynomial identities arising from commutator formulas]
{Polynomial identities arising from commutator formulas for semisimple generators}

In this last section we will give an application of our theory. We use the commutator formulas in Theorem
\ref{alt-presentation-dHall}(5) to derive certain polynomial
identities which seems to be interesting of its own.

For $\la\in\afmbnn$, set as in \eqref{semisimple module}
$$S_{\la}=\bop\limits_{i=1}^n\la_iS_i\;\;\text{and}\;\;
A_\la=\sum_{i=1}^n\la_iE^\vtg_{i,i+1}\in\afThnp.$$
 Then, $\fkd(A_\la)=\dim S_\la=\sg(\la)$ and $M(A_\la)=S_\la$. Furthermore,
for $\la,\al,\bt\in\afmbnn$, set
$\vi_{\al,\bt}^{\la}$ to be the Hall polynomial $\vi_{A_\al,A_\bt}^{A_\la}$ and
$$\dleb{\la\atop\al}\drib=\dleb{\la_1\atop\al_1}\drib\dleb{\la_2\atop\al_2}\drib
\cdots\dleb{\la_n\atop\al_n}\drib\quad(\text{cf.~}\eqref{multiGauss}).$$
Recall from \eqref{order on afmbzn} that, for
$\la=(\la_i),\mu=(\mu_i)\in\afmbnn$, $\la\leq\mu$ means
$\la_i\leq\mu_i$ for all $i$. Recall also from \eqref{tilde u_A} and
\eqref{poly-auto-group} the number $d_A'$ and the polynomial
$\fka_{A}$. For semisimple modules, we have the following easy
formulas.

\begin{Lem}\label{vi(al,ga)^la}
Let $\la,\al\in\afmbnn$ satisfy $\al\leq\la$. Then
$$\vi_{\al,\la-\al}^\la=\dleb{\la\atop\al}\drib,\;d_{A_\la}'=\sum_{1\leq
i\leq n}(\la_i^2-\la_i)\;\;
\text{and}\;\;\fka_\la:=\fka_{A_\la}=\prod\limits_{1\leq i\leq
n\atop 0\leq s\leq\la_i-1}(\up^{2\la_i}-\up^{2s}).$$
In particular, $\vi_{\la-\al,\al}^\la=\vi_{\al,\la-\al}^\la=\up^{2\sum_{i=1}^n\al_i(\al_i-\la_i)}\frac{\fka_\la}{\fka_\al\fka_{\la-\al}}.$

\end{Lem}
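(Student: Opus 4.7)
The plan is to prove all three displayed formulas by direct computation using the semisimple structure of $M(A_\la)=S_\la=\bigoplus_{i=1}^n\la_i S_i$, and then derive the ``in particular'' identity as a combinatorial consequence together with the well-known Gaussian binomial formula for $|GL_n(\mbf_q)|$.

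First I would establish formula~1. By the defining property of Hall polynomials, it suffices to evaluate at $\up^2=q$ and count the submodules $N\subseteq S_\la$ with $N\cong S_{\la-\al}$ and $S_\la/N\cong S_\al$ over the finite field $\mbf_q$. Since $S_\la$ is semisimple with isotypic decomposition, any submodule decomposes uniquely as $N=\bigoplus_i N_i$ with $N_i\subseteq \la_iS_i$; the isomorphism constraint forces $\dim_\field N_i=\la_i-\al_i$. The number of such subspaces of an $\la_i$-dimensional $\mbf_q$-space is $\binom{\la_i}{\la_i-\al_i}_q=\binom{\la_i}{\al_i}_q$. Multiplying over $i$ and using the identity (straightforward to derive from \eqref{Gauss poly1})
$$\up^{t(N-t)}\leb{N\atop t}\rib\bigg|_{\up^2=q}=\prod_{i=1}^t\frac{q^{N-i+1}-1}{q^i-1}=\binom{N}{t}_q,$$
we recognize the product as $\dleb{\la\atop\al}\drib|_{\up^2=q}$. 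Since the identity of polynomials holds at infinitely many prime powers, it holds identically.

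Next, formulas~2 and~3 come from the explicit structure of $\End(S_\la)$. Since $\Hom(S_i,S_j)=\dt_{i,j}\field$, we have $\End(S_\la)\cong\prod_{i=1}^nM_{\la_i}(\field)$, giving $\dim_\field\End(S_\la)=\sum\la_i^2$ while $\dim_\field S_\la=\sg(\la)=\sum\la_i$; subtracting yields formula~2. For formula~3, the decomposition above shows that $\rad\End(S_\la)=0$, so $m_{A_\la}=0$ in the general expression \eqref{poly-auto-group}. Since the nonzero off-diagonal entries of $A_\la$ are exactly $a_{i,i+1}=\la_i$, plugging into \eqref{poly-auto-group} gives
$$\fka_{A_\la}=\prod_{i=1}^n\prod_{s=0}^{\la_i-1}(\up^{2\la_i}-\up^{2s}),$$
as claimed. (Alternatively, this matches $|\Aut(S_\la)|=\prod_i|GL_{\la_i}(\mbf_q)|$ at $\up^2=q$.)

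Finally, for the ``in particular'' statement, I combine formula~3 with the classical identity
$$\binom{m}{k}_q=\frac{|GL_m(\mbf_q)|}{q^{k(m-k)}\,|GL_k(\mbf_q)|\cdot|GL_{m-k}(\mbf_q)|},$$
applied with $m=\la_i$, $k=\al_i$. Taking the product over $i$ and substituting $q=\up^2$ gives
$$\prod_{i=1}^n\dleb{\la_i\atop\al_i}\drib=\up^{2\sum_i\al_i(\al_i-\la_i)}\frac{\fka_\la}{\fka_\al\fka_{\la-\al}},$$
which together with formula~1 produces the asserted expression for $\vi_{\al,\la-\al}^\la$. The equality $\vi_{\la-\al,\al}^\la=\vi_{\al,\la-\al}^\la$ follows from counting submodules $N\cong S_\al$ of $S_\la$ (with quotient $\cong S_{\la-\al}$), which again yields $\prod_i\binom{\la_i}{\al_i}_q$ by the symmetry $\binom{\la_i}{\al_i}_q=\binom{\la_i}{\la_i-\al_i}_q$. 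There is no substantive obstacle in this lemma; the only mild care needed is tracking the $\up^{t(N-t)}$ normalization in $\dleb{\,\cdot\,\atop\cdot}\drib$ versus the $\mbf_q$-counts and the centering $\up^{2\al_i(\al_i-\la_i)}$ coming from the $q^{k(m-k)}$ factor in the Gaussian binomial formula.
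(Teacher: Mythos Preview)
Your proof is correct. The paper does not actually prove this lemma; it is presented as a collection of ``easy formulas'' for semisimple modules and left to the reader. Your argument supplies exactly the expected details: the submodule count in a semisimple module for the Hall polynomial, the direct computation of $\dim\End(S_\la)$ for $d'_{A_\la}$, the specialization of \eqref{poly-auto-group} with $m_{A_\la}=0$ for $\fka_\la$, and the standard $|GL_m(\mbf_q)|$ identity for the final expression.
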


We will also use the abbreviation for the elements
\begin{equation*}\begin{split}
\vi_{A,B}^{A_1,B_1}&=\frac{\fka_{A_1}\fka_{B_1}}{\fka_A\fka_B}\sum_{A_2\in\afThnp}
v^{2\fkd(A_2)}\fka_{A_2}\vi_{A_1,A_2}^{A}\vi_{B_1,A_2}^B,\\
\ti{\vi_{A,B}^{A_1,B_1}}&=\frac{\fka_{A_1}\fka_{B_1}}{\fka_A\fka_B}
\sum_{A_2\in\afThnp}v^{2\fkd(A_2)}\fka_{A_2}\vi_{A_2,A_1}^{A}\vi_{A_2,B_1}^B.\\
\end{split}
\end{equation*}
defined in \eqref{phA1B1AB} by setting,
 for $\la,\mu,\al,\bt\in\afmbnn$,
$$\vi_{\la,\mu}^{\al,\bt}=\vi_{A_\la, A_\mu}^{A_\al,
A_\bt}\;\;\text{and}\;\;
\ti{\vi_{\la,\mu}^{\al,\bt}}=\ti{\vi_{A_\la ,A_\mu}^{A_\al,
A_\bt}}.$$

\begin{Prop}\label{vi(la,mu)^A,B}
For $\la,\mu\in\afmbnn$ and $A,B\in\afThnp$, if $\vi_{A_\la,A_\mu}^{A,B}$ or $\ti{\vi_{A_\la,A_\mu}^{A,B}}$ is nonzero, then there exist
$\al,\bt\in\afmbnn$ such that $A=A_\al$, $B=A_\bt$,
$\la-\al=\mu-\bt\geq \bf0$, and
$$\vi_{\la,\mu}^{\al,\bt}=\ti{\vi_{\la,\mu}^{\al,\bt}}=\frac{1}
{\fka_{\la-\al}}\up^{2\sg(\la-\al)+\sum_{1\leq i\leq
n}2(\al_i(\al_i-\la_i)+\bt_i(\bt_i-\mu_i))}.$$
\end{Prop}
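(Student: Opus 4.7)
The plan is to unravel the definition of $\vi_{A_\la,A_\mu}^{A,B}$ in \eqref{phA1B1AB} (applied with $A\mapsto A_\la$, $B\mapsto A_\mu$, $A_1\mapsto A$, $B_1\mapsto B$) and exploit the fact that $S_\la$ and $S_\mu$ are semisimple. Writing out \eqref{phA1B1AB} in this case gives
$$\vi_{A_\la,A_\mu}^{A,B}=\frac{\fka_A\fka_B}{\fka_{A_\la}\fka_{A_\mu}}\sum_{A_2\in\afThnp}\up^{2\fkd(A_2)}\fka_{A_2}\,\vi_{A,A_2}^{A_\la}\,\vi_{B,A_2}^{A_\mu},$$
and similarly for $\ti{\vi_{A_\la,A_\mu}^{A,B}}$ but with $\vi_{A_2,A}^{A_\la}\vi_{A_2,B}^{A_\mu}$ in place of $\vi_{A,A_2}^{A_\la}\vi_{B,A_2}^{A_\mu}$. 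The crucial observation is that every submodule of $S_\la$ is a direct summand, so any short exact sequence $0\to N\to S_\la\to Q\to 0$ forces both $N$ and $Q$ to be semisimple. Hence for either ordering, a nonvanishing factor $\vi_{A,A_2}^{A_\la}$ or $\vi_{A_2,A}^{A_\la}$ immediately forces $M(A)$ to be a (semisimple) direct summand of $S_\la$, i.e.\ $A=A_\al$ for some $\al\in\afmbnn$ with $\al\le\la$, and forces $A_2=A_{\la-\al}$. Applying the same argument to the other factor gives $B=A_\bt$ with $\bt\le\mu$ and $A_2=A_{\mu-\bt}$. Comparison yields $\la-\al=\mu-\bt\ge\bfl$ as claimed.

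With $\al,\bt$ determined and $A_2=A_{\la-\al}$ the unique surviving summand, I would then evaluate the sum explicitly. By Lemma \ref{vi(al,ga)^la},
$$\vi_{A_\al,A_{\la-\al}}^{A_\la}=\vi_{\al,\la-\al}^\la=\dleb{\la\atop\al}\drib=\up^{2\sum_i\al_i(\al_i-\la_i)}\frac{\fka_\la}{\fka_\al\fka_{\la-\al}},$$
and similarly for the $\mu,\bt$ factor, while $\fkd(A_2)=\sg(\la-\al)$ and $\fka_{A_2}=\fka_{\la-\al}$. Substituting these into the one surviving term of the sum and writing $\fka_A=\fka_\al$, $\fka_B=\fka_\bt$, $\fka_{A_\la}=\fka_\la$, $\fka_{A_\mu}=\fka_\mu$ produces
$$\vi_{\la,\mu}^{\al,\bt}=\frac{\fka_\al\fka_\bt}{\fka_\la\fka_\mu}\cdot\up^{2\sg(\la-\al)}\fka_{\la-\al}\cdot\frac{\up^{2\sum_i\al_i(\al_i-\la_i)}\fka_\la}{\fka_\al\fka_{\la-\al}}\cdot\frac{\up^{2\sum_i\bt_i(\bt_i-\mu_i)}\fka_\mu}{\fka_\bt\fka_{\mu-\bt}},$$
in which the $\fka$-factors telescope (using $\fka_{\mu-\bt}=\fka_{\la-\al}$) to give exactly
$$\vi_{\la,\mu}^{\al,\bt}=\frac{1}{\fka_{\la-\al}}\,\up^{2\sg(\la-\al)+2\sum_i(\al_i(\al_i-\la_i)+\bt_i(\bt_i-\mu_i))}.$$

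For the tilde version, I observe that the symmetry ``submodule of semisimple = quotient of semisimple'' makes the argument identical: $\vi_{A_2,A}^{A_\la}\ne 0$ also forces both $M(A)$ and $M(A_2)$ semisimple via a split exact sequence $0\to S_\al\to S_\la\to S_{\la-\al}\to 0$, and the associated counts $\vi_{A_{\la-\al},A_\al}^{A_\la}=\vi_{\la-\al,\al}^\la=\vi_{\al,\la-\al}^\la$ by the second identity of Lemma \ref{vi(al,ga)^la}. Thus $\ti{\vi_{\la,\mu}^{\al,\bt}}$ reduces to the same arithmetic expression, establishing the final equality. The computation itself is routine telescoping; the only conceptual step is the semisimplicity reduction, and I do not foresee any obstacle beyond bookkeeping of the $v$-exponent.
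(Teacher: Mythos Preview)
Your proof is correct and follows essentially the same approach as the paper: both argue that nonvanishing of the Hall polynomial factors forces $M(A)$, $M(B)$, and $M(A_2)$ to be semisimple (as submodules or quotients of $S_\la$, $S_\mu$), reducing to the unique summand $A_2=A_{\la-\al}=A_{\mu-\bt}$, and then invoke Lemma~\ref{vi(al,ga)^la} to evaluate the resulting product. Your write-up is slightly more explicit in displaying the telescoping of the $\fka$-factors, but the argument is the same.
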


\begin{proof} If either
$\vi_{A_\la,A_\mu}^{A,B}\neq0$ or $\ti{\vi_{A_\la,A_\mu}^{A,B}}\neq0$, then
$\vi_{A,C}^{A_\la}\vi_{B,C}^{A_\mu}\neq0$ or $\vi_{C,A}^{A_\la}\vi_{C,B}^{A_\mu}\neq0$
for some $C$. In either case, both $M(A)$ and $M(B)$, as submodules or quotient modules of a semisimple
module, are semisimple, and
$\la-\bfd(A)=\bfd(C)=\mu-\bfd(B)$.  Write $A=A_\al$, $B=A_\bt$ for some $\al,\bt\in\afmbnn$. Then
$\la-\al=\mu-\bt$ and so $\fka_{\la-\al}=\fka_{\mu-\bt}$. By the last assertion of Lemma \ref{vi(al,ga)^la}, one sees immediately that
\begin{equation*}
\begin{split}
\vi_{A_\la,A_\mu}^{A,B}=\ti{\vi_{A_\la,A_\mu}^{A,B}}&=\frac{\fka_\al \fka_\bt}{\fka_\la\fka_\mu}
\dleb{\la\atop\al}\drib\dleb{\mu\atop\bt}\drib\fka_{\la-\al}\up^{2\sg(\la-\al)}\\
&=\frac{1} {\fka_{\la-\al}}\up^{2\sg(\la-\al)+\sum_{1\leq i\leq
n}2(\al_i(\al_i-\la_i)+\bt_i(\bt_i-\mu_i))},
\end{split}
\end{equation*}
as required.\end{proof}

Recall the surjective homomorphism $\xi_r:\dHallr\ra \afbfSr$ as
explicitly described in Theorem \ref{xirl}. For $A,B\in\afThnp$, let
$X_{A,B}:=\xi_r(L_{A,B})$ and $Y_{A,B}:=\xi_r(R_{A,B})$, where
$L_{A,B}, R_{A,B}$ are defined in Theorem
\ref{alt-presentation-dHall}(5). Then $X_{A,B}=Y_{A,B}$. In fact,
since these commutator formulas continue to hold in $\DC(n)$ where
$\up$ is specialized to a non-root of unity in $\mbc$ (see Remark
\ref{NonrootOfUnity2}), $X_{A,B}=Y_{A,B}$ in $\afSr_\mbc$. In
particular,  for each prime power $q$, by specializing $\up$ to
$\sqrt{q}$, we will view both $X_{A,B}$ and $Y_{A,B}$ as elements in
the convolution algebra $\mbc_G(\scrY\times\scrY)\cong \afSr_\mbc$.
In this case, denote $X_{A,B}$ and $Y_{A,B}$ by $X_{A,B}^q$ and
$Y_{A,B}^q$, respectively. Thus, we have $X_{A,B}^q=Y_{A,B}^q$,
where
\begin{equation}\label{XABYAB}
\aligned
X_{A,B}^q=\sqrt{q}^{\lan
\bfd(B),\bfd(B)\ran}\sum_{A_1,B_1}\vi_{A,B}^{A_1,B_1}(q)
 &\sqrt{q}^{\lan
 \bfd(B_1),\bfd(A)+\bfd(B)-\bfd(B_1)\ran-d_{A_1}'-d_{B_1}'}\\
 &\;\times \ti\bffkk_{\bfd(B)-\bfd(B_1)}*({^tB_1})(\bfl,r)*A_1(\bfl,r),\\
Y_{A,B}^q =\sqrt{q}^{\lan\bfd(B),\bfd(A)\ran}\sum_{A_1,B_1}
 \ti{\vi_{A,B}^{A_1,B_1}}(q)&\sqrt{q}^{\lan \bfd(B)-\bfd(B_1),\bfd(A_1)\ran+\lan
 \bfd(B),\bfd(B_1)\ran-d_{A_1}'-d_{B_1}'}\\
&\; \times\ti \bffkk_{\bfd(B_1)-\bfd(B)}*A_1(\bfl,r)*({^t
B_1})(\bfl,r),
\endaligned
\end{equation}
with $\bffkk_i=\xi_r(K_i)=0(\bfe_i,r)$ for any $i\in I$ and
 $\ti\bffkk_{\bfa}=\prod_{i=1}^n\bffkk_i^{a_i}\bffkk_{i+1}^{-a_i}$
for any $\bfa=(a_i)\in \afmbnn$.

In the following we are going to use the equality
$X_{A,B}^q=Y_{A,B}^q$ to derive some interesting polynomial
identities. In the rest of this section, we fix the finite field
$\field$ with $q$ elements. As in \S3.1, let $\scrY=\afFn(q)$ be the set of
all cyclic flags $\bfL=(L_i)_{i\in\mbz}$ of lattices in a fixed
$\field[\ep,\ep^{-1}]$-free module $V$ of rank $r\geq 1$.

\begin{Lem}\label{tiafK}
For $\bfa\in\afmbnn$ and $(\bfL,\bfL')\in\scrY\times\scrY$,
\begin{equation*}
\ti\bffkk_{\bfa}(\bfL,\bfL')=\begin{cases}q^{\frac{1}{2}\lan\bfa,\la\ran},
&\quad\text{if $\bfL=\bfL'$ and $(\bfL,\bfL)\in\sO_{\diag(\la)}$ for $\la\in\afLanr$;}\\
0,&\quad\text{otherwise.}
\end{cases}
\end{equation*}
\end{Lem}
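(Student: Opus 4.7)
The plan is to express $\ti\bffkk_\bfa$ as an explicit $\mbq(\up)$-linear combination of the idempotents $[\diag(\la)]$ and then translate this expression, upon specialization $\up \mapsto \sqrt{q}$, into the language of characteristic functions on $\scrY \times \scrY$.

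First, using the definition $\bffkk_i = 0(\bfe_i,r)$ together with \eqref{def-A(j,r)} for the zero matrix, I obtain
$$\bffkk_i = \sum_{\la \in \afLanr} \up^{\la_i}\,[\diag(\la)].$$
Since $\{[\diag(\la)]\}_{\la \in \afLanr}$ is a family of orthogonal idempotents by \eqref{product [diag(la)][A] in affine q-Schur algebra}, arbitrary integer powers of $\bffkk_i$ are computed componentwise, and multiplying over $i \in I$ yields
$$\ti\bffkk_\bfa \;=\; \prod_{i=1}^n \bffkk_i^{a_i}\bffkk_{i+1}^{-a_i} \;=\; \sum_{\la \in \afLanr} \up^{\sum_i a_i(\la_i-\la_{i+1})}[\diag(\la)] \;=\; \sum_{\la \in \afLanr} \up^{\lan\bfa,\la\ran}[\diag(\la)],$$
where the last equality is the formula \eqref{Euler form 2} for the Euler form.

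Next I pass to the convolution algebra $\mbc_G(\scrY\times\scrY)$ via the isomorphism \eqref{iso-Schur-Function}. Because $d_{\diag(\la)}=0$ (the sum defining $d_A$ in \eqref{sqAdA} is empty for a diagonal matrix), the element $[\diag(\la)]$ is sent to the characteristic function $\chi_{\sO_{\diag(\la)}}$. A short direct computation of the entries $a_{i,j} = \dim\frac{L_i\cap L_j'}{L_{i-1}\cap L_j' + L_i\cap L_{j-1}'}$ shows that $(\bfL,\bfL') \in \sO_{\diag(\la)}$ forces $\bfL = \bfL'$ and $\bfL \in \afFn_{,\la}$, and conversely any such diagonal pair lies in that orbit. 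Substituting $\up = \sqrt{q}$ into the displayed formula for $\ti\bffkk_\bfa$ and evaluating at $(\bfL,\bfL')$ therefore picks out at most one summand, giving the value $q^{\frac{1}{2}\lan\bfa,\la\ran}$ when $\bfL = \bfL' \in \afFn_{,\la}$ and $0$ otherwise, as claimed.

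There is no real obstacle here: the algebraic step is immediate from the idempotent decomposition of $\afSrz$ in Proposition \ref{afSrz}, and the geometric step reduces to identifying the orbit $\sO_{\diag(\la)}$ with the diagonal pairs in $\afFn_{,\la}$, which follows straight from the orbit description recalled in \S 3.1.
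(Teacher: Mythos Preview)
Your proof is correct and follows essentially the same approach as the paper: both express $\ti\bffkk_\bfa$ as $\sum_\la \up^{\lan\bfa,\la\ran}[\diag(\la)]$ and then evaluate pointwise using that $[\diag(\la)]$ is the characteristic function of the diagonal orbit over $\afFn_{,\la}$. The only cosmetic difference is that the paper cites Lemma \ref{triangular matrix}(1)(2) to identify $\sO_{\diag(\la)}$ with the diagonal pairs, whereas you verify this directly from the orbit parametrization.
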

\begin{proof}
Let $\la\in\afLanr$. By Lemma \ref{triangular matrix}(1) and (2),
$(\bfL,\bfL')\in\sO_{\diag(\la)}$ if and only if $\bfL=\bfL'$ and
$\la_i=\dim(L_i/L_{i-1})$ for all $i$. Thus, if
$\ti\bffkk_{\bfa}(\bfL,\bfL')\not=0$, then $\bfL=\bfL'$. Now we
assume $\bfL=\bfL'$ and $(\bfL,\bfL)\in\sO_{\diag(\la)}$. Since
$\ti\bffkk_{\bfa}=\sum_{\mu\in\La_\vtg(n,r)}q^{\frac{1}{2}\sum_{1\leq
i\leq n}(a_i-a_{i-1})\mu_i}[\diag(\mu)]$ and
$[\diag(\mu)]=\chi_{\sO_{\diag(\mu)}}$, it follows that
$\ti\bffkk_{\bfa}(\bfL,\bfL)=q^{\frac{1}{2}\sum_{1\leq i\leq
n}(a_i-a_{i-1})\la_i}=q^{\frac{1}{2}\lan\bfa,\la\ran}.$
\end{proof}

For integers $N,t$ with $t\geq 0$, let $\dleb{N\atop
t}\drib_q=\prod\limits_{1\leq i\leq
t}\frac{q^{N-i+1}-1}{q^{i}-1}$. For $\al,\bt\in\mbn_\vtg^n$ and
$(\bfL,\bfL'')\in\scrY\times\scrY$, consider the subsets of
$\scrY$:
\begin{equation*}
\begin{split}
X(\al,\bt,\bfL,\bfL'')&:=
\{\bfL'\in\scrY\mid\bfL,\bfL''\han\bfL',\bfL'/\bfL
\cong S_{\bt},\bfL'/\bfL''\cong S_{\al}\},\\
Y(\al,\bt,\bfL,\bfL'')&:=\{\bfL'\in\scrY\mid
\bfL'\han\bfL,\bfL'',\bfL/\bfL'\cong S_{\al}, \bfL''/\bfL'\cong
S_{\bt}\}.
\end{split}
\end{equation*}
Here, $\bfL,\bfL''\han\bfL'$ and $\bfL'\han\bfL,\bfL''$ are the short form
of $\bfL\han\bfL',\bfL''\han\bfL'$ and $\bfL'\han\bfL,\bfL'\han\bfL''$, respectively.
\begin{Lem}\label{X(al,bt,bfL,bfL'')}
For $\al,\bt\in\scrY$ and $(\bfL,\bfL'')\in\scrY\times\scrY$, if
$X(\al,\bt,\bfL,\bfL'')\neq\emptyset$ or $Y(\al,\bt,\bfL,\bfL'')\neq\emptyset$,
then
$L_i+L_i''\han L_{i+1}\cap L_{i+1}''$ and
$\bt_i-\dim((L_i+L_i'')/L_i)=\al_i-\dim((L_i+L_i'')/L_i'')\geq 0$
for all $i\in\mbz$. Moreover, in this case,
\begin{equation*}
\begin{split}
|X(\al,\bt,\bfL,\bfL'')|&=
\prod_{1\leq i\leq n}\dleb{\dim(L_{i+1}\cap L_{i+1}''/(L_i+L_i''))\atop \al_i-\dim((L_i+L_i'')/L_i'')}\drib_q\text{ and }\\
|Y(\al,\bt,\bfL,\bfL'')|&= \prod_{1\leq i\leq n}\dleb{\dim(L_{i}\cap
L_{i}''/(L_{i-1}+L_{i-1}''))\atop
\al_i-\dim((L_i+L_i'')/L_i'')}\drib_q.
\end{split}
\end{equation*}
\end{Lem}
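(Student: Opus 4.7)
The plan is to first extract the necessary inclusion and dimension conditions from the semisimplicity hypotheses on $\bfL'/\bfL,\bfL'/\bfL''$ (for $X$) and $\bfL/\bfL',\bfL''/\bfL'$ (for $Y$), then recognize the remaining data as a free choice of subspaces in certain quotient spaces, and finally invoke the definition of the Gaussian binomial coefficient. Recall that a nilpotent representation $(V_i,f_i)$ of $\tri$ is semisimple if and only if all the structure maps $f_i$ vanish; this is the elementary fact we shall use repeatedly.

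For the first assertion, take $\bfL'\in X(\al,\bt,\bfL,\bfL'')$. From $\bfL,\bfL''\han\bfL'$ we already have $L_i+L_i''\han L_i'$ for every $i$. Applying the semisimplicity criterion to $\bfL'/\bfL\cong S_\bt$, the map $L_i'/L_i\to L_{i+1}'/L_{i+1}$ induced by the inclusions $L_i'\han L_{i+1}'$ and $L_i\han L_{i+1}$ must vanish, which forces $L_i'\han L_{i+1}$; symmetrically, $\bfL'/\bfL''\cong S_\al$ yields $L_i'\han L_{i+1}''$, so $L_i'\han L_{i+1}\cap L_{i+1}''$. Shifting index then gives $L_i+L_i''\han L_{i+1}\cap L_{i+1}''$. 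The identity $\bt_i=\dim(L_i'/L_i)=\dim((L_i+L_i'')/L_i)+\dim(L_i'/(L_i+L_i''))$ and its analogue for $\al_i$ show that $\bt_i-\dim((L_i+L_i'')/L_i)=\al_i-\dim((L_i+L_i'')/L_i'')=\dim(L_i'/(L_i+L_i''))\geq 0$, which I denote by $d_i$. The same inclusion $L_i+L_i''\han L_{i+1}\cap L_{i+1}''$ holds in the $Y$-case as well: if $\bfL'\in Y$, semisimplicity of $\bfL/\bfL'$ and $\bfL''/\bfL'$ forces $L_i,L_i''\han L_{i+1}'$, and combining with $L_{i-1}'\han L_{i-1}\cap L_{i-1}''$ gives the sandwich $L_{i-1}+L_{i-1}''\han L_i'\han L_i\cap L_i''$.

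To count $|X(\al,\bt,\bfL,\bfL'')|$, observe that $\bfL'\in X$ is completely determined by the choice, for each $i\in\mbz$, of an $\field$-subspace $W_i:=L_i'/(L_i+L_i'')$ of $(L_{i+1}\cap L_{i+1}'')/(L_i+L_i'')$ of dimension $d_i$. The flag condition $L_{i-1}'\han L_i'$ is automatic from the chain $L_{i-1}'\han L_i\cap L_i''\han L_i+L_i''\han L_i'$, and periodicity of $\bfL$ and $\bfL''$ ensures that the $n$ choices $W_1,\ldots,W_n$ extend uniquely to a periodic flag by $L_{i-n}'=\ep L_i'$. Hence the $W_i$ can be chosen freely and independently for $i=1,\ldots,n$, and since the number of $d_i$-dimensional subspaces of an $N_i$-dimensional $\field$-space is $\dleb{N_i\atop d_i}\drib_q$ with $N_i=\dim((L_{i+1}\cap L_{i+1}'')/(L_i+L_i''))$, we obtain the claimed product formula.

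For $|Y(\al,\bt,\bfL,\bfL'')|$, the argument is symmetric. Using the natural isomorphism $(L_i+L_i'')/L_i''\cong L_i/(L_i\cap L_i'')$, the identity $\al_i=\dim(L_i/L_i')=\dim(L_i/(L_i\cap L_i''))+\dim((L_i\cap L_i'')/L_i')$ rewrites as $\dim((L_i\cap L_i'')/L_i')=\al_i-\dim((L_i+L_i'')/L_i'')=d_i$; thus $L_i'/(L_{i-1}+L_{i-1}'')$ has codimension $d_i$ inside $(L_i\cap L_i'')/(L_{i-1}+L_{i-1}'')$. The same flag-and-periodicity argument shows the choices are independent for $i=1,\ldots,n$, and the symmetry $\dleb{N\atop k}\drib_q=\dleb{N\atop N-k}\drib_q$ produces the stated Gaussian binomial expression. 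No step presents a serious obstacle: the only mild subtleties are unwinding semisimplicity into the inclusion $L_i'\han L_{i+1}\cap L_{i+1}''$ (respectively $L_i+L_i''\han L_{i+1}'$) and verifying that a periodic extension of finitely many subspace choices remains consistent, which follows directly from the $n$-periodicity of $\bfL$ and $\bfL''$.
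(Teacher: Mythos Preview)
Your proof is correct and follows essentially the same approach as the paper's: both use the observation that semisimplicity of the quotient representations forces the vanishing of the induced maps, yielding the sandwich $L_i+L_i''\han L_i'\han L_{i+1}\cap L_{i+1}''$ (and the analogous one for $Y$), from which the dimension identities and the counting formulas follow. The paper simply writes ``The rest of the proof is clear'' for the counting step, whereas you spell out the bijection between elements of $X$ (resp.\ $Y$) and independent choices of subspaces $W_1,\ldots,W_n$, and explicitly verify the flag condition and periodic extension---this added detail is welcome and entirely in line with the paper's argument.
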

\begin{proof} If, as representations of $\tri$, both $\bfL'/\bfL=(L_i'/L_i,f_i)$ and $\bfL'/\bfL''=(L_i'/L_i'',f_i'')$ are semisimple, then
the linear maps $f_i,f_i''$ induced from inclusion $L_i'\subseteq L_{i+1}'$ are zero maps. This forces $L_i'\subseteq L_{i+1}$ and
$L_i'\subseteq L_{i+1}''$ for all $i$. Hence,
\begin{equation*}
\begin{split}
X(\al,\bt,\bfL,\bfL'')\neq\emptyset&\iff\exists\,\bfL'\in X(\al,\bt,\bfL,\bfL'')\\
&\Longleftrightarrow
\begin{cases}
L_t+L_t''\han L_t'\han L_{t+1}\cap L_{t+1}'', & \text{for $t\in\mbz$;}\\
\dim(L_t'/L_t)=\bt_t,\dim(L_t'/L_t'')=\al_t, &\text{for
$t\in\mbz$}
\end{cases}\\
&\Longleftrightarrow
\begin{cases}
L_t+L_t''\han L_t'\han L_{t+1}\cap L_{t+1}'', & \text{for $t\in\mbz$;}\\
\dim(L_t'/(L_t+L_t''))=\bt_t-\dim((L_t+L_t'')/L_t),& \text{for $t\in\mbz$;}\\
\dim(L_t'/(L_t+L_t''))=\al_t-\dim((L_t+L_t'')/L_t''),&\text{for
$t\in\mbz$}
\end{cases}
\end{split}
\end{equation*}
and
\begin{equation*}
\begin{split}
Y(\al,\bt,\bfL,\bfL'')\neq\emptyset&\Longleftrightarrow
\exists\,\bfL'\in Y(\al,\bt,\bfL,\bfL'')
\\
&\Longleftrightarrow
\begin{cases}
L_t'\han L_t\han L_{t+1}',\ \dim(L_t/L_t')=\al_t, & \text{for $t\in\mbz$;}\\
L_t'\han L_t''\han L_{t+1}',\ \dim(L_t''/L_t')=\bt_t, &\text{for
$t\in\mbz$}
\end{cases}\\
&\Longleftrightarrow
\begin{cases}
L_{t-1}\han L_t'\han L_{t},\ \dim(L_t/L_t')=\al_t, & \text{for $t\in\mbz$;}\\
L_{t-1}''\han L_t'\han L_{t}'',\ \dim(L_t''/L_t')=\bt_t,
&\text{for $t\in\mbz$}
\end{cases}\\
&\Longleftrightarrow
\begin{cases}
L_{t-1}+L_{t-1}''\han L_t'\han L_{t}\cap L_{t}'', & \text{for $t\in\mbz$;}\\
\dim(L_t\cap L_t''/L_t')=\al_t-\dim((L_t+L_t'')/L_t''),& \text{for $t\in\mbz$;}\\
\dim(L_t\cap L_t''/L_t')=\bt_t-\dim((L_t+L_t'')/L_t),&\text{for
$t\in\mbz$}.
\end{cases}
\end{split}
\end{equation*}
The rest of the proof is clear.
\end{proof}

For any
$\la=(\la_i)_{i\in\mbz},\mu=(\mu_i)_{i\in\mbz}\in\mbn_\vtg^n$,
define polynomials in $\up^2$ over $\mbz$:
\begin{equation*}
\begin{split}
P_{\la,\mu}(\up^2)&=\sum_{0\leq\nu\leq\lambda\atop\nu\in\mathbb
N_\vartriangle^n} \up^{2\bigl(\sum_{1\leq i\leq
n}\frac{\nu_i^2-\nu_i}{2}+(\la_i-\nu_i)(\mu_i-\nu_{i-1})\bigr)}
\prod_{i=1}^n(\up^2-1)^{\nu_i}[\![\nu_i]\!]^!\ggp{\la_i}{\nu_i}\ggp{\mu_{i+1}}{\nu_i},\\
P'_{\la,\mu}(\up^2)&=\sum_{0\leq\nu\leq\lambda\atop\nu\in\mathbb
N_\vartriangle^n}\up^{2\bigl(\sum_{1\leq i\leq
n}\frac{\nu_i^2-\nu_i}{2}+(\la_i-\nu_i)(\mu_{i+1}-\nu_{i+1})\bigr)}
\prod_{i=1}^n(\up^2-1)^{\nu_i}[\![\nu_i]\!]^!\ggp{\la_i}{\nu_i}\ggp{\mu_i}{\nu_i}.
\end{split}
\end{equation*}

We now prove that these polynomials occur naturally in the
coefficients of $X_{A,B}, Y_{A,B}$ for $A,B\in\afThnp^{ss}$ when
they are written as a linear combination of $e_C$, $C\in\afThnr$.

\begin{Thm}\label{lem of Poly}
For $\la,\mu\in\afmbnn$, $x,y\in\La_\vtg(n,r)$ and
$(\bfL,\bfL'')\in\afFn_{,x}(q)\times\afFn_{,y}(q)$, let
$\tila:=\la-\gamma$ and $\timu:=\mu-\dt$, where
 $$\gamma=(\dim(L_i+L_i'')/L_i'')_{i\in\mbz},\;\dt=(\dim(L_{i}+L_i'')/L_i)_{i\in\mbz}.$$

If $X_{\la,\mu}^q(\bfL,\bfL'')\neq0$
or $Y_{\la,\mu}^q(\bfL,\bfL'')\neq0$, then $L_i+L_i''\han
L_{i+1}\cap L_{i+1}''$ for all $i\in\mbz$ and $\tila=\timu\geq 0$. Moreover,
putting $z_i=\dim(L_i\cap L_i''/(L_{i-1}+L_{i-1}'')))$ for all
$i\in\mbz$, we have in this case:
$$\aligned
X^q_{\la,\mu}(\bfL,\bfL''):=X^q_{A_\la,A_\mu}(\bfL,\bfL'')&=q^{\frac{1}{2}(f_{\tila}+\sum_{1\leq
i\leq n}(\la_i+\mu_i+\tila_i-\tila_i^2))}\prod\limits_{1\leq i\leq
n\atop 1\leq s\leq
\tila_i}\frac{1}{q^s-1}\cdot P_{\tila,\bsz}(q)\\
Y^q_{\la,\mu}(\bfL,\bfL''):=Y^q_{A_\la,A_\mu}(\bfL,\bfL'')&=q^{\frac{1}{2}(f_{\tila}+\sum_{1\leq
i\leq n}(\la_i+\mu_i+\tila_i-\tila_i^2))}\prod\limits_{1\leq i\leq
n\atop 1\leq s\leq
\tila_i}\frac{1}{q^s-1}\cdot P'_{\tila,\bsz}(q)\\
\endaligned
$$
where $f_{\tila}=\sum_{1\leq i\leq
n}\bigl[\la_i(\la_{i-1}-\la_i-y_i)
-x_{i+1}(\dt_i+\tila_i)\bigr]+\lan\mu,\mu\ran+\lan\dt+\tila,\la\ran.$
\end{Thm}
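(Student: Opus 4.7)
The plan is to compute $X_{\la,\mu}^q(\bfL,\bfL'')$ and $Y_{\la,\mu}^q(\bfL,\bfL'')$ directly from their defining expressions in \eqref{XABYAB}, evaluating the triple convolution products term-by-term. First, I would use Proposition \ref{vi(la,mu)^A,B} to restrict the sums over $A_1, B_1$ in \eqref{XABYAB} to semisimple modules $A_1 = A_\al$, $B_1 = A_\bt$ with $\la - \al = \mu - \bt \geq \boldsymbol{0}$, and substitute the explicit values of $\vi_{\la,\mu}^{\al,\bt}$ and $\ti\vi_{\la,\mu}^{\al,\bt}$ provided there. Note that $^t\!A_\bt \in \afThnp$ corresponds to no nilpotent representation of $\tri$ but rather plays the role of a ``lowering'' generator via $\zeta_r^-$; the orbit $\sO_{^t\!A_\bt}$ consists of pairs $(\bfL',\bfL)$ with $\bfL'\subseteq\bfL$ and $\bfL/\bfL'\cong S_\bt$.

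Second, for each admissible pair $(\al,\bt)$, I would evaluate
$$\bigl(\ti\bffkk_{\bfd(A_\mu)-\bfd(A_\bt)} * (^t\!A_\bt)(\bfl,r) * A_\al(\bfl,r)\bigr)(\bfL,\bfL'')$$
by expanding the convolution as a double sum over intermediate cyclic flags. Lemma \ref{tiafK} forces the first intermediate flag to equal $\bfL$ and contributes a scalar $q^{\frac12 \lan \mu-\bt, x\ran}$. Corollary \ref{A(bfj,r) acts on (bfL,bfL')} then evaluates the remaining two factors: $(^t\!A_\bt)(\bfl,r)(\bfL,\bfL''')$ is nonzero only when $\bfL\subseteq\bfL'''$ with $\bfL'''/\bfL \cong S_\bt$, contributing $q^{-\frac12 a(\bfL,\bfL''')}$, and $A_\al(\bfl,r)(\bfL''',\bfL'')$ is nonzero only when $\bfL''\subseteq\bfL'''$ with $\bfL'''/\bfL''\cong S_\al$, contributing $q^{-\frac12 c(\bfL''',\bfL'')}$ (using $\dim\ttO_{A_\al}=\dim\ttO_{A_\bt}=0$). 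Thus the sum reduces to a sum over $\bfL''' \in X(\al,\bt,\bfL,\bfL'')$, and Lemma \ref{X(al,bt,bfL,bfL'')} yields both the necessary conditions $L_i+L_i''\subseteq L_{i+1}\cap L_{i+1}''$, $\tila = \timu \geq \boldsymbol{0}$, and the cardinality of this set as a product of Gaussian binomials.

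Third, I would combine all exponential factors. The $\sqrt{q}$-prefactors from \eqref{XABYAB}, the $q$-powers from $\vi_{\la,\mu}^{\al,\bt}$, the contributions $a(\bfL,\bfL''')$ and $c(\bfL''',\bfL'')$, and the terms $\lan \mu-\bt, x\ran$ and $d'_{A_\al}, d'_{A_\bt}$ from $\ti u^{\pm}$-normalization must all be re-expressed in terms of $\la,\mu,x,y$ and the intrinsic quantities $\gamma,\dt,\tila,\bsz$. After introducing the reparametrization $\nu_i := \tila_i - (\al_i - \gamma_i) = \al_i - (\la_i - \tila_i + \gamma_i - \nu_i)$ (equivalently, $\nu$ records how much $\al$ differs from the ``diagonal'' value $\la-\tila+\gamma$), the remaining sum over $(\al,\bt)$ becomes a sum over $0\leq \nu\leq \tila$, and matches the definition of $P_{\tila,\bsz}(q)$ up to the overall factor $q^{\frac12(f_{\tila}+\sum_i(\la_i+\mu_i+\tila_i-\tila_i^2))} \prod_{i,s}(q^s - 1)^{-1}$. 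The computation for $Y_{\la,\mu}^q(\bfL,\bfL'')$ is entirely parallel: the order of the two factors $A_\al(\bfl,r)$ and $(^t\!A_\bt)(\bfl,r)$ is swapped, the intermediate $\bfL'''$ now satisfies $\bfL'''\subseteq \bfL\cap\bfL''$ (i.e.\ $\bfL'''\in Y(\al,\bt,\bfL,\bfL'')$), and the Gaussian binomials from Lemma \ref{X(al,bt,bfL,bfL'')} involve $\dim(L_i\cap L_i''/(L_{i-1}+L_{i-1}''))=z_i$ instead of $\dim(L_{i+1}\cap L_{i+1}''/(L_i+L_i''))$, which is precisely the difference between $P_{\tila,\bsz}$ and $P'_{\tila,\bsz}$.

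The main obstacle will be the sheer bookkeeping of exponents of $q$. In particular, one must carefully verify that the index shifts between ``+'' and ``$-$'' parts (reflecting the Auslander--Reiten translation $\tau$ that distinguishes $a(\bfL,\bfL')$ from $c(\bfL',\bfL)$, cf.\ the proof of Proposition \ref{afzrpm}) produce exactly the term $f_\tila$ and the prefactor $\sum_i(\la_i+\mu_i+\tila_i-\tila_i^2)$ in both formulas, while the cyclic shifts in $\nu_{i-1}$ vs.\ $\nu_{i+1}$ inside $P$ and $P'$ are accounted for by the swap of the intersection/sum index between consecutive lattices. Once the combinatorial identities $\sum_i(\la_i - \nu_i)(\mu_i - \nu_{i-1})$ versus $\sum_i(\la_i-\nu_i)(\mu_{i+1}-\nu_{i+1})$ are matched with the dimension data on $(\bfL,\bfL'')$, the theorem follows, and the equality $X^q_{\la,\mu}=Y^q_{\la,\mu}$ proved in \S3.1 immediately yields Corollary \ref{polyidentity}.
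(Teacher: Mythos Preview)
Your approach is essentially the same as the paper's: both restrict the sums in \eqref{XABYAB} to semisimple $A_\al,A_\bt$ via Proposition~\ref{vi(la,mu)^A,B}, evaluate the convolution product using Lemma~\ref{tiafK} and Corollary~\ref{A(bfj,r) acts on (bfL,bfL')}, reduce to counting the sets $X(\al,\bt,\bfL,\bfL'')$ and $Y(\al,\bt,\bfL,\bfL'')$ via Lemma~\ref{X(al,bt,bfL,bfL'')}, and then reparametrize by $\nu$ with $0\le\nu\le\tila$ to match $P_{\tila,\bsz}$ and $P'_{\tila,\bsz}$. Two small remarks: your definition of $\nu$ is garbled (the displayed equation is circular); the paper simply takes $\nu=\al-\gamma=\bt-\dt$. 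Also, the exponent bookkeeping you flag as ``the main obstacle'' is genuinely the bulk of the work, and the paper handles it by introducing auxiliary quantities $f_\nu,g_\nu$ and deferring their simplification to a separate appendix lemma (Lemma~\ref{appendix}), so expect that step to require careful organization rather than a single pass.
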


\begin{proof} We need to compute the value of \eqref{XABYAB} at $(\bfL,\bfL'')$ for $A=A_\la,B=A_\mu$ (hence, $A_1=A_\al$ and $B_1=A_\bt$).
By Corollary \ref{A(bfj,r) acts on (bfL,bfL')} and Lemma \ref{tiafK}, and noting $\dim\ttO_{A_\nu}=0$ for $\nu\in\afmbnn$,
$$
\aligned
(\ti\bffkk_{\mu-\bt}*{\tA_\bt}(\bfl,r)*A_\al(\bfl,r))(\bfL,\bfL'')&=\sum_{\bfL'\in\scrY}
(\ti\bffkk_{\mu-\bt}*{\tA_\bt}(\bfl,r))(\bfL,\bfL')A_\al(\bfl,r)(\bfL',\bfL'')\\
&=\sum_{\bfL'\in\scrY,\bfL,\bfL''\han\bfL'\atop
\bfL'/\bfL\cong S_\bt,\bfL'/\bfL''\cong S_{\al}}q^{\frac{1}{2}(\lan\mu-\bt,x\ran-a(\bfL,\bfL')-c(\bfL',\bfL''))},
\endaligned$$
where $a(\bfL,\bfL'),c(\bfL',\bfL'')$ are defined in \eqref{a and c}.
Thus, if $X_{\la,\mu}^q(\bfL,\bfL'')\neq0$
or $Y_{\la,\mu}^q(\bfL,\bfL'')\neq0$, then some $\vi_{A_\la,A_\mu}^{A,B}=\ti{\vi_{A_\la,A_\mu}^{A,B}}\neq0$.
So applying the first assertion of Proposition \ref{vi(la,mu)^A,B} yields
\begin{equation*}
\begin{split}
X^q_{\la,\mu}(\bfL,\bfL'')=
\sum_{\al,\bt\in\afmbnn\atop\la-\al=\mu-\bt\geq
0}\vi_{\la,\mu}^{\al,\bt}(q)
&q^{\frac{1}{2}(\lan\mu,\mu\ran+\lan\bt,\mu-\bt\ran+
\lan\bt,\la\ran-d_{A_\bt}'-d_{A_\al}')}\\
&\qquad \times\sum_{\bfL'\in\scrY,\bfL,\bfL''\han\bfL'\atop
\bfL'/\bfL\cong S_\bt,\bfL'/\bfL''\cong S_{\al}}q^{\frac{1}{2}(\lan\mu-\bt,x\ran-a(\bfL,\bfL')-c(\bfL',\bfL''))},\\
\end{split}\end{equation*}
Similarly,
\begin{equation*}
\begin{split}
Y^q_{\la,\mu}(\bfL,\bfL'')=\sum_{\al,\bt\in\afmbnn\atop\la-\al=\mu-\bt\geq
0}\vi_{\la,\mu}^{\al,\bt}(q)&q^{\frac{1}{2}(\lan\mu,\la\ran+\lan\mu-\bt,\al\ran+
\lan\mu,\bt\ran-d_{A_\bt}'-d_{A_\al}')}\\
&\qquad \times\sum_{\bfL'\in\scrY,\bfL'\han\bfL,\bfL''\atop
\bfL/\bfL'\cong S_\al,\bfL''/\bfL'\cong S_{\bt}}q^{\frac{1}{2}(\lan\bt-\mu,x\ran-a(\bfL',\bfL'')-c(\bfL,\bfL'))}.\\
\end{split}
\end{equation*}
If $\bfL'\in\scrY$ satisfies $\bfL,\bfL''\han\bfL'$,
$\bfL'/\bfL\cong S_\bt$ and $\bfL'/\bfL''\cong S_\al$, then
$$a(\bfL,\bfL')+c(\bfL',\bfL'')=\sum_{1\leq i\leq
n}\bt_i(x_{i+1}-\bt_i)+\sum_{1\leq i\leq
n}\al_{i+1}(y_{i+1}-\al_i).$$ Likewise, if $\bfL'\in\scrY$ satisfies
$\bfL'\subseteq\bfL,\bfL''$, $\bfL/\bfL'\cong S_\al$ and
$\bfL''/\bfL'\cong S_\bt$, then $L_i\subseteq L_{i+1}'$ and
$L_i''\subseteq L_{i+1}'$ for all $i$. Thus,
$(L_{i+1}'/L_i')/(L_i/L_i')\cong L_{i+1}'/L_i$ and
$(L_{i+1}/L_i)/(L_{i+1}'/L_i)\cong L_{i+1}/L_{i+1}'$, and so
$\dim(L_{i+1}'/L_i')=x_{i+1}-\al_{i+1}+\al_i$. Similarly,
$\dim(L_{i+1}'/L_i')=y_{i+1}-\bt_{i+1}+\bt_i$. Hence,
\begin{equation*}
\begin{split}
&a(\bfL',\bfL'')+c(\bfL,\bfL')\\
=&\sum_{1\leq i\leq n}\bt_i(\dim(L_{i+1}'/L_i')-\bt_i)+\sum_{1\leq i\leq n}\al_{i+1}(\dim(L_{i+1}'/L_i')-\al_i)\\
=&\sum_{1\leq i\leq n}\bt_i(x_{i+1}-\al_{i+1}+\al_i-\bt_i)+\sum_{1\leq i\leq n}\al_{i+1}(y_{i+1}-\bt_{i+1}+\bt_i-\al_i)\\
=&\sum_{1\leq i\leq n}\bt_i(x_{i+1}-\bt_i)+\sum_{1\leq i\leq
n}\al_{i+1}(y_{i+1}-\al_i).
\end{split}
\end{equation*}
and consequently,
\begin{equation*}
\begin{split}
X^q_{\la,\mu}(\bfL,\bfL'')=\sum_{\al,\bt\in\afmbnn\atop\la-\al=\mu-\bt\geq
0}\vi_{\la,\mu}^{\al,\bt}(q)&q^{\frac{1}{2}
(-d_{A_\al}'-d_{A_\bt}'-\sum_{1\leq i\leq n}\bt_i(x_{i+1}-\bt_i)-\sum_{1\leq i\leq n}\al_{i+1}(y_{i+1}-\al_i))}\\
&\qquad\times q^{\frac{1}{2}(\lan\mu,\mu\ran+
\lan\bt,\mu-\bt\ran+\lan\bt,\la\ran+\lan\mu-\bt,x\ran)}|X(\al,\bt,\bfL,\bfL'')|,\\
Y^q_{\la,\mu}(\bfL,\bfL'')=\sum_{\al,\bt\in\afmbnn\atop\la-\al=\mu-\bt\geq
0}\vi_{\la,\mu}^{\al,\bt}(q)&q^{\frac{1}{2}
(-d_{A_\al}'-d_{A_\bt}'-\sum_{1\leq i\leq n}\bt_i(x_{i+1}-\bt_i)-\sum_{1\leq i\leq n}\al_{i+1}(y_{i+1}-\al_i))}\\
&\qquad\times q^{\frac{1}{2}(\lan\mu,\la\ran+
\lan\mu-\bt,\al\ran+\lan\mu,\bt\ran+\lan\bt-\mu,x\ran)}|Y(\al,\bt,\bfL,\bfL'')|.\\
\end{split}
\end{equation*}
Thus, $X^q_{\la,\mu}(\bfL,\bfL'')\not=0$ or
$Y^q_{\la,\mu}(\bfL,\bfL'')\not=0$ implies some $X(\al,\bt,\bfL,\bfL'')\neq\emptyset$ or
$Y(\al,\bt,\bfL,\bfL'')\neq\emptyset$, which implies, by Lemma
\ref{X(al,bt,bfL,bfL'')}, $L_i+L_i''\han
L_{i+1}\cap L_{i+1}''$ for all $i$ and $\bt-\dt=\al-\gamma\geq 0$. The latter together with
$\la-\al=\mu-\bt\geq 0$ implies $\tila-\timu=(\la-\mu)-(\gamma-\dt)=(\al-\bt)-(\gamma-\dt)=0$ and
$\tila=\la-\gamma\geq\al-\gamma\geq 0$. So we have proved the first assertion.

It remains to simplify $X^q_{\la,\mu}(\bfL,\bfL'')$ and
$Y^q_{\la,\mu}(\bfL,\bfL'')$ under the assumption that
$L_i+L_i''\han L_{i+1}\cap L_{i+1}''$ for all $i$ and
$\tila=\timu\geq 0$. First, putting $z_i=\dim(L_i\cap
L_i''/(L_{i-1}+L_{i-1}'')))$, Lemma \ref{X(al,bt,bfL,bfL'')} gives
$|X(\al,\bt,\bfL,\bfL'')|=\prod_{1\leq i\leq
n}\dleb{z_{i+1}\atop\al_i-\gamma_i}\drib_q$. Second, for
$\al,\bt\in\afmbnn$ satisfying $\la-\al=\mu-\bt\geq 0$, Lemma
\ref{vi(al,ga)^la} and Proposition \ref{vi(la,mu)^A,B} imply
\begin{equation*}
\begin{split}
&\quad\,\vi_{\la,\mu}^{\al,\bt}(q)q^{\frac{1}{2}
(-d_{A_\al}'-d_{A_\bt}'-\sum_{1\leq i\leq n}\bt_i(x_{i+1}-\bt_i)-\sum_{1\leq i\leq n}\al_{i+1}(y_{i+1}-\al_i))}\\
&=\frac{q^{\sum_{1\leq i\leq n}(\la_i-\al_i+\al_i(\al_i-\la_i)+\bt_i(\bt_i-\mu_i))}}{|\Aut(S_{\la-\al})|}
q^{\frac{1}{2}\sum_{1\leq i\leq n}[(\bt_i-\bt_i^2+\al_i-\al_i^2)-\bt_i(x_{i+1}-\bt_i)-\al_{i}(y_{i}-\al_{i-1})]}\\
&=q^{\frac{1}{2}\sum_{1\leq i\leq
n}(\la_i+\mu_i)}\frac{q^{\frac{1}{2}\sum_{1\leq i\leq
n}(\al_i(\al_i+\al_{i-1}-2\la_i-y_i)+\bt_i(2\bt_i-2\mu_i-x_{i+1}))}}{|\Aut(S_{\la-\al})|},
\end{split}
\end{equation*}since $2\la_i-\al_i+\bt_i=\la_i+\mu_i$ for all $i$.
Hence,
\begin{equation*}
\begin{split}
X^q_{\la,\mu}(\bfL,\bfL'')&=q^{\frac{1}{2}\sum_{1\leq i\leq n}(\la_i+\mu_i)}\sum_{\al,\bt\in\afmbnn\atop\la-\al=\mu-\bt\geq 0}\frac{q^{\frac{1}{2}\sum_{1\leq i\leq n}[\al_i(\al_i+\al_{i-1}-2\la_i-y_i)+\bt_i(2\bt_i-2\mu_i-x_{i+1})]}}{|\Aut(S_{\la-\al})|}\\
&\qquad\times q^{\frac{1}{2}(\lan\mu,\mu\ran+
\lan\bt,\mu-\bt\ran+\lan\bt,\la\ran+\lan\mu-\bt,x\ran)}\prod_{1\leq i\leq n}\dleb{z_{i+1}\atop\al_i-\gamma_i}\drib_q.\\
\end{split}
\end{equation*}
Here we implicitly assumed $\al\geq\ga$ (or equivalently, $\bt\geq\dt$).  Setting $\nu=\al-\gamma=\bt-\dt$
gives
$$X^q_{\la,\mu}(\bfL,\bfL'')=q^{\frac{1}{2}\sum_{1\leq i\leq
n}(\la_i+\mu_i)}\sum_{\nu\in\afmbnn\atop
0\leq\nu\leq\la-\gamma}\frac{q^{\frac{1}{2}f_\nu}}{|\Aut(S_{\la-\gamma-\nu})|}\prod_{1\leq
i\leq n}\dleb{z_{i+1}\atop\nu_i}\drib_q,\,\,\text{ where}$$
\begin{equation}\label{fnu}
 \begin{split}
f_{\nu}&=\sum_{1\leq i\leq
n}\bigl((\gamma_i+\nu_i)(\gamma_i+\nu_i+\gamma_{i-1}+\nu_{i-1}-2\la_i-y_i)+
(\dt_i+\nu_i)(2(\gamma_i+\nu_i-\la_i)-x_{i+1})\bigr)\\
&\quad+\lan\mu,\mu\ran+\lan\dt+\nu,\la-(\gamma+\nu)\ran+\lan\dt+\nu,\la\ran+\lan\la-(\gamma+\nu),x\ran
\end{split}
\end{equation}
for $\nu\in\afmbnn$ with $0\leq\nu\leq\tila$. Similarly,
\begin{equation*}
\begin{split}
Y^q_{\la,\mu}(\bfL,\bfL'')
&=q^{\frac{1}{2}\sum_{1\leq i\leq n}(\la_i+\mu_i)}\sum_{\al,\bt\in\afmbnn\atop\la-\al=\mu-\bt\geq 0}\frac{q^{\frac{1}{2}\sum_{1\leq i\leq n}(\al_i(\al_i+\al_{i-1}-2\la_i-y_i)+\bt_i(2\bt_i-2\mu_i-x_{i+1}))}}{|\Aut(S_{\la-\al})|}\\
&\qquad\times q^{\frac{1}{2}(\lan\mu,\la\ran+
\lan\mu-\bt,\al\ran+\lan\mu,\bt\ran+\lan\bt-\mu,x\ran)}\prod_{1\leq i\leq n}\dleb{z_{i}\atop\al_i-\gamma_i}\drib_q\\
&=q^{\frac{1}{2}\sum_{1\leq i\leq
n}(\la_i+\mu_i)}\sum_{\nu\in\afmbnn\atop
0\leq\nu\leq\la-\gamma}\frac{q^{\frac{1}{2}g_\nu}}{|\Aut(S_{\la-\gamma-\nu})|}
\prod_{1\leq i\leq n}\dleb{z_{i}\atop\nu_i}\drib_q,\,\,\text{ where }
\end{split}
\end{equation*}
\begin{equation}\label{gnu}
\begin{split}
g_{\nu}&=\sum_{1\leq i\leq n}
\bigl((\gamma_i+\nu_i)(\gamma_i+\nu_i+\gamma_{i-1}+\nu_{i-1}-2
\la_i-y_i)+(\dt_i+\nu_i)(2(\gamma_i+\nu_i-\la_i)-x_{i+1})\bigr)\\
&\qquad+\lan\mu,\la\ran+\lan\la-\gamma-\nu,
\gamma+\nu\ran+\lan\mu,\dt+\nu\ran+\lan\gamma+\nu-\la, x\ran.
\end{split}
\end{equation}
Simplifying $f_{\tila}-f_{\nu}$ and $f_{\tila}-g_{\nu}$ and substituting, we obtain by Lemma \ref{appendix},
\begin{equation*}
\begin{split}
\frac{X^q_{\la,\mu}(\bfL,\bfL'')}{q^{\frac{1}{2}\sum_{1\leq i\leq
n}(\la_i+\mu_i)+\frac{1}{2}f_{\tila}}}&=
\sum_{0\leq\nu\leq\tila\atop\nu\in\afmbnn}\frac{q^{\sum_{1\leq i\leq n}
(\tila_iz_i-\nu_i(z_i+\tila_i+\tila_{i+1}-\nu_i-\nu_{i+1}))}}
{|\Aut(S_{\tila-\nu})|}\prod_{1\leq i\leq n}\dleb{z_{i+1}\atop\nu_i}\drib_q,\\
&=\sum_{0\leq\nu\leq\tila\atop\nu\in\afmbnn}h_{\nu}q^{\sum_{1\leq i\leq n}
(\tila_iz_i-\nu_iz_i-\nu_i\tila_{i+1})}\prod_{1\leq i\leq n\atop 1\leq s\leq\nu_i}(q^{z_{i+1}-s+1}-1),\\
\end{split}
\end{equation*}
\begin{equation*}
\begin{split}
\frac{Y^q_{\la,\mu}(\bfL,,\bfL'')}{q^{\frac{1}{2}\sum_{1\leq i\leq
n}(\la_i+\mu_i)
+\frac{1}{2}f_{\tila}}}&=\sum_{0\leq\nu\leq\tila\atop\nu\in\afmbnn}
\frac{q^{\sum_{1\leq i\leq
n}(\tila_iz_{i+1}-\nu_i(z_{i+1}+\tila_i+\tila_{i-1}-\nu_i-\nu_{i+1}))}}
{|\Aut(S_{\tila-\nu})|}\prod_{1\leq i\leq n}\dleb{z_{i}\atop\nu_i}\drib_q\\
&=\sum_{0\leq\nu\leq\tila\atop\nu\in\afmbnn}h_{\nu}q^{\sum_{1\leq
i\leq
n}(\tila_iz_{i+1}-\nu_iz_{i+1}-\nu_i\tila_{i-1})}\prod_{1\leq
i\leq n\atop 1\leq s\leq\nu_i}(q^{z_i-s+1}-1),
\end{split}
\end{equation*}
where
$$h_{\nu}=\frac{q^{-\sum_{1\leq i\leq n}\nu_i(\tila_i-\nu_i-\nu_{i+1})}}{|\Aut(S_{\tila-\nu})|}
\prod_{1\leq i\leq n\atop 1\leq s\leq\nu_i}\frac{1}{q^s-1}.$$
Further simplification by Lemma \ref{vi(al,ga)^la}, we obtain
\begin{equation*}
\begin{split}
h_\nu&=\frac{q^{-\sum_{1\leq i\leq
n}\nu_i(\tila_i-\nu_i-\nu_{i+1})}}{q^{\sum_{1\leq i\leq
n}\frac{1}{2}(\tila_i-\nu_i-1)(\tila_i-\nu_i)}}
\prod_{1\leq i\leq n\atop 1\leq s\leq\tila_i-\nu_i}\frac{1}{q^s-1}\prod_{1\leq i\leq n\atop 1\leq s\leq\nu_i}\frac{1}{q^s-1}\\
&=q^{\sum_{1\leq i\leq n}\frac{1}{2}(\tila_i-\tila_i^2)}\prod_{1\leq
i\leq n\atop 1\leq s\leq\tila_i}\frac{1}{q^s-1}q^{\sum_{1\leq i\leq
n}(\frac{\nu_i^2-\nu_i}{2}+\nu_i\nu_{i+1})}\prod_{1\leq i\leq
n}\dleb{\tila_i\atop\nu_i}\drib_q.
\end{split}
\end{equation*}
Thus,
\begin{equation*}
\begin{split}
&X^q_{\la,\mu}(\bfL,\bfL'')=q^{\frac{1}{2}(f_{\tila}+\sum_{1\leq
i\leq n}(\la_i+\mu_i+\tila_i-\tila_i^2))}\prod\limits_{1\leq i\leq
n\atop 1\leq s\leq
\tila_i}\frac{1}{q^s-1}\cdot Q_{\tila,\bsz}\\
&Y^q_{\la,\mu}(\bfL,\bfL'')=q^{\frac{1}{2}(f_{\tila}+\sum_{1\leq
i\leq n}(\la_i+\mu_i+\tila_i-\tila_i^2))}\prod\limits_{1\leq i\leq
n\atop 1\leq s\leq
\tila_i}\frac{1}{q^s-1}\cdot Q'_{\tila,\bsz}\\
\end{split}
\end{equation*}
where, for $p_{\nu,\la}(q)=q^{\sum_{1\leq i\leq
n}(\frac{\nu_i^2-\nu_i}{2}+\nu_i\nu_{i+1})}
\prod_{1\leq i\leq
n}\left[\!\!\left[{\lambda_i\atop\nu_i}\right]\!\!\right]_q,$
\begin{equation*}
\begin{split}
Q_{\tila,\bsz}&=\sum_{0\leq\nu\leq\tila\atop\nu\in\mathbb
N_\vartriangle^n}p_{\nu,\tila}(q)\cdot q^{\sum_{1\leq i\leq
n}(\tila_iz_i-\nu_iz_i-\nu_i\tila_{i+1})}\prod_{1\leq i\leq
n\atop 1\leq s\leq \nu_i}(q^{z_{i+1}-s+1}-1)\\
Q_{\tila,\bsz}'&=\sum_{0\leq\nu\leq\tila\atop\nu\in\mathbb
N_\vartriangle^n}p_{\nu,\tila}(q)\cdot q^{\sum_{1\leq i\leq
n}(\tila_iz_{i+1}-\nu_iz_{i+1}-\nu_{i+1}\tila_{i})}\prod_{1\leq
i\leq n\atop 1\leq s\leq \nu_i}(q^{z_{i}-s+1}-1),
\end{split}
\end{equation*}
Finally, it is clear to see that $P_{\tila,\bsz}=Q_{\tila,\bsz}$ and $P'_{\tila,\bsz}=Q'_{\tila,\bsz}$.
\end{proof}

The fact that $\xi_r$ is an algebra homomorphism gives immediately
the following polynomial identity.\index{polynomial identity}

\begin{Coro} \label{polyidentity} For any
$\la=(\la_i)_{i\in\mbz},\mu=(\mu_i)_{i\in\mbz}\in\mbn_\vtg^n$, $P_{\la,\mu}(\up^2)=P'_{\la,\mu}(\up^2)$.
\end{Coro}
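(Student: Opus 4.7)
The plan is to deduce the polynomial identity from the equality $X_{A_\la,A_\mu}=Y_{A_\la,A_\mu}$ in $\afbfSr$ (which comes from applying the algebra homomorphism $\xi_r$ to the commutator relation in Theorem \ref{alt-presentation-dHall}(5), since the identity holds in $\dHallr$ in semisimple cases with $A=A_\la$, $B=A_\mu$) combined with the explicit evaluations provided by Theorem \ref{lem of Poly}. Specializing $\up$ to $\sqrt q$ for a prime power $q$, this equality gives $X^q_{\la,\mu}(\bfL,\bfL'')=Y^q_{\la,\mu}(\bfL,\bfL'')$ for every pair $(\bfL,\bfL'')\in\scrY\times\scrY$.

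The first step is to make a convenient choice of $(\bfL,\bfL'')$ that realizes prescribed data $(\tila,\bsz)$. Taking $\bfL=\bfL''$, the vectors $\ga=(\dim(L_i+L_i'')/L_i'')_{i\in\mbz}$ and $\dt=(\dim(L_i+L_i'')/L_i)_{i\in\mbz}$ both vanish, so that $\tila=\la$ and $\timu=\mu$; the constraint $\tila=\timu$ from Theorem \ref{lem of Poly} then forces $\la=\mu$, and the vector $\bsz$ becomes $(\dim(L_i/L_{i-1}))_{i\in\mbz}\in\afLanr$. Since $r$ is arbitrary and every $\bsz\in\afLanr$ arises as the type of some cyclic flag $\bfL$, we can realize an arbitrary pair $(\la,\bsz)\in\afmbnn\times\afmbnn$ by choosing $r=\sg(\bsz)$ and $\bfL\in\afFn_{,\bsz}(q)$, and by further requiring $L_i+L_i\subseteq L_{i+1}\cap L_{i+1}$ (which is automatic since $L_i\subseteq L_{i+1}$).

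The second step is to extract the polynomial identity. Under the above choice, both sides of Theorem \ref{lem of Poly} share the identical prefactor $q^{\frac12(f_{\tila}+\sum_i(\la_i+\mu_i+\tila_i-\tila_i^2))}\prod_{i,\,s}(q^s-1)^{-1}$, which is a nonzero rational number. Dividing the equality $X^q_{\la,\la}(\bfL,\bfL)=Y^q_{\la,\la}(\bfL,\bfL)$ by this common factor yields $P_{\la,\bsz}(q)=P'_{\la,\bsz}(q)$. As $\la$ and $\bsz$ range freely over $\afmbnn$ (upon varying $r$), and $q$ ranges over the infinite set of prime powers, the polynomials $P_{\la,\bsz}$ and $P'_{\la,\bsz}$ in $\up^2$ (with integer coefficients) agree at infinitely many specializations, hence coincide as polynomials. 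Renaming the second index $\bsz$ as $\mu$ gives the desired identity $P_{\la,\mu}(\up^2)=P'_{\la,\mu}(\up^2)$.

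The expected main obstacle is bookkeeping rather than conceptual: one must verify carefully that the shared prefactor in Theorem \ref{lem of Poly} is nonzero for the chosen $(\bfL,\bfL'')$ (so that cancellation is legitimate), and that the degenerate choice $\bfL=\bfL''$ does not collapse the sums defining $P$ and $P'$ in a way that loses information. Because $\bsz$ can be taken to be any element of $\afmbnn$ and the polynomials involve only $\la$ and $\bsz$ symmetrically in the Gaussian binomials indexed over $\nu\le\la$, the substitution is faithful and no cancellation is artificial; thus the identity follows cleanly from the Schur--Weyl framework without any further combinatorial analysis.
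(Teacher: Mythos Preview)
Your proposal is correct and follows essentially the same approach as the paper: choose $\bfL=\bfL''$ of type $\bsz=\mu$, apply Theorem~\ref{lem of Poly} to $X^q_{\la,\la}(\bfL,\bfL)=Y^q_{\la,\la}(\bfL,\bfL)$, cancel the common nonzero prefactor, and vary $q$ over prime powers to obtain the polynomial identity. You are in fact slightly more careful than the paper in making explicit that infinitely many specializations are needed to conclude equality of polynomials, and in checking that the prefactor $q^{\frac12(\cdots)}\prod_{i,s}(q^s-1)^{-1}$ is nonzero.
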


\begin{proof} If $\mu=\bf0$, then the equality holds trivially. Now
suppose $\mu\not=0$ and set $r=\sum_{i=1}^n \mu_i$. Let $\field$ be a
finite field with $q$ elements. By Theorems \ref{xirl} and \ref{alt-presentation-dHall}(5),
we have $X_{\la,\mu'}^q(\bfL,\bfL'')=Y_{\la,\mu'}^q(\bfL,\bfL)$ for all $\la,\mu'$ and $(\bfL,\bfL'')$.
Take $\bfL=(L_i)_{i\in\mbz}\in\scrY$ such that $\dim
L_i/L_{i-1}=\mu_i$ for all $i\in \mbz$, and $\bfL''=\bfL$ (thus, $\tila=\la=\mu=\mu'$). Applying Theorem \ref{lem of Poly} to the equality
$X_{\la,\la}^q(\bfL,\bfL)=Y_{\la,\la}^q(\bfL,\bfL)$ gives the equality $P_{\la,\mu}(q)=P'_{\la,\mu}(q)$.
\end{proof}

\begin{Rems} We point out that the polynomial identity $P_{\la,\mu}(\up^2)=P'_{\la,\mu}(\up^2)$
for all $\la,\mu$ is {\sf equivalent} to the fact that the algebra
homomorphisms $\zeta_r^\pm:\dHallr^\pm\to\afbfSr$ constructed by
Varagnolo and Vasserot \cite{VV99} (see \eqref{zetarpm}), which have
easy extensions $\zeta_r^{\geq0}:\dHallr^{\geq0}\to\afbfSr$ and
$\zeta_r^{\leq0}:\dHallr^{\leq0}\to\afbfSr$,
 can be extended to
an algebra homomorphism $\dHallr\to\afbfSr$.

In fact, there is an obvious linear extension $\xi_r'$ which is an
algebra homomorphism if and only if $\xi_r'$ preserves the
commutator relations in Theorem \ref{alt-presentation-dHall}(5) on
semisimple generators (see Lemma \ref{reduce lemma for presentation
of double Hall algebra}). This is the case by Theorem \ref{lem of
Poly} if $P_{\la,\mu}(q)=P'_{\la,\mu}(q)$ for every prime power $q$
and $\la,\mu$.
\end{Rems}

\begin{Prob} Give a direct (or combinatorial) proof for the polynomial identity $P_{\la,\mu}(\up^2)=P'_{\la,\mu}(\up^2)$.
\end{Prob}

\section{Appendix}
In this Appendix, we prove a few lemmas which have been used in the
previous sections. The first one reflects some affine phenomenon for
the length of  the longest element $w_{0,\la}$ of $\fS_\la$. This is
used in the proof of Corollary \ref{di=length}.

\begin{Lem}\label{w0la} For $\la\in\afLanr$ and $d\in\mathscr D_\la^\vtg$, let
$$\aligned
Y&=\{(s,t)\in\mbz^2\mid 1\leq d^{-1}(s)\leq r, s<t, s,t\in R_k^\la\text{ for some }k\in\mbz\}\\
Z&=\{(s,t)\in\mbz^2\mid 1\leq s\leq r, s<t, s,t\in R_k^\la\text{ for some }1\leq k\leq n\}.\\
\endaligned$$
Then $|Y|=|Z|=\ell(w_{0,\la})$.
\end{Lem}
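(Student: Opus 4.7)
\medskip

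The plan is to compute $|Z|$ directly and then exhibit an explicit bijection $\psi\colon Y\to Z$ built from the periodicity of the partition $\bigsqcup_{k\in\mbz}R_k^\la=\mbz$ and of the permutation $d$.

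First I would handle $|Z|$. Since $R_k^\la\subseteq[1,r]$ and $|R_k^\la|=\la_k$ for $1\le k\le n$, the set $Z$ decomposes as a disjoint union of pairs $(s,t)$ with $s<t$ lying inside a single $R_k^\la$, giving $|Z|=\sum_{k=1}^n\binom{\la_k}{2}$. Since $\fS_\la\cong\fS_{\la_1}\times\cdots\times\fS_{\la_n}$, the longest element $w_{0,\la}$ has length $\sum_{k=1}^n\binom{\la_k}{2}$, so $|Z|=\ell(w_{0,\la})$.

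Next I would prove $|Y|=|Z|$ via the shift map. The key observation is the periodicity $R_{k_0+mn}^\la=R_{k_0}^\la+mr$ coming from $\la_{k,i-1}=kr+\sum_{t<i}\la_t$. Given $(s,t)\in Y$, write the common index $k$ of $R_k^\la$ uniquely as $k=k_0+mn$ with $1\le k_0\le n$, and define
\[
\psi(s,t)=(s-mr,\,t-mr).
\]
Then $s-mr,t-mr\in R_{k_0}^\la\subseteq[1,r]$ and $s-mr<t-mr$, so $\psi(s,t)\in Z$. To check injectivity, if $\psi(s,t)=\psi(s^*,t^*)$, then $s\equiv s^*\pmod r$; since $d^{-1}$ satisfies $d^{-1}(j+r)=d^{-1}(j)+r$ and both $d^{-1}(s),d^{-1}(s^*)$ lie in $[1,r]$, they must be equal, hence $s=s^*$ and then $t=t^*$. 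For surjectivity, given $(s',t')\in Z$ with $s',t'\in R_{k_0}^\la$, there is a unique $m\in\mbz$ such that $d^{-1}(s')+mr\in[1,r]$, and the pair $(s'+mr,t'+mr)$ lies in $R_{k_0+mn}^\la$ with $d^{-1}$-value in $[1,r]$, i.e.\ in $Y$, and maps to $(s',t')$.

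Combining the two steps gives $|Y|=|Z|=\ell(w_{0,\la})$. The argument is essentially combinatorial and the only real input is the periodicity of the partition $\{R_k^\la\}$ and of $d\in\afsygr$; I do not expect any serious obstacle beyond bookkeeping the index $m$ correctly. The hypothesis $d\in\afmsD_\la$ is not needed for this cardinality statement (it is used in the surrounding argument of Corollary~\ref{di=length}).
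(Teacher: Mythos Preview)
Your proof is correct and follows essentially the same approach as the paper: both exploit the periodicity $R_{k_0+mn}^\la=R_{k_0}^\la+mr$ together with the fact that $d\in\affSr$ permutes residues modulo $r$. The paper organizes this slicewise (showing $|Y_a|$ depends only on $a\bmod r$ and then summing), while you package the same shift into a single explicit bijection $Y\to Z$; your observation that the hypothesis $d\in\afmsD_\la$ is not actually used here is also correct.
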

\begin{proof}For $a\in\mbz$ let $Y_a=\{b\in\mbz\mid a<b,\;a,b\in R_k^{\la}\text{ for some }k\in\mbz\}.$
We first claim $|Y_{a_1}|=|Y_{a_2}|$ whenever $a_1=a_2\mod  r$.
Indeed, write $a_2=a_1+cr$ and assume $a_1\in R_k^\la$ (see \eqref{set R}). Then $a_2\in R_{k+cn}^\la$. By definition,
$$Y_{a_2}=\{b\in\mbz\mid a_2<b,\;b\in R_{k+cn}^\la\}
=\{b\in\mbz\mid a_1<b-cr,\;b-cr\in R_{k}^\la\}.$$
Hence, there is a bijection from $Y_{a_2}$ to $Y_{a_1}$ defined by sending $b$ to $b-cr$, proving the claim.

Since the remainders of $d(i)$ when divided by $r$ are all distinct and
$$Y=\bin_{1\leq s\leq r}\{(d(s),j)\mid j\in Y_{d(s)}\},$$
it follows from the claim that
\begin{equation*}
\begin{split}
|Y|&=\sum_{1\leq s\leq r}|Y_{d(s)}|=\sum_{1\leq s\leq r}|Y_{s}|\\
&=\sum_{1\leq s\leq r}|\{t\in\mbz\mid s<t,\;s,t\in R_k^{\la}\text{ for some }k\in\mbz\}|\\
&=|Z|=\ell(w_{0,\la}).
\end{split}
\end{equation*}
\end{proof}

The next lemma is used in the proof of Proposition \ref{triangular formula in A(bfj,r)}.
\begin{Lem}\label{ddd}
Keep the notations $A,A^+,A^-,\la,\mu,\nu$ used in \eqref{Lem-ddd}.
We have $$d_{A+\diag(\la)}=d_{A^++\diag(\mu)}+d_{A^-+\diag(\nu)}.$$
\end{Lem}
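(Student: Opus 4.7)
Write $B=A+\diag(\la)$, $B^+=A^++\diag(\mu)$, $B^-=A^-+\diag(\nu)$, with $b_{i,j}, b^+_{i,j}, b^-_{i,j}$ their entries, and recall that since $A\in\afThnpm$ we have $a_{i,i}=0$. The plan is to expand each of the three quantities $d_B$, $d_{B^+}$, $d_{B^-}$ using \eqref{sqAdA}, and to match the terms explicitly. First I would pin down $\mu$ and $\nu$: the matrix product $[B^+]\cdot[B^-]$ producing $[B]$ (which appears in the proof of Proposition \ref{triangular formula in A(bfj,r)}) forces $\co(B^+)=\ro(B^-)$ and $\ro(B)=\ro(B^+)$, $\co(B)=\co(B^-)$. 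Reading off row $i$ and column $l$ sums using $A=A^++A^-$ gives
\[
\mu_i=\la_i+\sum_{j<i}a_{i,j}=\la_i+\rho_i,\qquad \nu_l=\la_l+\sum_{k<l}a_{k,l}=\la_l+\sigma_l.
\]

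Next I would expand $d_M=\sum_{1\le i\le n,\,i\ge k,\,j<l}M_{i,j}M_{k,l}$ for each $M\in\{B,B^+,B^-\}$ by splitting the domain into four cases according to whether the first index pair lies on the diagonal ($i=j$) or off it, and similarly for the second pair. The ``both diagonal'' case $(i=j,\,k=l)$ is \emph{empty} in every case because the constraints $i\ge k$ and $j<l$ together with $i=j$, $k=l$ force $i\ge l>i$. This yields
\[
d_B=d_A+\sum_{1\le i\le n}\la_i\!\!\sum_{k\le i<l}a_{k,l}+\sum_{1\le i\le n,\,j<l\le i}\la_l\, a_{i,j},
\]
and, because $A^+$ (resp.\ $A^-$) has support in $\{i<j\}$ (resp.\ $\{i>j\}$), the same splitting gives
\[
d_{B^+}=d_{A^+}+\sum_{1\le i\le n}\mu_i\!\!\sum_{k\le i<l}a_{k,l},\qquad d_{B^-}=d_{A^-}+\sum_{1\le i\le n,\,j<l\le i}\nu_l\,a_{i,j},
\]
the other mixed-diagonal contributions dropping out for the same $i\ge l>j$ reason.

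Then I would decompose $d_A$ itself according to whether each pair is upper or lower triangular. Under the constraints $1\le i\le n$, $i\ge k$, $j<l$, the case ``$i<j$ with $k>l$'' is empty (it forces $l<j<l$), so only three terms survive:
\[
d_A=S_1+S_2+S_3,\qquad S_1=d_{A^+},\quad S_2=d_{A^-},\quad S_3=\!\!\sum_{\substack{1\le i\le n,\,i>j\\ k<l,\,i\ge k,\,j<l}}\!\!a_{i,j}a_{k,l}.
\]
Substituting the expressions for $\mu_i,\nu_l$ from Step~1 and cancelling the $\la$-contributions, the required identity $d_B=d_{B^+}+d_{B^-}$ reduces to the purely combinatorial equality
\[
S_3\;=\;T_1+T_2,\qquad T_1=\!\!\!\sum_{\substack{1\le i\le n\\ j<i,\,k\le i<l}}\!\!\!a_{i,j}a_{k,l},\qquad T_2=\!\!\!\sum_{\substack{1\le i\le n\\ i\ge l>j,\,k<l}}\!\!\!a_{i,j}a_{k,l},
\]
coming from $\rho_i$ and $\sigma_l$ respectively.

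The final and most delicate step is to verify $S_3=T_1+T_2$. I would do this by partitioning $S_3$ along $i<l$ versus $i\ge l$: when $i<l$ the constraint $i\ge k$ together with $k<l$ becomes $k\le i<l$, and $i>j$ together with $j<l$ gives precisely the defining conditions of $T_1$; when $i\ge l$ the inequality $i\ge l>k$ makes $i\ge k$ automatic and the remaining conditions match $T_2$ exactly. The main obstacle is purely bookkeeping: one must be scrupulous about which index in each summand carries the constraint $1\le i\le n$, and about the fact that the outer index of the diagonal-factor contributions in $d_{B^\pm}$ can differ from the corresponding outer index in $d_B$. Once the splitting is carried out symmetrically, the identity falls out by inspection.
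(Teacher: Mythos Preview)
Your proposal is correct and follows essentially the same route as the paper's own proof: determine $\mu_i=\la_i+\sum_{j<i}a_{i,j}$ and $\nu_l=\la_l+\sum_{k<l}a_{k,l}$ from row/column sum conditions, expand each $d$-value by separating diagonal from off-diagonal contributions, split $d_A=d_{A^+}+d_{A^-}+S_3$, and reduce to a purely combinatorial identity among index sets. Your final step, partitioning $S_3$ along $i<l$ versus $i\ge l$ to match $T_1$ and $T_2$, makes explicit the cancellation that the paper simply records as ``$=0$'' after writing out the three sums side by side.
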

\begin{proof} Recall from Lemma \ref{lemma2 for triangular decomposition of affine q-Schur algebra}(2)
that $\mu,\nu$ are uniquely determined by the conditions
$$(\bfL,\bfL')\in\sO_{A^++\diag(\mu)}\;\;\text{ and}\;\;
(\bfL',\bfL'')\in\sO_{A^-+\diag(\nu)},$$
whenever $(\bfL,\bfL'')\in\sO_{A+\diag(\la)}$ and $\bfL'=\bfL\cap\bfL''$.
 By Lemma \ref{triangular matrix}(3), for $i\in\mbz$,
\begin{equation*}
\begin{split}
\la_i+\sum_{k,k\not=i}a_{i,k}&=\dim(L_i/L_{i-1})=\mu_i+\sum_{k,i<k}a_{i,k},\\
\la_i+\sum_{k,k\not=i}a_{k,i}&=\dim(L_i''/L_{i-1}'')=\nu_i+\sum_{k,k>i}a_{k,i}.
\end{split}
\end{equation*}
Thus, $\mu_i=\la_i+\sum_{k<i}a_{i,k}$ and
$\nu_i=\la_i+\sum_{k<i}a_{k,i}$ for all $i$.  Moreover, since for
such $\mu,\nu$,
$$d_{A+\diag(\la)}=\sum_{1\leq i\leq n\atop i\geq
k;j<l}a_{i,j}a_{k,l}+ \sum_{1\leq i\leq n\atop k\leq
i<l}\la_ia_{k,l}+\sum_{j<k\leq i}\la_ka_{i,j},$$
$$\aligned
d_{A^++\diag(\mu)}&=\sum_{\tiny\mbox{$\begin{array}{c}
1\leq i\leq n\\i\geq k;j<l\\i<j;k<l\end{array}$}}a_{i,j}a_{k,l}+\sum_{1\leq i\leq n\atop k\leq i<l}\mu_ia_{k,l},\\
&=\sum_{\tiny\mbox{$\begin{array}{c}1\leq i\leq n\\i\geq k;j<l\\i<j;k<l
\end{array}$}}a_{i,j}a_{k,l}+\sum_{1\leq i\leq n\atop k\leq i<l}\la_ia_{k,l}+
\sum_{\tiny\mbox{$\begin{array}{c}1\leq i\leq n\\k\leq
i<l\\i>j\end{array}$}}a_{i,j}a_{k,l},
\endaligned$$
$$\aligned
d_{A^-+\diag(\nu)}&=\sum_{\tiny\mbox{$\begin{array}{c}1
\leq i\leq n\\i\geq k;j<l\\i>j;k>l\end{array}$}}a_{i,j}a_{k,l}+\sum_{1\leq i\leq n\atop j<k\leq i}\nu_ka_{i,j}\\
&=\sum_{\tiny\mbox{$\begin{array}{c}1\leq i\leq n\\i\geq
k;j<l\\i>j;k>l\end{array}$}}a_{i,j}a_{k,l}+\sum_{1\leq i\leq n\atop
j<k\leq i}\la_ka_{i,j}+\sum_{\tiny\mbox{$\begin{array}{c}1\leq i\leq
n\\j<l\leq i\\k<l\end{array}$}}a_{i,j}a_{k,l},
\endaligned$$
it follows that
$$\aligned
&\quad d_{A+\diag(\la)}-d_{A^++\diag(\mu)}-d_{A^-+\diag(\nu)}\\
&=\sum_{1\leq i\leq n\atop i\geq
k;j<l}a_{i,j}a_{k,l}-\sum_{\tiny\mbox{$\begin{array}{c}1\leq i\leq
n\\i\geq k,j<l\\i<j,k<l\end{array}$}}a_{i,j}a_{k,l}-
\sum_{\tiny\mbox{$\begin{array}{c}1\leq i\leq n\\i\geq
k;j<l\\i>j;k>l\end{array}$}}a_{i,j}a_{k,l}\\
&\quad-\sum_{\tiny\mbox{$\begin{array}{c}1\leq i\leq n\\k\leq
i<l\\i>j\end{array}$}}a_{i,j}a_{k,l}-\sum_{\tiny\mbox{$\begin{array}{c}1\leq
i\leq n\\j<l\leq i\\k<l\end{array}$}}a_{i,j}a_{k,l}\\
&=\sum_{\tiny\mbox{$\begin{array}{c}1\leq i\leq n\\i\geq
k,j<l\\i>j,k<l
\end{array}$}}a_{i,j}a_{k,l}-
\sum_{\tiny\mbox{$\begin{array}{c}1\leq i\leq n\\k\leq i<l\\i>j
\end{array}$}}a_{i,j}a_{k,l}-\sum_{\tiny\mbox{$\begin{array}{c}1\leq i\leq n\\j<l\leq i\\k<l
\end{array}$}}a_{i,j}a_{k,l}\\
&=0,
\endaligned$$
as required.
\end{proof}

Let $f_\nu,g_\nu$ be the numbers defined as in \eqref{fnu},\eqref{gnu}. The following lemma tells a
certain relationship between these numbers and is used in the proof of
Theorem \ref{lem of Poly}.
\begin{Lem}\label{appendix}Maintain the notation in the proof of Theorem \ref{lem of Poly}.
For any ${\bf0}\leq\nu\leq\tila$, we have $f_\tila=g_\tila$ and
\begin{equation*}
\aligned
f_{\tila}-f_{\nu}&=-2\sum_{1\leq i\leq n}\tila_iz_i+\sum_{1\leq i\leq
n}2\nu_i(z_i+\tila_i+\tila_{i+1}-\nu_i-\nu_{i+1})\\
g_{{\tila}}-g_{\nu}&=-2\sum_{1\leq i\leq
n}\tila_iz_{i+1}+\sum_{1\leq i\leq
n}2\nu_i(z_{i+1}+\tila_i+\tila_{i-1}-\nu_i-\nu_{i+1}).\endaligned
\end{equation*}
\end{Lem}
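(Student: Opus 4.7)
The plan is a direct-expansion argument that hinges on two dimension identities translating between the auxiliary vectors $x, y, \ga, \dt, z$ and the vector $\tila$. Under the hypothesis $L_i+L_i''\subseteq L_{i+1}\cap L_{i+1}''$ guaranteed by Theorem \ref{lem of Poly}, I would first derive, from the filtration $L_{i-1}\subseteq L_{i-1}+L_{i-1}''\subseteq L_i\cap L_i''\subseteq L_i$ whose successive quotients have dimensions $\dt_{i-1}, z_i, \ga_i$, the identity $x_i=\ga_i+z_i+\dt_{i-1}$; the analogous filtration of $L_i''/L_{i-1}''$ gives $y_i=\dt_i+z_i+\ga_{i-1}$. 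Substituting $\ga_i=\la_i-\tila_i$ and $\dt_i=\mu_i-\tila_i$ rewrites these as
\[
x_i-\la_i-\mu_{i-1}-z_i=-\tila_i-\tila_{i-1},\qquad y_i-\mu_i-\la_{i-1}-z_i=-\tila_i-\tila_{i-1},\quad(\star)
\]
which are the only geometric inputs; everything else is pure algebra.

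Next I would verify $f_{\tila}=g_{\tila}$. Setting $\nu=\tila$ gives $\ga+\nu=\la$ and $\dt+\nu=\mu$, so the common first summation agrees trivially, and the Euler-form parts of both $f_{\tila}$ and $g_{\tila}$ collapse to $\lan\mu,\mu\ran+\lan\mu,\la\ran$.

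For the two difference formulas, I would reparametrize by $\sg:=\tila-\nu\ge\mathbf 0$, so that $\ga+\nu=\la-\sg$ and $\dt+\nu=\mu-\sg$. The change $S(\tila)-S(\nu)$ in the common polynomial part becomes a quadratic polynomial in $\sg$ with coefficients depending on $\la,\mu,x,y$, which by $(\star)$ reduces to an expression in $\sg,\tila,z$ only. The change in the Euler-form part of $f_\nu$, namely $\lan\sg,\la-x\ran-\lan\mu-\sg,\sg\ran$, is expanded via $\lan\bfa,\bfb\ran=\sum a_ib_i-\sum a_ib_{i+1}$ and again simplified using $(\star)$. Summing the two contributions and substituting $\sg=\tila-\nu$ back, while reindexing shifted sums by periodicity $\la_{i+n}=\la_i$ and $\tila_{i+n}=\tila_i$, should produce the claimed formula for $f_{\tila}-f_\nu$.

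The main obstacle will be the bookkeeping of cross-terms: monomials $\la_i\mu_j$, $\la_i\la_j$, $\mu_i\mu_j$, $\nu_i\la_j$ and $\nu_i\mu_j$ must all cancel, which requires carefully pairing sums that differ by an index shift $i\mapsto i\pm1$. The computation for $g_{\tila}-g_\nu$ is strictly parallel: comparing the Euler-form parts of $f_\nu$ and $g_\nu$ shows that $g_\nu$ is obtained from $f_\nu$ by transposing the two arguments in three of the four inner products. Since $\lan\bfa,\bfb\ran-\lan\bfb,\bfa\ran=\sum a_ib_i-\sum a_{i+1}b_i$, this transposition converts $z_i$ and $\tila_{i+1}$ into $z_{i+1}$ and $\tila_{i-1}$ in the final expression, which is exactly the asymmetry stated in the formula for $g_{\tila}-g_\nu$.
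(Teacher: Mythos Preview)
Your approach is essentially the paper's: both arguments extract the dimension identities $x_i=\gamma_i+z_i+\delta_{i-1}$ and $y_i=\delta_i+z_i+\gamma_{i-1}$ (the paper uses them in the combined form $\gamma_{i-1}+\gamma_i-y_i-x_i+\delta_{i-1}+\delta_i=-2z_i$), then expand $f_{\tila}-f_\nu$ directly. Your reparametrization $\sigma=\tila-\nu$ is a cosmetic variant of the paper's splitting into $f_\nu'+f_\nu''$. The verification of $f_{\tila}=g_{\tila}$ by observing $\gamma+\tila=\lambda$ and $\delta+\tila=\mu$ is exactly right.

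There is, however, a genuine gap in your treatment of $g_{\tila}-g_\nu$. The Euler-form part of $g_\nu$ is \emph{not} obtained from that of $f_\nu$ by transposing arguments in three of the four bilinear forms: compare $\langle\mu,\mu\rangle$ with $\langle\mu,\lambda\rangle$, or $\langle\lambda-\gamma-\nu,x\rangle$ with $\langle\gamma+\nu-\lambda,x\rangle$ (the latter pair differ by a sign, not a transposition). Your formula $\langle\mathbf a,\mathbf b\rangle-\langle\mathbf b,\mathbf a\rangle=\sum a_ib_i-\sum a_{i+1}b_i$ is also incorrect; the right-hand side equals $\langle\mathbf b,\mathbf a\rangle$, and the actual antisymmetrization is $\sum a_{i+1}b_i-\sum a_ib_{i+1}$. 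So the proposed shortcut does not go through as stated.

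The fix is to run a parallel direct expansion for $g_{\tila}-g_\nu$, just as the paper does. From your identities $(\star)$ one obtains a second linear combination
\[
\gamma_{i-1}-\gamma_i+2\gamma_{i+1}+x_i-y_i-2x_{i+1}+3\delta_i-\delta_{i-1}=-2z_{i+1},
\]
and this is the relation that collapses the $g$-side to the stated formula. The computation is no harder than the one for $f$, and the paper carries it out by splitting $g_{\tila}-g_\nu=g_\nu'+g_\nu''$ analogously.
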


\begin{proof}
By definition, for $0\leq\nu\leq\tila=\la-\gamma$,
\begin{equation*}
\begin{split}
&f_{\tila}-f_{\nu}\\
&=\sum_{1\leq i\leq n}(\tila_i+\gamma_i)(\tila_{i-1}+\gamma_{i-1}-(\gamma_i+\tila_i)-y_i)-
\sum_{1\leq i\leq n}(\dt_i+\timu_i)x_{i+1}+\lan\dt+\timu,\gamma+\tila\ran\\
&\quad-\sum_{1\leq i\leq n}(\gamma_i+\nu_i)(-\gamma_i+\nu_i+\gamma_{i-1}+\nu_{i-1}-2\tila_i-y_i)
-\sum_{1\leq i\leq n}(\dt_i+\nu_i)(2(\nu_i-\tila_i)-x_{i+1})\\
&\quad-\lan\dt+\nu,\tila-\nu\ran-\lan\dt+\nu,\gamma+\tila\ran-\lan\tila-\nu,x\ran\\
&=\sum_{1\leq i\leq n}(\tila_i\tila_{i-1}+\tila_i\gamma_{i-1}-\tila_i^2-\tila_iy_i+
\gamma_i\tila_{i-1}-x_{i+1}\timu_i-\gamma_i\nu_{i-1}-3\nu_i^2-\nu_i\gamma_{i-1}-\nu_i\nu_{i-1})\\
&\quad+\sum_{1\leq i\leq n}(4\nu_i\tila_i+\nu_iy_i-2\dt_i\nu_i+2\dt_i\tila_i+\nu_ix_{i+1})\\
&\quad+\lan\timu-\nu,\gamma+\tila\ran-\lan\dt+\nu,\tila-\nu\ran-\lan\tila-\nu,x\ran=f_{\nu}'+f_{\nu}'',
\end{split}
\end{equation*}
where
\begin{equation*}
\begin{split}
f_{\nu}'&=\sum_{1\leq i\leq n}(\tila_i\tila_{i-1}+\tila_i\gamma_{i-1}-\tila_i^2-\tila_iy_i
+\gamma_i\tila_{i-1}-x_{i+1}\timu_i+\dt_i\tila_i\\
&\qquad\qquad+\timu_i\tila_i+\timu_i\gamma_i-\timu_i\gamma_{i+1}-\timu_i\tila_{i+1})
+\sum_{1\leq i\leq n}(\dt_i\tila_{i+1}-x_i\tila_i+x_{i+1}\tila_i),\\
f_{\nu}''&=\sum_{1\leq i\leq n}(-\gamma_i\nu_i-\gamma_i\nu_{i-1}-2\nu_i^2-\nu_i\gamma_{i-1}
-\nu_i\nu_{i-1}+2\nu_i\tila_i+\nu_iy_i-\dt_i\nu_i+\nu_i\gamma_{i+1}+2\nu_i\tila_{i+1})\\
&\quad+\sum_{1\leq i\leq
n}(-\dt_i\nu_{i+1}-\nu_i\nu_{i+1}+x_i\nu_i).
\end{split}
\end{equation*}
 Since $\timu=\tila$ and $\gamma_{i-1}+\gamma_i-y_i-x_i+\dt_{i-1}+\dt_i=-2z_i$ for all $i\in\mbz$, we have
\begin{equation*}
\begin{split}
f_{\nu}'&=\sum_{1\leq i\leq n}\tila_i(\gamma_{i-1}-y_i+\dt_i+\gamma_i-\gamma_{i+1}-x_i)+\sum_{1\leq i\leq n}\gamma_i\tila_{i-1}+\sum_{1\leq i\leq n}\dt_i\tila_{i+1}\\
&=\sum_{1\leq i\leq n}\tila_i(\gamma_{i-1}+\gamma_i-y_i-x_i+\dt_{i-1}+\dt_i)\\
&=-2\sum_{1\leq i\leq n}\tila_iz_i,\\
\end{split}
\end{equation*}
\begin{equation*}
\begin{split}
f_{\nu}''&=\sum_{1\leq i\leq n}\nu_i(-(\gamma_{i-1}+\gamma_i-y_i-x_i+\dt_{i-1}+\dt_i)-2\nu_i-\nu_{i-1}+2\tila_i+2\tila_{i+1}-\nu_{i+1})\\
&=\sum_{1\leq i\leq
n}2\nu_i(z_i+\tila_i+\tila_{i+1}-\nu_i-\nu_{i+1}).
\end{split}
\end{equation*}
Consequently, for $0\leq\nu\leq\tila$, we obtain that
\begin{equation*}\label{f(tila)-f(ga)}
f_{\tila}-f_{\nu}=-2\sum_{1\leq i\leq n}\tila_iz_i+\sum_{1\leq i\leq
n}2\nu_i(z_i+\tila_i+\tila_{i+1}-\nu_i-\nu_{i+1}).
\end{equation*}
Since $\timu=\tila$, for $0\leq\nu\leq\tila$, we have
\begin{equation*}
\begin{split}
g_{\tila}-g_{\nu}&=\sum_{1\leq i\leq n}(\tila_i+\gamma_i)(\gamma_{i-1}+\tila_{i-1}-\gamma_i-\tila_i-y_i)
-\sum_{1\leq i\leq n}(\dt_i+\tila_i)x_{i+1}+\lan\dt+\tila,\dt+\tila\ran\\
&\quad-\sum_{1\leq i\leq n}(\gamma_i+\nu_i)(-\gamma_i+\gamma_{i-1}+\nu_i+\nu_{i-1}-2\tila_i-y_i)\\
&\quad -\sum_{1\leq i\leq n}(\dt_i+\nu_i)(2(\nu-\tila_i)-x_{i+1})-\lan\tila
-\nu,\gamma+\nu\ran-\lan\dt+\tila,\dt+\nu\ran-\lan\nu-\tila,x\ran\\
&=g_{\nu}'+g_{\nu}'',
\end{split}
\end{equation*}
where
\begin{equation*}
\begin{split}
g_{\nu}'&=\sum_{1\leq i\leq n}(\tila_i\gamma_{i-1}-\tila_i\gamma_i-\tila_iy_i+\gamma_i\tila_{i-1}
-2x_{i+1}\tila_i+3\dt_i\tila_i-\dt_i\tila_{i+1}+\tila_i\gamma_{i+1}+x_i\tila_i),\\
g_{\nu}''&=\sum_{1\leq i\leq n}(\gamma_i\nu_i-\gamma_i\nu_{i-1}-2\nu_i^2-\nu_i\gamma_{i-1}
-\nu_i\nu_{i-1}+2\nu_i\tila_i+\nu_iy_i-3\dt_i\nu_i+2\nu_ix_{i+1}+\dt_i\nu_{i+1}\\
&\quad+2\tila_i\nu_{i+1}-\nu_i\gamma_{i+1}-\nu_i\nu_{i+1}-x_i\nu_i).
\end{split}
\end{equation*}
It is easy to check that for all $i\in\mbz$,
$$\gamma_{i-1}-\gamma_i+2\gamma_{i+1}+x_i-y_i-2x_{i+1}+3\dt_i-\dt_{i-1}
=-2z_{i+1}.$$
 Therefore,
\begin{equation*}
\begin{split}
g_{\nu}'&=\sum_{1\leq i\leq n}\tila_i(\gamma_{i-1}-
\gamma_i+2\gamma_{i+1}+x_i-y_i-2x_{i+1}+3\dt_i-\dt_{i-1})
=-2\sum_{1\leq i\leq n}\tila_iz_{i+1},\\
g_{\nu}''&=\sum_{1\leq i\leq n}\bigl(-(\gamma_{i-1}-\gamma_i+2\gamma_{i+1}+x_i-y_i-2x_{i+1}+3\dt_i-\dt_{i-1})\\
&\qquad\qquad\qquad\qquad\qquad\qquad-2\nu_i-\nu_{i-1}-\nu_{i+1}+2\tila_i+2\tila_{i-1}\bigr)\\
&=\sum_{1\leq i\leq
n}2\nu_i(z_{i+1}+\tila_i+\tila_{i-1}-\nu_i-\nu_{i+1}).
\end{split}
\end{equation*}
Note that $g_{\tila}=f_{\tila}$. Thus, for $0\leq\nu\leq\tila$,
\begin{equation*}\label{f(tila)-g(ga)}
f_{\tila}-g_{\nu}=g_{{\tila}}-g_{\nu}=-2\sum_{1\leq i\leq
n}\tila_iz_{i+1}+\sum_{1\leq i\leq
n}2\nu_i(z_{i+1}+\tila_i+\tila_{i-1}-\nu_i-\nu_{i+1}).
\end{equation*}
\end{proof}

\chapter{Representations of affine quantum Schur algebras}

The quantum Schur--Weyl duality links representations of quantum
$\frak{gl}_n$ with those of Hecke algebras of symmetric groups. More
precisely, there are category equivalences between level $r$
representations of quantum $\frak{gl}_n$ and representations of the
Hecke algebra of $\fS_r$. This relationship provides in turn two
approaches to representations of quantum Schur algebras. First, one
uses the polynomial representation theory of quantum $\frak{gl}_n$
to determine representations of quantum Schur algebras. This is a
{\it downwards} approach. Second, representations of quantum Schur
algebras can also be determined by those of Hecke algebras. We call
this an {\it upwards} approach. In this chapter, we will investigate
the affine versions of the two approaches.

Classification of finite dimensional simple (or irreducible) polynomial representations of the
quantum loop algebra $\afUglC$ was completed by Frenkel--Mukhin
\cite{FM}, based on Chari--Pressley's classification of finite dimensional simple
representations of the quantum loop algebra $\afUslC$
\cite{CP91,CPbk}. On the other hand, Zelevinsky \cite{Zelevinsky}
and Rogawski \cite{Rogawski} classified simple representations
of the Hecke algebra $\sH_\vtg(r)_\mbc$. Moreover, Chari--Pressley
have also established a  category equivalence between the module
category $\sH_\vtg(r)_\mbc\hmod$ and a certain full subcategory of
$\afUslC\hmod$ when $n>r$ without using affine quantum Schur
algebras.

We will first establish in \S4.1 a category equivalence between the
categories $\afSr_\field\hMod$ and $\afHr_\field\hMod$ when $n\geq r$ (Theorem \ref{category equiv}).
As an immediate application of this equivalence, we prove that every simple
representation in $\afSr_\field\hMod$ is finite dimensional (Theorem \ref{fd simple modules}).
We will briefly review the classification theorems of Chari--Pressley and Frenkel--Mukhin in \S4.2 and
of Rogawski in \S4.3. We will then use the category equivalence to classify simple
representations of affine quantum Schur algebras via Rogawski's
classification of simple representations of $\sH_\vtg(r)_\mbc$ (Theorem \ref{weakly category equiv}).

From \S 4.4 onwards, we will investigate the downwards approach.
Using the surjective homomorphism $\zrC:\afUglC \to \afSrC$, every
simple $\afSrC$-module is inflated to a simple $\afUglC$-module. Our
question is how to identify those inflated simple modules. We use a
two-step approach. First, motivated by a result of Chari--Pressley,
we identify in the $n>r$ case the simple $\afSrC$-modules arising
from simple $\afHrC$-modules in terms of simple polynomial
representations of $\afUglC$ (Theorem \ref{n>r representation}).
Then, using this identification, we will classify all simple
$\afSr_\mbc$-modules through simple polynomial representations of
$\afUglC$ (Theorem \ref{representation}). Thus, all finite
dimensional simple polynomial representations of $\afUglC$ are
inflations of simple representations of affine quantum Schur
algebras. In this way, like the classical theory, affine quantum
Schur algebras play a bridging role between polynomial
representations of quantum affine $\frak{gl}_n$ and those of affine
Hecke algebras of $\fS_r$. We end the chapter by presenting a
classification of simple $U_\vtg(n,r)_\mbc$-modules (Theorem
\ref{simple U(n,r)-modules}).

Throughout the chapter, all representations are defined over a
ground field $\mathbb F$ with a fixed specialization $\sZ\to\mathbb
F$\index{$\sZ=\mbz[\up,\up^{-1}]$, Laurent polynomial ring in
indeterminate $\up$} by sending $\up$ to a non-root of unity $z\in
\mathbb F$. From \S4.2 onwards, $\mathbb F$ will be the complex
number field $\mbc$. The algebras $\afUslC$ and $\afUglC$ are
defined in \S2.3 with Drinfeld's new presentation. For any algebra
$\scr A$ considered in this chapter, the notation ${\scr A}\hmod$
represents the category of all finite dimensional left $\scr
A$-modules. We also use ${\scr A}\hMod$ to denote the category of
all left $\scr A$-modules.

\section{Affine quantum Schur--Weyl duality, II}

In this section we establish the affine version of the category
equivalences mentioned in the introduction above. As a byproduct, we
prove that every irreducible representation over the affine quantum
Schur algebra is finite dimensional.

Recall from \S2.2 (see footnote 2 there) the extended affine
$\frak{sl}_n$, $\bfU_\vtg(n)$, generated by $E_i,F_i,K_i^\pm$
subject to relations (QGL1)--(QGL5) given in Definition
\ref{presentation dHallAlg}. By dropping the subscripts $_\vtg$ from
$\bfU_\vtg(n)$, $\afbfSr$, and $\bsH_\vtg(r)$, we obtain the
notations $\bfU(n)$, $\bfSr$, and $\bsH(r)$ for quantum
$\frak{gl}_n$, quantum Schur algebras and Hecke algebras of type
$A$.
 The algebras $\bfU(n)$, $\bfSr$ and $\bsH(r)$ will be
naturally viewed as subalgebras of $\bfU_\vtg(n)$,  $\afbfSr$ and
$\bsH_\vtg(r)$, respectively. Moreover, we may also regard $\bfU(n)$
as a subalgebra of $\dHallr=\bfU(\afgl)$. Let $U_\vtg(n)$ be the
$\sZ$-form of $\bfU_\vtg(n)$ defined in \eqref{integral form of
U(n)} and let $U(n)$, $\sS(n,r)$ and $\sH(r)$ denote the
corresponding $\sZ$-free subalgebras of $U_\vtg(n)$, $\afSr$ and
$\afHr$.

The tensor space $\Og^{\otimes r}$ over $\sZ$ is a
$U_\vtg(n)$-$\afHr$-bimodule via the actions \eqref{QGKMAlg-action}
and \eqref{afH action}, where the affine $\afHr$-action is a natural
extension from the $\sH(r)$-action on $\Og_n^{\ot r}$, where $\Og_n$
is the $\sZ$-subspace of $\Og$ spanned by the elements $\og_i$
$(1\leq i\leq n)$; see \eqref{afH action}. Thus, $\Og_n^{\ot r}$ is
a $U(n)$-$\sH(r)$-subbimodule of $\Og^{\ot r}$ by restriction.

\begin{Lem}\label{Lem1 for category equivalent}
There is a $U(n)$-$\sH_\vtg(r)$-bimodule isomorphism
$$\Og_n^{\ot r}\ot_{\sH(r)}\sH_\vtg(r)\stackrel{\sim}{\lra}\Og^{\ot r},\;
x\ot h\longmapsto xh.$$
\end{Lem}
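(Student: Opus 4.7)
The plan is to exploit the Bernstein--Zelevinsky structure of $\sH_\vtg(r)$ to reduce the claim to a tautological $\sZ$-module bijection. By the Bernstein presentation recalled in \S3.3, the multiplication map $\sH(r)\otimes_\sZ A\to\sH_\vtg(r)$ is a $\sZ$-module isomorphism, where $A=\sZ[X_1^{\pm1},\ldots,X_r^{\pm1}]$. In particular, $\sH_\vtg(r)$ is a free left $\sH(r)$-module with basis $\{X^\la:=X_1^{\la_1}\cdots X_r^{\la_r}\mid\la\in\mbz^r\}$. Consequently
\[
\Og_n^{\ot r}\otimes_{\sH(r)}\sH_\vtg(r)=\bigoplus_{\la\in\mbz^r}\Og_n^{\ot r}\otimes X^\la,
\]
so a $\sZ$-basis of the left-hand side is $\{\og_\bfi\otimes X^\la\mid\bfi\in I(n,r),\,\la\in\mbz^r\}$.

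Next, I would compute the image of this basis under the map $f:x\otimes h\mapsto xh$. Iterating the rule $\og_\bfi\cdot X_t^{-1}=\og_{\bfi+n\bse_t}$ from \eqref{afH action}, we get $\og_\bfi\cdot X^\la=\og_{\bfi-n\la}$, where $\bfi-n\la:=(i_1-n\la_1,\ldots,i_r-n\la_r)$. For every $\bfj\in\mbz^r$ there is a unique pair $(\bfi,\la)\in I(n,r)\times\mbz^r$ with $\bfj=\bfi-n\la$ (obtained componentwise by division with remainder modulo $n$). Thus $f$ sends the above $\sZ$-basis bijectively onto the basis $\{\og_\bfj\mid\bfj\in\mbz^r\}$ of $\Og^{\ot r}$, and is therefore a $\sZ$-module isomorphism.

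Finally, I would verify the bimodule compatibility. Right $\sH_\vtg(r)$-linearity is immediate: $f((x\otimes h)h')=f(x\otimes hh')=x(hh')=(xh)h'=f(x\otimes h)h'$. For left $U(n)$-linearity, note that the $U(n)$-action on $\Og_n^{\ot r}$ commutes with the $\sH(r)$-action (a consequence of Proposition \ref{actions tensor space commute} applied to the subalgebra $U(n)\subseteq\bfU(\ti C_\infty)$ and the subalgebra $\sH(r)\subseteq\afbfHr$, specialised to the integral form), so the $U(n)$-action on the first tensor factor descends to a well-defined left action on $\Og_n^{\ot r}\otimes_{\sH(r)}\sH_\vtg(r)$; moreover, the same proposition shows that $U(n)$ commutes with all of $\afHr$ on $\Og^{\ot r}$, so $f(u(x\otimes h))=f(ux\otimes h)=(ux)h=u(xh)=uf(x\otimes h)$.

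There is no genuine obstacle here: the only content beyond formal verification is the freeness of $\sH_\vtg(r)$ as a left $\sH(r)$-module, and this is a standard consequence of the Bernstein--Zelevinsky presentation already recalled in \S3.3. The combinatorial input---the bijection $I(n,r)\times\mbz^r\overset\sim\to\mbz^r,\;(\bfi,\la)\mapsto\bfi-n\la$---is precisely what makes $\{\og_\bfi\mid\bfi\in I(n,r)\}$ the ``fundamental set'' of representatives for the $\sH_\vtg(r)$-action on $\Og^{\ot r}$ generated from $\Og_n^{\ot r}$.
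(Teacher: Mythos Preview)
Your proof is correct and follows essentially the same approach as the paper's: both use the Bernstein basis $\{T_wX^\la\}$ to identify a $\sZ$-basis $\{\og_\bfi\otimes X^\la\mid\bfi\in I(n,r),\,\la\in\mbz^r\}$ of the left-hand side, compute its image as $\og_{\bfi-n\la}$ via \eqref{afH action}, and invoke the bijection $I(n,r)\times\mbz^r\to\mbz^r$, $(\bfi,\la)\mapsto\bfi-n\la$. The paper simply asserts the bimodule property at the outset, whereas you spell it out, but the argument is otherwise the same.
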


\begin{proof} Clearly, there is a $U(n)$-$\sH_\vtg(r)$-bimodule
homomorphism
\begin{equation*}
\vi:\Og_n^{\ot r}\ot_{\sH(r)}\sH_\vtg(r)\lra\Og^{\ot r},\; x\ot
h\longmapsto x h.
\end{equation*}
Since the set $\{T_wX_1^{a_1}\cdots X_r^{a_r}\mid
w\in\fS_r,\,a_i\in\mbz,\,1\leq i\leq r\}$ forms a $\sZ$-basis for
$\sH_\vtg(r)$, it follows that the set
$$\sX:=\{\og_\bfi\ot X_1^{a_1}\cdots X_r^{a_r}\mid\bfi\in I(n,r),\,a_i\in\mbz,\,1\leq i\leq r\}$$
forms a $\sZ$-basis for $\Og_n^{\ot r}\ot_{\sH(r)}\sH_\vtg(r)$.
Furthermore, by \eqref{afH action},
$$\vi(\og_\bfi\ot X_1^{a_1}\cdots X_r^{a_r})
=\og_\bfi X_1^{a_1}\cdots
X_r^{a_r}=\og_{\bfi-n(a_1,a_2,\ldots,a_r)},$$ for all
$(a_1,a_2,\ldots,a_r)\in\mbz^r$ and $\bfi\in I(n,r)$.
 Thus, the bijection \eqref{afInr4} from $\afInr$ to $\mbz^r=I(n,r)+n\mbz^r$ implies
that $\vi(\sX)=\{\og_{\bfi}\mid\bfi\in\afInr\}$ forms a $\sZ$-basis
for $\Og^{\ot r}$. Hence, $\vi$ is a $U(n)$-$\sH_\vtg(r)$-bimodule
isomorphism.
\end{proof}

For $N=\text{max}\{n,r\}$, let $\og\in\Lambda_\vtg(N,r)$ be defined
as in \eqref{111}. Thus, if $\og=(\og_i)$, then $\og_i=1$ for $1\leq
i\leq r$, and $\og_i=0$ if $n>r$ and $r< i\leq n$. Let
$e_\og=e_{\diag(\og)}.$

\begin{Coro}\label{Lem2 for category equivalent}
If $n\geq r$, then there is an $\sS(n,r)$-$\sH_\vtg(r)$-bimodule
isomorphism
$$\afSr e_\og \cong\sS(n,r)e_\og \ot_{\sH(r)} \sH_\vtg(r).$$
\end{Coro}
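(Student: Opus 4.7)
The plan is to identify both sides of the claimed isomorphism with tensor spaces and then invoke Lemma~\ref{Lem1 for category equivalent}. The key input is that because $n\geq r$, the composition $\og=(1^r,0^{n-r})\in\La(n,r)$ has trivial associated Young subgroup $\fS_\og$, so $x_\og=1$ and the weight-$\og$ subspace $e_\og\Og_n^{\ot r}$ is a free right $\sH(r)$-module of rank one, generated by $\og_{\bfi_\og^\vtg}=\og_1\ot\og_2\ot\cdots\ot\og_r$. The same reasoning over $\afsygr$ shows that $e_\og\Og^{\ot r}$ is a free right $\sH_\vtg(r)$-module of rank one, since the $\afsygr$-stabilizer of the fundamental sequence $\bfi_\og^\vtg\in I_\vtg(n,r)_0$ is trivial (its periodic entries $1,2,\ldots,r$ are pairwise distinct modulo $n$).

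Using this, I would first establish an $\sS(n,r)$-$\sH(r)$-bimodule isomorphism
$$\sS(n,r)\,e_\og\;\stackrel{\sim}{\lra}\;\Og_n^{\ot r},\qquad x\longmapsto x\cdot\og_{\bfi_\og^\vtg}.$$
Under the non-affine analogue $\sS(n,r)\cong\End_{\sH(r)}(\Og_n^{\ot r})$ of Proposition~\ref{bimodule-isom}, the left ideal $\sS(n,r)\,e_\og$ corresponds to $\Hom_{\sH(r)}(e_\og\Og_n^{\ot r},\Og_n^{\ot r})$, and the cyclic generator of $e_\og\Og_n^{\ot r}$ gives the displayed evaluation isomorphism. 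A routine unravelling of the canonical algebra isomorphism $e_\og\sS(n,r)e_\og\cong\sH(r)$ confirms that this map intertwines the right $\sH(r)$-actions. Exactly the same argument, applied to Proposition~\ref{bimodule-isom} together with Corollary~\ref{QSA} (which realises $\sS(n,r)$ as a subalgebra of $\afSr$), yields the affine analogue $\afSr\,e_\og\cong\Og^{\ot r}$ as $\sS(n,r)$-$\sH_\vtg(r)$-bimodules, via the same formula $x\mapsto x\cdot\og_{\bfi_\og^\vtg}$.

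Putting everything together,
$$\sS(n,r)\,e_\og\ot_{\sH(r)}\sH_\vtg(r)\;\cong\;\Og_n^{\ot r}\ot_{\sH(r)}\sH_\vtg(r)\;\cong\;\Og^{\ot r}\;\cong\;\afSr\,e_\og$$
as $\sS(n,r)$-$\sH_\vtg(r)$-bimodules: the two outer isomorphisms are the evaluation maps just described, and the middle isomorphism is Lemma~\ref{Lem1 for category equivalent} (which is manifestly $\sS(n,r)$-linear because the $U(n)$-action on each tensor space factors through $\sS(n,r)$). The only nontrivial point to verify is that the right $\sH_\vtg(r)$-action on $\afSr\,e_\og$, coming from right multiplication by $e_\og\afSr e_\og\cong\sH_\vtg(r)$, really does correspond under $x\mapsto x\cdot\og_{\bfi_\og^\vtg}$ to the tautological right $\afHr$-action on $\Og^{\ot r}$; this will be the main technical step, but it reduces at once to the identity $\iota(h)\cdot\og_{\bfi_\og^\vtg}=\og_{\bfi_\og^\vtg}\cdot h$, which is forced by the cyclicity of $e_\og\Og^{\ot r}$ over $\sH_\vtg(r)$.
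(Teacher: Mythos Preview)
Your proposal is correct and follows essentially the same route as the paper: identify $\afSr e_\og$ with the affine tensor space $\Og^{\ot r}$ and $\sS(n,r)e_\og$ with the finite tensor space $\Og_n^{\ot r}$, then invoke Lemma~\ref{Lem1 for category equivalent}. The paper obtains the first identification by citing Lemma~\ref{tsp} (which gives $\afSr e_\og\cong\sT_\vtg(n,r)$) and Proposition~\ref{bimodule-isom} (which gives $\sT_\vtg(n,r)\cong\Og^{\ot r}$), whereas you rederive it directly via the evaluation map $x\mapsto x\cdot\og_{\bfi_\og^\vtg}$; this amounts to the same thing, and your extra care in checking the right $\sH_\vtg(r)$-compatibility is exactly what those cited results encode.
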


\begin{proof} By the proof of Lemma \ref{tsp}, there is an $\sS_\vtg(n,r)$-$\sH_\vtg(r)$-bimodule
isomorphism $\afSr e_\og\cong \sT_\vtg(n,r)$. By Proposition \ref{bimodule-isom},
this isomorphism extends to a bimodule isomorphism $\afSr
e_\og\cong\Og^{\otimes r}$. Now, the required result follows from
the lemma above.
\end{proof}

We are now ready to establish a category equivalence. {\it For the
rest of this section, we assume $\field$ is a (large enough) field which
is a $\sZ$-algebra such that the image $z\in \field$ of $\up$ is not
a root of unity.} Thus, both $\sH(r)_\field$ and
 $\sS(n,r)_\field$ are semisimple $\field$-algebras, and every finite dimensional $U(n)_\field$-module is completely reducible.
Let $\afSr_\field\hmod$ (resp., $\sH_\vtg(r)_\field\hmod$) be the
category of {\it finite dimensional} $\afSr_\field$-modules (resp.,
$\sH_\vtg(r)_\field$-modules). The following result generalizes the
category equivalence between $\sS(n,r)_\field\hmod$ and
$\sH(r)_\field\hmod$ $(n\geq r)$ to the affine case.

\begin{Thm}\label{category equiv}
Assume $n\geq r$. The categories $\afSr_\field\hMod$ and
$\sH_\vtg(r)_\field\hMod$ are equivalent. It also induces a category
equivalence between $\afSr_\field\hmod$ and
$\sH_\vtg(r)_\field\hmod$. Hence, in this case, the algebras
$\afSr_\field$ and $\afHr_\field$ are Morita
equivalent.\index{Morita equivalence}
\end{Thm}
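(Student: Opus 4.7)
The plan is to realize the desired equivalence as a Morita equivalence in its idempotent form, with the idempotent $e_\og:=[\diag(\og)]\in\afSr_\field$ playing the role of a full idempotent, where $\og=(1,\ldots,1,0,\ldots,0)\in\afLanr$ (the $r$ ones are accommodated because $n\geq r$). Combining Lemma \ref{tsp} (applied with $N=n$) with Proposition \ref{Green defn=geom defn} identifies the corner algebra canonically with the affine Hecke algebra,
\[
e_\og\,\afSr_\field\,e_\og\;\cong\;\afHr_\field.
\]
The standard Morita theorem for idempotents then reduces the whole statement to verifying the fullness condition $\afSr_\field\,e_\og\,\afSr_\field=\afSr_\field$, in which case the mutually inverse equivalences are given explicitly by $M\mapsto e_\og M$ and $N\mapsto \afSr_\field e_\og\otimes_{\afHr_\field}N$.

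The main step is thus to establish fullness of $e_\og$. By the bimodule isomorphism of Proposition \ref{Green defn=geom defn} together with Corollary \ref{Lem2 for category equivalent}, we have an isomorphism of $(\afSr_\field,\afHr_\field)$-bimodules
\[
\afSr_\field e_\og\;\cong\;T:=\bigoplus_{\la\in\afLanr}x_\la\afHr_\field.
\]
Since the image $z\in\field$ of $\up$ is not a root of unity, the Poincar\'e polynomial $P_\la(\up^2):=\sum_{w\in\fS_\la}\up^{2\ell(w)}$ is nonzero at $\up=z$, so the identity $x_\la^2=P_\la(z^2)\,x_\la$ shows that $x_\la/P_\la(z^2)$ is an idempotent in $\afHr_\field$ and that each $x_\la\afHr_\field$ is a direct summand of $\afHr_\field$, hence projective. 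Here the hypothesis $n\geq r$ becomes decisive: it forces every entry of $\og$ to be at most $1$, so $\fS_\og=\{1\}$, $x_\og=1$, and therefore $T$ contains the rank-one free module $\afHr_\field$ as a direct summand. Consequently $T$ is a finitely generated projective generator in the category of right $\afHr_\field$-modules, and transporting this statement back through $T\cong\afSr_\field e_\og$ yields the required fullness $\afSr_\field e_\og\afSr_\field=\afSr_\field$.

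To conclude that the induced equivalence restricts to the finite-dimensional subcategories, note that $e_\og M$ is manifestly finite-dimensional when $M$ is. Conversely, $\afSr_\field e_\og\cong T$ is generated as a right $\afHr_\field$-module by the finite set $\{x_\la\mid\la\in\afLanr\}$, so for any finite-dimensional $\afHr_\field$-module $N$ the tensor product $\afSr_\field e_\og\otimes_{\afHr_\field}N$ is a quotient of $N^{\oplus|\afLanr|}$ and is therefore itself finite-dimensional.

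The principal obstacle in this plan is precisely the fullness step $\afSr_\field e_\og\afSr_\field=\afSr_\field$: the clean argument above via a free direct summand works only because $n\geq r$ permits $\fS_\og$ to be trivial. When $n<r$, every $\la\in\afLanr$ must have some part exceeding $1$, no summand of $T$ is free over $\afHr_\field$, and $T$ need no longer generate the category of right $\afHr_\field$-modules; a Morita equivalence of this naive shape cannot then be expected to persist, consistent with the hypothesis $n\geq r$ in the statement.
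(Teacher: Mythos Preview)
Your proof is correct and follows a genuinely different, more direct route than the paper's. The paper argues by reduction to the finite (non-affine) situation: it uses the semisimplicity of the finite $q$-Schur algebra $\sS(n,r)_\field$ to obtain $\sS(n,r)_\field e_\og\otimes_{\sH(r)_\field}e_\og M\cong M$ for every $\sS(n,r)_\field$-module $M$, then invokes the base-change isomorphism of Corollary~\ref{Lem2 for category equivalent} to lift this to $\afSr_\field e_\og\otimes_{\afHr_\field}e_\og M\cong M$, and finally verifies by an explicit computation with the basis $\{\phi_{\la,\og}^1\}$ that the resulting map is $\afSr_\field$-linear (not merely $\sS(n,r)_\field$-linear). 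You bypass this entirely by observing that $T=\bigoplus_\la x_\la\afHr_\field$ is a finitely generated projective generator for right $\afHr_\field$-modules, so that Morita's progenerator theorem, combined with the identification $\afSr_\field=\End_{\afHr_\field}(T)$ from Proposition~\ref{Green defn=geom defn}, yields the equivalence in one stroke. Your argument is shorter and does not need Corollary~\ref{Lem2 for category equivalent} at all; the paper's argument, on the other hand, makes the connection with the classical finite Schur--Weyl duality more transparent.

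One point worth tightening: the inference ``$Ae$ is a progenerator of $eAe$-Mod $\Rightarrow$ $AeA=A$'' is not valid for an arbitrary idempotent $e$ in a ring $A$ (upper-triangular $2\times 2$ matrices with $e=e_{22}$ give a counterexample). What makes it work here is precisely the extra fact $\afSr_\field\cong\End_{\afHr_\field}(\afSr_\field e_\og)$, which is the content of Proposition~\ref{Green defn=geom defn}. You cite that proposition, so the ingredient is present, but it would be cleaner to invoke the progenerator form of Morita's theorem directly rather than phrasing the goal as ``fullness of $e_\og$''.
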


\begin{proof}
The $\afSr_\field$-$\sH_\vtg(r)_\field$-bimodule $\Og_\field^{\ot
r}$ induces a functor
\begin{equation}\label{functor sfF}
\sfF:\sH_\vtg(r)_\field\hMod\lra\afSr_\field\hMod,\;L\longmapsto\Og_\field^{\ot
r}\ot_{\sH_\vtg(r)_\field}L.
\end{equation}
 Since $n\geq r$, there is a functor, the Schur functor,
\begin{equation*}
\sfG:\afSr_\field\hMod\lra\sH_\vtg(r)_\field\hMod,\; M\longmapsto
e_\og M.
\end{equation*}
Here we have identified $ e_\og \afSr_\field e_\og $ with
$\sH_\vtg(r)_\field$. We need to prove that there are natural
isomorphisms
 $\sfG\circ \sfF\cong \mathsf{id}_{\sH_\vtg(r)_\field\hMod}$, which is clear (see, e.g., \cite[(6.2d)]{Gr80}),
 and $\sfF\circ \sfG\cong \mathsf{id}_{\afSr_\field\hMod}$.

By the semisimplicity, for any $\sS(n,r)_\field$-module $M$, there
is a left $\sS(n,r)_\field$-module isomorphism
\begin{equation}\label{the iso f}
f:\sS(n,r)_\field e_\og  \ot_{\sH(r)_\field} e_\og M\cong M
\end{equation}
defined by $f(x\ot m)=xm$ for any $x\in\sS(n,r)_\field e_\og $ and
$m\in  e_\og M$. By Corollary \ref{Lem2 for category equivalent},
\eqref{the iso f} induces a left $\sS(n,r)_\field$-module
isomorphism
$$g:\afSr_\field e_\og
\ot_{\sH_\vtg(r)_\field} e_\og M\cong M$$
 satisfying $g(x\ot m)=xm$ for any $x\in\sS(n,r)_\field e_\og $
and $m\in  e_\og M$.

We now claim that $g$ is an $\afSr_\field$-module isomorphism.
Indeed, by identifying $\afSr_\field$ with $\sS^{_\sH}_\vtg(n,r)$
under the isomorphism given in Proposition \ref{Green defn=geom
defn} and considering the basis $\{\phi_{\la,\mu}^d\}$, we have
$e_\og=\phi_{\og,\og}^1$ and
$$\afSr_\field e_\og =\bigoplus_{\la\in\afLanr}\Hom_{\afHr_\field}(\afHr_\field,x_\la\afHr_\field)=\bop_{\la\in\afLanr}
\phi_{\la,\og}^1\sH_\vtg(r)_\field,$$ where
$\phi_{\la,\og}^1\in\sS(n,r)_\field$. Since $g(\phi_{\la,\og}^1\ot
m)=\phi_{\la,\og}^1 m,$  for all $\la\in\afLanr$ and $m\in e_\og M$,
it follows that for any $\la\in\afLanr$, $h\in\sH_\vtg(r)_\field$
and $m\in e_\og M$,
$$g(\phi_{\la,\og}^1h\ot m)=g(\phi_{\la,\og}^1\ot
hm)=\phi_{\la,\og}^1 (h m)=(\phi_{\la,\og}^1 h) m.$$ Hence, $g(x\ot
m)=xm$ for all $x\in\afSr_\field e_\og $ and $m\in  e_\og M$. Thus,
$g$ is an $\afSr_\field$-module isomorphism. Therefore, for any
$M\in\afSr_\field\hMod$, there is a natural isomorphism
$$\sfF\circ \sfG(M)\cong \afSr_\field e_\og
\ot_{\sH_\vtg(r)_\field} e_\og M\cong M,$$ proving $\sfF\circ
\sfG\cong \mathsf{id}_{\afSr_\field\hMod}$.

 The last assertion follows from the fact that, if $N$ is a finite dimensional $\afHr_\field$-module,
 then Lemma \ref{Lem1 for category equivalent} implies that $\sfF(N)=\Og_\field^{\ot
r}\ot_{\sH_\vtg(r)_\field}N$ is also finite dimensional.
\end{proof}

\begin{Rems}  (1) For $\field =\mbc$, a direct category equivalence from $\afHrC\hmod$ to
a full subcategory of $\afUglC\hmod$ has been established in \cite[Th.~6.8]{GRV} when $n\geq r$.
The construction there is geometric, using intersection cohomology complexes.

(2) See \cite{Vig} for a similar equivalence
in the context of representations of $p$-adic groups. See also
\cite{VV04} for a connection with representations of double affine Hecke
algebras of type $A$.
\end{Rems}

Following \cite[2.5]{CP}, a finite dimensional
$U(n)_\field$-module $M$ is said to be of {\it level} $r$ if every
irreducible component of $M$ is isomorphic to an irreducible
component of $\Og_{n,\field}^{\ot r}$. In other words, a
$U(n)_\field$-module $M$ has level $r$ if and only if it is an
$\sS(n,r)_\field$-module. We will generalize this definition to the
affine case; see Corollaries \ref{level r} and \ref{level r2} below.
Note that, since levels are defined for modules of quantum
$\frak{sl}_n$ in \cite[2.5]{CP}, the condition $n>r$ there is
necessary.

Recall that a $U_\vtg(n)_\field$-module $M$ is called of {\it type
1}, if it is a direct sum $M=\oplus_{\la\in\mbz^n}M_\la$ of its
weight spaces $M_\la$ which has the form
$$M_\la=\{x\in M\mid K_ix=z^{\la_i}x\}.$$
In other words, a $U_\vtg(n)_\field$-module $M$ is of type 1 if it
is of type 1 as a $U(n)_\field$-module.

Theorem \ref{category equiv} immediately implies the following
category equivalence due to Chari--Pressley \cite{CP}.  However, a
different functor is used in \cite[Th.~4.2]{CP}. We will make a
comparison of the two functors in the next section.

\begin{Coro}\label{ChariPressley}
If $n>r$, then the functor $\sfF$ induces a category equivalence
between the category of finite dimensional
$U_\vtg(n)_\field$-modules of type 1 which are of level $r$ when
restricted to $U(n)_\field$-modules and the category of finite
dimensional $\sH_\vtg(r)_\field$-modules.
\end{Coro}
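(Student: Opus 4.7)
The plan is to exhibit the claimed equivalence as the composition of two functors already essentially in place. By Theorem \ref{category equiv} (applicable since $n>r$ forces $n\geq r$), the functor $\sfF:\sH_\vtg(r)_\field\hmod\to\afSr_\field\hmod$ is a category equivalence; and by Theorem \ref{surjective-dHall-aff}(2) combined with Corollary \ref{n>r}, the algebra map $\xi_r|_{U_\vtg(n)_\field}:U_\vtg(n)_\field\twoheadrightarrow\afSr_\field$ is surjective (the proof, stated over $\mbq(\up)$, works verbatim over the specialisation at any non-root-of-unity $z\in\field$). Pulling back along this surjection embeds $\afSr_\field\hmod$ as a full subcategory of $U_\vtg(n)_\field\hmod$, so composing with $\sfF$ yields a fully faithful functor $\sH_\vtg(r)_\field\hmod\to U_\vtg(n)_\field\hmod$. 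What remains is to match its essential image with the category in the statement.

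For the easy containment, take a finite-dimensional $\afSr_\field$-module $M$. The orthogonal idempotents $\bfone_\la=[\diag(\la)]$ with $\la\in\afLanr$ from Proposition \ref{afSrz} decompose $M=\bigoplus_{\la\in\afLanr}\bfone_\la M$, and since $\xi_r(K_i)=0(\bfe_i,r)$ acts on $\bfone_\la M$ by the scalar $z^{\la_i}$, this is the weight decomposition making the pulled-back $U_\vtg(n)_\field$-module $M$ of type $1$ with weights in $\La(n,r)\hookrightarrow\afLanr$. Moreover, the image of $U(n)_\field\hookrightarrow U_\vtg(n)_\field\overset{\xi_r}{\to}\afSr_\field$ is exactly the quantum Schur subalgebra $\sS(n,r)_\field$ identified in Corollary \ref{QSA}, so the $U(n)_\field$-restriction of $M$ factors through $\sS(n,r)_\field$ and hence has level $r$.

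The main obstacle is the reverse containment: every finite-dimensional type $1$ $U_\vtg(n)_\field$-module $M$ whose $U(n)_\field$-restriction is of level $r$ must be annihilated by $\ker(\xi_r|_{U_\vtg(n)_\field})$. Since $z$ is not a root of unity, both $\sS(n,r)_\field$ and $\afSr_\field$ are semisimple, so one may reduce to the case of simple $M$. The strategy I would take exploits the faithfulness of $\bfOg_\field^{\ot r}$ as an $\afSr_\field$-module (built into the definition $\afSr_\field=\End_{\afHr_\field}(\bfOg_\field^{\ot r})$): it suffices to exhibit every simple type $1$ level $r$ $U_\vtg(n)_\field$-module as a summand of some $\sfF(L)=\bfOg_\field^{\ot r}\otimes_{\sH_\vtg(r)_\field}L$. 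The crux is then to match the classification of simple type $1$ level $r$ $U_\vtg(n)_\field$-modules against the classification of simple $\afSr_\field$-modules obtained by transporting the Zelevinsky--Rogawski classification through $\sfF$; care must be taken to avoid circularity with the identification theorems proved later in \S4.4--4.5, so I would organise the exposition so that the present corollary is deduced formally from Theorems \ref{category equiv} and \ref{surjective-dHall-aff}(2) together with the weight-space analysis above, rather than invoking Drinfeld polynomial parametrisations that themselves will rest on this corollary.
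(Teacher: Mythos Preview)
Your setup is correct and your treatment of the forward containment is fine. But you are overcomplicating the reverse containment, and the classification-based strategy you sketch is unnecessary (and, as you yourself worry, would risk circularity with the material in \S\S4.4--4.5).

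The point you are missing is that $\ker\bigl(\xi_r|_{U_\vtg(n)_\field}\bigr)$ is generated, as a two-sided ideal of $U_\vtg(n)_\field$, by elements of the \emph{zero part}: namely $K_1\cdots K_n-z^r$ and $[K_i;r+1]^!=(K_i-1)(K_i-z)\cdots(K_i-z^r)$ for $1\leq i\leq n$. This is exactly the presentation of $\bfU_\vtg(n,r)$ in Theorem \ref{2nd presentation for im(afzr)} (the argument there, via Lemma \ref{le1}, goes through verbatim after specialising $\up\mapsto z$ since $z$ is not a root of unity). These same elements generate $\ker\bigl(U(n)_\field\to\sS(n,r)_\field\bigr)$ by the non-affine Doty--Giaquinto presentation. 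Hence a $U_\vtg(n)_\field$-module $M$ factors through $U_\vtg(n,r)_\field=\afSr_\field$ if and only if these generators act as zero on $M$, which in turn holds if and only if the $U(n)_\field$-restriction of $M$ factors through $\sS(n,r)_\field$, i.e., $M$ has level $r$. (Concretely: if $M$ is type $1$ with all weights in $\La(n,r)$, then $K_i$ acts on each weight space by some $z^{\la_i}$ with $0\leq\la_i\leq r$, so $[K_i;r+1]^!$ kills every weight space, and $K_1\cdots K_n$ acts by $z^{\sum_i\la_i}=z^r$.)

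This is precisely what the paper does: it cites Lemma \ref{S(n,r)=U(n,r)} for $\afSr_\field=U_\vtg(n,r)_\field$ and then asserts in one line that level $r$ representations are precisely $\afSr_\field$-modules, implicitly using the presentation above. No appeal to simple-module classifications is needed.
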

\begin{proof} By Lemma \ref{S(n,r)=U(n,r)},
the condition $n>r$ implies $\afSr_\field=U_\vtg(n,r)_\field$ is a
homomorphic image of $U_\vtg(n)_\field$. Thus, level $r$
representations considered are precisely $\afSr_\field$-modules. Now
the result follows immediately from the theorem above.
\end{proof}


If $N\geq n$, then there is a natural injective map
$$\ti{\,\,}:\afThn\lra\Th_\vtg(N),\quad A=(a_{i,j})\longmapsto\ti
A=(\ti a_{i,j}),$$ where $\ti A$ is defined in \eqref{AtoAtilde}.
Similarly, there is an injective map
\begin{equation}\label{tilde-map}
\ti\ :\mbz_\vtg^n\lra\mbz_\vtg^N,\;\;\la\longmapsto\ti\la,
\end{equation}
 where $\ti\la_i=\la_i$ for $1\leq i\leq n $ and $\ti\la_i=0$ for
$n+1\leq i\leq N$. See the proof of Lemma \ref{tsp}.

These two maps induce naturally by Lemma \ref{tsp} an injective
algebra homomorphism (not sending 1 to 1)
$$\iota_r=\iota_{n,N,r}:\afSr\lra\sS_\vtg(N,r),\;\;[A]\longmapsto [\ti A]\;\;\text{for $A\in\afThnr$}.$$
In other words, we may identify $\afSr$ as the centralizer
subalgebra $e\sS_\vtg(N,r)e$ of $\sS_\vtg(N,r)$, where
$e=\sum_{\la\in\afLanr}\phi_{\ti\la,\ti\la}^1=\sum_{\la\in\afLanr}[\diag(\ti\la)]\in\sS_\vtg(N,r)$.

Now, the fact that every simple module of an affine Hecke algebra is
finite dimensional implies immediately that the same is true for
affine quantum Schur algebras.

\begin{Thm}\label{fd simple modules}
 Every simple $\afSr_\field$-module is finite dimensional.
\end{Thm}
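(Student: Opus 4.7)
The plan is to prove this by splitting into two cases according to whether $n\geq r$ or $n<r$, and in the second case reducing to the first via the embedding $\iota_{n,N,r}$ that was just recalled before the statement.

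First, suppose $n\geq r$. Then Theorem \ref{category equiv} yields a Morita equivalence between $\afSr_\field\hMod$ and $\afHr_\field\hMod$, which carries simple objects to simple objects and preserves $\field$-dimension. Hence it suffices to verify that every simple $\afHr_\field$-module is finite dimensional. This is a standard fact about extended affine Hecke algebras of type $A$: the subalgebra $\sZ[X_1^{\pm1},\ldots,X_r^{\pm1}]_\field$ is central up to the $\fS_r$-symmetrization, and $\afHr_\field$ is a finitely generated module over its center $\field[X_1^{\pm1},\ldots,X_r^{\pm1}]^{\fS_r}$. By a version of Dixmier's lemma/Schur's lemma, the center acts on any simple module by scalars through a character, and modding out by the kernel of that character gives a finite dimensional quotient of $\afHr_\field$; any simple module therefore has finite $\field$-dimension. (This also follows directly from Rogawski's classification in the case $\field=\mbc$, but the central-character argument works over an arbitrary ground field.)

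Second, suppose $n<r$. Set $N=r$, so that $N\geq n$ and $N\geq r$. Using the embedding $\iota_r=\iota_{n,N,r}:\afSr\hookrightarrow\sS_\vtg(N,r)$ recalled in \eqref{tilde-map} and the discussion that follows, we identify $\afSr_\field$ with the centralizer subalgebra $e\,\sS_\vtg(N,r)_\field\,e$ for the idempotent $e=\sum_{\la\in\afLanr}[\diag(\ti\la)]\in\sS_\vtg(N,r)_\field$. Let $M$ be a simple $\afSr_\field$-module, regarded as a simple $e\,\sS_\vtg(N,r)_\field\,e$-module. The standard idempotent correspondence (``Green's functor'' applied to $e$) produces a simple $\sS_\vtg(N,r)_\field$-module $L$ with $eL\neq 0$ and $eL\cong M$: concretely, $\sS_\vtg(N,r)_\field e\otimes_{e\sS_\vtg(N,r)_\field e}M$ possesses a unique maximal submodule trivially intersecting $e$-part, and its simple quotient is such an $L$. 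Since $N\geq r$, the first case applies to $\sS_\vtg(N,r)_\field$ and guarantees that $L$ is finite dimensional. Consequently $M\cong eL$ is finite dimensional as well.

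The only genuinely substantive point is the finite dimensionality of simple $\afHr_\field$-modules in the first paragraph; everything else is a formal consequence of the Morita equivalence (Theorem \ref{category equiv}) and the centralizer-subalgebra description just established before the statement. The reduction $n<r\rightsquigarrow N=r\geq n$ is essentially free, because the bimodule structure on the tensor space used to prove Theorem \ref{category equiv} behaves compatibly with $\iota_{n,N,r}$ via Lemma \ref{tsp}, so no additional compatibility has to be checked.
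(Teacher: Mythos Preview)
Your proof is correct and follows essentially the same two-case strategy as the paper: Morita equivalence for $n\geq r$, then reduction via the idempotent $e$ in $\sS_\vtg(N,r)_\field$ with $N=r$ when $n<r$. Two small remarks: first, the Morita equivalence does not literally ``preserve $\field$-dimension'' but only finite-dimensionality (which is exactly the content of the last assertion of Theorem~\ref{category equiv}); second, your phrase ``the center acts by scalars'' is only accurate over an algebraically closed field, though the conclusion still holds over any $\field$ by Zariski's lemma, since $Z/\operatorname{Ann}_Z(M)$ is a finitely generated $\field$-algebra that is a field, hence finite over $\field$. The paper, by contrast, simply takes the finite-dimensionality of simple $\afHr_\field$-modules as known and instead spends a line verifying explicitly that $e_\omega L\neq 0$ via the finite Schur algebra $\sS(n,r)_\field$; your direct appeal to Morita equivalence makes that step unnecessary.
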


\begin{proof} Let $L$ be a simple $\afSr_\field$-module.
By Corollary \ref{QSA}, we identify the quantum Schur algebra
$\sS(n,r)_\field$ as a subalgebra of $\afSr_\field$. Thus, for any
$0\neq x\in L$, $\sS(n,r)_\field x\subset L$ is a nonzero finite
dimensional $\sS(n,r)_\field$-module. Hence, it is a direct sum of
simple $\sS(n,r)_\field$-modules.

If $n\geq r$, then the Hecke algebra $\sH(r)_\field$ identifies the
centralizer subalgebra $e_\omega\sS(n,r)_\field e_\omega$, where
$e_\omega$ is the idempotent $\phi_{\omega,\omega}^1$. Thus,
$e_\omega\sS(n,r)_\field x$ is a nonzero $\sH(r)_\field$-module. We
conclude that $e_\omega L$ is a nonzero simple
$\sH_\vtg(r)_\field$-module. Since every simple
$\sH_\vtg(r)_\field$-module is finite dimensional, the simple
$\afSr_\field$-module $L\cong\Og_\field^{\ot
r}\ot_{\sH_\vtg(r)_\field}e_\og L$ is finite dimensional.

If $n<r$, then, for $N=r$, $\afSr_\field\cong e\sS_\vtg(r,r)_\field
e$, where $e=\sum_{\la\in\afLanr}\phi_{\ti\la,\ti\la}^1$. It follows
that each simple $\afSr_\field$-module is isomorphic to $eL$ for a
simple $\sS_\vtg(r,r)_\field$-module $L$; see, for example,
\cite[6.2(g)]{Gr80}. As shown above, all simple
$\sS_\vtg(r,r)_\field$-modules are finite dimensional. Hence, so are
all simple $\afSr_\field$-modules.
\end{proof}

\section{Chari--Pressley's category equivalence and classification}

We first prove that the functor $\sfF$ defined in \eqref{functor
sfF} coincides with the functor $\sF$ defined in \cite[Th.~4.2]{CP}.
Then, we describe the Chari--Pressley's classification of simple
$\afUslC$-modules and its generalization to $\afUglC$ for later use.

{\it From now on, we assume the ground field $\field=\mbc$, the
complex number field, which is a $\sZ$-module where $\up$ is mapped
to $z$ with $z^m\neq1$ for all $m\geq1$.}

Following \cite{CP}, let $E_\theta$ and $F_\theta$ be the operators
on $\Og_{n,\mbc}$ defined respectively by
\begin{equation}\label{Etheta}
E_\theta\og_i=\dt_{i,n}\og_1\;\;\text{and}\;\;
F_\theta\og_i=\dt_{i,1}\og_n.
\end{equation}
 Let $\ti K_\theta=\ti K_1\cdots\ti K_{n-1}=\ti K_0^{-1}=K_0^{-1}K_1$.\footnote{The $K_0$ and
 $E_0,F_0$ below should be regarded as $K_n, E_n, F_n$, respectively, if the index set
$I=\mbz/n\mbz$ is identified as $\{1,2,\ldots,n\}$.} For a left
$\sH_\vtg(r)_F$-module $M$, define
$$\sF(M)=\Og_{n,\mbc}^{\ot r}\ot_{\sH(r)_\mbc}M.$$
 Then $\sF(M)$ is equipped with the natural
action of $U(n)_\mbc$-module structure induced by that on
$\Og_{n,\mbc}^{\ot r}$. By \cite[Th.~4.2]{CP}, $\sF(M)$ becomes a
$U_\vtg(n)_\mbc$-module via the action
\begin{equation*}
\begin{split}
E_0(w\ot m)&=\sum_{j=1}^r(Y_j^+w)\ot(X_jm),\\
F_0(w\ot m)&=\sum_{j=1}^r(Y_j^-w)\ot(X_j^{-1}m),
\end{split}
\end{equation*}
where $w\in\Og_{n,\mbc}^{\ot r}$, $m\in M$ and the operators
$Y_j^\pm\in\End_{\mbc}(\Og_{n,\mbc}^{\ot r})$ $(1\leq j\leq r)$ are
defined by
\begin{equation*}
\begin{split}
Y_j^+&=1^{\ot j-1}\ot F_\theta\ot(\ti K_\theta^{-1})^{\ot r-j}=1^{\ot j-1}\ot F_\theta\ot(\ti K_0)^{\ot r-j},\\
Y_j^-&=(\ti K_\theta)^{\ot j-1}\ot E_\theta\ot 1^{\ot r-j}=(\ti
K_0^{-1})^{\ot j-1}\ot E_\theta\ot 1^{\ot r-j}.
\end{split}
\end{equation*}
Hence, we obtain a functor
$$\sF:\sH_\vtg(r)_\mbc\hmod\lra U_\vtg(n)_\mbc\hmod.$$
When $n>r$, Chari--Pressley used this functor to establish a
category equivalence between $\sH_\vtg(r)_\mbc\hmod$ and the full
subcategory of $U_\vtg(n)_\mbc\hmod$ consisting of finite
dimensional $U_\vtg(n)_\mbc$-modules which are of level $r$ when
restricted to $U(n)_\mbc$-modules; see Corollary
\ref{ChariPressley}.

On the other hand, the algebra homomorphism
$\xi_{r,\mbc}:U_\vtg(n)_\mbc\to\sS_\vtg(n,r)_\mbc$ (cf. Remark
\ref{NonrootOfUnity4}) gives an inflation functor
$$\Upsilon:\sS_\vtg(n,r)_\mbc\hmod\lra U_\vtg(n)_\mbc\hmod.$$
Thus, we obtain the functor
$$\Upsilon\sfF=\Upsilon\circ\sfF:\sH_\vtg(r)_\mbc\hmod\lra U_\vtg(n)_\mbc\hmod.$$

\begin{Prop}\label{Chari-Presslry} There is a natural isomorphism of functors $\vi:\sF\rightsquigarrow\Upsilon\sfF$.
In other words, for any $\sH_\vtg(r)_F$-modules $M,M'$ and
homomorphism $f:M\to M'$, there is a $U_\vtg(n)_\mbc$-module
isomorphism
$$\vi_M:\sF(M)\overset\sim\lra \Upsilon\sfF(M),\quad w\ot m\longmapsto w\ot_\vtg m,$$
 for all $w\in\Og_{n,\mbc}^{\ot r}$ and $m\in
M$ such that $\Upsilon\sfF(f)\vi_M=\vi_{M'}\sF(f)$. Here
$\otimes=\otimes_{\sH(r)_\mbc}$ and
$\otimes_\vtg=\otimes_{\sH_\vtg(r)_\mbc}$.
\end{Prop}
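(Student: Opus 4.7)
The plan is to define $\vi_M$ as the obvious linear map $w\otimes m\mapsto w\otimes_\vtg m$ (for $w\in\Og_{n,\mbc}^{\otimes r}$, $m\in M$); it is clearly well defined since $\sH(r)_\mbc$ is a subalgebra of $\sH_\vtg(r)_\mbc$, so any tensor relation $wh\otimes m=w\otimes hm$ with $h\in\sH(r)_\mbc$ is already a relation over $\sH_\vtg(r)_\mbc$. First I would establish bijectivity by chaining Lemma \ref{Lem1 for category equivalent} with the change-of-rings isomorphism:
$$\Upsilon\sfF(M)=\Og_\mbc^{\otimes r}\otimes_{\sH_\vtg(r)_\mbc}M\cong\bigl(\Og_{n,\mbc}^{\otimes r}\otimes_{\sH(r)_\mbc}\sH_\vtg(r)_\mbc\bigr)\otimes_{\sH_\vtg(r)_\mbc}M\cong\Og_{n,\mbc}^{\otimes r}\otimes_{\sH(r)_\mbc}M=\sF(M).$$
Tracing the construction, this composition sends $w\otimes_\vtg m$ back to $w\otimes m$, so it is the inverse of $\vi_M$.

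The core task is to verify that $\vi_M$ intertwines the two $U_\vtg(n)_\mbc$-actions. Since both $\sF(M)$ and $\Upsilon\sfF(M)$ carry a compatible $U(n)_\mbc$-action coming from the natural action on the first tensor factor $\Og_{n,\mbc}^{\otimes r}$ (the action of $\xi_{r,\mbc}|_{U(n)_\mbc}$ on $\Og_\mbc^{\otimes r}$ restricts to the same formulas on $\Og_{n,\mbc}^{\otimes r}$), compatibility with the generators $E_i,F_i,K_i^{\pm 1}$ for $i\neq n$ is immediate. The only non-trivial check concerns $E_n=E_0$ and $F_n=F_0$, and this will be the main obstacle.

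For $E_n$, I would compute $E_n\cdot(\og_\bfi\otimes_\vtg m)$ in $\Upsilon\sfF(M)$ using the iterated comultiplication
$$\Delta^{(r-1)}(E_n)=\sum_{j=1}^r 1^{\otimes j-1}\otimes E_n\otimes\ti K_n^{\otimes r-j},$$
together with the natural action $E_n\cdot\og_s=\dt_{n+1,\bar s}\og_{s-1}$ and $\ti K_n\og_s=z^{\dt_{n,\bar s}-\dt_{1,\bar s}}\og_s$ of \eqref{QGKMAlg-action}. For $\bfi\in I(n,r)$, only summands with $i_j=1$ contribute, producing a term with $\og_0$ in position $j$ and weight factor $z^{c_j}$ where $c_j=\sum_{k>j}(\dt_{i_k,n}-\dt_{i_k,1})$. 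The Hecke-module identity $\og_0=\og_n\cdot X_j$ (at position $j$), read off from \eqref{afH action}, then allows me to push $X_j$ across $\otimes_\vtg$, yielding $\sum_{j:i_j=1}z^{c_j}\,w'_j\otimes_\vtg X_jm$ where $w'_j$ is $\og_\bfi$ with $\og_1$ at position $j$ replaced by $\og_n$. On the other side, the Chari--Pressley formula expands to $E_0(\og_\bfi\otimes m)=\sum_{j:i_j=1}z^{c'_j}w'_j\otimes X_jm$, with $c'_j=\sum_{k>j}\la_{i_k}$ and $\la_s$ the eigenvalue of $\ti K_\theta^{-1}=\ti K_0$ on $\og_s$. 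Since $\ti K_0=K_nK_1^{-1}=\ti K_n$ by footnote, the two weight exponents coincide, so the $E_n$-actions match. The case of $F_n$ is strictly analogous, using the dual comultiplication $\Delta^{(r-1)}(F_n)=\sum_j\ti K_n^{-\otimes j-1}\otimes F_n\otimes 1^{\otimes r-j}$ and the Hecke identity $\og_{n+1}=\og_1\cdot X_j^{-1}$ in position $j$.

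Finally, naturality is immediate: for any $f\in\Hom_{\sH_\vtg(r)_\mbc}(M,M')$, both $\sF(f)$ and $\Upsilon\sfF(f)$ act as $\id\otimes f$ on their respective tensor products, so $\vi_{M'}\circ\sF(f)=\Upsilon\sfF(f)\circ\vi_M$ holds tautologically on elementary tensors. The only delicate step is the weight-bookkeeping in the $E_n/F_n$ comparison sketched above; I expect that to be a short but careful computation rather than a conceptual hurdle, since the crucial coincidence $\ti K_0=\ti K_n$ makes the two Cartan factors equal term-by-term.
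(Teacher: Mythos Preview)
Your proposal is correct and follows essentially the same route as the paper's proof: both use Lemma~\ref{Lem1 for category equivalent} to get the $U(n)_\mbc$-module isomorphism, then reduce to checking compatibility with $E_0$ and $F_0$ by expanding the comultiplication $\Dt^{(r-1)}(E_0)=\sum_j 1^{\ot j-1}\ot E_0\ot\ti K_0^{\ot r-j}$ on one side and the Chari--Pressley operators $Y_j^\pm$ on the other, matching via the Hecke relation $\og_{\bfi-\bse_j}=w_j'\cdot X_j$ pushed across $\otimes_\vtg$. The only cosmetic difference is order of exposition---the paper computes $\vi(E_0(\og_\bfi\ot m))$ first and then $E_0\og_\bfi$, whereas you compute the $\Upsilon\sfF$ side first---but the key identity $E_0\og_\bfi=\sum_j(Y_j^+\og_\bfi)X_j$ and the coincidence $\ti K_0=\ti K_n$ are used identically.
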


\begin{proof} We first recall from Remark \ref{NonrootOfUnity4} that the representation
$U_\vtg(n)_\mbc\to\End_\mbc(\Og_\mbc^{\ot r})$ factors through the
algebra homomorphism
$\xi_{r,\mbc}:U_\vtg(n)_\mbc\to\sS_\vtg(n,r)_\mbc$. Thus, by Lemma
\ref{Lem1 for category equivalent}, there is a $U(n)_\mbc$-module
isomorphism
\begin{equation*}
\begin{split}
\vi=\vi_M:\sF(M)&\lra \Upsilon\sfF(M) \\
w\ot m&\longmapsto w\ot_\vtg m\quad(\text{for
all}\;w\in\Og_{n,\mbc}^{\ot r},\ m\in M).
\end{split}
\end{equation*}
It remains to prove that $\vi(E_0(\og_\bfi\ot
m))=E_0(\og_\bfi\ot_\vtg m)$ and $\vi(F_0(\og_\bfi\ot
m))=F_0(\og_\bfi\ot_\vtg m)$ for all $\bfi\in I(n,r)$ and $m\in M$.

By the definition above, for $\bfi\in I(n,r)$ and $m\in M$,
\begin{equation}\label{eq1 for category equiv}
\begin{split}
\vi(E_0(\og_\bfi\ot m))&=\vi(\sum_{j=1}^r(Y_j^+\og_\bfi)\ot(X_jm))\\
&=\sum_{j=1}^rY_j^+\og_\bfi\ot_\vtg
X_jm=\sum_{j=1}^r(Y_j^+\og_\bfi) X_j\ot_\vtg m,\\
\end{split}
\end{equation}
where, by \eqref{Etheta} and \eqref{afH action},
\begin{equation}\label{eq2 for category equiv}
\begin{split}
(Y_j^+\og_\bfi) X_j&= \up^{\sum_{j+1\leq k\leq
r}(\dt_{i_k,n}-\dt_{i_k,1})}\dt_{i_j,1}(\og_{i_1}
 \cdots\og_{i_{j-1}}
 \og_n \og_{i_{j+1}} \cdots \og_{i_r})X_j\\
&=\up^{\sum_{j+1\leq k\leq
r}(\dt_{i_k,n}-\dt_{i_k,1})}\dt_{i_j,1}\og_{i_1} \cdots\og_{i_{j-1}}
\og_0 \og_{i_{j+1}} \cdots \og_{i_r}.
\end{split}
\end{equation}
On the other hand, $E_0$ acts on $\Og_\mbc^{\ot r}$ via the
comultiplication $\Dt$ as described in Corollary \ref{coalgebra
structure}. Since $$\Dt^{(r-1)}(E_0)=\sum_{j=1}^r1^{\ot j-1}\ot
E_0\ot\ti K_0^{\ot (r-j)},$$ this together with
\eqref{QGKMAlg-action} gives
\begin{equation*}
\begin{split}
E_0\og_\bfi&=\sum_{j=1}^r\up^{\sum_{j+1\leq k\leq
r}(\dt_{i_k,n}-\dt_{i_k,1})}\dt_{i_j,1}\og_{i_1} \cdots\og_{i_{j-1}}
\og_0 \og_{i_{j+1}} \cdots \og_{i_r}\\
&=\sum_{j=1}^r(Y_j^+\og_\bfi) X_j.
\end{split}
\end{equation*}
(Note that, for $1\leq i,j\leq n$, the values $\dt_{i,j}$ is
unchanged regardless viewing $i,j$ as elements in $\mbz$ or in
$I=\mbz/n\mbz$.)
 Hence, by \eqref{eq1 for category equiv},
$$E_0\vi(\og_\bfi\ot m)=E_0\og_\bfi\ot_\vtg m=\vi(E_0(\og_\bfi\ot m)).$$

Similarly, we can prove that for $\bfi\in I(n,r)$ and $m\in M$,
\begin{equation*}
\begin{split}
F_0\vi(\og_\bfi\ot m)&=\sum_{j=1}^r\up^{\sum_{1\leq k\leq
j-1}(\dt_{i_k,1}-\dt_{i_k,n})}\dt_{i_j,n}\og_{i_1}
\cdots\og_{i_{j-1}}
\og_{n+1} \og_{i_{j+1}} \cdots \og_{i_r}\ot_\vtg m\\
&=\vi(F_0(\og_\bfi\ot m)),
\end{split}
\end{equation*}
as required. The commutativity relation
$\Upsilon\sfF(f)\circ\vi_M=\vi_{M'}\circ\sF(f)$ is clear.
\end{proof}

We now recall another theorem of Chari--Pressley which classifies
finite dimensional simple $\afUslC$-modules and its generalization
by Frenkel--Mukhin to the classification of finite dimensional
irreducible polynomial representations of $\afUglC$.

For $1\leq j\leq n-1$ and $s\in\mbz$, define the elements $\ms
P_{j,s}\in\afUslC$ through the generating functions
\begin{equation*}
\begin{split}
& \ms P_j^\pm(u):=\exp\bigg(-\sum_{t\geq
1}\frac{1}{[t]_\ttv}\tth_{j,\pm t} (\ttv u)^{\pm
t}\bigg)=\sum_{s\geq 0}\ms P_{j,\pm s} u^{\pm
s}\in\afUslC[[u,u^{-1}]].
\end{split}
\end{equation*}
 Here we may view $\ms P_j^\pm(u)$ as formal power series (at 0 or $\infty$) with coefficients in $\afUslC$.
Note that these elements are related to the elements
$\phi_{j,s}^\pm$ used in the definition of $\afUslC$ (see Definition
\ref{QLA}(2)) through the formula
\begin{equation}\label{PhiP relation}
\Phi_j^\pm(u)=\ti\ttk_j^{\pm1}\frac{\ms P_j^\pm(\ttv^{-2}u)}{\ms
P_j^\pm(u)}.
\end{equation}
(For a proof, see, e.g., \cite[p.~291]{CP97}.)\footnote{If $\mathcal
P_j^\pm(u)$ denote the elements defined in line 2 above
\cite[(4.1)]{FM}, then $\ms P_j^\pm(u)=\mathcal P_j^\pm(uz)$. We
have corrected a typo by removing the $+$ sign in exp$(\mp\sum
...)$. If the $+$ is there, then the $-$ case of
$\Phi^\pm_i(u)=k_i^{\pm1}\mathcal P_j^\pm(uz^{-1})/\mathcal
P_j^\pm(uz)$ as given in \cite[(4.1)]{FM} is no longer true. Also,
\eqref{PhiP relation} shows that $+$ sign is unnecessary.}

For any polynomial $f(u)=\prod_{1\leq i\leq m}(1-a_iu)\in\mbc[u]$
with constant term $1$ and $a_i\in\mbc^*$, define $f^\pm(u)$ (so
that $f^+(u)=f(u)$) as follows:
\begin{equation}\label{f^pm(u)}
f^\pm(u)=\prod_{1\leq i\leq m}(1-a_i^{\pm1}u^{\pm1}).
\end{equation}
Note that $f^-(u)=(\prod_{i=1}^m(-a_iu)^{-1})f^+(u)$.

Let $V$ be a finite dimensional representation of $\afUslC$ of type
1. Then $V=\oplus_{\la\in\mbz^{n-1}}V_\la$ and, since all $\ms
P_{i,s}$ commute with the $\ti\ttk_j$,  each $V_\la$ is a direct sum
of generalized eigenspaces of the form
\begin{equation}\label{geigenspace}
V_{\la,\gamma}=\{x\in V_\la\mid (\ms
P_{i,s}-\gamma_{i,s})^px=0\text{ for some $p$}\, (1\leq i\leq
n-1,s\in\mbz)\},
\end{equation}
 where $\gamma=(\gamma_{i,s})$ with $\gamma_{i,s}\in\mbc$. If we put $\Gamma_j^\pm(u)=\sum_{s\geq 0}\gamma_{j,\pm s} u^{\pm
s}$, then by \cite[Prop.~1]{FR2}, there exist polynomials
$f_{i}(u)=\prod_{1\leq j\leq m_{i}}(1-a_{j,i}u)$ and
$g_{i}(u)=\prod_{1\leq j\leq n_{i}}(1-b_{j,i}u)$ in $\mbc[u]$, such
that
\begin{equation}\label{poly Gamma}
\Gamma_i^\pm(u)=\frac{f_{i}^\pm(u)}{g_{i}^\pm(u)}\quad\text{ and
}\quad \la_i=m_i-n_i.
\end{equation}

Following \cite[12.2.4]{CPbk}, a nonzero ($\mu$-weight) vector $w\in V$ is called a {\it pseudo-highest weight vector},
\index{pseudo-highest weight vector} if there exist some
$P_{j,s}\in\mbc$ such that
$$
\ttx_{j,s}^+w=0,\quad\ms P_{j,s} w=P_{j,s} w,\quad
\ti\ttk_jw=\ttv^{\mu_j}w
$$
for all $1\leq j\leq n-1$ and $s\in\mbz$. The module $V$ is called a
{\it pseudo-highest weight module}\footnote{There are simple highest weight integrable modules $\La_\la$
considered in \cite[6.2.3]{Lu93} (defined in \cite[3.5.6]{Lu93}) which, in general, are infinite dimensional.}
\index{pseudo-highest weight module} if $V=\afUslC w$ for some pseudo-highest weight
vector $w$.

For notational simplicity, the expressions $\ms P_{j,s} w=P_{j,s} w$
for all $s\in\mbz$ will be written by a single expression
$$\ms P_{j}^\pm(u) w=P_{j}^\pm(u) w\in V[[u,u^{-1}]],$$
where
\begin{equation}\label{Pis}
P_j^\pm(u)=\sum_{s\geq 0}P_{j,\pm s} u^{\pm s},
\end{equation}



 Let $\sP(n)$ be the set
of $(n-1)$-tuples of polynomials with constant terms $1$. For
$\bfP=(P_1(u),\ldots,P_{n-1}(u))\in\sP(n)$, define $P_{j,s}\in\mbc$,
for $1\leq j\leq n-1$ and $s\in\mbz$, as in $P_j^\pm(u)=\sum_{s\geq
0}P_{j,\pm s} u^{\pm s}$, where $P_j^\pm(u)$ is defined by
\eqref{f^pm(u)}.

Let $\Icp(\bfP)$ be the left ideal of $\afUslC$ generated by
$\ttx_{j,s}^+ ,\quad\ms P_{j,s}-P_{j,s},\quad
\ti\ttk_j-\ttv^{\mu_j}$, for $1\leq j\leq n-1$ and $s\in\mbz$, where
$\mu_j=\mathrm{deg}P_j(u)$, and define ``Verma module''
$$\Mcp(\bfP)=\afUslC/\Icp(\bfP).$$
Then $\Mcp(\bfP)$ has a unique simple quotient, denoted by
$\Lcp(\bfP)$. The polynomials $P_i(u)$ are called {\it Drinfeld
polynomials} associated with $\Lcp(\bfP)$. \index{Drinfeld
polynomials}

The following result is due to Chari--Pressley (see
\cite{CP91,CPbk} or \cite[pp.7-8]{CP95}).

\begin{Thm}
The modules $\Lcp(\bfP)$ with $\bfP\in\sP(n)$ are all nonisomorphic
finite dimensional simple $\afUslC$-modules of  type $1$.
\end{Thm}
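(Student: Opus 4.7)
The plan is to establish the three required facts — finite-dimensionality of each $\Lcp(\bfP)$, their mutual non-isomorphism, and exhaustiveness among type-$1$ finite-dimensional simple $\afUslC$-modules — by (a) extracting a pseudo-highest weight vector from any such simple module, (b) identifying the resulting spectrum of $\ms P_j^\pm(u)$ with an $(n-1)$-tuple of polynomials via a rank-one reduction, and (c) realizing every prescribed $\bfP$ inside an appropriate tensor product of evaluation modules. For (a), let $V$ be a finite-dimensional simple type-$1$ $\afUslC$-module with weight decomposition $V = \oplus_\mu V_\mu$ under $\ti\ttk_1,\ldots,\ti\ttk_{n-1}$. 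Since each $\ttx_{j,s}^+$ raises the weight by the simple root $\al_j$, any nonzero vector of a maximal-weight subspace $V_\la$ is annihilated by all $\ttx_{j,s}^+$. The commutative subalgebra generated by $\{\tth_{j,s}, \ti\ttk_j^{\pm 1}\}$ preserves $V_\la$; since the $\tth_{j,s}$ commute and $V_\la$ is finite-dimensional, one passes to a common eigenvector $v_0$, producing formal series $P_j^\pm(u)$ with $P_{j,0}=1$ such that $\ms P_j^\pm(u) v_0 = P_j^\pm(u) v_0$.

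The decisive step is (b). Fix $j\in\{1,\ldots,n-1\}$ and let $\mathbf U[j]\subseteq \afUslC$ be the subalgebra generated by $\ttx_{j,s}^\pm$, $\tth_{j,s}$ and $\ti\ttk_j^{\pm 1}$; this is (a quotient of) quantum affine $\mathfrak{sl}_2$, and $\mathbf U[j]\cdot v_0$ is a finite-dimensional pseudo-highest weight module for it. The rank-one classification — proved independently via Jimbo's evaluation homomorphism $U_\ttv(\widehat{\mathfrak{sl}}_2)\to U_\ttv(\mathfrak{sl}_2)$ and the explicit evaluation modules $V(m,a)$ — ensures that the generating function $\Gamma_j^\pm(u)$ appearing in \eqref{geigenspace}--\eqref{poly Gamma} has the form $f_j^\pm(u)/g_j^\pm(u)$, and moreover that finite-dimensionality of $\mathbf U[j]\cdot v_0$ forces a cancellation producing a single polynomial $P_j(u)$ with constant term $1$. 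Combined with \eqref{PhiP relation}, this yields $P_j^+(u)$ and $P_j^-(u)$ as the two expansions, at $0$ and $\infty$, of the common polynomial $P_j(u)$ in the sense of \eqref{f^pm(u)}. Setting $\bfP = (P_1,\ldots,P_{n-1})$, the ideal $\Icp(\bfP)$ annihilates $v_0$, so the surjection $\Mcp(\bfP)\twoheadrightarrow \afUslC\cdot v_0 = V$ factors through $\Lcp(\bfP)$, giving $V\cong\Lcp(\bfP)$.

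For (c), given $\bfP\in\sP(n)$, factor each $P_j(u)=\prod_i(1-a_{i,j}u)$ and form the tensor product of evaluation modules obtained by pulling back fundamental $U_\ttv(\mathfrak{sl}_n)$-modules along $\mathrm{ev}_{a_{i,j}}$. The tensor product of the highest-weight vectors is a pseudo-highest weight vector with Drinfeld spectrum exactly $\bfP$, so the cyclic submodule it generates has a nontrivial simple quotient which by the previous paragraph must be $\Lcp(\bfP)$; in particular $\Lcp(\bfP)$ is finite-dimensional. Non-isomorphism is automatic: by \eqref{Pis} the polynomials $P_1,\ldots,P_{n-1}$ are recovered from the eigenvalues of $\ms P_j^\pm(u)$ on any pseudo-highest weight vector of $\Lcp(\bfP)$, so they are isomorphism invariants.

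The main obstacle will be the rank-one reduction in step (b). Proving that the apparent ratio $f_j^\pm/g_j^\pm$ collapses to a single polynomial $P_j$ requires the $U_\ttv(\widehat{\mathfrak{sl}}_2)$-theory to be developed in sufficient sharpness: one needs an effective bound on the dimension of $\mathbf U[j]\cdot v_0$ in terms of $\dg P_j$, obtained via repeated application of the lowering operators $\ttx_{j,s}^-$ and careful commutator computations with $\ms P_j^\pm(u)$ to show that any would-be common factor of $f_j$ and $g_j$ creates a proper submodule incompatible with pseudo-highest weight structure. Ensuring compatibility across the distinct Drinfeld subalgebras $\mathbf U[j]$ — so that the polynomials $P_1,\ldots,P_{n-1}$ extracted rank-by-rank patch together consistently under the full $\afUslC$-action — adds a further technical layer that must be treated explicitly.
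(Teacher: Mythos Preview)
The paper does not prove this statement: it is recorded as a result due to Chari--Pressley, with references to \cite{CP91,CPbk} and \cite[pp.~7--8]{CP95}, and no argument is supplied in the text. Your sketch is therefore not competing with a proof in the paper but rather summarizing the Chari--Pressley argument from those references. The three-step structure you outline --- extraction of a pseudo-highest weight vector via a maximal weight space and simultaneous triangularization of the commuting $\tth_{j,s}$, reduction to $U_\ttv(\widehat{\mathfrak{sl}}_2)$ to establish polynomiality of the Drinfeld spectrum, and realization of each $\bfP$ in a tensor product of evaluation modules --- is precisely their method.

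Your own final paragraph is accurate about where the real work lies: the rank-one polynomiality (that the a priori rational functions $\Gamma_j^\pm(u)$ in \eqref{poly Gamma} collapse to polynomials $P_j^\pm(u)$ on a pseudo-highest weight vector) is the core of Chari--Pressley's proof and is not carried out in your sketch. As written, you have correctly identified the architecture and the obstacle, but the sketch would not stand alone without either importing the $U_\ttv(\widehat{\mathfrak{sl}}_2)$ classification wholesale or reproducing the relevant computations with the operators $\ms P_j^\pm(u)$ and $\ttx_{j,s}^-$ on $\mathbf U[j]\cdot v_0$. Since the paper itself does not attempt this and simply cites the result, your proposal is consistent with how the theorem is treated here.
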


Let $\afUglC$ be the algebra over $\mathbb C$ generated by
$\ttx^\pm_{i,s}$ ($1\leq i<n$, $s\in\mbz$), $\ttk_i^{\pm1}$ and
$\ttg_{i,t}$ ($1\leq i\leq n$, $t\in\mbz\backslash\{0\}$) with
relations similar to (QLA1)--(QLA7) as defined in \S2.3.
Chari--Pressley's classification is easily generalized to simple
$\afUglC$-modules as follows; see \cite{FM}.

For $1\leq i\leq n$ and $s\in\mbz$, define the elements $\ms
Q_{i,s}\in\afUglC$ through the generating functions
\begin{equation*}
\begin{split}
&\quad\qquad\ms Q_i^\pm(u):=\exp\bigg(-\sum_{t\geq
1}\frac{1}{[t]_\ttv}g_{i,\pm t} (\ttv u)^{\pm t}\bigg)=\sum_{s\geq
0}\ms Q_{i,\pm s} u^{\pm s}\in\afUglC[[u,u^{-1}]].
\end{split}
\end{equation*}
Since
$\tth_{i,m}=\ttv^{(i-1)s}\ttg_{i,m}-\ttv^{(i+1)s}\ttg_{i+1,m}$, it
follows from the definitions that
\begin{equation*}
\begin{split}
& \ms P_j^\pm(u)=\frac{\ms Q_j^\pm(u\ttv^{j-1})}{\ms
Q_{j+1}^\pm(u\ttv^{j+1})},
\end{split}
\end{equation*}
for $1\leq j\leq n-1$.

As in the case of $\afUslC$, if $V$ is a representation of $\afUglC$
then a nonzero ($\la$-weight) vector $w\in V$ is called a {\it pseudo-highest weight
vector},\index{pseudo-highest weight vector} if there exist
some $Q_{i,s}\in\mbc$ such that
\begin{equation}\label{HWvector}
\ttx_{j,s}^+w=0,\quad\ms Q_{i,s}w=Q_{i,s}w,\quad
\ttk_iw=\ttv^{\la_i}w
\end{equation}
for all $1\leq i\leq n$ and $1\leq j\leq n-1$ and $s\in\mbz$. The
module $V$ is called a {\it pseudo-highest weight module}\index{pseudo-highest weight module} if $V=\afUglC w$
for some pseudo-highest weight vector $w$. Associated to the sequence
$(Q_{i,s})_{s\in\mbz}$, defined two formal power series by
\begin{equation}\label{power series Q}
Q_i^\pm(u)=\sum_{s\geq 0}Q_{i,\pm s}u^{\pm s}.
\end{equation}
We also write the short form $\ms Q_i^\pm(u)w=Q_i^\pm(u)w$ for the
relations $\ms Q_{i,s}w=Q_{i,s}w\,(s\in\mbz)$.

A finite dimensional $\afUglC$-module $V$ is called a {\it
polynomial representation}\index{polynomial representation} if it is
of type 1 and, for every weight $\la=(\la_1,\ldots,\la_n)\in\mbz^n$
of $V$, the formal power series $\Gamma_i^\pm(u)$ associated to the
eigenvalues $(\gamma_{i,s})_{s\in\mbz}$ defining the generalized
eigenspaces $V_{\la,\gamma}$ as given in \eqref{geigenspace}, where
$\ms P_{i,s}$ is replaced by $\ms Q_{i,s}$ and $n-1$ by $n$, are
polynomials in $u^\pm$ of degree $\la_i$ so that the zeroes of the
functions $\Gamma_i^+(u)$ and $\Gamma_i^-(u)$ are the same.

Following \cite{FM}, an $n$-tuple of polynomials
$\bfQ=(Q_1(u),\ldots,Q_n(u))$ with constant terms $1$ is called {\it
dominant} if for $1\leq i\leq n-1$ the ratio
$Q_i(\ttv^{i-1}u)/Q_{i+1}(\ttv^{i+1}u)$ is a polynomial. Let
$\sQ(n)$ be the set of dominant $n$-tuples of polynomials.

For $\bfQ=(Q_1(u),\ldots,Q_{n}(u))\in\sQ(n)$, define
$Q_{i,s}\in\mbc$, for $1\leq i\leq n$ and $s\in\mbz$, by the
following formula
$$Q_i^\pm(u)=\sum_{s\geq 0}Q_{i,\pm s}u^{\pm s},$$
where $Q_i^\pm(u)$ is defined using \eqref{f^pm(u)}. Let $I(\bfQ)$
be the left ideal of $\afUglC$ generated by $\ttx_{j,s}^+ ,\quad\ms
Q_{i,s}-Q_{i,s},\quad \ttk_i-\ttv^{\la_i}$ for $1\leq j\leq n-1$,
$1\leq i\leq n$ and $s\in\mbz$, where $\la_i=\mathrm{deg}Q_i(u)$,
and define
$$M(\bfQ)=\afUglC/I(\bfQ).$$
Then $M(\bfQ)$ has a unique simple quotient, denoted by $L(\bfQ)$.
The polynomials $Q_i(u)$ are called {\it Drinfeld
polynomials}\index{Drinfeld polynomials} associated with $L(\bfQ)$.

\begin{Thm}[\cite{FM}]\label{classification of simple afUglC-modules}
The $\afUglC$-modules $L(\bfQ)$ with $\bfQ\in\sQ(n)$ are all
nonisomorphic finite dimensional simple polynomial representations
of $\afUglC$. Moreover,
$$L(\bfQ)|_{\afUslC}\cong \bar L(\bfP)$$ where
$\bfP=(P_1(u),\ldots,P_{n-1}(u))$ with
$P_i(u)=Q_i(\ttv^{i-1}u)/Q_{i+1}(\ttv^{i+1}u)$.
\end{Thm}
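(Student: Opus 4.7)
The plan is to bootstrap from the Chari--Pressley classification recalled in the preceding theorem, exploiting the fact that $\afUglC$ is generated over $\afUslC$ by central elements, specifically the $\theta_s$ defined in \eqref{defn of theta_s} together with $\ttk_1\cdots\ttk_n$. The bridge between the two classifications is the identity $\ms P_j^\pm(u)=\ms Q_j^\pm(u\ttv^{j-1})/\ms Q_{j+1}^\pm(u\ttv^{j+1})$ recorded just before the theorem, which sends each $\bfQ\in\sQ(n)$ to the candidate $(n-1)$-tuple $\bfP$ of the statement; the dominance condition on $\bfQ$ is exactly what guarantees that each $P_j$ is a polynomial, so $\bfP\in\sP(n)$.

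For the existence direction, fix $\bfQ\in\sQ(n)$ and let $v\in L(\bfQ)$ denote the image of $1$. Then $v$ is a pseudo-highest weight vector for $\afUglC$ with data $\bfQ$, and by the identity above it is also a pseudo-highest weight vector for $\afUslC$ with data $\bfP$. Hence the $\afUslC$-submodule $\afUslC\cdot v$ is a pseudo-highest weight $\afUslC$-module whose unique simple quotient $\bar L(\bfP)$ is finite-dimensional by Chari--Pressley. The central elements $\theta_s$ (and $\ttk_1\cdots\ttk_n$) act as scalars on the simple module $L(\bfQ)$ by Schur's lemma, and together with $\afUslC$ they generate $\afUglC$ (a consequence of Theorem \ref{iso afgln dHallr} combined with the decomposition discussed in Remark \ref{subalgebra quantum gl}). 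Consequently $L(\bfQ)=\afUglC\cdot v=\afUslC\cdot v$, and any proper $\afUslC$-submodule of $\afUslC\cdot v$ is automatically $\afUglC$-stable by the same scalar argument, hence is zero. This shows simultaneously that $L(\bfQ)$ is finite-dimensional, that $L(\bfQ)|_{\afUslC}\cong \bar L(\bfP)$, and that $L(\bfQ)$ is a polynomial representation (the last point via the explicit polynomial eigenvalues of $\ms Q_{i,s}$ on $v$).

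For the classification, let $V$ be any finite-dimensional simple polynomial $\afUglC$-module. Decompose $V=\bigoplus_\la V_\la$ into $\ttk_i$-weight spaces and pick $\la$ maximal with $V_\la\ne 0$. By (QLA2), the operator $\ttx^+_{j,s}$ shifts the weight by $\ep_j-\ep_{j+1}$, so $\ttx^+_{j,s}V_\la=0$. The commuting family $\{\ms Q_{i,s}\}$ (commutativity from (QLA4)) preserves $V_\la$; on any joint generalized eigenspace the commuting nilpotents $\ms Q_{i,s}-\gamma_{i,s}$ have a nonzero joint kernel, producing a genuine joint eigenvector $v\in V_\la$. The polynomial-representation hypothesis identifies the eigenvalue sequence $(\gamma_{i,s})_s$ with the expansion of a polynomial $Q_i(u)$ of degree $\la_i$, yielding $\bfQ=(Q_1,\ldots,Q_n)$; dominance of $\bfQ$ is verified by restricting to $\afUslC\cdot v$ and invoking Chari--Pressley on the corresponding $\bfP$. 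The universal property of $M(\bfQ)$ then supplies a surjection $M(\bfQ)\twoheadrightarrow V$ which, by simplicity of $V$, factors through an isomorphism $L(\bfQ)\overset{\sim}{\to} V$. Nonisomorphism of $L(\bfQ)$ and $L(\bfQ')$ for $\bfQ\ne\bfQ'$ is immediate from the distinct eigenvalues of $\ms Q_{i,s}$ and $\ttk_i$ on the uniquely determined pseudo-highest weight line.

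The main obstacle is the extraction of a genuine joint eigenvector $v\in V_\la$ whose eigenvalues come from polynomials $Q_i$ of the correct degrees $\la_i$. The polynomial-representation hypothesis enters essentially here: without it, one only knows that each $\Gamma_i^\pm(u)$ is the formal expansion of a rational function, with no a priori control on whether it is actually a polynomial of degree matching the weight of $v$, so the dominance of the resulting $\bfQ$ could not be verified and the link to the Chari--Pressley classification would break down.
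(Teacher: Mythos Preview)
The paper does not supply its own proof of this theorem; it is quoted from \cite{FM}. Your outline captures the overall architecture---reduce to Chari--Pressley via the centrality of the $\theta_s$---and the restriction statement $L(\bfQ)|_{\afUslC}\cong\bar L(\bfP)$ together with the classification direction are handled essentially correctly. There is, however, a genuine gap in the existence direction.

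You claim $L(\bfQ)$ is a polynomial representation ``via the explicit polynomial eigenvalues of $\ms Q_{i,s}$ on $v$.'' But the polynomial condition (recalled just before the theorem) must hold on \emph{every} generalized eigenspace $V_{\mu,\gamma}$, not only on the pseudo-highest weight line. Your argument gives $L(\bfQ)|_{\afUslC}\cong\bar L(\bfP)$, which controls the $\ms P_j$-eigenvalues---hence only the ratios $\ms Q_j(u\ttv^{j-1})/\ms Q_{j+1}(u\ttv^{j+1})$---on lower weight spaces, and centrality pins down the scalar $\prod_i\ms Q_i^\pm(u)=\prod_iQ_i^\pm(u)$; neither of these forces each individual $\Gamma_i^\pm(u)$ on an arbitrary weight space to be a polynomial of degree $\mu_i$. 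In \cite{FM} this is established not by eigenvalue bookkeeping from the top but by realizing $L(\bfQ)$ inside a tensor product of modules already known to be polynomial---exactly the mechanism invoked later in this paper (see the proof of Proposition~\ref{step1}, which cites \cite[4.3]{FM}). A minor secondary point: $\afUglC$ is not literally generated by $\afUslC$, the $\theta_s$, and $\ttk_1\cdots\ttk_n$ (the individual $\ttk_i$ are not recoverable from these), though the conclusion you actually need---that any $\afUslC$-submodule of a simple type-1 $\afUglC$-module is $\afUglC$-stable---does hold via a weight-space argument once $\ttk_1\cdots\ttk_n$ acts as a scalar.
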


\section{Classification of simple $\sS_\vtg(n,r)_\mbc$-modules: the upwards approach}

We first recall the classification of irreducible representations of
$\afHrC$ or equivalently, simple $\afHrC$-modules.

Let $\mbc^*=\mbc\backslash\{0\}$. For
$\bfa=(a_1,\ldots,a_r)\in(\mbc^*)^r$, let $M_{\bfa}=\afHrC/J_\bfa$,
where $J_\bfa$ is the left ideal of $\afHrC$ generated by $X_j-a_j$
for $1\leq j\leq r$. Then $M_{\bfa}$ is an $\afHrC$-module of
dimension $r!$ and, regarded as an $\sH(r)_\mbc$-module by
restriction, $M_{\bfa}$ is isomorphic to the regular representation
of $\sH(r)_\mbc$.

Applying the functor $\sfF$ to $M_{\bfa}$ yields an $\afSrC$-module
$\OgC^{\ot r}\ot_{\afHrC}M_\bfa$ and hence, a $\DC(n)$-module
inflated by the homomorphism $\xi_{r,\mbc}$ given in
\eqref{xi_{r,z}}. The following result, which will be used in the
next section, tells how the central generators $\sfz_t^{\pm}$ of
$\DC(n)$ defined in \eqref{expression sfx sfy} act on this module.
\begin{Lem}\label{action1 central elts}
Let $\bfa=(a_1,\ldots,a_r)\in(\mbc^*)^r$ and $w\in\OgC^{\ot
r}\ot_{\afHrC}M_\bfa$. Then we have $\sfz_t^{\pm}w=\sum_{1\leq s\leq
r}a_s^{\pm t}w$ for $t\geq 1$.
\end{Lem}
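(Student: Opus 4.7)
The plan is to unwind both sides of the claimed equality through the bimodule structure on $\OgC^{\ot r}$, reducing the statement to the fact that $\sum_{s=1}^r X_s^{\pm t}$ is a central element of $\afHrC$ that acts on $M_{\bfa}$ as the scalar $\sum_{s=1}^r a_s^{\pm t}$.

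First I would compare the explicit formula \eqref{action central elts} for the left $\DC(n)$-action of $\sfz_t^{\pm}$ on $\OgC^{\ot r}$ with the explicit formula \eqref{afH action} for the right $\afHrC$-action of the Bernstein generators $X_s$. Since \eqref{afH action} gives $\og_{\bfi}\cdot X_s^{-1}=\og_{i_1}\cdots\og_{i_s+n}\cdots\og_{i_r}$, iteration yields $\og_{\bfi}\cdot X_s^{-t}=\og_{i_1}\cdots\og_{i_s+tn}\cdots\og_{i_r}$, and comparing term by term with \eqref{action central elts} gives the key identifications
\[
\sfz_t^{-}\cdot\og_{\bfi}\;=\;\sum_{s=1}^{r}\og_{\bfi}\cdot X_s^{-t},\qquad
\sfz_t^{+}\cdot\og_{\bfi}\;=\;\sum_{s=1}^{r}\og_{\bfi}\cdot X_s^{\,t},
\]
valid for every $\bfi\in\mbz^{r}$.

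Next, I would invoke the fact (already used in the proof of Proposition \ref{actions tensor space commute}) that $\sum_{s=1}^{r}X_s^{\pm t}$ lies in the centre of $\afHrC$. Consequently the operator $\cdot\,\sum_{s}X_s^{\pm t}$ on $\OgC^{\ot r}$, viewed as a right $\afHrC$-endomorphism, passes through the tensor product over $\afHrC$: for any $w\in\OgC^{\ot r}$ and $m\in M_{\bfa}$,
\[
\sfz_t^{\pm}\cdot(w\otimes m)\;=\;\Bigl(w\cdot\sum_{s=1}^{r}X_s^{\pm t}\Bigr)\otimes m\;=\;w\otimes\Bigl(\sum_{s=1}^{r}X_s^{\pm t}\Bigr)m.
\]

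Finally, on $M_{\bfa}=\afHrC/J_{\bfa}$, the coset $\bar 1$ of the identity satisfies $X_s\cdot\bar 1=a_s\bar 1$, whence $X_s^{\pm t}\cdot\bar 1=a_s^{\pm t}\bar 1$. Because $\sum_{s}X_s^{\pm t}$ is central in $\afHrC$, it commutes with the left multiplication by $\afHrC$ on $M_{\bfa}$, so its action on every element $m=h\cdot\bar 1$ of $M_{\bfa}$ is given by the same scalar: $\bigl(\sum_{s}X_s^{\pm t}\bigr)m=h\bigl(\sum_{s}X_s^{\pm t}\bigr)\bar 1=\bigl(\sum_{s}a_s^{\pm t}\bigr)m$. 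Substituting this into the previous display yields $\sfz_t^{\pm}(w\otimes m)=\bigl(\sum_{s}a_s^{\pm t}\bigr)(w\otimes m)$, which extends by linearity to all $w\in\OgC^{\ot r}\otimes_{\afHrC}M_{\bfa}$.

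There is no real obstacle here; the proof is entirely bookkeeping once one recognises the coincidence between the Hopf-algebra action of $\sfz_t^{\pm}$ dictated by \eqref{action central elts} and right multiplication by the central sum $\sum_s X_s^{\pm t}\in\afHrC$. The only point requiring mild care is to make sure the sign/orientation conventions for $X_s$ versus $X_s^{-1}$ match those used in \eqref{afH action} and \eqref{action central elts}, i.e.\ that $\sfz_t^{+}$ corresponds to $\sum_s X_s^{\,t}$ and $\sfz_t^{-}$ to $\sum_s X_s^{-t}$, which is what a direct comparison of the shifts $i_s\mapsto i_s\mp tn$ confirms.
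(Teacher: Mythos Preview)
Your proof is correct and follows essentially the same approach as the paper: both identify the left action of $\sfz_t^{\pm}$ on $\OgC^{\ot r}$ with right multiplication by the central element $\sum_s X_s^{\pm t}\in\afHrC$ via \eqref{action central elts}, then use centrality to move this element across the tensor product and evaluate it on $M_\bfa$ as the scalar $\sum_s a_s^{\pm t}$. Your write-up is simply more detailed in spelling out the sign conventions and the passage through the tensor product.
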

\begin{proof}
We may assume $w=\og_\bfi\ot \bar h$ for some $\bfi\in\afInr$ and
$\bar h\in M_\bfa$. Since for each $t\geq 1$, $\sum_{1\leq s\leq
r}X_s^{\pm t}$ is a central element in $\afHrC$, it follows from
\eqref{action central elts} that
$$
\sfz_t^{\pm}w=\og_\bfi \sum_{1\leq s\leq r}X_s^{\pm t}\ot\bar h
=\og_\bfi \ot h \sum_{1\leq s\leq r}X_s^{\pm t}\bar 1=\sum_{1\leq
s\leq r}a_s^{\pm t}\og_\bfi\ot h\bar 1=\sum_{1\leq s\leq r}a_s^{\pm
t}w.
$$
\end{proof}

A {\it segment} $\sfs$ with center $a\in\mbc^*$ is by definition an
ordered sequence
$$\sfs=(a\ttv^{-k+1},a\ttv^{-k+3},\ldots,a\ttv^{k-1})\in(\mbc^*)^k.$$
 Here $k$ is called the length
of the segment, denoted by $|\sfs|$. If
$\bfs=\{\sfs_1,\ldots,\sfs_p\}$ is an unordered collection of
segments, define $\wp(\bfs)$ to be the partition associated with the
sequence $(|\sfs_1|,\ldots,|\sfs_p|)$. That is,
$\wp(\bfs)=(|\sfs_{i_1}|,\ldots,|\sfs_{i_p}|)$ with
$|\sfs_{i_1}|\geq\cdots\geq|\sfs_{i_p}|$, where
$|\sfs_{i_1}|,\ldots,|\sfs_{i_p}|$ is a permutation of
$|\sfs_1|,\ldots,|\sfs_p|$. We also call $|\bfs|:=|\wp(\bfs)|$ the
length of $\bfs$.

Let $\mathscr S_r$ be the set of unordered collections of segments
$\bfs$ with $|\bfs|=r$. Then $\mathscr
S_r=\cup_{\la\in\Lambda^+(r)}\mathscr S_{r,\la}$, where $\mathscr
S_{r,\la}=\{\bfs\in\mathscr S_r\mid\wp(\bfs)=\la\}$.

For $\bfs=\{\sfs_1,\ldots,\sfs_p\}\in\mathscr S_{r,\la}$, let
$$\bfa(\bfs)=(\sfs_1,\ldots,\sfs_p)\in(\mbc^*)^r$$
be the $r$-tuple obtained by juxtaposing the segments in $\bfs$.
Then the element
$$y_{\la}=\sum_{w\in\fS_\la}(-\ttv^2)^{-\ell(w)}T_w\in\afHrC$$
generates the submodule $\afHrC \bar y_\la$ of $M_{\bfa(\bfs)}$
which, as an $\sH(r)_\mbc$-module, is isomorphic to $\sH(r)_\mbc
y_\la$. However,
\begin{equation}\label{signed permutation module}
\sH(r)_\mbc y_\la\cong S_\la\oplus(\bigoplus_{\mu\vdash r,
\mu\unrhd\la}m_{\mu,\la}S_\mu),
\end{equation}
where $S_\nu$ is the left cell module defined by the
Kazhdan--Lusztig's C-basis \cite{KL79} associated with the left cell
containing $w_{0,\la}$.

Let $V_\bfs$ be the unique composition factor of the $\afHrC$-module
$\afHrC \bar y_\la$ such that the multiplicity of $S_\la$ in
$V_\bfs$ as an $\sH(r)_\mbc$-module is nonzero.

We now can state the following classification theorem due to
Zelevinsky \cite{Zelevinsky} and Rogawski \cite{Rogawski}. The
construction above follows \cite{Rogawski}.

\begin{Thm}\label{classification irr affine Hecke algebra}
Let $\text{\rm Irr}(\afHrC)$ be the set of isoclasses of all simple
$\afHrC$-modules. Then the correspondence $\bfs\mapsto V_\bfs$
defines a bijection from $\mathscr S_r$ to $\text{\rm Irr}(\afHrC)$.
\end{Thm}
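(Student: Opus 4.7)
The plan is to follow the approach of Rogawski, combined with Zelevinsky's segment combinatorics, and to exploit the interaction between the $\afHrC$-action and the Kazhdan--Lusztig cell structure on $\sH(r)_\mbc$-modules. The starting observation is that every finite-dimensional simple $\afHrC$-module $V$ is a quotient of some $M_\bfa$. Indeed, since $\afHrC$ is finitely generated over its center and $V$ is finite-dimensional (Theorem \ref{fd simple modules} does not apply here, but the usual argument via the commutative subalgebra $\sA=\mbc[X_1^{\pm1},\ldots,X_r^{\pm1}]$ and the Hilbert Nullstellensatz does), the commutative algebra $\sA$ admits a joint eigenvector $v\in V$ with $X_iv=a_iv$, and the map $M_{\bfa}\to V$, $\bar 1\mapsto v$, is surjective. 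Using the intertwining elements constructed from \eqref{formula-in-HeckeAlg}, one shows that $M_\bfa$ and $M_{w\bfa}$ share the same Jordan--H\"older factors for every $w\in\fS_r$, so each simple module is attached to a well-defined multiset of eigenvalues in $(\mbc^*)^r/\fS_r$.

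Next I would analyze $\afHrC\bar y_\la\subseteq M_{\bfa(\bfs)}$ for $\bfs\in\mathscr S_{r,\la}$. As an $\sH(r)_\mbc$-module it is isomorphic to $\sH(r)_\mbc y_\la$, and by \eqref{signed permutation module} the cell module $S_\la$ occurs with multiplicity exactly one there, while every other $\sH(r)_\mbc$-constituent $S_\mu$ satisfies $\mu\vartriangleright\la$. Consequently at most one composition factor of $\afHrC\bar y_\la$ contains $S_\la$ in its $\sH(r)_\mbc$-restriction, which both pins down $V_\bfs$ unambiguously and shows $\dim\Hom_{\sH(r)_\mbc}(S_\la, V_\bfs)=1$. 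Existence of such a constituent is established by induction on the partial order on $\mathscr S_r$ controlled by Zelevinsky's linkage relation: when the segments in $\bfs$ are pairwise unlinked, $\afHrC\bar y_\la$ is itself simple, providing the base case; when linked pairs occur, one uses intertwiners between different orderings of $\bfa(\bfs)$ to compare with strictly smaller collections of segments and extracts the required factor.

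To show surjectivity, start with an arbitrary simple $V\in\text{Irr}(\afHrC)$ and let $\{a_1,\dots,a_r\}$ be its eigenvalue multiset. Applying Zelevinsky's greedy algorithm — grouping the $a_i$ into maximal chains of the form $a,a\ttv^2,a\ttv^4,\dots$ — produces a (unique) collection $\bfs\in\mathscr S_r$ with the same multiset. Let $\la=\wp(\bfs)$ be the minimal $\sH(r)_\mbc$-type occurring in $V|_{\sH(r)_\mbc}$ with respect to $\unrhd$; Frobenius reciprocity together with the surjection $M_{\bfa(\bfs)}\twoheadrightarrow V$ furnishes a nonzero $\sH(r)_\mbc$-map $S_\la\to V$, and hence a nonzero $\afHrC$-map from the cyclic submodule $\afHrC\bar y_\la\subseteq M_{\bfa(\bfs)}$ onto $V$; the image must be the distinguished composition factor $V_\bfs$, so $V\cong V_\bfs$. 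Injectivity is then immediate: $V_\bfs$ determines the central character, hence the multiset, and within a multiset the partition $\wp(\bfs)$ is recovered as the minimal $\mu$ with $\Hom_{\sH(r)_\mbc}(S_\mu,V_\bfs)\neq 0$; Zelevinsky's unique segment decomposition of a multiset refining a given partition shape then recovers $\bfs$.

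The main obstacle is the existence/simplicity step underlying the construction of $V_\bfs$: ensuring that $\afHrC\bar y_\la$ really does possess a (necessarily unique) composition factor in which $S_\la$ survives. The delicate point is to control the composition series of $\afHrC\bar y_\la$ when the segments in $\bfs$ are linked, since then $\afHrC\bar y_\la$ need not be simple and one must rule out the possibility that $S_\la$ gets absorbed into a proper subquotient whose other composition factors only see $S_\mu$ with $\mu\vartriangleright\la$. Handling this cleanly requires the intertwiner calculus between $M_{\bfa}$ and $M_{w\bfa}$ together with a careful induction on Zelevinsky's partial order on $\mathscr S_r$, which is the technical heart of Rogawski's argument.
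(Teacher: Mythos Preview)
The paper does not prove this theorem at all: it is stated without proof and attributed to Zelevinsky and Rogawski, with the remark that ``the construction above follows \cite{Rogawski}.'' So there is nothing to compare your proposal against on the paper's side.

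Your sketch is a reasonable outline of the Rogawski argument and captures the main ingredients correctly: realizing every simple as a quotient of some $M_\bfa$ via a joint eigenvector for the Laurent polynomial subalgebra, the multiplicity-one occurrence of $S_\la$ in $\sH(r)_\mbc y_\la$ to pin down $V_\bfs$, the intertwiner calculus relating $M_\bfa$ and $M_{w\bfa}$, and Zelevinsky's segment combinatorics to recover $\bfs$ from the eigenvalue multiset together with the minimal $\sH(r)_\mbc$-type. You are also right that the delicate point is the existence step when segments are linked. One small wrinkle: in your surjectivity paragraph you let $\la=\wp(\bfs)$ be ``the minimal $\sH(r)_\mbc$-type occurring in $V|_{\sH(r)_\mbc}$,'' but a priori these two quantities --- the partition determined by Zelevinsky's greedy decomposition of the eigenvalue multiset and the minimal type of $V$ --- need to be shown to coincide; this is part of what the linked/unlinked induction establishes, so be careful not to assume it prematurely.
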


We record the following general result.

\begin{Lem}\label{SH module}
Let $S$ and $H$ be two algebras over a field $\field$. If $V$ is an
$S$-$H$-bimodule,  $M$ is a left $H$-module, and $e\in S$ is an
idempotent element, then $eV$ is an $eSe$-$H$-bimodule and there is
an $eSe$-module isomorphism
$$
(eV)\ot_H M\cong e(V\ot_H M)\quad(ew\ot m\map e(w\ot m)).
$$
\end{Lem}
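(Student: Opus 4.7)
The plan is to reduce the statement to a straightforward direct sum decomposition argument, since $e$ being an idempotent in $S$ that commutes with the $H$-action produces a clean decomposition of $V$ as a right $H$-module.

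First I would verify the bimodule structure on $eV$: for $ese \in eSe$ and $ev \in eV$, using $e^2 = e$ gives $(ese)(ev) = e(sev) \in eV$, so $eV$ is a left $eSe$-module. It is a right $H$-submodule of $V$ because $(ev)h = e(vh)$ (the actions of $S$ and $H$ on $V$ commute), so $eV$ inherits a right $H$-module structure; the two actions on $eV$ clearly still commute.

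Next, the key observation is that, since $e$ and $1-e$ are orthogonal idempotents in $S$ and the $S$-action commutes with the $H$-action, the decomposition
$$V = eV \oplus (1-e)V$$
is a decomposition of \emph{right $H$-modules}. Applying the functor $- \otimes_H M$ therefore yields a decomposition of abelian groups
$$V \otimes_H M = (eV \otimes_H M) \oplus ((1-e)V \otimes_H M).$$
Now $e \in S$ acts on $V \otimes_H M$ through its left action on $V$; on $eV \otimes_H M$ it acts as the identity (because $e \cdot ev = e^2 v = ev$), and on $(1-e)V \otimes_H M$ it acts as zero (because $e(1-e) = 0$). Consequently
$$e(V \otimes_H M) = eV \otimes_H M$$
as subsets of $V \otimes_H M$, under the natural identification coming from the inclusion $eV \hookrightarrow V$.

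Finally I would read off the isomorphism: the map $(eV) \otimes_H M \to e(V \otimes_H M)$, $ew \otimes m \mapsto e(w \otimes m) = ew \otimes m$, is the canonical identification provided by the previous step. It is $eSe$-linear because on both sides the $eSe$-action is induced from the $S$-action on the first tensor factor. There is no substantive obstacle here; the only point requiring mild care is ensuring that the map is well-defined on the tensor product (i.e.\ balanced over $H$), which is immediate from $(ewh) \otimes m = ew \otimes (hm)$ in $V \otimes_H M$, together with the fact that $eV$ is closed under the $H$-action.
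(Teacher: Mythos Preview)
Your proof is correct and essentially the same as the paper's: the paper constructs the explicit inverse maps $\bar\alpha:(eV)\otimes_H M\to e(V\otimes_H M)$, $ew\otimes m\mapsto ew\otimes m$, and $\bar\beta:e(V\otimes_H M)\to (eV)\otimes_H M$ induced by the projection $V\to eV$, $w\mapsto ew$, then checks $\bar\alpha\bar\beta=\id$ and $\bar\beta\bar\alpha=\id$. Your direct-sum decomposition $V=eV\oplus(1-e)V$ is just a repackaging of the same argument---the projection $w\mapsto ew$ is exactly the splitting of that decomposition---so the two proofs coincide in substance.
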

\begin{proof}
There is a natural map $\al:(eV)\ot_H M\ra
 V\ot_HM$ defined by sending $ew\ot m$ to $ew\ot m$. The map $\al$ induces a
surjective map $\bar\al:(eV)\ot_H M\ra e(V\ot_HM)$. On the other
hand, there is a surjective right $H$-module homomorphism from $V$
to $eV$ defined by sending $w$ to $ew$ for $w\in V$. This map
induces a natural surjective map $\bt:V\ot_H M\ra (eV)\ot_HM$
defined by sending $w\ot m$ to $ew\ot m$ for $w\in V$ and $m\in M$.
By restriction, we get a map $\bar\beta:e(V\ot_HM)\ra (eV)\ot_H M$.
Since $\bar\al\bar\bt=\id$ and $\bar\bt\bar\al=\id$, the assertion
follows.
\end{proof}

Let
$$\mathscr S_{r}^{(n)}=\{\bfs\in\mathscr S_r\mid \OgC^{\ot r}\ot_{\afHrC}V_\bfs\neq0\}.$$
Then $\mathscr S_{r}^{(n)}=\mathscr S_{r}$ for all $r\leq n$. We
have the following classification theorem.

\begin{Thm}\label{weakly category equiv}  The set
 $$\{\OgC^{\ot r}\ot_{\afHrC} V_\bfs\mid\bfs\in\mathscr S_{r}^{(n)}\}$$
is a complete set of nonisomorphic simple
$\sS_\vtg(n,r)_\mbc$-modules. In particular, if $n\geq r$, then the
set $\{\OgC^{\ot r}\ot_{\afHrC} V_\bfs\mid\bfs\in\mathscr S_r\}$ is
a complete set of nonisomorphic simple $\sS_\vtg(n,r)_\mbc$-modules.
\end{Thm}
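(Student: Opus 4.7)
The statement splits naturally into two cases according to whether $n\geq r$ or $n<r$, and my plan is to treat them in turn.

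First, suppose $n\geq r$. In this case $\mathscr S_r^{(n)}=\mathscr S_r$, and the claim is that $\sfF=\OgC^{\otimes r}\otimes_{\afHrC}(-)$ sends the complete list $\{V_\bfs\}_{\bfs\in\mathscr S_r}$ of simple $\afHrC$-modules (Theorem~\ref{classification irr affine Hecke algebra}) to a complete list of nonisomorphic simple $\afSr_\mbc$-modules. I would simply invoke the Morita equivalence of Theorem~\ref{category equiv}: the functor $\sfF$ is a category equivalence, so it preserves and reflects simplicity and isomorphism classes. To confirm $\sfF(V_\bfs)\neq 0$ for every $\bfs\in\mathscr S_r$, I would use $\sfG\circ\sfF\cong\mathrm{id}$ from the proof of Theorem~\ref{category equiv}. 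This immediately yields the desired bijection $\bfs\mapsto\OgC^{\otimes r}\otimes_{\afHrC}V_\bfs$ and simultaneously shows $\mathscr S_r^{(n)}=\mathscr S_r$.

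Second, suppose $n<r$. Here my plan is to reduce to the case just handled by choosing $N=r$ and using the centralizer realization from Lemma~\ref{tsp}: $\afSr_\mbc\cong e\sS_\vtg(r,r)_\mbc e$, where $e=\sum_{\la\in\afLanr}[\diag(\ti\la)]$. Since $\sS_\vtg(r,r)_\mbc$ is a finite-dimensional $\mbc$-algebra, a standard idempotent argument (as invoked in the proof of Theorem~\ref{fd simple modules} via \cite[6.2(g)]{Gr80}) gives that simple $\afSr_\mbc$-modules are precisely the nonzero modules of the form $eL$ as $L$ runs over simple $\sS_\vtg(r,r)_\mbc$-modules, and that distinct such $eL$'s are nonisomorphic. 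By the first case applied with $n$ replaced by $r$, the simple $\sS_\vtg(r,r)_\mbc$-modules are exactly $\bdOg_{r,\mbc}^{\otimes r}\otimes_{\afHrC}V_\bfs$ for $\bfs\in\mathscr S_r$, where I write $\bdOg_{r,\mbc}^{\otimes r}$ to emphasize that the tensor space carries the $\sS_\vtg(r,r)_\mbc$-action.

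The third and technical step is the identification of bimodules
\[
e\,\bdOg_{r,\mbc}^{\otimes r}\;\cong\;\OgC^{\otimes r}
\qquad\text{as $\afSr_\mbc$-$\afHrC$-bimodules.}
\]
This is essentially built into the proof of Lemma~\ref{tsp}: the embedding of $\afSr_\mbc$ as $e\sS_\vtg(r,r)_\mbc e$ was obtained precisely by viewing $\sT_\vtg(n,r)$ as a sub-bimodule of $\sS_\vtg(r,r)_\mbc$ via $e_A\mapsto e_{\ti A}$, and by Proposition~\ref{bimodule-isom} we have a bimodule isomorphism $\sT_\vtg(n,r)\cong\OgC^{\otimes r}$. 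Combining these with Lemma~\ref{SH module}, applied to $S=\sS_\vtg(r,r)_\mbc$, $H=\afHrC$, $V=\bdOg_{r,\mbc}^{\otimes r}$, $M=V_\bfs$, yields
\[
e\bigl(\bdOg_{r,\mbc}^{\otimes r}\otimes_{\afHrC}V_\bfs\bigr)\;\cong\;(e\,\bdOg_{r,\mbc}^{\otimes r})\otimes_{\afHrC}V_\bfs\;\cong\;\OgC^{\otimes r}\otimes_{\afHrC}V_\bfs.
\]
Thus a simple $\sS_\vtg(r,r)_\mbc$-module $L=\bdOg_{r,\mbc}^{\otimes r}\otimes_{\afHrC}V_\bfs$ contributes a nonzero simple $\afSr_\mbc$-module precisely when $\OgC^{\otimes r}\otimes_{\afHrC}V_\bfs\neq 0$, i.e., exactly when $\bfs\in\mathscr S_r^{(n)}$, and each such contributes the module named in the theorem. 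The last sentence of the theorem is the combination of this with the first case.

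\textbf{Main obstacle.} The routine parts are the two applications of Morita/idempotent theory, together with the appeal to Lemma~\ref{SH module}. The one point that requires care is the bimodule identification $e\,\bdOg_{r,\mbc}^{\otimes r}\cong\OgC^{\otimes r}$: one must check that the $\sS_\vtg(r,r)_\mbc$-action on the left-hand side, restricted via $e\sS_\vtg(r,r)_\mbc e\cong\afSr_\mbc$, coincides with the $\afSr_\mbc$-action on $\OgC^{\otimes r}$ coming from $\xi_{r,\mbc}$ (Remark~\ref{NonrootOfUnity4}), and similarly that the right $\afHrC$-actions match. This compatibility is already implicit in Lemma~\ref{tsp} and Proposition~\ref{bimodule-isom}, but writing it out cleanly, especially pinning down the correct idempotent $e$ and verifying that $eL\neq 0\iff\bfs\in\mathscr S_r^{(n)}$, is the step I expect to demand the most attention.
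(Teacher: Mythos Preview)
Your proposal is correct and follows essentially the same route as the paper: the $n\geq r$ case is immediate from Theorem~\ref{category equiv}, and for $n<r$ you pass to $\sS_\vtg(r,r)_\mbc$ via the idempotent $e$, apply the first case there, use the idempotent-reduction fact \cite[6.2(g)]{Gr80}, and then invoke Lemma~\ref{SH module} together with the bimodule identification $e\,\sT_\vtg(r,r)_\mbc\cong\OgC^{\otimes r}$ to land on the stated modules. One factual slip: $\sS_\vtg(r,r)_\mbc$ is \emph{not} finite dimensional (its defining basis is indexed by the infinite set $\Th_\vtg(r,r)$), but this is harmless since the idempotent argument you cite does not require finite dimensionality.
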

\begin{proof}
If $n\geq r$, then the assertion follows from Theorem \ref{category
equiv}. Now we assume $n<r$. As in the proof of Theorem \ref{fd
simple modules}, we choose $N=r$ and regard $\afLanr$ as a subset of
$\afLarr$ via the map $\mu\mapsto\ti\mu$ given in \eqref{tilde-map}.
Then $\afSrC$ identifies a centralizer subalgebra
$e\sS_\vtg(r,r)_\mbc e$ of $\sS_\vtg(r,r)_\mbc$, where
 $e=\sum_{\la\in\afLanr}[\diag(\ti\la)]=\sum_{\la\in\afLanr}\phi_{\ti\la,\ti\la}^1$.

Recall from Proposition \ref{bimodule-isom} that the tensor space
$\Og^{\ot r}$ identifies $\sT_\vtg(n,r)$. Thus, $\sT_\vtg(r,r)_\mbc$
is the tensor space on which $\sS_\vtg(r,r)_\mbc$ acts. Hence, the
set
$$\{\sT_\vtg(r,r)_\mbc\ot_{\afHrC} V_\bfs\mid\bfs\in\mathscr S_r\}$$
forms a complete set of nonisomorphic simple
$\sS_\vtg(r,r)_\mbc$-modules. By \cite[6.2(g)]{Gr80} or a direct
argument, the set
$$\{e(\sT_\vtg(r,r)_\mbc\ot_{\afHrC} V_\bfs)\mid\bfs\in\mathscr S_r\}\backslash\{0\}$$
forms a complete set of nonisomorphic simple
$\sS_\vtg(n,r)_\mbc$-modules. Since
$$e\sT_\vtg(r,r)_\mbc\cong e\biggl(\bop_{\mu\in\afLa(r,r)}
x_\mu\afHrC\biggr)\cong\bop_{\mu\in\afLanr}x_{\ti
\mu}\afHrC\cong\bop_{\mu\in\afLanr}x_{ \mu}\afHrC\cong\OgC^{\ot
r},$$
 by Lemma \ref{SH module}, there is an $\afSrC$-module isomorphism
 $$
 e(\sT_\vtg(r,r)_\mbc\ot_{\afHrC}V_\bfs) \cong (e\sT_\vtg(r,r)_\mbc)\ot_{\afHrC}V_\bfs\cong \OgC^{\ot r}\ot_{\afHrC}V_\bfs,
$$
proving the $n<r$ case.
\end{proof}

We end this section with a discussion on how the index set $\mathscr
S_{r}^{(n)}$ of simple $\afSrC$-modules for $n<r$ is related to a
certain Branching Rule.

Since the parameter $z$ for the specialization
$\sZ\to\mbc,\up\mapsto z$ is not a root of unity, the Hecke algebra
$\sH(r)_\mbc$ is semisimple. Thus, as an $\sH(r)_\mbc$-module,
$V_\bfs$ is semisimple. By \eqref{signed permutation module}, we can
decompose the $\HrC$-module
\begin{equation}\label{Branching}
V_\bfs|_{\HrC}=\bop_{\mu(\bfs)\vdash
r,\mu(\bfs)\unrhd\wp(\bfs)}m_{\mu(\bfs)} S_{\mu(\bfs)}.
\end{equation}
Here, for $\la =\wp(\bfs)$, $m_\la=1$, and $m_{\mu(\bfs)}\leq
m_{\mu(\bfs),\la}$ (see \eqref{signed permutation module}).

\begin{Prop} Maintain the notation above. We have $\bfs\in\mathscr S_{r}^{(n)}$
if and only if $m_{\mu(\bfs)}\neq0$ for some partition
$\mu(\bfs)\in\La^+(n,r)$.
\end{Prop}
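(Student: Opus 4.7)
The plan is to reduce the computation of $\OgC^{\ot r}\ot_{\afHrC}V_\bfs$ to a finite type (non-affine) Schur--Weyl situation where the vanishing criterion is classical. First I would apply Lemma \ref{Lem1 for category equivalent} (specialized to $\field=\mbc$) to realize $\OgC^{\ot r}\cong\OgC_n^{\ot r}\ot_{\HrC}\afHrC$ as a $U(n)_\mbc$-$\afHrC$-bimodule. This yields an $\sS(n,r)_\mbc$-module isomorphism
\[
\OgC^{\ot r}\ot_{\afHrC}V_\bfs\;\cong\;\bigl(\OgC_n^{\ot r}\ot_{\HrC}\afHrC\bigr)\ot_{\afHrC}V_\bfs\;\cong\;\OgC_n^{\ot r}\ot_{\HrC}\bigl(V_\bfs|_{\HrC}\bigr),
\]
transporting the original nonvanishing question on the affine side to a nonvanishing question for the $\HrC$-module obtained by restriction.

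Next I would plug in the decomposition \eqref{Branching}, which since $z$ is not a root of unity and $\HrC$ is therefore semisimple, gives
\[
\OgC_n^{\ot r}\ot_{\HrC}\bigl(V_\bfs|_{\HrC}\bigr)\;\cong\;\bigoplus_{\mu(\bfs)\vdash r,\;\mu(\bfs)\unrhd\wp(\bfs)}m_{\mu(\bfs)}\bigl(\OgC_n^{\ot r}\ot_{\HrC}S_{\mu(\bfs)}\bigr).
\]
Thus the whole module vanishes if and only if $\OgC_n^{\ot r}\ot_{\HrC}S_{\mu(\bfs)}=0$ for every $\mu(\bfs)$ with $m_{\mu(\bfs)}\neq 0$.

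The final step is to invoke classical Schur--Weyl duality for $\sS(n,r)_\mbc$ (as developed e.g.\ in \cite[Part 5]{DDPW} and used in the proof of Theorem \ref{category equiv}): via the Morita-type correspondence between $\HrC$-modules and $\sS(n,r)_\mbc$-modules, the functor $M\mapsto\OgC_n^{\ot r}\ot_{\HrC}M$ sends the simple $\HrC$-module $S_\mu$ to the simple (Weyl-type) $\sS(n,r)_\mbc$-module indexed by $\mu$ when $\mu\in\La^+(n,r)$, and to $0$ otherwise. Combining this with the previous display gives that $\OgC^{\ot r}\ot_{\afHrC}V_\bfs\neq 0$ if and only if $m_{\mu(\bfs)}\neq 0$ for some partition $\mu(\bfs)$ with at most $n$ parts, proving the proposition.

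The routine part of this argument is the tensor product gymnastics in the first paragraph; the conceptual content is entirely in the semisimple reduction \eqref{Branching} and the classical nonvanishing criterion for Weyl modules. The only real obstacle is bookkeeping: one has to be careful that the bimodule isomorphism from Lemma \ref{Lem1 for category equivalent} is indeed of $U(n)_\mbc$-$\afHrC$-bimodules (not just of $\HrC$-bimodules) so that the resulting identification descends to an $\sS(n,r)_\mbc\cong e\sS_\vtg(N,r)_\mbc e$-module isomorphism (with $N=\max\{n,r\}$ as in Lemma \ref{tsp}); this is where the special compatibility between the $\afHr$-action of \eqref{afH action} and the $U(n)_\mbc$-action on $\OgC_n^{\ot r}$ is used, and it is already implicit in the proof of Theorem \ref{category equiv}.
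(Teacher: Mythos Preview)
Your proposal is correct and follows essentially the same route as the paper's proof: apply Lemma~\ref{Lem1 for category equivalent} to obtain $\OgC^{\ot r}\ot_{\afHrC}V_\bfs\cong\OgnC^{\ot r}\ot_{\HrC}(V_\bfs|_{\HrC})$, decompose via \eqref{Branching}, and then invoke the classical fact that $\OgnC^{\ot r}\ot_{\HrC}S_\mu\neq 0$ if and only if $\mu\in\La^+(n,r)$. The extra bookkeeping remarks you make about the bimodule structure are accurate but not strictly needed, since only nonvanishing (as a vector space) is at issue.
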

\begin{proof}
By Lemma \ref{Lem1 for category equivalent}, we have
$\sS(n,r)_\mbc$-module isomorphisms
$$\OgC^{\ot r}\ot_{\afHrC}V_\bfs\cong\OgnC^{\ot r}\ot_{\HrC}V_\bfs\cong
\bop_{\mu(\bfs)\vdash r,\mu(\bfs)\unrhd\wp(\bfs)}m_{\mu(\bfs)}
(\OgnC^{\ot r}\ot_{\HrC}S_{\mu(\bfs)}).$$ Since $\OgnC^{\ot
r}\ot_{\HrC}S_{\mu(\bfs)}\not=0$ if and only if
$\mu(\bfs)\in\La^+(n,r)$, it follows that $\OgC^{\ot
r}\ot_{\afHrC}S_{\mu(\bfs)}\not=0$ if and only if
$m_{\mu(\bfs)}\not=0$ for some partition $\mu(\bfs)\in\La^+(n,r)$.
\end{proof}

The upwards approach to the classification of simple
$\afSrC$-modules when $n<r$ depends on the description of the set
$\mathscr S_{r}^{(n)}$. From the proof above, one sees easily that
$$\bigcup_{\la\in\La^+(n,r)}\mathscr S_{r,\la}\subseteq \mathscr S_{r}^{(n)}.$$
The next example shows that $\mathscr S_{r}^{(n)}\neq \mathscr
S_{r}$ does occur.

\begin{Example} For any $a\in\mbc$, there is an algebra homomorphism
$ev_a:\afHr_\mbc\to\sH(r)_\mbc$, call the {\it evaluation map} (see
\cite[5.1]{CP}), such that
\begin{enumerate}
\item $ev_a(T_i)=T_i,\qquad1\leq i\leq r-1$;
\item $ev_a(X_j)=a\ttv^{-2(j-1)}T_{j-1}\cdots T_2T_1T_1T_2\cdots T_{j-1}, 1\leq j\leq r$.
\end{enumerate}
Thus, every simple $\sH(r)_\mbc$-module $S_\la$ is also a simple
$\afHrC$-module. Those $\bfs$ such that $V_\bfs$ is isomorphic to
$S_\la$ with $\la\not\in\La^+(n,r)$ are not in $\mathscr
S_{r}^{(n)}$.
\end{Example}


However, the proof above also shows that if we know explicitly the
decomposition in \eqref{Branching}, especially for those
$\bfs\in\mathscr S_r$ with $\wp(\bfs)$ having more than $n$ parts,
then we are able to determine $\mathscr S_{r}^{(n)}$. We will call
the description of the nonzero multiplicities in \eqref{Branching}
the {\it affine-to-finite Branching Rule} or simply the {\it affine
Branching Rule}.\index{affine Branching Rule}

\begin{Prob} \label{Prob-Branching-Rules} Describe the affine Branching Rule. In other words,
find a necessary and sufficient condition for the nonzero
multiplicities $m_{\mu(\bfs)}$ given in \eqref{Branching}.
\end{Prob}

We will use the downwards approach to complete the classification in
\S4.5. We will prove that each  simple $\afSrC$-module is an
irreducible polynomial representation of $\afUglC$ in Proposition
\ref{step1} and prove that each irreducible polynomial
representation of $\afUglC$ is a simple $\afSrC$-module for some $r$
in Proposition \ref{step2}. Combining Propositions \ref{step1} with
\ref{step2}, we can classify the simple modules for $\afSrC$.

\section{Identification of simple $\sS_\vtg(n,r)_\mbc$-modules: the $n>r$ case}


Recall the $\mathbb C$-algebra $\DC(n)$ defined in Remark
\ref{NonrootOfUnity2} and the $\mathbb C$-algebra isomorphism
$\sE_{\rm H,\mbc}:\DC(n)\to\afUglC$\index{$\sE_{\rm H,\mbc}$, isomorphism $\DC(n)\overset\sim\to\afUglC$}
 discussed in Theorem \ref{iso
afgln dHallr} and Remarks \ref{iso afgln dHallr over C}(2). By the
presentation given in Theorem \ref{presentation dHallAlg} it is easy
to see that there is an automorphism $g$ of $\DC(n)$ satisfying
$$\aligned
&g(K_i^{\pm1})=K_i^{\pm1},\;\qquad
g(u_i^\pm)=u_i^\pm,\;(1\leq i<n),\;\\
&g(u_n^\pm)=(-1)^nz^{\pm1}u_n^\pm,\,\quad g(\sfz^\pm_s)=-z^{\mp(n-1)s}\sfz^\pm_s\;(s\geq 1).\\
\endaligned$$
Combining the two gives the following.

\begin{Prop}\label{isom f}
 There is a $\mbc$-algebra isomorphism
 $$f=\sE_{\rm H,\mbc}\circ g:\DC(n)\lra
\afUglC$$ such that
$$\aligned
&K_i^{\pm1}\lm\ttk_i^{\pm1},\;\qquad
u_i^\pm\lm \ttx^\pm_{i,0}\;(1\leq i<n),\;\\
&u_n^\pm\lm(-1)^n\ttv^{\pm 1}\ep_n^\pm,\,\quad\sfz^\pm_s\lm
\ttv^{\mp(n-1)s}\frac{s}{[s]_\ttv}\th_{\pm
s}\;(s\geq 1).\\
\endaligned$$
\end{Prop}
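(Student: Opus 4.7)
The plan is to prove the proposition in two stages: first establish that the formulas on generators given for $g$ extend to a well-defined $\mbc$-algebra automorphism of $\DC(n)$, then compose with the isomorphism $\sE_{\rm H,\mbc}:\DC(n)\to\afUglC$ from Theorem \ref{iso afgln dHallr} (specialised to $\mbc$, per Remarks \ref{iso afgln dHallr over C}(2)) and read off the images of the generators.

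For the first stage, I will use the Chevalley presentation of $\DC(n)$ obtained from Theorem \ref{presentation dHallAlg} by specialising $\up\mapsto z$ (valid by Corollary \ref{NonrootOfUnity3}, since $z$ is not a root of unity). It then suffices to verify that the assignments $K_i^{\pm1}\mapsto K_i^{\pm1}$, $u_i^\pm\mapsto u_i^\pm$ for $1\leq i<n$, $u_n^\pm\mapsto (-1)^n z^{\pm1} u_n^\pm$, $\sfz_s^\pm\mapsto -z^{\mp(n-1)s}\sfz_s^\pm$ preserve each of the relations (QGL1)--(QGL8). Most relations are preserved for trivial reasons: (QGL1), (QGL6), (QGL7) involve only elements on which $g$ acts by scalar multiplication, so commutativity and the $K_iK_i^{-1}=1$ relation are automatic; (QGL2), (QGL3) for $i\neq j$ or $j\neq n$, and (QGL8) for $i<n$ all produce matching scalar factors on both sides. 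The only cases requiring a short check are (QGL3) with $i=j=n$, where one computes that the scalar $(-1)^n z \cdot (-1)^n z^{-1}=1$ cancels on the commutator side while the right-hand side $\frac{K_nK_1^{-1}-K_n^{-1}K_1}{\up-\up^{-1}}$ is fixed by $g$; (QGL2), (QGL3), (QGL8) involving $u_n^\pm$ or $\sfz_s^\pm$, where the common scalar can be factored through; and the quantum Serre relations (QGL4), (QGL5), where for each summand $E_i^a E_j E_i^b$ one picks up a factor $c_i^{a+b} c_j = c_i^{1-c_{i,j}} c_j$ depending only on the pair $(i,j)$ (with $c_k=1$ for $k<n$ and $c_n=(-1)^n z$), so that the Serre sum is rescaled by a single nonzero constant and remains zero. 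The same scalar observation shows that the inverse assignments (with $z$ replaced by $z^{-1}$ in the $u_n^\pm$ and $\sfz_s^\pm$ coefficients, adjusted by signs) give a two-sided inverse; thus $g$ is a $\mbc$-algebra automorphism of $\DC(n)$.

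For the second stage, I will compute the images of the generators under $f=\sE_{\rm H,\mbc}\circ g$ directly using the explicit formulas for $\sE_{\rm H,\mbc}$ (obtained from Corollary \ref{epi-doubleRH-QGL} and Theorem \ref{iso afgln dHallr} over $\mbc$): $K_i^{\pm1}\mapsto \ttk_i^{\pm1}$, $u_i^\pm\mapsto \ttx_{i,0}^\pm$ for $1\leq i<n$, $u_n^\pm\mapsto \ep_n^\pm$, and $\sfz_s^\pm\mapsto -\frac{s}{[s]_\ttv}\th_{\pm s}$ for $s\geq 1$. Applying these after $g$ yields $K_i^{\pm1}\mapsto\ttk_i^{\pm1}$, $u_i^\pm\mapsto \ttx_{i,0}^\pm$ for $i<n$, $u_n^\pm\mapsto (-1)^n z^{\pm1}\ep_n^\pm=(-1)^n\ttv^{\pm1}\ep_n^\pm$, and $\sfz_s^\pm\mapsto (-z^{\mp(n-1)s})\bigl(-\tfrac{s}{[s]_\ttv}\th_{\pm s}\bigr)=\ttv^{\mp(n-1)s}\tfrac{s}{[s]_\ttv}\th_{\pm s}$, matching the claimed formulas. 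Since $\sE_{\rm H,\mbc}$ is an isomorphism and $g$ is an automorphism, their composite $f$ is a $\mbc$-algebra isomorphism.

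I do not anticipate any serious obstacle: the whole argument reduces to bookkeeping of scalars in the defining relations. The only mildly delicate point is to be attentive with the index convention $K_{n+1}=K_1$ (and likewise for $\ttk$'s) when verifying (QGL3) for $i=j=n$, but once this is spelled out correctly, the scalars cancel cleanly. The role of the factor $(-1)^n z^{\pm1}$ on $u_n^\pm$ is precisely to absorb the mismatch coming from Beck's embedding formula for $\ep_n^\pm$ in Chapter 2, so that the resulting $f$ intertwines $u_n^\pm$ with a natural scalar multiple of $\ep_n^\pm$, and the factor $-z^{\mp(n-1)s}$ on $\sfz_s^\pm$ is chosen to cancel the sign and absorb the shift $\ttv^{\mp(n-1)s}$ appearing between $\th_{\pm s}$ and the Schiffmann--Hubery generators under $\sE_{\rm H,\mbc}$, as read off from \eqref{defn of theta_s} and Proposition \ref{Hubery Theorm}.
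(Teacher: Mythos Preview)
Your proposal is correct and follows exactly the approach indicated in the paper: the paragraph immediately preceding the proposition asserts that the presentation in Theorem~\ref{presentation dHallAlg} makes it ``easy to see'' that $g$ is an automorphism of $\DC(n)$, and the proposition then follows by composing with $\sE_{\rm H,\mbc}$. You have simply made explicit the relation-by-relation verification that the paper leaves to the reader, and your computation of the images under $f=\sE_{\rm H,\mbc}\circ g$ is correct.
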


Now the $\mbc$-algebra epimorphism
$\zrC:\DC(n)\to\afSrC$\index{$\zrC$, epimorphism $\DC(n)\to\afSrC$}
described in Corollary \ref{surjective-dHall-aff/C} (and Theorem
\ref{surjective-dHall-aff}) together with $f$ gives a $\mbc$-algebra
epimorphism \index{$\zrC'$, epimorphism $\afUglC\to\afSrC$}
\begin{equation}\label{zrCf}
\zrC':=\zrC\circ f^{-1}:\afUglC\lra\afSrC.
\end{equation}
Thus, every $\afSrC$-module will be inflated into a $\afUglC$-module
via this homomorphism. In particular, every simple $\afSrC$-module
given in Theorem \ref{weakly category equiv} is a simple
$\afUglC$-module. We now identify them for the $n>r$ case in terms
of the irreducible polynomial representations of $\afUglC$ described
in Theorem \ref{classification of simple afUglC-modules}.

Recall from \S4.2 that $\sQ(n)$ is the set of dominant $n$-tuples of
polynomials. For $r\geq 1$, let
$$\sQ(n)_r=\big\{\bfQ=(Q_1(u),\ldots,Q_n(u))\in\sQ(n)\mid r=\sum_{1\leq i\leq n}\mathrm{deg}\, Q_i(u)\big\}.$$

 For $\bfs=\{\sfs_1,\ldots,\sfs_p\}\in\ms S_r$ with
$$\sfs_i=(a_i\ttv^{-\mu_i+1},a_i\ttv^{-\mu_i+3},\ldots,a_i\ttv^{\mu_i-1})\in(\mbc^*)^{\mu_i},$$
let $Q_n(u)=1$ and for $1\leq i\leq n-1$, define
$$\aligned
P_{i}^\pm(u)&=\prod_{1\leq j\leq p\atop \mu_j=i}(1-a_j^{\pm 1}u^{\pm 1}),\\
Q_i^\pm(u)&=P_i^\pm(uz^{-i+1})P_{i+1}^\pm(uz^{-i+2})\cdots
P_{n-1}^\pm(uz^{n-2i}),\,\,\text{ and }\\
\bfQ_\bfs&=(Q_1(u),\ldots,Q_n(u)).
\endaligned$$
We now have the following identification theorem.

\begin{Thm}\label{n>r representation} Maintain the notation above and let $n>r$.
The map $\bfs\mapsto\bfQ_\bfs$ defines a bijection from $\mathscr
S_r$ to $\sQ(n)_r$, and induces $\afUglC$-module isomorphisms
$\OgC^{\ot r}\ot_{\afHrC}V_\bfs\cong L(\bfQ_\bfs)$ for all
$\bfs\in\ms S_r$. Hence, the set $\{L(\bfQ)\mid\bfQ\in\sQ(n)_r\}$
forms a complete set of nonisomorphic simple $\afSrC$-modules.
\end{Thm}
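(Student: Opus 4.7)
My plan breaks into two main parts: establishing the combinatorial bijection $\bfs \mapsto \bfQ_\bfs$, and identifying each simple $\afSrC$-module $M_\bfs := \OgC^{\otimes r} \otimes_{\afHrC} V_\bfs$ (inflated via $\zrC'$) with the simple polynomial $\afUglC$-module $L(\bfQ_\bfs)$.

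For the bijection, I would verify by direct computation that $\bfQ_\bfs \in \sQ(n)_r$: dominance is immediate from the telescoping identity $Q_i^+(uz^{i-1})/Q_{i+1}^+(uz^{i+1}) = P_i^+(u)$, and the degree identity $\sum_{i=1}^{n-1} \deg Q_i = \sum_{i=1}^{n-1}\sum_{k \geq i} p_k = \sum_k k\, p_k = r$ (where $p_k$ counts length-$k$ segments) confirms that $\bfQ_\bfs$ has total degree $r$. Since $n > r$, the partition $\wp(\bfs)$ has at most $r < n$ nonzero parts, so $\bfQ_\bfs$ is consistent with $Q_n = 1$. The inverse map reads segments off from the roots of $P_i^+(u) := Q_i^+(uz^{i-1})/Q_{i+1}^+(uz^{i+1})$, grouped by $i$.

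The core of the proof is the module identification. Setting $\la = \wp(\bfs)$ and $\bfi_\la = (1^{\la_1}, 2^{\la_2}, \ldots, n^{\la_n}) \in I(n,r)$, I would take as candidate pseudo-highest weight vector $w := \og_{\bfi_\la} \otimes \overline{y_\la} \in M_\bfs$, where $\overline{y_\la}$ is the image of $y_\la$ in $V_\bfs$. Nonvanishing of $w$ follows from the $\HrC$-decomposition \eqref{signed permutation module}, which contains $S_\la$ with multiplicity one, combined with the fact that $\og_{\bfi_\la}$ generates the $\la$-weight line in $\Og_{n,\mbc}^{\otimes r} \otimes_{\HrC} S_\la$ as a quantum $\frak{gl}_n$-Weyl module. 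The weight relations $\ttk_i w = z^{\la_i} w$ are immediate from \eqref{QGKMAlg-action}, and $u_j^+ w = 0$ for $1 \leq j < n$ follows from the shape of $\bfi_\la$. To upgrade this to $\ttx^+_{j,s}w = 0$ for all $s \in \mbz$, I would use Beck's embedding to express $\ttx^+_{j,s}$ as iterated $v$-commutators of $\ttx^\pm_{i,0}$ and $\th_{\pm 1}$, then argue inductively on $|s|$ that any nonzero image would have weight strictly greater than $\la$ in dominance order, forcing it to vanish in $M_\bfs$. Next, by Proposition \ref{isom f} and Lemma \ref{action1 central elts}, $\sfz_t^\pm w = \bigl(\sum_{s=1}^r a(\bfs)_s^{\pm t}\bigr) w$, which via $f$ gives the eigenvalues of $\theta_{\pm t} = v^{\pm nt}\sum_i \ttg_{i,\pm t}$ on $w$. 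A short multiset check then shows $\bigsqcup_{i=1}^{n-1}\{\text{roots of } Q_{i,\bfs}^+\} = \bfa(\bfs)$, matching the predicted central character. Pinning down the individual $Q_i$'s (rather than just their product) is achieved by combining the weight $\la$, which fixes $\deg Q_i = \la_i$, with the $\afUslC$-restriction data supplied by Proposition \ref{Chari-Presslry} together with Chari--Pressley's classification \cite{CP}, and with the normalization $Q_n = 1$ forced by $\la_n = 0$. This yields a surjection $M(\bfQ_\bfs) \twoheadrightarrow M_\bfs$, and the simplicity of $M_\bfs$ (Theorem \ref{weakly category equiv}) gives $M_\bfs \cong L(\bfQ_\bfs)$. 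The completeness assertion then follows at once from Theorem \ref{weakly category equiv} and the bijection of step one.

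The main obstacle I anticipate is twofold: first, verifying $\ttx^+_{j,s}w = 0$ for $s \neq 0$ is delicate because Beck's embedding expresses these elements through intricate commutators, and second, extracting the individual eigenvalues of $\ms Q_i^\pm$ from the central-element data (which only detects symmetric functions in the roots across all $i$). The cleanest route around both difficulties is likely to invoke Proposition \ref{Chari-Presslry} first, transporting the problem to the Chari--Pressley functor $\sF$ and identifying $M_\bfs|_{\afUslC} \cong \bar L(\bfP_\bfs)$ with the expected Drinfeld polynomial $\bfP_\bfs = (P_{i,\bfs})$, and then using the central character together with the normalization $Q_n = 1$ to lift uniquely to the $\afUglC$-structure $L(\bfQ_\bfs)$.
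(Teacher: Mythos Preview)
Your ``cleanest route'' at the end is essentially the paper's approach: first identify $M_\bfs|_{\afUslC}\cong\bar L(\bfP_\bfs)$ via Proposition~\ref{Chari-Presslry} and the Chari--Pressley result \cite[7.6]{CP} (this supplies a pseudo-highest weight vector $w_0$ with the $P_i^\pm$ as eigenvalues, so all the $\ttx_{j,s}^+w_0=0$ come for free and your first anticipated obstacle evaporates), then use the central-character data from $\sfz_t^\pm$ to determine the $\afUglC$-structure. Your bijection argument is also the paper's: the inverse reads segment lengths and centers from the roots of the $P_i$.

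There is, however, one genuine gap in your reasoning. You write that ``$Q_n=1$ is forced by $\la_n=0$'', but at this stage you only know the $Q_i^\pm(u)$ as formal power series (eigenvalues of $\ms Q_i^\pm(u)$ on $w_0$), not as polynomials; so ``$\deg Q_n=\la_n$'' has no meaning yet, and knowing $\ttk_nw_0=w_0$ says nothing about the action of the $\ttg_{n,\pm t}$. The paper closes this gap as follows. On one hand, the central character (via Lemma~\ref{action1 central elts} and Proposition~\ref{isom f}) computes $\prod_i Q_i^\pm(u)$ explicitly as $\prod_{j,k}\bigl(1-(a_ju z^{2k-1-\mu_j})^{\pm1}\bigr)$. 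On the other hand, telescoping the relations $P_i^\pm(u)=Q_i^\pm(uz^{i-1})/Q_{i+1}^\pm(uz^{i+1})$ expresses the same product as $\bigl(\prod_i P_i^\pm(uz^{-i+1})\cdots P_i^\pm(uz^{i-1})\bigr)\cdot\prod_{l=0}^{n-1}Q_n^\pm(uz^{2l})$. Equating the two gives $\prod_{l=0}^{n-1}Q_n^\pm(uz^{2l})=1$; rewriting this via the exponential definition of $\ms Q_n^\pm$ forces $\ttg_{n,\pm t}w_0=0$ for all $t\geq1$, whence $Q_n^\pm(u)=1$. Only then does the telescoping relation yield that each $Q_i^\pm$ is a genuine polynomial with $\deg Q_i=\la_i$, and the degree identity $\sum\deg Q_i=r$ gives $\la_n=0$ (not the other way around).
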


\begin{proof} By the algebra homomorphism $\xi_{r,\mbc}'$ given in \eqref{zrCf}, every $\afSrC$-module $M$
is regarded as a $\afUglC$-module.  Let $[M]$ denote the isoclass of
$M$. By Theorem \ref{weakly category equiv}, it suffices to prove
that
\begin{itemize}
\item[(1)] $\OgC^{\ot r}\ot_{\afHrC}V_\bfs\cong L(\bfQ_\bfs)$, and
 \item[(2)] $\{[L(\bfQ)]\mid\bfQ\in\sQ(n)_r\}=\{[\OgC^{\ot r}\ot_{\afHrC}V_\bfs]\mid\bfs\in\mathscr S_r\}.$
 \end{itemize}

We first prove (1). Let $\bfs=\{\sfs_1,\ldots,\sfs_p\}\in\ms S_r$ be
an unordered collection of segments with
$$\sfs_i=(a_i\ttv^{-\mu_i+1},a_i\ttv^{-\mu_i+3},\ldots,a_i\ttv^{\mu_i-1})\in(\mbc^*)^{\mu_i}.$$
Thus, $r=\sum_{1\leq i\leq p}\mu_i$. Let
$\bfa=\bfa(\bfs)=(\sfs_1,\ldots,\sfs_p)\in(\mbc^*)^r$ be the
sequence obtained by juxtaposing the segments in $\bfs$.  Then the
simple $\afSrC$-module $\OgC^{\ot r}\ot_{\afHrC}V_\bfs$ becomes a
simple $\afUglC$-module via \eqref{zrCf}. As a simple
$\afUslC$-module, this module is isomorphic to the Chari--Pressley
module $\sF(V_\bfs)$ by Proposition \ref{Chari-Presslry}. Applying
\cite[7.6]{CP} yields a $\afUslC$-module isomorphism $\OgC^{\ot
r}\ot_{\afHrC}V_\bfs\cong\bar L(\bfP)$, where
$\bfP=(P_1(u),\ldots,P_{n-1}(u))$ with
\begin{equation}\label{Drinfeld poly}
P_{i}^\pm(u)=\prod_{1\leq j\leq p\atop \mu_j=i}(1-a_j^{\pm 1}u^{\pm
1}),\quad 1\leq i\leq n-1.
\end{equation}
As a simple $\afUglC$-module, by \cite[Lem.~4.2]{FM}, if
$w_0\in\OgC^{\ot r}\ot_{\afHrC}V_\bfs$ is the pseudo-highest weight
vector of weight $\la=(\la_1,\ldots,\la_n)$, then $\la$ is a
partition of $r$, since $\la$ is also a highest weight as an
$\sS(n,r)_\mbc$-module, and there exist $Q_i^+(u)\in\mbc[[u]]$ and
$Q_i^-(u)\in\mbc[[u^{-1}]]$, $1\leq i\leq n$, such that
\begin{equation}\label{Q P}
\begin{split}
&\ms Q_i^\pm(u)w_0= Q_i^\pm (u)w_0,\quad \ms P_j^\pm(u)w_0=
P_j^\pm(u) w_0,\quad K_iw_0=\ttv^{\la_i}w_0\\
&\qquad\qquad\qquad
P_j^\pm(u)=\frac{Q_j^\pm(u\ttv^{j-1})}{Q_{j+1}^\pm(u\ttv^{j+1})},\,\,\text{
and }\,\, \text{deg}P_i(u)=\la_i-\la_{i+1}.
\end{split}
\end{equation}
We now prove that $Q_i^\pm (u)$ are polynomials of degree $\la_i$.

Since $f(\sfz^\pm_t)=\ttv^{\mp(n-1)t}\frac{t}{[t]}\th_{\pm t}$ for
all $t\geq 1$ as in Proposition \ref{isom f}, it follows from
\eqref{defn of theta_s} and Lemma \ref{action1 central elts} that
$$
\frac{t\ttv^{\pm t}}{[t]_\ttv}\sum_{1\leq i\leq n}g_{i,\pm t}w_0
=\sfz_t^\pm w_0=\sum_{1\leq i\leq p\atop 1\leq k\leq\mu_i}
(a_i\ttv^{-\mu_i+2k-1})^{\pm t}w_0.
$$
 Thus,
\begin{equation*}
\begin{split}
\prod_{1\leq i\leq n} Q_i^\pm(u)w_0=\prod_{1\leq i\leq n}\ms
Q_i^\pm(u)w_0&=\exp\bigg(-\sum_{t\geq
1}\frac{1}{[t]_\ttv}\bigg(\sum_{1\leq i\leq n}g_{i,\pm t}\bigg)
(u\ttv)^{\pm t}\bigg)w_0\\
&=\exp\bigg(-\sum_{1\leq i\leq p\atop 1\leq k\leq\mu_i}\sum_{t\geq
1}\frac1t(a_iu\ttv^{2k-1-\mu_i})^{\pm t}\bigg)w_0\\
&=\prod_{1\leq i\leq p\atop 1\leq k\leq\mu_i}\exp\bigg(-\sum_{t\geq
1}\frac1t(a_iu\ttv^{2k-1-\mu_i})^{\pm t}\bigg)w_0\\
&=\prod_{1\leq i\leq p\atop 1\leq k\leq\mu_i}\bigg(1-\big(
a_iu\ttv^{2k-1-\mu_i}\big)^{\pm 1}\bigg)w_0.
\end{split}
\end{equation*}
as $-\sum_{t\geq 1}\frac1t(a_iu\ttv^{2k-1-\mu_i})^{\pm
t}=\text{ln}\big(1-a_iu\ttv^{2k-1-\mu_i}\big)$. Hence,
\begin{equation}\label{one hand}
\prod_{1\leq i\leq n} Q_i^\pm(u)=\prod_{1\leq i\leq p\atop 1\leq
k\leq\mu_i}\bigg(1-\big( a_iu\ttv^{2k-1-\mu_i}\big)^{\pm 1}\bigg).
\end{equation}

On the other hand, by \eqref{Q P},
\begin{equation}\label{polynomial Qi}
Q_i^\pm(u)=P_i^\pm(uz^{-i+1})P_{i+1}^\pm(uz^{-i+2})\cdots
P_{n-1}^\pm(uz^{n-2i})Q_n^\pm(uz^{2(n-i)})
\end{equation}
for all $1\leq i\leq n$, and by \eqref{Drinfeld poly},
\begin{equation*}
\begin{split}
&\quad P_i^\pm(uz^{-i+1})P_i^\pm(uz^{-i+3})\cdots P_i^\pm(uz^{i-1}) \\
&=\prod_{1\leq j\leq p\atop \mu_j=i}(1-(a_juz^{-\mu_j+1})^{\pm 1})
(1-(a_juz^{-\mu_j+3})^{\pm 1})\cdots(1-(a_juz^{\mu_j-1})^{\pm 1}) \\
&=\prod_{1\leq j\leq p,\,\mu_j=i\atop 1\leq
k\leq\mu_j}(1-(a_juz^{2k-1-\mu_j})^{\pm 1}).
\end{split}
\end{equation*}
Thus,
\begin{equation*}
\prod_{1\leq i\leq
n-1}\bigg(P_i^\pm(uz^{-i+1})P_i^\pm(uz^{-i+3})\cdots
P_i^\pm(uz^{i-1})\bigg)=\prod_{1\leq j\leq p\atop 1\leq
k\leq\mu_j}\bigg(1-\big( a_ju\ttv^{2k-1-\mu_j}\big)^{\pm 1}\bigg)
\end{equation*}
Hence,
\begin{equation*}
\begin{split}
\prod_{1\leq i\leq n} Q_i^\pm(u) &=\prod_{1\leq i\leq n-1}\bigg( P_i^\pm(uz^{-i+1})P_i^\pm(uz^{-i+3})\cdots P_i^\pm(uz^{i-1})\bigg)\prod_{0\leq l\leq n-1}Q_n^\pm(u\ttv^{2l})\\
&=\prod_{1\leq i\leq p\atop 1\leq k\leq\mu_i}\bigg(1-\big(
a_iu\ttv^{2k-1-\mu_i}\big)^{\pm
1}\bigg)\prod_{0\leq l\leq n-1}Q_n^\pm(u\ttv^{2l}).\\
\end{split}
\end{equation*}
Combining this with \eqref{one hand} yields
$$
\prod_{0\leq l\leq n-1}Q_n^\pm(u\ttv^{2l})=1.
$$
So we have
$$
\exp\bigg(-\sum_{t\geq 1}\frac{1}{[t]_\ttv}g_{n,\pm
t}\bigg(\sum_{0\leq l\leq n-1}\ttv^{\pm 2lt}(u\ttv)^{\pm
t}\bigg)\bigg)w_0=\prod_{0\leq l\leq n-1}\ms
Q_n^\pm(u\ttv^{2l})w_0=w_0.
$$
It follows that
$$-\sum_{t\geq 1}\frac{1}{[t]_\ttv}g_{n,\pm
t}\bigg(\sum_{0\leq l\leq n-1}\ttv^{\pm 2lt}(u\ttv)^{\pm
t}\bigg)w_0=0.$$ This forces $g_{n,\pm t}w_0=0$ for all $t\geq 1.$
Consequently, $$Q_n^\pm(u)w_0=\ms Q_n^\pm(u)w_0=w_0,$$ and hence,
$Q_n^\pm(u)=1$. We conclude by \eqref{polynomial Qi} that all
$Q_i^{\pm}(u)$ are polynomials with constant term 1 and
$\bfQ_\bfs=(Q_1(u),\ldots,Q_n(u))\in\sQ(n)$. Moreover,
 $$\sum_{1\leq i\leq n}{\mathrm{deg}\,Q_i(u)}=\sum_{1\leq j\leq
n-1}j\mathrm{deg}\,P_j(u)=\sum_{1\leq j\leq
p}\mu_j=r=\sum_{i=1}^n\la_i.$$
 This forces $\la_n=0$ and consequently, $\text{deg}\, Q_i(u)=\la_i$
for all $1\leq i\leq n$. Therefore, $\OgC^{\ot r}\ot_{\afHrC}V_\bfs$
is a simple polynomial representation of $\afUglC$ and
 $\OgC^{\ot r}\ot_{\afHrC}V_\bfs\cong
L(\bfQ_\bfs)$, proving (1).

We now prove (2). For any $\bfQ=(Q_1(u),\ldots,Q_n(u))\in\sQ(n)$
such that $r=\sum_{1\leq i\leq n}\la_i$ with $\la_i=\mathrm{deg}
Q_i(u)$, we now prove that $L(\bfQ)$ is a simple $\afSrC$-module.

Since the polynomials
$$
P_j (u) =\frac{Q_j (u\ttv^{j-1})} {Q_{j+1} (u\ttv^{j+1})}\,\,(1\leq
j\leq n-1).
$$
have constant term $1$ and ${\rm deg\,}
P_j(u)=\la_j-\la_{j+1}=:\nu_j$, it follows that $\la\in\La^+(n,r)$
is a partition with at most $n$ parts. So $n>r$ implies $\la_n=0$.
Moreover, we may write, for $1\leq i\leq n-1$,
$$P_i(u)=(1-a_{\nu_1 +\cdots+\nu_{i-1}+1}u)(1-a_{\nu_1 +\cdots+\nu_{i-1}+2}u)
\cdots(1-a_{\nu_1 +\cdots+\nu_{i-1}+\nu_i}u),$$ where
$a_j^{-1}\in\mbc$, $1\leq j\leq p=\sum_i\nu_i$, are the roots of
$P_i(u)$. Let $\bfs=\{\sfs_1,\ldots,\sfs_p\}$, where
$$\sfs_i=(a_i\ttv^{-\mu_i+1},a_i\ttv^{-\mu_i+3},\ldots,a_i\ttv^{\mu_i-1})$$
and $(\mu_1,\ldots,\mu_p)=(1^{\nu_1},\ldots,(n-1)^{\nu_{n-1}})$, and
let $\bfa=(\sfs_1,\ldots,\sfs_p)$. Since
$$\sum_{1\leq j\leq p}\mu_j=\sum_{1\leq i\leq n-1}i\nu_i=\sum_{1\leq i\leq n}\la_i=r,$$
we have $\bfa\in(\mbc^*)^r$. By the first part of the proof, we see
$\bfQ=\bfQ_\bfs$, and hence, $\OgC^{\ot r}\ot_{\afHrC}V_\bfs\cong
L(\bfQ)$ as $\afUglC$-modules. In other words, $L(\bfQ)$ is a simple
$\afSrC$-module.
\end{proof}

\section{Classification of simple $\afSrC$-modules: the downwards approach}

We now  complete the classification of simple $\afSrC$-modules by
removing the condition $n>r$ in Theorem \ref{n>r representation}. We
will continue to use the downwards approach with a strategy
different from that in the previous section. Throughout this
section, we will identify $\afUglC$ with $\DC(n)$ via the
isomorphism $f$ in Proposition \ref{isom f} and will regard every
$\afSrC$-module as a $\afUglC$-module via the algebra homomorphism
$\zrC:\DC(n)\to\afSrC$; see \eqref{zrCf}.

Consider the $\afUglC$-module via $\xi_{1,\mbc}'$
$$\OgC(a):=\sfF(M_a)=\OgC\ot_{\afHoC}M_a$$
for $a\in\mbc^*$. By Proposition \ref{Chari-Presslry},
$\OgC(a)\cong\OgnC\ot_{\HoC}M_a\cong\OgnC$ as
$\sS(n,1)_\mbc$-modules. Hence, $\dim\OgC(a)=n$.

By Theorem \ref{category equiv}, $\OgC(a)$ is a simple
$\afUglC$-module since $\dim_\mbc M_a=1$ and $M_a=V_\bfs$ with
$\bfs=(a)\in\ms S_1$. Since $n>1$, by Theorem \ref{n>r
representation}, we have
\begin{equation}\label{OgC(a)}
\OgC(a)\cong L(\bfQ)\text{ with $Q_1(u)=1-au$ and $Q_i(u)=1$ for
$2\leq i\leq n$}.
\end{equation}

The $\afUglC$-module $\OgC(a)$ is very useful and we will prove that
every finite dimensional simple $\afSrC$-module is a quotient module
of $\OgC(a_1)\ot_\mbc\cdots\ot_\mbc\OgC(a_r)$ for some
$\bfa\in(\mbc^*)^r$ in Corollary \ref{quotient module of tensor
space}.

Let $\ti\og_i=\og_i\ot\bar 1\in\OgC(a)$ for all $i\in\mbz$.
\begin{Lem}\label{key for step1}
For any $\bfa=(a_1,\ldots,a_r)\in(\mbc^*)^r$, there is a
$\afUglC$-module isomorphism
$$\vi:\OgC(a_1)\ot_\mbc
\cdots\ot_\mbc\OgC(a_r)\lra\OgC^{\ot r}\ot_{\afHrC}M_\bfa$$ defined
by sending $\ti\og_{\bfi}$ to $\og_{\bfi}\ot\bar 1$ for
$\bfi\in\afInr$, where
$\ti\og_\bfi=\ti\og_{i_1}\ot\cdots\ot\ti\og_{i_r}$. Moreover, as an
$\SrC$-module, $\OgC(a_1)\ot_\mbc \cdots\ot_\mbc\OgC(a_r)$ is
isomorphic to the finite tensor space $\OgnC^{\ot r}$  for all
$\bfa\in(\mbc^*)^r$.
\end{Lem}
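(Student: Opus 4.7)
The plan is to realize both sides of the claimed isomorphism as $\DC(n)$-equivariant quotients of the tensor space $\OgC^{\ot r}$, viewed as a $\DC(n)$-$\afHrC$-bimodule via Proposition \ref{actions tensor space commute}, and to check that the two quotients coincide. For each $s$, $\OgC(a_s)=\OgC\ot_{\afHoC}M_{a_s}$ is the quotient of $\OgC$ by the subspace $\OgC\cdot(X_1-a_s)$, which by \eqref{afH action} is spanned by $\og_{i-n}-a_s\og_i$, $i\in\mbz$, and is $\DC(n)$-stable because the left $\DC(n)$- and right $\afHoC$-actions on $\OgC$ commute. The canonical surjection $\pi_s\colon\OgC\twoheadrightarrow\OgC(a_s)$, $\og_i\mapsto\ti\og_i$, is therefore $\DC(n)$-equivariant, and its $r$-fold tensor product yields a $\DC(n)$-equivariant surjection
$$\pi\colon\OgC^{\ot r}\twoheadrightarrow\OgC(a_1)\ot\cdots\ot\OgC(a_r),\quad\og_\bfi\longmapsto\ti\og_\bfi,$$
with kernel spanned by $\og_{\bfi-n\bse_s}-a_s\og_\bfi$ for $\bfi\in\afInr$ and $1\leq s\leq r$.

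On the other hand, the evaluation $p\colon\OgC^{\ot r}\twoheadrightarrow\OgC^{\ot r}\ot_{\afHrC}M_\bfa$, $v\mapsto v\ot\bar 1$, is $\DC(n)$-equivariant (again by Proposition \ref{actions tensor space commute}) with kernel $\OgC^{\ot r}\cdot J_\bfa=\sum_j\OgC^{\ot r}(X_j-a_j)$. The action formula \eqref{afH action} gives $\og_\bfi\cdot(X_j-a_j)=\og_{\bfi-n\bse_j}-a_j\og_\bfi$, so $\ker(p)$ is spanned by the very same elements as $\ker(\pi)$. Hence $\ker(\pi)=\ker(p)$, and the two surjections factor through a common $\DC(n)$-module isomorphism $\vi$ satisfying $\vi(\ti\og_\bfi)=\og_\bfi\ot\bar 1$; pulling back along the algebra isomorphism $f\colon\DC(n)\stackrel{\sim}{\lra}\afUglC$ of Proposition \ref{isom f} yields the asserted $\afUglC$-module isomorphism.

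For the second statement, composing $\vi$ with the $\mbc$-base change of Lemma \ref{Lem1 for category equivalent} produces $\SrC$-module isomorphisms
$$\OgC(a_1)\ot\cdots\ot\OgC(a_r)\,\cong\,\OgC^{\ot r}\ot_{\afHrC}M_\bfa\,\cong\,\OgnC^{\ot r}\ot_{\HrC}M_\bfa.$$
Since $z$ is not a root of unity, $\HrC$ is semisimple, and the hypothesis that $M_\bfa|_{\HrC}$ is the regular representation forces an isomorphism $M_\bfa\cong\HrC$ of left $\HrC$-modules. The right-hand side therefore reduces to $\OgnC^{\ot r}\ot_{\HrC}\HrC=\OgnC^{\ot r}$ as $\SrC$-modules, independently of $\bfa$. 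The only real subtlety in the whole argument, delivered factor-by-factor by Proposition \ref{actions tensor space commute}, is ensuring that the comultiplication action on $\OgC(a_1)\ot\cdots\ot\OgC(a_r)$ coincides with the quotient action inherited from $\OgC^{\ot r}$; no further hard ingredients are needed.
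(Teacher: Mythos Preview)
Your proof is correct, and it takes a cleaner route than the paper's. The paper argues basis-by-basis: it first observes that $\{\ti\og_\bfi\mid\bfi\in I(n,r)\}$ and $\{\og_\bfi\ot\bar 1\mid\bfi\in I(n,r)\}$ are bases (the latter via the $\SrC$-isomorphism $\OgC^{\ot r}\ot_{\afHrC}M_\bfa\cong\OgnC^{\ot r}$, which is also how it obtains the second assertion), defines $\vi$ on this basis, then checks by an explicit $X_t$-calculation that $\vi(\ti\og_\bfi)=\og_\bfi\ot\bar 1$ holds for \emph{all} $\bfi\in\afInr$, and finally asserts that $\afUglC$-equivariance ``follows easily''. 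By contrast, you realize both targets as quotients of the bimodule $\OgC^{\ot r}$ by the same kernel $\sum_s\OgC^{\ot r}(X_s-a_s)$, so that the isomorphism and its $\DC(n)$-equivariance come for free from Proposition \ref{actions tensor space commute} and the Hopf-algebra compatibility of tensor products. Your approach makes the equivariance transparent rather than leaving it as an implicit check, at the cost of identifying the kernel of a tensor product of surjections; the paper's approach is more hands-on but has to earn equivariance separately. For the $\SrC$-statement both proofs reduce to the same ingredients (Lemma \ref{Lem1 for category equivalent} and $M_\bfa|_{\HrC}\cong\HrC$), though the paper packages this via Proposition \ref{Chari-Presslry}.
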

\begin{proof} The set
$$\{\ti\og_i\mid 1\leq i\leq n\}$$
forms a basis of $\OgC(a)$. Hence, the set
$$\{\ti\og_\bfi\mid\bfi\in I(n,r)\}$$
forms a basis of $\OgC(a_1)\ot_\mbc \cdots\ot_\mbc\OgC(a_r)$.

Similarly by Proposition \ref{Chari-Presslry}, we have
\begin{equation}\label{restriction of tensor space}
\OgC^{\ot r}\ot_{\afHrC}M_\bfa\cong\OgnC^{\ot
r}\ot_{\HrC}M_\bfa\cong\OgnC^{\ot r}
\end{equation}
as $\SrC$-modules. So the set
$$\{\og_\bfi\ot\bar 1\mid\bfi\in I(n,r)\}$$
forms a basis of $\OgC^{\ot r}\ot_{\afHrC}M_\bfa$. Hence, there is a
linear isomorphism $$\vi:\OgC(a_1)\ot_\mbc
\cdots\ot_\mbc\OgC(a_r)\lra\OgC^{\ot r}\ot_{\afHrC}M_\bfa$$ defined
by sending $\ti\og_{\bfi}$ to $\og_{\bfi}\ot\bar 1$ for $\bfi\in
I(n,r)$.

Now we assume $\bfi\in\afInr$. We write $\bfi=\bfj+n\bft$ with
$\bfj\in I(n,r)$ and $\bft\in\mbz^r$. Then
\begin{equation*}
\begin{split}
\vi(\ti\og_\bfi)&=\vi((a_1^{-t_1}
\ti\og_{j_1})\ot\cdots\ot(a_r^{-t_r}\ti\og_{j_r}))\\
&=a_1^{-t_1}a_2^{-t_2}\cdots a_r^{-t_r}\og_{\bfj}\ot\bar 1\\
&=\og_{\bfj}X_1^{-t_1}X_2^{-t_2}\cdots X_r^{-t_r}\ot\bar 1\\
&=\og_{\bfi}\ot\bar 1.
\end{split}
\end{equation*}
It follows easily that $\vi$ is a $\afUglC$-module isomorphism. The
last assertion is clear from \eqref{restriction of tensor space}.
\end{proof}

\begin{Coro}\label{quotient module of tensor space}
Let $V$ be a finite dimensional simple $\afUglC$-module. Then the
following conditions are equivalent
\begin{itemize}
\item[(1)] $V$ can be regarded as an $\afSrC$-module via $\zrC$;

\item[(2)] $V$ is a quotient module of the $\afUglC$-module $\OgC(a_1)\ot_\mbc
\cdots\ot_\mbc\OgC(a_r)$ for some $\bfa\in(\mbc^*)^r$;

\item[(3)] $V$ is a quotient module of $\OgC^{\ot r}$;

\item[(4)] $V$ is a subquotient module of $\OgC^{\ot r}$.
\end{itemize}
\end{Coro}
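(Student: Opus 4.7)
The plan is to prove the ring of implications $(3)\Rightarrow(4)\Rightarrow(1)\Rightarrow(3)$, giving the equivalence $(1)\Leftrightarrow(3)\Leftrightarrow(4)$, and then to close the loop through $(1)\Rightarrow(2)\Rightarrow(3)$.

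First I would dispose of the formal implications. $(3)\Rightarrow(4)$ is immediate, and $(4)\Rightarrow(1)$ follows because the $\afUglC$-action on $\OgC^{\ot r}$ factors through $\xi_{r,\mbc}'\colon\afUglC\to\afSrC$, so $\ker\xi_{r,\mbc}'$ annihilates every subquotient of $\OgC^{\ot r}$ and $V$ automatically inherits an $\afSrC$-module structure. For $(2)\Rightarrow(3)$, since $M_\bfa=\afHrC/J_\bfa$ is cyclic, there is a surjective $\afHrC$-map $\afHrC\twoheadrightarrow M_\bfa$; applying the right exact functor $\OgC^{\ot r}\otimes_{\afHrC}(-)$ together with the identification of Lemma~\ref{key for step1} gives a surjective $\afUglC$-map $\OgC^{\ot r}\twoheadrightarrow\OgC(a_1)\otimes\cdots\otimes\OgC(a_r)$, through which any quotient of the target becomes a quotient of $\OgC^{\ot r}$.

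Next, $(1)\Rightarrow(3)$: by Theorems~\ref{fd simple modules} and~\ref{weakly category equiv}, any simple $\afSrC$-module has the form $V\cong\OgC^{\ot r}\otimes_{\afHrC}V_\bfs$ for some $\bfs\in\ms S_r^{(n)}$, and cyclicity of the simple $\afHrC$-module $V_\bfs$ gives a surjection $\afHrC\twoheadrightarrow V_\bfs$; applying $\OgC^{\ot r}\otimes_{\afHrC}(-)$ yields the surjection $\OgC^{\ot r}\twoheadrightarrow V$.

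The substantive step is $(1)\Rightarrow(2)$. Writing again $V\cong\OgC^{\ot r}\otimes_{\afHrC}V_\bfs$, it suffices, by the right-exactness argument above combined with Lemma~\ref{key for step1}, to exhibit $V_\bfs$ as a quotient $\afHrC$-module of some $M_\bfa$ with $\bfa\in(\mbc^*)^r$. Here I would use the Bernstein commuting generators: on the nonzero finite-dimensional $\mbc$-space $V_\bfs$ the operators $X_1,\dots,X_r$ pairwise commute, so the standard inductive argument (eigenspace of $X_1$ is $X_2$-stable, etc., using algebraic closedness of $\mbc$) produces a common eigenvector $v_0\in V_\bfs$ with $X_jv_0=a_jv_0$; the invertibility of each $X_j$ in $\afHrC$ forces $a_j\in\mbc^*$. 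Simplicity of $V_\bfs$ then gives $V_\bfs=\afHrC v_0$, and the relations $(X_j-a_j)v_0=0$ yield the required surjection $M_\bfa\twoheadrightarrow V_\bfs$.

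The main (and essentially only) obstacle will be the existence of the common eigenvector for the Bernstein generators; everything else reduces to right-exactness of tensor product, cyclicity of simple modules, and the results already collected in this chapter. Once that eigenvector is in hand, the tensor product interpretation furnished by Lemma~\ref{key for step1} translates the algebraic surjection $M_\bfa\twoheadrightarrow V_\bfs$ into the desired statement that $V$ is a quotient of $\OgC(a_1)\otimes\cdots\otimes\OgC(a_r)$.
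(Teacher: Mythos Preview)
Your proof is correct and follows essentially the same route as the paper: both establish the cycle through the key step $(1)\Rightarrow(2)$ by writing $V\cong\OgC^{\ot r}\otimes_{\afHrC}V_\bfs$ via Theorem~\ref{weakly category equiv}, exhibiting $V_\bfs$ as a quotient of some $M_\bfa$, and then invoking Lemma~\ref{key for step1}. The only notable difference is that where the paper simply cites \cite[3.4]{CP} for the fact that each $V_\bfs$ is a quotient of some $M_\bfa$, you supply a self-contained argument via a common eigenvector of the commuting Bernstein generators $X_1,\dots,X_r$; this is correct and arguably more transparent. Your separate implication $(1)\Rightarrow(3)$ is redundant once $(1)\Rightarrow(2)\Rightarrow(3)$ is established, but harmless.
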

\begin{proof}
By the lemma above, the map
$$\OgC^{\ot r}\lra \OgC(a_1)\ot_\mbc
\cdots\ot_\mbc\OgC(a_r), \quad \og_\bfi\longmapsto\ti\og_\bfi,$$ is
a $\afUglC$-module epimorphism (say, induced by the natural
$\afHrC$-module epimorphism $\afHrC\to M_\bfa$). Hence, (2) implies
(3). Certainly,  (3) implies (4). Since $\OgC^{\ot r}$ is an
$\afSrC$-module, (4) implies (1).

If $V$ can be regarded as an $\afSrC$-module via $\zrC$, then, by
Theorem \ref{weakly category equiv}, $V\cong\OgC^{\ot
r}\ot_{\afHrC}V_\bfs$ for some $\bfs$. Since $V_\bfs$ is a
homomorphic image of some $M_\bfa$ (see \cite[3.4]{CP}, say), it
follows that $V$ is a homomorphic image of $V\cong\OgC^{\ot
r}\ot_{\afHrC}M_\bfa$, which is, by Lemma \ref{key for step1},
isomorphic to $\OgC(a_1)\ot_\mbc \cdots\ot_\mbc\OgC(a_r).$ Hence,
$V$ is a homomorphic image of $\OgC(a_1)\ot_\mbc
\cdots\ot_\mbc\OgC(a_r)$, proving (2).
\end{proof}

\begin{Rem} The algebras $\afUglC$, $\afSrC$, etc., under consideration are all defined over $\mbc$
with parameter $z$ which is not a root of unity. As an
$\sS(n,r)_\mbc$-module, the finite dimensional tensor space
$\Og_{n,\mbc}^{\ot r}$ is a semisimple module (i.e., is completely
reducible). In the affine case, however, it is unclear if the
infinite dimensional tensor space $\Og_{\mbc}^{\ot r}$ is completely
reducible. The fact that every simple $\afSrC$-module is a
homomorphic image of $\Og_{\mbc}^{\ot r}$ does reflect a certain
degree of the complete reducibility.
\end{Rem}

Using Corollary \ref{quotient module of tensor space}, we can prove
the first key result for the classification theorem.

\begin{Prop}\label{step1}
Every simple $\afSrC$-module is a polynomial representation of
$\afUglC$.
\end{Prop}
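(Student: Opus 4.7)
The plan is to realize every simple $\afSrC$-module $V$ as a homomorphic image of a tensor product of manifestly polynomial modules, and then to exploit closure properties of the class of polynomial representations of $\afUglC$. First, by Corollary~\ref{quotient module of tensor space}, $V$ is a quotient (via inflation along $\zrC'$) of $T:=\OgC(a_1)\otimes_\mbc\cdots\otimes_\mbc\OgC(a_r)$ for some $\bfa=(a_1,\ldots,a_r)\in(\mbc^*)^r$. Each factor $\OgC(a_i)$ is already polynomial: from \eqref{OgC(a)}, $\OgC(a_i)\cong L(\bfQ^{(i)})$ with $Q_1^{(i)}(u)=1-a_iu$ and $Q_j^{(i)}(u)=1$ for $j\geq 2$, which is a dominant polynomial tuple in the sense of Theorem~\ref{classification of simple afUglC-modules}.

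The cleanest route would then be to invoke the closure of polynomial representations under tensor products (\cite[Cor.~4.5]{FM}) and under subquotients (\cite[Lem.~4.1]{FM}): $T$ would be polynomial, and therefore so would its quotient $V$.

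For a more self-contained argument built on tools already developed in this paper, I would pick a pseudo-highest weight vector $v_0\in V$ of weight $\la\in\mbz^n$. Because the weights of $T$ have nonnegative components summing to $r$, so does $\la$. Applying Lemma~\ref{action1 central elts} shows that the central element $\sfz_t^\pm\in\DC(n)$ acts on $T$ (hence on $V$) as the scalar $c_t^\pm:=\sum_{s=1}^r a_s^{\pm t}$. Translating through the isomorphism $f$ of Proposition~\ref{isom f} and summing the resulting exponential generating series exactly as in the final computation of the proof of Theorem~\ref{n>r representation} would yield
$$\prod_{i=1}^n Q_i^\pm(u)\;=\;\prod_{s=1}^r\bigl(1-(a_s u)^{\pm 1}\bigr),$$
a polynomial of degree $r$, acting on $v_0$. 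Simultaneously, Chari--Pressley's classification applied to $V|_{\afUslC}$ would force each ratio $P_j^\pm(u):=Q_j^\pm(u\ttv^{j-1})/Q_{j+1}^\pm(u\ttv^{j+1})$ (for $1\leq j\leq n-1$) to be a polynomial. Iterating \eqref{polynomial Qi} and matching degrees against $\sum_i\la_i=r$ with $\la_i\geq 0$ would then force each $Q_i^\pm(u)$ to be a polynomial, so $V\cong L(\bfQ)$ for a dominant polynomial tuple $\bfQ$, and $V$ is polynomial as desired.

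The hard part will be the final step in the self-contained variant: ruling out that $Q_n^\pm(u)$, and therefore every $Q_i^\pm(u)$ via the iteration, is merely a rational function with cancellations hidden inside the cyclic product $\prod_{i=0}^{n-1}Q_n^\pm(u\ttv^{2i})$. Such cancellations are not obviously prevented by the polynomiality of the total product alone, and excluding them requires either a careful degree argument that exploits the nonnegativity of $\la$ (coupled with the fact that $\ttv$ is not a root of unity), or, more efficiently, the abstract closure results of \cite[\S4]{FM}, which is the route I would ultimately take.
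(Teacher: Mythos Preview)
Your proposal is correct, and the ``cleanest route'' you identify is exactly the paper's own proof: realize $V$ as a quotient of $\OgC(a_1)\otimes\cdots\otimes\OgC(a_r)$ via Corollary~\ref{quotient module of tensor space}, observe each factor is polynomial by \eqref{OgC(a)}, and then appeal to the closure results of \cite{FM} (the paper cites \cite[4.3]{FM} where you cite \cite[Cor.~4.5]{FM} and \cite[Lem.~4.1]{FM}, but these are the same closure properties). Your additional self-contained variant is not pursued in the paper, and your own diagnosis of its difficulty---controlling possible cancellations in $\prod_i Q_n^\pm(u\ttv^{2i})$---is accurate; the paper simply takes the short route through \cite{FM}.
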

\begin{proof}
Let $V$ be a simple $\afSrC$-module. Then $V$ is finite dimensional
by Theorem \ref{fd simple modules}. Thus, $V$ is a quotient module
of $\OgC(a_1)\ot_\mbc \cdots\ot_\mbc\OgC(a_r)$ for some
$\bfa\in(\mbc^*)^r$,  by Corollary \ref{quotient module of tensor
space}. Now, \eqref{OgC(a)} implies that $\OgC(a)$ is a polynomial
representation of $\afUglC$. So, by \cite[4.3]{FM}, the tensor
product $\OgC(a_1)\ot_\mbc\cdots\ot_\mbc\OgC(a_r)$ is a polynomial
representation of $\afUglC$. Hence, $V$ is also a polynomial
representation of $\afUglC$.
\end{proof}

Now let us define the $\afUglC$-module $\Det_a$ for $a\in\mbc^*$ in
the following lemma. The $\afUglC$-module $\Det_a$ plays the same
role in the representation theory of $\afUglC$ as the quantum
determinant in the representation theory of $\UglC$. One can easily
see that by restriction $\Det_a$ is isomorphic to the quantum
determinant for $\UglC$ for any $a\in\mbc^*$.

\begin{Lem}
Fix $a\in\mbc^*$. Let
$\bfa=\bfs=(a,a\ttv^{2},\ldots,a\ttv^{2(n-1)})$ regarded as a single
segment, and let
$$\Det_{a}:=\OgC^{\ot n}\ot_{\afHnC}V_a,$$
where $V_a=V_\bfs$ is the submodule of $M_\bfa$ generated by
$\ol{y}_{(n)}$.
 Then $\dim\Det_a=1$ and
$$
\Det_{a}=\spann_{\mbc}\{\og_1\ot\cdots\ot\og_n\ot \ol{y}_{(n)}\}.
$$
Moreover, as a $\afUglC$-module, $\Det_{a}\cong L(\bfQ)$, where
$\bfQ=(Q_1(u),\ldots,Q_n(u))\in\sQ(n)$ with $Q_i(u)=1-az^{2(n-i)}u$
for all $i=1,2,\ldots,n$.
\end{Lem}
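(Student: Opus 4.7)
The plan is to prove the three assertions in order: that $V_a$ (and hence $\Det_a$) is one-dimensional, that the vector $w_0:=\og_1\ot\cdots\ot\og_n\ot\ol y_{(n)}$ spans it, and that the resulting simple $\afUglC$-module has the stated Drinfeld polynomials.

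First I would show $V_a$ is one-dimensional by a direct Hecke-algebra computation proving that each $X_i$ acts on $\ol y_{(n)}\in M_\bfa$ by the scalar $a\ttv^{2(n-i)}$. This is an induction using the Bernstein-type relation $X_iT_i=T_iX_{i+1}+(1-\ttv^2)X_{i+1}$ from \eqref{formula-in-HeckeAlg} together with $T_i\ol y_{(n)}=-\ol y_{(n)}$ (valid because $\fS_{(n)}=\fS_n$, so every $T_i$ acts on $y_{(n)}$ by $-1$). From this $V_a=\afHnC\ol y_{(n)}=\HnC\ol y_{(n)}$, and by \eqref{signed permutation module} the latter is isomorphic to the one-dimensional sign representation $S_{(n)}$, since no partition of $n$ strictly dominates $(n)$. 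Lemma \ref{Lem1 for category equivalent} then identifies $\Det_a$ with $\OgnC^{\ot n}\ot_{\HnC}S_{(n)}$ as an $\sS(n,n)_\mbc$-module, which is the one-dimensional determinant component of the classical Schur--Weyl decomposition; the vector $w_0$ is the image of the antisymmetrizer applied to $\og_1\ot\cdots\ot\og_n$, and is therefore nonzero, so $\Det_a=\mbc w_0$.

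Next I would verify that $w_0$ is pseudo-highest weight and compute its Drinfeld polynomials. The comultiplication formulas of Corollary \ref{coalgebra structure} give $K_iw_0=\ttv\,w_0$, so the weight is $\la=(1,\ldots,1)$ and $\deg Q_i=\la_i=1$; one-dimensionality forces $\ttx_{j,s}^+w_0=0$ and each $\ms Q_{i,s}$ to act by a scalar. Repeating the exponentiation argument from the proof of Theorem \ref{n>r representation}, the scalars $X_i\ol y_{(n)}=a\ttv^{2(n-i)}\ol y_{(n)}$ together with Proposition \ref{isom f} yield, via (the proof of) Lemma \ref{action1 central elts}, the identity $\sfz_t^\pm w_0=\sum_{k=0}^{n-1}(a\ttv^{2k})^{\pm t}w_0$, whence
\[
\prod_{i=1}^n Q_i^\pm(u)=\prod_{k=0}^{n-1}\bigl(1-(au\ttv^{2k})^{\pm 1}\bigr).
\]
Since $\Det_a$ restricts to the trivial $\afUslC$-module, all $\afUslC$-Drinfeld polynomials satisfy $P_j(u)=1$, and the relation $P_j(u)=Q_j(u\ttv^{j-1})/Q_{j+1}(u\ttv^{j+1})$ forces $Q_i(u)=Q_n(u\ttv^{2(n-i)})$; comparing with the product formula pins down $Q_n(u)=1-au$, giving $Q_i(u)=1-az^{2(n-i)}u$.

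The main obstacle is the first-step Hecke-algebra computation that $X_i$ acts on $\ol y_{(n)}$ by the scalar $a\ttv^{2(n-i)}$, with the indices appearing in the reverse order to the segment entries of $\bfa$. This reversal reflects how the Young antisymmetrizer $y_{(n)}$ interacts with the Bernstein generators via the formula $X_{i+1}=\ttv^{-2}T_iX_iT_i$, and it is essential for aligning Rogawski's segment data with the Drinfeld polynomials parametrising polynomial representations of $\afUglC$; once it is in hand, everything else is a direct specialization of the central-element and exponentiation techniques already developed for Theorem \ref{n>r representation}.
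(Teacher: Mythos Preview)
Your proof is correct and follows the same route as the paper: both show $\Det_a$ is one-dimensional, read off the weight $(1,\ldots,1)$, deduce $P_j(u)=1$ (you via triviality as a $\afUslC$-module, the paper via degree counting), and determine $Q_n(u)=1-au$ from the product identity coming out of Lemma~\ref{action1 central elts}. Your explicit computation of the $X_i$-eigenvalues on $\ol y_{(n)}$ supplies a detail the paper only asserts, but note that for the central-element step it is not actually needed: Lemma~\ref{action1 central elts} already yields $\sfz_t^\pm w_0=\sum_{k=0}^{n-1}(a\ttv^{2k})^{\pm t}w_0$ directly from the eigenvalues $a_j=a\ttv^{2(j-1)}$ of $X_j$ on $\bar 1$, so the ``reversal'' you highlight plays no role in matching the Drinfeld polynomials.
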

\begin{proof} Recall the notations used in \S4.3. We have $M_\bfa\cong\HnC$ and, by Theorem \ref{classification irr affine Hecke
algebra}, $V_a=\HnC\ol{y}_{(n)}=\mbc \ol{y}_{(n)}$, since
$T_iy_{(n)}=-y_{(n)}$ for all $1\leq i\leq n-1$.

By Theorem \ref{category equiv}, $\Det_a$ is a simple
$\afSnC$-module. By Proposition \ref{step1}, $\Det_a\cong L(\bfQ)$
for some $\bfQ\in\sQ(n)$. Since
$$\OgnC^{\ot n}=\bop_{\bfi\in\afJnn}\og_\bfi\HnC,$$ by Proposition
\ref{Chari-Presslry},
$$
\Det_a\cong \OgnC^{\ot
n}\ot_{\HnC}V_a\cong\bop_{\bfi\in\afJnn}\og_{\bfi}\HnC\ot_{\HnC}V_a.
$$
Hence, $ \Det_a=\spann_\mbc
\{\og_\bfi\ot\ol{y}_{(n)}\mid\bfi\in\afJnn\}. $ If $i_k=i_{k+1}$ in
an $\bfi\in\afJnn$ for some $1\leq k\leq n-1$, then
$$
\ttv^2\og_\bfi\ot\ol{y}_{(n)}=\og_\bfi T_k\ot \ol{y}_{(n)}=\og_\bfi
\ot T_k\ol{y}_{(n)}=-\og_\bfi\ot\ol{y}_{(n)}.
$$
This forces $\og_\bfi\ot\ol{y}_{(n)}=0$ as $z$ is not a root of
unity. The only $\bfi\in\afJnn$ with $i_k\not=i_{k+1}$ for  $1\leq
k\leq n-1$ is $\bfi=(1,2,\ldots,n)$. Hence,
$$\Det_a=\mbc\, \og_1\ot\cdots\ot\og_n\ot\ol{y}_{(n)}.$$  Since
$K_iw_0=\ttv w_0$, where $w_0=\og_1\ot\cdots\ot\og_n
\ot\ol{y}_{(n)},$ it follows that $\mathrm{deg}\, Q_i(u)=1$ for all
$1\leq i\leq n$.  On the other hand, since
$$P_i(u)=\frac{Q_i (u\ttv^{i-1})} {Q_{i+1} (u\ttv^{i+1})}$$ is a
polynomial, we must have $P_i(u)=1$ for all $1\leq i\leq n-1$.

Suppose $Q_n(u)=1-bu$ for some $b\in\mbc^*$. Then
$Q_i(u)=1-bz^{2(n-i)}u$ for all $i$. Thus, as in the proof of
Theorem \ref{n>r representation}, Lemma \ref{action1 central elts}
implies
$$
\frac{t\ttv^{\pm t}}{[t]_\ttv}\sum_{1\leq i\leq n}g_{i,\pm t}w_0
=\sfz_t^\pm w_0=\sum_{1\leq k\leq n} (a\ttv^{2(k-1)})^{\pm t}w_0
$$
for all $t\geq 1$, and hence,
\begin{equation*}
\begin{split}
\prod_{1\leq i\leq n}\bigg(1-\big(
b\ttv^{2(n-i)}u\big)^{\pm 1}\bigg)w_0&=\prod_{1\leq i\leq n} Q_i^\pm(u)w_0\\
&=\prod_{1\leq i\leq n}\ms Q_i^\pm(u)w_0=\prod_{1\leq k\leq
n}\bigg(1-\big( a\ttv^{2(k-1)}u\big)^{\pm 1}\bigg)w_0.
\end{split}
\end{equation*}
Equating the coefficients of $u$ forces $a=b$.
\end{proof}

\begin{Lem}\label{tensor product of irreducible module}
Let $V$ be a simple $\afSkC$-module and $W$ be a simple
$\afSlC$-module. Then $V\ot W$ is an $\afSklC$-module.
\end{Lem}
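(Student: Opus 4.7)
The plan is to exhibit $V\ot W$ as a $\DC(n)$-module quotient of a space whose $\DC(n)$-action demonstrably factors through $\zrC$ with $r=k+l$, and hence is an $\afSklC$-module. Both $V$ and $W$ are finite dimensional by Theorem \ref{fd simple modules}, so Corollary \ref{quotient module of tensor space} furnishes tuples $\bfa=(a_1,\ldots,a_k)\in(\mbc^*)^k$ and $\bfb=(b_1,\ldots,b_l)\in(\mbc^*)^l$ together with $\DC(n)$-module epimorphisms
$$p_V\colon \OgC(a_1)\ot_\mbc\cdots\ot_\mbc\OgC(a_k)\twoheadrightarrow V,\qquad p_W\colon \OgC(b_1)\ot_\mbc\cdots\ot_\mbc\OgC(b_l)\twoheadrightarrow W.$$
Because $\DC(n)$ is a Hopf algebra (Corollary \ref{coalgebra structure}), the tensor product of two $\DC(n)$-linear maps is again $\DC(n)$-linear with respect to the comultiplication-induced actions, so $p_V\ot p_W$ yields a $\DC(n)$-module surjection
$$\pi\colon T:=\OgC(a_1)\ot_\mbc\cdots\ot_\mbc\OgC(a_k)\ot_\mbc\OgC(b_1)\ot_\mbc\cdots\ot_\mbc\OgC(b_l)\twoheadrightarrow V\ot W.$$

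Next, I would apply Lemma \ref{key for step1} to the concatenated tuple $(\bfa,\bfb)\in(\mbc^*)^{k+l}$ to obtain a $\DC(n)$-module isomorphism
$$T\cong \OgC^{\ot(k+l)}\ot_{\sH_\vtg(k+l)_\mbc}M_{(\bfa,\bfb)},$$
whose right hand side is, by its very construction, an $\afSklC$-module via the homomorphism $\xi_{k+l,\mbc}\colon\DC(n)\to\afSklC$ of \eqref{xi_{r,z}}. Thus the $\DC(n)$-action on $T$ factors through $\xi_{k+l,\mbc}$. Since $\pi$ is $\DC(n)$-linear and surjective, $\ker\pi$ is a $\DC(n)$-submodule of $T$ and hence automatically an $\afSklC$-submodule, so the induced $\DC(n)$-action on $V\ot W\cong T/\ker\pi$ also factors through $\xi_{k+l,\mbc}$. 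This is precisely the assertion that $V\ot W$ carries a compatible $\afSklC$-module structure.

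The principal technical point --- and really the only non-formal step --- is verifying that the isomorphism supplied by Lemma \ref{key for step1} is genuinely $\DC(n)$-linear with respect to the comultiplication-induced tensor structure on the left and the $\xi_{k+l,\mbc}$-pullback of the $\afSklC$-action on the right, as opposed to being an isomorphism only as vector spaces or only under one of the two structures. This compatibility is built into Lemma \ref{key for step1} once it is transferred to $\DC(n)$ along the Hopf algebra isomorphism $f\colon\DC(n)\overset{\sim}{\to}\afUglC$ of Proposition \ref{isom f}, so no additional verification is required beyond invoking that lemma. Everything else in the argument is formal Hopf algebra bookkeeping together with the existence of a surjection from a tensor space, which has already been established.
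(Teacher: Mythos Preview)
Your proof is correct and follows essentially the same approach as the paper: use Corollary~\ref{quotient module of tensor space} to realize $V$ and $W$ as quotients of tensor products of $\OgC(a_i)$'s, tensor the surjections together, and then invoke Lemma~\ref{key for step1} on the concatenated tuple to identify the resulting module with $\OgC^{\ot(k+l)}\ot_{\sH_\vtg(k+l)_\mbc}M_{(\bfa,\bfb)}$, which is manifestly an $\afSklC$-module. Your additional remarks on the Hopf-algebra compatibility of the isomorphism in Lemma~\ref{key for step1} make explicit a point the paper leaves implicit, but the argument is otherwise identical.
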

\begin{proof}
By Corollary \ref{quotient module of tensor space}, $V$ is a
quotient module of the $\afUglC$-module $\OgC(a_1)\ot_\mbc
\cdots\ot_\mbc\OgC(a_k)$ for some $\bfa\in(\mbc^*)^k$ and  $W$ is a
quotient module of $\OgC(b_1)\ot_\mbc \cdots\ot_\mbc\OgC(b_l)$ for
some $\bfb\in(\mbc^*)^l$. Thus, $V\ot W$ is a quotient module of the
$\afUglC$-module
$$
\OgC(a_1)\ot_\mbc\cdots\ot_\mbc
\OgC(a_k)\ot_\mbc\OgC(b_1)\ot_\mbc\cdots \ot_\mbc\OgC(b_l),
$$
which is isomorphic to $\OgC^{\ot(k+l)}\ot_{\afHrC}
M_{(\bfa,\bfb)}$, by Lemma~\ref{key for step1}. Hence, as a quotient
module of an $\afSklC$-module, $V\ot W$ is an $\afSklC$-module.
\end{proof}

We remark that it would be possible to embed
$\sH_\vtg(k)_\mbc\ot\sH_\vtg(l)_\mbc$ into $\sH_\vtg(k+l)_\mbc$ as a
subalgebra (see, e.g., \cite[3.2]{CP}), and hence, regard $\afSklC$
as a subalgebra of $\afSkC\ot\afSlC$. Thus, by restriction, the
$\afSkC\ot\afSlC$-module $V\ot W$ is an $\afSklC$-module.

For $1\leq i\leq n-1$ and $a\in(\mbc)^*$, define
$\bfQ_{i,a}\in\sQ(n)$ by setting $Q_n(u)=1$ and
$$\frac{Q_j(u\ttv^{j-1})}
{Q_{j+1}(u\ttv^{j+1})}=(1-au)^{\dt_{i,j}}$$ for $1\leq j\leq n-1$.
In other words,
$$\bfQ_{i,a}=(1-az^{i-1}u,\ldots,1-az^{-i+3}u,\underset{(i)}{1-az^{-i+1}u},1,\ldots,1).$$
Since $n>i$, by Theorem \ref{n>r representation}, the simple
$\afUglC$-module $L_{i,a}:=L(\bfQ)$ is also a simple
$\afSiC$-module. The weight of the pseudo-highest weight vector of $L_{i,a}$ is
$\la(i,a)=(1^i,0^{n-i})$. Now we can prove the second key result for
the classification theorem.

\begin{Prop}\label{step2}
If $\bfQ=(Q_i(u))\in\sQ(n)$ with $r=\sum_{1\leq i\leq
n}\mathrm{deg}(Q_i(u))$, then  $L(\bfQ)$ is (isomorphic to) a simple
$\afSrC$-module.
\end{Prop}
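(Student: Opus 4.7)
The plan is to realize $L(\bfQ)$ as a subquotient of a tensor product of the ``fundamental'' modules $L_{j,a}$ ($1 \leq j \leq n-1$) and $\Det_a$ that have already been identified as simple $\sS_\vtg(n,j)_\mbc$- and $\afSnC$-modules in the discussion preceding this Proposition, and then to apply Lemma \ref{tensor product of irreducible module} iteratively to conclude that any such tensor product, as well as each of its simple subquotients, is an $\afSrC$-module for the expected value of $r$.

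First, I would write $Q_n(u) = \prod_{l=1}^c (1 - b_l u)$ and, using the identity $P_j(u) = Q_j(uz^{j-1})/Q_{j+1}(uz^{j+1})$, factor each $P_j(u) = \prod_{k=1}^{m_j}(1 - a_{j,k} u)$ for $1 \leq j \leq n-1$. A direct comparison of these factorizations with the explicit Drinfeld tuple $\bfQ_{j,a}$ displayed just before this Proposition and with that of $\Det_a$ recorded in the preceding Lemma shows that $\bfQ$ is the component-wise product of the Drinfeld tuples of the modules $L_{j, a_{j,k}}$ ($1 \leq j \leq n-1$, $1 \leq k \leq m_j$) and $\Det_{b_l}$ ($1 \leq l \leq c$); in particular
$$
r=\sum_{i=1}^{n}\deg Q_i(u)=\sum_{j=1}^{n-1} j\, m_j + n c.
$$

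Next, I would form the tensor product
$$
W = \Det_{b_1} \otimes \cdots \otimes \Det_{b_c} \otimes \bigotimes_{j=1}^{n-1}\bigotimes_{k=1}^{m_j} L_{j,\, a_{j,k}}
$$
in some fixed order, and invoke Lemma \ref{tensor product of irreducible module} iteratively to conclude that $W$ is an $\afSrC$-module. Let $w \in W$ be the tensor product of the pseudo-highest weight vectors of the tensor factors. The coproduct formula $\Delta(\ttk_i)=\ttk_i\otimes \ttk_i$ of Corollary \ref{coalgebra structure} makes $w$ a weight vector of weight $(\deg Q_1(u), \ldots, \deg Q_n(u))$. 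Moreover, using the primitivity of $\sfz_t^\pm$ (Lemma \ref{central-dHall}) together with the identification $f$ of Proposition \ref{isom f}, and exponentiating the defining generating functions of the $\ms Q_i^\pm(u)$, one checks that on $w$ these generating functions act as the product, over the tensor factors, of the corresponding eigenvalues, which by construction equals $Q_i^\pm(u)$. Granted the additional annihilation $\ttx_{j,s}^+ w = 0$ for all $1\leq j<n$ and $s\in\mbz$, the cyclic submodule $\afUglC\cdot w\subseteq W$ is then a pseudo-highest weight module with Drinfeld polynomials $\bfQ$, whose unique simple quotient is $L(\bfQ)$.

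Being a subquotient of the $\afSrC$-module $W$, the module $L(\bfQ)$ inherits an $\afSrC$-structure; since it is already simple as an $\afUglC$-module and the $\afSrC$-action factors through $\zrC'$, it is a simple $\afSrC$-module, as required. The main obstacle in this plan is the verification that the tensor vector $w$ is annihilated by every $\ttx_{j,s}^+$. Because Drinfeld's new coproduct has no closed form for $\ttx_{j,s}^+$ when $s\neq 0$, this cannot be done by direct manipulation of coproducts; instead one must argue either by placing the spectral parameters $a_{j,k}, b_l$ in suitably generic position and using an ordering argument on the tensor factors in the spirit of Chari--Pressley's analysis for $\afUslC$, or by appealing to the explicit construction by Frenkel--Mukhin of finite-dimensional polynomial representations of $\afUglC$ as subquotients of tensor products of fundamental modules. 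Once this vanishing is secured, the remainder of the argument above is purely formal.
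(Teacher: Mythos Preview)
Your approach is essentially the same as the paper's: form the tensor product of the fundamental modules $L_{j,a_{j,k}}$ and the determinant modules $\Det_{b_l}$, take the tensor of pseudo-highest weight vectors, identify its $\ms Q_i^\pm(u)$-eigenvalues and weight, and conclude that the cyclic submodule it generates has $L(\bfQ)$ as simple quotient, whence $L(\bfQ)$ is an $\afSrC$-module via Lemma~\ref{tensor product of irreducible module}.

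Two points of justification deserve correction. First, the multiplicativity of $\ms Q_i^\pm(u)$ on the tensor of highest weight vectors does \emph{not} follow from the primitivity of $\sfz_t^\pm$: that primitivity only controls the sum $\theta_{\pm t}\propto\sum_i \ttg_{i,\pm t}$, not the individual $\ttg_{i,\pm t}$ entering $\ms Q_i^\pm(u)$. The paper instead invokes \cite[Lem.~4.1]{FM}, which establishes precisely this multiplicativity for pseudo-highest weight vectors. Second, the annihilation $\ttx_{j,s}^+w=0$ that you flag as the main obstacle is handled in the paper simply by citing the construction in \cite[Lem.~4.8]{FM}; this is exactly one of the two routes you suggest, so your concern is well-placed but the resolution is already available in the literature rather than requiring a new generic-position argument.
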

\begin{proof}
Let $\la=(\la_1,\ldots,\la_n)$ with $\la_j=\mathrm{deg}(Q_j(u))$ and
let
$$
P_i(u)=\frac{Q_i(u\ttv^{i-1})} {Q_{i+1}(u\ttv^{i+1})},\quad 1\leq
i\leq n-1.
$$
Write $Q_n(u)=(1-b_1u)\cdots(1-b_{\la_n}u)$ and
$$
P_i(u)=\prod_{1\leq j\leq\mu_i}(1-a_{i,j}u),
$$
where $\mu_i=\la_i-\la_{i+1}$. Let
$$
V=L_1\ot\cdots\ot L_{n-1}\ot\Det_{b_1}\ot\cdots\ot\Det_{b_{\la_n}}
$$
where $L_i=L_{i,a_{i,1}}\ot\cdots\ot L_{i,a_{i,\mu_i}}$ for $1\leq
i\leq n-1$.

 Let $w_{i,a_{i,k}}$ (resp. $v_j$) be a pseudo-highest weight vector of $L_{i,a_{i,k}}$ (resp., $\Det_{b_j}$), and
 let $w_0=w_1\ot w_2\ot\cdots\ot w_{n-1}$, where $w_i= w_{i,a_{i,1}}\ot w_{i,a_{i,2}}\ot\cdots\ot w_{i,a_{i,\mu_i}}$, and $v_0=v_1\ot\cdots\ot v_n$.
 Since $w_{i,a_{i,k}}$ has weight $(1^i,0^{n-i})$ and
 $$\aligned
 \ms Q_j^\pm(u)w_{i,a_{i,k}}&=\begin{cases}(1-(a_{i,k}z^{i-2j+1}u)^{\pm1})w_{i,a_{i,k}},&\text{if }1\leq j\leq i;\\
 w_{i,a_{i,k}},&\text{if }i< j\leq n,\end{cases}\text{ and }\\
 \ms Q_j^\pm(u)v_i&=(1-(b_iz^{2(n-j)}u)^{\pm1})v_i\;\text{ for }1\leq i\leq \la_n,\endaligned$$
it follows from \cite[Lem.~4.1]{FM} that
$$ \ms Q_j^\pm(u)(w_0\ot v_0)=Q^\pm_j(u)(w_0\ot v_0).$$
Moreover, the weight of $w_0\ot v_0$ is
$$\la=(\mu_1,0,\ldots,0)+(\mu_2,\mu_2,0,\ldots,0)+(\mu_{n-1},\ldots,\mu_{n-1},0)+(\la_n,\ldots,\la_n).$$

  Let  $W$ be the submodule of $V$ generated
by $w_0\ot v_0$. Then $W$ is a pseudo-highest weight module whose pseudo-highest
weight vector is a common eigenvector of $\ttk_i$ and $\ms Q_{i,s}$
with eigenvalues $z^{\la_i}$ and $Q_{i,s}$, respectively, where
$Q_{i,s}$ are the coefficients of $Q_i^\pm(u)$. So the simple
quotient module of $W$ is isomorphic to $L(\bfQ)$ (cf. the
construction in \cite[Lem.~4.8]{FM}). Since $\sum_{1\leq i\leq
n-1}i\mu_i+n\la_n=\sum_{1\leq i\leq n}\la_i=r$, by Lemma \ref{tensor
product of irreducible module},  $V$ is an $\afSrC$-module. Hence,
$L(\bfQ)$ has an $\afSrC$-module structure.
\end{proof}

Now using Propositions \ref{step1} and \ref{step2} we can prove the
following classification theorem.

\begin{Thm}\label{representation}
For any $n,r\geq1$, the set $\big\{L(\bfQ)\mid\bfQ\in\sQ(n)_r\big\}$
is a complete set of  nonisomorphic simple $\afSrC$-modules.
\end{Thm}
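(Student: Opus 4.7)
The plan is to combine Propositions \ref{step1} and \ref{step2} with the Frenkel--Mukhin classification (Theorem \ref{classification of simple afUglC-modules}), viewing simple $\afSrC$-modules as simple $\afUglC$-modules via the surjection $\zrC':\afUglC\to\afSrC$ of \eqref{zrCf}.

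First, I would establish that the $L(\bfQ)$ for $\bfQ\in\sQ(n)_r$ are pairwise non-isomorphic simple $\afSrC$-modules. Indeed, Proposition \ref{step2} endows each $L(\bfQ)$ with $\bfQ\in\sQ(n)_r$ with an $\afSrC$-module structure compatible with its $\afUglC$-action via $\zrC'$; its simplicity as an $\afSrC$-module follows immediately from its simplicity as an $\afUglC$-module together with the surjectivity of $\zrC'$. Non-isomorphism between distinct $L(\bfQ),L(\bfQ')$ is inherited from Theorem \ref{classification of simple afUglC-modules}, because an $\afSrC$-isomorphism would in particular be a $\afUglC$-isomorphism.

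Next, for exhaustiveness, let $V$ be a simple $\afSrC$-module. By Theorem \ref{fd simple modules}, $V$ is finite-dimensional, and Proposition \ref{step1} shows that, inflated via $\zrC'$, $V$ is a finite-dimensional simple polynomial representation of $\afUglC$. Hence by Theorem \ref{classification of simple afUglC-modules}, $V\cong L(\bfQ)$ for some $\bfQ\in\sQ(n)$. It remains to check that in fact $\bfQ\in\sQ(n)_r$, i.e.\ $\sum_{i=1}^n\deg Q_i(u)=r$.

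This last verification is the one step where some care is required. I would argue as follows. Let $w_0\in V$ be the pseudo-highest weight vector of weight $\la=(\la_1,\ldots,\la_n)$, so that $\ttk_i w_0=z^{\la_i}w_0$ and $\la_i=\deg Q_i(u)$. Under $f$ of Proposition \ref{isom f} and $\zrC$ of \eqref{zrCf}, the generators $\ttk_i$ correspond to $\xi_r(K_i)=0(\bse_i,r)=\sum_{\mu\in\afLanr}z^{\mu_i}\bfone_\mu\in\afSrC$, where $\{\bfone_\mu\}_{\mu\in\afLanr}$ is the complete set of orthogonal idempotents from Proposition \ref{afSrz} with $\sum_{\mu\in\afLanr}\bfone_\mu=1_{\afSrC}$. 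Thus any nonzero weight vector $x$ in the $\afSrC$-module $V$ lies in $\bfone_\mu V$ for a unique $\mu\in\afLanr$, on which $\ttk_i$ acts by $z^{\mu_i}$; since $z$ is not a root of unity, this forces the weight of $x$ to equal $\mu$, and in particular its components sum to $|\mu|=r$. Applied to $w_0$, this yields $\sum_i\la_i=r$, hence $\bfQ\in\sQ(n)_r$.

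The main obstacle is precisely this final weight-sum identification: it rests on the consistency between the $\ttk_i$-action coming from $\afUglC$ and the decomposition of the identity of $\afSrC$ into the idempotents $\bfone_\mu$, together with the non-root-of-unity hypothesis on $z$ which makes distinct $z^{\mu_i}$-eigenspaces separate. Everything else is a direct bookkeeping combination of the two propositions already proved and the Frenkel--Mukhin theorem.
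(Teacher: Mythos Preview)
Your proof is correct and follows the same overall architecture as the paper's: both use Proposition~\ref{step2} for existence and non-isomorphism, and Proposition~\ref{step1} together with Theorem~\ref{classification of simple afUglC-modules} for exhaustiveness.

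The only difference lies in the final verification that $\bfQ\in\sQ(n)_r$. You argue directly: the idempotent decomposition $1=\sum_{\mu\in\afLanr}\bfone_\mu$ forces the weight $\la$ of the pseudo-highest weight vector to lie in $\afLanr$, hence $\sum_i\deg Q_i(u)=\sum_i\la_i=r$. The paper instead re-invokes Proposition~\ref{step2}: setting $l=\sum_i\deg Q_i(u)$, it observes that $L(\bfQ)$ is an $\sS_\vtg(n,l)_\mbc$-module, so $V$ restricts to both an $\sS(n,r)_\mbc$-module and an $\sS(n,l)_\mbc$-module, which forces $r=l$. Your argument is more self-contained and makes explicit the weight-idempotent mechanism underlying the paper's appeal to the level; the paper's version is slicker in that it reuses an already-proved proposition rather than unpacking the idempotent action again. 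Both are valid and essentially equivalent in content.
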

\begin{proof} By Proposition \ref{step2}, the set $\big\{L(\bfQ)\mid\bfQ\in\sQ(n)_r\big\}$
consists of nonisomorphic simple $\afSrC$-modules. It remains to
prove that every simple $\afSrC$-module is isomorphic to $L(\bfQ)$
for some $\bfQ\in\sQ(n)_r$.

Let $V$ be a simple $\afSrC$-module. Then $V\cong L(\bfQ)$ as a
$\afUglC$-module for some $\bfQ\in\sQ(n)$ by Proposition
\ref{step1}. Let $l=\sum_{1\leq i\leq n}\mathrm{deg}\,Q_i(u)$. Then,
by Proposition \ref{step2}, $L(\bfQ)$ is an $\afSlC$-module. Thus,
by restriction, $V$ is a module for the $q$-Schur algebra
$\sS(n,r)_\mbc$ and $V$ is also a module for the $q$-Schur algebra
$\sS(n,l)_\mbc$. Hence, $r=l$.
\end{proof}

\begin{Coro}\label{level r}
Let $V$ be a finite dimensional irreducible polynomial
representation of $\afUglC$. Then $V$ can be regarded as an
$\afSrC$-module via $\zrC$ if and only if $V$ is of level $r$ as a
$U(n)_\mbc$-module.
\end{Coro}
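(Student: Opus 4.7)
The plan is to combine the classification Theorem \ref{representation} of simple $\afSrC$-modules with the Frenkel--Mukhin parametrization (Theorem \ref{classification of simple afUglC-modules}), using the fact that $\sS(n,r)_\mbc$ sits inside $\afSrC$ as a unital subalgebra (Corollary \ref{QSA}). Write $V=L(\bfQ)$ with $\bfQ=(Q_1(u),\ldots,Q_n(u))\in\sQ(n)$, so that the pseudo-highest weight is $\la=(\la_1,\ldots,\la_n)$ with $\la_i=\deg Q_i(u)$. The corollary then amounts to the equivalence
$$\bfQ\in\sQ(n)_r\ \Longleftrightarrow\ V\text{ is of level }r\text{ as a }U(n)_\mbc\text{-module},$$
since by Theorem \ref{representation} the left-hand side characterizes when $V$ is an $\afSrC$-module.

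First, I would handle the forward direction: assume that the $\afUglC$-action on $V$ factors through $\zrC':\afUglC\to\afSrC$. By Corollary \ref{QSA}, $\sS(n,r)_\mbc\subseteq\afSrC$ is a (unital) subalgebra, hence $V$ restricts to an $\sS(n,r)_\mbc$-module. One verifies that this $U(n)_\mbc$-action, obtained through the subalgebra $\sS(n,r)_\mbc\subset\afSrC$, coincides with the $U(n)_\mbc$-action obtained from $U(n)_\mbc\hookrightarrow \afUglC\xrightarrow{\zrC'}\afSrC$; both agree because they give the same action on the defining tensor space $\OgC^{\otimes r}$ (the action of the generators $E_i,F_i,K_i^{\pm1}$ of $U(n)_\mbc$ is the same whether one passes through $\afUglC$ or through $\sS(n,r)_\mbc$). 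By definition of level, $V$ is then of level $r$ as a $U(n)_\mbc$-module.

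For the converse, suppose $V=L(\bfQ)$ is of level $r$ as a $U(n)_\mbc$-module. Since $V$ is a polynomial representation, every $U(n)_\mbc$-weight $\mu$ of $V$ is a partition of a fixed integer equal to $|\la|=\sum_i\la_i$; in particular every irreducible $U(n)_\mbc$-constituent of $V$ has weight summing to $|\la|$. Being of level $r$ forces this common weight sum to equal $r$, so
$$r\ =\ \sum_{i=1}^n\la_i\ =\ \sum_{i=1}^n\deg Q_i(u),$$
that is, $\bfQ\in\sQ(n)_r$. Theorem \ref{representation} (or Proposition \ref{step2}) then shows that $L(\bfQ)$ carries a simple $\afSrC$-module structure, which by Proposition \ref{step1} must agree with its $\afUglC$-structure via $\zrC'$.

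The main obstacle is the compatibility check in the forward direction: verifying that the composition $U(n)_\mbc\hookrightarrow\afUglC\xrightarrow{\zrC'}\afSrC$ lands in the subalgebra $\sS(n,r)_\mbc$ identified in Corollary \ref{QSA}, and coincides with the classical quantum Schur--Weyl map. This is not entirely automatic because the action of $U(n)_\mbc\subset\afUglC$ on $\OgC$ does not preserve the finite-rank submodule $\OgC_n$; nevertheless, the image lies in $\End_{\afHr_\mbc}(\OgC^{\otimes r})=\afSrC$, and one can check on Chevalley generators (or via the standard basis $\phi^d_{\la,\mu}$) that this image sits inside the subalgebra $\sS(n,r)_\mbc$ corresponding to $d\in\mathscr D_{\la,\mu}\subset\fS_r$. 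Once this identification is in place, the rest of the argument is a transparent combination of the classification theorems.
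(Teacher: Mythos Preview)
Your argument is essentially correct and close to the paper's, but there are two points worth noting.

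First, the ``main obstacle'' you raise in the forward direction is not genuine, and contains a small error: the subalgebra $U(n)_\mbc\subset\DC(n)\cong\afUglC$ is generated by $E_i,F_i$ for $1\leq i\leq n-1$ together with $K_j^{\pm1}$ for $1\leq j\leq n$, and these operators \emph{do} preserve $\Og_{n,\mbc}$ (it is only $E_n,F_n\in U_\vtg(n)_\mbc$ that fail to). More to the point, no compatibility check on $\OgC^{\otimes r}$ is needed. Once $V$ is an $\afSrC$-module, restriction along the unital subalgebra $\sS(n,r)_\mbc\subset\afSrC$ makes $V$ an $\sS(n,r)_\mbc$-module; and the $U(n)_\mbc$-action on $V$ obtained from $U(n)_\mbc\hookrightarrow\afUglC\xrightarrow{\zrC'}\afSrC$ factors through this very subalgebra, since the image of $U(n)_\mbc$ under the (non-affine) Schur--Weyl map is exactly $\sS(n,r)_\mbc$. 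The paper simply says ``by restriction'' here.

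Second, for the converse the paper proceeds slightly more cleanly than you do. Rather than arguing directly with weight sums, it sets $r'=\sum_i\deg Q_i(u)$, invokes Theorem~\ref{representation} to realize $V$ as an $\sS_\vtg(n,r')_\mbc$-module, and then applies the forward direction (at $r'$) to conclude that $V$ is of level $r'$; since a finite dimensional $U(n)_\mbc$-module has at most one level, $r=r'$. Your weight-sum computation achieves the same thing and is perfectly valid; the paper's route just avoids re-deriving that all weights of $L(\bfQ)$ have sum $\sum_i\deg Q_i$.
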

\begin{proof}
If $V$ can be regarded as an $\afSrC$-module via $\zrC$ then we may
view $V$ as an $\sS(n,r)_\mbc$-module by restriction, and hence, $V$
is of level $r$ as a $U_\vtg(n)_\mbc$-module.

Conversely, suppose that $V$ is of level $r$ as a $U(n)_\mbc$-module
and $V=L(\bfQ)$ for some $\bfQ\in\sQ(n)$. Then $V$ is an
$\sS_\vtg(n,r')_\mbc$-module by Theorem \ref{representation}, where
$r'=\sum_{1\leq i\leq n}\mathrm{deg}Q_i(u)$. Hence, $V$ is of level
$r'$ as a $U(n)_\mbc$-module. So $r=r'$ and $V$ is an
$\afSrC$-module.
\end{proof}

\begin{Rem}\label{level r2} It is reasonable to make the following definition.
A finite dimensional $\afUglC$-module is said to be of level $r$ if
it is an $\afSrC$-module via $\zrC$. Thus, if $V$ is a
$\afUglC$-module of level $r$, then its composition factors are all
homomorphic images of $\Og_\mbc^{\ot r}$. It would be interesting to
know if the converse is also true.
\end{Rem}

It is natural to make a comparison between the Classification
Theorems \ref{weakly category equiv} and \ref{representation} and to
raise the following problem.

\begin{Prob} \label{Prob-Identif-Thm} Generalize the Identification Theorem
\ref{n>r representation} to the case where $n\leq r$.
\end{Prob}

\section{Classification of simple $\afUnrC$-modules}

The homomorphic image $\afUnrC$ of the extended affine quantum
$\frak{sl}_n$, $\afUnC$, is a proper subalgebra of $\afSrC$ when
$n\leq r$. In other words, by restriction, the surjective algebra
homomorphism $\zrC:\afUglC\ra\afSrC$ induces a surjective algebra
homomorphism $\zrC:\afUnC\ra\afUnrC$. In this section, we will
classify simple $\afUnrC$-modules.

Let $\bfP\in\sP(n)$ and $\la\in\La^+(n,r)$ be such that
$\la_i-\la_{i+1}=\mathrm{deg}P_i$ for $1\leq i\leq n-1$. Define
$$
\Mcp(\bfP,\la)=\afUnC/\Icp(\bfP,\la),
$$
where $\bar I(\bfP,\la)$ is the left ideal of $\afUnC$ generated by
$\ttx^+_{i,s}$, $\msP_{i,s}-P_{i,s}$ and $\ttk_j-\ttv^{\la_j}$ for
$1\leq i\leq n-1$, $s\in\mbz$ and $1\leq j\leq n$, where $P_{i,s}$
is defined using \eqref{Pis}. The $\afUnC$-module $\Mcp(\bfP,\la)$
has a unique simple quotient $\afUnC$-module, which is denoted by
$\Lcp(\bfP,\la)$.

\begin{Lem}\label{restriction afUglC to afUnC}
For $\bfQ\in\sQ(n)$, let $\la=(\la_1,\ldots,\la_n)$ with
$\la_i=\mathrm{deg}(Q_i(u))$  for $1\leq i\leq n$ and
$\bfP=(P_1(u),\ldots,P_{n-1}(u))\in\sP(n)$ be such that
$$
P_j(u)=\frac{Q_j(u\ttv^{j-1})} {Q_{j+1}(u\ttv^{j+1})}
$$
for $1\leq j\leq n-1$. Then $ \Lcp(\bfP,\la)\cong L(\bfQ)|_{\afUnC}.
$
\end{Lem}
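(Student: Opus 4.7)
The plan is to construct an explicit surjective $\afUnC$-module homomorphism from $\Mcp(\bfP,\la)$ onto $L(\bfQ)|_{\afUnC}$, and then to show that $L(\bfQ)|_{\afUnC}$ is already simple as an $\afUnC$-module. Since $\Lcp(\bfP,\la)$ is by definition the unique simple quotient of $\Mcp(\bfP,\la)$, these two facts together will force $L(\bfQ)|_{\afUnC}\cong \Lcp(\bfP,\la)$.

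For the surjection, let $w\in L(\bfQ)$ be the pseudo-highest weight vector, so that $\ttx^+_{j,s}w=0$, $\ms Q_i^\pm(u)\,w=Q_i^\pm(u)\,w$, and $\ttk_i w=\ttv^{\la_i}w$ for all admissible $i,j,s$. Using the identity $\ms P_j^\pm(u)=\ms Q_j^\pm(u\ttv^{j-1})/\ms Q_{j+1}^\pm(u\ttv^{j+1})$ (meaningful as an identity on $w$ since each $Q_i$ has constant term $1$, so $Q_{j+1}^\pm(u\ttv^{j+1})$ is invertible in $\mbc[[u^{\pm1}]]$), together with the hypothesis $P_j(u)=Q_j(u\ttv^{j-1})/Q_{j+1}(u\ttv^{j+1})$, I would deduce $\ms P_{j,s}w=P_{j,s}w$ for every $j$ and $s$. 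Thus $w$ annihilates every generator of $\Icp(\bfP,\la)$, and the assignment $\bar 1\mapsto w$ defines a nonzero $\afUnC$-module homomorphism $\vi:\Mcp(\bfP,\la)\to L(\bfQ)|_{\afUnC}$ whose image is $\afUnC\cdot w$.

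For the simplicity, I would show that $\afUglC$ is generated as a $\mbc$-algebra by $\afUnC$ together with the central elements $\th_{\pm s}$, $s\ge 1$ (recall from \eqref{defn of theta_s} that each $\th_s$ is central in $\afUglC$). The only new generators in $\afUglC$ relative to $\afUnC$ are the Drinfeld generators $\ttg_{i,t}$ for $1\le i\le n$, $t\neq 0$, and for each fixed $t\neq 0$ the passage from $\{\ttg_{1,t},\ldots,\ttg_{n,t}\}$ to $\{\tth_{1,t},\ldots,\tth_{n-1,t},\th_t\}$ is given by an $n\times n$ matrix of scalars; a short telescoping computation using $\tth_{i,t}=\ttv^{(i-1)t}\ttg_{i,t}-\ttv^{(i+1)t}\ttg_{i+1,t}$ together with $\th_t=\ttv^{nt}(\ttg_{1,t}+\cdots+\ttg_{n,t})$ reduces invertibility of this matrix to the non-vanishing of the geometric sum $\sum_{i=0}^{n-1}\ttv^{2it}=(\ttv^{2nt}-1)/(\ttv^{2t}-1)$, which holds since $\ttv=z$ is not a root of unity. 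Because each $\th_{\pm s}$ is central in $\afUglC$ and $L(\bfQ)$ is a simple $\afUglC$-module, Schur's lemma forces $\th_{\pm s}$ to act by a scalar on $L(\bfQ)$; consequently every $\afUnC$-submodule of $L(\bfQ)$ is automatically stable under the $\th_{\pm s}$ and therefore is a $\afUglC$-submodule, so it equals $0$ or $L(\bfQ)$.

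Combining the two steps, the image $\afUnC\cdot w$ of $\vi$ is a nonzero $\afUnC$-submodule of the simple $\afUnC$-module $L(\bfQ)|_{\afUnC}$, hence equals $L(\bfQ)|_{\afUnC}$; being simple, it must coincide with $\Lcp(\bfP,\la)$. The only nontrivial point in the argument is the structural fact that $\afUglC=\afUnC\langle\th_{\pm s}\mid s\ge 1\rangle$, and the remainder of the proof is formal once that is established.
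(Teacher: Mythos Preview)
Your proof is correct and follows essentially the same approach as the paper's: construct a map $\Mcp(\bfP,\la)\to L(\bfQ)|_{\afUnC}$ via the pseudo-highest weight vector, argue that $L(\bfQ)|_{\afUnC}$ is simple because $\afUglC$ is generated over $\afUnC$ by central elements, and conclude by uniqueness of the simple quotient. The paper invokes the central elements $\sfz_t^\pm$ and the decomposition $\dHallr\cong\bfU_\vtg(n)\otimes\mathbf Z_\vtg(n)$ (Remark~\ref{subalgebra quantum gl}(2), transported via $f$), whereas you re-derive the analogous fact for the $\th_{\pm s}$ by an explicit change-of-basis computation; since $f(\sfz_s^\pm)$ is a nonzero scalar multiple of $\th_{\pm s}$, the two are equivalent.
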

\begin{proof}
Let $w_0\in L(\bfQ)$ be a pseudo-highest weight vector. Since
$\afUglC$ is generated by $\afUnC$ and the central elements
$\sfz_t^\pm$ for $t\geq 1$, every simple $\afUglC$-module is a
simple $\afUnC$-module by restriction. In particular,
$L(\bfQ)|_{\afUnC}$ is simple. So we have $L(\bfQ)=\afUnC w_0$.
Hence, there is a surjective $\afUnC$-module homomorphism
$\vi:\Mcp(\bfP,\la)\ra L(\bfQ)$ defined by sending $\bar u$ to
$uw_0$ for $u\in\afUnC$. Thus, $L(\bfQ)\cong \Mcp(\bfP,\la)/{\rm
Ker}\,\vi$ as $\afUnC$-modules. Since $L(\bfQ)|_{\afUnC}$ is simple
and $\Mcp(\bfP,\la)$ has a unique simple quotient $\Lcp(\bfP,\la)$,
we have $\Lcp(\bfP,\la)\cong L(\bfQ)$ as $\afUnC$-modules.
\end{proof}
\begin{Coro}\label{lem1 for rep of UnrC}
Let $\bfP\in\sP(n)$ and $\la\in\La^+(n,r)$ with $\la_i-\la_{i+1}
=\mathrm{deg}P_i(u)$ for $1\leq i\leq n-1$. Then $\Lcp(\bfP,\la)$ is
a $\afUnrC$-module via $\zrC$ and
$\Lcp(\bfP,\la)|_{\afUslC}\cong\Lcp(\bfP)$.
\end{Coro}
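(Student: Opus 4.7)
The plan is to deduce both assertions directly from Lemma~\ref{restriction afUglC to afUnC} together with Theorem~\ref{representation}, by producing a suitable $\bfQ \in \sQ(n)$ whose associated data matches the given pair $(\bfP, \la)$.

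First, given $\bfP = (P_1(u), \ldots, P_{n-1}(u)) \in \sP(n)$ and $\la = (\la_1, \ldots, \la_n) \in \La^+(n,r)$ with $\la_i - \la_{i+1} = \mathrm{deg}\,P_i(u)$, I would construct $\bfQ = (Q_1(u), \ldots, Q_n(u)) \in \sQ(n)$ with $\mathrm{deg}\,Q_i(u) = \la_i$ and $P_i(u) = Q_i(u\ttv^{i-1})/Q_{i+1}(u\ttv^{i+1})$. Pick any polynomial $Q_n(u) \in \mbc[u]$ of degree $\la_n$ with constant term $1$ (for instance, $Q_n(u) := (1-u)^{\la_n}$), and then define $Q_i(u)$ recursively for $i = n-1, n-2, \ldots, 1$ by
$$Q_i(u) := P_i(u\ttv^{-(i-1)})\, Q_{i+1}(u\ttv^{2}).$$
A routine induction yields $Q_i(0) = 1$, $\mathrm{deg}\,Q_i = \mathrm{deg}\,P_i + \mathrm{deg}\,Q_{i+1} = (\la_i - \la_{i+1}) + \la_{i+1} = \la_i$, and $Q_i(u\ttv^{i-1})/Q_{i+1}(u\ttv^{i+1}) = P_i(u)$. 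The last property ensures the dominance condition, so $\bfQ \in \sQ(n)$; moreover, $\sum_{i=1}^n \mathrm{deg}\,Q_i = \sum_{i=1}^n \la_i = r$, hence $\bfQ \in \sQ(n)_r$.

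Second, Theorem~\ref{representation} endows $L(\bfQ)$ with the structure of a simple $\afSrC$-module via the surjective homomorphism $\zrC$ of \eqref{zrCf}. Restricting $\zrC$ to the subalgebra $\afUnC \subset \afUglC$ gives the surjection $\zrC : \afUnC \to \afUnrC$ (by the very definition $\afUnrC := \zrC(\afUnC)$), so $L(\bfQ)|_{\afUnC}$ inherits a natural $\afUnrC$-module structure. By Lemma~\ref{restriction afUglC to afUnC}, $L(\bfQ)|_{\afUnC} \cong \Lcp(\bfP, \la)$, which therefore carries the desired $\afUnrC$-action via $\zrC$, establishing the first assertion.

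Finally, since $\afUslC$ sits inside $\afUnC$ as the subalgebra generated by $E_i, F_i, \ti K_i^{\pm 1}$ with $\ti K_i = K_i K_{i+1}^{-1}$, further restriction yields
$$\Lcp(\bfP, \la)\big|_{\afUslC} \;\cong\; \big(L(\bfQ)|_{\afUnC}\big)\big|_{\afUslC} \;=\; L(\bfQ)|_{\afUslC} \;\cong\; \Lcp(\bfP),$$
where the last isomorphism is the second statement of Theorem~\ref{classification of simple afUglC-modules}. The only technical point is the recursive construction of $\bfQ$ from $(\bfP, \la)$; no serious obstacle is expected, since Lemma~\ref{restriction afUglC to afUnC}, Theorem~\ref{representation}, and Theorem~\ref{classification of simple afUglC-modules} supply everything else.
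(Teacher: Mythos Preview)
Your proposal is correct and takes essentially the same approach as the paper: construct a suitable $\bfQ\in\sQ(n)_r$ by choosing $Q_n$ of degree $\la_n$ with constant term $1$ and building the remaining $Q_i$ recursively, then apply Theorem~\ref{representation} and Lemma~\ref{restriction afUglC to afUnC}. The only cosmetic differences are your choice $Q_n(u)=(1-u)^{\la_n}$ versus the paper's $Q_n(u)=1+u^{\la_n}$, and that for the second assertion you invoke Theorem~\ref{classification of simple afUglC-modules} directly, whereas the paper argues that $\Lcp(\bfP,\la)|_{\afUslC}$ is simple via the surjectivity of $\zrC:\afUslC\to\afUnrC$ (your route is in fact slightly more explicit).
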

\begin{proof}
Let $Q_n(u)=1+u^{\la_n}$. Using the formula
$$
P_j(u)=\frac{Q_j(u\ttv^{j-1})} {Q_{j+1}(u\ttv^{j+1})},
$$
we define the polynomials $Q_i(u)$ for $1\leq i\leq n-1$. Then we
have $\bfQ=(Q_1(u),\ldots,Q_n(u))\in\sQ(n)$ and
$\la_i=\mathrm{deg}Q_i(u)$ for $1\leq i\leq n$. By Theorem
\ref{representation}, $L(\bfQ)$ is an $\afSrC$-module. So, by Lemma
\ref{restriction afUglC to afUnC}, $\Lcp(\bfP,\la)\cong
L(\bfQ)|_{\afUnC}$ is a $\afUnrC$-module. Hence,
$\Lcp(\bfP,\la)|_{\afUslC}$ is simple since the algebra homomorphism
$\zrC:\afUslC\ra\afUnrC$ is surjective.
\end{proof}

Since $\afUnrC$ contains $\SrC$ as a subalgebra, it follows that, if
$\la\in\La^+(n,r)$, then $\Lcp(\bfP,\la)$ is an $\SrC$-module and
\begin{equation}\label{wt}
\Lcp(\bfP,\la)=\bop_{\mu\leq\la\atop\mu\in\La(n,r)}\Lcp(\bfP,\la)_\mu
\end{equation}
where $\Lcp(\bfP,\la)_\mu$ denote the weight space of
$\Lcp(\bfP,\la)$ as a $U(n)_\mbc$-module.

\begin{Lem}\label{lem2 for rep of UnrC} Let $\la,\ti\la$ be partitions and $\bfP,\ti\bfP\in\sP(n)$.
If $\Lcp(\bfP,\la)\cong\Lcp(\ti\bfP,\ti\la)$, then $\bfP=\ti\bfP$
and $\la=\ti\la$. In particular, for $\bfQ,\bfQ'\in\sQ(n)$,
$L(\bfQ)|_{\afUnC}\cong L(\bfQ')|_{\afUnC}$ if and only if
$\mathrm{deg}Q_i(u)=\mathrm{deg}Q_i'(u)$ and
$Q_j(u\ttv^{j-1})/Q_{j+1}(u\ttv^{j+1})=Q_j'(uz^{j-1})/Q_{j+1}'(u\ttv^{j+1})$
for all $1\leq i\leq n$ and $1\leq j\leq n-1$.
\end{Lem}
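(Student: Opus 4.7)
The plan is to reduce the first assertion to Chari--Pressley's classification of finite-dimensional simple $\afUslC$-modules, and then pin down the remaining ambiguity in $\la$ by using a central element of $\afUnC$. First, suppose $\Lcp(\bfP,\la)\cong\Lcp(\ti\bfP,\ti\la)$ as $\afUnC$-modules. Restricting to $\afUslC$, Corollary~\ref{lem1 for rep of UnrC} gives $\Lcp(\bfP,\la)|_{\afUslC}\cong\Lcp(\bfP)$ and similarly for the tilded side, so $\Lcp(\bfP)\cong\Lcp(\ti\bfP)$ as $\afUslC$-modules. The Chari--Pressley classification \cite{CP91,CPbk} (recalled in \S4.2) then forces $\bfP=\ti\bfP$.

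Knowing $\bfP=\ti\bfP$, the definition of $\Mcp(\bfP,\la)$ imposes $\deg P_j(u)=\la_j-\la_{j+1}$, so $\la_j-\la_{j+1}=\ti\la_j-\ti\la_{j+1}$ for $1\le j\le n-1$; hence $\la-\ti\la=(c,\ldots,c)$ for some $c\in\mbz$. To pin down $c=0$, I observe that relation (QGL2) implies $\ttk_1\ttk_2\cdots\ttk_n$ commutes with every $E_j$ and $F_j$---indeed, the exponents $\sum_{i=1}^n(\delta_{i,j}-\delta_{i,j+1})$ telescope to $0$ (using $\ttk_{n+1}=\ttk_1$)---so this product is central in $\afUnC$. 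It acts on the canonical generator $\bar w_0\in\Lcp(\bfP,\la)$ by $\ttv^{|\la|}$ and on $\bar{\ti w}_0\in\Lcp(\ti\bfP,\ti\la)$ by $\ttv^{|\ti\la|}$; the isomorphism forces $\ttv^{|\la|}=\ttv^{|\ti\la|}$, and since $\ttv=z$ is not a root of unity while $|\la|,|\ti\la|$ are integers, we conclude $|\la|=|\ti\la|$, whence $c=0$ and $\la=\ti\la$.

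For the ``in particular'' statement, I would combine the first assertion with Lemma~\ref{restriction afUglC to afUnC}, which identifies $L(\bfQ)|_{\afUnC}\cong\Lcp(\bfP,\la)$ where $\la_i=\deg Q_i(u)$ and $P_j(u)=Q_j(u\ttv^{j-1})/Q_{j+1}(u\ttv^{j+1})$ (and similarly for $\bfQ'$). The ``if'' direction is then immediate because the same pair $(\bfP,\la)$ produces isomorphic simple $\afUnC$-modules, while the ``only if'' direction applies the first assertion to force $(\bfP,\la)=(\bfP',\la')$, which translates exactly to the stated equalities on degrees and on the ratios $Q_j(u\ttv^{j-1})/Q_{j+1}(u\ttv^{j+1})$. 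The substantive content is packaged into the Chari--Pressley classification, used here as a black box; the remaining ingredient is the centrality check for $\ttk_1\cdots\ttk_n$, which is immediate from (QGL2). I therefore do not anticipate a deep obstacle beyond these standard inputs---the only care needed is to correctly identify the scalar by which the central element $\ttk_1\cdots\ttk_n$ acts on each pseudo-highest weight vector.
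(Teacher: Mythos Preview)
Your argument is correct, and it shares with the paper the step of restricting to $\afUslC$ (via Corollary~\ref{lem1 for rep of UnrC}) to conclude $\bfP=\ti\bfP$ from Chari--Pressley's classification. The difference lies in how you obtain $\la=\ti\la$. The paper does this first, in one stroke, by invoking the weight-space decomposition~\eqref{wt}: since $\Lcp(\bfP,\la)$ is an $\SrC$-module with highest weight $\la$, the partition $\la$ is an invariant of the module, so $\la=\ti\la$ immediately. You instead deduce $\bfP=\ti\bfP$ first, then use the compatibility $\la_j-\la_{j+1}=\deg P_j$ to reduce $\la-\ti\la$ to a constant vector, and finally eliminate that constant via the central element $\ttk_1\cdots\ttk_n$ and the non-root-of-unity hypothesis on $z$. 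Both routes are sound; the paper's is shorter because it exploits the fact that the full weight $\la$ (not just the differences $\la_j-\la_{j+1}$) is already visible from the $U(n)_\mbc$-module structure, whereas your approach recovers $\la$ in two stages and needs the extra observation about centrality. Your handling of the ``in particular'' statement via Lemma~\ref{restriction afUglC to afUnC} is the natural one and matches the intended use.
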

\begin{proof}
By \eqref{wt} we have $\la=\ti\la$. Since
$\Lcp(\bfP,\la)\cong\Lcp(\ti\bfP,\ti\la)$, it follows from Corollary
\ref{lem1 for rep of UnrC} that
$\Lcp(\bfP)\cong\Lcp(\bfP,\la)|_{\afUslC}\cong
\Lcp(\ti\bfP,\ti\la)|_{\afUslC}\cong\Lcp(\ti\bfP)$. Therefore,
$\bfP=\ti\bfP$.
\end{proof}
\begin{Lem}\label{lem3 for rep of UnrC}
Let $V$ be a finite dimensional simple $\afUnrC$-module. Then there
exist $\bfP=(P_1(u),\ldots,P_{n-1}(u))\in\sP(n)$ and $\la\in\La^+(n,r)$ with $\la_i-\la_{i+1}
=\mathrm{deg}P_i(u)$, for all $1\leq i\leq n-1$, such that $V\cong
\Lcp(\bfP,\la)$.
\end{Lem}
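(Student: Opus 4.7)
The plan is to (i) inflate $V$ through $\zrC:\afUnC\to\afUnrC$ to a finite-dimensional simple $\afUnC$-module, (ii) construct a pseudo-highest weight vector $w_0\in V$ with weight $\la$ and Drinfeld polynomials $\bfP$ forcing $V\cong\Lcp(\bfP,\la)$, and (iii) verify $\la\in\La^+(n,r)$ via the embedding $\SrC\subseteq\afUnrC$.

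First, inflate $V$ via $\zrC$ to a simple $\afUnC$-module; simplicity is preserved because $\afUnrC=\zrC(\afUnC)$, so $\afUnC$-submodules and $\afUnrC$-submodules of $V$ coincide. Being finite-dimensional and of type~$1$ for the commutative subalgebra $\langle\ttk_1^{\pm 1},\ldots,\ttk_n^{\pm 1}\rangle\subseteq\afUnC$, $V$ decomposes into weight spaces $V=\bigoplus_{\mu\in\mbz^n}V_\mu$ with only finitely many nonzero components.

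Next, I would produce a pseudo-highest weight vector. Choose $\la\in\mbz^n$ maximal among the weights of $V$ in the partial order generated by $\al_i=\bse_i-\bse_{i+1}$ for $1\le i\le n-1$; such a $\la$ exists since the weight set is finite. Then $V_{\la+\al_i}=0$ for each such $i$, and relation (QLA2) of Definition \ref{QLA} forces $\ttx^+_{i,s}V_\la\subseteq V_{\la+\al_i}=0$ for all $1\le i\le n-1$, $s\in\mbz$. The family $\{\ms P_{i,s}\mid 1\le i\le n-1,\,s\in\mbz\}$ pairwise commutes by (QLA4) and commutes with each $\ttk_j$ by (QLA2), so it acts as a commuting family on the finite-dimensional $\mbc$-space $V_\la$, admitting a common eigenvector $w_0\ne 0$ with eigenvalues $P_{i,s}\in\mbc$. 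Thus $\ttx^+_{i,s}w_0=0$, $\ms P_{i,s}w_0=P_{i,s}w_0$, and $\ttk_jw_0=z^{\la_j}w_0$.

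Then I would identify the Drinfeld data and pass to the simple quotient. The submodule $\afUslC\cdot w_0\subseteq V$ is a finite-dimensional pseudo-highest weight $\afUslC$-module with pseudo-highest weight vector $w_0$. By Chari--Pressley's classification theorem applied to this submodule, the generating series $P_i^\pm(u)=\sum_{s\ge 0}P_{i,\pm s}u^{\pm s}$ must arise from a polynomial $P_i(u)\in\mbc[u]$ of degree $\la_i-\la_{i+1}$ with constant term $1$, via the convention in \eqref{f^pm(u)}. Setting $\bfP=(P_1(u),\ldots,P_{n-1}(u))\in\sP(n)$, the vector $w_0$ satisfies the defining relations of a generator of $\Mcp(\bfP,\la)$, yielding a nonzero $\afUnC$-homomorphism $\Mcp(\bfP,\la)\to V$ sending $\bar 1\mapsto w_0$. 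Since $V$ is simple, this factors through the unique simple quotient, giving $V\cong\Lcp(\bfP,\la)$.

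Finally, to verify $\la\in\La^+(n,r)$, note that the composite $U(n)_\mbc\hookrightarrow\afUnC\stackrel{\zrC}{\to}\afUnrC\hookrightarrow\afSrC$ coincides with the restriction of $\zrC'$ to $U(n)_\mbc$, whose image is the non-affine quantum Schur algebra $\SrC$; hence $\SrC\subseteq\afUnrC$. Restricting $V$ to $\SrC$, every $U(n)_\mbc$-weight of $V$ lies in $\La(n,r)$, so $\la_i\ge 0$ for all $i$ and $\sum_{i=1}^n\la_i=r$. Combined with $\la_i-\la_{i+1}=\deg P_i(u)\ge 0$, this yields $\la\in\La^+(n,r)$. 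The main obstacle is the third step, which rests on Chari--Pressley's non-trivial result that a finite-dimensional pseudo-highest weight $\afUslC$-module has Drinfeld polynomials of degree matching the weight differences. Without this, the scalars $P_{i,s}$ from the eigenvalue computation merely define formal series, and upgrading them to genuine polynomials is not a routine consequence of the commuting-family diagonalization.
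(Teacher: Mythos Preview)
Your proof is correct and follows the same overall skeleton as the paper's (produce a pseudo-highest weight vector, identify the Drinfeld data, map from $\Mcp(\bfP,\la)$), but the order and mechanism of the steps differ in a way worth noting. The paper first exploits the surjectivity $\zrC:\afUslC\twoheadrightarrow\afUnrC$ to view $V$ itself as a \emph{simple} $\afUslC$-module, so Chari--Pressley's classification applies directly to $V$ and immediately yields a pseudo-highest weight vector $w_0$ together with $\bfP\in\sP(n)$; only afterwards does it use the idempotent decomposition $1=\sum_{\nu\in\La_\vtg(n,r)}\bfone_\nu$ in $\afUnrC$ to pick off a weight $\la\in\La(n,r)$ with $\bfone_\la w_0\neq 0$ and check $\la_i-\la_{i+1}=\deg P_i$. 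You instead first locate $\la$ via the $\ttk_i$-weight decomposition (equivalently the idempotents), then diagonalize the commuting $\ms P_{i,s}$ on $V_\la$ to find $w_0$, and only then invoke the Chari--Pressley theory on the submodule $\afUslC\cdot w_0$ to force the $P_i^\pm(u)$ to be polynomial. The paper's route is slightly cleaner because simplicity of $V$ over $\afUslC$ lets it invoke the full classification theorem rather than the intermediate lemma (that a finite-dimensional pseudo-highest weight module has genuine Drinfeld polynomials of the correct degree) which you correctly flag as the non-routine ingredient in your approach. Your explicit verification that $\la\in\La^+(n,r)$ via $\SrC\subseteq\afUnrC$ is a nice touch; the paper leaves this implicit.
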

\begin{proof}
Since $\zrC:\afUslC\ra\afUnrC$ is surjective, $V$ is a simple
$\afUslC$-module. Let $w_0$ be a pseudo-highest weight vector satisfying
$$\ttx^+_{i,s}w_0=0, \msP_{i,s}w_0=P_{i,s}w_0, \text{ and }\ti\ttk_jw_0=\ttv^{\mu_j}w_0$$ for
all $1\leq i,j\leq n-1$ and $s\in\mbz$, where
$\mu_j=\mathrm{deg}P_j(u)$. Using the idempotent decomposition
$1=\sum_{\nu\in\La_\vtg(n,r)}\bfone_\nu$, \index{$\bfone_\la$,
idempotent $[\diag(\la)]$}
 $\sum_{\nu\in\La_\vtg(n,r)}\bfone_\nu
w_0=w_0\not=0$ implies that there exists $\la\in\La(n,r)$ such that
$\bfone_\la w_0\not=0$.

It is clear that $\bfone_\la w_0$ is also a pseudo-highest weight vector
satisfying
$$\ttx^+_{i,s}\bfone_\la w_0=0, \msP_{i,s}\bfone_\la w_0=P_{i,s}\bfone_\la w_0, \text{ and }\ti\ttk_j\bfone_\la w_0=\ttv^{\mu_j}\bfone_\la w_0.$$
 On the other hand, $\ttk_i\bfone_\la w_0=\ttv^{\la_i}\bfone_\la w_0$ for $1\leq i\leq
n$. Thus, $\ti\ttk_j\bfone_\la w_0=\ttv^{\la_j-\la_{j+1}}\bfone_\la
w_0$. So $\la_i-\la_{i+1}=\mu_i$ for $1\leq i\leq n-1$. Hence, there
is a surjective $\afUnC$-module homomorphism $\vi:\Mcp(\bfP,\la)\ra
V$ defined by sending $\bar u$ to $uw_0$ for all $u\in\afUnC$. This
surjection induces a $\afUnC$-module isomorphism
 $V\cong \Lcp(\bfP,\la)$.
\end{proof}


All together gives the following classification theorem.

\begin{Thm}\label{simple U(n,r)-modules}
The set
$$
\{\Lcp(\bfP,\la)\mid\bfP\in\sP(n),
\,\la\in\La^+(n,r),\,\la_i-\la_{i+1} =\mathrm{deg}P_i(u)\text{ for
$1\leq i\leq n-1$}\}
$$
is a complete set of nonisomorphic finite dimensional simple
$\afUnrC$-modules.
\end{Thm}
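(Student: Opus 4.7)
The plan is to assemble the theorem directly from the three preparatory results established immediately above it, which together cover the three assertions implicit in the statement: membership, pairwise non-isomorphism, and exhaustion.

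First, I would verify that every element of the proposed indexing set really does give a finite dimensional simple $\afUnrC$-module. This is exactly the content of Corollary \ref{lem1 for rep of UnrC}: given $\bfP \in \sP(n)$ and $\la \in \La^+(n,r)$ satisfying the compatibility condition $\la_i - \la_{i+1} = \mathrm{deg}\,P_i(u)$, the module $\Lcp(\bfP,\la)$ is obtained by restriction from the polynomial representation $L(\bfQ)$ (where $Q_n$ is chosen freely and the other $Q_i$ are determined by $\bfP$), and $L(\bfQ)$ is an $\afSrC$-module by Theorem \ref{representation}. Since $\zrC:\afUnC \to \afUnrC$ is surjective, restriction of a simple $\afSrC$-module through $\afUnrC \hookrightarrow \afSrC$ is still simple (or zero, but the existence of the pseudo-highest weight vector precludes that). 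Finite dimensionality is inherited from $L(\bfQ)$.

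Next, I would invoke Lemma \ref{lem2 for rep of UnrC} to deduce pairwise non-isomorphism: if $\Lcp(\bfP,\la) \cong \Lcp(\ti\bfP,\ti\la)$, the weight-space decomposition \eqref{wt} forces $\la = \ti\la$, and then the further restriction to $\afUslC$ together with the classification of Chari--Pressley modules $\Lcp(\bfP)$ forces $\bfP = \ti\bfP$.

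Finally, exhaustion is Lemma \ref{lem3 for rep of UnrC}: given any finite dimensional simple $\afUnrC$-module $V$, one views $V$ as a simple $\afUslC$-module, picks a pseudo-highest weight vector $w_0$ with associated Drinfeld tuple $\bfP$, and uses the idempotent decomposition $1 = \sum_\nu \bfone_\nu$ in the Schur algebra sitting inside $\afUnrC$ to find $\la \in \La(n,r)$ with $\bfone_\la w_0 \neq 0$; comparing the action of $\ti\ttk_j$ on $\bfone_\la w_0$ via both $\msP_{j,0}$ and $\ttk_j \ttk_{j+1}^{-1}$ yields the compatibility $\la_i - \la_{i+1} = \mathrm{deg}\,P_i(u)$ and $\la \in \La^+(n,r)$, and the universal property of $\Mcp(\bfP,\la)$ then produces the required isomorphism $V \cong \Lcp(\bfP,\la)$. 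I expect no real obstacle here since every ingredient has already been developed; the proof is essentially a one-line citation of the three preceding results.
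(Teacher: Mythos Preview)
Your proposal is correct and matches the paper's approach exactly: the paper simply writes ``All together gives the following classification theorem'' before stating the result, meaning it is assembled from Corollary~\ref{lem1 for rep of UnrC}, Lemma~\ref{lem2 for rep of UnrC}, and Lemma~\ref{lem3 for rep of UnrC} precisely as you outline. One small clarification: simplicity of $\Lcp(\bfP,\la)$ as a $\afUnC$-module holds by its definition as the unique simple quotient of $\Mcp(\bfP,\la)$, so you need Corollary~\ref{lem1 for rep of UnrC} only for finite dimensionality and the $\afUnrC$-module structure, not for simplicity.
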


Define an equivalence relation $\sim$ on $\sQ(n)$ be setting for
$\bfQ,\bfQ'\in\sQ(n)$,
$$\aligned
\bfQ\sim\bfQ'\iff &\mathrm{deg}Q_i(u)=\mathrm{deg}Q_i'(u),1\leq
i\leq
n,\text{ and }\\ &\frac{Q_j(u\ttv^{j-1})}{Q_{j+1}(u\ttv^{j+1})}=\frac{Q_j'(uz^{j-1})}{Q_{j+1}'(u\ttv^{j+1})}, 1\leq j\leq n-1.\\
\endaligned$$

\begin{Coro} If $\Pi_r=\sQ(n)_r/\sim$ denotes the set of equivalence classes and choose a representative $\bfQ_\pi\in\pi$
for every $\pi\in\Pi_r$, then the set
$\{L(\bfQ_\pi)|_{\afUnC}:\pi\in\Pi_r\}$ is a complete set of
nonisomorphic finite dimensional simple $\afUnrC$-modules. Moreover,
if $n>r$, then $\Pi_r=\sQ(n)_r$.
\end{Coro}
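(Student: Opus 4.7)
The plan is to deduce this corollary by gluing together three results already established in the chapter: Theorem \ref{simple U(n,r)-modules}, which classifies simple $\afUnrC$-modules in the parametrization $\Lcp(\bfP,\la)$; Lemma \ref{restriction afUglC to afUnC}, which identifies the restriction $L(\bfQ)|_{\afUnC}$ with some $\Lcp(\bfP,\la)$; and Lemma \ref{lem2 for rep of UnrC}, which shows $L(\bfQ)|_{\afUnC}\cong L(\bfQ')|_{\afUnC}$ if and only if $\bfQ\sim\bfQ'$. Thus the equivalence relation $\sim$ on $\sQ(n)_r$ is designed exactly to match the restriction functor.

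First I would verify that each $L(\bfQ_\pi)|_{\afUnC}$ is a simple finite dimensional $\afUnrC$-module, which follows immediately since $\bfQ_\pi\in\sQ(n)_r$ implies $L(\bfQ_\pi)$ is a simple $\afSrC$-module by Theorem \ref{representation}, and because the surjection $\xi_{r,\mbc}':\afUnC\to\afUnrC$ keeps the restriction simple (as already used in the proof of Lemma \ref{restriction afUglC to afUnC}). Pairwise non-isomorphism for distinct classes $\pi\neq\pi'$ is then immediate from Lemma \ref{lem2 for rep of UnrC}, since $\bfQ_\pi\not\sim\bfQ_{\pi'}$.

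For completeness, I would take an arbitrary finite dimensional simple $\afUnrC$-module $V$. By Theorem \ref{simple U(n,r)-modules}, $V\cong\Lcp(\bfP,\la)$ for some $\bfP\in\sP(n)$ and $\la\in\La^+(n,r)$ with $\la_i-\la_{i+1}=\deg P_i(u)$. I would now build a $\bfQ\in\sQ(n)_r$ lifting $(\bfP,\la)$: exactly as in the proof of Corollary \ref{lem1 for rep of UnrC}, choose any polynomial $Q_n(u)$ of degree $\la_n$ with constant term $1$ (for instance $Q_n(u)=1+u^{\la_n}$ when $\la_n>0$, and $Q_n(u)=1$ otherwise), and define $Q_i(u)$ for $i<n$ recursively from $Q_n$ by the relation $Q_j(u\ttv^{j-1})/Q_{j+1}(u\ttv^{j+1})=P_j(u)$. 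Then $\bfQ$ is dominant with $\deg Q_i(u)=\la_i$, so $\bfQ\in\sQ(n)_r$, and Lemma \ref{restriction afUglC to afUnC} yields $L(\bfQ)|_{\afUnC}\cong\Lcp(\bfP,\la)\cong V$. Hence $V\cong L(\bfQ_\pi)|_{\afUnC}$ for the unique $\pi\in\Pi_r$ containing $\bfQ$.

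Finally, for $n>r$ I would argue that each equivalence class is a singleton, so $\Pi_r=\sQ(n)_r$. Indeed, if $\bfQ\in\sQ(n)_r$ and $\la=(\deg Q_1,\ldots,\deg Q_n)$, then $\la$ is a partition with $\sum_i\la_i=r<n$, forcing $\la_n=0$ and hence $Q_n(u)=1$. Now within a single $\sim$-class the component degrees and the ratios $Q_j(u\ttv^{j-1})/Q_{j+1}(u\ttv^{j+1})$ are fixed, and the recursion combined with $Q_n=1$ determines $Q_{n-1},Q_{n-2},\ldots,Q_1$ uniquely. There is no real obstacle here; the only mild subtlety is ensuring the lift $\bfQ$ in the completeness step actually satisfies the dominance condition and lies in $\sQ(n)_r$, but this is automatic from the degree identities $\la_i-\la_{i+1}=\deg P_i$ together with $\la\in\La^+(n,r)$.
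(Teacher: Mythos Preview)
Your argument is correct. For the main classification statement the paper gives no proof at all, treating it as immediate from the preceding results (Theorem \ref{simple U(n,r)-modules}, Lemma \ref{restriction afUglC to afUnC}, Lemma \ref{lem2 for rep of UnrC}, and Corollary \ref{lem1 for rep of UnrC}); your write-up simply makes that deduction explicit, so there is no real difference in approach there.

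For the last assertion the paper argues differently: it invokes Lemma \ref{S(n,r)=U(n,r)} to say that $\afUnrC=\afSrC$ when $n>r$, so the simple $\afUnrC$-modules coincide with the simple $\afSrC$-modules, and since the latter are parametrized bijectively by $\sQ(n)_r$ (Theorem \ref{representation}) the equivalence classes must be singletons. Your argument is instead a direct analysis of the polynomials: $n>r$ forces $\la_n=0$, hence $Q_n(u)=1$, and then the fixed ratios $P_j$ determine all $Q_i$ uniquely. Both are valid; the paper's route is quicker given what has already been proved, while yours is more self-contained and makes the combinatorial reason for the collapse of $\sim$ transparent without appealing to the algebra identification $\afUnrC=\afSrC$.
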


\begin{proof} The last assertion follows from the fact that, if $n>r$, then $\afUnrC=\afSrC$.
\end{proof}

As seen in Theorem \ref{classification of simple afUglC-modules},
there is a rougher equivalence relation $\sim'$ on $\sQ(n)$ defined
by setting for $\bfQ,\bfQ'\in\sQ(n)$,
$$\bfQ\sim'\bfQ'\iff \frac{Q_j(u\ttv^{j-1})}{Q_{j+1}(u\ttv^{j+1})}=\frac{Q_j'(uz^{j-1})}{Q_{j+1}'(u\ttv^{j+1})}\;\,
\text{for $1\leq j\leq n-1$}$$
 such that the equivalence classes are in one-to-one correspondence
to simple $\afUslC$-modules.

\chapter{The presentation and realization problems}

As seen in Chapters 2 and 3, the double Ringel--Hall algebra
$\dHallr$ is presented by generators and relations, while the affine
quantum Schur algebra $\afbfSr$ is defined as an endomorphism algebra
which is a vector space with an explicitly defined multiplication. Now the algebra
epimorphism from $\dHallr$ to $\afbfSr$ raises two natural
questions: how to present affine quantum Schur algebras $\afbfSr$ in
terms of generators and relations and how to realize the double
Ringel--Hall algebra $\dHallr$ in terms of a vector space together
with an explicitly defined multiplication? In this and next
chapters, we will tackle these problems.

Since $\dHallr\cong \bfU_\vtg(n)\ot{\bf Z}_\vtg(n)$ by Remark
\ref{subalgebra quantum gl}(2), it follows that
$\afbfSr=\bfU_\vtg(n,r)\mathbf Z_\vtg(n,r)$, where $\bfU_\vtg(n,r)$
(resp., $\mathbf Z_\vtg(n,r)$) is the homomorphic image  of the
quantum group $\bfU_\vtg(n)$, the extended quantum affine
$\mathfrak{sl}_n$,\index{extended quantum affine $\frak{sl}_n$}
(resp. the central subalgebra ${\bf Z}_\vtg(n)$)\index{central
subalgebra of $\dHallr$} under the map $\xi_r$. We first review  in
\S5.1 a presentation of McGerty for $\bfU_\vtg(n,r)$. This is a
proper subalgebra of $\afbfSr$ if $n\leq r$. As a natural affine
analogue of the presentation given by Doty--Giaquinto \cite{DG}, we
will modify McGerty's presentation to obtain a Drinfeld--Jimbo type
presentation for $\bfU_\vtg(n,r)$ (Theorem \ref{2nd presentation for
im(afzr)}). We then determine the structure of the central
subalgebra $\mathbf Z_\vtg(n,r)$ of $\afbfSr$ (Proposition
\ref{basis-center}). However, it is almost impossible to combine the
two to give a presentation for $\afbfSr$. In \S5.3, we will use the
multiplication formulas given in \S3.4 to derive some extra
relations for an extra generator required for presenting
$\boldsymbol{\sS}_\vtg(r,r)$ for all $r\geq1$ (Theorem \ref{n,n
case}). In particular, we will then easily see why the Hopf algebra
$\widehat\bfU$ considered in \cite[3.1.1]{Gr99}\footnote{This
algebra $\widehat\bfU$ is denoted by $U(\widehat{gl}_n)$ in
\cite{Gr99}.} maps onto affine quantum Schur algebras; see Remark
\ref{RGreenQAgln}. Strictly speaking, $\widehat\bfU$ cannot be
regarded as a quantum enveloping algebra since it does not have a
triangular decomposition.

From \S5.4 onwards, we will discuss the realization problem. We
first formulate a realization conjecture in \S5.4, as suggested in
\cite[5.2(2)]{DF09}, and its classical ($\up=1$) version. In the
last section, we show that Lusztig's transfer maps are not
compatible with the map $\xi_r$ for the double Ringel--Hall algebra
${}'\dHallr$ considered in Remarks \ref{subalgebra quantum gl}(3).
This justifies that we cannot have a realization in terms of an
inverse limit of the transfer maps. We will then establish the
conjecture for the classical case in the next chapter.

\section{McGerty's presentation for $\bfU_\vtg(n,r)$}

The presentation problem for $\afbfSr$ when $n>r$ is relatively
easy. In this case, $\afbfSr=\bfU_\vtg(n,r)$ is a homomorphic image
of $\bfU_\vtg(n)$. By using McGerty's presentation for
$\bfU_\vtg(n,r)$, we obtain a new presentation for $\bfU_\vtg(n,r)$
similar to that for quantum Schur algebras given in \cite{DG} (cf.
\cite{DP03}). In particular, this gives Doty--Green's result
\cite{DGr} for $\afbfSr$ with $n>r$ (removing the condition $n\ge3$
required there).

Let $\xi_r:\dHallr\ra \afbfSr$ be the surjective homomorphism
defined in \eqref{xir}. For each $r\geq1$, define
$$\bfU_\vtg(n,r):=\xi_r(\bfU_\vtg(n)).$$\index{$\bfU_\vtg(n,r)$,
homomorphic image of $\bfU_\vtg(n)$} Clearly, $\bfU_\vtg(n,r)$ is
generated by the elements
$$\bffke_i:=\xi_r(E_i)=\afE_{i,i+1}(\bfl,r),\,
\bffkf_i:=\xi_r(F_i)=\afE_{i+1,i}(\bfl,r),\,\bffkk_i:=\xi_r(K_i)=0(\bfe_i,r),$$
for all $i\in I$.

Let $C=(c_{i,j})$ denote the generalized Cartan matrix of type $\ti
A_{n-1}$ as in \eqref{CMforAn-1}. Recall the elements $\bfone_\la$,
$\la\in\afLanr$, defined in Proposition
\ref{afSrz}(2).\index{$\bfone_\la$, idempotent $[\diag(\la)]$}
 The following result is taken from \cite[Prop.~6.4 \&
Lem.~6.6]{Mc07}.

\begin{Thm}\label{presentation for im(afzr)}
As a $\mbq(\up)$-algebra, $\bfU_\vtg(n,r)$ is generated by
$$\bffke_i,\ \bffkf_i,\ \bfone_\la\qquad(i\in I,\ \la\in\afLanr)$$
subject to the following relations
\begin{itemize}
\item[(1)] $\bfone_\la\bfone_\mu=\dt_{\la,\mu}\bfone_\la$,
$\sum_{\la\in\afLanr}\bfone_\la=1$;

\item[(2)] $\bffke_i\bfone_\la=\begin{cases}
\bfone_{\la+\afbse_i-\afbse_{i+1}}\bffke_i,\;\;\;&\text{if $\la+\afbse_i-\afbse_{i+1}\in\afLanr$};\\
0,\;\;&\text{otherwise;}\end{cases}$

\item[(3)] $\bffkf_i\bfone_\la=\begin{cases}
\bfone_{\la-\afbse_i+\afbse_{i+1}}\bffkf_i,\;\;\;&\text{if $\la-\afbse_i+\afbse_{i+1}\in\afLanr$};\\
0,\;\;&\text{otherwise;}\end{cases}$

\item[(4)] $\bffke_i\bffkf_j-\bffkf_j\bffke_i=\delta_{i,j}
\sum_{\la\in\afLanr}[\la_i-\la_{i+1}] \bfone_\la;$

\item[(5)] $\displaystyle\sum_{a+b=1-c_{i,j}}(-1)^a
\leb{1-c_{i,j}\atop a}\rib \bffke_i^{a}\bffke_j\bffke_i^{b}=0$ for
$i\not=j$;

\item[(6)] $\displaystyle\sum_{a+b=1-c_{i,j}}(-1)^a
\leb{1-c_{i,j}\atop a}\rib \bffkf_i^{a}\bffkf_j\bffkf_i^{b}=0$ for
$i\not=j$.
\end{itemize}
\end{Thm}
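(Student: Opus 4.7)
The plan is to prove the theorem by the standard three-step strategy: (i) verify the relations hold in $\bfU_\vtg(n,r)$, which produces a surjective algebra homomorphism $\phi: A \to \bfU_\vtg(n,r)$ from the abstract algebra $A$ defined by generators and relations (1)--(6); (ii) establish a triangular decomposition of $A$; and (iii) use this decomposition together with known information about $\bfU_\vtg(n,r)$ as the image of $\bfU_\vtg(n)$ under $\xi_r$ to conclude that $\phi$ is injective.

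For step (i), relation (1) is immediate from Proposition \ref{afSrz}(2) once we observe that $\bfone_\la = [\diag(\la)]$ and $\sum_\la [\diag(\la)] = 1$. To verify the remaining relations, the key point is that $\bffkk_i = \xi_r(K_i) = \sum_{\la \in \afLanr} \up^{\la_i}\bfone_\la$, so $\bffkk_i \bffkk_{i+1}^{-1} = \sum_\la \up^{\la_i - \la_{i+1}} \bfone_\la$ and hence $(\bffkk_i\bffkk_{i+1}^{-1} - \bffkk_i^{-1}\bffkk_{i+1})/(\up-\up^{-1})$ maps to $\sum_\la [\la_i - \la_{i+1}]\bfone_\la$. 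Relations (2), (3) then follow from applying $\xi_r$ to (QGL2) of Theorem \ref{presentation dHallAlg} together with \eqref{product [diag(la)][A] in affine q-Schur algebra}, relation (4) follows from applying $\xi_r$ to (QGL3), and relations (5), (6) follow directly from (QGL4), (QGL5). (One can also re-derive the crucial commutator $[\bffke_i, \bffkf_i]$ directly from the multiplication formulas in Theorem \ref{multiplication formulas in affine q-Schur algebra} applied to $\afE_{i,i+1}(\bfl,r) \cdot \afE_{i+1,i}(\bfl,r)$, which gives an independent check.) Since the image of $\phi$ contains all generators $\bffke_i, \bffkf_i, \bfone_\la$ of $\bfU_\vtg(n,r)$, the map $\phi$ is surjective.

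For step (ii), let $A^+$ (resp. $A^-$) be the subalgebra of $A$ generated by $\{\bffke_i, \bfone_\la\}$ (resp. $\{\bffkf_i, \bfone_\la\}$) and let $A^0 = \sum_\la \mathbb Q(\up)\bfone_\la$. Relations (2) and (3) allow one to move all idempotents to the right (or left), while relation (4) enables one to move all $\bffkf_j$'s past all $\bffke_i$'s modulo $A^0$. A straightforward induction on length of monomials in the generators then shows $A = A^- A^0 A^+$. Since relation (4) gives no constraint on $A^+$ or $A^-$ beyond the absorption with idempotents, the subalgebra $A^+$ (resp. $A^-$) is the image of $\bfU_\vtg(n)^+$ (resp. $\bfU_\vtg(n)^-$) under the obvious map, by the quantum Serre relations (5) and (6).

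The main obstacle, and the heart of the proof, is step (iii): showing $\phi$ is injective. The cleanest route is to establish a dimension bound on each $\bfone_\mu A \bfone_\la$ by producing an explicit spanning set indexed by $A \in \afThnr$ with $\ro(A) = \mu$ and $\co(A) = \la$, and then match this upper bound against $\dim \bfone_\mu \bfU_\vtg(n,r) \bfone_\la = |\{A \in \afThnr : \ro(A) = \mu, \co(A) = \la\}|$. Concretely, one builds monomials of the form $\bffkf^{(\bfs^-)} \bfone_\mu \bffke^{(\bfs^+)}$ for appropriate multi-indices $\bfs^\pm$ constructed via the distinguished-word algorithm of \S1.4 (applied to the positive and negative parts), and uses the triangular decomposition of $A$ together with relations (2), (3) to reduce an arbitrary element of $\bfone_\mu A \bfone_\la$ to a linear combination of such monomials. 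A convenient alternative is to invoke McGerty's geometric argument \cite{Mc07}, which identifies $A$ with the convolution algebra on the affine flag variety via Lusztig's construction, thereby bypassing the explicit combinatorial upper bound; either route concludes that $\phi$ is an isomorphism.
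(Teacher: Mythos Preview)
The paper does not prove this theorem; it simply cites it as \cite[Prop.~6.4 \& Lem.~6.6]{Mc07}. Your ``convenient alternative'' of invoking McGerty's argument is therefore exactly what the paper does, and that route is fine.

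Your first route in step~(iii), however, has a genuine gap. You claim
\[
\dim \bfone_\mu \bfU_\vtg(n,r) \bfone_\la = |\{A \in \afThnr : \ro(A) = \mu,\ \co(A) = \la\}|,
\]
but this is the dimension of $\bfone_\mu \afbfSr \bfone_\la$, not of $\bfone_\mu \bfU_\vtg(n,r) \bfone_\la$. When $n\leq r$, $\bfU_\vtg(n,r)$ is a \emph{proper} subalgebra of $\afbfSr$ (see Lemma~\ref{S(n,r)=U(n,r)} and Proposition~\ref{dim-difference}); its weight components are spanned by the canonical basis elements labeled by \emph{aperiodic} matrices only (this is \cite[Th.~8.2]{Lu99}, invoked in the proof of Proposition~\ref{dim-difference}). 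So your proposed upper bound, a spanning set indexed by all matrices in $\afThnr$, is strictly larger than the actual dimension of the target when $n\leq r$, and the sandwich argument does not close.

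To repair this route you would need to show that the abstract algebra $A$ already admits a spanning set indexed by aperiodic matrices. This is what the distinguished-word algorithm of \S1.4 actually gives you (Proposition~\ref{integral-monomial-basis-comp} produces monomial bases indexed by aperiodic matrices), but you would then need to argue carefully that in the presence of the idempotents $\bfone_\la$ the resulting monomials still span---the interaction between the Serre relations and the idempotent relations (2), (3) is exactly what McGerty's argument handles. So the combinatorial route is not hopeless, but it is not the one-line dimension count you sketch.
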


This theorem is the affine version of Theorem 3.4 in \cite{DG}.
Naturally, one expects the affinization of Theorem 3.1 in \cite{DG}
for a Drinfeld--Jimbo type presentation. In fact, this is an easy
consequence of the following result for the Laurent polynomial
 algebra $\bfU^0:=\mbq(\up)[K_1^{\pm1},\ldots,K_n^{\pm1}]=\bfU_\vtg(n)^0$,
whose proof can be found in \cite[Prop.~8.2,\,8.3]{DG} and
\cite[4.5,\,4.6]{DP03} in the context of quantum $\mathfrak{gl}_n$
and quantum Schur algebras (though the result itself was not
explicitly stated there).

For any $\mu\in\mbn^n$ and $1\leq i\leq n$, let
\begin{equation}\label{kk-lambda}
\aligned
\fL_\mu&=\prod\limits_{i=1}^n\left[{K_i;0 \atop
\mu_i}\right]\,\, \,\text{ and }\,\,\boldsymbol\up^\mu=(\up^{\mu_1},\ldots,\up^{\mu_n}),\\
[K_i;&r+1]^!=(K_i-1)(K_i-\up)\cdots(K_i-\up^r).\\\endaligned
\end{equation}
If we regard $\fL_\mu$ as a function from $\bfU^0$ to $\mbq(\up)$, then
 $\fL_\la(\boldsymbol\up^\mu)=\dt_{\la,\mu}$ for all $\la,\mu\in\La(n,r)$.

\begin{Lem}\label{le1} The ideals $\lan I_r\ran$ and $\lan J_r\ran$ of $\bfU^0$ generated by the sets
$$\aligned
I_r&=\{1-\Sigma_{\la\in\La(n,r)}\fL_\la\}\cup\{\fL_\la \fL_\mu-\dt_{\la,\mu}\fL_\la\mid
\la,\mu\in\La(n,r)\}\\
&\quad\,\cup\{K_i\fL_\la-\up^{\la_i}\fL_\la\mid
1\leq i\leq n,\,\la\in\La(n,r)\},\quad\text{ and }\\
J_r&=\{\kappa:=K_1\cdots K_n-\up^r,[K_i;r+1]^!\mid 1\leq i\leq n\},\endaligned$$
are the same.
\end{Lem}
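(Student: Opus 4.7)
The plan is to prove both inclusions separately, with the reverse inclusion being the substantive one. The forward inclusion $\langle J_r\rangle\subseteq\langle I_r\rangle$ is essentially a direct calculation using the three families of relations in $I_r$. The key observation is that modulo $\langle I_r\rangle$ one has $K_i=K_i\cdot 1=K_i\sum_{\la\in\La(n,r)}\fL_\la\equiv\sum_{\la}\up^{\la_i}\fL_\la$. From this I would deduce $K_1\cdots K_n\equiv\sum_\la\up^{\la_1+\cdots+\la_n}\fL_\la=\up^r\sum_\la\fL_\la=\up^r$, giving $\kappa\in\langle I_r\rangle$. For the relation $[K_i;r+1]^!\in\langle I_r\rangle$, I would use the same idea: since $\la_i\in\{0,1,\ldots,r\}$ whenever $\la\in\La(n,r)$, one of the factors $(K_i-\up^{\la_i})$ appears in $[K_i;r+1]^!$, hence $[K_i;r+1]^!\fL_\la\equiv 0$, and summing gives $[K_i;r+1]^!\equiv 0$.

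For the reverse inclusion $\langle I_r\rangle\subseteq\langle J_r\rangle$, I would work inside $R:=\bfU^0/\langle J_r\rangle$. First, using only the relations $[K_i;r+1]^!=0$, the $K_i$ become commuting operators each satisfying a separable polynomial of degree $r+1$. By a Chinese-remainder/Lagrange-interpolation argument applied one variable at a time, the subring generated by $K_i$ alone decomposes as a product of $r+1$ copies of $\mbq(\up)$ via the idempotents $e_{i,k}:=\prod_{j\ne k}\frac{K_i-\up^j}{\up^k-\up^j}$ ($0\le k\le r$). Taking products $e_\mu:=\prod_{i=1}^n e_{i,\mu_i}$ for $\mu\in\{0,1,\ldots,r\}^n$ yields a complete system of orthogonal idempotents for $\bfU^0/\langle [K_i;r+1]^!:1\le i\le n\rangle$, with $K_i e_\mu=\up^{\mu_i}e_\mu$. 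Next, imposing the additional relation $\kappa=0$ gives $\up^r e_\mu=K_1\cdots K_n e_\mu=\up^{\mu_1+\cdots+\mu_n}e_\mu$ in $R$, which forces $e_\mu=0$ unless $\mu\in\La(n,r)$. Consequently $R\cong\bigoplus_{\mu\in\La(n,r)}\mbq(\up)$ with idempotents $e_\mu$.

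The final step is to identify $\fL_\mu$ with $e_\mu$ in $R$. Both families are characterized by their values under the evaluation maps $\mathrm{ev}_{\boldsymbol\up^\nu}:\bfU^0\to\mbq(\up)$ for $\nu\in\La(n,r)$ (these evaluations factor through $R$ precisely by the construction above). The stated property $\fL_\la(\boldsymbol\up^\mu)=\dt_{\la,\mu}$ matches the defining property $\mathrm{ev}_{\boldsymbol\up^\nu}(e_\mu)=\dt_{\mu,\nu}$, so $\fL_\mu\equiv e_\mu\pmod{\langle J_r\rangle}$. All three families of generators of $I_r$ are then immediate consequences: $\sum_\mu\fL_\mu\equiv\sum_\mu e_\mu=1$, $\fL_\la\fL_\mu\equiv e_\la e_\mu=\dt_{\la,\mu}e_\la\equiv\dt_{\la,\mu}\fL_\la$, and $K_i\fL_\la\equiv K_ie_\la=\up^{\la_i}e_\la\equiv\up^{\la_i}\fL_\la$.

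The main obstacle will be rigorously justifying the idempotent decomposition of $R$, in particular checking that the products $e_\mu$ are well defined and orthogonal (which uses the separability $\up^{2m}\ne 1$ for all $m\ge1$, guaranteed since $\up$ is an indeterminate over $\mbq$) and that the identification $\fL_\mu\equiv e_\mu$ can be made purely via evaluations. The evaluation argument is the cleanest route: since $R$ embeds into $\prod_{\nu\in\La(n,r)}\mbq(\up)$ via the $\mathrm{ev}_{\boldsymbol\up^\nu}$ (in fact this is an isomorphism by the dimension count above), any two elements of $\bfU^0$ agreeing on all these evaluations are congruent modulo $\langle J_r\rangle$.
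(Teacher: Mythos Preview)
Your proof is correct and rests on the same idea as the paper's: both ideals are the kernel of the evaluation map $\bfU^0\to\prod_{\mu\in\La(n,r)}\mbq(\up)$, and this is verified via Chinese Remainder/Lagrange interpolation together with the relation $\fL_\la(\boldsymbol\up^\mu)=\dt_{\la,\mu}$. The paper's version is organized a bit more compactly---rather than proving the two inclusions separately, it sets $\langle I_r\rangle=\mathrm{Ker}\,\varphi=\bigcap_{\mu\in\La(n,r)}\langle K_i-\up^{\mu_i}\rangle$ directly, then rewrites this intersection as $\langle\kappa\rangle+\bigcap_{0\le\mu_i\le r}\langle K_i-\up^{\mu_i}\rangle=\langle J_r\rangle$ (using that $\kappa\in\langle K_i-\up^{\mu_i}:i\rangle$ iff $\sum_i\mu_i=r$)---but your explicit idempotent construction is an equally valid and perhaps more transparent packaging of the same computation.
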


\begin{proof} For completeness, we provide here a direct proof. Consider the algebra epimorphism
$$\varphi:\bfU^0\lra\bigoplus_{\mu\in\La(n,r)}\bfU^0/\lan K_1-\up^{\mu_1},\ldots,K_n-\up^{\mu_n}\ran,\,\, f\longmapsto (f(\boldsymbol\up^\mu))_{\mu\in\La(n,r)}.$$
It is clear from the relations
$\fL_\la(\boldsymbol\up^\mu)=\dt_{\la,\mu}$ that $\lan I_r\ran={\rm
Ker}\,\varphi$. Applying the Chinese Remainder Theorem
yields\footnote{The ideal $J$ appeared in the proof of
\cite[Lem.~13.36]{DDPW} should be
$$\bigcap_{0\leq\mu_1,\ldots,\mu_n\leq r}\lan
x_1-\up^{\mu_1},\ldots,x_n-\up^{\mu_n}\ran,$$ while the quotient
algebra $R/J$ should have dimension $(r+1)^n$.}
$$\aligned
\lan I_r\ran&=\bigcap_{\mu\in\La(n,r)}\lan K_1-\up^{\mu_1},\ldots,K_n-\up^{\mu_n}\ran\\
&=\bigcap_{0\leq\mu_1,\ldots,\mu_n\leq r}(\lan\kappa\ran+\lan K_1-\up^{\mu_1},\ldots,K_n-\up^{\mu_n}\ran)\\
&=\lan \kappa\ran+\bigcap_{0\leq\mu_1,\ldots,\mu_n\leq r}\lan
K_1-\up^{\mu_1},\ldots,K_n-\up^{\mu_n}\ran\\
&=\lan \kappa\ran+\lan [K_1;r+1]^!,\ldots,[K_n;r+1]^!\ran\\
&=\lan J_r \ran,\\\endaligned$$
since $\kappa\in \lan K_1-\up^{\mu_1},\ldots,K_n-\up^{\mu_n}\ran\iff \mu_1+\cdots+\mu_n=r$.\end{proof}

 The above lemma together with Theorem \ref{presentation for im(afzr)}
gives the following result, which, as mentioned above, was proved in
\cite{DGr} under the assumption that $n\geq 3$ and $n>r$.

\begin{Thm} \label{2nd presentation for im(afzr)}
The algebra $\bfU_\vtg(n,r)$ is generated by the elements
$$\bffke_i,\ \bffkf_i,\ \bffkk_i\ (i\in I=\mbz/n\mbz)$$
subject to the relations$:$
\begin{itemize}
\item[(QS1)] $\bffkk_{i}\bffkk_{j}=\bffkk_{j}\bffkk_{i};$\index{defining relations!({\rm QS1})--({\rm QS6})}

\item[(QS2)] $\bffkk_{i}\bffke_j=\up^{\dt_{i,j}-\dt_{i,j+1}}\bffke_j\bffkk_{i},\
\bffkk_{i}\bffkf_j=\up^{-\dt_{i,j}+\dt_{i,j+1}}
\bffkf_j\bffkk_i;$

\item[(QS3)] $\bffke_i\bffkf_j-\bffkf_j\bffke_i=\delta_{i,j}\frac
{\ti\bffkk_{i}-\ti\bffkk_{i}^{-1}}{\up-\up^{-1}},\ where \
\ti\bffkk_i =\bffkk_{i}\bffkk_{i+1}^{-1};$

\item[(QS4)] $\displaystyle\sum_{a+b=1-c_{i,j}}(-1)^a
\leb{1-c_{i,j}\atop a}\rib \bffke_i^{a}\bffke_j\bffke_i^{b}=0$ for
$i\not=j$;

\item[(QS5)] $\displaystyle\sum_{a+b=1-c_{i,j}}(-1)^a
\leb{1-c_{i,j}\atop a}\rib \bffkf_i^{a}\bffkf_j\bffkf_i^{b}=0$ for
$i\not=j$;

\item[(QS6)] $[\bffkk_i;r+1]^!=0,$ $\bffkk_1\cdots\bffkk_n=\up^r$.
\end{itemize}
\end{Thm}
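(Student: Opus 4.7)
\medskip

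The plan is to establish the presentation by exhibiting a mutual translation between the relations (QS1)--(QS6) and McGerty's relations (1)--(6) from Theorem \ref{presentation for im(afzr)}, using Lemma \ref{le1} as the bridge on the zero part. Let $\sU$ denote the $\mbq(\up)$-algebra presented by generators $\bffke_i,\bffkf_i,\bffkk_i^{\pm1}$ ($i\in I$) subject to (QS1)--(QS6), and let $\sU'$ denote McGerty's algebra from Theorem \ref{presentation for im(afzr)}. The goal is to produce mutually inverse algebra homomorphisms between $\sU$ and $\sU'$, both of which factor the canonical epimorphism onto $\bfU_\vtg(n,r)$.

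First I would verify that (QS1)--(QS6) hold in $\bfU_\vtg(n,r)$, giving a surjection $\pi:\sU\twoheadrightarrow\bfU_\vtg(n,r)$. Relations (QS1)--(QS5) are immediate from the images of (QGL1)--(QGL5) of Theorem \ref{presentation dHallAlg} under the epimorphism $\xi_r$, since $\bffke_i=\xi_r(E_i)$, $\bffkf_i=\xi_r(F_i)$, and $\bffkk_i=\xi_r(K_i)=0(\bfe_i,r)$. For (QS6), a direct computation using \eqref{product [diag(la)][A] in affine q-Schur algebra} and the definition of $\bfone_\la$ in Proposition \ref{afSrz}(2) shows $\xi_r(\fL_\mu)=\bfone_\mu$ for each $\mu\in\afLanr$ and $\xi_r(\fL_\mu)=0$ otherwise; consequently the relations $\bffkk_1\cdots\bffkk_n=\up^r$ and $[\bffkk_i;r+1]^!=0$ in $\bfU_\vtg(n,r)$ follow from applying $\xi_r$ to the generators of $\lan J_r\ran$, which by Lemma \ref{le1} coincide with the generators of $\lan I_r\ran$, and these in turn realize relation (1) of Theorem \ref{presentation for im(afzr)}.

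Next I would construct a homomorphism $\phi:\sU'\to\sU$ by setting $\bffke_i\mapsto\bffke_i$, $\bffkf_i\mapsto\bffkf_i$ and $\bfone_\la\mapsto\fL_\la$ (interpreted inside the zero part of $\sU$). The key point is that, by (QS6) and Lemma \ref{le1}, the elements $\fL_\la$ ($\la\in\afLanr$) form a complete system of orthogonal idempotents summing to $1$ in $\sU$, and satisfy $\bffkk_i\fL_\la=\up^{\la_i}\fL_\la$. Using this, McGerty's relation (1) is immediate, while (2) and (3) follow from (QS2) by rewriting $\bffke_i\fL_\la$ and $\bffkf_i\fL_\la$ via the commutation of $\bffke_i,\bffkf_i$ with each $\bffkk_j$; (4) follows from (QS3) after inserting $\sum_\la\fL_\la=1$ and using $\ti\bffkk_i\fL_\la=\up^{\la_i-\la_{i+1}}\fL_\la$; and (5), (6) are exactly (QS4), (QS5). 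Thus $\phi$ is well-defined, and composing with McGerty's isomorphism $\sU'\cong\bfU_\vtg(n,r)$ yields a surjection $\sU'\to\bfU_\vtg(n,r)$ factoring through $\phi$ and $\pi$.

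Finally I would verify that $\pi\circ\phi$ and the composite in the other direction are the identities on generators, so that $\phi$ and $\pi$ are mutually inverse up to the identification given by McGerty's theorem; this forces $\pi$ to be injective and completes the proof. The main obstacle is the bookkeeping in step three: one must check that the idempotent decomposition forced by (QS6) in the zero part of $\sU$ is genuinely compatible with the cross-relations involving $\bffke_i$ and $\bffkf_i$, i.e.\ that pushing the relations $\bffke_i\fL_\la=\fL_{\la+\afbse_i-\afbse_{i+1}}\bffke_i$ (and its vanishing case when $\la+\afbse_i-\afbse_{i+1}\notin\afLanr$) out of (QS2) genuinely uses the truncation relation $[\bffkk_j;r+1]^!=0$. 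The vanishing case is the delicate one, since one needs the annihilator inside $\sU^0$ to be exactly $\lan J_r\ran$; this is precisely where Lemma \ref{le1} does the essential work, identifying $\lan J_r\ran$ with $\lan I_r\ran$ and hence with the kernel of the natural map $\bfU^0\to\bigoplus_{\la\in\afLanr}\mbq(\up)\fL_\la$.
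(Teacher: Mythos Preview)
Your proposal is correct and follows essentially the same approach as the paper: both arguments identify the algebra presented by (QS1)--(QS6) with $\bfU_\vtg(n)/\sJ_r$, invoke Lemma \ref{le1} to replace $\sJ_r$ by $\sI_r$, and then construct mutually inverse maps with McGerty's presentation by checking that relations (1)--(6) of Theorem \ref{presentation for im(afzr)} hold in the quotient. The paper phrases this slightly more tersely by working directly with the quotient $\bfU_\vtg(n)/\sI_r$ rather than an abstract $\sU$, but the content is the same.
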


\begin{proof} First, by the lemma, the ideal $\sI_r$ of $\afbfU$ generated by $I_r$ is
the same as the ideal $\sJ_r$  generated by $J_r$. Second, if
$\xi'_r:\bfU_\vtg(n)\ra\afbfSr$ denotes the restriction of $\xi_r$
to $\bfU_\vtg(n)$, then it is clear that $\sI_r\han{\rm
Ker}\,\xi'_r$ (see the proof of Proposition \ref{afSrz}). Thus, we
obtain an epimorphism
$$\varphi:\afbfU/\sI_r\lra \afbfU/{\rm Ker}\,\xi'_r\cong\bfU_\vtg(n,r)$$
satisfying $\varphi(E_i+\sI_r)=\bffke_i$, $\varphi(F_i+\sI_r)=\bffkf_i$, and
$\varphi(\fL_\la+\sI_r)=\bfone_\la$ for $i\in I$ and $\la\in\afLanr$.

On the other hand, it is direct to check that all relations given in
Theorem \ref{presentation for im(afzr)} hold in $\afbfU/\sI_r$ (see,
e.g., the proof of \cite[Lem~13.40]{DDPW}). Thus, applying Theorem
\ref{presentation for im(afzr)} yields a natural algebra
homomorphism
$$\psi:\bfU_\vtg(n,r)\lra\afbfU/\sI_r$$
satisfying $\bffke_i\mapsto E_i+\sI_r$, $\bffkf_i\mapsto F_i+\sI_r$, and
$\bfone_\la\mapsto \fL_\la+\sI_r$.
Therefore, $\varphi$ has to be an isomorphism, forcing ${\rm
Ker}\,\xi'_r=\sI_r=\sJ_r$.
\end{proof}

\begin{Rems} (1) It is possible to replace the relations in (QS6) by the relations

\vspace{.2cm}

\qquad $[\bffkk_1;\mu_1]^![\bffkk_2;\mu_2]^!
\cdots[\bffkk_n;\mu_n]^!=0$ for all $\mu\in\mbn_\vtg^n$ with
$\sigma(\mu)=r+1$,

\vspace{.2cm}

\noindent and replace $\bffkk_n$ used in (QS1)--(QS5) by
$\bffkk_n:=\up^r\bffkk_1^{-1}\cdots\bffkk_{n-1}^{-1}$. The new
presentation uses only $3n-1$ generators. For more details, see
\cite{DP03} or \cite[\S13.10]{DDPW}. It is interesting to point out
that the homomorphic image $\bfU(\infty,r)$ of
$\bfU(\mathfrak{gl}_\infty)$, which is a proper subalgebra of the
infinite quantum Schur algebra $\boldsymbol\sS(\infty,r)$ for all
$r\geq 1$, has only a presentation of this type; see
\cite[4.7,5.4]{DF09}.

(2) If $\ti\sI_r$ denotes the ideal of $\dHallr$ generated by $I_r$, then
$$\dHallr/\ti\sI_r\cong \bfU_\vtg(n,r)\ot{\bf Z}_\vtg(n).$$
Thus, adding (QS6) to relations (QGL1)--(QGL8) in Theorem
\ref{presentation dHallAlg} gives a presentation for this algebra.
\end{Rems}

The relations (QS1)--(QS6) in Theorem \ref{2nd presentation for
im(afzr)} will form part of the relations in a presentation for
$\boldsymbol\sS_\vtg(r,r)$, $r\geq 1$; see \S5.3.

\section{Structure of affine quantum Schur algebras}

When $n\leq r$, $\bfU_\vtg(n,r)$ is a proper subalgebra of $\afbfSr$.
The next two sections are devoted to the study of the structure of
affine quantum Schur algebras $\afbfSr$ in this case. We will first
see in this section a general structure of $\afbfSr$ inherited from
$\dHallr$ and give an explicit presentation for
${\bds\sS}_\vtg(r,r)$ in \S5.3.


We first endow $\afbfSr$ with a $\mbz$-grading through the surjective
algebra homomorphism
$$\xi_r:\dHallr\lra \afbfSr=\End_{\afbfHr}(\bdOg^{\otimes r}).$$

If we assign to each $u_A^+$ (resp., $u_A^-$, $K_i$) the degree
$\fkd(A)=\dim M(A)$ (resp., $-\fkd(A)$, $0$), then $\dHallr$ admits
a $\mbz$-grading $\dHallr=\oplus_{m\in\mbz}\dHallr_m$. By
definition, we have for each $x\in\dHallr_m$ and
$\og_\bfi=\og_{i_1}\ot\cdots\ot\og_{i_r}\in\bdOg^{\ot r}$,
$$x\cdot \og_\bfi=\sum_{p=1}^l a_p\og_{\bfj^{(p)}},$$
 where $a_p\in\mbq(\up)$ and
 $\bfj^{(p)}=(j_{1,p},\ldots,j_{r,p})\in\mbz^r$ satisfy
 $\sum_{s=1}^rj_{s,p}=\sum_{s=1}^r i_s-m$ for all $1\leq p\leq l$.
 Thus, letting ${\bds\sS}_\vtg(n,r)_m=\xi_r(\dHallr_m)$ gives a
 decomposition
$${\bds \sS}_\vtg(n,r)=\bigoplus_{m\in\mbz}{\bds
\sS}_\vtg(n,r)_m$$
 of ${\bds \sS}_\vtg(n,r)$, i.e., ${\bds \sS}_\vtg(n,r)$ is
$\mbz$-graded, too.

By Remark \ref{subalgebra quantum gl}(2), 
there is a central subalgebra ${\bf
Z}_\vtg(n)=\mbq(\up)[\sfz_m^+,\sfz_m^-]_{m\geq 1}$ of $\dHallr$
such that
$\dHallr=\bfU_\vtg(n)\otimes_{\mbq(\up)}{\bf Z}_\vtg(n).$
 This gives another subalgebra of $\afbfSr$
 $${\bf Z}_\vtg(n,r):=\xi_r({\bf Z}_\vtg(n))$$
such that $\afbfSr=\bfU_\vtg(n,r) {\bf Z}_\vtg(n,r)$.  In other
words, $\xi_r$ induces a surjective algebra homomorphism
$$\bfU_\vtg(n,r)\otimes {\bf Z}_\vtg(n)\lra \afbfSr,\;x\otimes y\lm
x\xi_r(y).$$ Clearly, ${\bf Z}_\vtg(n,r)$ is contained in the center
of $\afbfSr$.

By \cite[Th.~7.10 \& 8.4]{Lu99} (see also Corollary \ref{n>r}), we
have the following result.

\begin{Lem} \label{S(n,r)=U(n,r)}  The equality $\bfU_\vtg(n,r)=\afbfSr$
holds if and only if $n>r$. In other words, ${\bf
Z}_\vtg(n,r)\subseteq\bfU_\vtg(n,r)$ if and only if $n>r$.
\end{Lem}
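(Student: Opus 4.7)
The proof splits along the stated ``if and only if''. The forward direction ($n>r \Rightarrow \bfU_\vtg(n,r)=\afbfSr$) is immediate from what has already been established: by Corollary~\ref{n>r}, $\xi_r(\dHallr) = \xi_r(\bfU_\vtg(n))$, and by Theorem~\ref{surjective-dHall-aff} the map $\xi_r$ is surjective, so $\afbfSr = \xi_r(\bfU_\vtg(n)) = \bfU_\vtg(n,r)$. The equivalence of the two formulations of the lemma follows from the factorisation $\afbfSr = \bfU_\vtg(n,r)\mathbf Z_\vtg(n,r)$ noted just before the statement: $\bfU_\vtg(n,r) = \afbfSr$ holds iff $\mathbf Z_\vtg(n,r) \subseteq \bfU_\vtg(n,r)$.

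For the converse direction, assume $n \leq r$. The plan is to exhibit an element of $\mathbf Z_\vtg(n,r)$ that does not belong to $\bfU_\vtg(n,r)$. The natural candidate is $\xi_r(\sfz_m^+)$ for some $m \geq 1$, since these are precisely the central elements generating $\mathbf Z_\vtg(n,r)$ beyond the zero-part of $\bfU_\vtg(n,r)$. First I would extract from Proposition~\ref{BLMbasis} and the triangular decomposition (Theorem~\ref{PBW basis of affine q-Schur algebras}) the explicit index set for a basis of $\afbfSr$, and from the Hall-algebra side use the semisimple generators $u_{m\dt}^\pm$ (Proposition~\ref{generators-RH-alg}) together with the expression \eqref{expression sfx sfy} writing $\sfz_m^\pm$ in terms of $\tilde u^\pm_{E^\vtg_{l,l+mn}}$ plus decomposable terms.

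The core step is a separation argument on the tensor space $\bdOg^{\otimes r}$. Using the explicit formula \eqref{action central elts} for the action of $\sfz_1^+$ and the formulas of Corollary~\ref{action-ss} for the semisimple generators, one checks that when $n\leq r$ there exists $\bfi \in \afInr$ (for instance $\bfi_\dt^\vtg$, which exists precisely because $\sg(\dt)=n\leq r$) for which $\xi_r(\sfz_1^+)\cdot \og_\bfi \neq 0$, while by a parallel computation $\xi_r(u_{\dt}^+)\cdot \og_\bfi$ produces a distinct combination of basis vectors in $\bdOg^{\otimes r}$. One then appeals to Lusztig's theorem \cite[Th.~7.10]{Lu99}, which provides a canonical basis $\{b_A\}_{A \in \afThnr}$ of $\afbfSr$ whose restriction to aperiodic matrices $A$ spans exactly $\bfU_\vtg(n,r)$ by \cite[Th.~8.4]{Lu99}. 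Since $n \leq r$, the matrix $A = \sum_{i=1}^n \afE_{i,i+n} + (r-n)\afE_{1,1} \in \afThnr$ has periodic $+$-part ($a^+_{i,i+n}=1$ for \emph{all} $i \in \mbz$), hence is not aperiodic, so $b_A \notin \bfU_\vtg(n,r)$ but $b_A \in \afbfSr$.

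The main obstacle is the converse direction: without invoking Lusztig's canonical basis machinery, a purely algebraic proof would require showing that a specific central element (e.g.\ $\xi_r(\sfz_1^+)$) cannot be expressed as a polynomial in the Chevalley-type generators $\bffke_i, \bffkf_i, \bffkk_i^{\pm 1}$, which would entail a delicate comparison of the matrix coefficients of the action on $\bdOg^{\otimes r}$, or equivalently, a dimension count via PBW-type bases on both sides. The cleanest route is the one above: combine the aperiodic/periodic dichotomy for $A \in \afThnr$ with \cite[Th.~7.10, 8.4]{Lu99}, noting that aperiodicity of every element of $\afThnr$ fails as soon as $n \leq r$ because periodic matrices of $\sigma$-weight $\geq n$ then fit inside $\afThnr$.
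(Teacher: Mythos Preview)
Your proposal is correct and ultimately takes the same approach as the paper: the forward direction via Corollary~\ref{n>r} (plus surjectivity of $\xi_r$), and the converse via Lusztig's canonical basis results \cite[Th.~7.10 \& 8.4]{Lu99}, exhibiting a periodic matrix in $\afThnr$ when $n\leq r$. The paper in fact gives no argument beyond these citations.

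One remark: your ``separation argument'' paragraph is not a proof on its own and should be dropped. Showing that $\xi_r(\sfz_1^+)$ and $\xi_r(u_\dt^+)$ act differently on some $\og_\bfi$ does not show $\xi_r(\sfz_1^+)\notin\bfU_\vtg(n,r)$, since $\bfU_\vtg(n,r)$ is much larger than the span of the images of the semisimple generators. You yourself recognise this in the final paragraph, where you correctly identify the Lusztig citation as the actual argument; the intermediate material only muddies the exposition.
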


Moreover, for each $m\geq 1$, $\xi_r(\sfz_m^+)\in\afbfSr_{mn}$ and
$\xi_r(\sfz_m^-)\in\afbfSr_{-mn}$, and the $\mbz$-grading of
$\afbfSr$ induces a $\mbz$-grading
$$\bfU_\vtg(n,r)=\bigoplus_{m\in\mbz}\bfU_\vtg(n,r)_m$$
of $\bfU_\vtg(n,r)$, where $\bfU_\vtg(n,r)_m=\bfU_\vtg(n,r)\cap
 {\bds\sS}_\vtg(n,r)_m$.

 We are now going to determine the structure of both
 ${\bf Z}_\vtg(n,r)$ and $\bfU_\vtg(n,r)$. For all $1\leq s\leq r$, define commuting $\mbq(\up)$-linear
 maps
$$\phi_s:\bdOg^{\otimes r}\lra\bdOg^{\otimes
r},\;\omega_{\bfi}\longmapsto \omega_{\bfi-n\bfe_s}=\og_{i_1}\ot
\cdots\ot\og_{i_s-n}\ot\cdots \ot \og_{i_r}$$
 and set for each $m\geq 1$,
$${\frak p}_m=\sum_{s=1}^r \phi_s^m\;\;\text{and}\;\;{\frak q}_m=\sum_{s=1}^r \phi_s^{-m}.$$
 By \eqref{action central elts}, ${\frak p}_m=\xi_r(\sfz_m^+)$ and
${\frak q}_m=\xi_r(\sfz_m^-)$. Thus, they both lie in $\afbfSr$.
 Moreover,
$${\bf Z}_\vtg(n,r)={\mbq}(\up)[{\frak p}_m,{\frak q}_m]_{m\geq 1}.$$

\begin{Rem} \label{images-central-elements} The right action \eqref{afH action}
of $\afbfHr$ on $\bdOg^{\ot r}$ induces an algebra homomorphism
$\psi_r:\afbfHr\ra\End_{\mbq(\up)}(\bdOg^{\ot r})$. It is easy to
see from the definition that for each $m\geq 1$,
$${\frak p}_m=\psi_r(X_1^m+\cdots+X_r^m)\;\;\text{and}\;\;
{\frak q}_m=\psi_r(X_1^{-m}+\cdots+X_r^{-m}).$$
 It is well known that for each $m\in\mbz$, the element
 $X_1^m+\cdots+X_r^m$ is central in $\afbfHr$.
\end{Rem}

 Now let $\sg_1,\ldots,\sg_r$ (resp., $\tau_1,\ldots,\tau_r$)
denote the elementary symmetric polynomials in
$\phi_1,\ldots,\phi_r$ (resp., $\phi_1^{-1},\ldots,\phi_r^{-1}$),
i.e., for $1\leq s\leq r$,
$$\sg_s=\sum_{1\leq t_1<\cdots <t_s\leq
r}\phi_{t_1}\cdots\phi_{t_s}\;\;({\rm resp.}, \tau_s=\sum_{1\leq
t_1<\cdots <t_s\leq r}\phi^{-1}_{t_1}\cdots\phi^{-1}_{t_s}).$$
 Then $\sg_s,\tau_s\in {\bf Z}_\vtg(n,r)$ and
$${\bf Z}_\vtg(n,r)={\mbq}(v)[\sg_1,\ldots,\sg_r,\tau_1,\ldots,\tau_r].$$
 Since
$$\tau_r=\sg_r^{-1}\;\;\text{and}\;\;\tau_s=\sg_{r-s}\tau_r\;\;\text{for each $1\leq s<r$,}$$
 this implies that
$${\bf Z}_\vtg(n,r)={\mbq}(v)[\sg_1,\ldots,\sg_r,\sg_r^{-1}].$$

\begin{Prop} \label{basis-center} The set
$${\mathcal X}:=\{\sg_1^{\la_1}\cdots \sg_{r-1}^{\la_{r-1}}\sg_r^{\la_r}\mid
\la_1,\ldots,\la_{r-1}\in\mbn,\la_r\in\mbz\}$$
 forms a $\mbq(\up)$-basis of ${\bf Z}_\vtg(n,r)$. In other words,
${\bf Z}_\vtg(n,r)$ is a (Laurent) polynomial ring in
$\sg_1,\ldots,\sg_r,\sg_r^{-1}$.
\end{Prop}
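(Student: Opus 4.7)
The text preceding the Proposition has already shown that $\mathbf{Z}_{\vartriangle}(n,r) = \mathbb{Q}(\upsilon)[\sigma_1,\ldots,\sigma_r,\sigma_r^{-1}]$, so the set $\mathcal{X}$ spans $\mathbf{Z}_{\vartriangle}(n,r)$. The content of the proposition is therefore the linear independence of $\mathcal{X}$, which is equivalent to the algebraic independence of $\sigma_1,\ldots,\sigma_r$ in $\mathrm{End}_{\mathbb{Q}(\upsilon)}(\boldsymbol{\Omega}^{\otimes r})$. My plan is to reduce this to the algebraic independence of the commuting operators $\phi_1,\ldots,\phi_r$, and then invoke the classical fact that elementary symmetric polynomials in $r$ indeterminates are algebraically independent.

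The first step is to prove that $\phi_1,\ldots,\phi_r$ are algebraically independent in $\mathrm{End}_{\mathbb{Q}(\upsilon)}(\boldsymbol{\Omega}^{\otimes r})$. Recall from the definition that $\phi_s(\omega_{\mathbf{i}}) = \omega_{\mathbf{i} - n\mathbf{e}_s}$, so for any multi-index $\mathbf{k} = (k_1,\ldots,k_r) \in \mathbb{Z}^r$,
\[
\phi_1^{k_1}\cdots\phi_r^{k_r}(\omega_{\mathbf{i}}) = \omega_{\mathbf{i} - n\mathbf{k}}.
\]
Since the map $\mathbf{k} \mapsto \mathbf{i} - n\mathbf{k}$ is injective on $\mathbb{Z}^r$, the vectors $\{\phi_1^{k_1}\cdots\phi_r^{k_r}(\omega_{\mathbf{i}})\}_{\mathbf{k}\in\mathbb{Z}^r}$ are distinct basis elements of $\boldsymbol{\Omega}^{\otimes r}$. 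Hence any nontrivial Laurent polynomial relation $\sum_{\mathbf{k}} c_{\mathbf{k}}\phi_1^{k_1}\cdots\phi_r^{k_r} = 0$ would evaluate on $\omega_{\mathbf{i}}$ to a nontrivial linear combination of distinct basis vectors, a contradiction. Thus $\phi_1,\ldots,\phi_r$ generate a Laurent polynomial subalgebra in $r$ algebraically independent variables.

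The second step is standard: since $\sigma_1,\ldots,\sigma_r$ are the elementary symmetric polynomials in the algebraically independent invertible elements $\phi_1,\ldots,\phi_r$, they themselves generate a polynomial subalgebra in which $\sigma_1,\ldots,\sigma_r$ are algebraically independent (this is the fundamental theorem of symmetric polynomials, applied inside the commutative $\mathbb{Q}(\upsilon)$-algebra $\mathbb{Q}(\upsilon)[\phi_1^{\pm 1},\ldots,\phi_r^{\pm 1}]$). Since $\sigma_r = \phi_1\cdots\phi_r$ is a unit, adjoining $\sigma_r^{-1}$ gives the Laurent polynomial ring $\mathbb{Q}(\upsilon)[\sigma_1,\ldots,\sigma_{r-1},\sigma_r^{\pm 1}]$, whose standard monomial basis is precisely $\mathcal{X}$. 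This yields simultaneously the linear independence of $\mathcal{X}$ and the last sentence of the proposition.

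I do not expect a serious obstacle here: the only substantive point is step one, and the main technical pitfall to avoid is confusing the geometric action on $\boldsymbol{\Omega}^{\otimes r}$ with the algebraic structure of $\mathbf{Z}_{\vartriangle}(n,r)$ — one must remember that $\mathbf{Z}_{\vartriangle}(n,r)$ is \emph{defined} as the image $\xi_r(\mathbf{Z}_{\vartriangle}(n))$ inside $\boldsymbol{\mathcal{S}}_{\vartriangle}(n,r) = \mathrm{End}_{\widehat{\boldsymbol{\mathcal{H}}}_{\vartriangle}(r)}(\boldsymbol{\Omega}^{\otimes r})$, so algebraic independence in the endomorphism algebra is exactly what is needed. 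Note that no further appeal to properties of the Hecke action is required, because the $\sigma_s$ are manifestly built from the $\phi_s$, whose $\mathbb{Q}(\upsilon)$-linear independence structure on the basis $\{\omega_{\mathbf{i}}\}_{\mathbf{i}\in I_{\vartriangle}(n,r)}$ is transparent.
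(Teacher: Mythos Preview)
Your proof is correct and takes a genuinely different route from the paper's. The paper argues directly: it equips $\mathbb{Z}^r$ with a lexicographic order, shows that each monomial $\sigma^{\lambda}=\sigma_1^{\lambda_1}\cdots\sigma_r^{\lambda_r}$ acts on $\omega_{\mathbf{i}}$ with a unique minimal term $\omega_{\mathbf{i}*\lambda}$ (where $\mathbf{i}*\lambda$ is an explicit shift depending injectively on $\lambda$), and deduces linear independence by a triangularity argument. Your approach instead first establishes the algebraic independence of $\phi_1,\ldots,\phi_r$ (essentially the same computation, but for monomials in the $\phi_s$ rather than the $\sigma_s$) and then invokes the fundamental theorem of symmetric polynomials. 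Your argument is cleaner and more conceptual; the paper's is more self-contained and yields an explicit leading-term formula for $\sigma^{\lambda}(\omega_{\mathbf{i}})$, which the paper immediately reuses in the proof of the next proposition on $\widehat{\mathbf{Z}}_{\vartriangle}(n,r)$.
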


\begin{proof} It is obvious that ${\bf Z}_\vtg(n,r)$ is
spanned by $\mathcal X$. It remains to show that $\mathcal X$ is
linearly independent.

For $\bfi=(i_1,\ldots,i_r),\bfj=(j_1,\ldots,j_r)\in\mbz^r$, we
define the lexicographic ordering $\bfi\leq_{\rm lex}\bfj$ if
$\bfi=\bfj$ or there exists $1<s\leq r$ such that
$$i_r=j_r,\ldots,i_s=j_s,i_{s-1}<j_{s-1}.$$
 Clearly, this gives a linear ordering on $\mbz^r$.

For each
$\la=(\la_1,\ldots,\la_{r-1},\la_r)\in\mbn^{r-1}\times\mbz$, define
$$\sg^\la=\sg_1^{\la_1}\cdots
\sg_{r-1}^{\la_{r-1}}\sg_r^{\la_r}.$$
 Then, by definition, for $\bfi=(i_1,\ldots,i_r)\in\mbz^r$, we
 obtain that\footnote{The linear maps $\sg_i$ and $\sg^\la$ should not be confused with the sum function
 $\sg$ defined on $M_{\vtg,n}(\mbz)$ and $\mbz_\vtg^n$ in \S1.1.}
$$\sg^{\la}(\og_{\bfi})=\og_{\bfi\ast\la}+\sum_{\bfi\ast\la<_{\rm lex}\bfj}b_\bfj \og_\bfj,$$
 where all but finitely many $b_\bfj\in\mbn$ are zero and
$$\bfi\ast\la=\bigl(i_1-\la_r
n,i_2-(\la_{r-1}+\la_r)n,\ldots,i_r-(\la_1+\cdots+\la_r)n\bigr).$$
 Note that for $\la,\mu\in\mbn^{r-1}\times\mbz$,
$$\bfi\ast\la=\bfi\ast\mu\Longleftrightarrow \la=\mu.$$

Now suppose
$$\sum_{t=1}^m a_t\sg^{\la^{(t)}}=0,$$
 where $a_t\in\mbq(\up)$ and
 $\la^{(t)}\in\mbn^{r-1}\times\mbz$ for $1\leq t\leq m$. Fix an $\bfi\in\mbz^r$.
Without loss of generality, we may suppose
$\bfi\ast{\la^{(1)}}<_{\rm lex}\bfi\ast{\la^{(t)}}$ for all $2\leq t
\leq m$. Then
$$0=\sum_{t=1}^m a_t \sg^{\la^{(t)}}(\og_\bfi)=
a_1\og_{\bfi\ast{\la^{(1)}}}+x',$$
 where $x'$ is a linear combination of $\og_\bfj$ with
 $\bfi\ast{\la^{(1)}}<_{\rm lex}\bfj$. Hence, $a_1=0$. Inductively,
 we deduce that $a_t=0$ for all $1\leq t\leq m$.
This finishes the proof.
\end{proof}

\begin{Rem} It has been proved in \cite{Yang2} that the center of the affine Schur
algebra of type $A$ is isomorphic to the polynomial algebra
${\mbq}[\sg_1,\ldots,\sg_r,\sg_r^{-1}]$. It is natural to conjecture
that ${\bf Z}_\vtg(n,r)={\mbq}(v)[\sg_1,\ldots,\sg_r,\sg_r^{-1}]$ is
exactly the center of $\afbfSr$.
\end{Rem}

Recall that $\bffke_i=\xi_r(E_i),\,
\bffkf_i=\xi_r(F_i),\,\bffkk_i=\xi_r(K_i)$ for each $1\leq i\leq n$.
 Let ${\bds \sS}_\vtg(\cycn,r)^0$ denote the subalgebra of
$\afbfSr$ generated by $\bffkk_i$ for $1\leq i\leq n$. Then, by
\cite[Cor.~4.7(1)]{DP03} (see also \cite[Lem.~3.29]{DDPW}), ${\bds
\sS}_\vtg(\cycn,r)^0$ has a basis
$$\{\bffkk_{\bfj}=\bffkk_1^{j_1}\cdots \bffkk_{n-1}^{j_{n-1}}\mid
\bfj=(j_1,\ldots,j_{n-1})\in(\mbn^{n-1})_{\leq r}\},$$
 where $(\mbn^{n-1})_{\leq r}=\{\bfj=(j_1,\ldots,j_{n-1})\in\mbn^{n-1}\mid j_1+\cdots+j_{n-1}\leq r\}$.
Let further $\wh{\bf Z}_\vtg(n,r)$ be the subalgebra of $\afbfSr$
generated by ${\bf Z}_\vtg(n,r)$ and ${\bds \sS}_\vtg(\cycn,r)^0$.

\begin{Prop} The multiplication map
$${\bf Z}_\vtg(n,r)\otimes_{\mbq(\up)}{\bds \sS}_\vtg(\cycn,r)^0
\lra\wh{\bf Z}_\vtg(n,r)$$
 is a $\mbq(\up)$-algebra isomorphism.
\end{Prop}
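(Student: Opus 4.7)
The plan is to verify well-definedness and surjectivity immediately, and then obtain injectivity by applying the putative relation to carefully chosen weight vectors in the tensor space $\bdOg^{\otimes r}$ so that the orthogonal idempotent decomposition of ${\bds\sS}_\vtg(\cycn,r)^0$ separates the contributions by weight.

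First, since the central subalgebra ${\bf Z}_\vtg(n)\subset\dHallr$ (see Remark \ref{subalgebra quantum gl}(2)) is central in $\dHallr$ and the homomorphism $\xi_r$ is surjective, its image ${\bf Z}_\vtg(n,r)=\xi_r({\bf Z}_\vtg(n))$ is central in $\afbfSr$ and hence commutes with ${\bds\sS}_\vtg(\cycn,r)^0$. Thus the multiplication map is a well-defined algebra homomorphism, and it is surjective by the very definition of $\wh{\bf Z}_\vtg(n,r)$. Moreover, since $\{\sg_1^{\la_1}\cdots\sg_{r-1}^{\la_{r-1}}\sg_r^{\la_r}\mid \la\in\mbn^{r-1}\times\mbz\}$ is a $\mbq(\up)$-basis of ${\bf Z}_\vtg(n,r)$ by Proposition \ref{basis-center}, and ${\bds\sS}_\vtg(\cycn,r)^0$ has a $\mbq(\up)$-basis by Proposition \ref{afSrz}(2), a basis for the tensor product is at hand.

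The key idea for injectivity is to replace the monomial basis $\{\bffkk_\bfj\}$ of ${\bds\sS}_\vtg(\cycn,r)^0$ by the orthogonal idempotent basis $\{\bfone_\la\mid\la\in\afLanr\}$ guaranteed by Proposition \ref{afSrz}(2). From \eqref{product [diag(la)][A] in affine q-Schur algebra} together with the identification $\og_{\bfi_\mu^\vtg}=[A^{\bfi_\mu^\vtg}]$ (Proposition \ref{bimodule-isom}), and the observation $\ro(A^{\bfi_\mu^\vtg})=\mu$, one gets the crucial diagonal action
\[
\bfone_\nu\cdot\og_{\bfi_\mu^\vtg}=\dt_{\nu,\mu}\,\og_{\bfi_\mu^\vtg}\qquad(\nu,\mu\in\afLanr).
\]

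With this in place, suppose $\sum_{\la,\mu}a_{\la,\mu}\,\sg^\la\bfone_\mu=0$ in $\afbfSr$, with $\la$ running over the indexing set of Proposition \ref{basis-center} and $\mu$ over $\afLanr$, and with only finitely many nonzero $a_{\la,\mu}\in\mbq(\up)$. Fix any $\mu_0\in\afLanr$ and apply both sides to $\og_{\bfi_{\mu_0}^\vtg}$: since $\bfone_\mu\cdot\og_{\bfi_{\mu_0}^\vtg}=\dt_{\mu,\mu_0}\og_{\bfi_{\mu_0}^\vtg}$, one obtains
\[
\sum_\la a_{\la,\mu_0}\,\sg^\la(\og_{\bfi_{\mu_0}^\vtg})=0.
\]
Now the lexicographic leading-term argument used in the proof of Proposition \ref{basis-center} applies verbatim: the vectors $\{\sg^\la(\og_{\bfi_{\mu_0}^\vtg})\}_\la$ are linearly independent because the leading term $\og_{\bfi_{\mu_0}^\vtg\ast\la}$ distinguishes them (since $\bfi\ast\la=\bfi\ast\la'\Leftrightarrow\la=\la'$). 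Hence $a_{\la,\mu_0}=0$ for all $\la$. Since $\mu_0\in\afLanr$ was arbitrary, all coefficients vanish, establishing injectivity.

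I do not expect any real obstacle here, as the whole argument assembles previously established facts. The only delicate point is the choice of basis: working with $\{\bfone_\la\}$ rather than the monomial basis $\{\bffkk_\bfj\}$ makes the weight-space separation instantaneous, whereas with $\{\bffkk_\bfj\}$ one would be forced into a multivariate Vandermonde argument (invertibility of the matrix $(\up^{\bfj\cdot\mu'})_{\bfj,\mu'}$ indexed by $(\mbn^{n-1})_{\leq r}$), which is still true but considerably more cumbersome.
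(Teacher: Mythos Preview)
Your proof is correct. The route you take is a minor variation on the paper's: both reduce injectivity to the leading-term argument of Proposition~\ref{basis-center}, but the order of separation is swapped. The paper works with the monomial basis $\{\bffkk_\bfj\}$ of ${\bds\sS}_\vtg(n,r)^0$, observes that each $\bffkk_\bfj$ acts diagonally on $\og_\bfi$ (as $\up^{f(\bfi,\bfj)}$), and hence $\sg^\la\bffkk_\bfj(\og_\bfi)$ has leading term a scalar multiple of $\og_{\bfi\ast\la}$; since $\bfi\ast\la$ determines $\la$, one first peels off $\la$, and then for fixed $\la$ invokes the already-known linear independence of $\{\bffkk_\bfj\}$. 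You instead pass to the idempotent basis $\{\bfone_\mu\}$, use the diagonal action $\bfone_\nu\cdot\og_{\bfi_\mu^\vtg}=\dt_{\nu,\mu}\og_{\bfi_\mu^\vtg}$ to peel off $\mu$ first, and then apply the leading-term argument to handle $\la$.

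One small correction to your closing remark: the $\{\bffkk_\bfj\}$ route does not actually force a Vandermonde computation in this proof. Because the zero-part acts diagonally, a relation $\sum_\la\sg^\la z_\la=0$ with $z_\la\in{\bds\sS}_\vtg(n,r)^0$ gives, upon evaluation at any $\og_\bfi$, a relation $\sum_\la g_\la(\bfi)\sg^\la(\og_\bfi)=0$ with scalar eigenvalues $g_\la(\bfi)$; Proposition~\ref{basis-center} then forces each $z_\la=0$, after which one simply quotes the (already established) linear independence of $\{\bffkk_\bfj\}$. So both approaches are equally light; yours is perhaps a bit more transparent because the idempotents make the weight separation explicit.
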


\begin{proof} It remains to show that the set
$$\{\sg^{\la}\bffkk_{\bfj}\mid \la\in\mbn^{n-1}\times\mbz,
\bfj\in(\mbn^{n-1})_{\leq r}\}$$
 is linearly independent. By definition, for each $\bfi\in\mbz^r$,
 $\bffkk_\bfj(\og_\bfi)=\up^{f(\bfi,\bfj)}\og_\bfi$, where $f(\bfi,\bfj)\in\mbz$
is determined by $\bfi$ and $\bfj$. Thus, by the proof of
Proposition \ref{basis-center}, we infer that for
$\la\in\mbn^{n-1}\times\mbz, \bfj\in(\mbn^{n-1})_{\leq r}$, and
$\bfi\in\mbz^r$,
$$\sg^{\la}\bffkk_{\bfj}(\og_\bfi)=\up^{f(\bfi,\bfj)}\og_{\bfi\ast\la}+
\sum_{\bfi\ast\la<_{\rm lex}\bfj}c_\bfj \og_\bfj,$$
 where all but finitely many $c_\bfj\in\mbq(\up)$ are zero.

Since $\bfi\ast\la=\bfi\ast\mu$ if and only if $\la=\mu$, it
suffices to show that for each fixed $\la$,
$\{\sg^{\la}\bffkk_{\bfj}\mid \bfj\in(\mbn^{n-1})_{\leq r}\}$ is a
linearly independent set. This follows from the fact that
$\{\bffkk_{\bfj}\mid \bfj\in(\mbn^{n-1})_{\leq r}\}$ is a linearly
independent set.
\end{proof}

By the discussion above, the center of the positive part ${\bds
\sS}_\vtg(\cycn,r)^+$ of $\afbfSr$ contains the polynomial algebra
$\mbq(\up)[\sg_1,\ldots,\sg_r]:={\bf Z}_\vtg(n,r)^+$, and the center
of the negative part ${\bds \sS}_\vtg(\cycn,r)^-$ contains the
polynomial algebra $\mbq(\up)[\tau_1,\ldots,\tau_r]:={\bf
Z}_\vtg(n,r)^-$. Moreover,
$${\bds \sS}_\vtg(\cycn,r)^+=\bfU_\vtg(n,r)^+
\cdot{\bf Z}_\vtg(n,r)^+\;\;\text{and}\;\;{\bds
\sS}_\vtg(\cycn,r)^-=\bfU_\vtg(n,r)^- \cdot{\bf Z}_\vtg(n,r)^-.$$

For each $m\geq 0$, set
$${\bds \sS}_\vtg(\cycn,r)^\pm_m={\bds
\sS}_\vtg(\cycn,r)^\pm\cap\afbfSr_m\;\;\text{and}\;\;
\bfU_\vtg(n,r)^\pm_m=\bfU_\vtg(n,r)^\pm\cap \bfU_\vtg(n,r)_m.$$
  Then we have
$${\bds \sS}_\vtg(\cycn,r)^\pm=\bigoplus_{m\geq 0}
{\bds \sS}_\vtg(\cycn,r)^\pm_m\;\;\text{and}\;\;
\bfU_\vtg(n,r)^\pm=\bigoplus_{m\geq 0}\bfU_\vtg(n,r)^\pm_m.$$
 Note that for $m\geq 1$, ${\frak p}_m\in {\bds\sS}_\vtg(\cycn,r)^+_{mn}$ and
${\frak q}_m\in {\bds\sS}_\vtg(\cycn,r)^-_{mn}$, and for $1\leq
s\leq r$, $\sg_s\in {\bds\sS}_\vtg(\cycn,r)^+_{sn}$ and $\tau_s\in
{\bds\sS}_\vtg(\cycn,r)^-_{sn}$.

\begin{Prop}\label{dim-difference} For each $m\geq 0$,
$$\dim {\bds \sS}_\vtg(\cycn,r)^\pm_m/\bfU_\vtg(n,r)^\pm_m=|\{A\in\afThnp\mid
A\;\text{is periodic with}\; \sg(A)\leq r,{\frak d}(A)=m\}|.$$
\end{Prop}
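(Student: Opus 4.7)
The plan is to compute $\dim{\bds\sS}_\vtg(\cycn,r)^\pm_m$ and $\dim\bfU_\vtg(n,r)^\pm_m$ separately as cardinalities of explicit subsets of $\afThnp$, and to subtract. Since $\xi_r$ is a graded homomorphism and the $\pm$-parts are graded subalgebras of $\afbfSr$ (under the sign convention for the $-$-grading recorded in the paragraphs preceding the proposition), the problem reduces to a counting problem in each graded piece. For the positive side, Proposition \ref{afSrz}(1) gives $\{A(\bfl,r):A\in\afThnp,\ \sg(A)\le r\}$ as a $\mbq(\up)$-basis of ${\bds\sS}_\vtg(\cycn,r)^+$, and by Theorem \ref{xirl} the element $A(\bfl,r)$ is (up to a power of $\up$) the image of $\ti u_A^+$, which sits in degree $\fkd(A)=\dim M(A)$. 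This would yield at once
$$\dim{\bds\sS}_\vtg(\cycn,r)^+_m=|\{A\in\afThnp:\sg(A)\le r,\ \fkd(A)=m\}|.$$

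To compute $\dim\bfU_\vtg(n,r)^+_m$ I would invoke Lusztig's theorem \cite[\S\S4.1, 8.2]{Lu99} (cited in the footnote to Corollary \ref{n>r-integral-surjectivity}): the canonical basis elements of $\afSr$ labelled by aperiodic matrices form a basis of $U_\vtg(n,r)$. Since the canonical basis is unitriangular with respect to $\{A(\bfl,r)\}$ in Lusztig's partial order on $\afThnr$ and respects both the triangular decomposition and the $\mbz$-grading, passing to $\mbq(\up)$ and intersecting with the positive part gives
$$\dim\bfU_\vtg(n,r)^+_m=|\{A\in\afThnp:A\text{ aperiodic},\ \sg(A)\le r,\ \fkd(A)=m\}|,$$
and subtraction delivers the formula in the $+$-case, since aperiodic and periodic matrices partition $\afThnp$. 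For the $-$-case, I would transport the result through the anti-involution $\tau_r$ of \eqref{taur}, which sends $[A]\mapsto[\tA]$, hence $A(\bfl,r)\mapsto\tA(\bfl,r)$, and carries the generators $\bffke_i$ of $\bfU_\vtg(n,r)^+$ to the generators $\bffkf_i$ of $\bfU_\vtg(n,r)^-$. Hence $\tau_r$ restricts to a linear bijection ${\bds\sS}_\vtg(\cycn,r)^+_m\overset\sim\to{\bds\sS}_\vtg(\cycn,r)^-_m$ carrying $\bfU_\vtg(n,r)^+_m$ onto $\bfU_\vtg(n,r)^-_m$. Transposition is a bijection $\afThnp\to\afThnm$ preserving $\sg$ and aperiodicity (the condition ``for each $l\ge 1$ some entry on the $l$-th superdiagonal of $A$ vanishes'' becomes the same condition on the $l$-th subdiagonal of $\tA$), so the cardinalities are unchanged.

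The main obstacle is the dimension identity for $\bfU_\vtg(n,r)^+_m$. A self-contained approach through the monomial basis $\{u^{(A)}:A\in\afThnp\text{ aperiodic}\}$ of $\bfcomp^+$ (Proposition \ref{integral-monomial-basis-comp}) yields only one inequality: by the strong monomial basis property \cite[Th.~7.5(i)]{DDX}, $u^{(A)}$ expands as $\ti u_A^+$ plus a combination of $\ti u_B^+$ with $B<_{\deg}A$; since passing to a degeneration can only increase $\sg$ (the number of indecomposable summands), and $\xi_r(\ti u_B^+)=0$ whenever $\sg(B)>r$, it follows that $\xi_r(u^{(A)})=0$ for every aperiodic $A$ with $\sg(A)>r$, producing the upper bound on $\dim\bfU_\vtg(n,r)^+_m$. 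The matching lower bound --- namely the linear independence of $\{\xi_r(u^{(A)}):A\text{ aperiodic},\ \sg(A)\le r\}$ inside ${\bds\sS}_\vtg(\cycn,r)^+_m$ --- is precisely the content of Lusztig's aperiodic-basis theorem, and does not appear to admit a derivation from the material developed earlier in the chapter.
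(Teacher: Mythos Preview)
Your overall strategy and the upper-bound half of your monomial-basis argument match the paper exactly. Where you diverge is in obtaining linear independence of the images in $\bfU_\vtg(n,r)^+_m$. You conclude that this step cannot be extracted without Lusztig's aperiodic canonical-basis theorem, but the paper avoids that theorem entirely by choosing a different basis of $\bfcomp^+$: instead of the monomial basis $u^{(A)}$, it uses the basis $\{E_A^+:A\in\afThnp\text{ aperiodic}\}$ from \cite[\S8]{DDX}, whose defining feature is
\[
E_A^+=\ti u_A^++\sum_{B<_{\deg}A}\eta_B^A\,\ti u_B^+\quad\text{with \emph{every} lower term $B$ periodic.}
\]
Applying $\xi_r$ then gives $\xi_r(E_A^+)=A(\bfl,r)+(\text{terms }B(\bfl,r)\text{ with }B\text{ periodic})$; since $\{C(\bfl,r):C\in\afThnp,\ \sg(C)\le r\}$ is a basis of $\afbfSrp$ by Proposition~\ref{afSrz}(1), and the leading terms are indexed by pairwise distinct aperiodic matrices while all correction terms are indexed by periodic ones, linear independence of $\{\xi_r(E_A^+):A\text{ aperiodic},\ \sg(A)\le r,\ \fkd(A)=m\}$ is immediate. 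So the ingredient you were missing is the $E_A^+$ basis of \cite[\S8]{DDX}, not \cite{Lu99}. Your Lusztig route would also work in principle, but it is substantially heavier and, as stated for all of $U_\vtg(n,r)$, would still require a separate argument that the aperiodic canonical basis restricts compatibly to the positive part; your sentence about unitriangularity ``with respect to $\{A(\bfl,r)\}$'' is not literally correct since the canonical basis is indexed by $\afThnr$, not by $\afThnpm$.
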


\begin{proof} We only prove the assertion for the ``$+$'' case, and the
``$-$'' case is similar.

By \cite[\S8]{DDX}, the composition subalgebra
${\boldsymbol{\mathfrak C}_\vtg(n)^+}$ of $\boldsymbol{\frak
D}_\vtg(n)^+$ has a basis
$$\{E^+_A\mid A\in\afThnp\;\text{is aperiodic}\},$$
 where $E_A^+=\ti u_A^++\sum_{B<_\deg A} \eta_B^A\,\ti u_B^+$ with
all $B$ periodic and $\eta_B^A\in\mbq(\up)$. Furthermore, by
\cite[\S6]{DD05}, for $A,B\in\afThnp$,
$$B\leq_\deg A \Longrightarrow \sg(B)\geq \sg(A).$$
 Thus, if $A\in\afThnp$ is aperiodic with $\sg(A)>r$, then by
 \eqref{def-A(j,r)},
$$\xi_r(E^+_A)=A({\bf0},r)+\sum_{B<_\deg A} \eta_B^A\, B({\bf0},r)=0.$$
Hence, for each $m\geq 0$, the set
$${\scr X}_m:=\{\xi_r(E^+_A)\mid A\in\afThnp\;\text{is aperiodic with}\; \sg(A)\leq r,{\frak d}(A)=m\}$$
 is a spanning set for $\bfU_\vtg(n,r)^+_m$. By Proposition \ref{afSrz}(1), the set
$$\{A({\bf0},r)\mid A\in\afThnp, \sg(A)\leq r,{\frak d}(A)=m\}$$
 is a basis of ${\bds \sS}_\vtg(\cycn,r)^+_m$. Hence, ${\scr X_m}$
 is a linearly independent set and, thus, is a basis of
 $\bfU_\vtg(n,r)^+_m$. This gives the desired assertion.
\end{proof}

\begin{Coro}\label{n=r case} For each $r\geq 2$,
$${\bds \sS}_\vtg(r,r)^+=\bfU_\vtg(r,r)^+\oplus\bigoplus_{t\geq 1}\mbq(\up)\sg_1^t\;\;\text{and}\;\;{\bds
\sS}_\vtg(r,r)^-=\bfU_\vtg(r,r)^-\oplus\bigoplus_{t\geq
1}\mbq(\up)\tau_1^t.$$
\end{Coro}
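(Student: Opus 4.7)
The plan is to apply Proposition~\ref{dim-difference} after classifying the periodic matrices that contribute, identify $\sg_1^t$ as the natural representative in each nonzero graded piece of the quotient, and reduce the statement to the non-membership $\sg_1^t \notin \bfU_\vtg(r,r)^+$ for each $t \geq 1$.

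First I would classify the periodic $A \in \afThnp$ satisfying $\sigma(A) \leq r = n$. Periodicity forces some $l \geq 1$ with $a_{i,i+l} \neq 0$ for all $i$, so each row $i \in \{1,\ldots,n\}$ contributes at least $1$ to $\sigma(A)$; combined with $\sigma(A) \leq n$, this yields $\sigma(A) = n$ and $a_{i,i+l} = 1$ with all other entries zero. Thus such matrices are precisely $A^{(l)} := \sum_{i=1}^n E^\vtg_{i,i+l}$ for $l \geq 1$, with $\fkd(A^{(l)}) = rl$. Proposition~\ref{dim-difference} then gives
$$\dim\bigl({\bds\sS}_\vtg(r,r)^+_m / \bfU_\vtg(r,r)^+_m\bigr) = \begin{cases} 1, & m = rt \text{ for some } t \geq 1, \\ 0, & \text{otherwise,}\end{cases}$$
and analogously for the negative part.

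Second, by Newton's identity $e_1 = p_1$ applied to the commuting operators $\phi_1,\ldots,\phi_r$, together with \eqref{action central elts}, one sees $\sg_1 = \frak p_1 = \xi_r(\sfz_1^+)$, so $\sg_1 \in {\bds\sS}_\vtg(r,r)^+_r$ and $\sg_1^t \in {\bds\sS}_\vtg(r,r)^+_{rt}$. Since the nonzero graded pieces of the quotient are one-dimensional and lie in distinct degrees $rt$, the direct sum decomposition in the corollary reduces to proving $\sg_1^t \notin \bfU_\vtg(r,r)^+$ for every $t \geq 1$.

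For the non-membership, I would combine Proposition~\ref{afSrz}(1)---which identifies $\bfU_\vtg(r,r)^+ = \xi_r(\bfcomp^+)$ and $\ker\xi_r \cap \dHallr^+ = \spann_{\mbq(\up)}\{u_A^+ : \sigma(A) > r\}$---with the Schiffmann--Hubery decomposition $\dHallr^+ = \bfcomp^+ \otimes \mbq(\up)[\sfz_1^+,\sfz_2^+,\ldots]$ from Theorem~\ref{Schiffmann-Hubery}. The non-membership becomes $(\sfz_1^+)^t \notin \bfcomp^+ + \ker\xi_r^+$. The case $t = 1$ is immediate: every $A$ with $\bfd(A) = \delta$ automatically satisfies $\sigma(A) \leq n$, so $\ker\xi_r^+ \cap \dHallr^+_\delta = 0$, while $\sfz_1^+ \notin \bfcomp^+$ by Schiffmann--Hubery.

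The hard part will be the case $t \geq 2$, since then $\ker\xi_r^+ \cap \dHallr^+_{t\delta}$ is nonzero and elements such as $u^+_{tA_\delta}$ do contribute to the $(\sfz_1^+)^t$-component of the Schiffmann--Hubery expansion. The strategy here is to exploit the mixed basis $\{\xi_r(E_A^+)\}_{A \text{ aperiodic},\, \fkd(A) = rt} \cup \{A^{(t)}(\bfl,r)\}$ of ${\bds\sS}_\vtg(r,r)^+_{rt}$---noting that $A^{(t)}(\bfl,r)$ cannot appear as the BLM-leading-term of any $\xi_r(E_A^+)$ with $A$ aperiodic, and hence its coset spans the one-dimensional quotient---and then to verify directly that $\sg_1^t$ has nonzero $A^{(t)}(\bfl,r)$-coefficient in this basis. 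The negative-part decomposition follows by the symmetric argument using $\tau_1 = \sum_{s=1}^r \phi_s^{-1} = \xi_r(\sfz_1^-)$.
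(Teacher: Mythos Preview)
Your classification of the periodic matrices and the resulting dimension count via Proposition~\ref{dim-difference} are correct, as is the reduction to the non-membership $\sg_1^t\notin\bfU_\vtg(r,r)^+$ and your handling of the case $t=1$. The gap is precisely where you flag it: for $t\ge 2$ you only outline a strategy of checking the $A^{(t)}(\bfl,r)$-coefficient of $\sg_1^t$ in the mixed basis, but carrying this out would require controlling simultaneously the PBW expansion of $(\sfz_1^+)^t$ and the correction terms $\eta^A_{A^{(t)}}$ appearing in $E_A^+$, which is genuinely delicate and not done.

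The paper sidesteps this by invoking Theorem~\ref{surjective-dHall-aff}(2): for $n=r$ one has $m=1$, so $\dHallr^{(1)}$ already surjects onto $\afbfSr$, and hence ${\bds\sS}_\vtg(r,r)^+$ is generated as an algebra by the $\bffke_i$ together with the single central element $\sg_1=\xi_r(\sfz_1^+)$. This generation statement, combined with the dimension count you already have, gives all $t$ at once. Concretely: since $\bfU_\vtg(r,r)^+$ is generated in degree~$1$ and $r\ge 2$, one has $\bfU_\vtg(r,r)^+_k\,\sg_1\subseteq\bfU_\vtg(r,r)^+_{k+r}$ for every $k\ge 1$---for $r\nmid k$ this is immediate from ${\bds\sS}_\vtg(r,r)^+_{k+r}=\bfU_\vtg(r,r)^+_{k+r}$, and for $k=rj$ write $u=\sum_i\bffke_i v_i$ with $v_i$ in degree $rj-1\not\equiv 0\pmod r$ and use centrality of $\sg_1$. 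It follows that ${\bds\sS}_\vtg(r,r)^+=\sum_{t\ge 0}\bfU_\vtg(r,r)^+\sg_1^t=\bfU_\vtg(r,r)^++\sum_{t\ge 1}\mbq(\up)\sg_1^t$, and the dimension count forces the sum to be direct. So the ingredient you are missing is not a harder computation but the generation result Theorem~\ref{surjective-dHall-aff}(2).
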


\begin{proof} By Proposition \ref{dim-difference},
$$\dim {\bds \sS}_\vtg(r,r)^+_m/\bfU_\vtg(r,r)^+_m
=\begin{cases}1,&\text{if $m\not=0$ and $m\equiv 0$ mod $r$,}\\
0,&\text{otherwise}.\end{cases}$$
 Since $\sg_1={\frak p}_1=\xi_r(\sfz_1^+)$, it follows from Theorem
 \ref{surjective-dHall-aff}(2) that ${\bds \sS}_\vtg(r,r)^+$
is generated by $\bffke_i$ ($1\leq i\leq r$) and $\sg_1$. This
implies the first decomposition. The second one can be proved
similarly.
\end{proof}

\section{A presentation for ${\bds\sS}_\vtg(r,r)$}

In this section we give a presentation for ${\bds\sS}_\vtg(r,r)$ by
describing explicitly a complement of $\bfU_\vtg(r,r)$ in
${\bds\sS}_\vtg(r,r)$.

We first consider the general case and recall the surjective algebra
homomorphism $\xi_r:\dHallr\ra \afbfSr$. For each
$\bfa=(a_i)\in\mbn^n$, we write in $\dHallr$,
$$u^\pm_\bfa=u^\pm_{[S_\bfa]}\;\;\text{and}\;\;\ti
u^\pm_\bfa=\up^{\sum_ia_i(a_i-1)}u^\pm_\bfa.$$
 By \eqref{non-sincere elements}, if $\bfa$ is not sincere, say $a_i=0$, then
$$\ti u^\pm_\bfa=\up^{\sum_ja_j(a_j-1)}u^\pm_\bfa=(u_{i-1}^\pm)^{(a_{i-1})}\cdots (u_1^\pm)^{(a_1)}(u_n^\pm)^{(a_n)}\cdots
(u_{i+1}^\pm)^{(a_{i+1})}\in \bfU_\vtg(n).$$
 Thus, if $\bfa$ is not sincere, then both ${\bffke}_\bfa:=\xi_r(\ti u^+_\bfa)$ and
${\bffkf}_\bfa:=\xi_r(\ti u^-_\bfa)$ lie in $\bfU_\vtg(n,r)$.

Now consider the following element $\rho$ in
$\End_{\mbq(\up)}(\bdOg^{\otimes r})$:
 $$\rho:\bdOg^{\otimes r}\lra\bdOg^{\otimes
r},\;\omega_{\bfi}\longmapsto \omega_{\bfi-\bfe_1-\cdots-\bfe_r},$$
 i.e., $\rho(\og_{i_1}\otimes\cdots \otimes\og_{i_r})=\og_{i_1-1}\otimes\cdots \otimes\og_{i_r-1}$.
 It is clear that $\rho^r=\sg_r\in{\bds\sS}_\vtg(n,r)$.

\begin{Lem} \label{description-rho} Suppose $n\geq r$. Then
$$\rho=\sum_{\bfa\in\mbn^n,\, \sg(\bfa)=r}{\bffke}_\bfa\;\;\text{and}\;\;
\rho^{-1}=\sum_{\bfa\in\mbn^n,\, \sg(\bfa)=r}{\bffkf}_\bfa.$$
 In particular, if $n>r$, then $\rho,\rho^{-1}\in\bfU_\vtg(n,r)$.
\end{Lem}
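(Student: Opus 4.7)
The plan is to verify both identities by evaluating each side on the basis $\{\og_\bfi : \bfi \in \mbz^r\}$ of $\bdOg^{\otimes r}$, using the explicit action formulas from Corollary \ref{action-ss}. First I would recall that $\bffke_\bfa = \xi_r(\wt u_\bfa^+)$ and apply the formula
\[
\wt u_\bfa^+\cdot\og_{\bf i}
 =\sum_{m_s\in\{0,1\},\ \bfa=\sum_s m_s\bfe_{i_s-1}}
 \up^{\sum_{s>t}m_t(m_s-1)\lan\bfe_{i_s},\bfe_{i_t}\ran}
 \og_{i_1-m_1}\ot\cdots\ot\og_{i_r-m_r}.
\]

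The key observation is that when we sum over all $\bfa\in\mbn^n$ with $\sg(\bfa)=r$, the decomposition $\bfa=\sum_s m_s\bfe_{i_s-1}$ forces $\sum_s m_s=\sg(\bfa)=r$, which combined with $m_s\in\{0,1\}$ forces $m_s=1$ for every $s$. Thus exactly one $\bfa$ contributes to each $\og_\bfi$, namely $\bfa=\sum_s\bfe_{i_s-1}$, and with a single term $(m_s)=(1,\ldots,1)$. The exponent $\sum_{s>t}m_t(m_s-1)\lan\bfe_{i_s},\bfe_{i_t}\ran$ vanishes because every $m_s-1=0$. Hence
\[
\sum_{\bfa\in\mbn^n,\,\sg(\bfa)=r}\bffke_\bfa\cdot\og_{\bf i}
 =\og_{i_1-1}\ot\cdots\ot\og_{i_r-1}=\rho(\og_\bfi),
\]
which yields the first identity. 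The same argument with the dual formula for $\wt u_\bfa^-\cdot\og_\bfi$ gives the identity for $\rho^{-1}$.

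For the final assertion, I would invoke the observation made immediately before the lemma: whenever $\bfa\in\mbn^n$ is not sincere, both $\bffke_\bfa$ and $\bffkf_\bfa$ lie in $\bfU_\vtg(n,r)$ because $\wt u_\bfa^\pm$ can be written in terms of divided powers of the $u_i^\pm$ via \eqref{non-sincere elements}. When $n>r$, any $\bfa\in\mbn^n$ with $\sg(\bfa)=r$ has at most $r<n$ nonzero entries, hence is not sincere; so every summand in the two displayed expressions for $\rho$ and $\rho^{-1}$ lies in $\bfU_\vtg(n,r)$, and the conclusion follows.

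I do not anticipate any genuine obstacle: once the action formulas are in hand, the combinatorial constraint $\sum_s m_s=r$ collapses the double sum to a single term per $\og_\bfi$, and the pigeonhole argument for the sincerity condition is immediate. The mild subtlety is simply to notice that the exponent in Corollary \ref{action-ss} vanishes identically on the all-ones tuple, so no nontrivial power of $\up$ survives.
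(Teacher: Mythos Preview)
The proposal is correct and follows essentially the same approach as the paper: both evaluate the sum on basis vectors $\og_\bfi$ via Corollary~\ref{action-ss}, observe that the constraint $\sg(\bfa)=r$ forces all $m_s=1$ (so the $\up$-exponent vanishes and a unique $\bfa$ contributes), and deduce the final assertion from the non-sincerity of $\bfa$ when $n>r$. The paper packages the same bookkeeping slightly differently by introducing the auxiliary set $Y_\bfa$, but the argument is identical.
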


\begin{proof}  For each $\bfa\in\mbn^n$, let $Y_\bfa$ denote the set of the sequences
 $\bfj=(j_1,\ldots,j_r)$ satisfying that for each $1\leq i\leq n$,
$$a_i=|\{1\leq s\leq r\mid j_s=i\}|.$$
 By Corollary \ref{action-ss}, we have for each $\og_{i_1}\otimes\cdots
\otimes\og_{i_r}\in\bdOg^{\otimes r}$,
$$\ti u_\bfa^+\cdot(\og_{i_1}\otimes\cdots
\otimes\og_{i_r}) =\sum_{\bfj\in
Y_\bfa}\dt_{\ol{j_1},\ol{i_1-1}}\cdots\dt_{\ol{j_r},\ol{i_r-1}}\og_{i_1-1}\otimes\cdots
\otimes\og_{i_r-1},$$
 where for $m\in\mbz$, $\ol m$ denotes its residue class in $\mbz/n\mbz$. Therefore,
$$\bigl(\sum_{\bfa\in\mbn^n,\, \sg(\bfa)=r}\ti u^+_\bfa\bigr)\cdot (\og_{i_1}\otimes\cdots
\otimes\og_{i_r})=\og_{i_1-1}\otimes\cdots \otimes\og_{i_r-1},$$
 that is, the first equality holds. The second one can be proved
 similarly.

\end{proof}

 The above lemma implies that
\begin{equation}\label{relation-rho-semisimple}
{\bffke}_\dt':=\rho-{\bffke}_\dt,
\,{\bffkf}_\dt':=\rho^{-1}-{\bffkf}_\dt\in\bfU_\vtg(r,r),\end{equation}
 where $\dt=(1,\ldots,1)$. More precisely,
 \begin{equation}\label{edeltaf}
 \aligned
 {\bffke}_\dt'&=\sum_{i=1}^n\sum_{{\bfa\in\mbn^n}\atop {\sg(\bfa)=r,a_i=0}} (\bffke_{i-1})^{(a_{i-1})}\cdots (\bffke_1)^{(a_1)}(\bffke_n)^{(a_n)}\cdots
(\bffke_{i+1})^{(a_{i+1})}\\
{\bffkf}_\dt'&=\sum_{i=1}^n\sum_{{\bfa\in\mbn^n}\atop {\sg(\bfa)=r,a_i=0}} (\bffkf_{i-1})^{(a_{i-1})}\cdots (\bffkf_1)^{(a_1)}(\bffkf_n)^{(a_n)}\cdots
(\bffkf_{i+1})^{(a_{i+1})}.\endaligned
 \end{equation}

 From now on, we assume $n=r$. By \S1.4 and Theorem
 \ref{surjective-dHall-aff}(2), ${\bds \sS}_\vtg(r,r)$
is generated by ${\bffke_i},{\bffkf}_i,{\bffkk}_i$ ($1\leq i\leq
n=r$), and ${\bffke}_\dt, {\bffkf}_\dt$. It follows from
\eqref{relation-rho-semisimple} and \eqref{edeltaf} that ${\bds
\sS}_\vtg(r,r)$ is also generated by the
${\bffke_i},{\bffkf}_i,{\bffkk}_i$, and $\rho, \rho^{-1}$.
Since
$$\aligned
 {\bds \sS}_\vtg(r,r)^+_r&=\bfU_\vtg(r,r)^+_r\oplus\mbq(\up)\rho=\bfU_\vtg(r,r)^+_r\oplus\mbq(\up)\sg_1\;\;\text{and}\\
 {\bds
 \sS}_\vtg(r,r)^-_r&=\bfU_\vtg(r,r)^-_r\oplus\mbq(\up)\rho^{-1}=\bfU_\vtg(r,r)^-_r\oplus\mbq(\up)\tau_1,
\endaligned$$
 it follows from Corollary \ref{n=r case} that
\begin{equation}\label{decomposition for tri parts n=r}
{\bds \sS}_\vtg(r,r)^+=\bfU_\vtg(r,r)^+\oplus\bigoplus_{t\geq
1}\mbq(\up)\rho^t\;\;\text{and}\;\;{\bds
\sS}_\vtg(r,r)^-=\bfU_\vtg(r,r)^-\oplus\bigoplus_{t\geq
1}\mbq(\up)\rho^{-t}.\end{equation}

\begin{Rem}\label{RGreenQAgln} As a consequence of the above discussion,
we obtain \cite[Th.~3.4.8]{Gr99} which states that (for $n\geq r$
with $n\geq 3$) there is a surjective algebra homomorphism
$\al_r:{\widehat\bfU}\ra {\bds \sS}_\vtg(n,r)$ taking $R$ to
$\rho^{-1}$, where $\widehat\bfU$ is a Hopf algebra obtained from
$\bfU_\vtg(n)$ by adding primitive elements $R, R^{-1}$ with
$RR^{-1}=R^{-1}R=1$. It seems to us that $\widehat\bfU$ is not a
quantum group in a strict sense since it does not admit a triangular
decomposition.
\end{Rem}

\begin{Prop} For each $1\leq i\leq r$, we have in ${\bds \sS}_\vtg(r,r)$,
\begin{itemize}
 \item[(1)] ${\bffkk}_i{\bffke}_\dt=\up{\bffke}_\dt$;
 \item[(2)] ${\bffke}_i{\bffke}_\dt=\frac{1}{\up+\up^{-1}}
      {\bffke}_i^2{\bffke}_{i-1}\cdots{\bffke}_1{\bffke}_r\cdots {\bffke}_{i+1}$;
 \item[(3)] ${\bffkf}_i{\bffke}_\dt=\frac{1}{1-\up^{-2}}{\bffke}_{i-1}\cdots{\bffke}_1{\bffke}_r\cdots
      {\bffke}_{i+1}{\bffkk}_i^{-1}({\bffkk}_{i+1}-{\bffkk}_{i+1}^{-1})$;
 \item[(4)] ${\bffkk}_i{\bffkf}_\dt=\up{\bffkf}_\dt$;
 \item[(5)] ${\bffkf}_i{\bffkf}_\dt=\frac{1}{\up+\up^{-1}}
      {\bffkf}_i^2{\bffkf}_{i+1}\cdots{\bffkf}_r{\bffkf}_1\cdots {\bffkf}_{i-1}$;
 \item[(6)] ${\bffke}_i{\bffkf}_\dt=\frac{1}{1-\up^{-2}}{\bffkf}_{i+1}\cdots{\bffkf}_r{\bffkf}_1\cdots
      {\bffkf}_{i-1}{\bffkk}_{i+1}^{-1}({\bffkk}_i-{\bffkk}_i^{-1})$.
\end{itemize}
\end{Prop}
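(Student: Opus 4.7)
The plan is to reduce each of the six identities to a direct computation on the single vector $\og_\dt := \og_1 \ot \og_2 \ot \cdots \ot \og_r \in \bdOg^{\ot r}$, exploiting the fact that, when $n=r$, both $\bffke_\dt$ and $\bffkf_\dt$ are supported in the ``sincere corner'' $\bfone_\dt \afbfSr \bfone_\dt$. First, I would observe that $\bffke_\dt = \xi_r(\ti u_\dt^+) = A_\dt(\bfl,r)$ with $A_\dt = \sum_{i=1}^n E_{i,i+1}^\vtg$; the sum in \eqref{def-A(j,r)} collapses to $\bffke_\dt = [A_\dt]$ because $\sg(A_\dt) = n = r$. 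Since $\ro(A_\dt) = \co(A_\dt) = \dt$, the orthogonality \eqref{product [diag(la)][A] in affine q-Schur algebra} yields $\bffke_\dt = \bfone_\dt \bffke_\dt \bfone_\dt$, and likewise $\bffkf_\dt = [\tA_\dt] = \bfone_\dt \bffkf_\dt \bfone_\dt$. Identities (1) and (4) follow at once: $\bffkk_i \bffke_\dt = \bffkk_i \bfone_\dt \bffke_\dt = \up^{\dt_i} \bffke_\dt = \up \bffke_\dt$, and symmetrically for $\bffkk_i \bffkf_\dt$.

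For identities (2), (3), (5), (6) each left-hand side lies in $\bfone_\mu \afbfSr \bfone_\dt$ with $\mu = \dt \pm \al_i$. Granting, for the moment, that the same is true of each right-hand side, the comparison takes place in $\bfone_\mu \afbfSr \bfone_\dt$, and since $\afbfSr = \End_{\afbfHr}(\bdOg^{\ot r})$, any element of this component is determined by its action on a single $\afbfHr$-module generator of $\bfone_\dt \bdOg^{\ot r}$. I would show that $\og_\dt$ is such a generator: by Proposition \ref{action2-VV} the operators $T_w$ permute the tensor entries, and by \eqref{afH action} the lattice shifts $X_s^{\pm 1}$ translate any single entry by $\pm n$, so every $\dt$-weight basis vector $\og_\bfi$ (with $\bfi$ a permutation of $(1,2,\ldots,r)$ up to entry-wise shifts by multiples of $n$) is reached from $\og_\dt$ up to a nonzero scalar. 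Hence each identity reduces to verifying that its two sides produce the same vector when applied to $\og_\dt$.

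The computation on $\og_\dt$ is explicit via Corollary \ref{action-ss} and the iterated comultiplications of Corollary \ref{coalgebra structure}. The combinatorial constraint of Corollary \ref{action-ss} forces a single summand, giving $\bffke_\dt \og_\dt = \og_0 \ot \og_1 \ot \cdots \ot \og_{r-1}$ and $\bffkf_\dt \og_\dt = \og_2 \ot \og_3 \ot \cdots \ot \og_{r+1}$. For the right-hand sides, the cyclic chain $\bffke_{i+1}, \bffke_{i+2}, \ldots, \bffke_{i-1}$ walks once around the quiver on $\og_\dt$, each step affecting a unique tensor position, with all trailing $\ti K$-factors in the comultiplication acting trivially because, after every step, the surviving residues on the trailing positions avoid the active pair $\{i,i+1\}$. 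The final $\bffke_i^2$ in (2) then acts on a word containing two adjacent copies of $\og_{i+1}$; its comultiplication produces two summands whose trailing $\ti K_i$-contributions combine to $\up + \up^{-1}$, exactly cancelling the prefactor $1/(\up+\up^{-1})$. In (3) and (6) the extra factor $1/(1-\up^{-2})$ comes from the scalar eigenvalue of $\bffkk_i^{-1}(\bffkk_{i+1} - \bffkk_{i+1}^{-1})$ (resp.\ $\bffkk_{i+1}^{-1}(\bffkk_i - \bffkk_i^{-1})$) on $\og_\dt$, while the remaining chain of $\bffke_j$'s (resp.\ $\bffkf_j$'s) reproduces the intermediate vector computed for (2) (resp.\ (5)).

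The main obstacle is the pending step: verifying that the right-hand side of each of (2), (3), (5), (6) genuinely lies in $\bfone_\mu \afbfSr \bfone_\dt$, i.e., vanishes on every weight space $\bfone_\nu \bdOg^{\ot r}$ with $\nu \neq \dt$. This is not transparent from the expression. I expect the cleanest justification to be a residue-tracking argument: if $\nu \neq \dt$, some coordinate $\nu_j$ is zero while another exceeds $1$, and the cyclic chain must, at some step, attempt to lower a residue class of current multiplicity zero in the tensor word, forcing the output to be $0$. Making this dead-end behaviour precise--so that the delicate coefficient structure of the identities is seen to be compatible with it--is the technical heart of the proof.
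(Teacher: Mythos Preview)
Your approach is sound and genuinely different from the paper's. The paper argues inside $\dHallrr$: for (2) it computes both $u_i^+u_\dt^+$ and $(u_i^+)^2u_{i-1}^+\cdots u_1^+u_r^+\cdots u_{i+1}^+$ directly as Hall-algebra products, expressing each as a linear combination of $u^+_{[M]}$ and $u^+_{\dt+\bfe_i}$ (with $M=S_i[2]\oplus\bigoplus_{j\neq i,i+1}S_j$), and then applies $\xi_r$ using $\xi_r(u^+_{\dt+\bfe_i})=0$, which holds because $\dim S_{\dt+\bfe_i}=r+1>r$ (Corollary~\ref{action-ss}). The paper only writes out (1) and (2), declares (4) and (5) similar, and defers (3) and (6) to the BLM-type formulas of Theorem~\ref{multiplication formulas in affine q-Schur algebra}. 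Your scheme treats all six items uniformly and stays entirely on the tensor-space side, which is arguably cleaner for (3) and (6).

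The ``main obstacle'' you flag is not an obstacle; it is a two-line application of McGerty's idempotent relations (Theorem~\ref{presentation for im(afzr)}(2)--(3)) together with the pigeon-hole $\sg(\nu)=r=n$. Concretely, $\bffke_j\bfone_\nu=0$ unless $\nu_{j+1}\ge 1$, so threading $\bfone_\nu$ through the cyclic chain $\bffke_{i+1},\bffke_{i+2},\ldots,\bffke_{i-1}$ forces $\nu_j\ge 1$ for every $j\neq i+1$; the remaining constraint $\nu_{i+1}\ge 1$ is supplied either by the final $\bffke_i^2$ in (2), or by the factor $\bffkk_{i+1}-\bffkk_{i+1}^{-1}$ in (3), which kills $\bfone_\nu$ exactly when $\nu_{i+1}=0$. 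Hence $\nu_j\ge 1$ for all $j$ and $\sg(\nu)=r$ give $\nu=\dt$. The arguments for (5) and (6) are symmetric via $\bffkf_j\bfone_\nu=0$ unless $\nu_j\ge 1$. No coefficient-tracking is needed: the vanishing is purely weight-combinatorial, and is in fact the same $r+1>r$ pigeon-hole that drives the paper's Hall-algebra argument, seen from the Schur-algebra side.
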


\begin{proof} All these relations can be deduced from the multiplication formulas in Theorem \ref{multiplication formulas in affine q-Schur algebra}.
However, we provide here a direct proof for (1) and (2). The
relations (4) and (5) can be proved in a similar manner.

By definition, we have in $\dHallrr$,
$$u_i^+u_\dt^+=u^+_{[M]}+(\up^2+1)u^+_{\dt+\bfe_i},$$
 where $M=S_1\oplus \cdots \oplus S_{i-1}\oplus S_i[2]\oplus S_{i+2}\oplus\cdots \oplus S_r.$
On the other hand,
$$\aligned
(u_i^+)^2u_{i-1}^+\cdots u^+_1u^+_r\cdots
u_{i+1}^+&=\up(\up^2+1)u^+_{[2S_i]}
u^+_{[S_1\oplus\cdots S_{i-1}\oplus S_{i+1}\oplus \cdots\oplus S_r]}\\
&=(\up+\up^{-1})(u^+_{[M]}+u^+_{\dt+\bfe_i}).
\endaligned$$
 Since $S_{\dt+\bfe_i}$ is semisimple of dimension $r+1$, it follows that
 $\xi_r(u^+_{\dt+\bfe_i})=0$. Hence,
$$\aligned
{\bffke}_i{\bffke}_\dt&=\xi_r(u_i^+u_\dt^+)=\xi_r(u^+_{[M]})
=\frac{1}{\up+\up^{-1}}\xi_r((u_i^+)^2u_{i-1}^+\cdots u^+_1u^+_r\cdots u_{i+1}^+)\\
&=\frac{1}{\up+\up^{-1}}{\bffke}_i^2{\bffke}_{i-1}\cdots{\bffke}_1{\bffke}_r\cdots
{\bffke}_{i+1},
\endaligned$$
 which gives the relation (2).

The relation (1) follows from the fact that for each
$\og_{i_1}\otimes\cdots \otimes\og_{i_r}\in\bdOg^{\otimes r}$,
$$\aligned
&u_\dt\cdot(\og_{i_1}\otimes\cdots \otimes\og_{i_r})=\sum_{\bfj\in
Y_\dt}\dt_{\ol{j_1},\ol{i_1-1}}\cdots\dt_{\ol{j_r},\ol{i_r-1}}\og_{i_1-1}\otimes\cdots
\otimes\og_{i_r-1}\\
=&\begin{cases}\og_{i_1-1}\otimes\cdots
\otimes\og_{i_r-1},\;\;\;&\text{if ${\ol i_1},\ldots,{\ol i_r}$ are pairwise distinct;}\\
0, &\text{otherwise.}\end{cases}
\endaligned$$
 Note that if ${\ol i_1},\ldots,{\ol i_r}$ are pairwise distinct, then
$$K_i\cdot(\og_{i_1}\otimes\cdots
\otimes\og_{i_r})=v\og_{i_1}\otimes\cdots \otimes\og_{i_r}.$$
\end{proof}

The above proposition together with
$\rho={\bffke}_\dt'+{\bffke}_\dt$ and
$\rho^{-1}={\bffkf}_\dt'+{\bffkf}_\dt$ gives the following result.

\begin{Coro} For each $1\leq i\leq r$, we have in ${\bds \sS}_\vtg(r,r)$,
\begin{itemize}
 \item[(QS1$'$)] $({\bffkk}_i-\up)\rho=({\bffkk}_i-\up){\bffke}'_\dt$;
 \item[(QS2$'$)] ${\bffke}_i\rho={\bffke}_i{\bffke}_\dt'+\frac{1}{\up+\up^{-1}}
      {\bffke}_i^2{\bffke}_{i-1}\cdots{\bffke}_1{\bffke}_r\cdots{\bffke}_{i+1}$;
 \item[(QS3$'$)] ${\bffkf}_i\rho={\bffkf}_i{\bffke}_\dt'+\frac{1}{1-\up^{-2}}
      {\bffke}_{i-1}\cdots{\bffke}_1{\bffke}_r\cdots
      {\bffke}_{i+1}{\bffkk}_i^{-1}({\bffkk}_{i+1}-{\bffkk}_{i+1}^{-1})$;
 \item[(QS4$'$)] $({\bffkk}_i-\up)\rho^{-1}=({\bffkk}_i-v){\bffkf}'_\dt$;
 \item[(QS5$'$)] ${\bffkf}_i\rho^{-1}={\bffkf}_i{\bffkf}_\dt'+\frac{1}{\up+\up^{-1}}
      {\bffkf}_i^2{\bffkf}_{i+1}\cdots{\bffkf}_r{\bffkf}_1\cdots {\bffkf}_{i-1}$;
 \item [(QS6$'$)] ${\bffke}_i\rho^{-1}={\bffke}_i{\bffkf}_\dt'+\frac{1}{1-\up^{-2}}
      {\bffkf}_{i+1}\cdots{\bffkf}_r{\bffkf}_1\cdots
      {\bffkf}_{i-1}{\bffkk}_{i+1}^{-1}({\bffkk}_i-{\bffkk}_i^{-1})$, \index{defining relations!({\rm QS1$'$})--({\rm QS6$'$}), ({\rm QS0$'$})}
\end{itemize}
where ${\bffke}_\dt'$ and ${\bffkf}_\dt'$ are given in \eqref{edeltaf}.
\end{Coro}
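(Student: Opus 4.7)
The plan is to deduce each of the six identities directly from the preceding proposition together with the splittings
\[
\rho \;=\; \bffke_\dt' + \bffke_\dt, \qquad \rho^{-1} \;=\; \bffkf_\dt' + \bffkf_\dt
\]
established in \eqref{relation-rho-semisimple}. In other words, I would treat the corollary as a pure substitution argument: the proposition supplies closed-form evaluations of each of the products $\bffkk_i\bffke_\dt$, $\bffke_i\bffke_\dt$, $\bffkf_i\bffke_\dt$, $\bffkk_i\bffkf_\dt$, $\bffkf_i\bffkf_\dt$, $\bffke_i\bffkf_\dt$, and the splittings above convert each such evaluation into a statement about $\rho$ or $\rho^{-1}$.

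For (QS1$'$), rewriting $\rho - \bffke_\dt' = \bffke_\dt$ reduces the claim to $(\bffkk_i-\up)\bffke_\dt = 0$, which is exactly relation (1) of the preceding proposition; the derivation of (QS4$'$) from relation (4) is entirely symmetric. For (QS2$'$) I would expand
\[
\bffke_i\rho \;=\; \bffke_i\bffke_\dt' + \bffke_i\bffke_\dt
\]
and then substitute relation (2) for the second summand to obtain the stated right-hand side. The remaining identities (QS3$'$), (QS5$'$), (QS6$'$) follow the same pattern, invoking relations (3), (5), (6) respectively after substituting $\rho^{\pm1} = \bffke_\dt^{'\pm}+\bffke_\dt^{\pm}$ (with the sign convention that $\bffke_\dt^+ = \bffke_\dt$ and $\bffke_\dt^- = \bffkf_\dt$).

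There is no genuine obstacle here, since the substantive content lies in the preceding proposition. That proposition in turn rests on Lemma \ref{description-rho} (which writes $\rho$ and $\rho^{-1}$ as sums $\sum_{\sg(\bfa)=r}\bffke_\bfa$ and $\sum_{\sg(\bfa)=r}\bffkf_\bfa$ of images of semisimple generators), on the explicit Hall-algebra product $u_i^+u_\dt^+ = u^+_{[M]} + (\up^2+1)u^+_{\dt+\bfe_i}$ computed there, and on the key vanishing $\xi_r(u^\pm_{\dt+\bfe_i}) = 0$, which holds by Corollary \ref{action-ss} because $\dim S_{\dt+\bfe_i} = r+1 > r$ when $n=r$. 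Given these ingredients, the corollary is immediate from the two-line manipulation above, so the main point of this step in the paper is to package the output of the proposition in the form needed for the subsequent presentation theorem for ${\bds\sS}_\vtg(r,r)$.
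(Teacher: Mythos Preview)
Your proposal is correct and matches the paper's approach exactly: the paper states the corollary follows from ``the above proposition together with $\rho={\bffke}_\dt'+{\bffke}_\dt$ and $\rho^{-1}={\bffkf}_\dt'+{\bffkf}_\dt$,'' which is precisely the substitution argument you describe.
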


\begin{Thm}\label{n,n case} The $\mbq(\up)$-algebra $\bds{\sS}_\vtg(r,r)$ is generated by
${\bffke_i},{\bffkf}_i,{\bffkk}_i^{\pm1}$ ($1\leq i\leq r$) and
$\rho^{\pm 1}$ subject to the relations
$${\rm (QS0')}\qquad\rho\rho^{-1}=\rho^{-1}\rho=1,\,\rho{\bffke}_i\rho^{-1}
={\bffke}_{i-1},\,\rho{\bffkf}_i\rho^{-1}={\bffkf}_{i-1},\,\rho{\bffkk}_i\rho^{-1}={\bffkk}_{i-1}\hspace{3.8cm}$$
 together with the relations {\rm (QS1$'$)--(QS6$'$)} and the relations
{\rm (QS1)--(QS6)} in Theorem \ref{2nd presentation for im(afzr)}.
In particular,
$${\bds \sS}_\vtg(r,r)=\bfU_\vtg(r,r)\oplus
\bigoplus_{0\not=m\in\mbz}{\mbq(\up)}\rho^m.$$
\end{Thm}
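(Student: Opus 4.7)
The plan is to introduce the abstract $\mbq(\up)$-algebra $\wt{\bds{\sS}}$ presented by the listed generators and relations, observe that the obvious assignments yield a surjective $\mbq(\up)$-algebra homomorphism $\Phi:\wt{\bds{\sS}}\to\bds{\sS}_\vtg(r,r)$, and then prove $\Phi$ is an isomorphism by matching spanning sets on the two sides; the direct-sum decomposition of $\bds{\sS}_\vtg(r,r)$ drops out in the same argument. Well-definedness of $\Phi$ is routine: relations (QS1)--(QS6) hold inside $\bfU_\vtg(r,r)\subseteq\bds{\sS}_\vtg(r,r)$ by Theorem \ref{2nd presentation for im(afzr)}; relations (QS0') reduce to the identity $\rho\bffke_i\rho^{-1}=\bffke_{i-1}$ together with its $\bffkf_i$ and $\bffkk_i$ analogues, which are immediate from the description of $\rho$ as the uniform index shift on $\bdOg^{\otimes r}$; relations (QS1')--(QS6') are precisely the content of the preceding corollary after substituting $\rho=\bffke_\dt'+\bffke_\dt$ and $\rho^{-1}=\bffkf_\dt'+\bffkf_\dt$ from \eqref{relation-rho-semisimple}. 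Surjectivity is clear since, as noted before \eqref{decomposition for tri parts n=r}, $\bds{\sS}_\vtg(r,r)$ is generated by the images of these generators.

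Next, I would establish the direct-sum decomposition $\bds{\sS}_\vtg(r,r)=\bfU_\vtg(r,r)\oplus\bigoplus_{0\neq m\in\mbz}\mbq(\up)\rho^m$ inside $\bds{\sS}_\vtg(r,r)$ itself. Combining the triangular decomposition of Theorem \ref{PBW basis of affine q-Schur algebras} with the $\pm$-part decompositions of Corollary \ref{n=r case}, every element of $\bds{\sS}_\vtg(r,r)$ is a sum of products $p\,h\,q$ in which $p$ lies in either $\bfU_\vtg^+(r,r)$ or $\mbq(\up)\rho^t$ for some $t\geq 1$, $h\in\bfU_\vtg^0(r,r)$, and $q$ lies in either $\bfU_\vtg^-(r,r)$ or $\mbq(\up)\rho^{-s}$ for some $s\geq 1$. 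Using (QS0') to conjugate $\rho^{\pm1}$ past $\bfU_\vtg^0(r,r)$, and using the collapses $\bffke_i\rho^{\pm1},\,\bffkf_i\rho^{\pm1}\in\bfU_\vtg(r,r)$ obtained from the preceding corollary, each such product reduces to a scalar multiple of some $\rho^m$ plus an element of $\bfU_\vtg(r,r)$. Directness of the resulting sum follows from the $\mbz$-grading on $\bds{\sS}_\vtg(r,r)$: the element $\rho^m$ sits in degree $mr$, and Proposition \ref{dim-difference} shows that in each such graded piece the complement of $\bfU_\vtg^\pm(r,r)$ inside $\bds{\sS}_\vtg^\pm(r,r)$ is one-dimensional and spanned by the class of $\rho^{\pm|m|}$.

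Third, I would prove the companion spanning statement $\wt{\bds{\sS}}=\wt{\bfU}+\sum_{0\neq m\in\mbz}\mbq(\up)\rho^m$ in the abstract algebra, where $\wt{\bfU}$ denotes the subalgebra of $\wt{\bds{\sS}}$ generated by $\bffke_i,\bffkf_i,\bffkk_i^{\pm1}$. By (QS0'), every word in the generators can be rewritten as $u\rho^m$ with $u\in\wt{\bfU}$ and $m\in\mbz$; by induction on $|m|$ it suffices to handle $m=\pm 1$. For $m=1$, I would triangulate $u=u^-u^0u^+$ (a legitimate step in $\wt{\bfU}$, which by Theorem \ref{2nd presentation for im(afzr)} is a quotient of $\bfU_\vtg(r,r)$), then iterate (QS2') to reduce $u^+\rho$ to a scalar multiple of $\rho$ plus an element of $\wt{\bfU}$, apply (QS1') to absorb $u^0$ into the same shape, and apply (QS3') to absorb $u^-$; the case $m=-1$ is symmetric via (QS4'), (QS5'), (QS6'). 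Combined with Theorem \ref{2nd presentation for im(afzr)} (which forces the restriction $\Phi|_{\wt{\bfU}}:\wt{\bfU}\to\bfU_\vtg(r,r)$ to be an isomorphism) and the direct-sum decomposition of Step 2, the spanning set $\wt{\bfU}\cup\{\rho^m\}_{m\neq0}$ maps under $\Phi$ to a set that is $\mbq(\up)$-linearly independent modulo $\bfU_\vtg(r,r)$; hence $\Phi$ is an isomorphism, yielding simultaneously the claimed presentation and the direct-sum decomposition.

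The main obstacle is the triangular-style reduction in Step 3. The relations (QS2')--(QS6') yield collapses only for a single generator adjacent to $\rho^{\pm1}$, and propagating them through a general $u\in\wt{\bfU}$ requires induction on a carefully chosen invariant, for instance a PBW-height relative to the triangular decomposition of $\bfU_\vtg(r,r)$ lifted through the presentation. One must verify that successive applications of the reduction formulas strictly decrease this invariant without reintroducing $\rho^{\pm1}$'s at intermediate positions, and that the unavoidable ``leakage'' of a scalar multiple of $\rho$ produced by (QS1')/(QS4') when $u^0$ is non-constant is exactly what is required for the induction to terminate cleanly, leaving a single residual scalar multiple of $\rho^m$ modulo $\wt{\bfU}$.
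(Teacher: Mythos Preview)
Your approach is essentially the paper's: build the abstract algebra $\wt{\bds\sS}$, exhibit the surjection onto $\bds\sS_\vtg(r,r)$, identify the subalgebra $\wt\bfU$ with $\bfU_\vtg(r,r)$ via Theorem~\ref{2nd presentation for im(afzr)}, show that $\wt\bfU\cup\{\rho^m\mid m\neq 0\}$ spans $\wt{\bds\sS}$, and check that its image in $\bds\sS_\vtg(r,r)$ is linearly independent. The paper asserts the spanning step in one line; you attempt to justify it and flag it as the main obstacle.

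That obstacle is easier than the PBW-height induction you propose. First observe that any monomial $u$ in $\bffke_i,\bffkf_i,\bffkk_i^{\pm1}$ containing at least one $\bffke$ or $\bffkf$ satisfies $u\rho\in\wt\bfU$: use (QS0$'$) to commute $\rho$ past any trailing $\bffkk$'s, then absorb it into the rightmost $\bffke_l$ or $\bffkf_l$ via (QS2$'$) or (QS3$'$). For the purely toral part, use the idempotent decomposition $1=\sum_{\la}\bfone_\la$ (legitimate since $\wt\bfU\cong\bfU_\vtg(r,r)$): for $\la\neq\dt$ pick $i$ with $\la_i\neq 1$ and apply (QS1$'$) to get $(\up^{\la_i}-\up)\bfone_\la\rho=\bfone_\la(\bffkk_i-\up)\bffke_\dt'\in\wt\bfU$, whence $\bfone_\la\rho\in\wt\bfU$; then $\bfone_\dt\rho=\rho-\sum_{\la\neq\dt}\bfone_\la\rho\in\mbq(\up)\rho+\wt\bfU$. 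This gives $\wt\bfU\rho\subseteq\wt\bfU+\mbq(\up)\rho$ (and symmetrically for $\rho^{-1}$), and the spanning $\wt{\bds\sS}=\wt\bfU+\sum_{m\neq 0}\mbq(\up)\rho^m$ follows by induction on $|m|$.

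One correction to your Step~2: Proposition~\ref{dim-difference} concerns the quotients $\bds\sS_\vtg^\pm/\bfU_\vtg^\pm$, not the full algebra, so it does not directly give $\rho^m\notin\bfU_\vtg(r,r)$. The paper argues instead by contradiction: if $\rho^t\in\bfU_\vtg(r,r)_{rt}$ for some $t\neq 0$, then repeatedly applying the absorption $\bfU_\vtg(r,r)_m\cdot\rho^{\pm 1}\subseteq\bfU_\vtg(r,r)$ for $m\neq 0$ (same mechanism as above) forces $\rho$ or $\rho^{-1}$ into $\bfU_\vtg(r,r)$, whence $\bds\sS_\vtg(r,r)=\bfU_\vtg(r,r)$, contradicting Lemma~\ref{S(n,r)=U(n,r)}.
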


\begin{proof} Let $\bds\sS'$ be the $\mbq(\up)$-algebra
generated by ${\frak x}_i,{\frak y}_i,{\frak z}_i^{\pm1}$ ($1\leq
i\leq r$) and $\eta^{\pm 1}$ with the relations (QS1)--(QS7) and
(QS0$'$)--(QS6$'$) (Here we replace
${\bffke_i},{\bffkf}_i,{\bffkk}_i^{\pm1}$, and $\rho^{\pm 1}$ by
${\frak x}_i,{\frak y}_i,{\frak z}_i^{\pm1}$, and $\eta^{\pm 1}$,
respectively). Thus, there is a surjective $\mbq(\up)$-algebra
homomorphism
$$\begin{array}{rl}
\Upsilon:{\bds\sS'}&\lra \bds{\sS}_\vtg(r,r)\\
{\frak x}_i &\lm {\bffke}_i\\
{\frak y}_i &\lm {\bffkf}_i\\
{\frak z}_i^\pm &\lm {\bffkk}_i^\pm\\
\eta^{\pm1}&\lm \rho^{\pm 1} .
 \end{array}$$
 Let $\bfU'$ be the $\mbq(\up)$-subalgebra of $\bds\sS'$
generated by ${\frak x}_i,{\frak y}_i$, and ${\frak z}_i^{\pm1}$
($1\leq i\leq r$). Then $\Upsilon$ induces a surjective homomorphism
$\Upsilon_1:\bfU'\ra\bfU_\vtg(r,r)$. Since the relations
(QS1)--(QS7) are the generating relations for $\bfU_\vtg(r,r)$,
there is also a natural surjective homomorphism
$\Phi:\bfU_\vtg(r,r)\ra\bfU'$. Clearly, the compositions
$\Upsilon_1\Phi$ and $\Phi\Upsilon_1$ are identity maps. Thus, both
$\Upsilon_1$ and $\Phi$ are isomorphisms.

It is clear that for each $0\not=t\in\mbz$, $\rho^t$ lies in ${\bds
\sS}_\vtg(r,r)_{rt}$. We claim that $\rho^t$ does not lie in
$\bfU_\vtg(r,r)_{rt}$. Otherwise, applying relations
(QS0$'$)--(QS6$'$) would give that
$$\rho=\rho^t\rho^{-t+1}\in \bigl(\bfU_\vtg(r,r)_{rt}\bigr)\rho^{-t+1}\subseteq
\bfU_\vtg(r,r)_r.$$
 This implies ${\bds \sS}_\vtg(r,r)=\bfU_\vtg(r,r)$, which
contradicts Lemma \ref{S(n,r)=U(n,r)}. Hence,
$\rho^t\not\in\bfU_\vtg(r,r)_{rt}$ for all $0\not=t\in\mbz$.

For each $m\in\mbz$, choose a $\mbq(\up)$-basis ${\sB}_m$ of
$\bfU_\vtg(r,r)_m$. Then the set ${\sB}:=\cup_{m\in\mbz}{\sB}_m$
forms a basis of $\bfU_\vtg(r,r)$. Moreover, by the discussion
above, the set ${\sB}\cup\{\rho^t\mid 0\not=t\in\mbz\}$ is linearly
independent in $\bds{\sS}_\vtg(r,r)$. On the other hand, by the
definition of $\bds{\sS}'$, the set
$$\{\Phi(c)\mid c\in\sB\}\cup\{\eta^m\mid 0\not=m\in\mbz\}$$
 is a spanning set of $\bds{\sS}'$. Since $\Upsilon(\Phi(c))=c$ and
$\Upsilon(\eta^m)=\rho^m$, it follows that the above spanning set is
a basis of $\bds{\sS}'$. Therefore, $\Upsilon$ is an isomorphism.
This finishes the proof.
\end{proof}

\begin{Rem} We can also present ${\bds\sS}_\vtg(r, r)$ by
using generators ${\bffke_i},{\bffkf}_i,{\bffkk}_i^{\pm1}$ ($1\leq
i\leq r$), $\bffke_\dt$ and $\bffkf_\dt$, but the relation between $\bffke_\dt$ and $\bffkf_\dt$
is not clear. 
Furthermore, if we use generators $\sg_1$ and $\tau_1$
instead of $\rho$ and $\rho^{-1}$, the relations would be much more
complicated; see the example below.
 From the relations obtained in
the case $n=r$, it seems very hard to get the relations for the
general case $n<r$.
\end{Rem}

\begin{Example} \label{presentation-n=r=2} We consider the special
case $n=r=2$. In this case, $\bfU_\vtg(2,2)=\xi_2(\bfU_\vtg(2))$ is
generated by $\bffke_i,\bffkf_i,\bffkk_i$ for $i=1,2$, and
${\boldsymbol {\bf Z}}_\vtg(2,2)={\mathbb
Q}(\up)[\sg_1,\sg_2,\sg_2^{-1}].$ Moreover, $\rho\in\End_{\mathbb
Q(\up)}(\bdOg^{\otimes 2})$ is defined by
$\rho(\omega_s\ot\omega_t)=\omega_{s-1}\ot\omega_{t-1}$ and
satisfies $\rho^2=\sg_2$. A direct calculation shows that
$$\rho=\frac{1}{\up+\up^{-1}}(\bffke_1^2+\bffke_1\bffke_2+\bffke_2\bffke_1+\bffke_2^2-
\sg_1).$$
 By \eqref{decomposition for tri parts n=r}, we obtain that
$${\bds \sS}_\vtg(2,2)^+=\bfU_\vtg(2,2)^+\oplus
\bigoplus_{m\geq 1}{\mathbb Q(\up)}\rho^m=\bfU_\vtg(2,2)^+\oplus
\bigoplus_{m\geq 1}{\mathbb Q(\up)}\sg_1^m.$$
 Similarly,
$$\rho^{-1}=\frac{1}{\up+\up^{-1}}(\bffkf_1^2+\bffkf_1\bffkf_2+\bffkf_2\bffkf_1+\bffkf
_2^2-\tau_1)$$
 and
$${\bds \sS}_\vtg(2,2)^-=\bfU_\vtg(2,2)^-\oplus
\bigoplus_{m\geq 1}{\mathbb Q(\up)}\rho^{-m}=\bfU_\vtg(2,2)^-\oplus
\bigoplus_{m\geq 1}{\mathbb Q(\up)}\tau_1^m.$$
 Hence, $\sg_1\tau_1\in\bfU_\vtg(2,2)$ and
$${\bds \sS}_\vtg(2,2)=\bfU_\vtg(2,2)\oplus
\bigoplus_{0\not=m\in\mbz}{\mathbb
Q(\up)}\rho^m=\bfU_\vtg(2,2)\oplus \bigoplus_{m\geq 1}\bigl({\mathbb
Q(\up)}\sg_1^m \oplus {\mathbb Q(\up)}\tau_1^m\bigr).$$

Furthermore, the following relations hold in ${\bds \sS}_\vtg(2,2)$:
\begin{equation}\label{n=r=2 relations}
\aligned
&\sg_1\bffke_1=\bffke_1\bffke_2\bffke_1,\;\sg_1\bffke_2=\bffke_2\bffke_1\bffke_2,\\
&\tau_1\bffkf_1=\bffkf_1\bffkf_2\bffkf_1,\;\tau_1\bffkf_2=\bffkf_2\bffkf_1\bffkf_2
,\\
 &\sg_1\bffkf_1=\bffke_1\bffke_2\bffkf_1+\bffkf_1\bffke_2\bffke_1,\;
 \sg_1\bffkf_2=\bffke_2\bffke_1\bffkf_2+\bffkf_2\bffke_1\bffke_2,\\
 &\tau_1\bffke_1=\bffkf_1\bffkf_2\bffke_1+\bffke_1\bffkf_2\bffkf_1,\;
 \tau_1\bffke_2=\bffkf_2\bffkf_1\bffke_2+\bffke_2\bffkf_1\bffkf_2,\\
 &\sg_1(\bffkk_i-\up)=(\bffke_1\bffke_2+\bffke_2\bffke_1)(\bffkk_i-\up),\;i=1,2,\\
 &\tau_1(\bffkk_i-\up)=(\bffkf_1\bffkf_2+\bffkf_2\bffkf_1)(\bffkk_i-\up),\;i=1,2.
\endaligned
\end{equation}

\end{Example}

\begin{Rem} \label{Counterexample}
Recall from \S2.2 that $\fD_\vtg(n)$ is the $\sZ$-subalgebra of $\dHallr$
generated by $K_i^{\pm1}$, $\big[{K_i;0\atop t}\big]$, $\sfz_s^+$,
$\sfz^-_s$, $(u_i^+)^{(m)}$ and $(u_i^-)^{(m)}$ for $i\in I$ and
$s,t,m\geq 1$. By Proposition \ref{afzrpm} and Corollary
\ref{central-elem-integral}, all $\xi_r(\sfz^\pm_s)$ lie in $\afSr$.
Thus, the $\mbq(\up)$-algebra homomorphism $\xi_r:\dHallr\to\afbfSr$
induces a $\sZ$-algebra homomorphism
$\xi_{r,\sZ}:\fD_\vtg(n)\to\afSr$. By Corollary
\ref{n>r-integral-surjectivity}, $\xi_{r,\sZ}$ is surjective in case
$n>r$. However, it is in general not surjective as shown below.

Let
$n=r=2$. By Theorem \ref{n,n case},
$${\bds \sS}_\vtg(2,2)=\bfU_\vtg(2,2)\oplus
\bigoplus_{0\not=m\in\mbz}{\mbq(\up)}\rho^m,$$
 where $\rho=\sum_{\bfa\in\mbn^2,\, \sg(\bfa)=2}{\bffke}_\bfa$ and
$\rho^{-1}=\sum_{\bfa\in\mbn^2,\, \sg(\bfa)=2}{\bffkf}_\bfa$; see
Lemma \ref{description-rho}. By definition, ${\bffke}_\bfa=\xi_r(\ti
u^+_\bfa)$ and ${\bffkf}_\bfa=\xi_r(\ti u^-_\bfa)$. It follows that
$\rho$ and $\rho^{-1}$ lie in $\sS_\vtg(2,2)$. Using an argument
similar to the proof of Corollary \ref{n>r-integral-surjectivity},
we can show that $\sS_\vtg(2,2)$ can be generated by
$\bffke_i^{(m)},\bffkf_i^{(m)}$ ($i=1,2, m\geq 1$), $\bfone_\la$
($\la\in\La_\vtg(2,2)$), $\rho$ and $\rho^{-1}$. Thus, we obtain
that
$$\sS_\vtg(2,2)=U_\vtg(2,2)\oplus
\bigoplus_{0\not=m\in\mbz}\sZ\rho^m,$$
 where $U_\vtg(2,2)$ is the $\sZ$-subalgebra of ${\bds \sS}_\vtg(2,2)$
generated by $\bffke_i^{(m)},\bffkf_i^{(m)},\bfone_\la$ for $i=1,2$,
$m\geq 1$, and $\la\in\La_\vtg(2,2)$.

On the other hand, the image ${\rm Im}\,\xi$ for
$\xi:=\xi_{2,\sZ}:\fD_\vtg(2)\to\sS_\vtg(2,2)$ is the
$\sZ$-subalgebra of $\sS_\vtg(2,2)$ generated by $\bffkk_i^{\pm1}$,
$\big[{\bffkk_i;0\atop t}\big]$, $\xi(\sfz_s^+)$, $\xi(\sfz^-_s)$,
$\bffke_i^{(m)}$ and $\bffkf_i^{(m)}$ for $i=1,2$ and $s,t,m\geq 1$.
Since $\sg_1=\xi(\sfz_1^+)$ and $\tau_1=\xi(\sfz_1^-)$, we have by
the example above that
$$\xi(\sfz_1^+)=-(\up+\up^{-1})\rho+(\bffke_1^2+\bffke_1\bffke_2+\bffke_2\bffke_1+\bffke_2^2)$$ and
$$\xi(\sfz_1^-)=-(\up+\up^{-1})\rho^{-1}+(\bffkf_1^2+\bffkf_1\bffkf_2+\bffkf_2\bffkf_1+\bffkf
_2^2).$$
 Furthermore, for each $s\geq 1$,
$$\xi(\sfz_s^\pm)=\left\{\begin{array}{ll}
                 (\xi(\sfz_1^\pm))^s-\sum_{t=1}^{(s-1)/2}\big({s\atop t}\big)\xi(\sfz_{s-2t}^\pm)\rho^{\pm 2t},\;\;& \text{if $s$ is odd;}\\
                 (\xi(\sfz_1^\pm))^s-\sum_{t=1}^{s/2}\big({s\atop t}\big)\xi(\sfz_{s-2t}^\pm)\rho^{\pm 2t},\;\;& \text{if $s$ is even}.
                 \end{array}\right.$$
 By \eqref{n=r=2 relations}, all the elements
$$\bffke_i\rho^{\pm 1},\, \bffkf_i\rho^{\pm 1},\, \rho^{\pm
1}\bffke_i, \,\rho^{\pm 1}\bffke_i$$
 lie in $\bfU_\vtg(2,2)$ for $i=1,2$. Thus,
an inductive argument implies that for each $s\geq 1$,
$$\xi(\sfz_s^\pm)\equiv (-1)^s(\up^s+\up^{-s})\rho^{\pm s}\;\,{\rm
mod}\;\,U_\vtg(2,2).$$
 From \eqref{n=r=2 relations} it also follows that
$$\bffkk_i\rho^{\pm 1}\equiv \up \rho^{\pm 1}\equiv \rho^{\pm 1}\bffkk_i\;\,{\rm
mod}\;\,\bfU_\vtg(2,2)\;\;\text{for $i=1,2$.}$$
 We conclude that neither of $\rho$ and $\rho^{-1}$ lies in ${\rm Im}\,\xi$.
Therefore, the $\sZ$-algebra homomorphism
$\xi=\xi_{2,\sZ}:\fD_\vtg(2)\to\sS_\vtg(2,2)$ is not surjective.
\end{Rem}

\section{The realization conjecture}

We now look at the realization problem for quantum affine
$\frak{gl}_n$. In the non-affine case,
Beilinson--Lusztig--MacPherson \cite{BLM} provided a construction
for quantum $\frak{gl}_n$ via quantum Schur algebras. In order to
generalize the BLM approach to the affine case, a modified BLM
approach has been introduced in \cite{DF09}. On the one hand,
this approach produces a realization of quantum $\frak{gl}_n$
which serves as a ``cut-down'' version of the original BLM
realization. On the other hand, most of the constructions in this
approach can be generalized to the affine case. The conjecture
proposed below is a natural outcome from this consideration; see \cite[5.2]{DF09}.

Let  $\afsK$ be the $\sZ$-algebra which has $\sZ$-basis
$\{[A]\}_{A\in\afThn}$ and multiplication defined by $[A]\cdot[B]=0$
if $\co(A)\neq\ro(B)$, and $[A]\cdot[B]$ as given in $\afbfSr$ if
$\co(A)=\ro(B)$ and $r=\sg(A)$, where $\co(A)$ (resp., $\ro(A)$) is
the column (resp., row) sum vector associated with $A$ (see \S1.1).
\index{$\co(A)$, column sum vector} \index{$\ro(A)$, row sum vector}
This algebra has no identity but infinitely many idempotents
$[\diag(\la)]$ for all $\la\in\afmbnn$. Moreover,
$\afsK\cong\oplus_{r\ge 0}\afSr$. (Note that $\sS_\vtg(n,0)=\sZ$
with  a basis labeled by the zero matrix.)

Let $\afbsK=\afsK_{\mbq(\up)}$ and let $\afhbsK$ be the vector space
of all formal (possibly infinite) $\mbq(\up)$-linear combinations
$\sum_{A\in\afThn}\beta_A[A]$ which have the following properties:
for any ${\bf x}\in\afmbnn$,
\begin{equation}\label{(F)}\text{the sets }
\aligned &\{A\in\afThn \mid  \beta_A\neq0,\ \ro(A)={\bf
x}\}\\&\{A\in\afThn \mid  \beta_A\neq0,\ \co(A)={\bf
x}\}\\
\endaligned \text{ are finite.}
\end{equation}
In other words, for $\la,\mu\in\mbn^n_\vtg$, both
$$\sum_{A\in\afThn}\beta_A [\diag(\la)]\cdot[A]\;\;\text{and}\;\;
\sum_{A\in\afThn}\beta_A [A]\cdot[\diag(\mu)]$$
 are finite sums. Thus, there is a well-defined multiplication on
$\afhbsK$ by setting
$$\bigl(\sum_{A\in\afThn}\al_A [A]\bigr)\cdot\bigl(\sum_{B\in\afThn}\beta_B [B]\bigr):
=\sum_{A,B\in\afThn}\al_A\beta_B [A][B].$$
 This defines an associative algebra structure on $\afhbsK$. This
algebra has an identity element
$\sum_{\la\in\mbn^n_\vtg}[\diag(\la)]$, the sum of all $[D]$ with
$D$ a diagonal matrix in $\afThn$, and contains $\afbsK$ as a
natural subalgebra without identity. Note that $\afhbsK$ is
isomorphic to the direct product algebra $\prod_{r\geq0}\afbfSr$,
and that the anti-involutions $\tau_r$ given in \eqref{taur} induce the algebra anti-involution
\begin{equation}\label{tau}
\tau:=\sum_r\tau_r:\afhbsK\lra\afhbsK,
\qquad\sum_{A\in\afThn}\beta_A[A]\longmapsto\sum_{A\in\afThn}\beta_A[\tA].
\end{equation}

For
$A\in\afThnpm$ and $\bfj\in\afmbzn$, define  $A(\bfj)\in \afhbsK$ by
$$
A(\bfj)=\sum_{\la\in\afmbnn}\up^{\la\centerdot\bfj}[A+\diag(\la)],
$$ where $\la\centerdot\bfj=\sum_{1\leq i\leq n}\la_ij_i$.
Clearly, $A(\bfj)=\sum_{r\ge0}A(\bfj,r)$.

Let $\bsA_\vtg(n)$ be the subspace of $\afhbsK$ spanned by $A(\bfj)$
for all $A\in\afThnpm$ and  $\bfj\in\afmbzn$. Since the structure
constants (with respect to the BLM basis) appeared in the
multiplication formulas given in Theorem \ref{multiplication
formulas in affine q-Schur algebra} are independent of $r$, we
obtain immediately the following similar formulas in $\bsA_\vtg(n)$.
Here, again for the simplicity of the statement, we extend the
definition of $A(\bfj)$ to all the matrices in $M_{n,\,\vtg}(\mbz)$
by setting $A(\bfj)=0$ if some off-diagonal entries of $A$ are
negative.

\begin{Thm}\label{multiplication formulas for A(j)}
Maintain the notations used in Theorem \ref{multiplication formulas
in affine q-Schur algebra}. The following multiplication formulas
hold in $\bsA_\vtg(n)$:
\begin{equation}\label{formula1 (1) in the affine q-Schur algebra}
\begin{split}
0(\bfj)A(\bfj')&=\up^{\bfj\centerdot\ro(A)}A(\bfj+\bfj'),\\
A(\bfj')0(\bfj)&=\up^{\bfj\centerdot\co(A)}A(\bfj+\bfj'),
\end{split}
\end{equation}
where $0$ stands for the zero matrix,

\begin{equation}\label{formula1 (2) in the affine q-Schur algebra}
\begin{split}
 \afE_{h,h+1}(\bfl)&A(\bfj)=\sum_{i<h;a_{h+1,i}\geq 1}\up^{f(i)}\ol{\left[\!\!\left[{a_{h,i}+1
\atop 1}\right]\!\!\right]}(A+\afE_{h,i}-\afE_{h+1,i})(\bfj+\afal_h)\\
&+\sum_{i>h+1;a_{h+1,i}\geq
1}\up^{f(i)}\ol{\left[\!\!\left[{a_{h,i}+1\atop 1
}\right]\!\!\right]}(A+\afE_{h,i}-\afE_{h+1,i})(\bfj)\\
&+\up^{f(h)-j_h-1}\frac{(A-\afE_{h+1,h})(\bfj+\afal_h)-(A-\afE_{h+1,h})(\bfj+\afbt_h)}{1-\up^{-2}}\\
&+\up^{f(h+1)+j_{h+1}}\ol{\left[\!\!\left[{a_{h,h+1}+1\atop 1
}\right]\!\!\right]}(A+\afE_{h,h+1})(\bfj),
\end{split}
\end{equation}
\begin{equation}\label{formula1 (3) in the affine q-Schur algebra}
\begin{split}
 \afE_{h+1,h}(\bfl)&A(\bfj)=\sum_{i<h;a_{h,i}\geq
 1}\up^{f'(i)}\ol{\left[\!\!\left[{a_{h+1,i}+1\atop 1
}\right]\!\!\right]}(A-\afE_{h,i}+\afE_{h+1,i})(\bfj)\\
&+\sum_{i>h+1;a_{h,i}\geq
1}\up^{f'(i)}\ol{\left[\!\!\left[{a_{h+1,i}+1\atop 1
}\right]\!\!\right]}(A-\afE_{h,i}+\afE_{h+1,i})(\bfj-\afal_h)\\
&+\up^{f'(h+1)-j_{h+1}-1}\frac{(A-\afE_{h,h+1})
(\bfj-\afal_h)-(A-\afE_{h,h+1})(\bfj+\afbt_h)}{1-\up^{-2}}\\
&+\up^{f'(h)+j_{h}}\ol{\left[\!\!\left[{a_{h+1,h}+1\atop 1
}\right]\!\!\right]}(A+\afE_{h+1,h})(\bfj).
\end{split}
\end{equation}
\end{Thm}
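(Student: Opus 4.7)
The plan is to deduce the identities in $\bsA_\vtg(n)$ directly from the corresponding identities in each $\afbfSr$ (Theorem \ref{multiplication formulas in affine q-Schur algebra}) via the algebra embedding $\bsA_\vtg(n)\hookrightarrow\afhbsK\cong\prod_{r\ge0}\afbfSr$, $A(\bfj)\longmapsto\bigl(A(\bfj,r)\bigr)_{r\ge0}$. First I would verify that each $A(\bfj)$ really defines an element of $\afhbsK$, i.e., satisfies the finiteness condition \eqref{(F)}. This is straightforward: if $A(\bfj)=\sum_{\la\in\afmbnn}\up^{\la\centerdot\bfj}[A+\diag(\la)]$, then fixing $\ro$ or $\co$ determines $\la$ uniquely in terms of $\ro(A)$ or $\co(A)$, so condition \eqref{(F)} holds trivially. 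Hence the assignment $A(\bfj)\mapsto\bigl(A(\bfj,r)\bigr)_{r\ge0}$ is a well-defined linear map into $\prod_{r\ge0}\afbfSr$, and $\bsA_\vtg(n)$ makes sense as a subspace of $\afhbsK$.

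Next I would exploit the fact that the product in $\afhbsK$ is componentwise under the identification $\afhbsK\cong\prod_{r\ge0}\afbfSr$: this follows from the convention $[A]\cdot[B]=0$ whenever $\co(A)\ne\ro(B)$, so that a product in $\afhbsK$ restricted to the $r$-th component is the product in $\afbfSr$ for the corresponding matrices of weight $r$. Consequently, to prove a given multiplicative identity $X=Y$ in $\bsA_\vtg(n)\subseteq\afhbsK$, it suffices to establish its $r$-th component $X_r=Y_r$ in $\afbfSr$ for every $r\ge0$.

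The decisive observation is that the structure constants on the right-hand sides of \eqref{formula1 (1) in the affine q-Schur algebra}--\eqref{formula1 (3) in the affine q-Schur algebra} in Theorem \ref{multiplication formulas in affine q-Schur algebra} depend only on the entries of $A$, on $\bfj,\bfj'$ and on the fixed index $h$, but \emph{not} on $r$: the scalars $\up^{f(i)}$, $\up^{f'(i)}$, $\up^{\bfj\centerdot\ro(A)}$, the quantum integers $\ol{\left[\!\!\left[{a_{h,i}+1\atop 1}\right]\!\!\right]}$, and the denominator $1-\up^{-2}$ are all manifestly $r$-independent. Therefore, applying Theorem \ref{multiplication formulas in affine q-Schur algebra} componentwise and then summing the resulting identities over all $r\ge0$ yields exactly the claimed formulas, provided the sums reassemble correctly.

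The last point is where the main (mild) technicality lies: one must check that, for each term $B(\bfk)$ appearing on the right, the family $\bigl(B(\bfk,r)\bigr)_{r\ge0}$ obtained from the $r$-th component of the identity in $\afbfSr$ matches the definition \linebreak $B(\bfk)=\sum_{r\ge0}B(\bfk,r)$. This is essentially a bookkeeping verification: the condition $\sg(A)\le r$ implicit in the definition of $A(\bfj,r)$ is automatically handled by the convention $A(\bfj,r)=0$ when $\sg(A)>r$, so summing over $r$ produces the full element $A(\bfj)$. Modulo this routine check, the theorem follows. I expect the only genuine obstacle to be a clean statement of the componentwise product, which requires matching the grading by $\sg(A)=r$ on both sides of each identity; once this is transparent, the proof amounts to quoting Theorem \ref{multiplication formulas in affine q-Schur algebra} and invoking the $r$-independence of its structure constants.
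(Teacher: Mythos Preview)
Your proposal is correct and follows exactly the paper's approach: the paper simply notes that since the structure constants in Theorem \ref{multiplication formulas in affine q-Schur algebra} are independent of $r$, the formulas in $\bsA_\vtg(n)$ follow immediately from the identity $A(\bfj)=\sum_{r\ge0}A(\bfj,r)$. Your write-up just spells out the routine verifications (the finiteness condition \eqref{(F)} and the componentwise nature of the product) that the paper leaves implicit.
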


There is a parallel construction in the non-affine case as discussed
in \cite[\S8]{DF09}. By removing the sub/superscripts $\vtg$, we
obtain the corresponding objects and multiplication formulas in this
(non-affine) case. Since the quantum $\mathfrak{gl}_n$, denoted by
$\bfU(n)$, is generated by $E_i,F_i$ and $K_j^{\pm1}$ ($1\leq i\leq
n-1,1\leq j\leq n$), the corresponding multiplication formulas
define an algebra isomorphism $\bfU(n)\to{\mathfrak A}(n)$ sending
$E_h$ to $E_{h,h+1}(\bfl)$, $F_h$ to $E_{h+1,h}(\bfl)$ and $K_j\to
0(\boldsymbol e_j)$. In this way, we obtain a realization of the
quantum $\mathfrak{gl}_n$. This is the so-called modified approach
introduced in \cite{DF09}. In this approach, the algebra
$\bds\sK(n)$ as a direct sum of all quantum Schur algebras
$\bds\sS(n,r)$ is just a homomorphic image of the BLM algebra
$\mathbf K$ constructed in \cite{BLM}, which is isomorphic to the
modified quantum group $\dot\bfU(n)$. However, since we avoid using
the stabilization property (see \cite[\S4]{BLM}) required in the
construction of $\mathbf K$, it can be generalized to obtain the
affine construction above.

The $\mbq(\up)$-space $\bsA_\vtg(n)$ is a natural candidate for a
realization of $\dHallr\cong\bfU(\afgl)$. Since $\dHallr$ has
generators other than simple generators, the formulas in Theorem
\ref{multiplication formulas for A(j)} are not sufficient to show
that $\bsA_\vtg(n)$ is a subalgebra. However, as seen in \cite[5.4
\&7.5]{DF09}, these formulas are sufficient to embed the subalgebras
$\bfU_\vtg(n)$, $\Hallpi$ and $\Hallmi$ into $\bsA_\vtg(n)$. Thus,
it is natural to formulate the following conjecture.

\begin{Conj}\label{realization conjecture} The $\mbq(\up)$-space
$\bsA_\vtg(n)$ is a subalgebra of $\afhbsK$ which is isomorphic to
 $\dHallr$, the quantum $\afgl$.\index{realization conjecture}
\end{Conj}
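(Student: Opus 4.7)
The strategy is to show that the epimorphisms $\xi_r$ of Theorem \ref{surjective-dHall-aff} assemble into a single algebra homomorphism $\xi\colon\dHallr\to\afhbsK$ whose image is exactly $\bsA_\vtg(n)$ and which is injective. Since distinct summands of the decomposition $\bigoplus_r\afbfSr\subseteq\afhbsK$ annihilate each other under the convolution product, the assignment $x\mapsto\sum_{r\ge 0}\xi_r(x)$ is automatically a ring homomorphism once it is well-defined, i.e.\ once each image satisfies the finiteness condition \eqref{(F)}. For a PBW monomial $x=\tilde u_A^+K_\mu\tilde u_B^-$ this is immediate from Theorem \ref{xirl} and the first pair of formulas in Theorem \ref{multiplication formulas for A(j)}: the $\afbfSr$-component of $\xi(x)$ is supported on matrices whose off-diagonal part is determined by $A$ and $\tB$ and whose diagonal is controlled by $\mu$, so each fixed row sum (or column sum) in $\afmbnn$ is attained for at most one value of $r$.

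Given the homomorphism $\xi$, the proof splits into three parts. For the \emph{containment} $\xi(\dHallr)\subseteq\bsA_\vtg(n)$, Theorem \ref{xirl} immediately sends the Chevalley generators $E_i,F_i,K_i^{\pm 1}$ of Theorem \ref{presentation dHallAlg} to $\afE_{i,i+1}(\bfl),\afE_{i+1,i}(\bfl),0(\pm\bse_i)\in\bsA_\vtg(n)$, and formula \eqref{expression sfx sfy} together with Theorem \ref{xirl} shows that each central generator $\sfz_s^\pm$ also maps into $\bsA_\vtg(n)$; once $\bsA_\vtg(n)$ is known to be a subalgebra, containment follows. For \emph{surjectivity}, Proposition \ref{indecomposable basis} tells us that $\dHallr$ is generated, together with its $0$-part, by the Chevalley generators and the homogeneous indecomposables $\tilde u_{\afE_{h,h+sn}}^{\pm}$ for $s\ge 1$, whose $\xi$-images are $\afE_{h,h+sn}(\bfl)$ and $\afE_{h+sn,h}(\bfl)$; combining these with the $0(\bfj)$ and applying the triangular relation of Proposition \ref{triangular formula in A(bfj,r)} inductively along the ordering $\prec$ expresses every $C(\bfk)$ with $C\in\afThnpm$ as a $\mbq(\up)$-linear combination of products of such elements, so $\xi(\dHallr)\supseteq\bsA_\vtg(n)$. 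For \emph{injectivity}, if $\xi(x)=0$ then $\xi_r(x)=0$ for every $r$; expanding $x$ as a finite sum in the PBW basis $\{\tilde u_A^+K_\mu\tilde u_B^-\}$ of $\dHallr$ and combining Theorem \ref{PBW basis of affine q-Schur algebras} with the triangular relation, one sees that for $r$ large relative to the PBW support of $x$ the images of the PBW monomials appearing in $x$ are linearly independent in $\afbfSr$, forcing every PBW coefficient of $x$ to vanish.

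The main obstacle is the assertion that \emph{$\bsA_\vtg(n)$ is closed under the convolution product of $\afhbsK$}, equivalently, the resolution of Problem \ref{Problem for MF}. Theorem \ref{multiplication formulas for A(j)} provides explicit expansions of the simple-root products $\afE_{h,h\pm 1}(\bfl)\cdot A(\bfj)$ as finite $\mbq(\up)$-linear combinations of elements $B(\bfi)\in\bsA_\vtg(n)$, but this covers only the Chevalley generators; to obtain closure one also needs analogous formulas for the homogeneous indecomposable generators $\afE_{h,h+sn}(\bfl)\cdot A(\bfj)$ and $\afE_{h+sn,h}(\bfl)\cdot A(\bfj)$ for every $s\ge 1$. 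The expected route, paralleling the programme that will be carried out in Chapter 6 in the classical case $\upsilon=1$, is to realise these products as convolutions of $G$-invariant characteristic functions of orbits on pairs of cyclic flags, to stratify the relevant fibres by the relative position of an intermediate flag with respect to the lattice chain defining $A$, and to enumerate each stratum while tracking the $\up$-powers tied to orbit dimensions as in \eqref{sqAdA}. The output should be a finite $\mbq(\up)$-linear combination of elements $B(\bfi)$ with coefficients structurally analogous to those of the formula for $\afE_{h,h+1}(\bfl)A(\bfj)$ in Theorem \ref{multiplication formulas for A(j)} but adapted to the longer ``ladder'' matrices $\afE_{h,h+sn}$; once these formulas are established, closure of $\bsA_\vtg(n)$ is immediate and the three steps above combine to yield the realization conjecture.
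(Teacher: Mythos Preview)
The statement is a \emph{conjecture}, not a theorem: the paper does not prove it in the quantum case and explicitly identifies the missing ingredient as the open Problems \ref{Problem for MF} and \ref{Prob-realization}. Your proposal is therefore not being compared against a proof in the paper, because there is none; only the classical ($\up=1$) version is established, in Theorem \ref{classical BLM basis}.

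Your outline is honest about this. You correctly reduce the conjecture to the closure of $\bsA_\vtg(n)$ under multiplication, and you correctly observe that Theorem \ref{multiplication formulas for A(j)} handles only the simple generators $\afE_{h,h\pm 1}(\bfl)$, leaving the homogeneous indecomposable generators $\afE_{h,h+sn}(\bfl)$ for $s\ne 0$ untreated. But your final paragraph does not resolve this: describing an ``expected route'' via convolution and orbit stratification is precisely what the paper also suggests but cannot carry out in the quantum setting. The difficulty is genuine---in the classical case the required formula is Proposition \ref{MFforSBE}(2), proved by an explicit coset computation in \S6.4, and the paper poses finding its quantum analogue as Problem \ref{Prob-realization}. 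So your proposal is a correct strategic reduction, not a proof; the gap you name is the same gap the paper names, and it remains open.

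One smaller point: your injectivity argument (``for $r$ large the PBW images are linearly independent'') needs more care. The PBW basis of $\dHallr$ is indexed by triples $(A,\alpha,B)$ with $\alpha\in\mbz I$, while the basis $\{\ttp_C\}$ of $\afbfSr$ in Theorem \ref{PBW basis of affine q-Schur algebras} is indexed by $C\in\afThnr$; relating the image of $\tilde u_A^+K_\alpha\tilde u_B^-$ to this basis requires tracking how $K_\alpha$ interacts with the idempotents $\bfone_\lambda$, and distinct $\alpha$ can collide after projection to a single $r$. In the classical proof (Theorem \ref{classical BLM basis}) this is handled by the Vandermonde argument of Lemma \ref{Vandermonde determinant}, which has no direct analogue in your sketch.
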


 As seen in \S1.4, there are three types of extra generators for $\dHallr$. We expect to
 derive more multiplication formulas between these extra generators and the BLM basis elements
 for $\afbfSr$ (see, e.g., Problem \ref{Prob-realization}) and, hence, to prove the conjecture.
As a first test, we will establish the conjecture for the classical
($\up=1$) case in the next chapter. We end this section with a
formulation of the conjecture in this case.

 Let $\sZ\to\mbq$ be the
specialization by sending $\up$ to 1, and let $\afhsKq$ be the
vector space of all formal (possibly infinite) $\mbq$-linear
combinations $\sum_{A\in\afThn}\beta_A[A]_1$ satisfying \eqref{(F)}.
Here $[A]_1$ denote the image of $[A]$ in $\afSr_\mbq$. For any
$r>0$, $A\in\afThnpm$ and ${\bf j}\in \afmbnn$, define in
$\afSr_\mbq$ (cf. \cite[(3.0.3)]{Fu09})
\begin{equation}\label{A[j,r]}
\begin{split}
A[{\bf j},r]&=\begin{cases} \sum_{\la\in\La_\vtg(n,r-\sg(A))}
\la^\bfj[A+\diag(\la)]_1,&\text{ if }\sg(A)\leq r;\\
0,&\text{ otherwise.}\end{cases}\\
\end{split}
\end{equation}
where $\la^\bfj=\prod_{i=1}^n\la_i^{j_i}$. (These elements play a role similar to the elements
$A(\bfj,r)$ for affine quantum Schur algebras. See \S6.2 below for more discussion of the elements.)
Let
\begin{equation}\label{A[j]}
A[{\bf j}]=\sum_{r=0}^\infty A[\bfj,r]\in \afhsKq.
\end{equation}
Then the classical version of Conjecture \ref{realization conjecture} claims that
the $\mbq$-span $\fA_{\vtg,\mbq}[n]$ of all $A[\bfj]$ is a subalgebra
of $\afhsKq$ which is isomorphic to the algebra
$$\overline{\fD_\vtg(n)}_\mbq:=\fD_\vtg(n)_\mbq/\lan K_i-1\mid 1\leq i\leq
n\ran,$$
where $\fD_\vtg(n)$ is the integral form defined at the end of \S2.2. We will prove in \S6.1 that
the specialized algebra $\overline{\fD_\vtg(n)}_\mbq$ is isomorphic to the universal enveloping algebra
$\sU(\afgl)$ over $\mbq$ introduced in \S1.1.

\section{Lusztig's transfer maps and semisimple generators}

In \cite{L00} Lusztig defined an algebra homomorphism
$\phi_{r,r-n}:\afbfS(n,r)\ra \afbfS(n,r-n)$ for $r\geq n$, called
the {\it transfer map}\index{transfer map}. In this section, we
describe the images of $A(\bfl,r)$ and $\tA(\bfl,r)$ under
$\phi_{r,r-n}$ for all $A=A_\la=\sum_{i=1}^n\la_iE^\vtg_{i,i+1}
\in\afTh(n)$.

Throughout this section, for a prime power $q$,
$\sS_\vtg(n,r)_{\mbc}=\sS_\vtg(n,r)\otimes_{\sZ}\mbc$ always denotes
the specialization of $\sS_\vtg(n,r)$ by at $\up=\sqrt{q}$. In other
words, $\sS_\vtg(n,r)_{\mbc}$ is a $\mbc$-vector space with a basis
$\{e_A=e_A\ot 1\mid A\in\afThnr\}$; see Definition \ref{generic
affine quantum Schur algebra}.

We first recall from \cite[\S1]{L00} the definition of
$\phi_{r,r-n}$.

 As in \S3.1, let $V$ be a free
$\field[\ep,\ep^{-1}]$-module of rank $r$ and let $\afFn={\scr
F}_{\vtg,n}(V)$ be the set of all cyclic flags
$\bfL=(L_i)_{i\in\mbz}$, where each $L_i$ is a lattice in $V$ such
that $L_{i-1}\han L_i$ and $L_{i-n}=\ep L_i$ for all $i\in\mbz$. If
$\field$ is the finite field with $q$ elements, then we use the
notation $\scrY=\afFn(q)$ for $\afFn$ and $\sS_\vtg(n,r)_{\mbc}$ is
identified with $\mbc_G(\scrY\times\scrY)$, where $G=G_V$ is the
group of automorphisms of the $\field[\ep,\ep^{-1}]$-module $V$.
Given $\bfL=(L_i),\widetilde\bfL=(\widetilde L_i)\in\afFn$ with
$\widetilde\bfL\subseteq\bfL$, i.e., $\widetilde L_i\subseteq L_i$
for all $i\in\mbz$, $\bfL/\widetilde\bfL$ can be viewed as a
nilpotent representation of the cyclic quiver $\triangle(n)$; see
\S3.6.

As in \eqref{semisimple module}, for each $\la=(\la_i)_{i\in\mbz}\in
\mbn_\vtg^n$, let $S_\la=\oplus_{1\leq i\leq n}\la_iS_i$, where
$S_i$ denotes the simple representation of $\triangle(n)$
corresponding to the vertex $i$, and set $A_\la=\sum_{1\leq i\leq
n}\la_i E^\vtg_{i,i+1}\in\Th_\vtg(n)$.

For $A\in\afThnpm$ and $\bfj\in\afmbzn$, let $A(\bfj,r)\in
\afSr$ be the BLM basis elements defined as in \eqref{def-A(j,r)}.
 We also view $A(\bfj,r)$ as an element in $\sS_\vtg(n,r)_{\mbc}$
via specializing $\up$ to $\sqrt{q}$.

By Corollary \ref{A(bfj,r) acts on (bfL,bfL')}, for all
$\la\in\mbn_\vtg^n$ and $\bfL=(L_i),\widetilde\bfL=(\widetilde
L_i)\in\afFn$, we have
\begin{equation}\label{action1}
A_\la(\bfl,r)(\bfL,\widetilde\bfL)=\left\{\begin{array}{ll}
            \up^{-c(\bfL,\widetilde\bfL)},\;\;&\text{if $\widetilde\bfL\subseteq\bfL$
            and $\bfL/\widetilde\bfL\cong S_\la$;}\\
            0, & \text{otherwise,}\end{array}\right.
\end{equation}
 where
$$\aligned
c(\bfL,\widetilde\bfL)&=\sum_{1\leq i\leq n}\dim (L_{i+1}/\widetilde
L_{i+1})
\bigl(\dim (\widetilde L_{i+1}/\widetilde L_i)-\dim (L_i/\widetilde L_i)\bigr)\\
&=\sum_{{1\leq i\leq n}}\dim (L_i/\widetilde L_i) \bigl(\dim
(L_i/L_{i-1})-\dim (L_i/\widetilde L_i)\bigr).
\endaligned$$
Moreover,
\begin{equation}\label{action2}
0(\la,r)(\bfL,\widetilde\bfL)=\left\{\begin{array}{ll}
            \up^{\la\centerdot\mu},\;\;&\text{if $\widetilde\bfL=\bfL$;}\\
            0, & \text{otherwise,}\end{array}\right.
\end{equation}
 where $\mu=(\mu_i)\in\mbn_\vtg^n$ with $\mu_i=\dim L_i/L_{i-1}$.

Now let $r,r',r''\geq 0$ satisfy $r=r'+r''$. Let $V''$ be a direct
summand of $V$ of rank $r''$. Set $V'=V/V''$. Define $\pi':{\scr
F}_{\vtg,n}(V)\ra {\scr F}_{\vtg,n}(V'), \bf L\mapsto\bf L'$ (resp.,
$\pi'':{\scr F}_{\vtg,n}(V)\ra {\scr F}_{\vtg,n}(V''), \bf
L\mapsto\bf L''$) by setting
$$L_i'=(L_i+V'')/V''\;\; \text{(resp., $L_i''=L_i\cap V''$)}$$
for all $i\in\mbz$. Thus, for each $i\in\mbz$, there is an exact
sequence
$$0\lra L_i''/L_{i-1}''\lra L_i/L_{i-1}\lra L_i'/L_{i-1}'\lra 0$$
of $\field$-vector spaces.

Let $\field$ be the finite field with $q$ elements. Following
\cite[1.2]{L00}, let $\Xi:\sS_\vtg(n,r)_{\mbc}\ra
\sS_\vtg(n,r')_{\mbc}\otimes \sS_\vtg(n,r'')_{\mbc}$ be the map
defined by\footnote{The map $\Xi$ is denoted by $\Dt$ in
\cite{L00}.}
$$\Xi(f)(\bfL',\widetilde\bfL',\bfL'',\widetilde\bfL'')=
\sum_{{\widetilde\bfL}\in{\scr
F}_{\vtg,n}(V)}f(\bfL,\widetilde\bfL)$$
 for $f\in \sS_\vtg(n,r)_{\mbc}$, where
$\bfL',\widetilde\bfL'\in {\scr F}_{\vtg,n}(V')$,
$\bfL'',\widetilde\bfL''\in{\scr F}_{\vtg,n}(V'')$, and $\bfL$ is a
fixed element in ${\scr F}_{\vtg,n}(V)$ satisfying
$\pi'(\bfL)=\bfL'$ and $\pi''(\bfL)=\bfL''$, and the sum is taken
over all $\widetilde\bfL\in {\scr F}_{\vtg,n}(V)$ such that
$\pi'(\widetilde\bfL)=\widetilde\bfL'$ and
$\pi''(\widetilde\bfL)=\widetilde\bfL''$.

For two lattices $L,L'$ in $V$, define
$$(L:L')=\dim_\field (L/\widetilde L)-\dim_\field(L'/\widetilde L),$$
 where $\widetilde L$ is a lattice contained in $L\cap L'$. For
$\bfL=(L_i),\bfL'=(L'_i)\in{\scr F}_{\vtg,n}(V)$, set
$$(\bfL:\bfL')=\sum_{i=1}^n (L_i:L_i').$$
 We finally define a $\mbc$-linear isomorphism
$\xi:\sS_\vtg(n,r)_{\mbc}\ra\sS_\vtg(n,r)_{\mbc}$ by
$$\xi(f)(\bfL,\bfL')=q^{(\bfL:\bfL')/2}f(\bfL,\bfL')$$
for $f\in \sS_\vtg(n,r)_{\mbc}$ and $\bfL,\bfL'\in{\scr
F}_{\vtg,n}(V)$. Then $\xi$ is an algebra isomorphism; see
\cite[1.7]{L00}.

As before, let $\dt=(\dt_i)\in\mbn^n_\vtg$ with all $\dt_i=1$. For
each $A=(a_{i,j})\in\afTh(n,n)$, if $\ro(A)\not=\dt$ or
$\co(A)\not=\dt$, then we set ${\rm sgn}_A=0$. Suppose now
$\ro(A)=\co(A)=\dt$. Then there is a unique permutation
$w:\mbz\ra\mbz$ such that $a_{i,j}=\dt_{w(j),i}$. In this case, we
set ${\rm sgn}_A=(-1)^{{\rm Inv}(w)}$, where
$${\rm Inv}(w)=|\{(i,j)\in\mbz^2\mid 1\leq i\leq n, i<j,w(i)>w(j)\}|$$
(cf. \S3.2).
 By \cite[1.8]{L00}, there is an algebra homomorphism
$\chi:\sS_\vtg(n,n)_{\mbc}\ra\mbc$ such that $\chi(e_{A})={\rm
sgn}_A$. In particular, for $1\leq i\leq n$ and $\la\in\mbz^n_\vtg$,
$$\chi(E_{i,i+1}^\vtg(\bfl,n))=0,\;\chi(E_{i+1,i}^\vtg(\bfl,n))=0,\;
\chi(0(\la,n))=q^{(\la\centerdot\dt)/2}.$$
 Suppose $r\geq n$ and let $\phi^q_{r,r-n}$ denote the composition
$$\sS_\vtg(n,r)_{\mbc}\stackrel{\Xi}{\lra}
\sS_\vtg(n,r-n)_{\mbc}\otimes
\sS_\vtg(n,n)_{\mbc}\stackrel{\xi\otimes\chi}{\lra}\sS_\vtg(n,r-n)_{\mbc}\otimes
\mbc=\sS_\vtg(n,r-n)_{\mbc}.$$

Lusztig \cite{L00} showed that for each pair $A\in \afThnr$ and
$B\in\Th_\vtg(n,r-n)$, there is a uniquely determined polynomial
$f_{A,B}(\up,\up^{-1})\in\sZ=\mbz[\up,\up^{-1}]$ such that for each
finite field $\field$ of $q$ elements,
$$\phi_{r,r-n}^q(q^{-d_A/2}e_{A})=\sum_{B\in \Th_\vtg(n,r-n)}
f_{A,B}(q^{1/2},q^{-1/2})q^{-d_B/2}e_{B}.$$
 This gives a $\mathbb Q(\up)$-algebra homomorphism
$\phi_{r,r-n}:\afbfS(n,r)\ra\afbfS(n,r-n)$ defined by setting
$$\phi_{r,r-n}([A])=\sum_{B\in
\Th_\vtg(n,r-n)}f_{A,B}(\up,\up^{-1})[B],$$
 which is called the {\it transfer map}. Moreover,
$\phi_{r,r-n}$ takes
$$\aligned
E_{i,i+1}^\vtg(\bfl,r)&\longmapsto E_{i,i+1}^\vtg(\bfl,r-n),\\
E_{i+1,i}^\vtg(\bfl,r)&\longmapsto E_{i+1,i}^\vtg(\bfl,r-n),\\
0(\la,r)&\longmapsto \up^{\la\centerdot\dt}0(\la,r-n)
\endaligned$$
 for $i\in\mbz$ and $\la\in\mbz^n_\vtg$.

By the definition, for each $i\in I$, we have
$$\phi_{r,r-n}\xi_r(K_i)=\up\, 0(\afbse_i,n-r)\not=0(\afbse_i,n-r)=\xi_{r-n}(K_i).$$
 Thus, $\phi_{r,r-n}\xi_r\not=\xi_{r-n}$. However, if we view
$\bfU(\afsl)$ as a subalgebra of $\dHallr$ generated by $E_i,F_i,
\ti K_i$ for $i\in I$ and denote the restriction of $\xi_r$ (resp.,
$\xi_{r-n}$) to $\bfU(\afsl)$ still by $\xi_r$ (resp., $\xi_{r-n}$),
then $\phi_{r,r-n}\xi_r=\xi_{r-n}$, i.~e., the following triangle
commutes
\begin{center}
\begin{pspicture}(0,-0.1)(4,2.2)
\psset{xunit=.8cm,yunit=.7cm} \uput[l](1,1.4){$\bfU(\afsl)$}
\uput[u](4.5,2){$\afbfS(n,r)$} \uput[d](4.8,0.8){$\afbfS(n,r-n)$}
\uput[r](4.65,1.4){$\phi_{r,r-n}$} \psline{->}(4.6,2)(4.6,0.75)
\psline{->}(1.1,1.5)(3.5,2.4)
\psline{->}(1.1,1.3)(3.5,0.45)\uput[u](2.4,2){$\xi_r$}
\uput[d](2.4,0.95){$\xi_{r-n}$}
\end{pspicture}
\end{center}
 As a result, there is an induced algebra homomorphism
 $\bfU(\afsl)\ra\underset{\longleftarrow}\lim\afbfS(n,n+m)$; see \cite[3.4]{L00}.
As seen above (a fact pointed out by Lusztig in \cite{L00}), if
$\bfU(\afsl)$ is replaced by $\bfU_\vtg(n)$, then the diagram above
is not commutative. Hence, the homomorphism
$\bfU(\afsl)\ra\underset{\longleftarrow}\lim\afbfS(n,n+m)$ cannot be
extended to $\bfU_\vtg(n)$, nor to $\dHallr$. However, it is natural
to ask if this homomorphism can be extended to the double
Ringel--Hall algebra $'\dHallr$ introduced in Remarks
\ref{subalgebra quantum gl}(3), which has the same 0-part as
$\bfU(\afsl)$. We now show below that this is not the case.

It is known from \S3.7 that $\afbfS(n,r)$ is generated by the
$A_\la(\bfl,r)$, $\tA_\la(\bfl,r)$ and $0(\afbse_i,r)$ for
$\la\in\mbn^n_\vtg$ and $i\in I$. In the following we describe the
images of $A_\la(\bfl,r)$ and $\tA_\la(\bfl,r)$ under
$\phi_{r,r-n}$. As in \S2.1, for $\la=(\la_i)\in\mbz^n_\vtg$, define
$\tau\la\in \mbz_\vtg^n$ by setting $(\tau\la)_i=\la_{i-1}$ for all
$i\in \mbz$.

\begin{Prop} Keep the notation above. Let $\field$ be the finite field
with $q$ elements. For each $\la=(\la_i)\in\mbn_\vtg^n$, we have
$$\Xi(A_\la(\bfl,r))=\sum_{{\mu,\nu\in\mbn^n_\vtg}\atop{\mu+\nu=\la}}
q^{(\mu\centerdot\tau\nu-\tau\mu\centerdot\nu)/2}A_\mu(\bfl,r')0(\nu,r')\otimes
A_\nu(\bfl,r'')0(-\mu,r'').$$
 Dually, we have
$$\Xi(\tA_\la(\bfl,r))=\sum_{{\mu,\nu\in\mbn^n_\vtg}\atop{\mu+\nu=\la}}
q^{(\mu\centerdot\tau\nu-\tau\mu\centerdot\nu)/2}\cdot\tA_\mu(\bfl,r')0(\tau\nu,r')\otimes
\tA_\nu(\bfl,r'')0(-\tau\mu,r'').$$
\end{Prop}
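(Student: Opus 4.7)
The plan is to evaluate both sides of the claimed identity pointwise on quadruples $(\bfL',\widetilde\bfL',\bfL'',\widetilde\bfL'')\in{\scr F}_{\vtg,n}(V')^2\times{\scr F}_{\vtg,n}(V'')^2$, relying on the explicit formulas \eqref{action1} and \eqref{action2}. First I would fix a lift $\bfL\in{\scr F}_{\vtg,n}(V)$ with $\pi'(\bfL)=\bfL'$ and $\pi''(\bfL)=\bfL''$, and recall that by definition
$$\Xi(A_\la(\bfl,r))(\bfL',\widetilde\bfL',\bfL'',\widetilde\bfL'')
=\sum_{\widetilde\bfL}A_\la(\bfl,r)(\bfL,\widetilde\bfL),$$
the sum ranging over $\widetilde\bfL$ with $\pi'(\widetilde\bfL)=\widetilde\bfL'$ and $\pi''(\widetilde\bfL)=\widetilde\bfL''$. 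Only those $\widetilde\bfL\subseteq\bfL$ with $\bfL/\widetilde\bfL\cong S_\la$ contribute, each with weight $q^{-c(\bfL,\widetilde\bfL)/2}$.

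Next, I would exploit the short exact sequence $0\to\bfL''/\widetilde\bfL''\to\bfL/\widetilde\bfL\to\bfL'/\widetilde\bfL'\to 0$ of nilpotent representations of $\triangle(n)$. Since $S_\la$ is semisimple, any extension in this family must itself be semisimple, which forces $\bfL'/\widetilde\bfL'\cong S_\mu$ and $\bfL''/\widetilde\bfL''\cong S_\nu$ for a unique decomposition $\la=\mu+\nu$ in $\mbn_\vtg^n$; moreover the semisimplicity of $\bfL/\widetilde\bfL$ is equivalent to $L_{i-1}\subseteq\widetilde L_i$ for every $i$, so each $\widetilde L_i$ is determined by a subspace of $L_i/L_{i-1}$ of codimension $\la_i$. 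Conversely, once the decomposition $\la=\mu+\nu$ is fixed, the contributing $\widetilde\bfL$ are exactly the lifts of the pair $(\widetilde\bfL',\widetilde\bfL'')$ sitting inside this range; parametrising these lifts amounts to choosing, in each graded piece $L_i/L_{i-1}$ and compatibly with the splitting induced by $V''\hookrightarrow V$, a complement to a prescribed subspace. A standard count over $\field_q$ shows that the fiber has cardinality $q^{f(\mu,\nu)}$ for an explicit bilinear form $f$ in $\mu$ and $\nu$.

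Then I would decompose the exponent $c(\bfL,\widetilde\bfL)=\sum_i\dim(L_i/\widetilde L_i)(\dim(L_i/L_{i-1})-\dim(L_i/\widetilde L_i))$ using $\dim(L_i/L_{i-1})=\dim(L'_i/L'_{i-1})+\dim(L''_i/L''_{i-1})$ and $\dim(L_i/\widetilde L_i)=\mu_i+\nu_i$, expressing it as $c(\bfL',\widetilde\bfL')+c(\bfL'',\widetilde\bfL'')$ plus mixed terms involving $\mu$ and $\nu$. Combining this with the fiber count $q^{f(\mu,\nu)}$, I expect the mixed contribution to collapse precisely to $q^{(\mu\centerdot\tau\nu-\tau\mu\centerdot\nu)/2}$ after cancellation, which is the Euler-form-like expression arising from the asymmetry of the cyclic orientation on $\triangle(n)$. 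The right-hand side will then be matched using \eqref{action1} for the $A_\mu$, $A_\nu$ factors and \eqref{action2} for the diagonal factors $0(\nu,r')$, $0(-\mu,r'')$, which pick up weights $\up^{\nu\centerdot\ro(\widetilde\bfL')}=\up^{\nu\centerdot(\ro(\bfL')-\mu)}$ and $\up^{-\mu\centerdot\ro(\widetilde\bfL'')}$; since the overall evaluation only sees $\bfL'/\widetilde\bfL'\cong S_\mu$ and $\bfL''/\widetilde\bfL''\cong S_\nu$, these diagonal weights fix the shift needed to recover the claimed exponent. The dual formula for $\tA_\la(\bfl,r)$ follows from the same argument with the roles of $\bfL$ and $\widetilde\bfL$ swapped, accounting for the replacement of $c(\bfL,\widetilde\bfL)$ by $a(\widetilde\bfL,\bfL)$ and the appearance of $\tau\mu,\tau\nu$ in the diagonal shifts, or alternatively by transporting the $\la\leftrightarrow\la$ identity through the anti-involution $\tau$ of \eqref{tau}.

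The main obstacle will be the bookkeeping in the middle paragraph: correctly parametrising the fiber of the projection $\widetilde\bfL\mapsto(\widetilde\bfL',\widetilde\bfL'')$ under the semisimplicity constraint and verifying, by a direct but delicate index shift in the sums over $i\in\mbz/n$, that the difference $c(\bfL,\widetilde\bfL)-c(\bfL',\widetilde\bfL')-c(\bfL'',\widetilde\bfL'')$ together with the fiber exponent reduces exactly to $\mu\centerdot\tau\nu-\tau\mu\centerdot\nu$. Everything else is a mechanical unwinding of the definitions.
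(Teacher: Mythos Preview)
Your plan is essentially the paper's own proof: evaluate both sides pointwise using \eqref{action1}--\eqref{action2}, count the fiber of $\widetilde\bfL\mapsto(\widetilde\bfL',\widetilde\bfL'')$ by interpreting each $\widetilde L_i$ as a complement to $(L_{i-1}+L_i'')/(L_{i-1}+\widetilde L_i'')$ inside $\theta_i^{-1}(\widetilde L_i')/(L_{i-1}+\widetilde L_i'')$, and then match exponents. Two small corrections to your bookkeeping sketch: the fiber cardinality is $q^{\sum_i\nu_i(\dim L_i'/L_{i-1}'-\mu_i)}$, which depends on $\ro(\bfL')$ and not only on $\mu,\nu$; and the diagonal weight from $0(\nu,r')$ is $\up^{\nu\centerdot\ro(\widetilde\bfL')}$ with $\dim\widetilde L_i'/\widetilde L_{i-1}'=\dim L_i'/L_{i-1}'+\mu_{i-1}-\mu_i$, not $\ro(\bfL')-\mu$---this $\tau$-shift is precisely what produces the term $\mu\centerdot\tau\nu-\tau\mu\centerdot\nu$ after cancellation, as you anticipated.
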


\begin{proof} We only prove the first formula. The second one can be
proved similarly.

For $\mu+\nu=\la$ with $\mu,\nu\in\mbn^n_\vtg$, we write
$$\Psi_{\mu,\nu}=A_\mu(\bfl,r')0(\nu,r')\otimes
A_\nu(\bfl,r'')0(-\mu,r'').$$
 Thus, it suffices to show that for all fixed $\bfL',\widetilde\bfL'\in{\scr F}_{\vtg,n}(V')$
and $\bfL'',\widetilde\bfL''\in {\scr F}_{\vtg,n}(V'')$,
$$\Xi(A_\la(\bfl,r))(\bfL',\widetilde\bfL',\bfL'',\widetilde\bfL'')
=\sum_{{\mu,\nu\in\mbn^n_\vtg}\atop{\mu+\nu=\la}}
q^{(\mu\centerdot\tau\nu-\tau\mu\centerdot\nu)/2}\Psi_{\mu,\nu}(\bfL',\widetilde\bfL',\bfL'',\widetilde\bfL'').$$

By the definition and \eqref{action1}, we obtain that
$$\Xi(A_\la(\bfl,r))(\bfL',\widetilde\bfL',\bfL'',\widetilde\bfL'')=
\sum_{\widetilde\bfL}A_\la(\bfl,r)(\bfL,\widetilde\bfL)
=q^{(-\sum_{1\leq i\leq n}\la_i(\dim L_i/L_{i-1}-\la_i))/2}\ell,$$
 where $\ell$ denotes the cardinality of the set
 $${\scr L}:=\{\widetilde\bfL\in{\scr F}_{\vtg,n}(V)\mid
\widetilde\bfL\subseteq\bfL,\, \bfL/\widetilde\bfL\cong S_\la,\,
\pi'(\widetilde\bfL)=\widetilde\bfL',\,
\pi''(\widetilde\bfL)=\widetilde\bfL''\}.$$
 Now let $\widetilde\bfL\in\scr L$.  For each $i\in\mbz$, consider the
projection
$$\th_i:L_i\lra L_i'=(L_i+V'')/V''.$$
 Thus, $\th_i^{-1}(\widetilde L_i)=\widetilde L_i+L_i\cap V''=\widetilde L_i+L_i''$ and
$L_i/\th_i^{-1}(\widetilde L_i)\cong L'_i/\widetilde L_i'$. This
implies that
$$\dim \th_i^{-1}(\widetilde L_i)/\widetilde L_i=\dim
L_i/\widetilde L_i-\dim L'_i/\widetilde L_i'=\dim L''_i/\widetilde
L_i''.$$
 The semisimplicity of $\bfL/\widetilde\bfL\cong S_\la$ shows
$L_{i-1}\subseteq \widetilde L_i$. Hence, we have the inclusions
$$L_{i-1}+\widetilde L_i''\subseteq \widetilde L_i\subseteq
\th_i^{-1}(\widetilde L_i)=\widetilde L_i+L_i''\subseteq L_i.$$
 Then
$$\th_i^{-1}(\widetilde L_i)/(L_{i-1}+\widetilde L_i'')
=\widetilde L_i/(L_{i-1}+\widetilde
L_i'')+(L_{i-1}+L_i'')/(L_{i-1}+\widetilde L_i'')\;\;\text{and}$$

$$\dim \th_i^{-1}(\widetilde L_i)/(L_{i-1}+\widetilde L_i'')
=\dim\widetilde L_i/(L_{i-1}+\widetilde L_i'')+\dim L''_i/\widetilde
L_i''.$$
 The inequality $\dim(L_{i-1}+L_i'')/(L_{i-1}+\widetilde L_i'')\leq
\dim L''_i/\widetilde L_i''$ gives that
$$\dim(L_{i-1}+L_i'')/(L_{i-1}+\widetilde L_i'')= \dim
L''_i/\widetilde L_i''\;\;\text{and}$$
$$\th_i^{-1}(\widetilde
L_i)/(L_{i-1}+\widetilde L_i'') =\widetilde L_i/(L_{i-1}+\widetilde
L_i'')\oplus (L_{i-1}+L_i'')/(L_{i-1}+\widetilde L_i'').$$
 Thus, $\widetilde L_i/(L_{i-1}+\widetilde
L_i'')$ is a complement of $(L_{i-1}+L_i'')/(L_{i-1}+\widetilde
L_i'')$.

Consequently,
$$\ell=\prod_{1\leq i\leq n}\ell_i,$$
 where $\ell_i$ is
the number of subspaces in $\th_i^{-1}(\widetilde
L_i)/(L_{i-1}+\widetilde L_i'')$ which are complementary to
$(L_{i-1}+L_i'')/(L_{i-1}+\widetilde L_i'')$. Further,
$$\begin{array}{rl}
&\dim \th_i^{-1}(\widetilde L_i)/(L_{i-1}+{\widetilde L_i''})=\dim
L_i/(L_{i-1}+{\widetilde L_i''})- \dim L_i/\th_i^{-1}(\widetilde L_i)\\
=&\dim L_i/(L_{i-1}+L_i'')+\dim (L_{i-1}+L_i'')/(L_{i-1}+{\widetilde
L_i''})-\dim L'_i/\widetilde L_i' \\
=&\dim L_i/L_{i-1}-\dim(L_{i-1}+L_i'')/L_{i-1}+\dim L_i''/\widetilde
L_i''-\dim
L'_i/\widetilde L_i'\\
=&\dim L'_i/L'_{i-1}+\dim L_i''/\widetilde L_i''-\dim
L'_i/\widetilde L_i'.
\end{array}$$
 Therefore,
$$\ell_i=q^{\dim L_i''/\widetilde L_i''(\dim L'_i/L'_{i-1}-\dim
L'_i/\widetilde L_i')}.$$
 We finally get that
$$\Xi(A_\la(\bfl,r))(\bfL',\widetilde\bfL',\bfL'',\widetilde\bfL'')\\
=q^{a/2},$$
 where $a=\sum_{1\leq i\leq n}\bigl(-\la_i(\dim L_i/L_{i-1}-\la_i)+2\dim
L_i''/\widetilde L_i''(\dim L'_i/L'_{i-1}-\dim L'_i/\widetilde
L_i')\bigr)$.

On the other hand, by \eqref{action1} and \eqref{action2},
$$\begin{array}{rl}
&\qquad\Psi_{\mu,\nu}(\bfL',\widetilde\bfL',\bfL'',\widetilde\bfL'')\not=0\\
&\Longleftrightarrow
\widetilde\bfL'\subseteq\bfL',\,\widetilde\bfL''\subseteq\bfL'',\,
\bfL'/\widetilde\bfL'\cong S_\mu,\,\bfL''/\widetilde\bfL''\cong
S_\nu \\
&\Longleftrightarrow \widetilde L_i'\subseteq L_i',\,\widetilde
L_i''\subseteq L_i'',\, L_i'/\widetilde L_i'\cong
\field^{\mu_i},\,L_i''/\widetilde L_i''\cong
\field^{\nu_i},\;\forall\,i\in\mbz.
\end{array}$$
 Moreover, if this is the case, then
$$\aligned
&\Psi_{\mu,\nu}(\bfL',\widetilde\bfL',\bfL'',\widetilde\bfL'')\\
=&A_\mu(\bfl,r')(\bfL',\widetilde\bfL')0(\nu,r')(\widetilde\bfL',\widetilde\bfL')
A_\nu(\bfl,r'')(\bfL'',\widetilde\bfL'')0(-\mu,r'')(\widetilde\bfL'',\widetilde\bfL'')\\
=&q^{b/2}
\endaligned$$
 where
$b=-c(\bfL',\widetilde\bfL')-c(\bfL'',\widetilde\bfL'')+\sum_{1\leq
i\leq n}(\nu_i\dim \widetilde L'_i/\widetilde L_{i-1}'-\mu_i\dim
\widetilde L''_i/\widetilde L_{i-1}'')$. Since
$$\begin{array}{ll}
c(\bfL',\widetilde\bfL')=\sum_{1\leq i\leq n}\mu_i(\dim
L'_i/L_{i-1}'-\mu_i),\; & c(\bfL'',\widetilde\bfL'')=\sum_{1\leq
i\leq n}\nu_i(\dim L''_i/L_{i-1}''-\nu_i),\\
\dim \widetilde L'_i/\widetilde L_{i-1}'=\dim
L'_i/L_{i-1}'+\mu_{i-1}-\mu_i, & \dim \widetilde L''_i/\widetilde
L_{i-1}''=\dim L''_i/L_{i-1}''+\nu_{i-1}-\nu_i,
\end{array}$$
 it follows that
$$\aligned
b=&\sum_{1\leq i\leq n} \bigl(-\mu_i\dim L'_i/L_{i-1}'-\nu_i\dim
L''_i/L_{i-1}''+\mu_i^2+\nu_i^2\\
&+ \nu_i L'_i/L_{i-1}'-\mu_i\dim
L''_i/L_{i-1}''+\nu_i\mu_{i-1}-\mu_i\nu_{i-1}\bigr)\\
=&a+\tau\mu\centerdot\nu-\mu\centerdot\tau\nu.
\endaligned$$
 Note that $\la_i=\mu_i+\nu_i$ and $\dim L_i/L_{i-1}=\dim
 L_i'/L_{i-1}'+\dim L_i''/L_{i-1}''$ for all $i\in\mbz$.
 Therefore,
$$\Xi(A_\la(\bfl,r))(\bfL',\widetilde\bfL',\bfL'',\widetilde\bfL'')=\sum_{\mu+\nu=\la}
q^{(\mu\centerdot\tau\nu-\tau\mu\centerdot\nu)/2}\Psi_{\mu,\nu}(\bfL',\widetilde\bfL',\bfL'',\widetilde\bfL''),$$
 which finishes the proof.
\end{proof}

\begin{Coro}\label{5.5.2} Suppose $r\geq n$. If $\la=(\la_i)\in\mbn_\vtg^n$ satisfies
$\la_i\geq 1$ for all $i\in\mbz$, then
$$\phi_{r,r-n}(A_\la(\bfl,r))=A_\la(\bfl,r-n)+A_{\la-\delta}(\bfl,r-n)\cdot0(\dt,r-n)$$
and
$$\phi_{r,r-n}(\tA_\la(\bfl,r))=\tA_\la(\bfl,r-n)+\tA_{\la-\delta}(\bfl,r-n)\cdot0(\dt,r-n).$$
\end{Coro}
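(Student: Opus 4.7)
The plan is to exploit the fact that $\phi_{r,r-n} = (\xi\otimes\chi)\circ\Xi$ together with the explicit formula for $\Xi(A_\la(\bfl,r))$ and $\Xi(\tA_\la(\bfl,r))$ given in the immediately preceding proposition, which expresses each as a finite sum indexed by decompositions $\la=\mu+\nu$ with $\mu,\nu\in\afmbnn$. The proof reduces to showing that almost all terms vanish when $\chi$ is applied to the second tensor factor, leaving exactly two surviving pieces that assemble into the right-hand side of the corollary.

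The critical step is to characterize when $\chi(A_\nu(\bfl,n)\cdot 0(-\mu,n))$ is nonzero. Using $A(\bfj',r)\cdot 0(\bfj,r)=\up^{\bfj\cdot\co(A)}A(\bfj+\bfj',r)$ from \eqref{formula1 (1) in the affine q-Schur algebra} to expand the product, and the fact that $\chi([C])=\up^{-d_C}\text{\rm sgn}_C$ vanishes off permutation matrices, one needs $A_\nu+\diag(\beta)$ to have both row and column sum vectors equal to $\dt$. Since $\co(A_\nu)=\tau\nu$ and $\ro(A_\nu)=\nu$, this forces $\nu=\tau\nu$, so $\nu$ is a constant sequence; combined with $\beta=\dt-\nu\geq\bfl$ it forces $\nu\in\{0,\dt\}$. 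The case $\nu=\dt$, $\mu=\la-\dt$ arises only when $\la\geq\dt$, which is precisely the hypothesis. The analogous dichotomy for $\tA_\la$ follows from $\ro(\tA_\nu)=\tau\nu$, $\co(\tA_\nu)=\nu$.

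For the $\nu=0$, $\mu=\la$ contribution, the Proposition's coefficient $q^{(\mu\cdot\tau\nu-\tau\mu\cdot\nu)/2}$ equals $1$, the second tensor factor reduces to $0(-\la,n)$ with $\chi$-value $\up^{-\la\cdot\dt}$, and on the support $\bfL'\subseteq\bfL$, $\bfL/\bfL'\cong S_\la$ of the first factor $A_\la(\bfl,r-n)$, Lemma \ref{triangular matrix}(4) yields $(\bfL:\bfL')=\sg(\la)=\la\cdot\dt$, so $\xi$ rescales by $\up^{\la\cdot\dt}$; these two factors cancel to reproduce $A_\la(\bfl,r-n)$. For the $\nu=\dt$, $\mu=\la-\dt$ contribution, the coefficient is again $1$ because $\tau\dt=\dt$; one applies $\chi(A_\dt(\bfl,n))=1$ (which holds since $A_\dt(\bfl,n)=[A_\dt]$ with $A_\dt$ corresponding to the shift $\rho^{-1}\in\affSr$ of length $0$, giving $d_{A_\dt}=0$ and $\text{\rm sgn}_{A_\dt}=1$) together with $\chi(0(-(\la-\dt),n))=\up^{-\la\cdot\dt+n}$, and evaluates $\xi$ on $A_{\la-\dt}(\bfl,r-n)\cdot 0(\dt,r-n)$ after simplification via \eqref{formula1 (1) in the affine q-Schur algebra}. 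The scalars combine to yield exactly $A_{\la-\dt}(\bfl,r-n)\cdot 0(\dt,r-n)$, and summing the two contributions establishes the first formula.

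The second formula for $\tA_\la(\bfl,r)$ runs in parallel, using the dual proposition formula (with $0(\tau\nu,r')$ and $0(-\tau\mu,r'')$) and the identity $\chi(\tA_\dt(\bfl,n))=1$, which holds because $\tA_\dt$ corresponds to $\rho\in\affSr$, also of length zero. The main obstacle will be the careful tracking of $\up$-powers across the composition --- the coefficient from the proposition, the $\chi$-values of the diagonal factors, the $\xi$-rescaling by $q^{(\bfL:\bfL')/2}$, and the internal multiplications within each first tensor factor --- to ensure they cancel as required to reconstruct the right-hand sides precisely as stated.
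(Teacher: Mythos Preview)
Your proposal is correct and follows essentially the same route as the paper: apply the preceding proposition to expand $\Xi(A_\la(\bfl,r))$ as a sum over $\mu+\nu=\la$, use the vanishing of $\chi$ off permutation matrices to reduce to the two terms $\nu=0$ and $\nu=\dt$, and assemble. The paper's proof is more terse --- it simply writes down the two surviving terms and asserts the final equality $\xi(A_\la(\bfl,r'))\chi(0(-\la,n))+\xi(A_{\la-\dt}(\bfl,r')0(\dt,r'))\chi(A_\dt(\bfl,n)0(-\la+\dt,n))=A_\la(\bfl,r')+A_{\la-\dt}(\bfl,r')0(\dt,r')$ without spelling out the $\up$-power cancellations --- whereas you fill in exactly those details (the $(\bfL:\bfL')=\sg(\la)$ computation for $\xi$, the values $\chi(A_\dt(\bfl,n))=1$ and $\chi(0(-\mu,n))=\up^{-\mu\cdot\dt}$, and the triviality of the proposition's coefficient when $\nu\in\{0,\dt\}$).
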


\begin{proof} Write $r'=r-n$. Fix a finite field $\field$ with $q$ elements. Then
applying the above proposition gives
$$\Xi(A_\la(\bfl,r))=\sum_{{\mu,\nu\in\mbn^n_\vtg}\atop{\mu+\nu=\la}}
q^{(\mu\centerdot\tau\nu-\tau\mu\centerdot\nu)/2}A_\mu(\bfl,r')0(\nu,r')\otimes
A_\nu(\bfl,n)0(-\mu,n).$$
 If $\sg(\nu)>n$, then $A_\nu(\bfl,n)=0$. If $1\leq\sg(\nu)\leq n$
 and $\nu\not=\dt$, then $\chi(A_\nu(\bfl,n))=0$. Thus,
$$\aligned
&\phi_{r,r'}^q(A_\la(\bfl,r))=(\xi\otimes\chi)\Xi(A_\la(\bfl,r))\\
=&\xi(A_\la(\bfl,r'))\chi(0(-\la,n))
+\xi(A_{\la-\dt}(\bfl,r')0(\dt,r'))\chi(A_\dt(\bfl,n)0(-\la+\dt,n))\\
=&A_\la(\bfl,r')+A_{\la-\dt}(\bfl,r')0(\dt,r').
\endaligned$$
 The second formula can be proved similarly.

\end{proof}

By the above corollary, if $\la=(\la_i)\in\mbn_\vtg^n$ satisfies
$\la_i\geq 1$ for all $i\in\mbz$ and $\sg(\la)\leq r$, then
$$\aligned
&\phi_{r,r-n}\xi_r(A_\la(\bfl,r))\not=\xi_{r-n}(A_\la(\bfl,r))\;\;\text{and}\\
&\phi_{r,r-n}\xi_r(\tA_\la(\bfl,r))\not=\xi_{r-n}(\tA_\la(\bfl,r)).
\endaligned$$
This shows that the transfer maps $\phi_{r,r-n}$ are not compatible
with the homomorphisms $\xi_r:{}'\dHallr\to\afbfSr$.


\chapter{The classical ($\up=1$) case}

Let $\sU(\afgl)$ be the universal enveloping
algebra\index{$\sU(\afgl)$, universal enveloping algebra of $\afgl$}
of the loop algebra $\afgl(\mbq)$\index{loop algebra of
$\frak{gl}_n$} as mentioned in \S1.1. We will establish, on the one
hand, a surjective homomorphism from $\sU(\afgl)$  to the affine
Schur algebra $\afSr_\mbq$ via the natural action of $\afgl(\mbq)$
on the $\mbq$-space $\Og_\mbq$, and prove, on the other hand, that
$\sU(\afgl)$ is isomorphic to the specialization
$\overline{\fD_\vtg(n)}_\mbq$ of the double Ringel--Hall algebra
$\dHallr$ at $\up=1$ and $K_i=1$. In this way, we obtain a
surjective algebra homomorphism
$\eta_r:\overline{\fD_\vtg(n)}_\mbq\to\afSr_\mbq$ which is regarded
as the the classical ($\up=1$) version of the surjective
homomorphism $\xi_r:\dHallr\to\afSr$. We then prove the classical
version of Conjecture \ref{realization conjecture} via the
homomorphisms  $\eta_r$. A crucial step to establish the conjecture
in this case is the extension of the multiplication formulas given
in \S3.4 to formulas between indecomposable modules of imaginary
roots and arbitrary BLM basis elements. This is done in \S6.2. The
conjecture in the classical case is proved in \S6.3.

In order to distinguish the specializations at non-roots of unity
$\up=z\in\mbc$ considered in previous chapters from the
specialization at $\up=1$, we will particularly consider the
specialization $\sZ\to \mbq$ by sending $\up$ to 1 throughout the
chapter. Thus, $\afHr_\mbq$ identifies the group algebra
$\mbq\fS_{\vtg,r}$, $\Og_\mbq$ is the $\mbq$-space with basis
$\{\og_i\}_{i\in\mbz}$, and $\afSr_\mbq$ identifies the classical
affine Schur algebra. In other words,
$\afSr_\mbq\cong\text{End}_{\mbq\fS_{\vtg,r}}(\Og_\mbq^{\otimes
r})$.

\section{The universal enveloping algebra $\sU(\afgl)$}
Recall from \S1.1 the loop algebra $\afgl(\mbq)$ has a basis
$\{E_{i,j}^\vtg\}_{1\leq i\leq n, j\in\mbz}$. Thus, the natural
action of these basis elements on $\Og_\mbq$ defined by
\begin{equation}\label{natural module}
\afE_{i,j}\og_{k}=
\begin{cases}
\og_{i+tn} &\text{if $k=j+tn$;}\\
0&\text{otherwise}
\end{cases}
\end{equation}
gives rise to a $\sU(\afgl)$-module structure on $\Og_\mbq$, and
hence, on the $r$-fold tensor product $\Og_\mbq^{\ot r}$. Thus, we
obtain an algebra homomorphism
$$\eta_r:\sU(\afgl)\lra\afSr_\mbq.$$

The surjectivity of $\eta_r$ was established in \cite[Th.~6.6(i)]{Yang1} by a coordinate algebra approach.
We now present a different proof by identifying the above action with the Hall algebra action
as discussed in \S\S3.5-6. We first interpret the $\pm$-part of $\sU(\afgl)$ as the specialization of
Hall algebras. We need the following in this interpretation.

\begin{Lem}\label{Hall polynomial evaluation at 1}
For $1\leq k\leq t$, let
$A_k=(a_{i,j}^{(k)}),B=(b_{i,j})\in\afThnp$.
\begin{itemize}
\item[(1)] If $\sg(B)\geq\sum_{k=1}^t\sg(A_k)$ and
$B\not=\sum_{k=1}^tA_k$, then
$(\up^2-1)|\vi_{A_1,A_2,\ldots,A_t}^B$.
\item[(2)] If $B=\sum_{k=1}^tA_k$, then
$$(\up^2-1)\bigg|\bigg(\vi_{A_1,A_2,\ldots,A_t}^B-
\prod_{1\leq i\leq n,\,j\in\mbz\atop
i<j}\frac{b_{i,j}!}{a_{i,j}^{(1)}!a_{i,j}^{(2)}!\cdots
a_{i,j}^{(t)}!}\bigg).$$
\end{itemize}
\end{Lem}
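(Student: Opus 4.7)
The plan is to reduce both parts to the case $t=2$ via the standard associativity factorization of Hall polynomials, and then handle the two base cases by separate arguments. For Part (1), I expect the hypothesis to be essentially vacuous whenever $\vi_{A_1,\ldots,A_t}^B$ is nonzero. Indeed, any filtration $0=M_t\subset M_{t-1}\subset\cdots\subset M_0=M(B)$ with $M_{k-1}/M_k\cong M(A_k)$ yields short exact sequences $0\to M_k\to M_{k-1}\to M(A_k)\to 0$, and by Krull--Schmidt the number of indecomposable summands satisfies $\sg(M_{k-1})\le\sg(M_k)+\sg(A_k)$ at each step (summands can merge in an extension but never split). Iterating gives $\sg(B)\le\sum_k\sg(A_k)$, with equality forcing every extension to be split, hence $M(B)\cong\bigoplus_kM(A_k)$, i.e.\ $B=\sum_kA_k$. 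The assumption $\sg(B)\ge\sum_k\sg(A_k)$ together with $B\ne\sum_kA_k$ thus precludes any filtration, so $\vi^B_{A_1,\ldots,A_t}$ vanishes identically and the divisibility is trivial.

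For Part (2), I would proceed by induction on $t$, the case $t=1$ being immediate since $\vi^B_{A_1}=\dt_{A_1,B}$. For the inductive step I would use the factorization
$$\vi_{A_1,\ldots,A_t}^B=\sum_{C\in\afThnp}\vi_{A_1,\ldots,A_{t-1}}^C\,\vi_{C,A_t}^B.$$
Working modulo $(\up^2-1)$, Part (1) eliminates every term except $C=\sum_{k<t}A_k$: if $\vi^C_{A_1,\ldots,A_{t-1}}$ is not divisible by $\up^2-1$ then the dimension-vector constraint plus Part (1) forces $C=\sum_{k<t}A_k$. The induction hypothesis evaluates the first factor as $\prod_{i,j}(\sum_{k<t}a^{(k)}_{i,j})!/\prod_{k<t}a^{(k)}_{i,j}!$, and the $t=2$ case evaluates the second as $\prod_{i,j}\binom{b_{i,j}}{\sum_{k<t}a^{(k)}_{i,j}}$; their product collapses to the desired multinomial $\prod_{i,j}b_{i,j}!/\prod_ka^{(k)}_{i,j}!$.

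The main obstacle is therefore the $t=2$ base case of Part (2), i.e.\ the congruence
$$\vi_{A_1,A_2}^{A_1+A_2}\equiv\prod_{i<j}\binom{b_{i,j}}{a^{(1)}_{i,j}}\pmod{\up^2-1}.$$
The approach I would take is the orbit-count interpretation of \S1.2: over $\field_q$ the left-hand side counts submodules $N\subset M(A_1)\oplus M(A_2)$ with $N\cong M(A_2)$ and quotient $\cong M(A_1)$. Stratifying such $N$ by their position inside the Krull--Schmidt decomposition $M(B)=\bigoplus_{i<j}b_{i,j}M^{i,j}$, the ``diagonal'' submodules obtained as direct sums of $a^{(2)}_{i,j}$ of the $b_{i,j}$ summands $M^{i,j}$ contribute exactly the multinomial $\prod_{i<j}\binom{b_{i,j}}{a^{(1)}_{i,j}}$, while ``twisted'' submodules arising from nontrivial homomorphisms between the summands are parameterized by nonempty affine $\field_q$-spaces and therefore contribute polynomials in $q=\up^2$ divisible by $q-1$. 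Assembling the strata yields the congruence. An alternative, and possibly cleaner, route would be to invoke the explicit product formula for Hall polynomials of direct sums of uniserial modules over the cyclic quiver, which is classical in spirit and available through the work of Ringel and Hubery.
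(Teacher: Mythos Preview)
Your argument for Part (1) contains a genuine error. You claim that the condition $\sg(B)=\sum_k\sg(A_k)$ together with the existence of a filtration forces $M(B)\cong\bigoplus_kM(A_k)$, asserting that ``equality [forces] every extension to be split.'' This is false. For $n=2$, take $A_1=A_2=E^\vtg_{1,5}$ (so $M(A_k)=S_1[4]$) and $B=E^\vtg_{1,7}+E^\vtg_{1,3}$ (so $M(B)=S_1[6]\oplus S_1[2]$). One checks directly that $M(B)$ admits a submodule isomorphic to $S_1[4]$ with quotient isomorphic to $S_1[4]$: take the submodule generated by $v_2+w_0$, where $v_\bullet$ and $w_\bullet$ are the obvious uniserial bases of the two summands. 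Thus $\vi^B_{A_1,A_2}\ne 0$; in fact $\vi^B_{A_1,A_2}=\up^2(\up^2-1)$. Yet $\sg(B)=2=\sg(A_1)+\sg(A_2)$ and $B\ne A_1+A_2$. The Hall polynomial does not vanish; it is merely divisible by $\up^2-1$, which is all the lemma asserts. (Your socle-dimension argument correctly gives $\sg(M)\le\sg(N)+\sg(Q)$ for any extension, but equality does not force splitting.)

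A correct approach to the $t=2$ case, which then feeds your inductive reduction (that part is sound), is to let the torus $(\field_q^\times)^{\sg(B)}$ act on $M(B)$ by scaling each indecomposable summand, hence on the set of submodules $N\cong M(A_2)$ with quotient $\cong M(A_1)$. Nontrivial orbits have size divisible by $q-1$, so modulo $q-1$ one only has to count fixed points. A fixed $N$ decomposes as $\bigoplus_lN_l$ with $N_l$ a submodule of the $l$-th indecomposable summand $M_l$ of $M(B)$; the hypothesis $\sg(B)\ge\sg(A_1)+\sg(A_2)$ then forces each $N_l\in\{0,M_l\}$ by the same socle count you used, whence $M(B)\cong M(A_1)\oplus M(A_2)$. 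This yields zero fixed points when $B\ne A_1+A_2$, proving Part (1), and exactly $\prod_{i<j}\binom{b_{i,j}}{a^{(1)}_{i,j}}$ fixed points when $B=A_1+A_2$, proving Part (2). This is the mechanism behind the references the paper cites (Ringel, Peng). It also repairs the imprecision in your $t=2$ base case for Part (2): ``nonempty affine $\field_q$-spaces'' have $q^k$ points, which is \emph{not} divisible by $q-1$; what you need are nontrivial torus orbits.
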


\begin{proof}
By using \cite[p.~441, Lem.]{Ri90} and \cite[Prop.~ 3]{Ri92}, the
assertion can be proved in a way similar to the proof of
\cite[Prop.~4.1]{Peng}.
\end{proof}

Since $\Hall^+_\mbq\cong \Hall_\mbq$ and $\Hall^-_\mbq\cong
\Hall_\mbq^{\text{op}}$, it follows that $\Hall^\pm_\mbq$ is
generated by $u_{_{\afE_{i,j},1}}^\pm$ for all $i<j$. {\it Here and
below, the subscripts $1$ indicate those elements in
$\Hall^\pm_\mbq$ obtained from elements in $\Hall^\pm$ by
specializing $\up$ to $1$}.

\begin{Thm}\label{positive}
Suppose $\sU(\afgl)^+$ (resp., $\sU(\afgl)^-$) is the subalgebra of
$\sU(\afgl)$ generated by $\afE_{i,j}$ (resp., $\afE_{j,i}$) for all
$i<j$. Then, for each $\ep\in\{+,-\}$,  there is a unique algebra
isomorphism $\sU(\afgl)^\ep\to\Hall_\mbq^\ep$ taking
$\afE_{i,j}\mapsto u_{_{\afE_{i,j},1}}^+$ (resp.,$\afE_{j,i}\mapsto
u_{_{\afE_{i,j},1}}^-$) for all $i<j$. In particular, there are
algebra embeddings
$$\iota^\pm:\Hall_\mbq^\pm\lra \sU(\afgl).$$
\end{Thm}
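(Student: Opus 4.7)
The uniqueness is immediate, since $\{\afE_{i,j}:1\le i\le n,\,i<j\}$ generates $\sU(\afgl)^+$ as an algebra; I therefore concentrate on the existence and bijectivity of the $+$-map, the $-$-case being obtained symmetrically (or equivalently via the anti-isomorphism $\Hall_\mbq^-\cong(\Hall_\mbq^+)^{\mathrm{op}}$). The claimed embeddings $\iota^\pm:\Hall_\mbq^\pm\hookrightarrow\sU(\afgl)$ are then the inverses composed with the inclusions $\sU(\afgl)^\pm\hookrightarrow\sU(\afgl)$.

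Setting $\mathfrak n^+:=\spann_\mbq\{\afE_{i,j}:1\le i\le n,\,i<j\}$, a Lie subalgebra of $\afMnq$ with $\sU(\afgl)^+=\sU(\mathfrak n^+)$, my first step is to define $\Psi:\mathfrak n^+\to\Hall_\mbq^+$ on generators by $\afE_{i,j}\mapsto u^+_{\afE_{i,j},1}$ and to verify that $\Psi$ intertwines the Lie bracket on $\mathfrak n^+$ with the commutator on $\Hall_\mbq^+$. Specialising the Ringel--Hall multiplication at $\up=1$ trivialises the Euler-form prefactor, so
\[
[u^+_{\afE_{i,j},1},u^+_{\afE_{k,l},1}]=\sum_{C\in\afThnp}\bigl(\vi^C_{\afE_{i,j},\afE_{k,l}}-\vi^C_{\afE_{k,l},\afE_{i,j}}\bigr)\big|_{\up=1}\,u^+_{C,1}.
\]
By Lemma \ref{Hall polynomial evaluation at 1}(2) the split contribution $C=\afE_{i,j}+\afE_{k,l}$ carries multinomial coefficient $1$ in each ordering and so cancels in the commutator, while Lemma \ref{Hall polynomial evaluation at 1}(1) kills every other $C$ with $\sg(C)\ge 2$ at $\up=1$. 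The surviving terms come from indecomposable matrices $C=\afE_{p,q}$, which by the representation theory of $\mathrm{rep}^0\tri$ appear precisely when a short exact sequence of the form $0\to M(\afE_{k,l})\to M(\afE_{i,l+j-k})\to M(\afE_{i,j})\to 0$ is available (namely when $\bar j\equiv\bar k\pmod n$), together with the partner obtained by swapping $(i,j)\leftrightarrow(k,l)$. A direct calculation identifies these single-entry contributions with the matrix commutator of $\afE_{i,j}$ and $\afE_{k,l}$ in $\afMnq$, showing that $\Psi$ is a Lie homomorphism; the universal property of $\sU$ then extends it to an algebra map $\iota^+:\sU(\afgl)^+\to\Hall_\mbq^+$.

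For surjectivity, given $A=(a_{i,j})\in\afThnp$, fix any total order on the pairs $(i,j)$ with $i<j$ and form $m_A:=\prod(\afE_{i,j})^{a_{i,j}}\in\sU(\afgl)^+$. A repeated application of Lemma \ref{Hall polynomial evaluation at 1} to $\iota^+(m_A)=\prod(u^+_{\afE_{i,j},1})^{a_{i,j}}$ yields
\[
\iota^+(m_A)=\Bigl(\prod_{i<j}a_{i,j}!\Bigr)u^+_{A,1}+\sum_{B\in\afThnp,\,\sg(B)<\sg(A)}c_{B}\,u^+_{B,1}\qquad(c_B\in\mbq),
\]
since part (1) of the lemma forces every $B\neq A$ with $\sg(B)\ge\sg(A)$ to vanish at $\up=1$ and part (2) supplies the multinomial leading coefficient. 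Upward induction on $\sg(A)$, with trivial base case $\sg(A)=1$, then places every $u^+_{A,1}$ in $\Image\iota^+$, the positive integer $\prod a_{i,j}!$ being invertible over $\mbq$. Injectivity follows by a graded dimension count: both algebras are $\mbn I$-graded by dimension vector with $\iota^+$ grading-preserving, and by PBW a basis of $\sU(\afgl)^+_\bfd$ is indexed by tuples $(a_{i,j})_{i<j}$ satisfying $\sum a_{i,j}\bfdim M(\afE_{i,j})=\bfd$, which is in natural bijection with $\{A\in\afThnp:\bfd(A)=\bfd\}$, the index set of the Ringel--Hall basis of $(\Hall_\mbq^+)_\bfd$; equality of finite graded dimensions together with surjectivity forces $\iota^+$ to be a bijection. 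I expect the main obstacle to lie in Stage 1: the enumeration of extensions of string modules for the cyclic quiver $\tri$ is sensitive to periodicity shifts, and verifying that the resulting Hall numbers at $\up=1$ reproduce the matrix commutator of $\afE_{i,j}$ and $\afE_{k,l}$ exactly --- with the correct signs and the cyclic index shifts that distinguish the affine case from the classical $\frak{gl}_n$ situation --- will require careful bookkeeping.
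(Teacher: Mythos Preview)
Your proof is correct and follows essentially the same strategy as the paper's. Both arguments verify the commutator relation $[u^+_{\afE_{i,j},1},u^+_{\afE_{k,l},1}]=\dt_{\bar j,\bar k}u^+_{\afE_{i,l+j-k},1}-\dt_{\bar l,\bar i}u^+_{\afE_{k,j+l-i},1}$ in $\Hall^+_\mbq$ via Lemma~\ref{Hall polynomial evaluation at 1}, invoke the universal property of $\sU(\mathfrak n^+)$, and then use the triangular product formula $\prod(u^+_{\afE_{i,j},1})^{a_{i,j}}=(\prod a_{i,j}!)\,u^+_{A,1}+(\text{lower }\sg)$ to conclude bijectivity. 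The only cosmetic difference is that the paper reads off bijectivity directly from ``PBW basis $\mapsto$ basis'', whereas you split this into surjectivity plus a graded dimension count; the two are equivalent. One small notational slip: you use $\iota^+$ for the map $\sU(\afgl)^+\to\Hall^+_\mbq$, while the statement reserves $\iota^+$ for its inverse embedding $\Hall^+_\mbq\hookrightarrow\sU(\afgl)$.
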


\begin{proof} It suffices to prove the $+$ case. Let $\afgl^+$ be the Lie subalgebra of $\afgl$
generated by $\afE_{i,j}$ ($i<j$).
 Then $\sU(\afgl)^+$ is isomorphic to the enveloping algebra
of $\afgl^+$ and in $\sU(\afgl)^+$,
$$[\afE_{i,j},\afE_{k,l}]=\dt_{\bar j,\bar
k}\afE_{i,l+j-k}-\dt_{\bar l,\bar i}\afE_{k,j+l-i}\;\;\text{for
$i,j,k,l\in\mbz$}.$$ On the other hand, by Lemma \ref{Hall
polynomial evaluation at 1}, for $i<j$ and $k<l$, we have in
$\Hall^+_\mbq$,
\begin{equation}\label{commute formula}
u^+_{_{\afE_{i,j},1}}u^+_{_{\afE_{k,l},1}}-u^+_{_{\afE_{k,l},1}}u^+_{_{\afE_{i,j},1}}=\dt_{\bar
j,\bar k}u^+_{_{\afE_{i,l+j-k},1}}-\dt_{\bar l,\bar
i}u^+_{_{\afE_{k,j+l-i},1}}.
\end{equation}
Thus,  there is an algebra homomorphism
$f:\sU(\afgl)^+\ra\Hall^+_\mbq$ such that
$f(\afE_{i,j})=u^+_{_{\afE_{i,j},1}}$ for $i<j$. Let
$\sL=\{(i,j)\mid 1\leq i\leq n,\ j\in\mbz,\ i<j\}$. By Lemma
\ref{Hall polynomial evaluation at 1} again, we see for any
$A=\sum_{(i,j)\in\sL}a_{i,j}E_{i,j}^\vtg\in\afThnp$,
$$\prod_{(i,j)\in\sL}(u^+_{_{\afE_{i,j},1}})^{a_{i,j}}
=\bigg(\prod_{(i,j)\in\sL}a_{i,j}!\bigg)u^+_{A,1}
+\sum_{B\in\afThnp,\,\sg(B)<\sg(A)}f_B(1)u^+_{B,1},$$
 where $f_{B}\in\sZ$ and the products are taken with respect to a fixed total
order on $\sL$. Hence, the set
$$\bigg\{\prod_{(i,j)\in\sL}(u^+_{_{\afE_{i,j},1}})^{a_{i,j}}\,\bigg|\,
A=(a_{i,j})\in\afThnp\bigg\}$$
 is a $\mbq$-basis of $\Hall^+_\mbq$. Thus, $f$
sends a PBW-basis of $\sU(\afgl)^+$ to a basis of $\Hall^+_\mbq$.
Consequently, $f$ is an isomorphism.
\end{proof}

Recall from Proposition \ref{afzrpm} that $\Hall^\pm$ act on the $\sZ$-free module $\Og$ via the maps $\zeta_r^\pm$.
These induce actions of $\Hall_\mbq^\pm$ on $\Og_\mbq$ via the maps
$$\zeta_{r,\mbq}^\pm:\Hall_\mbq^\pm\lra \afSr_\mbq.$$
 Again, in the following the subscripts $1$ indicate those elements in
$\afSr_\mbq$ obtained from elements in $\afSr$ by specializing $\up$
to $1$.

\begin{Lem}\label{etar(Eij)} For any $r\geq 0$, we have $\eta_r\circ\iota^\pm=\zeta_{r,\mbq}^\pm$.
More precisely, we have $\eta_r(\afE_{i,j})=\afE_{i,j}({\bf 0},r)_1$
for $i\not = j$ and $\eta_r(\afE_{i,i})=\big[{\ttk_{i,r};0\atop
1}\big]_1$, where $\ttk_{i,r}:=0(\afbse_i,r)\in\afSr$ and
$\big[{\ttk_{i,r};0\atop 1}\big]$ is defined as in \eqref{Gauss
poly2}. Moreover, the elements $\big({\afE_{i,i}\atop
t}\big):=\frac{\afE_{i,i}(\afE_{i,i}-1)\cdots(\afE_{i,i}-t+1)}{t!}$
in $\sU(\afgl)$ have the images $\eta_r\big({\afE_{i,i}\atop
t}\big)=\big[{\ttk_{i,r};0\atop t}\big]_1$ for $t\geq 0$.
\end{Lem}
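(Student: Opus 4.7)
The plan is to prove all four parts of the lemma by identifying $\afSr_\mbq$ with $\End_{\mbq\fS_{\vtg,r}}(\Og_\mbq^{\otimes r})$ and verifying the asserted equalities as operator identities on $\Og_\mbq^{\otimes r}$, since two elements of $\afSr_\mbq$ coincide precisely when they act identically on this space.

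For (a) and (b), both $\eta_r \circ \iota^+$ and $\zeta_{r,\mbq}^+$ are algebra homomorphisms $\Hall_\mbq^+ \to \afSr_\mbq$, and $\Hall_\mbq^+$ is generated by $\{\tilde u^+_{\afE_{i,j},1} : i < j\}$ (as seen in the proof of Theorem \ref{positive}). Using $\iota^+(\tilde u^+_{\afE_{i,j},1}) = \afE_{i,j}$ (from Theorem \ref{positive}, recalling that $u^\pm = \tilde u^\pm$ at $\up = 1$) and $\zeta_{r,\mbq}^+(\tilde u^+_{\afE_{i,j},1}) = \afE_{i,j}(\mathbf{0},r)_1$ (from Proposition \ref{afzrpm}), the equality $\eta_r \circ \iota^+ = \zeta_{r,\mbq}^+$ together with the $i<j$ part of (b) collapses to the single operator identity
$$
\eta_r(\afE_{i,j}) = \afE_{i,j}(\mathbf{0},r)_1 \quad (i<j),
$$
with the $-$ case entirely analogous. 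I would verify this identity by evaluation on the $\afHr_\mbq$-module generators $\{\og_\bfi\}_{\bfi \in \afJnr}$ of $\Og_\mbq^{\otimes r}$, noting that both operators are $\afHr_\mbq$-linear. The left side acts by the primitive-derivation rule, giving $\afE_{i,j}\og_\bfi = \sum_{s : \bar{i_s} = \bar j} \og_{i_1} \otimes \cdots \otimes \og_{i_s - (j-i)} \otimes \cdots \otimes \og_{i_r}$ by \eqref{natural module}. For the right side, I would transfer to $\sT_\vtg(n,r)_\mbq \cong \Og_\mbq^{\otimes r}$ via Proposition \ref{bimodule-isom} (which sends $\og_\bfi \mapsto [A^\bfi]$ for $\bfi \in \afJnr$), write $\afE_{i,j}(\mathbf{0},r) = \sum_\la [\afE_{i,j} + \diag(\la)]$ by \eqref{def-A(j,r)}, and then unwind the Schur-algebra multiplication $[\afE_{i,j} + \diag(\la)] \cdot [A^\bfi]$ using \eqref{e-basis multn} (or equivalently the geometric description of Corollary \ref{A(bfj,r) acts on (bfL,bfL')} at $\up = 1$). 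A matching of orbits, indexed by the positions $s$ where $\bar{i_s} = \bar j$, then completes the verification.

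The diagonal assertions (c) and (d) follow from a direct weight computation. By Proposition \ref{afSrz}(2) together with \eqref{product [diag(la)][A] in affine q-Schur algebra} and Proposition \ref{bimodule-isom}, $\ttk_{i,r}$ acts on $\og_\bfi$ as the scalar $\up^{\mu_i}$, where $\mu_i = |\{s \in \{1,\ldots,r\} : \bar{i_s} = \bar i\}|$ is the $i$-th component of the weight of $\og_\bfi$. Hence $\big[{\ttk_{i,r};0\atop t}\big]$ acts as the Gaussian binomial $\big[{\mu_i\atop t}\big]$, which specializes to the ordinary binomial $\binom{\mu_i}{t}$ at $\up = 1$. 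On the other side, \eqref{natural module} gives $\afE_{i,i}\og_\bfi = \mu_i \og_\bfi$, so $\big({\afE_{i,i}\atop t}\big)\og_\bfi = \binom{\mu_i}{t}\og_\bfi$, matching the right-hand side. Assertion (c) is the $t=1$ case and (d) is the general case.

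The main obstacle is the off-diagonal identity $\eta_r(\afE_{i,j}) = \afE_{i,j}(\mathbf{0},r)_1$ for general $i<j$, since the left side is a derivation defined coordinatewise via the natural action while the right side is a sum of BLM basis elements whose action is given by Schur-algebra multiplication. The delicate step is to match these two descriptions — tracking how each subrepresentation of $M(\afE_{i,j}) = S_i[j-i]$ inside a flag difference corresponds precisely to one position $s$ with $\bar{i_s} = \bar j$ in the index sequence. I would prefer a uniform geometric (orbit-counting) argument over induction on $j-i$, since an inductive approach via the Hall-algebra commutator \eqref{commute formula} would require separate bookkeeping in the imaginary-root case $n \mid (j-i)$, where the bracket $[\afE_{i,j-1}, \afE_{j-1,j}]$ acquires an extra ``loop'' correction that must be reconciled on both sides.
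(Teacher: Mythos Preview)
Your treatment of the diagonal assertions (c) and (d) matches the paper's argument. For the off-diagonal identity $\eta_r(\afE_{i,j}) = \afE_{i,j}(\mathbf{0},r)_1$, your plan is correct in principle but works much harder than necessary, and the ``main obstacle'' you identify is one the paper sidesteps entirely.

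The paper's key observation is that at $\up=1$ the element $u^\pm_{\afE_{i,j},1}$ is \emph{primitive} for Green's comultiplication: $\Delta(u^\pm_{\afE_{i,j},1}) = u^\pm_{\afE_{i,j},1}\otimes 1 + 1\otimes u^\pm_{\afE_{i,j},1}$. Indeed, in the formula of Proposition~\ref{Green Xiao}(b) applied to the indecomposable $M(C)=S_i[j-i]$, every cross term carries the factor $\fka_A\fka_B/\fka_C$; since each $\fka$ for a nonzero indecomposable contains a factor $(\up^2-1)$, this ratio vanishes at $\up=1$ whenever both $A,B\neq 0$. Consequently the $\zeta_{r,\mbq}^\pm$-action of $u^\pm_{\afE_{i,j},1}$ on $\Og_\mbq^{\otimes r}$ is given by the \emph{same} derivation rule as the $\eta_r$-action of $\afE_{i,j}$. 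Everything thus reduces to the case $r=1$, where the required identity is an immediate comparison of the formula in Proposition~\ref{compareson with VV's action} (via Theorem~\ref{xirl}) with the natural action~\eqref{natural module}.

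Your approach instead attempts to compute $\afE_{i,j}(\mathbf{0},r)_1\cdot[A^\bfi]$ directly at level $r$ via orbit counting. This is feasible, but for general $j-i$ the flag condition $\bfL/\bfL_\bfi\cong S_i[j-i]$ involves a non-simple module, and the combinatorics goes beyond the multiplication formulas actually established in the paper (which cover only $|j-i|=1$ or $n\mid(j-i)$). The primitivity trick buys you exactly the reduction you were hoping for: it collapses a level-$r$ orbit computation into a trivial level-$1$ check.
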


\begin{proof} By Proposition \ref{compareson with VV's action} and Theorem \ref{xirl},
$\Hall_\mbq^\pm$ acts on $\Og_\mbq$ via $\zeta^\pm_{1,\mbq}$: for
$i<j$,
$$u_{_{\afE_{i,j},1}}^+\og_{k}=
\begin{cases}
\og_{i+tn}, &\text{if $k=j+tn$,}\\
0,&\text{otherwise}
\end{cases}\;\;\text{and}\;\;
u_{_{\afE_{i,j},1}}^-\og_{k}=
\begin{cases}
\og_{j+tn}, &\text{if $k=i+tn$,}\\
0,&\text{otherwise.}
\end{cases}
$$
 This action agrees with the action of $\afE_{i,j}$ on $\Og_\mbq$ defined in
\eqref{natural module}. On the other hand, by Propositions
\ref{Green Xiao}(b) and \ref{Green Xiao 2}(b$'$), we have
$\Delta(u_{_{\afE_{i,j},1}}^\pm)=u_{_{\afE_{i,j},1}}^\pm\otimes
1+1\otimes u_{_{\afE_{i,j},1}}^\pm$. Thus, the action of
$\afE_{i,j}$ on $\Og_\mbq^{\otimes r}$ agrees with the action of
$u_{_{\afE_{i,j},1}}^\pm$, proving the first statement.

For $\la\in\afLanr$, let $\bfi_\la$ be defined as in the proof of
Proposition \ref{bimodule-isom}. Since
$\afE_{i,i}(\og_{\bfi_\la}w)=\la_i
\og_{\bfi_\la}w=\big[{\ttk_{i,r};0\atop 1}\big]_1(\og_{\bfi_\la}w)$
for all $w\in\affSr$, we have
$\eta_r(\afE_{i,i})=\big[{\ttk_{i,r};0\atop 1}\big]_1$. Hence,
\begin{equation*}
\begin{split}
\eta_r\bigg({\afE_{i,i}\atop t}\bigg)&=
\frac{1}{t!}\prod_{s=0}^{t-1}\bigg(\sum_{\la\in\afLanr}(\la_i-s)[\diag(\la)]_1\bigg)\\
&=\sum_{\la\in\afLanr}\bigg({\la_i\atop t}\bigg)[\diag(\la)]_1=\bigg[{\ttk_{i,r};0\atop t}\bigg]_1,
\end{split}
\end{equation*}
as required.
\end{proof}

\begin{Thm}\label{surjective in the case of q=1}
Identifying $\Hall^\pm_\mbq$ with $\sU(\afgl)^\pm$ via $\iota^\pm$,
the map $\eta_r$ is surjective and $\eta_r(u_{A,1}^+)=A(\bfl,r)_1$
and $\eta_r(u_{A,1}^-)=\tA(\bfl,r)_1$ for $A\in\afThnp$.
\end{Thm}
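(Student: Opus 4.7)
The plan is to reduce the theorem to Proposition \ref{afzrpm} by showing that $\eta_r\circ\iota^\pm$ coincides with $\zeta_{r,\mbq}^\pm$, and then to deduce surjectivity from the triangular decomposition established in Theorem \ref{PBW basis of affine q-Schur algebras}. The essential content of both assertions is already encoded in Lemma \ref{etar(Eij)} together with the material of Chapter 3, so the argument is chiefly a matter of matching the two specializations.

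First I would verify that the identifications $\eta_r\circ\iota^\pm=\zeta_{r,\mbq}^\pm$ hold. By Theorem \ref{positive}, the algebra $\Hall^+_\mbq$ is generated (via $\iota^+$) by the elements $u^+_{\afE_{i,j},1}\leftrightarrow\afE_{i,j}$ for $i<j$. Lemma \ref{etar(Eij)} gives $\eta_r(\afE_{i,j})=\afE_{i,j}(\bfl,r)_1$ for $i\neq j$. On the other hand, Proposition \ref{afzrpm} gives $\zeta_r^+(\ti u^+_{\afE_{i,j}})=\afE_{i,j}(\bfl,r)$; since $\ti u^+_A=\up^{d'_A}u^+_A$ specializes to $u^+_{A,1}$ at $\up=1$, this forces $\zeta_{r,\mbq}^+(u^+_{\afE_{i,j},1})=\afE_{i,j}(\bfl,r)_1$. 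Thus $\eta_r\circ\iota^+$ and $\zeta_{r,\mbq}^+$ agree on the generating set of $\Hall^+_\mbq$, hence on all of it; in particular, $\eta_r(u^+_{A,1})=\zeta_{r,\mbq}^+(u^+_{A,1})=A(\bfl,r)_1$ for every $A\in\afThnp$. The argument for the negative part is identical, using the corresponding statement for $\zeta_r^-$ and $\tA(\bfl,r)$.

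For surjectivity, I would invoke the triangular decomposition $\afSr=\afSrp\cdot\afSrz\cdot\afSrm$ from Theorem \ref{PBW basis of affine q-Schur algebras}, which specializes to $\afSr_\mbq=(\afSrp)_\mbq(\afSrz)_\mbq(\afSrm)_\mbq$. By the two formulas just proved, $\eta_r(\iota^+(\Hall^+_\mbq))$ contains every $A(\bfl,r)_1$ with $A\in\afThnp$ and $\sg(A)\leq r$, so by Proposition \ref{afSrz}(1) it equals $(\afSrp)_\mbq$; analogously the negative part is hit. For the zero part, Proposition \ref{afSrz}(2) together with the second half of Lemma \ref{etar(Eij)} shows that the generators $[\ttk_{i,r};0;t]_1$ (and hence the basis $\{\bfone_\la\}_{\la\in\afLanr}$) lie in the image of $\eta_r$. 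Combining these three facts with the triangular decomposition yields $\eta_r(\sU(\afgl))=\afSr_\mbq$.

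There is essentially no obstacle in this argument; the only subtle point to treat carefully is the specialization $\up\mapsto 1$ of the normalization factor $\up^{d'_A}$ relating $u_A^\pm$ and $\ti u_A^\pm$, which becomes trivial and therefore allows Proposition \ref{afzrpm} to be applied directly after specialization. Everything else is straightforward bookkeeping.
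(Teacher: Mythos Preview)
Your proposal is correct and follows essentially the same route as the paper: both arguments combine Lemma~\ref{etar(Eij)} (which already records $\eta_r\circ\iota^\pm=\zeta_{r,\mbq}^\pm$) with Proposition~\ref{afzrpm} to obtain the formulas, and then invoke the triangular decomposition of Theorem~\ref{PBW basis of affine q-Schur algebras} together with Proposition~\ref{afSrz} for surjectivity. You have simply unpacked the paper's two-line proof more explicitly, in particular spelling out the zero part, but there is no substantive difference in strategy.
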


\begin{proof}
The first assertion follows from Proposition \ref{afzrpm} and Theorem \ref{PBW
basis of affine q-Schur algebras}. Since $\Hall^\pm_\mbq$ is generated
by $u_{\afE_{i,j},1}^\pm$ for $i<j$, the second assertion follows from
Lemma \ref{etar(Eij)} and Proposition \ref{afzrpm}.
\end{proof}

Let $\fD_\vtg(n)$ be the integral form of $\dHallr$ defined right
after Remark \ref{freeness of U0}. We now prove that the
specialization $\overline{\fD_\vtg(n)}_\mbq$ of $\dHallr$ at $v=1$
and $K_i=1$ is isomorphic to the universal enveloping algebra
$\sU(\afgl)$, and thus, via the theorem above, we establish a
surjective homomorphism $\bar\xi_r$ from
$\overline{\fD_\vtg(n)}_\mbq$ to the affine Schur algebra; cf.
\cite{Yang1}. Clearly, $\bar\xi_r$ is a specialization of the
homomorphism $\xi_r$ defined in \eqref{xir}.

By specializing $\up$ to $1$, $\mbq$ is regarded as a $\sZ$-module. Consider the $\mbq$-algebra
\begin{equation}\label{bar integral form}
\aligned
 \overline{\fD_\vtg(n)}_\mbq&:=\fD_\vtg(n)_\mbq/\lan K_i-1\mid 1\leq i\leq
n\ran\\
&\cong(\fD_\vtg(n)\ot_\sZ\mbq[\up,\up^{-1}])/\lan\up-1,K_i-1\mid
1\leq i\leq n\ran.\endaligned
\end{equation}
We have used and will use the same notation for elements in $\fD_\vtg(n)_\mbq$ and $\overline{\fD_\vtg(n)}_\mbq$.

Let $\afsl(\mbq)=\frak{sl}_n(\mbq)\ot\mbq[t,t^{-1}]$ and set
$\afgl^\rmL=\afgl^\rmL(\mbq)=\afsl(\mbq)\op\mbq\afE$, where
$\afE=\sum_{1\leq i\leq n}\afE_{i,i}$ is in the center of
$\afgl^\rmL(\mbq)$. Then the set
\[
\begin{split}
X:=&\{\afE_{i,j+ln}\mid 1\leq i,j\leq n,\,i\not
=j,\,l\in\mbz\}\cup\{\afE_{i,i}\mid 1\leq i\leq n\}\\
&\quad\cup\{\afE_{i,i+ln}-\afE_{i+1,i+1+ln}\mid 1\leq i\leq
n-1,\,l\in\mbz,\,l\not=0\}
\end{split}
\]
forms a $\mbq$-basis for $\afgl^\rmL$. Let $\sfz_m=\sum_{1\leq h\leq
n}\afE_{h,h+mn}$ for $m\not=0$. Then $X\cup\{\sfz_{m}\mid
m\in\mbz,\,m\not=0\}$ forms a $\mbq$-basis for $\afgl=\afgl(\mbq)$.
By the PBW theorem,
\begin{equation}\label{decompositon of afbfUgl}
\sU(\afgl)\cong\sU(\afgl^\rmL)\ot\mbq [\sfz_m]_{m\in\mbz\backslash\{0\}},
\end{equation}
where  $\sU(\afgl^\rmL)$ is the enveloping algebra of $\afgl^\rmL$. Note
that the $\sfz_m$ are central elements in $\sU(\afgl)$.

\begin{Thm} \label{HalltoLoopAlgebra at v=1}
 There is an algebra isomorphism $\phi:\overline{\fD_\vtg(n)}_\mbq\ra\sU(\afgl)$
defined by sending $\big[{K_i;0\atop t}\big]$ to
$\big({\afE_{i,i}\atop t}\big)$, $(u_i^+)^{(m)}$ to
$(\afE_{i,i+1})^m/m!$, $(u_i^-)^{(m)}$ to $(\afE_{i+1,i})^m/m!$,
$\sfz_s^+$ to $\sfz_s$ and $\sfz_s^-$ to $\sfz_{-s}$ for $1\leq
i\leq n$ and $m,s,t\geq 1$.
\end{Thm}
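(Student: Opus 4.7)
The plan is to prove the isomorphism $\phi:\overline{\fD_\vtg(n)}_\mbq\to\sU(\afgl)$ by combining the presentation of $\dHallr$ given in Theorem \ref{presentation dHallAlg} with the triangular decompositions of both algebras. First I would verify that $\phi$ is a well-defined algebra homomorphism. The algebra $\overline{\fD_\vtg(n)}_\mbq$ inherits a presentation from that of $\dHallr$ after specializing $\up\mapsto 1$ and $K_i\mapsto 1$: the relations (QGL1), (QGL2), (QGL7) become trivial, relations (QGL3), (QGL6), (QGL8) remain (with commutators replacing $v$-commutators), and (QGL4), (QGL5) degenerate to the classical Serre relations for $\afsl$. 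Since $\phi$ sends divided powers $(u_i^\pm)^{(m)}$ to $(\afE_{i,i\pm 1})^m/m!$, I need only check that the images of these generators satisfy the specialized relations inside $\sU(\afgl)$, which is routine because the $\afE_{i,j}$ already satisfy the commutator relation $[\afE_{i,j},\afE_{k,l}]=\delta_{\bar j,\bar k}\afE_{i,l+j-k}-\delta_{\bar l,\bar i}\afE_{k,j+l-i}$ that underlies \eqref{commute formula}.

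Second, I would prove surjectivity by exhibiting a generating set of $\sU(\afgl)$ inside the image. The decomposition \eqref{decompositon of afbfUgl} shows $\sU(\afgl)=\sU(\afgl^\rmL)\otimes\mbq[\sfz_m]_{m\neq 0}$. The central generators $\sfz_{\pm s}$ lie in $\phi(\overline{\fD_\vtg(n)}_\mbq)$ by construction. For $\sU(\afgl^\rmL)$, note that $\afE_{i,i+1},\afE_{i+1,i}\in\Image\phi$ for $1\leq i\leq n-1$, and using the images of $u_n^\pm$ one obtains $\afE_{1,1+n}$ and $\afE_{1+n,1}=\afE_{n+1,n+1-n}$ as well, together with all $\binom{\afE_{i,i}}{t}$, hence all $\afE_{i,i}$. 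Iterated commutators of the Chevalley generators then produce every $\afE_{i,j+ln}$ with $i\neq j+ln$, giving all of $\sU(\afgl^\rmL)$.

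Third, I would establish injectivity via a PBW-basis count using the triangular decomposition $\overline{\fD_\vtg(n)}_\mbq\cong\overline{\fD_\vtg(n)^+}_\mbq\otimes\overline{\fD_\vtg(n)^0}_\mbq\otimes\overline{\fD_\vtg(n)^-}_\mbq$, which follows from the triangular decomposition of $\fD_\vtg(n)$ over $\sZ$ established at the end of \S2.2. Under specialization, $\overline{\fD_\vtg(n)^\pm}_\mbq=\Hall^\pm_\mbq\otimes\mbq[\sfz_1^\pm,\sfz_2^\pm,\ldots]$, and by Theorem \ref{positive}, the restriction of $\phi$ to $\Hall^\pm_\mbq$ gives an isomorphism $\Hall^\pm_\mbq\overset\sim\to\sU(\afgl)^\pm$ carrying $u_{\afE_{i,j},1}^+\mapsto \afE_{i,j}$ and $u_{\afE_{i,j},1}^-\mapsto\afE_{j,i}$. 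Meanwhile $\phi$ maps $\sfz_s^+\mapsto\sfz_s$ and $\sfz_s^-\mapsto\sfz_{-s}$, and the basis $\big\{\prod_i\binom{K_i;0}{t_i}\bigm| t_i\in\mbn\big\}$ of $\overline{\fD_\vtg(n)^0}_\mbq$ maps to the PBW basis $\big\{\prod_i\binom{\afE_{i,i}}{t_i}\big\}$ of the Cartan part of $\sU(\afgl^\rmL)$. Matching these three tensor factors against \eqref{decompositon of afbfUgl} completes the argument.

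The main obstacle I anticipate is the identification of the central generators: verifying that $\phi(\sfz_s^\pm)=\sfz_{\pm s}$ is compatible with the recursive definition \eqref{recursive formula} of $\pi_m^\pm$ (and hence of $\sfz_m^\pm$) in the Ringel--Hall algebra. Since at $\up=1$ the factor $\frac{\up^{nm}}{\up-\up^{-1}}$ is singular, one must renormalize; the correct approach is to pass through the generating-function identity in Remark \ref{FM-central-elements} relating the $\sfz_m^\pm$ to the Frenkel--Mukhin central elements and then to the classical power sums $\sum_h\afE_{h,h\pm mn}$, using Proposition \ref{compareson with VV's action} at the level-one action on $\bfOg$ to pin down the normalization. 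Once this calculation is carried out, the three tensor factors match and $\phi$ is forced to be an isomorphism.
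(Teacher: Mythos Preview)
There is a genuine gap in your injectivity argument. You write that under specialization
$\overline{\fD_\vtg(n)^\pm}_\mbq=\Hall^\pm_\mbq\otimes\mbq[\sfz_1^\pm,\sfz_2^\pm,\ldots]$,
but this is false: by the description at the end of \S2.2 one has
$\fD_\vtg(n)^\pm=\comp^\pm\otimes_\sZ\sZ[\sfz_1^\pm,\sfz_2^\pm,\ldots]$,
and $\comp^\pm$ is strictly smaller than $\Hall^\pm$ (see Remark~\ref{Lform2}, where for $n=2,3$ one has $u_\dt^+\notin\fD_\vtg(n)^+$). Thus $\Hall^\pm_\mbq$ is not a subalgebra of $\overline{\fD_\vtg(n)}_\mbq$, and you cannot invoke Theorem~\ref{positive} as a statement about the restriction of $\phi$. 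The triangular pieces you want to match are $\comp^\pm_\mbq\otimes\mbq[\sfz_s^\pm]$, not $\Hall^\pm_\mbq\otimes\mbq[\sfz_s^\pm]$; matching those against $\sU(\afgl)^\pm$ would require a separate argument (essentially Lusztig's specialization result for the composition algebra).

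The paper avoids this altogether by using a different decomposition: rather than the triangular one, it uses $\fD_\vtg(n)\cong U_\vtg(n)\otimes\sZ[\sfz_m^\pm\mid m>0]$ (Remark~\ref{subalgebra quantum gl}(2)) together with \eqref{decompositon of afbfUgl}, and cites Lusztig \cite{Lu90,Lu93} for the isomorphism $\overline{U_\vtg(n)}_\mbq\cong\sU(\afgl^\rmL)$. The central polynomial factor then matches trivially, and the proof reduces to showing a spanning set of $\overline{\fD_\vtg(n)}_\mbq$ maps to a basis of $\sU(\afgl)$.

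Your anticipated obstacle about the singular factor $\frac{\up^{nm}}{\up-\up^{-1}}$ is a red herring. The elements $\sfz_m^\pm$ are, by definition, generators of the $\sZ$-algebra $\fD_\vtg(n)$, so they specialize without issue; and the only relations they satisfy in the presentation of Theorem~\ref{presentation dHallAlg} are centrality and mutual commutativity (QGL6)--(QGL8), which $\sfz_{\pm s}\in\sU(\afgl)$ visibly satisfy. No passage through Remark~\ref{FM-central-elements} or the recursion \eqref{recursive formula} is needed.
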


\begin{proof}
Let $U_\vtg(n)$ be the $\sZ$-subalgebra of $\dHallr$ generated
by $K_i^{\pm1}$, $\big[{K_i;0\atop t}\big]$, $(u_i^+)^{(m)}$ and
$(u_i^-)^{(m)}$ for $1\leq i\leq n$ and $t,m\geq 1$.   By
\cite[6.7]{Lu90} and \cite{Lu93}, specializing $\up$ to 1 induces an algebra isomorphism
$$\al:\overline{U_\vtg(n)}_\mbq:=U_\vtg(n)_\mbq/\lan K_i-1\mid 1\leq i\leq
n\ran\lra\sU(\afgl^\rmL)
$$
defined by taking $\big[{K_i;0\atop
t}\big]\mapsto\big({\afE_{i,i}\atop t}\big)$,
$(u_i^+)^{(m)}\mapsto(\afE_{i,i+1})^m/m!$,
$(u_i^-)^{(m)}\mapsto(\afE_{i+1,i})^m/m!$ for $1\leq i\leq n$ and
$m,t\geq 1$.

Let $\iota$ be the natural algebra homomorphism induced by the
inclusion $U_\vtg(n)\subset \fD_\vtg(n)$:
$$\iota:\overline{U_\vtg(n)}_\mbq\lra\overline{\fD_\vtg(n)}_\mbq=\fD_\vtg(n)_\mbq/\lan K_i-1\mid 1\leq i\leq
n\ran.$$ Since $\fD_\vtg(n)\cong U_\vtg(n)\ot\sZ[\sfz_m^{\pm}\mid
m>0]$, by \eqref{decompositon of afbfUgl}, the map $\al$ induces an
algebra homomorphism
$$\phi:\overline{\fD_\vtg(n)}_\mbq\lra\sU(\afgl)$$ such that
$\phi\iota(x)=\al(x)$ for $x\in \overline{U_\vtg(n)}_\mbq$ and
$\phi(\sfz_s^+)=\sfz_s$ and $\phi(\sfz_s^-)=\sfz_{-s}$ for $s>0$. We
choose a $\mbq$-basis $\{x_j\mid j\in J\}$ of
$\overline{U_\vtg(n)}_\mbq$. Then $\overline{\fD_\vtg(n)}_\mbq$ is
spanned by the set
$$Y:=\bigg\{\iota(x_j)\prod_{i=1}^k(\sfz_i^+)^{a_i}\prod_{i=1}^l(\sfz_i^-)^{b_i}\,\bigg|\,
j\in J,\,k,l\geq 1,\,a_i,b_i\in\mbn,\,\forall i\bigg\}.$$ By
\eqref{decompositon of afbfUgl}, the set $\phi(Y)$ forms a
$\mbq$-basis for $\sU(\afgl)$. Thus, $\phi$ is an isomorphism.
\end{proof}

Put $\bar\xi_r=\eta_r\circ\phi$. Theorem \ref{HalltoLoopAlgebra at
v=1} gives rise to an algebra epimorphism
$$\bar\xi_r:\overline{\fD_\vtg(n)}_\mbq\lra \afSr_\mbq,$$
which is the classical version of the map $\xi_r:\dHallr\to\afbfSr$.
In the next two sections, we will describe explicitly the image of the map
$$\bar\xi=\prod_{r\geq
0}\bar\xi_r:\overline{\fD_\vtg(n)}_\mbq\lra\afhsKq,$$
 or equivalently, the map
\begin{equation}\label{map eta}
\eta=\prod_{r\geq
0}\eta_r:\sU(\afgl)\lra\afhsKq.
\end{equation}
 Here we have identified the algebra $\afhsKq$ defined at the end of \S5.4 with the direct product $\prod_{r\geq 0}\afSr_\mbq$.

\section{More multiplication formulas in affine Schur algebras}

In order to prove Conjecture \ref{realization conjecture} in the classical case, we use
the elements $A[\bfj,r]$ for $A\in\afThnpm,{\bf j}\in \afmbnn$ defined in \eqref{A[j,r]}:
$$A[{\bf j},r]=\begin{cases} \sum_{\la\in\La_\vtg(n,r-\sg(A))}
\la^\bfj[A+\diag(\la)]_1,&\text{ if }\sg(A)\leq r;\\
0,&\text{ otherwise.}\end{cases}$$\index{$A[{\bf j},r]$, BLM basis
element for affine Schur algebra}
 These elements can not be obtained by specializing $\up$ to 1 from
the elements $A(\bfj,r)$ defined in \eqref{def-A(j,r)}. However, we
have $A[\bfl,r]=A(\bfl,r)_1$ for $A\in\afThnpm$ (assuming $0^0=1$).
We also point out another difference when $\sg(A)=r$. In this case,
$A[\bfj,r]=\dt_{\bfl,\bfj}[A]_1$ while $A(\bfj,r)_1=[A]_1$ for all
$\bfj\in \afmbnn$.

We will first show that, for a given $r>0$, the set
$\{A[\bfj,r]\}_{A\in\afThnpm,{\bf j}\in \afmbnn}$ spans the affine
Schur algebra $\afSr_\mbq$. Then we derive some multiplication
formulas between $A[\bfj,r]$ and certain generators corresponding to
simple and homogeneous indecomposable representations of the cyclic
quiver $\tri$. We will leave the proof of the conjecture to the next
section.


The following result is the classical counterpart of \cite[Prop.~4.1]{DF09}. Its proof is similar to the proof there; cf. \cite[4.3,4.2]{Fu09}.

\begin{Prop} For any fixed $1\leq i_0\leq n$, the set
$$\{A[\bfj,r]\mid A\in\afThnpm,{\bf j}\in \afmbnn,j_{i_0}=0,\; \sg(A)+\sg(\bfj)\leq r\}$$
 is a basis for $\afSr_\mbq$. In particular, the set $\{A[\bfj,r]\mid A\in\afThnpm,{\bf j}\in \afmbnn, \sg(A)\leq r\}$ forms a spanning set of $\afSr_\mbq$.
\end{Prop}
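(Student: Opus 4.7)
The plan follows the strategy of the quantum case \cite[Prop.~4.1]{DF09} (cf.\ Proposition~\ref{BLMbasis}), adapted to the $\up=1$ specialization where the exponential Vandermonde $(\up^{\la\centerdot\bfj})$ is replaced by the monomial Vandermonde $(\la^\bfj)$. First I would fix $A\in\afThnpm$ with $s:=r-\sg(A)\ge 0$ and examine the blocks indexed by $A$ on both sides. By definition,
\begin{equation*}
A[\bfj,r]=\sum_{\la\in\Lambda_\vtg(n,s)}\la^\bfj\,[A+\diag(\la)]_1,
\end{equation*}
and the constraint $j_{i_0}=0$ makes $\la^\bfj=\prod_{i\neq i_0}\la_i^{j_i}$, so $\la_{i_0}$ enters only through the relation $\sg(\la)=s$. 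Both index sets $\{\bfj\in\afmbnn:j_{i_0}=0,\ \sg(\bfj)\le s\}$ and $\Lambda_\vtg(n,s)$ are therefore in canonical bijection with the simplicial lattice
\begin{equation*}
T^{(n-1)}_s:=\bigl\{(x_i)_{i\neq i_0}\in\mbn^{n-1}\ \big|\ \textstyle\sum_{i\neq i_0}x_i\le s\bigr\},
\end{equation*}
of cardinality $\binom{s+n-1}{n-1}$, and the transition ``matrix'' from $\{[A+\diag(\la)]_1\}_\la$ to $\{A[\bfj,r]\}_\bfj$ is the square matrix $M_s=\bigl(\prod_{i\neq i_0}\la_i^{j_i}\bigr)$.

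The decisive step is to show that $M_s$ is invertible over $\mbq$. I would formulate this as a unisolvence statement: the space of polynomials of total degree at most $s$ in $n-1$ variables has dimension $\binom{s+n-1}{n-1}$, and evaluation at the points of $T^{(n-1)}_s$ is bijective. To prove invertibility I would change basis from the monomials $\prod_{i\neq i_0}x_i^{j_i}$ to the Newton basis $\prod_{i\neq i_0}\binom{x_i}{a_i}$ with $|a|\le s$. The change of basis is unitriangular (Stirling numbers), and in the Newton basis the evaluation matrix $\bigl(\prod_{i\neq i_0}\binom{\la_i}{a_i}\bigr)_{\la,a}$ is unitriangular with respect to the componentwise partial order on $T^{(n-1)}_s$, since $\binom{\la_i}{a_i}=0$ unless $a_i\le\la_i$ and equals $1$ when $a_i=\la_i$. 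Consequently $M_s$ is invertible, and for each fixed $A$ the $\mbq$-spans of $\{A[\bfj,r]\}$ and $\{[A+\diag(\la)]_1\}$ coincide.

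Summing over all $A\in\afThnpm$ with $\sg(A)\le r$ and using the bijection $B\mapsto(A,\la)$ obtained by splitting $B\in\afThnr$ into its off-diagonal part $A\in\afThnpm$ and its diagonal part $\diag(\la)$ with $\la\in\Lambda_\vtg(n,r-\sg(A))$, the proposed family has the same $\mbq$-span as the standard basis $\{[B]_1:B\in\afThnr\}$ of $\afSr_\mbq$, so it spans $\afSr_\mbq$. The cardinality check
\begin{equation*}
\sum_{A\in\afThnpm,\,\sg(A)\le r}\binom{r-\sg(A)+n-1}{n-1}=|\afThnr|=\dim_\mbq\afSr_\mbq
\end{equation*}
then shows the spanning set has the right size and hence is a basis. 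The ``in particular'' claim is immediate, since dropping the constraint $j_{i_0}=0$ only enlarges the family.

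The only real obstacle I anticipate is the invertibility of $M_s$: a naive slice-by-slice Vandermonde argument fixing most coordinates at height $m$ only yields $s-m+1$ zeros of a univariate polynomial of degree $\le s$, which is insufficient to force vanishing. The Newton basis trick sidesteps this by producing a manifestly unitriangular evaluation matrix, and is the cleanest route I see. Once invertibility is in hand, the remainder is routine bookkeeping, exactly parallel to the argument in the quantum case cited.
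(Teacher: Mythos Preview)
Your argument is correct and is precisely the classical analogue the paper has in mind (it gives no proof here, only citing \cite[Prop.~4.1]{DF09} and \cite[4.3,4.2]{Fu09}): block by block in $A$, reduce to invertibility of the simplicial monomial Vandermonde and then count. One small slip: the Stirling change from monomials $x^j$ to binomials $\binom{x}{k}$ is triangular with diagonal entries $j!$, not unitriangular, but over $\mbq$ this is harmless and the rest of the argument---unitriangularity of $\bigl(\prod_{i\neq i_0}\binom{\la_i}{a_i}\bigr)$ in the componentwise order, hence invertibility of $M_s$---goes through unchanged.
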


The first two of the following multiplication formulas in affine
Schur algebras are a natural generalization of the multiplication
formulas for Schur algebras given in \cite[Prop.~3.1]{Fu09}.  The
third formula is new and is the key to the proof of Conjecture
\ref{realization conjecture} in the classical case. It would be
interesting to find the corresponding formula for affine quantum
Schur algebras. For the simplicity of the statement in the next
result, we also set $A[\bfj,r]=0$ if some off-diagonal entries of
$A$ are negative.

\begin{Thm}\label{Multiplication Formulas at v=1}
Assume $1\leq h,t\leq n$, $\bfj=(j_k)\in\afmbnn$ and $A=(a_{i,j})\in\afThnpm$. The
following multiplication formulas hold in $\afSr_\mbq$:
\begin{itemize}
\item[(1)] $0[\afbse_t,r]  A[\bfj,r]=A[\bfj+\afbse_t,r]
+\bigl(\sum_{s\in\mbz}a_{t,s}\bigr)A[\bfj,r]$;
\item[(2)] for $\varepsilon\in\{1,-1\}$,
\begin{equation*}\label{MF1}
\begin{split}
 E^\vtg_{h,h+\varepsilon}[\bfl,r]  A[\bfj,r] =
 &\sum_{a_{h+\varepsilon,i}\geq 1\atop\forall i\not=h,h+\varepsilon}
(a_{h,i}+1)(A+E^\vtg_{h,i}-E^\vtg_{h+\varepsilon,i})[\bfj,r] \\
&+\sum_{0\leq i\leq j_h}(-1)^i\bigg({j_h\atop i}\bigg)
(A-E^\vtg_{h+\varepsilon,h})[\bfj+(1-i)\afbse_h,r] \\
&+(a_{h,h+\varepsilon}+1)\sum_{0\leq i\leq j_{h+\varepsilon}}\bigg({j_{h+\varepsilon}\atop
i}\bigg)(A+E^\vtg_{h,h+\varepsilon})[\bfj-i\afbse_{h+\varepsilon},r];
\end{split}
\end{equation*}

\item[(3)] for $m\in \mbz\backslash\{0\}$,
\begin{equation*}
\begin{split}
\afE_{h,h+mn}[\bfl,r]A[\bfj,r] &=\sum_{s\not\in\{h,h-mn\}
\atop a_{h,s}\geq 1}(a_{h,s+mn}+1)(A+\afE_{h,s+mn}-\afE_{h,s})[\bfl,r]\\
&\quad+\sum_{0\leq t\leq j_h}(a_{h,h+mn}+1)\left({j_h\atop
t}\right)(A+\afE_{h,h+mn})[\bfj-t\afbse_h,r]\\
&\quad+\sum_{0\leq t\leq j_h}(-1)^t\left({j_h\atop t}
\right)(A-\afE_{h,h-mn})[\bfj+(1-t)\afbse_h,r].
\end{split}
\end{equation*}
\end{itemize}
\end{Thm}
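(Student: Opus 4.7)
The plan is to establish the three formulas directly within $\afSr_\mbq$ using its basis $\{[B]_1\}_{B \in \afThnr}$ and the multiplication rule from Definition \ref{generic affine quantum Schur algebra} specialized at $v=1$. Formula (1) is a direct calculation: expanding $0[\afbse_t,r] = \sum_{\mu \in \afLanr} \mu_t\,[\diag(\mu)]_1$ and using the orthogonality $[\diag(\mu)]_1 [A+\diag(\la)]_1 = \delta_{\mu,\,\ro(A)+\la}\,[A+\diag(\la)]_1$ obtained from the $v=1$ specialization of \eqref{product [diag(la)][A] in affine q-Schur algebra}, the coefficient of $[A+\diag(\la)]_1$ in $0[\afbse_t,r]\cdot A[\bfj,r]$ works out to $(\ro(A)_t + \la_t)\la^\bfj = \la_t\la^\bfj + (\sum_s a_{t,s})\la^\bfj$, and reindexing $\mu\mapsto\la=\mu-\ro(A)$ produces the two asserted summands.

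For formula (2), I would first handle the case $\bfj = \bfl$ by specializing Theorem \ref{multiplication formulas in affine q-Schur algebra} at $v = 1$: since $A(\bfj,r)|_{v=1} = A[\bfl,r]$ is independent of $\bfj$, the Gaussian binomials collapse to ordinary binomials, and the $0/0$ indeterminate fraction $(A(\bfj+\afal_h,r)-A(\bfj+\afbt_h,r))/(1-v^{-2})$ is resolved by Taylor-expanding around $v=1$. The general $\bfj$ case then follows by induction on $|\bfj|$ using formula (1) rewritten as $A[\bfj+\afbse_t,r] = 0[\afbse_t,r]\cdot A[\bfj,r] - (\sum_s a_{t,s})A[\bfj,r]$, combined with the commutator $[0[\afbse_t,r],\,E^\vtg_{h,h+\varepsilon}[\bfl,r]] = (\delta_{t,h}-\delta_{t,h+\varepsilon})E^\vtg_{h,h+\varepsilon}[\bfl,r]$ obtained by applying $\eta_r$ to the Lie bracket $[\afE_{t,t},\afE_{h,h+\varepsilon}]$ in $\sU(\afgl)$; the binomial coefficients $\binom{j_h}{i}$ and $\binom{j_{h+\varepsilon}}{i}$ emerge naturally from the resulting telescoping.

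Formula (3) contains the essentially new content. I would compute $\afE_{h,h+mn}[\bfl,r]\cdot A[\bfj,r]$ via the convolution product in the geometric model, using the $v=1$ specialization of Proposition \ref{afzrpm} (with opposite sign conventions distinguishing $m > 0$ from $m < 0$). The element $\afE_{h,h+mn}[\bfl,r]$ is the image under $\eta_r$ of a homogeneous indecomposable generator corresponding to the nilpotent representation $S_h[mn]$ (for $m>0$) or $S_{h+mn}[-mn]$ (for $m<0$); evaluating at a fixed pair $(\bfL,\bfL')$ and enumerating admissible intermediate flags $\bfL''$, the result splits according to how the socle or top of the indecomposable meets the strata of $A+\diag(\la)$: (i) lying in an off-diagonal stratum $(h,s)$ with $s\notin\{h,h-mn\}$ and $a_{h,s}\geq 1$ gives the first sum, (ii) creating a fresh entry in position $(h,h+mn)$ gives the second sum, and (iii) interacting with the diagonal at position $(h,h-mn)$ yields the third, combinatorially subtle sum. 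The general $\bfj$ case is again reached by the same inductive scheme used for (2).

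The main obstacle will be case (iii) above: tracking how the diagonal weight $\la$ in $A+\diag(\la)$ is re-parameterized when a subflag isomorphic to the homogeneous indecomposable is excised, and reconciling the resulting shift in $\la^\bfj$ with the prescribed matrix entry at $(h,h-mn)$, requires an inclusion--exclusion argument that produces the alternating binomial sum $\sum_t (-1)^t \binom{j_h}{t}$. This is precisely the classical ($v=1$) shadow of Problem \ref{Problem for MF}, which remains open at generic $v$; the reason the computation is tractable here is that all Gaussian factors degenerate to ordinary binomials and there are no quantum phase factors to track.
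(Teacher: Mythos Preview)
Your treatment of (1) matches the paper's, and your plan for (2) is workable and essentially equivalent to what the paper does: the paper uses Proposition~\ref{MFforSBE}(1) (which, as noted there, is itself obtainable by specializing \cite[3.5]{Lu99}) together with the binomial expansion $(\mu_h+1-1)^{j_h}=\sum_i(-1)^i\binom{j_h}{i}(\mu_h+1)^{j_h-i}$ to pass from $\bfj=\bfl$ to general $\bfj$, while you propose induction on $|\bfj|$ via the commutator with $0[\afbse_t,r]$. These are two packagings of the same Pascal-rule identity, so neither approach buys anything the other does not.

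The gap is in (3). You propose to compute the base case ``via the convolution product in the geometric model,'' but at $\up=1$ there is no flag variety over a field with one element, and no analogue of Theorem~\ref{multiplication formulas in affine q-Schur algebra} is available to specialize (this is exactly the open Problem~\ref{Problem for MF}). The paper instead establishes the key new ingredient, Proposition~\ref{MFforSBE}(2), by working directly in the group algebra $\mbq\fS_{\vtg,r}$: one identifies the shortest double-coset representative $u_{m,i}^\la$ of $\fS_\la\bse_i^m\fS_\la$ (Lemma~\ref{power of e_i}), computes $\afmsD_\delta\cap\fS_\la$ explicitly as a set of permutations $w_{m,i}$, and then reads off the matrix $B^{(m,i)}=A+\afE_{h,mn+t_i}-\afE_{h,t_i}$ from the action of $(u_{m,i}^\la)^{-1}w_{m,i}$ on the blocks $R_s^\la$. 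Your description of cases (i)--(iii) is a correct outline of what the answer looks like, but it does not substitute for this calculation.

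You also misattribute the alternating sign: the factor $(-1)^t\binom{j_h}{t}$ does not arise from inclusion--exclusion in the flag count. Once Proposition~\ref{MFforSBE}(2) is in hand, the three cases are treated uniformly; the sum over $\mu$ with weight $\mu^\bfj$ is then split according to whether the diagonal entry shifts by $-\afbse_h$, $+\afbse_h$, or not at all, and the binomial expansions $(\mu_h-1+1)^{j_h}$ and $(\mu_h+1-1)^{j_h}$ produce the positive and alternating sums respectively, just as in your (correct) treatment of (2).
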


It is natural to compare Theorem \ref{Multiplication Formulas at
v=1}(1)--(2) with \cite[(4.2.1-3)]{DF09} or Theorem
\ref{multiplication formulas in affine q-Schur algebra} which
generalize the corresponding ones for quantum Schur algebras. They
are {\it not} obtained from the quantum counterpart by specializing
$\up$ to 1. For example, the second sum in the right hand side of
(2) above is slightly different from the quantum version. In the
case where $\sg(A)=r+1$ and $a_{h+\varepsilon,h}\geq1$, the left
hand side of Theorem \ref{Multiplication Formulas at v=1}(2) is
zero. By the remark at the beginning of the section, the right hand
side is also 0 since $\bfj+(1-i)\afbse_h\neq\bfl$ for all $0\leq
i\leq j_h$.

The proof of the following result will be given at the end of the
chapter as an appendix; see \S6.4. It should be pointed out that the
first formula can also be obtained from \cite[3.5]{Lu99} by
specializing $\up$ to 1, while the second formula is the key to the
proof of part (3) of the theorem above.

\begin{Prop}\label{MFforSBE}Let $1\leq h\leq n$, $B=(b_{i,j})\in\afThnr$ and $\la=\ro(B)$.
\begin{itemize}
\item[(1)] If $\varepsilon\in\{1,-1\}$ and $\la\geq\afbse_{h+\varepsilon}$, then
$$[E^\vtg_{h,h+\varepsilon}+\diag(\la-\bse^\vtg_{h+\varepsilon})]_1[B]_1=\sum_{{i\in\mbz},{b_{h+\varepsilon,i}\geq1}}
(b_{h,i}+1)[B+E_{h,i}^\vtg-E_{h+\varepsilon,i}^\vtg]_1.$$

\item[(2)] If $m\in\mbz\backslash\{0\}$ and $\la\geq\afbse_h$, then
$$[\afE_{h,h+mn}+\diag(\la-\afbse_h)]_1[B]_1=\sum_{s\in\mbz\atop
b_{h,s}\geq 1}(b_{h,s+mn}+1)[B+\afE_{h,s+mn}-\afE_{h,s}]_1.$$
\end{itemize}
\end{Prop}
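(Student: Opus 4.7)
The plan is to prove both formulas by a direct computation of the structure constants for the products on the left-hand side, relative to the basis $\{[C]_1\}_{C\in\afThnr}$ of $\afSr_\mbq$. The key tool is the geometric realization of $\afSr$ as a convolution algebra of $G$-invariant functions on $\scrY\times\scrY$ (where $\scrY=\afFn(q)$ for a prime power $q$), combined with the fact that the structure constants for the basis $\{[A]\}$ are polynomials in $\up$ and hence specialize unambiguously to $\up=1$. Thus, for each candidate matrix $C$ on the RHS, I would fix a pair $(\bfL,\bfL'')\in\sO_C$ and count the number of intermediate cyclic flags $\bfL'\in\scrY$ for which $(\bfL,\bfL')\in\sO_A$ and $(\bfL',\bfL'')\in\sO_B$. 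At $\up=1$ this count (interpreted via the Hall-polynomial value at $\up^2=1$) is exactly the coefficient of $[C]_1$ in the product $[A]_1[B]_1$.

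For part (1), the matrix $A=E^\vtg_{h,h+\varepsilon}+\diag(\la-\bse^\vtg_{h+\varepsilon})$ has a very rigid form: any pair $(\bfL,\bfL')\in\sO_A$ satisfies $L'_j=L_j$ for every $j\not\equiv h+\varepsilon\pmod n$ when $\varepsilon=-1$, and an analogous statement for $\varepsilon=1$, with $\bfL,\bfL'$ differing by a codimension-one step in the remaining direction. Given $(\bfL,\bfL'')\in\sO_{C}$ with $C=B+E^\vtg_{h,i}-E^\vtg_{h+\varepsilon,i}$, I would parametrize the admissible $\bfL'$ by the choice of a hyperplane inside an explicit one-parameter family of subspaces determined by the entries $b_{h,i}$ of $B$ via Lemma \ref{triangular matrix}; the number of such hyperplanes specializes to $b_{h,i}+1$ at $\up=1$, yielding the stated formula.

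For part (2), $A=\afE_{h,h+mn}+\diag(\la-\afbse_h)$ again forces $L'_j=L_j$ outside positions congruent to $h\pmod n$, but now the nonzero off-diagonal entry $a_{h,h+mn}=1$ imposes a subtle condition measuring the interaction between $L_h$ and $L'_{h+mn}=L_{h+mn}$ across the periodicity. Concretely, the single allowed direction in which $\bfL'$ differs from $\bfL$ must be chosen to lie generically with respect to the filtration induced by $\bfL''$. Once $B$ is fixed and one tracks which entry $b_{h,s}$ gets decremented by one (and $b_{h,s+mn}$ gets incremented), I would show that the set of permissible $\bfL'$ is parametrized by a $\mbq$-rational variety whose $q=1$ specialization has cardinality $b_{h,s+mn}+1$; this yields the claimed coefficients.

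The main obstacle will be part (2): unlike part (1), the formula has no counterpart in the existing quantum multiplication results (Theorem \ref{multiplication formulas in affine q-Schur algebra} covers only simple-root generators $\afE_{h,h+1}$ and $\afE_{h+1,h}$), so the enumeration of intermediate flags in the imaginary-root direction must be carried out from scratch. The delicate point is to verify that the extra freedom in choosing $L'_{h+kn}$, modulo the constraint coming from the single nonzero off-diagonal block $a_{h,h+mn}=1$, contributes exactly the factor $b_{h,s+mn}+1$ for each valid $s$, with no additional terms arising from lower-dimensional incidence strata in the flag variety. This combinatorial identification, whose details I defer to the appendix in \S6.4, constitutes the technical heart of the proof.
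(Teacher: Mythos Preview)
Your approach is valid but genuinely different from the paper's. You propose to work geometrically via the convolution-algebra realization on $\scrY\times\scrY$, counting intermediate cyclic flags $\bfL'$ over $\mathbb F_q$ and then specializing the resulting polynomials to $\up^2=1$. The paper instead works entirely through the Hecke-algebra realization $\afSr_\mbq\cong\sS^{_\sH}_\vtg(n,r)_\mbq$ (Proposition~\ref{Green defn=geom defn}): it evaluates both sides on the element $\ul{\fS_\mu}\in\mbq\affSr$, expresses $[A]_1(\ul{\fS_\mu})$ as a sum over double cosets via explicit permutations, and tracks which matrix each coset contributes using Lemma~\ref{the map jmath} and Corollary~\ref{double coset}. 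For part~(2), the paper introduces concrete elements $u_{m,i}^\la,w_{m,i}\in\affSr$ (Lemma~6.4.1) as shortest double-coset representatives for $\fS_\la\bse_i^m\fS_\la$, and the coefficient $b_{h,s+mn}+1$ falls out of a direct factorial ratio after identifying the resulting matrices $B^{(m,i)}$.

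Your geometric approach for part~(1) is essentially Lusztig's in \cite[3.5]{Lu99}, as the paper itself notes. For part~(2), your flag-counting strategy should work in principle, but be aware that the constraint imposed by the entry $a_{h,h+mn}=1$ is \emph{not} a simple ``$L'_j=L_j$ outside positions congruent to $h$'' condition: since $\ro(A)=\co(A)=\la$ and the matrix is neither upper nor lower triangular for $m\neq 0$, the relative position of $\bfL$ and $\bfL'$ is more subtle than in part~(1), and the enumeration of intermediate flags requires care. The paper's double-coset method sidesteps this by reducing everything to explicit bijections between sets of coset representatives and the index set $R_h^\la$, which makes the coefficient $(a_{h,t+mn}+1)/a_{h,t}\cdot|\{i\in R_h^\la:t_i=t\}|=a_{h,t+mn}+1$ transparent. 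What your approach buys is a more uniform conceptual picture that would generalize directly to the quantum case if the flag count could be carried out with its $q$-deformation intact---which is exactly Problem~\ref{Prob-realization}.
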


We now use these formulas to prove the theorem.

\begin{proof} The proof of formula (1) is straightforward. Since for any
$A\in\afThnr$ and $\la\in\afLanr$,
$$[\diag(\la)]_1[A]_1=\begin{cases} [A]_1,&\text{ if }\la=\ro(A);\\
0,&\text{ otherwise,}\end{cases}$$ it follows that
$$\aligned
0[\afbse_t,r]  A[\bfj,r]&=\sum_{\la\in\La(n,r-1)}\la_t[\diag(\la)]_1\sum_{\mu\in\La(n,r-\sg(A))}\mu^\bfj[A+\diag(\mu)]_1\\
&=\sum_{\mu\in\La(n,r-\sg(A))}\la_t\mu^\bfj[A+\diag(\mu)]_1\;\;\;\text{(where }\la=\ro(A)+\mu)\\
&=\sum_{\mu\in\La(n,r-\sg(A))}(\sum_{j\in\mbz}a_{t,j}+\mu_t)\mu^\bfj[A+\diag(\mu)]_1\\
&=\sum_{\mu\in\La(n,r-\sg(A))}\mu^{\bfj+\bse_t}[A+\diag(\mu)]_1+\biggl(\sum_{j\in\mbz}a_{t,j}\biggr)A[\bfj,r]=\text{RHS}.\\
\endaligned$$

We now prove formula (2). For convenience, we set $[B]_1=0$ if one
of the entries of $B$ is negative. Since $[A]_1[B]_1=0$ whenever
$\co(A)\neq\ro(B)$, we have
$$E^\vtg_{h,h+\varepsilon}[\bfl,r]  A[\bfj,r] =\sum_{\mu\in\La(n,r-\sg(A))}\mu^\bfj[E^\vtg_{h,h+\ep}+\diag(\mu+\ro(A)-\afbse_{h+\ep})]_1[A+\diag(\mu)]_1.$$
By Proposition \ref{MFforSBE}(1),
$$\aligned
{}[E^\vtg_{h,h+\ep}&+\diag(\mu+\ro(A)-\afbse_{h+\ep})]_1[A+\diag(\mu)]_1\\
&=\sum_{{i\neq h,h+\ep}\atop{a_{h+\varepsilon,i}\geq1}}
(a_{h,i}+1)[A+E_{h,i}^\vtg-E_{h+\varepsilon,i}^\vtg+\diag(\mu)]_1\\
&\qquad+(\mu_h+1)[A-E_{h+\ep,h}^\vtg+\diag(\mu+\afbse_h)]_1\\
&\qquad+(a_{h,h+\ep}+1)[A+E_{h,h+\ep}+\diag(\mu-\afbse_{h+\ep})]_1.\\
\endaligned
$$
Thus,
$$E^\vtg_{h,h+\varepsilon}[\bfl,r]  A[\bfj,r]=\sum_{{i\neq h,h+\ep}\atop{a_{h+\varepsilon,i}\geq1}}
(a_{h,i}+1)(A+E_{h,i}^\vtg-E_{h+\varepsilon,i}^\vtg)[\bfj,r]+\sY_h+\sY_{h+\ep},$$
where
$$\aligned
\sY_h&=\sum_{\mu\in\La(n,r-\sg(A))}\mu^\bfj(\mu_h+1)[A-E_{h+\ep,h}^\vtg+\diag(\mu+\afbse_h)]_1\\
&=\sum_{\mu\in\La(n,r-\sg(A))}\bigl(\prod_{i\neq h}\mu_i^{j_i}\bigr)(\mu_h+1-1)^{j_h}(\mu_h+1)[A-E_{h+\ep,h}^\vtg+\diag(\mu+\afbse_h)]_1\\
&=\sum_{0\leq i\leq j_h}(-1)^i{j_h\choose i}(A-E_{h+\ep,h}^\vtg)[\bfj+(1-i)\afbse_h,r],\\
\endaligned$$
and
$$\aligned
&\quad\,\sY_{h+\ep}=\sum_{\mu\in\La(n,r-\sg(A))}\mu^\bfj(a_{h,h+\ep}+1)[A+E_{h,h+\ep}+\diag(\mu-\afbse_{h+\ep})]_1\\
&=(a_{h,h+\ep}+1)\sum_{\mu\in\La(n,r-\sg(A))}\bigl(\prod_{i\neq h+\ep}\mu_i^{j_i}\bigr)(\mu_{h+\ep}-1+1)^{j_{h+\ep}}[A+E_{h,h+\ep}+\diag(\mu-\afbse_{h+\ep})]_1\\
&=(a_{h,h+\ep}+1)\sum_{0\leq i\leq j_{h+\ep}}{j_{h+\ep}\choose i}\sum_{\mu\in\La(n,r-\sg(A)-1)}\mu^{\bfj-i\afbse_{h+\ep}}[A+E_{h,h+\ep}+\diag(\mu)]_1\\
&=(a_{h,h+\ep}+1)\sum_{0\leq i\leq j_{h+\varepsilon}}\bigg({j_{h+\varepsilon}\atop
i}\bigg)(A+E^\vtg_{h,h+\varepsilon})[\bfj-i\afbse_{h+\varepsilon},r].
\endaligned$$
Substituting gives (2).

Finally, we prove formula (3).  The proof is similar to that of (2).
First, with a same reasoning,
\[
\begin{split}
\afE_{h,h+mn}[0,r]A[\bfj,r]&=\sum_{\mu\in\afLa(n,r-\sg(A))}
\mu^\bfj[\afE_{h,h+mn}+\diag(\mu)+ro(A)-\afbse_h]_1\cdot[A+\diag(\mu)]_1.\\
\end{split}
\]
Applying Proposition \ref{MFforSBE}(2) yields
\[
\begin{split}
&\qquad[\afE_{h,h+mn}+\diag(\mu)+ro(A)-\afbse_h]_1
\cdot[A+\diag(\mu)]_1\\
&=\sum_{s\not\in\{h,h-mn\}}
(a_{h,s+mn}+1)[A+\afE_{h,s+mn}-\afE_{h,s}+\diag(\mu)]_1\\
&\qquad +(a_{h,h+mn}+1)[(A+\afE_{h,h+mn})+\diag(\mu-\afbse_h)]_1
\\&\qquad+
(\mu_h+1)[(A-E_{h,h-mn})+\diag(\mu+\afbse_h)]_1.\\
\end{split}
\]
Thus,
\[
\begin{split}
\afE_{h,h+mn}[0,r]A[\bfj,r]&=\sum_{s\not\in\{h,h-mn\}}
(a_{h,s+mn}+1)(A+\afE_{h,s+mn}-\afE_{h,s})[\bfl,r]+\sX_1+\sX_2
\end{split}
\]
where
\[\begin{split}
\sX_1&=(a_{h,h+mn}+1)\sum_{\mu\in\afLa(n,r-\sg(A))}\mu^\bfj
[(A+\afE_{h,h+mn})+\diag(\mu-\afbse_h)]_1\\
&=(a_{h,h+mn}+1)\sum_{\mu\in\afLa(n,r-\sg(A))}\prod_{s\not=h\atop
1\leq s\leq
n}\mu_s^{j_s}(\mu_h-1+1)^{j_h}[(A+\afE_{h,h+mn})+\diag(\mu-\afbse_h)]_1\\
&=(a_{h,h+mn}+1)\sum_{\mu\in\afLa(n,r-\sg(A))\atop 0\leq t\leq
j_h}\bigg({j_h\atop
t}\bigg)(\mu-\afbse_h)^{\bfj-t\afbse_h}[A+\afE_{h,h+mn}+\diag(\mu-\afbse_h)]_1\\
&=(a_{h,h+mn}+1)\sum_{0\leq t\leq j_h}\bigg({j_h\atop
t}\bigg)(A+\afE_{h,h+mn})[\bfj-t\afbse_h,r]
\end{split}
\]
and
\[
\begin{split}
\sX_2&=\sum_{\mu\in\afLa(n,r-\sg(A))}\mu^\bfj(\mu_h+1)
[(A-E_{h,h-mn})+\diag(\mu+\afbse_h)]_1\\
&=\sum_{\mu\in\afLa(n,r-\sg(A))}\prod_{s\not=h}\mu_s^{j_s}(\mu_h+1-1)^{j_h}(\mu_h+1)
[(A-E_{h,h-mn})+\diag(\mu+\afbse_h)]_1\\
&=\sum_{\mu\in\afLa(n,r-\sg(A))\atop 0\leq t\leq
j_h}(-1)^t\bigg({j_h\atop
t}\bigg)(\mu+\afbse_h)^{\bfj+(1-t)\afbse_h}[(A-E_{h,h-mn})+\diag(\mu+\afbse_h)]_1\\
&=\sum_{0\leq t\leq j_h}(-1)^t\bigg({j_h\atop
t}\bigg)(A-E_{h,h-mn})[\bfj+(1-t)\afbse_h,r],
\end{split}
\]
proving (3). This completes the proof of the theorem.
\end{proof}

\section{Proof of Conjecture 5.4.1 at $v=1$}

We now use Theorem \ref{Multiplication Formulas at v=1} to prove
Conjecture \ref{realization conjecture} in the classical case.

Recall from the proof of Theorem \ref{positive}, the specialized
Ringel--Hall algebra $\Hall_\mbq$ is generated by
$u_{i,j}:=u_{\afE_{i,j},1}$ for all $i<j\in\mbz$. As seen from
Proposition \ref{indecomposable basis}, the Ringel--Hall algebra
$\bfHall$ over $\mbq(\up)$ can be generated by the elements
associated with simple and homogeneous indecomposable
representations of $\tri$. We first prove that this is also true for
$\Hall_\mbq$.


\begin{Lem}\label{generators}
The $\Hall_\mbq$ is generated by the elements $u_{i,i+1}$ and
$u_{i,i+mn}$ for $i\in\mbz$ and $m\in\mbn$. In particular, the
subalgebras $\sS^\pm_\vtg(n,r)_\mbq$ spanned by
$A[\bfl,r]=A(\bfl,r)_1$ for all $A\in\Theta^\pm(n)$ can be generated
by $\afE_{h,h\pm1}[\bfl,r]$ and $\afE_{h,h\pm mn}[\bfl,r]$ for all
$1\leq h\leq n$ and $m\geq 1$, respectively.
\end{Lem}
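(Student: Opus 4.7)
The plan is to reduce the first claim to an inductive argument inside the Hall algebra via the commutator formula \eqref{commute formula}, and then transfer the conclusion to $\sS^\pm_\vtg(n,r)_{\mbq}$ through the surjections supplied by Theorem~\ref{surjective in the case of q=1}.

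By the proof of Theorem~\ref{positive}, $\Hall_{\mbq}$ is generated by the complete family $\{u_{i,j}\mid i<j\}$, and its multiplication satisfies the relation
$$[u_{i,j},u_{k,l}]=\dt_{\bar j,\bar k}u_{i,l+j-k}-\dt_{\bar l,\bar i}u_{k,j+l-i}$$
recorded in \eqref{commute formula}. Write $\sH$ for the subalgebra generated by the proposed elements; it suffices to show $u_{i,j}\in\sH$ for every $i<j$. The case $\bar j=\bar i$ is immediate, $u_{i,j}$ then being a homogeneous indecomposable generator. For $\bar j\neq\bar i$ I would induct on $\ell=j-i$, the base $\ell=1$ being a simple generator. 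In the inductive step $\ell\geq 2$, the bracket identity gives
$$u_{i,j}=[u_{i,i+1},u_{i+1,j}]=u_{i,i+1}u_{i+1,j}-u_{i+1,j}u_{i,i+1},$$
the correction term $\dt_{\bar j,\bar i}u_{i+1,i+1+\ell}$ vanishing by hypothesis. The left factor is a simple generator, and for the right factor one distinguishes three alternatives: either $\ell-1=1$ (simple generator), or $\ell-1\geq n$ with $\ell-1\equiv 0\pmod n$ (homogeneous generator), or $\ell-1\geq 2$ with $\ell-1\not\equiv 0\pmod n$, in which case $\bar j\neq\overline{i+1}$ and the induction hypothesis applies since $\ell-1<\ell$. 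In every alternative $u_{i+1,j}\in\sH$, hence $u_{i,j}\in\sH$, closing the induction with $\sH=\Hall_{\mbq}$.

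For the second assertion, Theorem~\ref{surjective in the case of q=1} supplies the surjection $\zeta_{r,\mbq}^+:\Hall_{\mbq}\to\sS^+_\vtg(n,r)_{\mbq}$ sending $u_{\afE_{i,j},1}\mapsto\afE_{i,j}(\bfl,r)_1=\afE_{i,j}[\bfl,r]$ for $\afE_{i,j}\in\afThnpm$. Combined with the generation result above, this immediately exhibits $\{\afE_{i,i+1}[\bfl,r],\,\afE_{i,i+mn}[\bfl,r]\mid i\in\mbz,\,m\geq 1\}$ as a generating set for $\sS^+_\vtg(n,r)_{\mbq}$, and the periodicity $\afE_{i+n,j+n}=\afE_{i,j}$ then reduces the index to $1\leq h\leq n$. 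The negative case is handled identically via $\zeta_{r,\mbq}^-$, which sends $u_{\afE_{i,j},1}$ to the transpose $\afE_{j,i}[\bfl,r]$ and yields the mirror generating set $\afE_{h,h-1}[\bfl,r]$, $\afE_{h,h-mn}[\bfl,r]$. The only delicate point is verifying the right factor $u_{i+1,j}$ in the trichotomy, but this is direct bookkeeping with no hidden obstacle; beyond \eqref{commute formula} and the identification $\Hall_{\mbq}\cong\sU(\afgl)^+$, no further input is required.
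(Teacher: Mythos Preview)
Your proof is correct and follows essentially the same approach as the paper: both arguments use the commutator identity \eqref{commute formula} to write $u_{i,j}=[u_{i,i+1},u_{i+1,j}]$ when $\bar j\neq\bar i$, reducing inductively to the simple and homogeneous indecomposable generators. The only cosmetic difference is that the paper writes $j-i=mn+k$ with $1\leq k\leq n$ and inducts on the residue $k$, whereas you induct directly on $\ell=j-i$; both schemes are equivalent, and the deduction of the second assertion via $\zeta_{r,\mbq}^\pm$ is the same.
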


\begin{proof}
Let $\frak K$ be the subalgebra of $\Hall_\mbq$ generated by the
elements $u_{i,i+1}$ and $u_{i,i+mn}$ for $i\in\mbz$ and $m\in\mbn$.
It is enough to prove $u_{i,j}\in \frak K$ for all $i<j$. Write
$j-i=mn+k$, where $m\in\mbz$ and $1\leq k\leq n$. Then it is clear
that $u_{i,j}\in\Hall_\mbq$ for $k=n$. Now assume $1\leq k<n$. We
apply induction on $k$. If $k=1$, then by \eqref{commute formula},
\[
u_{i,mn+i+1}=u_{i,i+1}u_{i+1,mn+1+i}-u_{i+1,mn+i+1}u_{i,i+1}\in
\frak K.
\]
Now suppose $u_{i,j}\in \frak K$ for all $i<j$ with $j-i=mn+k-1$.
Then by \eqref{commute formula} and the induction hypothesis,
\[
u_{i,mn+k+i}=u_{i,i+1}u_{i+1,mn+k+i}-u_{i+1,mn+k+i}u_{i,i+1}\in
\frak K,
\]
proving the first assertion.

The last assertion follows from Proposition \ref{afzrpm}.
\end{proof}

As in \S5.4, let $\afhsKq$ be the vector space of all formal
(possibly infinite) $\mbq$-linear combinations
$\sum_{A\in\afThn}\beta_A[A]_1$ satisfying \eqref{(F)}. Recall from
\eqref{A[j]} that for $A\in\afThnpm$ and ${\bf j}\in \afmbnn$,
$$A[{\bf j}]=\sum_{r=0}^\infty A[\bfj,r]\in \afhsKq.$$
 Furthermore, let $\leq$ and $\pr$ be the orders on $\mbz_\vtg^n$ and $\afThn$
defined in \eqref{order on afmbzn} and \eqref{order preceq},
respectively.
\begin{Prop}\label{triangular formula in A[bfj]}
For $A\in\afThnpm$ and $\bfj\in\afmbnn$, we have
\[
A^+[\bfl]0[\bfj]A^-[\bfl]=A[\bfj]+\sum_{\bfj'<\bfj
\atop\bfj'\in\afmbnn}f_{A,\bfj}^{\bfj'}A[\bfj'] +\sum_{B\in\afThnpm
\atop B\p A,\,\bfj'\in\afmbnn}f_{A,\bfj}^{B,\bfj'}B[\bfj'],
\]
 where $f_{A,\bfj}^{\bfj'},f_{A,\bfj}^{B,\bfj'}\in\mbq$.
\end{Prop}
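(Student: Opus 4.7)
The plan is to parallel the proof of Proposition \ref{triangular formula in A(bfj,r)}, accounting for the insertion of $0[\bfj,r]$. I would prove the identity in each $\afSr_\mbq$ with structure constants independent of $r$ and then assemble them into $\afhsKq=\prod_{r\geq 0}\afSr_\mbq$. Using Remark \ref{implicit use bcp}, I work in $\mbc_G(\scrY\times\scrY)\cong\afSr_\mbq$: evaluated at $(\bfL,\bfL'')\in\sO_C$ with $C\in\afThnr$, the convolution $A^+[\bfl,r]*0[\bfj,r]*A^-[\bfl,r]$ equals $\sum_{\bfL'}\la(\bfL')^{\bfj}$, where the sum ranges over intermediate flags $\bfL'$ satisfying $(\bfL,\bfL')\in\sO_{A^++\diag(\mu)}$ and $(\bfL',\bfL'')\in\sO_{A^-+\diag(\nu)}$ for some $\mu,\nu\geq 0$, and $\la(\bfL')_i:=\dim_{\field} L_i'/L_{i-1}'$ denotes the composition of $\bfL'$. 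By Lemma \ref{lemma2 for triangular decomposition of affine q-Schur algebra}(1) the sum vanishes unless $C\pr A$, which yields the desired triangular support.

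For the main case $C=A+\diag(\la')$ with $\la'\in\afLa(n,r-\sg(A))$, Lemma \ref{lemma2 for triangular decomposition of affine q-Schur algebra}(2) pins down the unique intermediate flag as $\bfL'=\bfL\cap\bfL''$, and a direct computation following Lemma \ref{ddd} gives $\la(\bfL')=\bfsg(A)+\la'$ with $\bfsg(A)$ the hook-sum vector from \eqref{sequence of hook sums}. Hence the coefficient of $[A+\diag(\la')]_1$ in the triple product equals $(\bfsg(A)+\la')^{\bfj}$. Applying the multinomial expansion
\begin{equation*}
(\bfsg(A)+\la')^{\bfj}=\sum_{\bfj'\leq\bfj}\binom{\bfj}{\bfj'}\bfsg(A)^{\bfj-\bfj'}(\la')^{\bfj'}
\end{equation*}
and re-summing over $\la'\in\afLa(n,r-\sg(A))$ produces the main term $A[\bfj,r]$ (from $\bfj'=\bfj$) plus $f_{A,\bfj}^{\bfj'}A[\bfj',r]$ for $\bfj'<\bfj$ with coefficients $f_{A,\bfj}^{\bfj'}=\binom{\bfj}{\bfj'}\bfsg(A)^{\bfj-\bfj'}$ manifestly independent of $r$.

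The remaining case $C=B+\diag(\la')$ with $B\in\afThnpm$ and $B\p A$ is the principal obstacle: several intermediate flags generally contribute, and one must show that $\sum_{\bfL'}\la(\bfL')^{\bfj}$ is a polynomial in $\la'$ with coefficients independent of $r$. I expect to establish this by a careful bookkeeping of the intermediate flags: for each fixed combinatorial type of $\bfL'$ (encoding how the quotients $\bfL/\bfL'$ and $\bfL''/\bfL'$ decompose semisimply), the number of $\bfL'$ realising this type should be a universal constant depending only on $B$ and the type, while $\la(\bfL')$ depends affinely on $\la'$ together with that type data. Expanding $\la(\bfL')^{\bfj}$ via the multinomial theorem and regrouping by powers of $\la'$ then repackages the contributions as $\sum_{\bfj'}f_{A,\bfj}^{B,\bfj'}B[\bfj',r]$ with $r$-independent coefficients, and summing over $r$ in $\afhsKq$ yields the stated formula.
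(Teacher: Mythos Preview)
Your computation of the main term (the case $C=A+\diag(\la')$) is correct and agrees with the paper: Lemma \ref{lemma2 for triangular decomposition of affine q-Schur algebra}(2) isolates $\bfL'=\bfL\cap\bfL''$, the row-sum vector is $\bfsg(A)+\la'$, and the multinomial expansion of $(\bfsg(A)+\la')^{\bfj}$ gives exactly the leading term $A[\bfj,r]$ plus the lower $A[\bfj',r]$ with the stated coefficients.

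The gap is in the $B\prec A$ case, and the paper's proof avoids this difficulty entirely by a different mechanism. Rather than attempt a direct geometric count of intermediate flags, the paper first observes (via Lemma \ref{generators}) that $A^+[\bfl,r]$ is a fixed $\mbq$-linear combination of monomials in the generators $E^\vtg_{h,h+1}[\bfl,r]$ and $E^\vtg_{h,h+mn}[\bfl,r]$, with coefficients independent of $r$. It then applies the multiplication formulas of Theorem \ref{Multiplication Formulas at v=1} repeatedly; since every structure constant appearing in those formulas is manifestly independent of $r$, the full expansion $A^+[\bfl,r]0[\bfj,r]A^-[\bfl,r]=\sum_{B,\bfj'}f_{A,\bfj}^{B,\bfj'}B[\bfj',r]$ automatically has $r$-independent coefficients. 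Only \emph{after} this is established does the paper compute the leading terms (your main case) to identify which $B[\bfj']$ actually occur with $B=A$.

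Your proposed bookkeeping---stratifying intermediate flags by combinatorial type and arguing each stratum contributes an $r$-independent count---is not obviously valid. The number of $\bfL'\subseteq\bfL\cap\bfL''$ of a given type is a Hall-type polynomial in $q$, and while its value at $q=1$ may well be $r$-independent, establishing this directly would essentially require reproving the content of Theorem \ref{Multiplication Formulas at v=1} in a less transparent form. There is also a minor technical issue: the convolution-algebra identification in Remark \ref{implicit use bcp} is for specialization at $\up=\sqrt{q}$, not $\up=1$, so you would need an additional polynomial-interpolation step to pass from prime powers to $\up=1$.
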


\begin{proof} For each $r\geq 0$, by Lemma \ref{generators}, we may
write $A^+[\bfl,r]$ as a linear combination of monomials
in $\afE_{h,h+1}[\bfl,r]$ and $\afE_{h,h+mn}[\bfl,r]$. By Theorem \ref{Multiplication Formulas at v=1},
there exist $f_{A,\bfj}^{B,\bfj'}\in\mbq$ (independent of $r$) such that
\begin{equation}\label{independent of r}
A^+[\bfl,r]0[\bfj,r]A^-[\bfl,r]=\sum_{B\in\afThnpm \atop
\bfj'\in\afmbnn}f_{A,\bfj}^{B,\bfj'}B[\bfj',r]
\end{equation}
for all $r\geq 0$. On the other hand, using an argument similar to the second display for the computation of $\ttp_A$ in the proof of
Theorem \ref{PBW basis of affine q-Schur algebras} yields
\[\begin{split}
A^+[\bfl,r]0[\bfj,r]A^-[\bfl,r] &=\sum_{\la\in\afLanr}\la^\bfj
A^+[\bfl,r][\diag(\la)]_1 A^-[\bfl,r]\\
&=\sum_{\la\in\afLanr\atop\la\geq\bfsg(A)}
\la^\bfj[A+\diag(\la-\bfsg(A))]_1+g
\end{split}\]
where $\bfsg(A)=\co(A^+)+\ro(A^-)$ and $g$ is a $\mbq$-linear
combination of $[B]_1$ with $B\in\afThnr$ and $B\p A$.  Since
$$\la^\bfj=(\la-\bfsg(A)+\bfsg(A))^\bfj=(\la-\bfsg(A))^\bfj+
\sum_{\bfj'<\bfj\atop
\bfj'\in\afmbnn}f_{A,\bfj}^{\bfj'}(\la-\bfsg(A))^{\bfj'}$$ where
$f_{A,\bfj}^{\bfj'}\in\mbz$ are independent of $\la$ and $r$, it follows that
\[
A^+[\bfl,r]0[\bfj,r]A^-[\bfl,r]=A[\bfj,r]+\sum_{\bfj'<\bfj
\atop\bfj'\in\afmbnn}f_{A,\bfj}^{\bfj'}A[\bfj',r] +g.
\]
Combining this with \eqref{independent of r} proves the assertion.
\end{proof}

For integers $a\geq -1$ and $l\geq 1$, let $\mbz_{a,l}=\{a+i\mid i=1,2,\ldots,l\}$
and $$(\mbz_{a,l})^n=\{\la\in\mbn^n\mid \la_i\in \mbz_{a,l}\,\forall i\}.$$

\begin{Lem}\label{Vandermonde  determinant}
For fixed integers $a\geq -1$ and $n,l\geq 1$, if we order
$(\mbz_{a,l})^n$ lexicographically and form an $l^n\times l^n$
matrix $B_n=(\la^\mu)_{\la,\mu\in(\mbz_{a,l})^n}$, where
$\la^\mu=\la_1^{\mu_1}\la_2^{\mu_2}\cdots\la_n^{\mu_n}$, then
$\det(B_n)\neq 0$.
\end{Lem}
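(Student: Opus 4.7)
The plan is to exhibit $B_n$ as an iterated Kronecker (tensor) product of $n$ copies of the $l \times l$ matrix $B_1 = (i^j)_{i,j \in \mbz_{a,l}}$, and then to reduce the nonvanishing of $\det(B_1)$ to a Vandermonde computation.

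First I would verify the tensor product decomposition. Viewing $\mbq^{(\mbz_{a,l})^n}$ as $(\mbq^{\mbz_{a,l}})^{\otimes n}$ with basis $e_\la = e_{\la_1} \otimes \cdots \otimes e_{\la_n}$, the operator $B_1^{\otimes n}$ sends $e_\mu$ to
\[
\sum_{\la_1,\ldots,\la_n} \prod_{i=1}^n \la_i^{\mu_i}\, e_{\la_1} \otimes \cdots \otimes e_{\la_n} = \sum_\la \la^\mu e_\la,
\]
which is exactly the action of $B_n$ on $e_\mu$. Lexicographic ordering of $(\mbz_{a,l})^n$ is precisely the ordering induced by iterating the tensor product (first coordinate varying slowest), so as matrices in these orderings $B_n = B_1^{\otimes n}$. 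The standard identity $\det(M^{\otimes n}) = \det(M)^{n l^{n-1}}$ then gives $\det(B_n) = \det(B_1)^{n l^{n-1}}$, so it suffices to check $\det(B_1) \neq 0$.

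Next I would handle the base case $n=1$. Two sub-cases arise according to whether $a = -1$ or $a \geq 0$. If $a = -1$, the matrix $B_1 = (i^j)_{i,j \in \{0,1,\ldots,l-1\}}$ is the standard Vandermonde matrix with determinant $\prod_{0 \leq i < i' \leq l-1}(i'-i) \neq 0$. If $a \geq 0$, then every row index $i \in \{a+1,\ldots,a+l\}$ is a nonzero integer; factoring $i^{a+1}$ out of row $i$ of $B_1 = (i^j)_{i, j \in \{a+1,\ldots,a+l\}}$ yields
\[
\det(B_1) = \Big(\prod_{i=a+1}^{a+l} i^{a+1}\Big) \cdot \det\bigl((i^{j-a-1})_{i,\,j-a-1 \in \{0,\ldots,l-1\}}\bigr),
\]
and the remaining factor is again a Vandermonde determinant on the distinct values $a+1, \ldots, a+l$, hence nonzero.

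The main (and only real) obstacle is a bookkeeping one: being careful that the lexicographic ordering on $(\mbz_{a,l})^n$ matches the ordering on tensor product bases so that the identification $B_n = B_1^{\otimes n}$ holds literally on the level of matrices rather than just up to conjugation by a permutation. Since a permutation conjugation only changes the determinant by a sign squared, even without this care one still obtains $\det(B_n) = \pm \det(B_1)^{nl^{n-1}} \neq 0$, so the conclusion is robust. No further steps are required.
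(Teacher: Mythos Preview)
Your proof is correct and takes a cleaner route than the paper's. The paper does not invoke the Kronecker product directly; instead it writes $B_n$ as an $l\times l$ block matrix with $(i,j)$-block equal to $(a+i)^{a+j}B_{n-1}$, performs successive block column operations to block-upper-triangularize, and thereby derives the recursion
\[
\det(B_n)=\det(B_{n-1})^l\prod_{i=1}^l(a+i)^{(a+1)l^{n-1}}\prod_{1\le j<i\le l}(i-j)^{l^{n-1}},
\]
concluding by induction. This is, of course, exactly what one gets by unwinding your identity $\det(B_n)=\det(B_1)^{nl^{n-1}}$ together with the Vandermonde evaluation of $\det(B_1)$; the paper is in effect rederiving the Kronecker determinant formula by hand in this special case. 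Your approach is more conceptual and shorter, while the paper's has the minor advantage of producing the explicit closed formula for $\det(B_n)$ along the way without appealing to the tensor identity as a black box. Your caveat about the ordering is well placed, and as you note, any mismatch would only introduce a sign, which is irrelevant to the nonvanishing claim.
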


\begin{proof} Write $(\mbz_{a,l})^n=\{\bsa_1,\bsa_2,\ldots,\bsa_{l^n}\}$ with $\bsa_i<_{\text{lx}}\bsa_{i+1}$ for all $i$
under the lexicographical order $<_{\text{lx}}$. If the $(i,j)$ entry of $B_n$ is $\bsa_i^{\bsa_j}$, then $B_n$ has the form
$$B_n=\begin{pmatrix}
(a+1)^{(a+1)}B_{n-1}&(a+1)^{a+2}B_{n-1}&\cdots&(a+1)^{a+l}B_{n-1}\\
(a+2)^{(a+1)}B_{n-1}&(a+2)^{(a+2)} B_{n-1}&\cdots&(a+2)^{(a+l)}B_{n-1}\\
\hdotsfor{4}\\
(a+l)^{(a+1)}B_{n-1}&(a+l)^{(a+2)}B_{n-1}&\cdots&(a+l)^{(a+l)}B_{n-1}
\end{pmatrix}.$$
Thus,
$$\aligned
&\quad\,\det(B_n)=\prod_{i=1}^l(a+i)^{(a+1)l^{n-1}}\det\begin{pmatrix}
B_{n-1}&(a+1)B_{n-1}&\cdots&(a+1)^{l-1}B_{n-1}\\
B_{n-1}&(a+2)B_{n-1}&\cdots&(a+2)^{l-1}B_{n-1}\\
\hdotsfor{4}\\
B_{n-1}&(a+l)B_{n-1}&\cdots&(a+l)^{l-1}B_{n-1}
\end{pmatrix}\\
&=\prod_{i=1}^l(a\!+\!i)^{(a+1)l^{n-1}}\!\!\!\det\!\begin{pmatrix}
B_{n-1}&0&\cdots&0\\
B_{n-1}&(2\!-\!1)B_{n-1}&\cdots&((a\!+\!2)^{l-1}\!-\!(a\!+\!2)^{l-2}(a\!+\!1))B_{n-1}\\
\hdotsfor{4}\\
B_{n-1}&(l\!-\!1)B_{n-1}&\cdots&((a\!+\!l)^{l-1}\!-\!(a\!+\!l)^{l-2}(a\!+\!1))B_{n-1}
\end{pmatrix}\\
&=\prod_{i=1}^l(a+i)^{(a+1)l^{n-1}}\det(B_{n-1})\prod_{j=2}^l(j-1)^{l^{n-1}}\det\begin{pmatrix}
B_{n-1}&\cdots&(a+2)^{l-2}B_{n-1}\\
\hdotsfor{3}\\
B_{n-1}&\cdots&(a+l)^{l-2}B_{n-1}
\end{pmatrix}\\
\endaligned
$$
Hence,
$$\det(B_n)=\det(B_{n-1})^l\prod_{i=1}^l(a+i)^{(a+1)l^{n-1}}\prod_{1\leq j<i\leq
l}(i-j)^{l^{n-1}}\neq0,$$
by induction.
\end{proof}

Note that, in the proof above, if $a=-1$, then
$$\mbz_l:=\mbz_{-1,l}=\{0,1,\ldots,l-1\}$$ and so the product
$\prod_{i=1}^l(a+i)^{(a+1)l^{n-1}}=1$.

As introduced at the end of \S5.4, let $\fA_{\vtg,\mbq}[n]$ be the
subspace of $\afhsKq$ spanned by the elements $A[\bfj]$ for all
$A\in\afThnpm$ and $\bfj\in\afmbnn$. The following theorem gives a
realization of the universal enveloping algebra $\sU(\afgl)$. Let
$(\mbz_l)_\vtg^n=\flat_2^{-1}((\mbz_l)^n)$, where $\flat_2$ is
defined in \eqref{flat2}.
\begin{Thm}\label{classical BLM basis}
The $\mbq$-space $\fA_{\vtg,\mbq}[n]$ is a subalgebra of $\afhsKq$
with $\mbq$-basis
 $${\mathfrak B}=\{A[\bfj]\mid A\in\afThnpm,\,\bfj\in\afmbnn\}.$$
Moreover, the map  $\eta:=\prod_{r\geq 0}\eta_r$ defined in \eqref{map eta} is injective
and induces a $\mbq$-algebra isomorphism $\sU(\afgl)\overset\eta\cong\fA_{\vtg,\mbq}[n]$.
\end{Thm}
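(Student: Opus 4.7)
The plan is to establish four assertions whose combination gives the theorem: (i) the family $\mathfrak B$ is linearly independent in $\afhsKq$; (ii) $\fA_{\vtg,\mbq}[n]$ is closed under the multiplication of $\afhsKq$; (iii) $\eta(\sU(\afgl))\subseteq\fA_{\vtg,\mbq}[n]$ and in fact equals $\fA_{\vtg,\mbq}[n]$; and (iv) $\eta$ is injective. Once all four are in hand, $\mathfrak B$ is a basis of $\fA_{\vtg,\mbq}[n]$ and $\eta:\sU(\afgl)\to\fA_{\vtg,\mbq}[n]$ is the desired algebra isomorphism.

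For (i), the strategy is a polynomial-identity argument. Suppose $\sum_{A,\bfj}c_{A,\bfj}A[\bfj]=0$ in $\afhsKq$ with only finitely many $c_{A,\bfj}$ nonzero. Projecting to each $\afSr_\mbq$ separates out the various $A\in\afThnpm$ with $\sg(A)\leq r$, since distinct $A$'s produce basis elements $[A+\diag(\mu)]_1$ with distinct off-diagonal parts; this leaves the identity $\sum_\bfj c_{A,\bfj}\mu^\bfj=0$ for every $\mu\in\afLa(n,r-\sg(A))$. Allowing $r$ to vary across all integers $\geq\sg(A)$, this polynomial identity holds on $\bigcup_{s\geq 0}\afLa(n,s)$, which under \eqref{flat2} is all of $\mbn^n$, forcing all $c_{A,\bfj}=0$. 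Lemma \ref{Vandermonde determinant} supplies the underlying Vandermonde nondegeneracy should one prefer a direct linear-algebra formulation.

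For (ii), the tactic is to single out the \emph{simple generators} $\{0[\afbse_t]\}_t$, $\{\afE_{h,h\pm 1}[\bfl]\}_h$, and $\{\afE_{h,h+mn}[\bfl]\}_{h,\,m\neq 0}$, and exploit the fact that Theorem \ref{Multiplication Formulas at v=1} writes their products with any $A[\bfj]$ as $\mbq$-linear combinations of elements $C[\bfi]$ via formulas \emph{independent of $r$}. Lifting these level-$r$ identities to $\afhsKq$, the subalgebra $\mathcal G$ of $\afhsKq$ generated by the simple generators is contained in $\fA_{\vtg,\mbq}[n]$. Conversely, iterating Theorem \ref{Multiplication Formulas at v=1}(1) at $A=0$ gives $0[\bfj]=\prod_i 0[\afbse_i]^{j_i}\in\mathcal G$; Proposition \ref{afzrpm} combined with Lemma \ref{generators} places every $A^+[\bfl]$ and $A^-[\bfl]$ in $\mathcal G$; and Proposition \ref{triangular formula in A[bfj]}, applied inductively along the partial order generated by $\prec$ on $\afThnpm$ and the componentwise order on $\afmbnn$, produces an arbitrary $A[\bfj]$ as an element of $\mathcal G$. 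Hence $\fA_{\vtg,\mbq}[n]=\mathcal G$ is a subalgebra.

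Having established (i) and (ii), the map $\eta$ is handled as follows. Lemma \ref{etar(Eij)} gives $\eta(\afE_{i,j})=\afE_{i,j}[\bfl]$ for $i\neq j$ and $\eta(\afE_{i,i})=0[\afbse_i]$, so the image of $\eta$ contains every simple generator and therefore equals $\fA_{\vtg,\mbq}[n]$, proving (iii). For injectivity (iv), invoke the PBW theorem for $\sU(\afgl)$ together with Theorem \ref{positive} and the triangular decomposition underlying \eqref{decompositon of afbfUgl} to exhibit the basis $\{\iota^+(u_{A,1}^+)\cdot\afE_{1,1}^{j_1}\cdots\afE_{n,n}^{j_n}\cdot\iota^-(u_{B,1}^-):A,B\in\afThnp,\,\bfj\in\afmbnn\}$ of $\sU(\afgl)$. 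Each such basis element is sent under $\eta$ to $A[\bfl]\cdot 0[\bfj]\cdot\tB[\bfl]$, which by Proposition \ref{triangular formula in A[bfj]} rewrites as $(A+\tB)[\bfj]$ plus a combination of $C[\bfi]$ with $\bfi<\bfj$ or $C\prec A+\tB$. The assignment $(A,B,\bfj)\leftrightarrow(A+\tB,\bfj)$ is a bijection between this PBW basis and $\mathfrak B$, and the rewriting is upper-triangular relative to the ordering in Proposition \ref{triangular formula in A[bfj]}; combined with the linear independence (i), the images of the PBW basis elements are linearly independent, so $\eta$ is injective. The principal obstacle in this entire outline is step (ii): producing every $A[\bfj]$ as a polynomial in the simple generators requires both the new imaginary-root formula Theorem \ref{Multiplication Formulas at v=1}(3)---the key classical computation whose quantum analogue is precisely what is missing, cf.\ Problem \ref{Problem for MF}---and the triangular identity of Proposition \ref{triangular formula in A[bfj]}. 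It is exactly this combination that restricts the argument to the $\up=1$ setting and prevents the same outline from resolving Conjecture \ref{realization conjecture} in full.
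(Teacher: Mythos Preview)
Your proposal is correct and uses the same two pillars as the paper's proof: the Vandermonde argument (Lemma \ref{Vandermonde determinant}) for linear independence of $\mathfrak B$, and the triangular identity of Proposition \ref{triangular formula in A[bfj]} to relate $\mathfrak B$ to the images of PBW monomials. The organization differs in one respect worth noting. You establish closure under multiplication (your step (ii)) directly, by showing $\fA_{\vtg,\mbq}[n]$ coincides with the subalgebra $\mathcal G$ generated by the ``simple generators''; you then use this both to contain $\eta(\sU(\afgl))$ inside $\fA_{\vtg,\mbq}[n]$ and to fill it out. The paper instead skips this step entirely: once (i) and the triangularity show that $\eta$ carries a PBW basis of $\sU(\afgl)$ bijectively onto a second basis $\{A^+[\bfl]0[\bfj]A^-[\bfl]\}$ of $\fA_{\vtg,\mbq}[n]$, the image of the algebra homomorphism $\eta$ is exactly $\fA_{\vtg,\mbq}[n]$, and closure under multiplication is automatic. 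Your step (ii) is thus correct but redundant---the paper's route is more economical, though your direct argument has the mild virtue of making explicit that $\fA_{\vtg,\mbq}[n]$ is generated by the elements appearing in Theorem \ref{Multiplication Formulas at v=1}.
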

\begin{proof} We first prove the linear independence of $\mathfrak B$. Suppose
\[
\sum_{A\in\afThnpm,\,\bfj\in\afmbnn} f_{A,\bfj}A[\bfj]=0
\]
for some $f_{A,\bfj}\in\mbq$. Then
\[
\begin{split}
0=\sum_{A\in\afThnpm\atop\bfj\in\afmbnn}
f_{A,\bfj}A[\bfj]&=\sum_{r\geq 0}\sum_{A\in\afThnpm
\atop\la\in\afLa(n,r-\sg(A))}\bigg(\sum_{\bfj\in\afmbnn}\la^\bfj
f_{A,\bfj}\bigg)[A+\diag(\la)].
\end{split}
\]
Thus, $\sum_{\bfj\in\afmbnn}\la^\bfj f_{A,\bfj}=0$ for all
$A\in\afThnpm$, $\la\in\afLa(n,r-\sg(A))$ and $r\geq\sg(A)$. So,
when $A$ is arbitrarily fixed, there is a finite subset $J$ of
$\afmbnn$ satisfying $f_{A,\bfj}\neq0$ for all $\bfj\in J$. Choose
$\l\geq1$ such that $J$ is a subset of $(\mbz_l)^n_\vtg$ and set
$f_{A,\bfj}=0$ if $\bfj\in (\mbz_l)^n_\vtg\backslash J$. Since
$\cup_{r\geq\sg(A)}\afLa(n,r-\sg(A))=\afmbnn$ which contains
$(\mbz_l)^n_\vtg$, it follows that
 $\sum_{\bfj\in (\mbz_l)^n_\vtg}\la^\bfj
f_{A,\bfj}=0$ for all $\la\in (\mbz_l)^n_\vtg$. Applying Lemma
\ref{Vandermonde determinant} gives $f_{A,\bfj}=0$ for all $\bfj$. Hence, $\mathfrak B$ forms a basis for
$\fA_{\vtg,\mbq}[n]$.

Since
$\sU(\afgl)\cong\sU(\afgl)^+\sU(\afgl)^0\sU(\afgl)^-$, by identifying $\sU(\afgl)^\pm$ with $\Hall_\mbq^\pm$, the set
$$\{u_{A,1}^+(\afE_{1,1})^{j_1}\cdots(\afE_{n,n})^{j_n}u_{B,1}^-\mid
A,B\in\afThnp,\bfj\in\afmbnn\}$$
 forms a basis of $\sU(\afgl)$. Now Lemma \ref{etar(Eij)} and
Theorem \ref{surjective in the case of q=1} imply
$$\eta(u^+_{A^+,1}(\afE_{1,1})^{j_1}\cdots(\afE_{n,n})^{j_n}u^-_{A^-,1})
=A^+[\bfl]0[\bfj]A^-[\bfl].$$ By Proposition \ref{triangular formula in A[bfj]},
the set $$\{A^+[\bfl]0[\bfj]A^-[\bfl]\mid A\in\afThnpm,\,\bfj\in\afmbnn\}$$
forms another basis for $\fA_{\vtg,\mbq}[n]$.
Hence, $\eta$ is injective and $\fA_{\vtg,\mbq}[n]$ is exactly the image of $\eta$. This completes the
proof of the theorem.
\end{proof}

This theorem together with Theorem \ref{Multiplication Formulas at v=1} implies immediately the
following multiplication formulas in $\sU(\afgl)$.

\begin{Coro}\label{classical MFs} The universal enveloping algebra
$\sU(\afgl)$ of the loop algebra $\afgl(\mbq)$ has a basis $\{A[\bfj]\mid A\in\afThnpm,\bfj\in\mbn_\vtg^n\}$
which satisfies the following multiplication formulas: for $1\leq h,t\leq n$, $\bfj=(j_k)\in\afmbnn$ and $A=(a_{i,j})\in\afThnpm$,
\begin{itemize}
\item[(1)] $0[\afbse_t]  A[\bfj]=A[\bfj+\afbse_t]
+\bigl(\sum_{s\in\mbz}a_{t,s}\bigr)A[\bfj]$;
\item[(2)] for $\varepsilon\in\{1,-1\}$,
\begin{equation*}\label{MF1}
\begin{split}
 E^\vtg_{h,h+\varepsilon}[\bfl]  A[\bfj] =
 &\sum_{a_{h+\varepsilon,i}\geq 1\atop\forall i\not=h,h+\varepsilon}
(a_{h,i}+1)(A+E^\vtg_{h,i}-E^\vtg_{h+\varepsilon,i})[\bfj] \\
&+\sum_{0\leq i\leq j_h}(-1)^i\bigg({j_h\atop i}\bigg)
(A-E^\vtg_{h+\varepsilon,h})[\bfj+(1-i)\afbse_h] \\
&+(a_{h,h+\varepsilon}+1)\sum_{0\leq i\leq j_{h+\varepsilon}}\bigg({j_{h+\varepsilon}\atop
i}\bigg)(A+E^\vtg_{h,h+\varepsilon})[\bfj-i\afbse_{h+\varepsilon}];
\end{split}
\end{equation*}

\item[(3)] for $m\in \mbz\backslash\{0\}$,
\begin{equation*}
\begin{split}
\afE_{h,h+mn}[\bfl]A[\bfj] &=\sum_{s\not\in\{h,h-mn\}
\atop a_{h,s}\geq 1}(a_{h,s+mn}+1)(A+\afE_{h,s+mn}-\afE_{h,s})[\bfl]\\
&\quad+\sum_{0\leq t\leq j_h}(a_{h,h+mn}+1)\left({j_h\atop
t}\right)(A+\afE_{h,h+mn})[\bfj-t\afbse_h]\\
&\quad+\sum_{0\leq t\leq j_h}(-1)^t\left({j_h\atop t}
\right)(A-\afE_{h,h-mn})[\bfj+(1-t)\afbse_h].
\end{split}
\end{equation*}
\end{itemize}
\end{Coro}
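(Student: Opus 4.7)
The plan is to deduce Corollary \ref{classical MFs} by transporting multiplication formulas from the finite-level algebras $\afSr_\mbq$ to the infinite-dimensional envelope $\afhsKq$ via the isomorphism $\eta: \sU(\afgl) \overset{\sim}{\to} \fA_{\vtg,\mbq}[n]$ provided by Theorem \ref{classical BLM basis}. The first statement on the basis is immediate: Theorem \ref{classical BLM basis} already asserts that $\mathfrak B = \{A[\bfj]\mid A\in\afThnpm,\bfj\in\afmbnn\}$ is a $\mbq$-basis of $\fA_{\vtg,\mbq}[n]$ and that $\eta$ is a $\mbq$-algebra isomorphism, so pulling $\mathfrak B$ back along $\eta$ gives the desired basis of $\sU(\afgl)$.

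For the three multiplication formulas, I would exploit the compatibility of multiplication in $\afhsKq$ with the direct-product decomposition $\afhsKq \cong \prod_{r \ge 0} \afSr_\mbq$. The crucial point is that, for basis vectors, $[C]_1 \cdot [D]_1 = 0$ unless $\co(C) = \ro(D)$, which forces $\sg(C) = \sg(D)$; consequently, $A[\bfj,r] \cdot B[\bfj',r'] = 0$ whenever $r \neq r'$, because $A[\bfj,r]$ is supported on matrices in $\afThnr$ and $B[\bfj',r']$ on matrices in $\Theta_\vtg(n,r')$. Since $A[\bfj] = \sum_{r \ge 0} A[\bfj,r]$ and similarly for $B[\bfj']$, this yields the identity
\[
A[\bfj] \cdot B[\bfj'] = \sum_{r \ge 0} A[\bfj,r] \cdot B[\bfj',r]
\]
inside $\afhsKq$, where every product on the right is computed in the single finite-level algebra $\afSr_\mbq$.

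With this in hand, I would apply Theorem \ref{Multiplication Formulas at v=1}(1)--(3) term-by-term at each level $r$, and then sum over $r \ge 0$. The decisive feature making this work cleanly is that the scalar coefficients appearing in parts (1), (2), (3) of Theorem \ref{Multiplication Formulas at v=1} are manifestly independent of $r$: they are polynomial expressions in the entries of $A$ and the components of $\bfj$ alone. Therefore each sum $\sum_{r \ge 0} C[\bfj'',r]$ appearing on the right-hand side collapses to $C[\bfj'']$ in $\afhsKq$, reproducing the stated formulas verbatim.

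The only technical point requiring care is to verify that the infinite sums on the right genuinely lie in $\fA_{\vtg,\mbq}[n] \subseteq \afhsKq$, i.e.\ satisfy the finiteness condition \eqref{(F)}. This reduces to observing that, in each of (1), (2), (3), only finitely many distinct pairs $(C,\bfj'')$ occur on the right-hand side - the matrices $C$ differ from $A$ by adding or removing a single off-diagonal unit of the form $\pm E_{h,i}^\vtg$ or $\pm E_{h,h\pm mn}^\vtg$, and the shifts of $\bfj$ are by bounded multiples of $\afbse_h$ or $\afbse_{h+\varepsilon}$. Since each $C[\bfj'']$ is already an element of $\fA_{\vtg,\mbq}[n]$, the full right-hand side is a finite $\mbq$-linear combination of basis elements of $\mathfrak B$, and the equality holds inside $\fA_{\vtg,\mbq}[n] \cong \sU(\afgl)$. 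I do not anticipate a genuine obstacle here; the main work is essentially a bookkeeping exercise using the grading-compatibility of products in $\afhsKq$ and the $r$-independence of the structure constants in Theorem \ref{Multiplication Formulas at v=1}.
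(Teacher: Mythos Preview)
Your proposal is correct and follows exactly the route the paper intends: the text immediately preceding the corollary states that it ``follows immediately'' from Theorem \ref{classical BLM basis} together with Theorem \ref{Multiplication Formulas at v=1}, and you have simply unpacked what ``immediately'' means --- namely, that the structure constants in Theorem \ref{Multiplication Formulas at v=1} are independent of $r$, so summing the level-$r$ identities over $r\ge 0$ yields the stated identities among the $A[\bfj]$ in $\afhsKq$. Your check that only finitely many terms appear on each right-hand side (because $A\in\afThnpm$ has finitely many nonzero entries per row) is the one point the paper leaves implicit.
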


\begin{Rem} There should be applications of these multiplication formulas. For example,
one may define the $\mbz$-subalgebra $\sU(\afgl)_\mbz$ generated by the divided powers of $E^\vtg_{h,h+\varepsilon}[\bfl]$
together with $\afE_{h,h+mn}[\bfl]$ for all $1\leq h\leq n$, $\ep\in\{1,-1\}$ and $m\in\mbz\backslash\{0\}$.
This should serve as the Kostant $\mbz$-form of $\sU(\afgl)$.
\end{Rem}

\section{Appendix: Proof of Proposition 6.2.3}

For a finite subset $X\han\affSr$, define $\ul X=\sum_{x\in
X}x\in\mbq\affSr$. If $A=\jmath_\vtg(\la,d,\mu)$ is the matrix
corresponding to the double coset $\fS_\la d\fS_\mu$ for
$\la,\mu\in\afLanr$, $d\in{\mathscr D}_{\la,\mu}^\vtg$,  then the
element $[A]_1\in\afSr_\mbq$ is the map $\phi_{\la,\mu}^d$ (at
$\up=1$) as defined in \eqref{def of standard basis}. Note also
that, if $\up=1$, then $x_\la=\ul{\fS_\la}$. Thus,
$[A]_1(\ul{\fS_\nu})=0$ for $\nu\neq\mu$ and
$[A]_1(\ul{\fS_\mu})=\ul {\fS_\la
d\fS_\mu}=\ul{\fS_\la}d\ul{{\mathscr D}_\nu^\vtg\cap\fS_\mu}$ by
Lemma \ref{decomposition of double coset}, where $\nu$ is the
composition defined by $\fS_\nu=d^{-1}\fS_\la d\cap \fS_\mu$ (see
Corollary \ref{double coset} for a precise description of $\nu$).
Lemma \ref{decomposition of double coset} implies also that for
$\la,\mu\in\afLanr$ and $w\in\affSr$,
\begin{equation}\label{conjugate intersection}
\ul{{\frak S}_\la} w\ul{\frak S_\mu}=|w^{-1} \frak S_\la
w\cap\frak S_\mu|\ul{{\frak S}_\la w\frak S_\mu}.
\end{equation}
This fact will be used frequently in the proofs below.

\vspace{.3cm}

\textsc{Proposition \ref{MFforSBE}(1).}
{\it Let $h\in[1,n]$ and $A=(a_{i,j})=\jmath_\vtg(\la,d,\mu)\in\afThnr$ with $\ro(A)=\la$ . If $\varepsilon\in\{1,-1\}$ and $\la\geq\afbse_{h+\varepsilon}$, then
$$[E^\vtg_{h,h+\varepsilon}+\diag(\la-\bse^\vtg_{h+\varepsilon})]_1[A]_1=
\sum_{{i\in\mbz},\;{a_{h+\varepsilon,i}\geq1}}
(a_{h,i}+1)[A+E_{h,i}^\vtg-E_{h+\varepsilon,i}^\vtg]_1.$$}

\begin{proof} 
Observe $\ro(E^\vtg_{h,h+\ep})=\bse^\vtg_h$ and
$\co(E^\vtg_{h,h+\ep})=\bse^\vtg_{h+\ep}$. Thus, applying
\eqref{Ajmath} yields
$$\jmath_\vtg(\la+{\boldsymbol\al}_{h,\ep}^\vtg,
1,\la)=E^\vtg_{h,h+\ep}+\text{diag}(\la-\bse^\vtg_{h+\ep}),$$ where
${\boldsymbol\al}_{h,\ep}^\vtg=\bse^\vtg_h-\bse^\vtg_{h+\ep}$. In
other words, the matrix
$E^\vtg_{h,h+\ep}+\diag(\la-\bse^\vtg_{h+\ep})$ is defined by the
double coset $\fS_{\la-{\boldsymbol\al}_{h,\ep}^\vtg}\fS_\la$.
Hence, putting $\sL=[1,n]\times\mbz$, we have by Corollary
\ref{double coset} that
$$\aligned{}
[E^\vtg_{h,h+\ep}+\diag(\la-\bse^\vtg_{h+\ep})]_1[A]_1(\ul{\fS_\mu})
&=[E^\vtg_{h,h+\ep}+\diag(\la-\bse^\vtg_{h+\ep})]_1(\ul{\fS_\la d\fS_\mu})\\
&=\prod_{s,t\in\sL}\frac1{a_{s,t}!}\ul{\fS_{\la+{\boldsymbol\al}_{h,\ep}^\vtg}\fS_\la}d\ul{\fS_\mu}\\
&=\prod_{s,t\in\sL}\frac1{a_{s,t}!}\ul{\fS_{\la+{\boldsymbol\al}_{h,\ep}^\vtg}}\ul{\afmsD_\ga\cap\fS_\la}d\ul{\fS_\mu},\endaligned$$
where
$$\ga=\ga(\ep)=\begin{cases} (\la_1,\ldots,\la_h,1,\la_{h+1}-1,\la_{h+2},\ldots,\la_n),&\text{ if }\ep=1;\\
(\la_1,\ldots,\la_{h-1}-1,1,\la_h,\la_{h+1},\ldots,\la_n),&\text{ if }\ep=-1,\\
\end{cases}$$
For $i\in R_{h+\ep}^\la$, define $w_{\ep,i}\in\fS_\la$ by setting
$$w_{\ep,i}=
\biggl(\begin{matrix}
1\cdots\la_{0,h}&\la_{0,h}+1&\cdots&i-1&     i       &i+1&\cdots\la_{0,h+1}\cdots r\\
1\cdots\la_{0,h}&\la_{0,h}+2&\cdots& i &\la_{0,h}+1&i+1&\cdots\la_{0,h+1}\cdots r\\
\end{matrix}\biggr), $$
if $\ep=1$, and
$$w_{\ep,i}=
\biggl(\begin{matrix}
1\cdots\la_{0,h-2}\cdots i-1&   i     &i+1&\cdots& \la_{0,h-1}  &\la_{0,h-1}+1\cdots r\\
1\cdots\la_{0,h-2}\cdots i-1&\la_{0,h-1}& i &\cdots& \la_{0,h-1}-1&\la_{0,h-1}+1\cdots r\\
\end{matrix}\biggr)$$
if $\ep=-1$. Then $\afmsD_\ga\cap\fS_\la=\{w_{\ep,i}\mid i\in
R_{h+\ep}^\la\}$. Hence,
$$[E^\vtg_{h,h+\ep}+\diag(\la-\bse^\vtg_{h+\ep})]_1[A]_1(\ul{\fS_\mu})
=\prod_{s,t\in\sL}\frac1{a_{s,t}!}\sum_{i\in
R_{h+\ep}^\la}\ul{\fS_{\la+{\boldsymbol\al}_{h,\ep}^\vtg}}w_{\ep,i}d\ul{\fS_\mu}.
$$
Let $B^{(\ep,i)}=(b_{s,t}^{(\ep,i)})\in\afThnr$ be the matrix
associated with $\la+{\boldsymbol\al}_{h,\ep}^\vtg,\mu$ and the
double coset $\fS_{\la+{\boldsymbol\al}_{h,\ep}^\vtg}w_{\ep,i}d\frak
S_\mu$. Since
\[
w_{\ep,i}^{-1}(R_s^{\la+{\boldsymbol\al}_{h,\ep}^\vtg})=
\begin{cases}
R_s^\la,&\text{if $1\leq s\leq n$ but $s\not=h,h+\ep$};\\
R_h^\la\cup\{i\},&\text{if $s=h$};\\
R_{h+\ep}^\la\backslash\{i\},&\text{if $s=h+\ep$},\\
\end{cases}
\]
it follows that for $i\in R_{h+\ep}^\la$,
\[
\begin{split}
b_{s,t}^{(\ep,i)} &=|d^{-1}w_{\ep,i}^{-1}
R_s^{\la+{\boldsymbol\al}_{h,\ep}^\vtg}\cap
R_t^\mu|\\
&=
\begin{cases}
a_{s,t},&\text{if $1\leq s\leq n$ but $s\not=h,h+\ep$};\\
a_{h,t}+|\{d^{-1}(i)\cap R_t^\mu\}|,&\text{if $s=h$};\\
a_{h+\ep,t}-|\{d^{-1}(i)\cap R_t^\mu\}|,&\text{if $s=h+\ep$}.\\
\end{cases}
\end{split}
\]
If $t_i\in\mbz$ is the unique integer such that $d^{-1}(i)\in
R_{t_i}^\mu$ (and $a_{h+\ep,t_i}\geq1$), then
\[
b_{s,t}^{(\ep,i)}=
\begin{cases}
a_{s,t},&\text{if $1\leq s\leq n$, $s\neq h,h+\ep$ or $t\not=t_i$};\\
a_{h,t}+1,&\text{if $s=h$, $t=t_i$};\\
a_{h+\ep,t}-1,&\text{if $s=h+\ep$, $t=t_i$}.
\end{cases}
\]
This implies that $B^{(\ep,i)}=A+\afE_{h,t_i}-\afE_{h+\ep,t_i}$ for all $i\in R_{h+\ep}^\la$. By
Corollary \ref{double coset} again,
$$\aligned{}
[E^\vtg_{h,h+\ep}+\diag(\la-\bse^\vtg_{h+\ep})]_1[A]_1(\ul{\fS_\mu})
&=\prod_{s,t\in\sL}\frac1{a_{s,t}!}\sum_{i\in R_{h+\ep}^\la}\prod_{s,t\in\sL}b_{s,t}^{(\ep,i)}!\ul{\fS_{\la+{\boldsymbol\al}_{h,\ep}^\vtg}w_{\ep,i}d\fS_\mu}\\
&=\sum_{i\in R_{h+\ep}^\la}\prod_{s,t\in\sL}\frac{b_{s,t}^{(\ep,i)}!}{a_{s,t}}[B^{(\ep,i)}]_1(\ul{\fS_\mu})\\
&=\sum_{i\in R_{h+\ep}^\la}\frac{a_{h,t_i}+1}{a_{h+\ep,t_i}}[B^{(\ep,i)}]_1(\ul{\fS_\mu}).\\
\endaligned.
$$
Finally,
$$\aligned{}
[E^\vtg_{h,h+\ep}&+\diag(\la-\bse^\vtg_{h+\ep})]_1[A]_1\\
&=\sum_{t\in\mbz,a_{h+\ep,t}\geq1}|\{i\in\mbz\mid i\in R_{h+\ep}^\la,t=t_i\}|\frac{a_{h,t}+1}{a_{h+\ep,t}}[A+\afE_{h,t}-\afE_{h+\ep,t}]_1\\
&=\sum_{t\in\mbz,a_{h+\ep,t}\geq1}(a_{h,t}+1)[A+\afE_{h,t}-\afE_{h+\ep,t}]_1,\\
\endaligned$$
as $|\{i\in\mbz\mid i\in
R_{h+\ep}^\la,t=t_i\}|=|d^{-1}R_{h+\ep}^\la\cap
R_t^\mu|=a_{h+\ep,t}$.
\end{proof}

We need some preparation before proving Proposition \ref{MFforSBE}(2). We follow the notation used in \S3.2. Thus, for $1\leq i\leq r$,
$\bse_i=(0,\ldots,0,\underset
i1,0,\ldots,0)\in\affSr$ is the permutation sending $i$ to $i+r$ and $j$ to $j$ for all $1\leq j\leq r$
 with $j\neq i$, and $\bse_i=\rho
s_{r+i-2}\cdots s_{i+1}s_i$ as seen in the proof of Proposition \ref{affine symmetric groups}.
Note that $s_{j+1}\rho=\rho s_j$ for all $j\in\mbz$.

Recall also from \eqref{set R} the sets
$R_{i+kn}^\la=\{\la_{k,i-1}+1,\la_{k,i-1}+2,\ldots,\la_{k,i-1}+\la_i\}$\linebreak
associated with $\la\in\afLanr$, where
$\la_{k,i-1}:=kr+\sum_{j=1}^{i-1}\la_j$. If $i\in R_h^{\la}$ for
some (unique) $1\leq h\leq n$, then removing those simple
reflections $s_j$ from $\bse_i$ indexed by the numbers
$i,i+1,\ldots,\la_{0,h}-1$ and
$r+\la_{0,h-1},r+\la_{0,h-1}+1,\ldots,r+i-2$ yields the shortest
representative $\rho s_{r+\la_{0,h-1}-1}\cdots
s_{\la_{0,h}+1}s_{\la_{0,h}}$ of the double coset
$\fS_\la\bse_i\fS_\la$.

We now determine the shortest representative in the double coset
$\fS_\la\bse_i^m\fS_\la$ for every $0\not=m\in\mbz$, $1\leq i\leq r$
and $\la\in\afLanr$. Suppose $i\in R_h^{\la}$ as above and define
$u_{m,i}^\la\in\affSr$ by
\[
u_{m,i}^\la=
\begin{cases}
\rho s_{r+\la_{0,h-1}-1}\cdots s_{\la_{0,h}+1}s_{\la_{0,h}}, \;&\text{if $m=1$};\\
\rho s_{r+\la_{0,h-1}-1}\cdots s_{i+1}s_i\bse_i^{m-2}\rho s_{r+i-2}\cdots
s_{\la_{0,h}+1}s_{\la_{0,h}}, \;&\text{if $m\geq 2$};\\
(u_{-m,i}^{\la})^{-1},&\text{if $m<0$}.
\end{cases}
\]

\begin{Lem}\label{power of e_i}Maintain the notation introduced above.
\begin{itemize}
\item[(1)] $u_{m,i}^\la$ is the shortest element in $\frak S_\la\bse_i^m\frak
S_\la$;
\item[(2)] $\jmath_\vtg(\la,u_{m,i}^\la,\la)=\afE_{h,h-mn}+\diag(\la-\afbse_i)$;
\item[(3)]
 $(u_{m,i}^\la)^{-1}\frak S_\la u_{m,i}^{\la}\cap\fS_\la=\frak S_\ga$, where
\[
\ga=
\begin{cases}
(\la_1,\ldots,\la_{h-1},
\la_h-1,1,\la_{h+1},\ldots,\la_n),\;&\text{if
$m>0$;}\\
(\la_1,\ldots,\la_{h-1},1,\la_h-1,\la_{h+1},\ldots,\la_n),&\text{if
$m<0$}.
\end{cases}
\]
\end{itemize}
\end{Lem}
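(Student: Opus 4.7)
The plan is to establish the three parts in the order $(2), (1), (3)$, with the bulk of work going into an explicit computation of the affine permutation $u = u_{m,i}^\la$. I would first reduce to $m \ge 1$, since for $m < 0$ the definition $u_{m,i}^\la = (u_{-m,i}^\la)^{-1}$ together with the transposition rule $\jmath_\vtg(\la, w^{-1}, \la) = {}^t\jmath_\vtg(\la, w, \la)$ and the periodicity $E^\vtg_{k,l} = E^\vtg_{k+n, l+n}$ transports part $(2)$, and inversion is symmetric for the shortest-element statement $(1)$.

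For part $(2)$, the concrete description I would prove by induction on $m \ge 1$ is that $u$ fixes each block $R_k^\la$ pointwise for $1 \le k \le n$ with $k \ne h$, while on $R_h^\la = \{\la_{0,h-1}+1, \ldots, \la_{0,h}\}$ it acts by the cyclic shift $\la_{0,h-1}+j \mapsto \la_{0,h-1}+j+1$ for $j = 1, \ldots, \la_h - 1$ and $\la_{0,h} \mapsto mr + \la_{0,h-1}+1$. Small-case verifications (e.g.\ $n = r = 2$, $\la = (2,2)$, $h \in \{1,2\}$, $m = 1$) confirm this, and the inductive step would follow by tracking the factored word $\rho s_{r+\la_{0,h-1}-1}\cdots s_i \cdot \bse_i^{m-2} \cdot \rho s_{r+i-2}\cdots s_{\la_{0,h}}$ through the identity $s_{j+1}\rho = \rho s_j$, with careful attention to indices read modulo $r$ via $s_{r+j} = s_j$. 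Granting the explicit action, the count $a_{k,l} = |R_k^\la \cap u R_l^\la|$ via \eqref{Ajmath} yields $a_{k,k} = \la_k$ for $k \ne h$, $a_{h,h} = \la_h - 1$, $a_{h, h-mn} = 1$, and zero otherwise, giving exactly the claimed matrix $\afE_{h, h-mn} + \diag(\la - \afbse_i)$ (with the $\afbse_i$-correction registered at the block index $h$, since $i \in R_h^\la$).

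For part $(1)$, I would first show $u_{m,i}^\la \in \fS_\la \bse_i^m \fS_\la$ via a factorisation $\bse_i^m = \alpha \cdot u_{m,i}^\la \cdot \beta$ with $\alpha, \beta \in \fS_\la$: starting from $\bse_i = \rho s_{r+i-2}\cdots s_{i+1}s_i$ from the proof of Proposition \ref{affine symmetric groups}, the simple reflections $s_j$ whose indices lie strictly in the interior of some block $R_{k'}^\la$ (read modulo $r$) all belong to $\fS_\la$, and collecting them yields the desired $\alpha$ and $\beta$. Minimality then follows from $u_{m,i}^\la \in \afmsD_{\la,\la} = \afmsD_\la \cap \afmsD_\la^{-1}$, which by \eqref{minimal coset representative} amounts to checking that both $u_{m,i}^\la$ and its inverse send each $R_k^\la$ to a strictly increasing sequence; this is transparent from the explicit action established in part $(2)$. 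Finally, part $(3)$ follows from Corollary \ref{double coset} applied to $d = (u_{m,i}^\la)^{-1}$: from part $(2)$ the transpose matrix is $\afE_{h, h+mn} + \diag(\la - \afbse_i)$ (using ${}^t\afE_{h, h-mn} = \afE_{h-mn, h} = \afE_{h, h+mn}$), whose row $h$ has nonzero entries $\la_h - 1$ at column $h$ and $1$ at column $h + mn$, so stripping zeros left-to-right gives $\nu^{(h)} = (\la_h - 1, 1)$ when $m > 0$ and $(1, \la_h - 1)$ when $m < 0$, matching the stated $\ga$.

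The main obstacle will be the bookkeeping in the explicit-action argument of part $(2)$ for $m \ge 2$, where the defining word of $u_{m,i}^\la$ contains two ``half-cycles'' sandwiching $m-2$ internal copies of $\bse_i$. Correctly handling the affine wrap-around of the simple reflections (for instance, the fact that $s_r$ swaps not only $r$ and $r+1$ but also $0$ and $1$, or equivalently that $s_{r+j} = s_j$ is an equality of functions on $\mbz$ rather than just a labelling convention) is essential in identifying the action of $u_{m,i}^\la$ on each integer and in verifying the claimed cycle on $\bigsqcup_{s \in \mbz}(R_h^\la + sr)$. This is largely a notational rather than a conceptual difficulty, but it is the point at which off-by-one errors in $\la_{0,h-1}$, $\la_{0,h}$, or the boundary simple reflections most easily creep in.
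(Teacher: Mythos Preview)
Your proposal is correct and follows essentially the same route as the paper. The paper also works from an explicit two-line description of $u_{m,i}^\la$ (exactly the cyclic shift on $R_h^\la$ you describe, fixing all other blocks), uses it together with \eqref{minimal coset representative} to verify $u_{m,i}^\la\in\afmsD_{\la,\la}$, and then deduces (2) from Lemma~\ref{the map jmath} and (3) from Corollary~\ref{double coset} applied to the transpose matrix, just as you do. The only organisational differences are that the paper orders the argument as $(1),(2),(3)$ rather than $(2),(1),(3)$, and it simply asserts the explicit form of $u_{m,i}^\la$ (together with the factorisation $\bse_i^m=s_{r+i-1}\cdots s_{\la_{0,h-1}+1}\,u_{m,i}^\la\, s_{\la_{0,h}-1}\cdots s_i$) rather than deriving it by induction on $m$; your planned induction supplies the verification the paper leaves implicit.
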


\begin{proof}Since $\bse_i^m=s_{r+i-1}\cdots s_{\la_{0,h-1}+2}s_{r+\la_{0,h-1}+1}u_{m,i}^\la s_{\la_{0,h}-1}\cdots s_{i+1}s_i$,
$\frak S_\la\bse_i^m\frak S_\la=\frak S_\la
u_{m,i}^\la\frak S_\la$. If $m>0$, then $u_{m,i}^\la\in\affSr$ is determined by its action on
$\{1,2,\ldots,r\}$:
$$u_{m,i}^\la\!=\!\left(\begin{matrix}
1\cdots\la_{0,h-1}\,\la_{0,h-1}+1\,\la_{0,h-1}+2&\cdots&\la_{0,h}-1  &\la_{0,h}&\la_{0,h}+1\cdots r\\
1\cdots\la_{0,h-1}\,\la_{0,h-1}+2\,\la_{0,h-1}+3&\cdots&\la_{0,h}&mr\!+\!\la_{0,h-1}\!+\!1&\la_{0,h}+1\cdots r\\
\end{matrix}\right)$$
Hence,
$$u_{-m,i}^\la\!=\!(u_{m,i}^\la)^{-1}\!=\!\left(\begin{matrix}
1\cdots\la_{0,h-1}&\la_{0,h-1}+1&\la_{0,h-1}+2&\cdots &\la_{0,h} &\la_{0,h}+1\cdots r\\
1\cdots\la_{0,h-1}&-mr+\la_{0,h}&\la_{0,h-1}+1&\cdots&\la_{0,h}-1&\la_{0,h}+1\cdots r\\
\end{matrix}\right)$$
Thus, $u_{m,i}^\la(i)<u_{m,i}^\la(i+1)$ for any $i$ with
$s_i\in\frak S_\la$ and $m\in\mbz\backslash\{0\}$. Hence, by
\eqref{minimal coset representative},
$u_{m,i}^\la\in\afmsD_{\la,\la}$, proving (1). The assertion (2)
follows from Lemma \ref{the map jmath}. The assertion (3) is a
consequence of (2) and Corollary \ref{double coset}.
\end{proof}


\textsc{Proposition \ref{MFforSBE}(2).} {\it Let $h\in[1,n]$ and $A=(a_{i,j})\in\afThnr$ with
$ro(A)=\la$.
If $m\in\mbz\backslash\{0\}$ and $\la\geq\afbse_h$, then
$$[\afE_{h,h+mn}+\diag(\la-\afbse_h)]_1[A]_1=\sum_{s\in\mbz,\;
a_{h,s}\geq 1}(a_{h,s+mn}+1)[A+\afE_{h,s+mn}-\afE_{h,s}]_1.$$}

\begin{proof} As above, let
$$\la_{0,h}=\la_1+\cdots+\la_h\quad (\text{and $\la_{0,0}=0$}).$$
Assume $\mu=\co(A)$ and $d\in\msD^\vtg_{\la,\mu}$ such that
$\jmath_\vtg(\la, d,\mu)=A$. By Lemma \ref{power of e_i} and
Corollary \ref{double coset} (and \eqref{conjugate intersection}),
\[
\begin{split}
[\afE_{h,h+mn}+\diag(\la-\afbse_h)]_1[A]_1(\ul{\frak S_\mu})&=\ul{\frak S_\la(u_{m,i}^\la)^{-1}\frak S_\la}\cdot d\cdot\ul{\msD^\vtg_\al\cap\frak S_\mu}\\
&=\frac{1}{|\frak S_\la|}\ul{\frak S_\la(u_{m,i}^\la)^{-1}\frak
S_\la}\cdot\ul{\frak S_\la d\frak S_\mu}\\
&=\frac{1}{|\frak S_\la|}\prod_{1\leq s\leq n\atop
t\in\mbz}\frac{1}{a_{s,t}!}\ul{\frak S_\la(u_{m,i}^\la)^{-1}\frak
S_\la}\cdot\ul{\frak S_\la}\cdot d\cdot\ul{\frak S_\mu}\\
&=\prod_{1\leq s\leq n\atop t\in\mbz}\frac{1}{a_{s,t}!}\ul{\frak
S_\la(u_{m,i}^\la)^{-1}\frak S_\la}\cdot d\cdot\ul{\frak S_\mu}\\
&=\prod_{1\leq s\leq n\atop t\in\mbz}\frac{1}{a_{s,t}!}\ul{\frak
S_\la}\cdot(u_{m,i}^\la)^{-1}\cdot\ul{\msD^\vtg_\dt\cap\frak
S_\la}\cdot d\cdot\ul{\frak S_\mu}
\end{split}
\]
where $\frak S_\al=d^{-1}\frak S_\la d\cap\frak S_\mu$ and
$\frak S_\dt=u_{m,i}^\la\frak S_\la (u_{m,i}^{\la})^{-1}\cap\fS_\la$ with
\[ \dt=
\begin{cases}
(\la_1,\ldots,\la_{h-1},1,\la_h-1,\la_{h+1},\ldots,\la_n)&\text{if
$m>0$,}\\
(\la_1,\ldots,\la_{h-1}, \la_h-1,1,\la_{h+1},\ldots,\la_n),&\text{if
$m<0$}.
\end{cases}
\]
We now compute $\msD^\vtg_\dt\cap\frak S_\la$. For $m\not=0$ and
$\la_{0,h-1}+1\leq i\leq \la_{0,h}$,\footnote{This condition is
equivalent to $i\in R_h^\la$.} define $w_{m,i}\in\fS_\la$ as
follows. If $m>0$, then
$$w_{m,i}:=\left(\begin{matrix}
1\cdots\la_{0,h-1}&\la_{0,h-1}+1&\cdots&i-1&     i       &i+1&\cdots\la_{0,h}\cdots r\\
1\cdots\la_{0,h-1}&\la_{0,h-1}+2&\cdots& i &\la_{0,h-1}+1&i+1&\cdots\la_{0,h}\cdots r\\
\end{matrix}\right)$$
If $m<0$, then
$$w_{m,i}:=\left(\begin{matrix}
1\cdots\la_{0,h-1}\cdots i-1&   i     &i+1&\cdots& \la_{0,h}  &\la_{0,h}+1\cdots r\\
1\cdots\la_{0,h-1}\cdots i-1&\la_{0,h}& i &\cdots& \la_{0,h}-1&\la_{0,h}+1\cdots r\\
\end{matrix}\right)$$
From the definition, it is clear that $w_{m,\la_{0,h-1}+1}=1$ for
$m>0$ and $w_{m,\la_{0,h}}=1$ for $m<0$. It is also clear that
$$\msD^\vtg_\dt\cap\frak
S_\la=\{w_{m,i}\mid\la_{0,h-1}+1\leq
i\leq\la_{0,h}\}\text{ for all }m\in\mbz\backslash\{0\}.$$
Hence, $[\afE_{h,h+mn}+\diag(\la-\afbse_h)]_1[A]_1(\ul{\frak S_\mu})
= \prod_{1\leq s\leq
n\atop t\in\mbz}\frac{1}{a_{s,t}!}\sum_{i\in R_h^\la}\ul{\frak S_\la}\cdot(u_{m,i}^\la)^{-1}
w_{m,i}  d\cdot\ul{\frak S_\mu}$.

Let $B^{(m,i)}=(b_{s,t}^{(m,i)})\in\afThnr$ be the matrix associated
with $\la,\mu$ and the double coset
$\fS_\la(u_{m,i}^\la)^{-1}w_{m,i}d\frak S_\mu$, where
$m\in\mbz\backslash\{0\}$ and $i\in R_h^\la$. Since
\[
w_{m,i}^{-1}u_{m,i}^\la(R_s^\la)=
\begin{cases}
R_s^\la,&\text{if $1\leq s\leq n$ but $s\not=h$};\\
(R_h^\la\backslash\{i\})\cup\{mr+i\},&\text{if $s=h$},
\end{cases}
\]
it follows that for $i\in R_h^\la$,
\[
\begin{split}
b_{s,t}^{(m,i)} &=|d^{-1}w_{m,i}^{-1}u_{m,i}^\la R_s^\la\cap
R_t^\mu|\\
&=
\begin{cases}
a_{s,t},&\text{if $1\leq s\leq n$ but $s\not=h$};\\
a_{s,t}-|\{d^{-1}(i)\cap R_t^\mu\}|+|d^{-1}(mr+i)\cap
R_t^\mu|,&\text{if $s=h$}.
\end{cases}
\end{split}
\]
If $t_i\in\mbz$ is the unique integer such that $d^{-1}(i)\in
R_{t_i}^\mu$ (and $a_{h,t_i}\geq1$), then
\[
b_{h,t}^{(m,i)}=
\begin{cases}
a_{h,t},&\text{if $t\not\in\{t_i,mn+t_i\}$};\\
a_{h,t}-1,&\text{if $t=t_i$};\\
a_{h,t}+1,&\text{if $t=mn+t_i$}.
\end{cases}
\]
This implies that $B^{(m,i)}=A+\afE_{h,mn+t_i}-\afE_{h,t_i}$ for all $i\in R_h^\la$.
Thus, applying Corollary \ref{double coset} again yields
\[
\begin{split}
 [\afE_{h,h+mn}+&\diag(\la-\afbse_h)]_1[A]_1(\ul{\frak S_\mu}) \\
&=
\prod_{1\leq s\leq n\atop
t\in\mbz}\frac{1}{a_{s,t}!}\sum_{i\in R_h^\la}\prod_{1\leq s\leq n\atop
t\in\mbz}b_{s,t}^{(m,i)}!\ul{\fS_\la(u_{m,i}^\la)^{-1}w_{m,i}d\frak S_\mu}\\
&=\sum_{i\in R_h^\la}
\prod_{1\leq s\leq n\atop
t\in\mbz}\frac{b_{s,t}^{(m,i)}!}{a_{s,t}!}[B^{(m,i)}]_1(\ul{\frak S_\mu})\\
&=\sum_{i\in R_h^\la,a_{h,t_i}\geq1}\frac{a_{h,mn+t_i}+1}{a_{h,t_i}}[B^{(m,i)}]_1(\ul{\frak
S_\mu}).
\end{split}
\]
Therefore,
\[
\begin{split}
&\qquad[\afE_{h,h+mn}+\diag(\la-\afbse_h)]_1[A]_1\\
&=\sum_{t\in\mbz\atop a_{h,t}\geq 1}|\{i\in\mbz\mid
i\in R_h^\la,\,
t=t_i\}|\frac{a_{h,mn+t}+1}{a_{h,t}}[A+\afE_{h,mn+t}-\afE_{h,t}]_1\\
&=\sum_{t\in\mbz\atop a_{h,t}\geq
1}(a_{h,t+mn}+1)[A+\afE_{h,t+mn}-\afE_{h,t}]_1
\end{split}
\]
since $a_{h,t}=|d^{-1}R_h^\la\cap R_t^\mu|=|\{i\in\mbz\mid i\in
R_h^\la,\, t=t_i\}|$ by Lemma \ref{the map jmath}.
\end{proof}

Proposition \ref{MFforSBE}(2) is the key to the establishment of the
multiplication formulas in Theorem \ref{Multiplication Formulas at
v=1}, which in turn play a decisive role in the proof of the
conjecture in the classical case (see Proposition \ref{triangular
formula in A[bfj]}). It would be natural to raise the following
question which is the key to Problem \ref{Problem for MF}:

\begin{Prob} \label{Prob-realization} Find the quantum version of the multiplication formulas given in Proposition \ref{MFforSBE}(2)
for affine quantum Schur algebras.
\end{Prob}

\backmatter

\printindex

\begin{thebibliography}{99}\frenchspacing

\bibitem{ARS} M. Auslander, I. Reiten and S.~O. Smal\o,
{\em Representation Theory of Artin Algebras}, Cambridge Studies
in Advanced Mathematics, no.~36, Cambridge University Press,
Cambridge, 1995.

\bibitem{Be} J. Beck, {\em Braid group action and quantum affine
algebras}, Comm. Math. Phys. {\bf 165} (1994), 655--568.

\bibitem{BLM} A.~A. Beilinson, G. Lusztig and R. MacPherson, {\em A geometric
setting for the quantum deformation of $GL_n$}, Duke Math. J. {\bf
61} (1990), 655--677.

\bibitem{Bor} R. Borcherds, {\em Generalized Kac--Moody algebras}, J. Algebra
{\bf 115} (1988), 501--512.

\bibitem{CP91}
V. Chari and A. Pressley, {\em Quantum affine algebras}, Comm. Math.
Phys. {\bf 142} (1991), 261--283.


\bibitem{CPbk}
V. Chari and A. Pressley, {\em A Guide to Quantum Groups}, Cambridge
University Press, Cambridge, 1994.

\bibitem{CP95}
V. Chari and A. Pressley, {\em Quantum affine algebras and their
representations}, Representations of groups (Banff, AB, 1994),
59--78, CMS Conf. Proc., 16, Amer. Math. Soc., Providence, RI, 1995.


\bibitem{CP} V. Chari and A. Pressley, {\em Quantum affine algebras and affine Hecke algebras},
Pacific J. Math. {\bf 174} (1996), 295--326.

\bibitem{CP97} V. Chari and A. Pressley, {\em Quantum affine algebras at roots of unity},
Represet. Theory {\bf 1} (1997), 280--328.

\bibitem{DD05}
B. Deng and J. Du {\em Monomial bases for quantum affine
$\frak{sl}_n$}, Adv. Math. {\bf 191} (2005), 276--304.



\bibitem{DDPW}
B. Deng, J. Du, B. Parshall and J. Wang, {\em Finite dimensional
algebras and quantum groups}, Mathematical Surveys and Monographs
Volume 150, Amer. Math. Soc., Providence 2008.

\bibitem{DDX} B. Deng, J. Du and J. Xiao, {\em Generic extensions and canonical
bases for cyclic quivers}, Can. J. Math. {\bf 59} (2007),
1260--1283.

\bibitem{DX1} B. Deng and J. Xiao, {\em On double Ringel--Hall algebras},
J. Algebra {\bf 251} (2002), 110--149.


\bibitem{DJ89} R. Dipper and G. James, {\em The $q$-Schur algebra},
Proc. London Math. Soc. {\bf 59} (1989), 23--50.

\bibitem{DJ91} R. Dipper and G. James, {\em $q$-Tensor spaces and
 $q$-Weyl modules}, Trans. Amer. Math. Soc. {\bf 327} (1991), 251--282.

\bibitem{DG} S. Doty and A. Giaquinto, {\em Presenting Schur
 algebras,} Internat. Math. Res. Not. {\bf 36} (2002), 1907--1944.

\bibitem{DGr} S. Doty and R.~M. Green, {\em Presenting affine $q$-Schur
algebras}, Math. Z. {\bf 256} (2007), 311--345.

\bibitem{Dr87} V~.G. Drinfeld, {\em Quantum groups}, Proceedings of the
International Congress of Mathematicians, Vol. 1, 2 (Berkeley,
Calif., 1986), 798--820, Amer. Math. Soc., Providence, RI, 1987.

\bibitem{Dr88} V.~G. Drinfeld, {\em A new realization of Yangians and quantized
affine alegbras}, Soviet Math. Dokl. {\bf 32} (1988), 212--216.

\bibitem{Du92} J. Du, {\em Kahzdan-Lusztig bases and isomorphism
 theorems for $q$-Schur algebras}, Contemp. Math. {\bf 139} (1992),
 121--140.

\bibitem{Du} J. Du, {\em A note on the quantized Weyl reciprocity at roots
of unity}, Alg. Colloq. {\bf 2} (1995), 363--372.

\bibitem{DF} J. Du and Q. Fu, {\em Quantum ${\mathfrak{gl}}_\infty$,
infinite $q$-Schur algebras and their representations}, J. Algebra
{\bf 322} (2009), 1516--1547.

\bibitem{DF09}
J. Du and Q. Fu, {\em A modified BLM approach to quantum affine
$\frak{gl}_n$}, Math. Z. {\bf 266} (2010), 747--781.

\bibitem{DFW}
J. Du, Q. Fu and J.-P. Wang, {\em Infinitesimal quantum
$\frak{gl}_n$ and  little $q$-Schur algebras,} J. Algebra {\bf
287} (2005), 199--233.

\bibitem{DP02} J. Du and B. Parshall, {\em Linear quivers and the geometric
setting of quantum $GL_n$},  Indag. Math. (N.S.) {\bf 13} (2002),
459--481.

\bibitem{DP03} J. Du and B. Parshall, {\em Monomial bases for $q$-Schur
algebras,} Trans. Amer. Math. Soc. {\bf 355} (2003), 1593--1620.

\bibitem{DPS} J. Du, B. Parshall and L. Scott, {\em Quantum Weyl
reciprocity and tilting modules}, Commun. Math. Phys. {\bf 195}
(1998), 321--352.

\bibitem{FM} E. Frenkel and E. Mukhin, {\em The Hopf algebra ${\rm
Rep}\;U_q(\wh{\frak{gl}}_\infty)$}, Sel. math., New Ser. {\bf 8}
(2002), 537--635.

\bibitem{FR2} E. Frenkel and N. Reshetikhin, {\em The $q$-characters of representations of
quantum affine algebras and deformations of $W$-algebras}, Contemp. Math., AMS {\bf 248}
(1998), 163--205.

\bibitem{Fu09}
Q. Fu {\em On Schur algebras and little Schur algebras}, J. Algebra
{\bf 322} (2009),  1637--1652.

\bibitem{GRV} V. Ginzburg, N. Reshetikhin and E. Vasserot, {\em
Quantum affine algebras and affine Hecke algebras}, Contemp. Math.
{\bf 175} (1994), 101--130.


\bibitem{GV} V. Ginzburg and E. Vasserot, {\em Langlands reciprocity for
affine quantum groups of type $A_n$}, Internat. Math. Res. Notices
1993, 67--85.

\bibitem{Gr80} J.~A. Green,
{\em Polynomial Representations of {\rm GL}$_n$}, 2nd ed., with an
appendix on Schensted correspondence and Littelmann paths by K.
Erdmann, J.~A. Green and M. Schocker, Lecture Notes in
Mathematics, no.~830, Springer-Verlag, Berlin, 2007.

\bibitem{Gr95} J.~A. Green, {\em Hall algebras, hereditary algebras and
quantum groups}, Invent. Math. {\bf 120} (1995), 361--377.

\bibitem{Gr99} R.~M. Green, {\em The affine $q$-Schur algebra}, J. Algebra {\bf
215} (1999),  379--411.

\bibitem{Guo95}
J.~Y. Guo, {\em The Hall polynomials of a cyclic serial algebra},
Comm. Algebra {\bf 23} (1995) 743--751.

\bibitem{GP} J.~Y. Guo and L. Peng, {\em Universal PBW-basis of Hall--Ringel
algebras and Hall polynomials}, J. Algebra {\bf 198} (1997),
339--351.

\bibitem{HX} J. Hua and J. Xiao, {\em On Ringel-Hall algebras of tame
hereditary algebras}, Algebra Represent. Theory {\bf 5} (2002),
527--550.

\bibitem{Hub1} A. Hubery, {\em Symmetric functions and the center of
the Ringel--Hall algebra of a cyclic quiver}, Math. Z. {\bf 251}
(2005), 705--719.

\bibitem{Hub2} A. Hubery, {\em Three presentations of the Hopf algebra
${\mathcal U}_v({\widehat{\frak{gl}}_n})$}, preprint.

\bibitem{IM} N. Iwahori and H. Matsumoto,
{\em On some Bruhat decomposition and the structure of the Hecke
rings of $\frak p$-adic Chevalley groups}, Inst. Hautes \'Etudes
Sci. Publ. Math. {\bf 25} (1965), 5--48.

\bibitem{JKK} K. Jeong, S.-J. Kang and M. Kashiwara, {\em Crystal
bases for quantum generalized Kac--Moody algebras}, Proc. London
Math. Soc. {\bf 90} (2005), 395--438.

\bibitem{Ji} M. Jimbo, {\em A $q$-analogue of $U(\frak {gl}(N+1))$,
Hecke algebra, and the Yang-Baxter equation}, Letters in Math.
Physics {\bf11} (1986), 247--252.

\bibitem{Jing} N. Jing, {\em On Drinfeld realization of quantum affine
algebras}, in: The Monster and Lie algebras (Columbus, OH, 1996),
195--206, Ohio State Univ. Math. Res. Inst. Publ., 7, de Gruyter,
Berlin, 1998.

\bibitem{Jones94} V.~F.~R. Jones,
{\em A quotient of the affine Hecke algebra in the Brauer
algebra}, Enseign. Math. {\bf 40} (1994), 313--344.

\bibitem{Jo95} A. Joseph, {\em Quantum groups and their primitive ideals},
Ergebnisse der Mathematik und ihrer Grenzgebiete, Vol. {\bf 29},
Springer-Verlag, Berlin, 1995.

\bibitem{Kac2} V. Kac, {\em Infinite dimensional Lie algebras}, Third edition,
Cambridge University Press, 1990.

\bibitem{Kang} S.-J. Kang, {\em Quantum deformations of generalized
Kac--Moody algebras and their modules}, J. Algebra {\bf 175}
(1995), 1041--1066.

\bibitem{KS} S.-J. Kang and O. Schiffmann, {\em Canonical bases for quantum
generalized Kac--Moody algebras}, Adv. Math. {\bf 200} (2006),
445--478.

\bibitem{KL79} D. Kazhdan and G. Lusztig, {\em Representations of Coxeter groups and Hecke Algebras,}
Invent. Math. {\bf 53} (1979), 165--184.

\bibitem{LLT}  A. Lascoux, B. Leclerc and J.-Y. Thibon, {\em Hecke algebras at roots of unity and crystal bases of quantum affine algebras}, Comm. Math. Phys. {\bf 181} (1996), 205--263.

\bibitem{LT00} B. Leclerc and J.-Y. Thibon, {\em Littlewood-Richardson coefficients and Kazhdan-Lusztig polynomials}. Combinatorial methods in representation theory (Kyoto, 1998), 155--220, Adv. Stud. Pure Math., 28, Kinokuniya, Tokyo, 2000.

\bibitem{Lu83} G. Lusztig, {\em Some examples of square integrable representations
of semisimple $p$-adic groups}, Trans. Amer. Math. Soc. {\bf 277}
(1983), 623--653.


\bibitem {Lu90} G. Lusztig, {\em Finite dimensional Hopf
 algebras arising from quantized universal enveloping
 algebras,} J. Amer. Math. Soc. {\bf 3} (1990), 257--296.

\bibitem {Lu91} G. Lusztig, {\em Quivers, perverse sheaves, and quantized
enveloping algebras,} J. Amer. Math. Soc. {\bf 4} (1991), 365-421.

\bibitem{Lu93} G. Lusztig, {\em Introduction to quantum groups},
Progress in Math. {\bf 110}, Birkh\"auser, 1993.

\bibitem {Lu98} G. Lusztig, {\em Canonical bases and Hall algebras,}
Representation Theories and Algebraic Geometry, A. Braer and A. Daigneault (eds.), 365--399,
Kluwer, 1998.

\bibitem{Lu99}
G. Lusztig, {\em Aperiodicity in quantum affine $\frak{gl}_n$},
Asian J. Math. {\bf 3} (1999),  147--177.

\bibitem{L00} G. Lusztig, {\em Transfer maps for quantum affine ${\frak {sl}}_n$},
in: Representations and quantizations (Shanghai, 1998), China High.
Educ. Press, Beijing, 2000, 341--356.

\bibitem{Mc07} K. McGerty, {\em Generalized $q$-Schur algebras and quantum Frobenius},
Adv. Math. {\bf 214} (2007), 116--131.


\bibitem {Peng} L. Peng, {\em Some Hall polynomials for representation-finite trivial extension algebras},
J. Algebra {\bf 197} (1997), 1--13.



\bibitem{GP} G. Pouchin, {\em A geometric Schur--Weyl duality for quotients
of affine Hecke algebras}, J. Algebra {\bf 321} (2009), 230--247.

\bibitem{Re1} M. Reineke,
{\em Generic extensions and multiplicative bases of quantum groups
at $q=0$}, Represent. Theory {\bf 5} (2001), 147--163.

\bibitem{Ri90} C.~M. Ringel, {\em Hall algebras},  Topics in algebra,  433--447, Banach Center Publ., 26, Part 1, PWN, Warsaw, 1990.

\bibitem{R90} C.~M. Ringel, {\em Hall algebras and quantum groups},
Invent. Math. {\bf 101} (1990), 583--592.

\bibitem{Ri92} C. M. Ringel, {\em Lie algebras arising in representation theory}, 284--291, London Math. Soc.
Lecture Notes Ser., {\bf 168}, Cambridge Univ. Press, Cambridge,
1992.

\bibitem{R932} C.~M. Ringel, {\em Hall algebras revisited},
Israel Mathematical Conference Proceedings, Vol. {\bf 7} (1993),
171--176.

\bibitem{Ri93} C.~M. Ringel, {\em The composition algebra of a
cyclic quiver}, Proc. London Math. Soc. {\bf 66} (1993), 507--537.

\bibitem{Rogawski}
J. D. Rogawski, {\em On modules over the Hecke algebra of a $p$-adic
group}, Invent. Math. {\bf 79} (1985), 443--465.



\bibitem{Sch} O. Schiffmann,
{\em The Hall algebra of a cyclic quiver and canonical bases of Fock
spaces}, Internat. Math. Res. Notices (2000), 413--440.

\bibitem{Sch2} O. Schiffmann, {\em Noncommutative projective curves and quantum loop algebras},
Duke Math. J. (2004) 113--168.

\bibitem{Sc1} I. Schur, {\em \"Uber eine Klasse von Matrizen, die sich einer gegebenen Matrix zuordnen lassen},
Dissertation, Berlin 1902.
\bibitem{Sc2}I. Schur, {\em \"Uber die rationalen Darstellungen der allgemeinen linearen Gruppe},
Sitzber K\"onigl. Preu{\ss}. Ak. Wiss., Physikal.-Math. Klasse, pages 58--75, 1927.

\bibitem{SV2}
B. Sevenhant and M. Van den Bergh, {\em On the double of the Hall
algebra of a quiver}, J. Algebra {\bf 221} (1999), 135--160



\bibitem{Tho} J. Thor\'en, {\em On the comultiplication in quantum affine algebras},
preprint, arXiv:math/9901061.

\bibitem{VV98}
M. Varagnolo and E. Vasserot, {\em Double-loop algebras and the Fock
space}, Invent. Math. {\bf 133} (1998), 133--159.

\bibitem{VV99} M. Varagnolo and E. Vasserot, {\em On the
decomposition matrices of the quantized Schur algebra}, Duke Math.
J. {\bf 100} (1999), 267--297.

\bibitem{VV04} M. Varagnolo and E. Vasserot, {\em From double affine Hecke algebras to quantized
affine Schur algebras}, Internat. Math. Res. Notices {\bf No. 26}
(2004), 1299--1333.

\bibitem{V}E. Vasserot, {\em Affine quantum groups and equivariant $K$-theory}, Transf. Groups
{\bf 3} (1998), 269--299.

\bibitem{Vig} M.-F. Vign\'eras, {\em Schur algebras of reductive $p$-adic groups. I}, Duke Math.
J. {\bf 116} (2003), 35--75.

\bibitem{We} H. Weyl, {\em The classical groups}, Princeton U. Press, Princeton, 1946.

\bibitem{X97} J. Xiao, {\em Drinfeld double and Ringel-Green theory of Hall algebras},
J. Algebra {\bf 190} (1997), 100--144.

\bibitem{Yang1} D. Yang, {\em On the affine Schur algebra of type $A$},
Comm. Algebra {\bf 37} (2009), 1389--1419..

\bibitem{Yang2} D. Yang, {\em On the affine Schur algebra of type $A$. II},
Algebr. Represent. Theory {\bf 12} (2009), 63--75.

\bibitem{Zelevinsky}
A.~V. Zelevinsky, {\em Induced representations of reductive $p$-adic
groups II. On irreducible represnetations of $GL_n$}, Ann. Sci. Ec.
Norm. Sup. 4$^e$ S$\acute{e}$r.  13  (1980), 165--210.


\bibitem{Zwa} G. Zwara,
{\em Degenerations for modules over representation-finite biserial
algebras}, J. Algebra {\bf 198} (1997), 563--581.
\end{thebibliography}
\end{document}